\documentclass[12pt,english]{smfbook}


\topmargin  2mm
\evensidemargin 2.5mm
\oddsidemargin  2.5mm
\textwidth  160mm
\textheight 212mm

\usepackage[english,french]{babel}

\usepackage{bm,amsfonts,amsmath,amssymb,dsfont,mathrsfs,bbm,fancybox,multicol} 
\usepackage{times}
\usepackage{graphicx} 
\usepackage{enumerate} 
\usepackage{caption}
\usepackage{subcaption}

\makeatletter

\@addtoreset{equation}{chapter}
\makeatother

\usepackage{diagrams}
\diagramstyle[labelstyle=\scriptstyle]

\usepackage{color}
\definecolor{webred}{rgb}{0.75,0,0}
\usepackage[citecolor=webred,colorlinks=true,linkcolor=webred]{hyperref}

\usepackage{etoolbox}
\patchcmd{\thebibliography}{*}{}{}{}
\pretocmd\thebibliography{\csname c@secnumdepth\endcsname=-2 }{}{}
\patchcmd{\theindex}{*}{}{}{}
\pretocmd\theindex{\csname c@secnumdepth\endcsname=-2 }{}{}

\newtheorem{theorem}{Theorem}[chapter]
\newtheorem{proposition}[theorem]{Proposition}
\newtheorem{lemma}[theorem]{Lemma}
\newtheorem{corollary}[theorem]{Corollary}

\theoremstyle{definition}
\newtheorem{definition}[theorem]{Definition}
\newtheorem{notation}[theorem]{Notation}
\newtheorem{example}[theorem]{Example}
\newtheorem{desc}[theorem]{Description}

\theoremstyle{remark}
\newtheorem{remark}[theorem]{Remark}

\newcommand{\N}{\mathbb{N}}
\newcommand{\R}{\mathbb{R}}

\newcommand{\cA}{\mathcal{A}}
\newcommand{\cB}{\mathcal{B}}
\newcommand{\cC}{\mathcal{C}}
\newcommand{\cD}{\mathcal{D}}

\newcommand{\cF}{\mathcal{F}}

\newcommand{\cI}{\mathcal{I}}
\newcommand{\cO}{\mathcal{O}}

\newcommand{\cS}{\mathcal{S}}

\newcommand{\cU}{\mathcal{U}}
\newcommand{\cV}{\mathcal{V}}
\newcommand{\cW}{\mathcal{W}}

\newcommand{\gA}{\mathfrak{A}}
\newcommand{\gC}{\mathfrak{C}}
\newcommand{\gD}{\mathfrak{D}}
\newcommand{\gE}{\mathfrak{E}}
\newcommand{\gF}{\mathfrak{F}}
\newcommand{\gN}{\mathfrak{N}}

\newcommand{\gO}{\mathfrak{O}}
\newcommand{\gP}{\mathfrak{P}}
\newcommand{\gS}{\mathfrak{S}}
\newcommand{\gT}{\mathfrak{T}}
\newcommand{\gV}{\mathfrak{V}}

\newcommand{\gVc}{\mathfrak{V}^{\circ}}
\newcommand{\gX}{\mathfrak{X}}

\newcommand{\ogD}{\overline\gD}
\newcommand{\ogP}{\overline\gP}

\newcommand{\dD}{\mathbb{D}}
\newcommand{\dy}{\mathbb{Y}}

\newcommand{\dx}{\mathbb{X}}
\newcommand{\dS}{\mathbb{S}}

\newcommand{\dec}{{\bv}}
\newcommand{\tbB}{\widetilde{\bB}}

\newcommand{\bA}{{\boldsymbol{\mathsf{A}}}}
\newcommand{\bB}{{\boldsymbol{\mathsf{B}}}}

\newcommand{\bc}{{\boldsymbol{\mathsf{c}}}}

\newcommand{\be}{{\boldsymbol{\mathsf{e}}}}
\newcommand{\bff}{{\boldsymbol{\mathsf{f}}}}
\newcommand{\bn}{{\boldsymbol{\mathsf{n}}}}
\newcommand{\bp}{{\boldsymbol{\mathsf{p}}}}

\newcommand{\bt}{{\boldsymbol{\mathsf{t}}}}
\newcommand{\bu}{{\boldsymbol{\mathsf{u}}}}
\newcommand{\bv}{{\boldsymbol{\mathsf{v}}}}

\newcommand{\bx}{{\boldsymbol{\mathsf{x}}}}
\newcommand{\by}{{\boldsymbol{\mathsf{y}}}}
\newcommand{\bz}{{\boldsymbol{\mathsf{z}}}}

\newcommand{\bZ}{{\boldsymbol{\mathsf{Z}}}}

\newcommand{\bfz}{{\boldsymbol{0}}}

\newcommand{\sA}{\mathscr{A}}
\newcommand{\sC}{\mathscr{C}}
\newcommand{\sE}{\mathscr{E}}

\newcommand{\sP}{\mathscr{P}}

\newcommand{\sZ}{\mathscr{Z}}

\newcommand{\QR}{\mathscr{Q}}

\newcommand{\rd}{\mathrm{d}}
\newcommand{\re}{\mathrm{e}}
\newcommand{\ri}{i}
\newcommand{\rD}{\mathrm{D}}
\newcommand{\rG}{\mathrm{G}}
\newcommand{\rJ}{\mathrm{J}}

\newcommand{\rK}{\mathrm{K}}

\newcommand{\rN}{\mathrm{N}}
\newcommand{\rT}{\mathrm{T}}

\newcommand{\uA}{\underline{\bA}}
\newcommand{\uB}{\underline{\bB}}

\newcommand{\udiffeo}{\ee\underline{\me\mathrm U\me}\ee} 
\newcommand{\uPsi}{\,\underline{\!\Psi\!}\,}
\newcommand{\upsi}{\underline{\psi\!}\,}

\newcommand{\ess}{\mathrm{ess}}
\newcommand{\loc}{\mathrm{loc}}
\newcommand{\ee}{\hskip 0.15ex}
\newcommand{\me}{\hskip -0.15ex}

\newcommand{\tronc}{\xi_{\bc}} 
\newcommand{\troncg}{\chi_{(\bx,r)}}
\newcommand{\troncp}{\xi_{(\bx,r)}}
\newcommand{\troncz}{\xi_{(\bx_0,r_0)}}
\newcommand\dist{\operatorname{dist}}
\newcommand\dom{\operatorname{Dom}}
\newcommand\supp{\operatorname{supp}}
\newcommand\curl{\operatorname{curl}}

\newcommand\Id{\operatorname{\mathbb{I}}}
\renewcommand\Re{\operatorname{Re}}

\newcommand{\OP}{H} 
\newcommand{\DG}[1]{\mathcal{H}(#1)} 
\newcommand{\En}{E} 
\newcommand{\seE}{\mathscr{E}^*} 
\newcommand{\pot}{\widetilde{\bA}}

\newcommand{\phihX}[1]{\varphi_{h}^{[#1]}}
\newcommand{\Lx}{\Lambda_{\dx}}

\newcommand{\diffeo}{\mathrm U}

\newcommand{\diffeoT}{\mathrm T}
\newcommand{\diffeoZ}{\mathrm Z}

\newcommand{\dir}{\widehat{\bx}}

\newcommand{\de}[1]{\delta_{#1}}

\newcommand{\chiboxh}{\chi^{\scriptscriptstyle\,\Box}_h\,}
\newcommand{\cVboxh}{\cV^{\scriptscriptstyle\,\square}_h}

\newcommand{\Ref}[1]{\textup{\ref{#1}}}

\newcommand{\pp}{p}


\title[Ground state energy of the magnetic Laplacian on corner domains]{Ground state energy\\ of the magnetic Laplacian\\ on corner domains}
\alttitle{Niveau fondamental du laplacien magn\'etique dans des domaines \`a coins}

\author{Virginie Bonnaillie-No\"el}
\address{Virginie Bonnaillie-No\"el, D\'epartement de Math\'ematiques et Applications (DMA UMR 8553), PSL, CNRS, ENS Paris, 45 rue d'Ulm, F-75230 Paris Cedex 05, France} 
\email{virginie.bonnaillie@ens.fr}

\author{Monique Dauge}
\address{Monique Dauge, IRMAR UMR 6625 - CNRS, Universit\'e de Rennes 1, Campus de Beaulieu, 35042 Rennes Cedex, France} 
\email{monique.dauge@univ-rennes1.fr}

\author{Nicolas Popoff}
\address{Nicolas Popoff, IMB UMR 5251 - CNRS, Universit\'e de Bordeaux, 351 cours de la lib\'eration, 33405 Talence Cedex, France}
\email{nicolas.popoff@u-bordeaux.fr}

\setcounter{tocdepth}{1}

\makeindex


\begin{document}
\frontmatter

\begin{abstract}
The asymptotic behavior of the first eigenvalue of a magnetic Laplacian in the strong field limit and with the Neumann realization in a smooth domain is characterized for dimensions 2 and 3 by model problems inside the domain or on its boundary. In dimension 2, for polygonal domains, a new set of model problems on sectors has to be taken into account. In this work, we consider the class of general corner domains. In dimension 3, they include as particular cases polyhedra and axisymmetric cones. We attach model problems not only to each point of the closure of the domain, but also to a hierarchy of ``tangent substructures'' associated with singular chains.  
We investigate spectral properties of these model problems, namely semicontinuity and existence of bounded generalized eigenfunctions. We prove estimates for the remainders of our asymptotic formula. Lower bounds are obtained with the help of an IMS type partition based on adequate two-scale coverings of the corner domain, whereas upper bounds are established by a novel construction of quasimodes, qualified as sitting or sliding according to spectral properties of local model problems. A part of our analysis extends to any dimension.
\end{abstract}

\begin{altabstract}
Le comportement asymptotique de la premi\`ere valeur propre du Laplacien magn\'e\-tique en pr\'esence d'un champ de forte intensit\'e et avec les conditions de Neumann sur un domaine r\'egulier, est caract\'eris\'e  en dimension 2 et 3 par des probl\`emes mod\`eles \`a l'int\'erieur du domaine et sur son bord. En dimension 2, quand il s'agit d'un domaine polygonal, on doit inclure dans l'analyse un nouvel ensemble de probl\`emes mod\`eles sur des secteurs plans. Dans ce travail, nous consid\'erons la classe g\'en\'erale des domaines \`a coins. En dimension 3, ceux-ci comprennent en particulier les poly\`edres et les c\^ones de r\'evolution. Nous associons des probl\`emes mod\`eles non seulement \`a chaque point de l'adh\'erence du domaine, mais \'egalement \`a une hi\'erarchie de structures tangentes associ\'ees \`a des cha\^{\i}nes singuli\`eres. Nous explorons des propri\'{e}t\'{e}s spectrales de ces probl\`emes mod\`eles, en particulier la semi-continuit\'e du niveau fondamental et l'existence de vecteurs propres g\'en\'eralis\'es. Nous d\'emontrons des estimations de reste pour nos formules asymptotiques. Les bornes inf\'erieures sont obtenues \`a l'aide de partitions de type IMS bas\'ees sur des recouvrements \`a deux \'echelles des domaines \`a coins. Les bornes sup\'erieures sont \'etablies gr\^ace \`a une construction originale de quasimodes, qualifi\'{e}s de fixes ou glissants selon les propri\'et\'es spectrales des probl\`emes mod\`eles locaux. Une partie de notre analyse s'\'etend \`a la dimension quelconque.
\end{altabstract}

\subjclass{81Q10, 35J10, 35P15, 47F05, 58G20}

\thanks{
This work was partially supported by the ANR (Agence Nationale de la Recherche), project {\sc Nosevol} ANR-11-BS01-0019. 
The third author was also supported by the {\sc ARCHIMEDE} Labex (ANR-11-LABX-0033) and the A*MIDEX project (ANR-11-IDEX-0001-02) funded by the "Investissements d'Avenir" French government program managed by the ANR.}

\maketitle

{\parskip 0.5pt
\tableofcontents
}

\mainmatter
%
%
%
\ifx\figforTeXisloaded\relax \else\global\let\figforTeXisloaded=\relax\fi
\message{version 1.9}
\catcode`\@=11
\ifx\ctr@ln@m\undefined\else%
    \immediate\write16{*** Fig4TeX WARNING : \string\ctr@ln@m\space already defined.}\fi
\def\ctr@ln@m#1{\ifx#1\undefined\else%
    \immediate\write16{*** Fig4TeX WARNING : \string#1 already defined.}\fi}
\ctr@ln@m\ctr@ld@f
\def\ctr@ld@f#1#2{\ctr@ln@m#2#1#2}
\ctr@ld@f\def\ctr@ln@w#1#2{\ctr@ln@m#2\csname#1\endcsname#2}
{\catcode`\/=0 \catcode`/\=12 /ctr@ld@f/gdef/BS@{\}}
\ctr@ld@f\def\ctr@lcsn@m#1{\expandafter\ifx\csname#1\endcsname\relax\else%
    \immediate\write16{*** Fig4TeX WARNING : \BS@\expandafter\string#1\space already defined.}\fi}
\ctr@ld@f\edef\colonc@tcode{\the\catcode`\:}
\ctr@ld@f\edef\semicolonc@tcode{\the\catcode`\;}
\ctr@ld@f\def\t@stc@tcodech@nge{{\let\c@tcodech@nged=\z@%
    \ifnum\colonc@tcode=\the\catcode`\:\else\let\c@tcodech@nged=\@ne\fi%
    \ifnum\semicolonc@tcode=\the\catcode`\;\else\let\c@tcodech@nged=\@ne\fi%
    \ifx\c@tcodech@nged\@ne%
    \immediate\write16{}
    \immediate\write16{!!!=============================================================!!!}
    \immediate\write16{ Fig4TeX WARNING:}
    \immediate\write16{ The category code of some characters has been changed, which will}
    \immediate\write16{ result in an error (message "Runaway argument?").}
    \immediate\write16{ This probably comes from another package that changed the category}
    \immediate\write16{ code after Fig4TeX was loaded. If that proves to be exact, the}
    \immediate\write16{ solution is to exchange the loading commands on top of your file}
    \immediate\write16{ so that Fig4TeX is loaded last. For example, in LaTeX, we should}
    \immediate\write16{ say :}
    \immediate\write16{\BS@ usepackage[french]{babel}}
    \immediate\write16{\BS@ usepackage{fig4tex}}
    \immediate\write16{!!!=============================================================!!!}
    \immediate\write16{}
    \fi}}
\ctr@ld@f\def\FigforTeX{F\kern-.05em i\kern-.05em g\kern-.1em\raise-.14em\hbox{4}\kern-.19em\TeX}
\ctr@ld@f\def\W@rnmesoldA#1{\W@rnmesold}
\ctr@ld@f\def\W@rnmesoldAB#1(#2){\W@rnmesold}
\ctr@ld@f\def\W@rnmesold{%
    \immediate\write16{}
    \immediate\write16{!!!=============================================================!!!}
    \immediate\write16{ Fig4TeX WARNING:}
    \immediate\write16{ The file to be compiled is not compatible with the current version}
    \immediate\write16{ of Fig4TeX. To fix that, upgrade the source file (mainly change \BS@ ps*}
    \immediate\write16{ macros by \BS@ fig* macros), or use fig4tex184.tex instead (\BS@ input fig4tex184}
    \immediate\write16{ or \BS@ usepackage{fig4tex184}).}
    \immediate\write16{!!!=============================================================!!!}
    \immediate\write16{}}
\ctr@ln@m\psbeginfig\let\psbeginfig\W@rnmesoldA
\ctr@ln@m\psset\let\psset\W@rnmesoldAB
\ctr@ln@m\pssetdefault\let\pssetdefault\W@rnmesoldAB
\ctr@ln@m\pssetupdate\let\pssetupdate\W@rnmesoldA
\ctr@ln@w{newdimen}\epsil@n\epsil@n=0.00005pt
\ctr@ln@w{newdimen}\Cepsil@n\Cepsil@n=0.005pt
\ctr@ln@w{newdimen}\dcq@\dcq@=254pt
\ctr@ln@w{newdimen}\PI@\PI@=3.141592pt
\ctr@ln@w{newdimen}\DemiPI@deg\DemiPI@deg=90pt
\ctr@ln@w{newdimen}\PI@deg\PI@deg=180pt
\ctr@ln@w{newdimen}\DePI@deg\DePI@deg=360pt
\ctr@ld@f\chardef\t@n=10
\ctr@ld@f\chardef\c@nt=100
\ctr@ld@f\chardef\@lxxiv=74
\ctr@ld@f\chardef\@xci=91
\ctr@ld@f\mathchardef\@nMnCQn=9949
\ctr@ld@f\chardef\@vi=6
\ctr@ld@f\chardef\@xxx=30
\ctr@ld@f\chardef\@lvi=56
\ctr@ld@f\chardef\@@lxxi=71
\ctr@ld@f\chardef\@lxxxv=85
\ctr@ld@f\mathchardef\@@mmmmlxviii=4068
\ctr@ld@f\mathchardef\@ccclx=360
\ctr@ld@f\mathchardef\@dccxx=720
\ctr@ln@w{newcount}\p@rtent \ctr@ln@w{newcount}\f@ctech \ctr@ln@w{newcount}\result@tent
\ctr@ln@w{newdimen}\v@lmin \ctr@ln@w{newdimen}\v@lmax \ctr@ln@w{newdimen}\v@leur
\ctr@ln@w{newdimen}\result@t\ctr@ln@w{newdimen}\result@@t
\ctr@ln@w{newdimen}\mili@u \ctr@ln@w{newdimen}\c@rre \ctr@ln@w{newdimen}\delt@
\ctr@ld@f\def\degT@rd{0.017453 }  
\ctr@ld@f\def\rdT@deg{57.295779 } 
\ctr@ln@m\v@leurseule
{\catcode`p=12 \catcode`t=12 \gdef\v@leurseule#1pt{#1}}
\ctr@ld@f\def\repdecn@mb#1{\expandafter\v@leurseule\the#1\space}
\ctr@ld@f\def\arct@n#1(#2,#3){{\v@lmin=#2\v@lmax=#3%
    \maxim@m{\mili@u}{-\v@lmin}{\v@lmin}\maxim@m{\c@rre}{-\v@lmax}{\v@lmax}%
    \delt@=\mili@u\m@ech\mili@u%
    \ifdim\c@rre>\@nMnCQn\mili@u\divide\v@lmax\tw@\c@lATAN\v@leur(\z@,\v@lmax)
    \else%
    \maxim@m{\mili@u}{-\v@lmin}{\v@lmin}\maxim@m{\c@rre}{-\v@lmax}{\v@lmax}%
    \m@ech\c@rre%
    \ifdim\mili@u>\@nMnCQn\c@rre\divide\v@lmin\tw@
    \maxim@m{\mili@u}{-\v@lmin}{\v@lmin}\c@lATAN\v@leur(\mili@u,\z@)%
    \else\c@lATAN\v@leur(\delt@,\v@lmax)\fi\fi%
    \ifdim\v@lmin<\z@\v@leur=-\v@leur\ifdim\v@lmax<\z@\advance\v@leur-\PI@%
    \else\advance\v@leur\PI@\fi\fi%
    \global\result@t=\v@leur}#1=\result@t}
\ctr@ld@f\def\m@ech#1{\ifdim#1>1.646pt\divide\mili@u\t@n\divide\c@rre\t@n\m@ech#1\fi}
\ctr@ld@f\def\c@lATAN#1(#2,#3){{\v@lmin=#2\v@lmax=#3\v@leur=\z@\delt@=\tw@ pt%
    \un@iter{0.785398}{\v@lmax<}%
    \un@iter{0.463648}{\v@lmax<}%
    \un@iter{0.244979}{\v@lmax<}%
    \un@iter{0.124355}{\v@lmax<}%
    \un@iter{0.062419}{\v@lmax<}%
    \un@iter{0.031240}{\v@lmax<}%
    \un@iter{0.015624}{\v@lmax<}%
    \un@iter{0.007812}{\v@lmax<}%
    \un@iter{0.003906}{\v@lmax<}%
    \un@iter{0.001953}{\v@lmax<}%
    \un@iter{0.000976}{\v@lmax<}%
    \un@iter{0.000488}{\v@lmax<}%
    \un@iter{0.000244}{\v@lmax<}%
    \un@iter{0.000122}{\v@lmax<}%
    \un@iter{0.000061}{\v@lmax<}%
    \un@iter{0.000030}{\v@lmax<}%
    \un@iter{0.000015}{\v@lmax<}%
    \global\result@t=\v@leur}#1=\result@t}
\ctr@ld@f\def\un@iter#1#2{%
    \divide\delt@\tw@\edef\dpmn@{\repdecn@mb{\delt@}}%
    \mili@u=\v@lmin%
    \ifdim#2\z@%
      \advance\v@lmin-\dpmn@\v@lmax\advance\v@lmax\dpmn@\mili@u%
      \advance\v@leur-#1pt%
    \else%
      \advance\v@lmin\dpmn@\v@lmax\advance\v@lmax-\dpmn@\mili@u%
      \advance\v@leur#1pt%
    \fi}
\ctr@ld@f\def\c@ssin#1#2#3{\expandafter\ifx\csname COS@\number#3\endcsname\relax\c@lCS{#3pt}%
    \expandafter\xdef\csname COS@\number#3\endcsname{\repdecn@mb\result@t}%
    \expandafter\xdef\csname SIN@\number#3\endcsname{\repdecn@mb\result@@t}\fi%
    \edef#1{\csname COS@\number#3\endcsname}\edef#2{\csname SIN@\number#3\endcsname}}
\ctr@ld@f\def\c@lCS#1{{\mili@u=#1\p@rtent=\@ne%
    \relax\ifdim\mili@u<\z@\red@ng<-\else\red@ng>+\fi\f@ctech=\p@rtent%
    \relax\ifdim\mili@u<\z@\mili@u=-\mili@u\f@ctech=-\f@ctech\fi\c@@lCS}}
\ctr@ld@f\def\c@@lCS{\v@lmin=\mili@u\c@rre=-\mili@u\advance\c@rre\DemiPI@deg\v@lmax=\c@rre%
    \mili@u\@@lxxi\mili@u\divide\mili@u\@@mmmmlxviii%
    \edef\v@larg{\repdecn@mb{\mili@u}}\mili@u=-\v@larg\mili@u%
    \edef\v@lmxde{\repdecn@mb{\mili@u}}%
    \c@rre\@@lxxi\c@rre\divide\c@rre\@@mmmmlxviii%
    \edef\v@largC{\repdecn@mb{\c@rre}}\c@rre=-\v@largC\c@rre%
    \edef\v@lmxdeC{\repdecn@mb{\c@rre}}%
    \fctc@s\mili@u\v@lmin\global\result@t\p@rtent\v@leur%
    \let\t@mp=\v@larg\let\v@larg=\v@largC\let\v@largC=\t@mp%
    \let\t@mp=\v@lmxde\let\v@lmxde=\v@lmxdeC\let\v@lmxdeC=\t@mp%
    \fctc@s\c@rre\v@lmax\global\result@@t\f@ctech\v@leur}
\ctr@ld@f\def\fctc@s#1#2{\v@leur=#1\relax\ifdim#2<\@lxxxv\p@\cosser@h\else\sinser@t\fi}
\ctr@ld@f\def\cosser@h{\advance\v@leur\@lvi\p@\divide\v@leur\@lvi%
    \v@leur=\v@lmxde\v@leur\advance\v@leur\@xxx\p@%
    \v@leur=\v@lmxde\v@leur\advance\v@leur\@ccclx\p@%
    \v@leur=\v@lmxde\v@leur\advance\v@leur\@dccxx\p@\divide\v@leur\@dccxx}
\ctr@ld@f\def\sinser@t{\v@leur=\v@lmxdeC\p@\advance\v@leur\@vi\p@%
    \v@leur=\v@largC\v@leur\divide\v@leur\@vi}
\ctr@ld@f\def\red@ng#1#2{\relax\ifdim\mili@u#1#2\DemiPI@deg\advance\mili@u#2-\PI@deg%
    \p@rtent=-\p@rtent\red@ng#1#2\fi}
\ctr@ld@f\def\pr@c@lCS#1#2#3{\ctr@lcsn@m{COS@\number#3 }%
    \expandafter\xdef\csname COS@\number#3\endcsname{#1}%
    \expandafter\xdef\csname SIN@\number#3\endcsname{#2}}
\pr@c@lCS{1}{0}{0}
\pr@c@lCS{0.7071}{0.7071}{45}\pr@c@lCS{0.7071}{-0.7071}{-45}
\pr@c@lCS{0}{1}{90}          \pr@c@lCS{0}{-1}{-90}
\pr@c@lCS{-1}{0}{180}        \pr@c@lCS{-1}{0}{-180}
\pr@c@lCS{0}{-1}{270}        \pr@c@lCS{0}{1}{-270}
\ctr@ld@f\def\invers@#1#2{{\v@leur=#2\maxim@m{\v@lmax}{-\v@leur}{\v@leur}%
    \f@ctech=\@ne\m@inv@rs%
    \multiply\v@leur\f@ctech\edef\v@lv@leur{\repdecn@mb{\v@leur}}%
    \p@rtentiere{\p@rtent}{\v@leur}\v@lmin=\p@\divide\v@lmin\p@rtent%
    \inv@rs@\multiply\v@lmax\f@ctech\global\result@t=\v@lmax}#1=\result@t}
\ctr@ld@f\def\m@inv@rs{\ifdim\v@lmax<\p@\multiply\v@lmax\t@n\multiply\f@ctech\t@n\m@inv@rs\fi}
\ctr@ld@f\def\inv@rs@{\v@lmax=-\v@lmin\v@lmax=\v@lv@leur\v@lmax%
    \advance\v@lmax\tw@ pt\v@lmax=\repdecn@mb{\v@lmin}\v@lmax%
    \delt@=\v@lmax\advance\delt@-\v@lmin\ifdim\delt@<\z@\delt@=-\delt@\fi%
    \ifdim\delt@>\epsil@n\v@lmin=\v@lmax\inv@rs@\fi}
\ctr@ld@f\def\minim@m#1#2#3{\relax\ifdim#2<#3#1=#2\else#1=#3\fi}
\ctr@ld@f\def\maxim@m#1#2#3{\relax\ifdim#2>#3#1=#2\else#1=#3\fi}
\ctr@ld@f\def\p@rtentiere#1#2{#1=#2\divide#1by65536 }
\ctr@ld@f\def\r@undint#1#2{{\v@leur=#2\divide\v@leur\t@n\p@rtentiere{\p@rtent}{\v@leur}%
    \v@leur=\p@rtent pt\global\result@t=\t@n\v@leur}#1=\result@t}
\ctr@ld@f\def\sqrt@#1#2{{\v@leur=#2%
    \minim@m{\v@lmin}{\p@}{\v@leur}\maxim@m{\v@lmax}{\p@}{\v@leur}%
    \f@ctech=\@ne\m@sqrt@\sqrt@@%
    \mili@u=\v@lmin\advance\mili@u\v@lmax\divide\mili@u\tw@\multiply\mili@u\f@ctech%
    \global\result@t=\mili@u}#1=\result@t}
\ctr@ld@f\def\m@sqrt@{\ifdim\v@leur>\dcq@\divide\v@leur\c@nt\v@lmax=\v@leur%
    \multiply\f@ctech\t@n\m@sqrt@\fi}
\ctr@ld@f\def\sqrt@@{\mili@u=\v@lmin\advance\mili@u\v@lmax\divide\mili@u\tw@%
    \c@rre=\repdecn@mb{\mili@u}\mili@u%
    \ifdim\c@rre<\v@leur\v@lmin=\mili@u\else\v@lmax=\mili@u\fi%
    \delt@=\v@lmax\advance\delt@-\v@lmin\ifdim\delt@>\epsil@n\sqrt@@\fi}
\ctr@ld@f\def\extrairelepremi@r#1\de#2{\expandafter\lepremi@r#2@#1#2}
\ctr@ld@f\def\lepremi@r#1,#2@#3#4{\def#3{#1}\def#4{#2}\ignorespaces}
\ctr@ld@f\def\@cfor#1:=#2\do#3{%
  \edef\@fortemp{#2}%
  \ifx\@fortemp\empty\else\@cforloop#2,\@nil,\@nil\@@#1{#3}\fi}
\ctr@ln@m\@nextwhile
\ctr@ld@f\def\@cforloop#1,#2\@@#3#4{%
  \def#3{#1}%
  \ifx#3\Fig@nnil\let\@nextwhile=\Fig@fornoop\else#4\relax\let\@nextwhile=\@cforloop\fi%
  \@nextwhile#2\@@#3{#4}}

\ctr@ld@f\def\@ecfor#1:=#2\do#3{%
  \def\@@cfor{\@cfor#1:=}%
  \edef\@@@cfor{#2}%
  \expandafter\@@cfor\@@@cfor\do{#3}}
\ctr@ld@f\def\Fig@nnil{\@nil}
\ctr@ld@f\def\Fig@fornoop#1\@@#2#3{}
\ctr@ln@m\list@@rg
\ctr@ld@f\def\trtlis@rg#1#2{\def\list@@rg{#1}%
    \@ecfor\p@rv@l:=\list@@rg\do{\expandafter#2\p@rv@l|}}
\ctr@ld@f\def\trtlis@rgtok#1{\let@xte={}\let\n@xt\addt@t@xt\addt@t@xt #1}
\ctr@ln@m\M@cro
\ctr@ln@m\n@xt
\ctr@ld@f\def\addt@t@xt#1{\if#1|\let\n@xt\relax\else%
    \if#1,\expandafter\M@cro\the\let@xte|\let@xte={}%
    \else\let@xte=\expandafter{\the\let@xte #1}\fi\fi\n@xt}
\ctr@ln@w{newbox}\b@xvisu
\ctr@ln@w{newtoks}\let@xte
\ctr@ln@w{newif}\ifitis@K
\ctr@ln@w{newcount}\s@mme
\ctr@ln@w{newcount}\l@mbd@un \ctr@ln@w{newcount}\l@mbd@de
\ctr@ln@w{newcount}\superc@ntr@l\superc@ntr@l=\@ne        
\ctr@ln@w{newcount}\typec@ntr@l\typec@ntr@l=\superc@ntr@l 
\ctr@ln@w{newdimen}\v@lX  \ctr@ln@w{newdimen}\v@lY  \ctr@ln@w{newdimen}\v@lZ
\ctr@ln@w{newdimen}\v@lXa \ctr@ln@w{newdimen}\v@lYa \ctr@ln@w{newdimen}\v@lZa
\ctr@ln@w{newdimen}\unit@\unit@=\p@ 
\ctr@ld@f\def\unit@util{pt}
\ctr@ld@f\def\ptT@ptps{0.996264 }
\ctr@ld@f\def\ptpsT@pt{1.00375 }
\ctr@ld@f\def\ptT@unit@{1} 
\ctr@ld@f\def\setunit@#1{\def\unit@util{#1}\setunit@@#1:\invers@{\result@t}{\unit@}%
    \edef\ptT@unit@{\repdecn@mb\result@t}}
\ctr@ld@f\def\setunit@@#1#2:{\ifcat#1a\unit@=\@ne#1#2\else\unit@=#1#2\fi}
\ctr@ld@f\def\d@fm@cdim#1#2{{\v@leur=#2\v@leur=\ptT@unit@\v@leur\xdef#1{\repdecn@mb\v@leur}}}
\ctr@ln@w{newif}\ifBdingB@x\BdingB@xtrue
\ctr@ln@w{newdimen}\c@@rdXmin \ctr@ln@w{newdimen}\c@@rdYmin  
\ctr@ln@w{newdimen}\c@@rdXmax \ctr@ln@w{newdimen}\c@@rdYmax
\ctr@ld@f\def\b@undb@x#1#2{\ifBdingB@x%
    \relax\ifdim#1<\c@@rdXmin\global\c@@rdXmin=#1\fi%
    \relax\ifdim#2<\c@@rdYmin\global\c@@rdYmin=#2\fi%
    \relax\ifdim#1>\c@@rdXmax\global\c@@rdXmax=#1\fi%
    \relax\ifdim#2>\c@@rdYmax\global\c@@rdYmax=#2\fi\fi}
\ctr@ld@f\def\b@undb@xP#1{{\Figg@tXY{#1}\b@undb@x{\v@lX}{\v@lY}}}
\ctr@ld@f\def\ellBB@x#1;#2,#3(#4,#5,#6){{\s@uvc@ntr@l\et@tellBB@x%
    \setc@ntr@l{2}\figptell-2::#1;#2,#3(#4,#6)\b@undb@xP{-2}%
    \figptell-2::#1;#2,#3(#5,#6)\b@undb@xP{-2}%
    \c@ssin{\C@}{\S@}{#6}\v@lmin=\C@ pt\v@lmax=\S@ pt%
    \mili@u=#3\v@lmin\delt@=#2\v@lmax\arct@n\v@leur(\delt@,\mili@u)%
    \mili@u=-#3\v@lmax\delt@=#2\v@lmin\arct@n\c@rre(\delt@,\mili@u)%
    \v@leur=\rdT@deg\v@leur\advance\v@leur-\DePI@deg%
    \c@rre=\rdT@deg\c@rre\advance\c@rre-\DePI@deg%
    \v@lmin=#4pt\v@lmax=#5pt%
    \loop\ifdim\v@leur<\v@lmax\ifdim\v@leur>\v@lmin%
    \edef\@ngle{\repdecn@mb\v@leur}\figptell-2::#1;#2,#3(\@ngle,#6)%
    \b@undb@xP{-2}\fi\advance\v@leur\PI@deg\repeat%
    \loop\ifdim\c@rre<\v@lmax\ifdim\c@rre>\v@lmin%
    \edef\@ngle{\repdecn@mb\c@rre}\figptell-2::#1;#2,#3(\@ngle,#6)%
    \b@undb@xP{-2}\fi\advance\c@rre\PI@deg\repeat%
    \resetc@ntr@l\et@tellBB@x}\ignorespaces}
\ctr@ld@f\def\initb@undb@x{\c@@rdXmin=\maxdimen\c@@rdYmin=\maxdimen%
    \c@@rdXmax=-\maxdimen\c@@rdYmax=-\maxdimen}
\ctr@ld@f\def\c@ntr@lnum#1{%
    \relax\ifnum\typec@ntr@l=\@ne%
    \ifnum#1<\z@%
    \immediate\write16{*** Forbidden point number (#1). Abort.}\end\fi\fi%
    \set@bjc@de{#1}}
\ctr@ln@m\objc@de
\ctr@ld@f\def\set@bjc@de#1{\edef\objc@de{@BJ\ifnum#1<\z@ M\romannumeral-#1\else\romannumeral#1\fi}}
\s@mme=\m@ne\loop\ifnum\s@mme>-19
  \set@bjc@de{\s@mme}\ctr@lcsn@m\objc@de\ctr@lcsn@m{\objc@de T}
\advance\s@mme\m@ne\repeat
\s@mme=\@ne\loop\ifnum\s@mme<6
  \set@bjc@de{\s@mme}\ctr@lcsn@m\objc@de\ctr@lcsn@m{\objc@de T}
\advance\s@mme\@ne\repeat
\ctr@ld@f\def\setc@ntr@l#1{\ifnum\superc@ntr@l>#1\typec@ntr@l=\superc@ntr@l%
    \else\typec@ntr@l=#1\fi}
\ctr@ld@f\def\resetc@ntr@l#1{\global\superc@ntr@l=#1\setc@ntr@l{#1}}
\ctr@ld@f\def\s@uvc@ntr@l#1{\edef#1{\the\superc@ntr@l}}
\ctr@ln@m\c@lproscal
\ctr@ld@f\def\c@lproscalDD#1[#2,#3]{{\Figg@tXY{#2}%
    \edef\Xu@{\repdecn@mb{\v@lX}}\edef\Yu@{\repdecn@mb{\v@lY}}\Figg@tXY{#3}%
    \global\result@t=\Xu@\v@lX\global\advance\result@t\Yu@\v@lY}#1=\result@t}
\ctr@ld@f\def\c@lproscalTD#1[#2,#3]{{\Figg@tXY{#2}\edef\Xu@{\repdecn@mb{\v@lX}}%
    \edef\Yu@{\repdecn@mb{\v@lY}}\edef\Zu@{\repdecn@mb{\v@lZ}}%
    \Figg@tXY{#3}\global\result@t=\Xu@\v@lX\global\advance\result@t\Yu@\v@lY%
    \global\advance\result@t\Zu@\v@lZ}#1=\result@t}
\ctr@ld@f\def\c@lprovec#1{%
    \det@rmC\v@lZa(\v@lX,\v@lY,\v@lmin,\v@lmax)%
    \det@rmC\v@lXa(\v@lY,\v@lZ,\v@lmax,\v@leur)%
    \det@rmC\v@lYa(\v@lZ,\v@lX,\v@leur,\v@lmin)%
    \Figv@ctCreg#1(\v@lXa,\v@lYa,\v@lZa)}
\ctr@ld@f\def\det@rm#1[#2,#3]{{\Figg@tXY{#2}\Figg@tXYa{#3}%
    \delt@=\repdecn@mb{\v@lX}\v@lYa\advance\delt@-\repdecn@mb{\v@lY}\v@lXa%
    \global\result@t=\delt@}#1=\result@t}
\ctr@ld@f\def\det@rmC#1(#2,#3,#4,#5){{\global\result@t=\repdecn@mb{#2}#5%
    \global\advance\result@t-\repdecn@mb{#3}#4}#1=\result@t}
\ctr@ld@f\def\getredf@ctDD#1(#2,#3){{\maxim@m{\v@lXa}{-#2}{#2}\maxim@m{\v@lYa}{-#3}{#3}%
    \maxim@m{\v@lXa}{\v@lXa}{\v@lYa}
    \ifdim\v@lXa>\@xci pt\divide\v@lXa\@xci%
    \p@rtentiere{\p@rtent}{\v@lXa}\advance\p@rtent\@ne\else\p@rtent=\@ne\fi%
    \global\result@tent=\p@rtent}#1=\result@tent\ignorespaces}
\ctr@ld@f\def\getredf@ctTD#1(#2,#3,#4){{\maxim@m{\v@lXa}{-#2}{#2}\maxim@m{\v@lYa}{-#3}{#3}%
    \maxim@m{\v@lZa}{-#4}{#4}\maxim@m{\v@lXa}{\v@lXa}{\v@lYa}%
    \maxim@m{\v@lXa}{\v@lXa}{\v@lZa}
    \ifdim\v@lXa>\@lxxiv pt\divide\v@lXa\@lxxiv%
    \p@rtentiere{\p@rtent}{\v@lXa}\advance\p@rtent\@ne\else\p@rtent=\@ne\fi%
    \global\result@tent=\p@rtent}#1=\result@tent\ignorespaces}
\ctr@ln@m\getredf@ctB
\ctr@ld@f\def\getredf@ctBDD#1{\getredf@ctDD#1(\v@lX,\v@lY)}
\ctr@ld@f\def\getredf@ctBTD#1{\getredf@ctTD#1(\v@lX,\v@lY,\v@lZ)}
\ctr@ld@f\def\FigptintercircB@zDD#1:#2:#3,#4[#5,#6,#7,#8]{{\s@uvc@ntr@l\et@tfigptintercircB@zDD%
    \setc@ntr@l{2}\figvectPDD-1[#5,#8]\Figg@tXY{-1}\getredf@ctDD\f@ctech(\v@lX,\v@lY)%
    \mili@u=#4\unit@\divide\mili@u\f@ctech\c@rre=\repdecn@mb{\mili@u}\mili@u%
    \figptBezierDD-5::#3[#5,#6,#7,#8]%
    \v@lmin=#3\p@\v@lmax=\v@lmin\advance\v@lmax0.1\p@%
    \loop\edef\T@{\repdecn@mb{\v@lmax}}\figptBezierDD-2::\T@[#5,#6,#7,#8]%
    \figvectPDD-1[-5,-2]\n@rmeucCDD{\delt@}{-1}\ifdim\delt@<\c@rre\v@lmin=\v@lmax%
    \advance\v@lmax0.1\p@\repeat%
    \loop\mili@u=\v@lmin\advance\mili@u\v@lmax%
    \divide\mili@u\tw@\edef\T@{\repdecn@mb{\mili@u}}\figptBezierDD-2::\T@[#5,#6,#7,#8]%
    \figvectPDD-1[-5,-2]\n@rmeucCDD{\delt@}{-1}\ifdim\delt@>\c@rre\v@lmax=\mili@u%
    \else\v@lmin=\mili@u\fi\v@leur=\v@lmax\advance\v@leur-\v@lmin%
    \ifdim\v@leur>\epsil@n\repeat\figptcopyDD#1:#2/-2/%
    \resetc@ntr@l\et@tfigptintercircB@zDD}\ignorespaces}
\ctr@ln@m\figptinterlines
\ctr@ld@f\def\inters@cDD#1:#2[#3,#4;#5,#6]{{\s@uvc@ntr@l\et@tinters@cDD%
    \setc@ntr@l{2}\vecunit@{-1}{#4}\vecunit@{-2}{#6}%
    \Figg@tXY{-1}\setc@ntr@l{1}\Figg@tXYa{#3}%
    \edef\A@{\repdecn@mb{\v@lX}}\edef\B@{\repdecn@mb{\v@lY}}%
    \v@lmin=\B@\v@lXa\advance\v@lmin-\A@\v@lYa%
    \Figg@tXYa{#5}\setc@ntr@l{2}\Figg@tXY{-2}%
    \edef\C@{\repdecn@mb{\v@lX}}\edef\D@{\repdecn@mb{\v@lY}}%
    \v@lmax=\D@\v@lXa\advance\v@lmax-\C@\v@lYa%
    \delt@=\A@\v@lY\advance\delt@-\B@\v@lX%
    \invers@{\v@leur}{\delt@}\edef\v@ldelta{\repdecn@mb{\v@leur}}%
    \v@lXa=\A@\v@lmax\advance\v@lXa-\C@\v@lmin%
    \v@lYa=\B@\v@lmax\advance\v@lYa-\D@\v@lmin%
    \v@lXa=\v@ldelta\v@lXa\v@lYa=\v@ldelta\v@lYa%
    \setc@ntr@l{1}\Figp@intregDD#1:{#2}(\v@lXa,\v@lYa)%
    \resetc@ntr@l\et@tinters@cDD}\ignorespaces}
\ctr@ld@f\def\inters@cTD#1:#2[#3,#4;#5,#6]{{\s@uvc@ntr@l\et@tinters@cTD%
    \setc@ntr@l{2}\figvectNVTD-1[#4,#6]\figvectNVTD-2[#6,-1]\figvectPTD-1[#3,#5]%
    \r@pPSTD\v@leur[-2,-1,#4]\edef\v@lcoef{\repdecn@mb{\v@leur}}%
    \figpttraTD#1:{#2}=#3/\v@lcoef,#4/\resetc@ntr@l\et@tinters@cTD}\ignorespaces}
\ctr@ld@f\def\r@pPSTD#1[#2,#3,#4]{{\Figg@tXY{#2}\edef\Xu@{\repdecn@mb{\v@lX}}%
    \edef\Yu@{\repdecn@mb{\v@lY}}\edef\Zu@{\repdecn@mb{\v@lZ}}%
    \Figg@tXY{#3}\v@lmin=\Xu@\v@lX\advance\v@lmin\Yu@\v@lY\advance\v@lmin\Zu@\v@lZ%
    \Figg@tXY{#4}\v@lmax=\Xu@\v@lX\advance\v@lmax\Yu@\v@lY\advance\v@lmax\Zu@\v@lZ%
    \invers@{\v@leur}{\v@lmax}\global\result@t=\repdecn@mb{\v@leur}\v@lmin}%
    #1=\result@t}
\ctr@ln@m\n@rminf
\ctr@ld@f\def\n@rminfDD#1#2{{\Figg@tXY{#2}\maxim@m{\v@lX}{\v@lX}{-\v@lX}%
    \maxim@m{\v@lY}{\v@lY}{-\v@lY}\maxim@m{\global\result@t}{\v@lX}{\v@lY}}%
    #1=\result@t}
\ctr@ld@f\def\n@rminfTD#1#2{{\Figg@tXY{#2}\maxim@m{\v@lX}{\v@lX}{-\v@lX}%
    \maxim@m{\v@lY}{\v@lY}{-\v@lY}\maxim@m{\v@lZ}{\v@lZ}{-\v@lZ}%
    \maxim@m{\v@lX}{\v@lX}{\v@lY}\maxim@m{\global\result@t}{\v@lX}{\v@lZ}}%
    #1=\result@t}
\ctr@ln@m\n@rmeucC
\ctr@ld@f\def\n@rmeucCDD#1#2{\Figg@tXY{#2}\divide\v@lX\f@ctech\divide\v@lY\f@ctech%
    #1=\repdecn@mb{\v@lX}\v@lX\v@lX=\repdecn@mb{\v@lY}\v@lY\advance#1\v@lX}
\ctr@ld@f\def\n@rmeucCTD#1#2{\Figg@tXY{#2}%
    \divide\v@lX\f@ctech\divide\v@lY\f@ctech\divide\v@lZ\f@ctech%
    #1=\repdecn@mb{\v@lX}\v@lX\v@lX=\repdecn@mb{\v@lY}\v@lY\advance#1\v@lX%
    \v@lX=\repdecn@mb{\v@lZ}\v@lZ\advance#1\v@lX}
\ctr@ln@m\n@rmeucSV
\ctr@ld@f\def\n@rmeucSVDD#1#2{{\Figg@tXY{#2}%
    \v@lXa=\repdecn@mb{\v@lX}\v@lX\v@lYa=\repdecn@mb{\v@lY}\v@lY%
    \advance\v@lXa\v@lYa\sqrt@{\global\result@t}{\v@lXa}}#1=\result@t}
\ctr@ld@f\def\n@rmeucSVTD#1#2{{\Figg@tXY{#2}\v@lXa=\repdecn@mb{\v@lX}\v@lX%
    \v@lYa=\repdecn@mb{\v@lY}\v@lY\v@lZa=\repdecn@mb{\v@lZ}\v@lZ%
    \advance\v@lXa\v@lYa\advance\v@lXa\v@lZa\sqrt@{\global\result@t}{\v@lXa}}#1=\result@t}
\ctr@ln@m\n@rmeuc
\ctr@ld@f\def\n@rmeucDD#1#2{{\Figg@tXY{#2}\getredf@ctDD\f@ctech(\v@lX,\v@lY)%
    \divide\v@lX\f@ctech\divide\v@lY\f@ctech%
    \v@lXa=\repdecn@mb{\v@lX}\v@lX\v@lYa=\repdecn@mb{\v@lY}\v@lY%
    \advance\v@lXa\v@lYa\sqrt@{\global\result@t}{\v@lXa}%
    \global\multiply\result@t\f@ctech}#1=\result@t}
\ctr@ld@f\def\n@rmeucTD#1#2{{\Figg@tXY{#2}\getredf@ctTD\f@ctech(\v@lX,\v@lY,\v@lZ)%
    \divide\v@lX\f@ctech\divide\v@lY\f@ctech\divide\v@lZ\f@ctech%
    \v@lXa=\repdecn@mb{\v@lX}\v@lX%
    \v@lYa=\repdecn@mb{\v@lY}\v@lY\v@lZa=\repdecn@mb{\v@lZ}\v@lZ%
    \advance\v@lXa\v@lYa\advance\v@lXa\v@lZa\sqrt@{\global\result@t}{\v@lXa}%
    \global\multiply\result@t\f@ctech}#1=\result@t}
\ctr@ln@m\vecunit@
\ctr@ld@f\def\vecunit@DD#1#2{{\Figg@tXY{#2}\getredf@ctDD\f@ctech(\v@lX,\v@lY)%
    \divide\v@lX\f@ctech\divide\v@lY\f@ctech%
    \Figv@ctCreg#1(\v@lX,\v@lY)\n@rmeucSV{\v@lYa}{#1}%
    \invers@{\v@lXa}{\v@lYa}\edef\v@lv@lXa{\repdecn@mb{\v@lXa}}%
    \v@lX=\v@lv@lXa\v@lX\v@lY=\v@lv@lXa\v@lY%
    \Figv@ctCreg#1(\v@lX,\v@lY)\multiply\v@lYa\f@ctech\global\result@t=\v@lYa}}
\ctr@ld@f\def\vecunit@TD#1#2{{\Figg@tXY{#2}\getredf@ctTD\f@ctech(\v@lX,\v@lY,\v@lZ)%
    \divide\v@lX\f@ctech\divide\v@lY\f@ctech\divide\v@lZ\f@ctech%
    \Figv@ctCreg#1(\v@lX,\v@lY,\v@lZ)\n@rmeucSV{\v@lYa}{#1}%
    \invers@{\v@lXa}{\v@lYa}\edef\v@lv@lXa{\repdecn@mb{\v@lXa}}%
    \v@lX=\v@lv@lXa\v@lX\v@lY=\v@lv@lXa\v@lY\v@lZ=\v@lv@lXa\v@lZ%
    \Figv@ctCreg#1(\v@lX,\v@lY,\v@lZ)\multiply\v@lYa\f@ctech\global\result@t=\v@lYa}}
\ctr@ld@f\def\vecunitC@TD[#1,#2]{\Figg@tXYa{#1}\Figg@tXY{#2}%
    \advance\v@lX-\v@lXa\advance\v@lY-\v@lYa\advance\v@lZ-\v@lZa\c@lvecunitTD}
\ctr@ld@f\def\vecunitCV@TD#1{\Figg@tXY{#1}\c@lvecunitTD}
\ctr@ld@f\def\c@lvecunitTD{\getredf@ctTD\f@ctech(\v@lX,\v@lY,\v@lZ)%
    \divide\v@lX\f@ctech\divide\v@lY\f@ctech\divide\v@lZ\f@ctech%
    \v@lXa=\repdecn@mb{\v@lX}\v@lX%
    \v@lYa=\repdecn@mb{\v@lY}\v@lY\v@lZa=\repdecn@mb{\v@lZ}\v@lZ%
    \advance\v@lXa\v@lYa\advance\v@lXa\v@lZa\sqrt@{\v@lYa}{\v@lXa}%
    \invers@{\v@lXa}{\v@lYa}\edef\v@lv@lXa{\repdecn@mb{\v@lXa}}%
    \v@lX=\v@lv@lXa\v@lX\v@lY=\v@lv@lXa\v@lY\v@lZ=\v@lv@lXa\v@lZ}
\ctr@ln@m\figgetangle
\ctr@ld@f\def\figgetangleDD#1[#2,#3,#4]{\ifGR@cri{\s@uvc@ntr@l\et@tfiggetangleDD\setc@ntr@l{2}%
    \figvectPDD-1[#2,#3]\figvectPDD-2[#2,#4]\vecunit@{-1}{-1}%
    \c@lproscalDD\delt@[-2,-1]\figvectNVDD-1[-1]\c@lproscalDD\v@leur[-2,-1]%
    \arct@n\v@lmax(\delt@,\v@leur)\v@lmax=\rdT@deg\v@lmax%
    \ifdim\v@lmax<\z@\advance\v@lmax\DePI@deg\fi\xdef#1{\repdecn@mb{\v@lmax}}%
    \resetc@ntr@l\et@tfiggetangleDD}\ignorespaces\fi}
\ctr@ld@f\def\figgetangleTD#1[#2,#3,#4,#5]{\ifGR@cri{\s@uvc@ntr@l\et@tfiggetangleTD\setc@ntr@l{2}%
    \figvectPTD-1[#2,#3]\figvectPTD-2[#2,#5]\figvectNVTD-3[-1,-2]%
    \figvectPTD-2[#2,#4]\figvectNVTD-4[-3,-1]%
    \vecunit@{-1}{-1}\c@lproscalTD\delt@[-2,-1]\c@lproscalTD\v@leur[-2,-4]%
    \arct@n\v@lmax(\delt@,\v@leur)\v@lmax=\rdT@deg\v@lmax%
    \ifdim\v@lmax<\z@\advance\v@lmax\DePI@deg\fi\xdef#1{\repdecn@mb{\v@lmax}}%
    \resetc@ntr@l\et@tfiggetangleTD}\ignorespaces\fi}    
\ctr@ld@f\def\figgetdist#1[#2,#3]{\ifGR@cri{\s@uvc@ntr@l\et@tfiggetdist\setc@ntr@l{2}%
    \figvectP-1[#2,#3]\n@rmeuc{\v@lX}{-1}\v@lX=\ptT@unit@\v@lX\xdef#1{\repdecn@mb{\v@lX}}%
    \resetc@ntr@l\et@tfiggetdist}\ignorespaces\fi}
\ctr@ld@f\def\figget#1=#2[#3]{\keln@mun#1|%
    \def\n@mref{a}\ifx\l@debut\n@mref\figgetangle#2[#3]\else
    \def\n@mref{d}\ifx\l@debut\n@mref\figgetdist#2[#3]\else
    \W@rnmeskwd{figget}{#1}\fi\fi\ignorespaces}
\ctr@ld@f\def\Figg@tT#1{\c@ntr@lnum{#1}%
    {\expandafter\expandafter\expandafter\extr@ctT\csname\objc@de\endcsname:%
     \ifnum\B@@ltxt=\z@\ptn@me{#1}\else\csname\objc@de T\endcsname\fi}}
\ctr@ld@f\def\extr@ctT#1,#2,#3/#4:{\def\B@@ltxt{#3}}
\ctr@ld@f\def\Figg@tXY#1{\c@ntr@lnum{#1}%
    \expandafter\expandafter\expandafter\extr@ctC\csname\objc@de\endcsname:}
\ctr@ln@m\extr@ctC
\ctr@ld@f\def\extr@ctCDD#1/#2,#3,#4:{\v@lX=#2\v@lY=#3}
\ctr@ld@f\def\extr@ctCTD#1/#2,#3,#4:{\v@lX=#2\v@lY=#3\v@lZ=#4}
\ctr@ld@f\def\Figg@tXYa#1{\c@ntr@lnum{#1}%
    \expandafter\expandafter\expandafter\extr@ctCa\csname\objc@de\endcsname:}
\ctr@ln@m\extr@ctCa
\ctr@ld@f\def\extr@ctCaDD#1/#2,#3,#4:{\v@lXa=#2\v@lYa=#3}
\ctr@ld@f\def\extr@ctCaTD#1/#2,#3,#4:{\v@lXa=#2\v@lYa=#3\v@lZa=#4}
\ctr@ln@m\t@xt@
\ctr@ld@f\def\figinit#1{\t@stc@tcodech@nge\initpr@lim\Figinit@#1,:\initpss@ttings\ignorespaces}
\ctr@ld@f\def\Figinit@#1,#2:{\setunit@{#1}\def\t@xt@{#2}\ifx\t@xt@\empty\else\Figinit@@#2:\fi}
\ctr@ld@f\def\Figinit@@#1#2:{\if#12 \else\Figs@tproj{#1}\initTD@\fi}
\ctr@ln@w{newif}\ifTr@isDim
\ctr@ld@f\def\UnD@fined{UNDEFINED}
\ctr@ln@m\@utoFN
\ctr@ln@m\@utoFInDone
\ctr@ln@m\disob@unit
\ctr@ld@f\def\initpr@lim{\initb@undb@x\figsetmark{}\figsetptname{$A_{##1}$}\def\Sc@leFact{1}%
    \initDD@\figsetroundcoord{yes}\GR@critrue\expandafter\setupd@te\D@FTupdate:%
    \edef\disob@unit{\UnD@fined}\edef\t@rgetpt{\UnD@fined}\gdef\@utoFInDone{1}\gdef\@utoFN{0}}
\ctr@ld@f\def\initDD@{\Tr@isDimfalse%
    \ifPDFm@ke%
     \let\Ps@rcerc=\Ps@rcercBz%
     \let\Ps@rell=\Ps@rellBz%
    \fi
    \let\c@lDCUn=\c@lDCUnDD%
    \let\c@lDCDeux=\c@lDCDeuxDD%
    \let\c@ldefproj=\relax%
    \let\c@lproscal=\c@lproscalDD%
    \let\c@lprojSP=\relax%
    \let\extr@ctC=\extr@ctCDD%
    \let\extr@ctCa=\extr@ctCaDD%
    \let\extr@ctCF=\extr@ctCFDD%
    \let\Figp@intreg=\Figp@intregDD%
    \let\Figpts@xes=\Figpts@xesDD%
    \let\getredf@ctB=\getredf@ctBDD%
    \let\n@rmeucSV=\n@rmeucSVDD\let\n@rmeuc=\n@rmeucDD\let\n@rmeucC\n@rmeucCDD\let\n@rminf=\n@rminfDD%
    \let\pr@dMatV=\pr@dMatVDD%
    \let\Q@@xes=\Q@@xesDD%
    \let\vecunit@=\vecunit@DD%
    \let\figcoord=\figcoordDD%
    \let\figgetangle=\figgetangleDD%
    \let\figpt=\figptDD%
    \let\figptBezier=\figptBezierDD%
    \let\figptbary=\figptbaryDD%
    \let\figptcirc=\figptcircDD%
    \let\figptcircumcenter=\figptcircumcenterDD%
    \let\figptcopy=\figptcopyDD%
    \let\figptcurvcenter=\figptcurvcenterDD%
    \let\figptell=\figptellDD%
    \let\figptendnormal=\figptendnormalDD%
    \let\figptinterlineplane=\figptinterlineplaneDD%
    \let\figptinterlines=\inters@cDD%
    \let\figptorthocenter=\figptorthocenterDD%
    \let\figptorthoprojline=\figptorthoprojlineDD%
    \let\figptorthoprojplane=\figptorthoprojplaneDD%
    \let\figptrot=\figptrotDD%
    \let\figptscontrol=\figptscontrolDD%
    \let\figptsintercirc=\figptsintercircDD%
    \let\figptsinterlinell=\figptsinterlinellDD%
    \let\figptsorthoprojline=\figptsorthoprojlineDD%
    \let\figptorthoprojplane=\figptorthoprojplaneDD%
    \let\figptsrot=\figptsrotDD%
    \let\figptssym=\figptssymDD%
    \let\figptstra=\figptstraDD%
    \let\figptsym=\figptsymDD%
    \let\figpttraC=\figpttraCDD%
    \let\figpttra=\figpttraDD%
    \let\figptvisilimSL=\figptvisilimSLDD%
    \let\figsetobdist=\figsetobdistDD%
    \let\figsettarget=\figsettargetDD%
    \let\figsetview=\figsetviewDD%
    \let\figvectDBezier=\figvectDBezierDD%
    \let\figvectN=\figvectNDD%
    \let\figvectNV=\figvectNVDD%
    \let\figvectP=\figvectPDD%
    \let\figvectU=\figvectUDD%
    \let\figdrawarccircP=\Q@arccircPDD%
    \let\figdrawarccirc=\Q@arccircDD%
    \let\figdrawarcell=\Q@arcellDD%
    \let\figdrawarcellPA=\Q@arcellPADD%
    \let\figdrawarrowBezier=\Q@arrowBezierDD%
    \let\figdrawarrowcircP=\Q@arrowcircPDD%
    \let\figdrawarrowcirc=\Q@arrowcircDD%
    \let\figdrawarrowhead=\Q@arrowheadDD%
    \let\figdrawarrow=\Q@arrowDD%
    \let\figdrawBezier=\Q@BezierDD%
    \let\figdrawcirc=\Q@circDD%
    \let\figdrawcurve=\Q@curveDD%
    \let\figdrawnormal=\Q@normalDD%
    }
\ctr@ld@f\def\initTD@{\Tr@isDimtrue\initb@undb@xTD\newt@rgetptfalse\newdis@bfalse%
    \let\c@lDCUn=\c@lDCUnTD%
    \let\c@lDCDeux=\c@lDCDeuxTD%
    \let\c@ldefproj=\c@ldefprojTD%
    \let\c@lproscal=\c@lproscalTD%
    \let\extr@ctC=\extr@ctCTD%
    \let\extr@ctCa=\extr@ctCaTD%
    \let\extr@ctCF=\extr@ctCFTD%
    \let\Figp@intreg=\Figp@intregTD%
    \let\Figpts@xes=\Figpts@xesTD%
    \let\getredf@ctB=\getredf@ctBTD%
    \let\n@rmeucSV=\n@rmeucSVTD\let\n@rmeuc=\n@rmeucTD\let\n@rmeucC\n@rmeucCTD\let\n@rminf=\n@rminfTD%
    \let\pr@dMatV=\pr@dMatVTD%
    \let\Q@@xes=\Q@@xesTD%
    \let\vecunit@=\vecunit@TD%
    \let\figcoord=\figcoordTD%
    \let\figgetangle=\figgetangleTD%
    \let\figpt=\figptTD%
    \let\figptBezier=\figptBezierTD%
    \let\figptbary=\figptbaryTD%
    \let\figptcirc=\figptcircTD%
    \let\figptcircumcenter=\figptcircumcenterTD%
    \let\figptcopy=\figptcopyTD%
    \let\figptcurvcenter=\figptcurvcenterTD%
    \let\figptinterlineplane=\figptinterlineplaneTD%
    \let\figptinterlines=\inters@cTD%
    \let\figptorthocenter=\figptorthocenterTD%
    \let\figptorthoprojline=\figptorthoprojlineTD%
    \let\figptorthoprojplane=\figptorthoprojplaneTD%
    \let\figptrot=\figptrotTD%
    \let\figptscontrol=\figptscontrolTD%
    \let\figptsintercirc=\figptsintercircTD%
    \let\figptsorthoprojline=\figptsorthoprojlineTD%
    \let\figptsorthoprojplane=\figptsorthoprojplaneTD%
    \let\figptsrot=\figptsrotTD%
    \let\figptssym=\figptssymTD%
    \let\figptstra=\figptstraTD%
    \let\figptsym=\figptsymTD%
    \let\figpttraC=\figpttraCTD%
    \let\figpttra=\figpttraTD%
    \let\figptvisilimSL=\figptvisilimSLTD%
    \let\figsetobdist=\figsetobdistTD%
    \let\figsettarget=\figsettargetTD%
    \let\figsetview=\figsetviewTD%
    \let\figvectDBezier=\figvectDBezierTD%
    \let\figvectN=\figvectNTD%
    \let\figvectNV=\figvectNVTD%
    \let\figvectP=\figvectPTD%
    \let\figvectU=\figvectUTD%
    \let\figdrawarccircP=\Q@arccircPTD%
    \let\figdrawarccirc=\Q@arccircTD%
    \let\figdrawarcell=\Q@arcellTD%
    \let\figdrawarcellPA=\Q@arcellPATD%
    \let\figdrawarrowBezier=\Q@arrowBezierTD%
    \let\figdrawarrowcircP=\Q@arrowcircPTD%
    \let\figdrawarrowcirc=\Q@arrowcircTD%
    \let\figdrawarrowhead=\Q@arrowheadTD%
    \let\figdrawarrow=\Q@arrowTD%
    \let\figdrawBezier=\Q@BezierTD%
    \let\figdrawcirc=\Q@circTD%
    \let\figdrawcurve=\Q@curveTD%
    }
\ctr@ld@f\def\un@v@ilable#1{\immediate\write16{*** The macro #1 is not available in the current context.}}
\ctr@ld@f\def\figinsert#1{{\def\t@xt@{#1}\relax%
    \ifx\t@xt@\empty\ifnum\@utoFInDone>\z@\Figinsert@\DefGIfilen@me,:\fi%
    \else\expandafter\FiginsertNu@#1 :\fi}\ignorespaces}
\ctr@ld@f\def\FiginsertNu@#1 #2:{\def\t@xt@{#1}\relax\ifx\t@xt@\empty\def\t@xt@{#2}%
    \ifx\t@xt@\empty\ifnum\@utoFInDone>\z@\Figinsert@\DefGIfilen@me,:\fi%
    \else\FiginsertNu@#2:\fi\else\expandafter\FiginsertNd@#1 #2:\fi}
\ctr@ld@f\def\FiginsertNd@#1#2:{\ifcat#1a\Figinsert@#1#2,:\else%
    \ifnum\@utoFInDone>\z@\Figinsert@\DefGIfilen@me,#1#2,:\fi\fi}
\ctr@ln@m\Sc@leFact
\ctr@ld@f\def\Figinsert@#1,#2:{\def\t@xt@{#2}\ifx\t@xt@\empty\xdef\Sc@leFact{1}\else%
    \X@rgdeux@#2\xdef\Sc@leFact{\@rgdeux}\fi%
    \Figdisc@rdLTS{#1}{\t@xt@}\@psfgetbb{\t@xt@}%
    \v@lX=\@psfllx\p@\v@lX=\ptpsT@pt\v@lX\v@lX=\Sc@leFact\v@lX%
    \v@lY=\@psflly\p@\v@lY=\ptpsT@pt\v@lY\v@lY=\Sc@leFact\v@lY%
    \b@undb@x{\v@lX}{\v@lY}%
    \v@lX=\@psfurx\p@\v@lX=\ptpsT@pt\v@lX\v@lX=\Sc@leFact\v@lX%
    \v@lY=\@psfury\p@\v@lY=\ptpsT@pt\v@lY\v@lY=\Sc@leFact\v@lY%
    \b@undb@x{\v@lX}{\v@lY}%
    \ifPDFm@ke\Figinclud@PDF{\t@xt@}{\Sc@leFact}\else%
    \v@lX=\c@nt pt\v@lX=\Sc@leFact\v@lX\edef\F@ct{\repdecn@mb{\v@lX}}%
    \ifx\TeXturesonMacOSltX\special{postscriptfile #1 vscale=\F@ct\space hscale=\F@ct}%
    \else\includegraphics{#1}\fi\fi%
    \message{[\t@xt@]}\ignorespaces}
\ctr@ld@f\def\Figdisc@rdLTS#1#2{\expandafter\Figdisc@rdLTS@#1 :#2}
\ctr@ld@f\def\Figdisc@rdLTS@#1 #2:#3{\def#3{#1}\relax\ifx#3\empty\expandafter\Figdisc@rdLTS@#2:#3\fi}
\ctr@ld@f\def\figinsertE#1{\FiginsertE@#1,:\ignorespaces}
\ctr@ld@f\def\FiginsertE@#1,#2:{{\def\t@xt@{#2}\ifx\t@xt@\empty\xdef\Sc@leFact{1}\else%
    \X@rgdeux@#2\xdef\Sc@leFact{\@rgdeux}\fi%
    \Figdisc@rdLTS{#1}{\t@xt@}\pdfximage{\t@xt@}%
    \setbox\Gb@x=\hbox{\pdfrefximage\pdflastximage}%
    \v@lX=\z@\v@lY=-\Sc@leFact\dp\Gb@x\b@undb@x{\v@lX}{\v@lY}%
    \advance\v@lX\Sc@leFact\wd\Gb@x\advance\v@lY\Sc@leFact\dp\Gb@x%
    \advance\v@lY\Sc@leFact\ht\Gb@x\b@undb@x{\v@lX}{\v@lY}%
    \v@lX=\Sc@leFact\wd\Gb@x\pdfximage width \v@lX {\t@xt@}%
    \rlap{\pdfrefximage\pdflastximage}\message{[\t@xt@]}}\ignorespaces}
\ctr@ld@f\def\X@rgdeux@#1,{\edef\@rgdeux{#1}}
\ctr@ln@m\figpt
\ctr@ld@f\def\figptDD#1:#2(#3,#4){\ifGR@cri\c@ntr@lnum{#1}%
    {\v@lX=#3\unit@\v@lY=#4\unit@\Fig@dmpt{#2}{\z@}}\ignorespaces\fi}
\ctr@ld@f\def\Fig@dmpt#1#2{\def\t@xt@{#1}\ifx\t@xt@\empty\def\B@@ltxt{\z@}%
    \else\expandafter\gdef\csname\objc@de T\endcsname{#1}\def\B@@ltxt{\@ne}\fi%
    \expandafter\xdef\csname\objc@de\endcsname{\ifitis@vect@r\C@dCl@svect%
    \else\C@dCl@spt\fi,\z@,\B@@ltxt/\the\v@lX,\the\v@lY,#2}}
\ctr@ld@f\def\C@dCl@spt{P}
\ctr@ld@f\def\C@dCl@svect{V}
\ctr@ln@m\c@@rdYZ
\ctr@ln@m\c@@rdY
\ctr@ld@f\def\figptTD#1:#2(#3,#4){\ifGR@cri\c@ntr@lnum{#1}%
    \def\c@@rdYZ{#4,0,0}\extrairelepremi@r\c@@rdY\de\c@@rdYZ%
    \extrairelepremi@r\c@@rdZ\de\c@@rdYZ%
    {\v@lX=#3\unit@\v@lY=\c@@rdY\unit@\v@lZ=\c@@rdZ\unit@\Fig@dmpt{#2}{\the\v@lZ}%
    \b@undb@xTD{\v@lX}{\v@lY}{\v@lZ}}\ignorespaces\fi}
\ctr@ln@m\Figp@intreg
\ctr@ld@f\def\Figp@intregDD#1:#2(#3,#4){\c@ntr@lnum{#1}%
    {\result@t=#4\v@lX=#3\v@lY=\result@t\Fig@dmpt{#2}{\z@}}\ignorespaces}
\ctr@ld@f\def\Figp@intregTD#1:#2(#3,#4){\c@ntr@lnum{#1}%
    \def\c@@rdYZ{#4,\z@,\z@}\extrairelepremi@r\c@@rdY\de\c@@rdYZ%
    \extrairelepremi@r\c@@rdZ\de\c@@rdYZ%
    {\v@lX=#3\v@lY=\c@@rdY\v@lZ=\c@@rdZ\Fig@dmpt{#2}{\the\v@lZ}%
    \b@undb@xTD{\v@lX}{\v@lY}{\v@lZ}}\ignorespaces}
\ctr@ln@m\figptBezier
\ctr@ld@f\def\figptBezierDD#1:#2:#3[#4,#5,#6,#7]{\ifGR@cri{\s@uvc@ntr@l\et@tfigptBezierDD%
    \FigptBezier@#3[#4,#5,#6,#7]\Figp@intregDD#1:{#2}(\v@lX,\v@lY)%
    \resetc@ntr@l\et@tfigptBezierDD}\ignorespaces\fi}
\ctr@ld@f\def\figptBezierTD#1:#2:#3[#4,#5,#6,#7]{\ifGR@cri{\s@uvc@ntr@l\et@tfigptBezierTD%
    \FigptBezier@#3[#4,#5,#6,#7]\Figp@intregTD#1:{#2}(\v@lX,\v@lY,\v@lZ)%
    \resetc@ntr@l\et@tfigptBezierTD}\ignorespaces\fi}
\ctr@ld@f\def\FigptBezier@#1[#2,#3,#4,#5]{\setc@ntr@l{2}%
    \edef\T@{#1}\v@leur=\p@\advance\v@leur-#1pt\edef\UNmT@{\repdecn@mb{\v@leur}}%
    \figptcopy-4:/#2/\figptcopy-3:/#3/\figptcopy-2:/#4/\figptcopy-1:/#5/%
    \l@mbd@un=-4 \l@mbd@de=-\thr@@\p@rtent=\m@ne\c@lDecast%
    \l@mbd@un=-4 \l@mbd@de=-\thr@@\p@rtent=-\tw@\c@lDecast%
    \l@mbd@un=-4 \l@mbd@de=-\thr@@\p@rtent=-\thr@@\c@lDecast\Figg@tXY{-4}}
\ctr@ln@m\c@lDCUn
\ctr@ld@f\def\c@lDCUnDD#1#2{\Figg@tXY{#1}\v@lX=\UNmT@\v@lX\v@lY=\UNmT@\v@lY%
    \Figg@tXYa{#2}\advance\v@lX\T@\v@lXa\advance\v@lY\T@\v@lYa%
    \Figp@intregDD#1:(\v@lX,\v@lY)}
\ctr@ld@f\def\c@lDCUnTD#1#2{\Figg@tXY{#1}\v@lX=\UNmT@\v@lX\v@lY=\UNmT@\v@lY\v@lZ=\UNmT@\v@lZ%
    \Figg@tXYa{#2}\advance\v@lX\T@\v@lXa\advance\v@lY\T@\v@lYa\advance\v@lZ\T@\v@lZa%
    \Figp@intregTD#1:(\v@lX,\v@lY,\v@lZ)}
\ctr@ld@f\def\c@lDecast{\relax\ifnum\l@mbd@un<\p@rtent\c@lDCUn{\l@mbd@un}{\l@mbd@de}%
    \advance\l@mbd@un\@ne\advance\l@mbd@de\@ne\c@lDecast\fi}
\ctr@ld@f\def\figptmap#1:#2=#3/#4/#5/{\ifGR@cri{\s@uvc@ntr@l\et@tfigptmap%
    \setc@ntr@l{2}\figvectP-1[#4,#3]\Figg@tXY{-1}%
    \pr@dMatV/#5/\figpttra#1:{#2}=#4/1,-1/%
    \resetc@ntr@l\et@tfigptmap}\ignorespaces\fi}
\ctr@ln@m\pr@dMatV
\ctr@ld@f\def\pr@dMatVDD/#1,#2;#3,#4/{\v@lXa=#1\v@lX\advance\v@lXa#2\v@lY%
    \v@lYa=#3\v@lX\advance\v@lYa#4\v@lY\Figv@ctCreg-1(\v@lXa,\v@lYa)}
\ctr@ld@f\def\pr@dMatVTD/#1,#2,#3;#4,#5,#6;#7,#8,#9/{%
    \v@lXa=#1\v@lX\advance\v@lXa#2\v@lY\advance\v@lXa#3\v@lZ%
    \v@lYa=#4\v@lX\advance\v@lYa#5\v@lY\advance\v@lYa#6\v@lZ%
    \v@lZa=#7\v@lX\advance\v@lZa#8\v@lY\advance\v@lZa#9\v@lZ%
    \Figv@ctCreg-1(\v@lXa,\v@lYa,\v@lZa)}
\ctr@ln@m\figptbary
\ctr@ld@f\def\figptbaryDD#1:#2[#3;#4]{\ifGR@cri{\edef\list@num{#3}\extrairelepremi@r\p@int\de\list@num%
    \s@mme=\z@\@ecfor\c@ef:=#4\do{\advance\s@mme\c@ef}%
    \edef\listec@ef{#4,0}\extrairelepremi@r\c@ef\de\listec@ef%
    \Figg@tXY{\p@int}\divide\v@lX\s@mme\divide\v@lY\s@mme%
    \multiply\v@lX\c@ef\multiply\v@lY\c@ef%
    \@ecfor\p@int:=\list@num\do{\extrairelepremi@r\c@ef\de\listec@ef%
           \Figg@tXYa{\p@int}\divide\v@lXa\s@mme\divide\v@lYa\s@mme%
           \multiply\v@lXa\c@ef\multiply\v@lYa\c@ef%
           \advance\v@lX\v@lXa\advance\v@lY\v@lYa}%
    \Figp@intregDD#1:{#2}(\v@lX,\v@lY)}\ignorespaces\fi}
\ctr@ld@f\def\figptbaryTD#1:#2[#3;#4]{\ifGR@cri{\edef\list@num{#3}\extrairelepremi@r\p@int\de\list@num%
    \s@mme=\z@\@ecfor\c@ef:=#4\do{\advance\s@mme\c@ef}%
    \edef\listec@ef{#4,0}\extrairelepremi@r\c@ef\de\listec@ef%
    \Figg@tXY{\p@int}\divide\v@lX\s@mme\divide\v@lY\s@mme\divide\v@lZ\s@mme%
    \multiply\v@lX\c@ef\multiply\v@lY\c@ef\multiply\v@lZ\c@ef%
    \@ecfor\p@int:=\list@num\do{\extrairelepremi@r\c@ef\de\listec@ef%
           \Figg@tXYa{\p@int}\divide\v@lXa\s@mme\divide\v@lYa\s@mme\divide\v@lZa\s@mme%
           \multiply\v@lXa\c@ef\multiply\v@lYa\c@ef\multiply\v@lZa\c@ef%
           \advance\v@lX\v@lXa\advance\v@lY\v@lYa\advance\v@lZ\v@lZa}%
    \Figp@intregTD#1:{#2}(\v@lX,\v@lY,\v@lZ)}\ignorespaces\fi}
\ctr@ld@f\def\figptbaryR#1:#2[#3;#4]{\ifGR@cri{%
    \v@leur=\z@\@ecfor\c@ef:=#4\do{\maxim@m{\v@lmax}{\c@ef pt}{-\c@ef pt}%
    \ifdim\v@lmax>\v@leur\v@leur=\v@lmax\fi}%
    \ifdim\v@leur<\p@\f@ctech=\@M\else\ifdim\v@leur<\t@n\p@\f@ctech=\@m\else%
    \ifdim\v@leur<\c@nt\p@\f@ctech=\c@nt\else\ifdim\v@leur<\@m\p@\f@ctech=\t@n\else%
    \f@ctech=\@ne\fi\fi\fi\fi%
    \def\listec@ef{0}%
    \@ecfor\c@ef:=#4\do{\sc@lec@nvRI{\c@ef pt}\edef\listec@ef{\listec@ef,\the\s@mme}}%
    \extrairelepremi@r\c@ef\de\listec@ef\figptbary#1:#2[#3;\listec@ef]}\ignorespaces\fi}
\ctr@ld@f\def\sc@lec@nvRI#1{\v@leur=#1\p@rtentiere{\s@mme}{\v@leur}\advance\v@leur-\s@mme\p@%
    \multiply\v@leur\f@ctech\p@rtentiere{\p@rtent}{\v@leur}%
    \multiply\s@mme\f@ctech\advance\s@mme\p@rtent}
\ctr@ln@m\figptcirc
\ctr@ld@f\def\figptcircDD#1:#2:#3;#4(#5){\ifGR@cri{\s@uvc@ntr@l\et@tfigptcircDD%
    \c@lptellDD#1:{#2}:#3;#4,#4(#5)\resetc@ntr@l\et@tfigptcircDD}\ignorespaces\fi}
\ctr@ld@f\def\figptcircTD#1:#2:#3,#4,#5;#6(#7){\ifGR@cri{\s@uvc@ntr@l\et@tfigptcircTD%
    \setc@ntr@l{2}\c@lExtAxes#3,#4,#5(#6)\figptellP#1:{#2}:#3,-4,-5(#7)%
    \resetc@ntr@l\et@tfigptcircTD}\ignorespaces\fi}
\ctr@ln@m\figptcircumcenter
\ctr@ld@f\def\figptcircumcenterDD#1:#2[#3,#4,#5]{\ifGR@cri{\s@uvc@ntr@l\et@tfigptcircumcenterDD%
    \setc@ntr@l{2}\figvectNDD-5[#3,#4]\figptbaryDD-3:[#3,#4;1,1]%
                  \figvectNDD-6[#4,#5]\figptbaryDD-4:[#4,#5;1,1]%
    \resetc@ntr@l{2}\inters@cDD#1:{#2}[-3,-5;-4,-6]%
    \resetc@ntr@l\et@tfigptcircumcenterDD}\ignorespaces\fi}
\ctr@ld@f\def\figptcircumcenterTD#1:#2[#3,#4,#5]{\ifGR@cri{\s@uvc@ntr@l\et@tfigptcircumcenterTD%
    \setc@ntr@l{2}\figvectNTD-1[#3,#4,#5]%
    \figvectPTD-3[#3,#4]\figvectNVTD-5[-1,-3]\figptbaryTD-3:[#3,#4;1,1]%
    \figvectPTD-4[#4,#5]\figvectNVTD-6[-1,-4]\figptbaryTD-4:[#4,#5;1,1]%
    \resetc@ntr@l{2}\inters@cTD#1:{#2}[-3,-5;-4,-6]%
    \resetc@ntr@l\et@tfigptcircumcenterTD}\ignorespaces\fi}
\ctr@ln@m\figptcopy
\ctr@ld@f\def\figptcopyDD#1:#2/#3/{\ifGR@cri{\Figg@tXY{#3}%
    \Figp@intregDD#1:{#2}(\v@lX,\v@lY)}\ignorespaces\fi}
\ctr@ld@f\def\figptcopyTD#1:#2/#3/{\ifGR@cri{\Figg@tXY{#3}%
    \Figp@intregTD#1:{#2}(\v@lX,\v@lY,\v@lZ)}\ignorespaces\fi}
\ctr@ln@m\figptcurvcenter
\ctr@ld@f\def\figptcurvcenterDD#1:#2:#3[#4,#5,#6,#7]{\ifGR@cri{\s@uvc@ntr@l\et@tfigptcurvcenterDD%
    \setc@ntr@l{2}\c@lcurvradDD#3[#4,#5,#6,#7]\edef\Sprim@{\repdecn@mb{\result@t}}%
    \figptBezierDD-1::#3[#4,#5,#6,#7]\figpttraDD#1:{#2}=-1/\Sprim@,-5/%
    \resetc@ntr@l\et@tfigptcurvcenterDD}\ignorespaces\fi}
\ctr@ld@f\def\figptcurvcenterTD#1:#2:#3[#4,#5,#6,#7]{\ifGR@cri{\s@uvc@ntr@l\et@tfigptcurvcenterTD%
    \setc@ntr@l{2}\figvectDBezierTD -5:1,#3[#4,#5,#6,#7]%
    \figvectDBezierTD -6:2,#3[#4,#5,#6,#7]\vecunit@TD{-5}{-5}%
    \edef\Sprim@{\repdecn@mb{\result@t}}\figvectNVTD-1[-6,-5]%
    \figvectNVTD-5[-5,-1]\c@lproscalTD\v@leur[-6,-5]%
    \invers@{\v@leur}{\v@leur}\v@leur=\Sprim@\v@leur\v@leur=\Sprim@\v@leur%
    \figptBezierTD-1::#3[#4,#5,#6,#7]\edef\Sprim@{\repdecn@mb{\v@leur}}%
    \figpttraTD#1:{#2}=-1/\Sprim@,-5/\resetc@ntr@l\et@tfigptcurvcenterTD}\ignorespaces\fi}
\ctr@ld@f\def\c@lcurvradDD#1[#2,#3,#4,#5]{{\figvectDBezierDD -5:1,#1[#2,#3,#4,#5]%
    \figvectDBezierDD -6:2,#1[#2,#3,#4,#5]\vecunit@DD{-5}{-5}%
    \edef\Sprim@{\repdecn@mb{\result@t}}\figvectNVDD-5[-5]\c@lproscalDD\v@leur[-6,-5]%
    \invers@{\v@leur}{\v@leur}\v@leur=\Sprim@\v@leur\v@leur=\Sprim@\v@leur%
    \global\result@t=\v@leur}}
\ctr@ln@m\figptell
\ctr@ld@f\def\figptellDD#1:#2:#3;#4,#5(#6,#7){\ifGR@cri{\s@uvc@ntr@l\et@tfigptell%
    \c@lptellDD#1::#3;#4,#5(#6)\figptrotDD#1:{#2}=#1/#3,#7/%
    \resetc@ntr@l\et@tfigptell}\ignorespaces\fi}
\ctr@ld@f\def\c@lptellDD#1:#2:#3;#4,#5(#6){\c@ssin{\C@}{\S@}{#6}\v@lmin=\C@ pt\v@lmax=\S@ pt%
    \v@lmin=#4\v@lmin\v@lmax=#5\v@lmax%
    \edef\Xc@mp{\repdecn@mb{\v@lmin}}\edef\Yc@mp{\repdecn@mb{\v@lmax}}%
    \setc@ntr@l{2}\figvectC-1(\Xc@mp,\Yc@mp)\figpttraDD#1:{#2}=#3/1,-1/}
\ctr@ld@f\def\figptellP#1:#2:#3,#4,#5(#6){\ifGR@cri{\s@uvc@ntr@l\et@tfigptellP%
    \setc@ntr@l{2}\figvectP-1[#3,#4]\figvectP-2[#3,#5]%
    \v@leur=#6pt\c@lptellP{#3}{-1}{-2}\figptcopy#1:{#2}/-3/%
    \resetc@ntr@l\et@tfigptellP}\ignorespaces\fi}
\ctr@ln@m\@ngle
\ctr@ld@f\def\c@lptellP#1#2#3{\edef\@ngle{\repdecn@mb\v@leur}\c@ssin{\C@}{\S@}{\@ngle}%
    \figpttra-3:=#1/\C@,#2/\figpttra-3:=-3/\S@,#3/}
\ctr@ln@m\figptendnormal
\ctr@ld@f\def\figptendnormalDD#1:#2:#3,#4[#5,#6]{\ifGR@cri{\s@uvc@ntr@l\et@tfigptendnormal%
    \Figg@tXYa{#5}\Figg@tXY{#6}%
    \advance\v@lX-\v@lXa\advance\v@lY-\v@lYa%
    \setc@ntr@l{2}\Figv@ctCreg-1(\v@lX,\v@lY)\vecunit@{-1}{-1}\Figg@tXY{-1}%
    \delt@=#3\unit@\maxim@m{\delt@}{\delt@}{-\delt@}\edef\l@ngueur{\repdecn@mb{\delt@}}%
    \v@lX=\l@ngueur\v@lX\v@lY=\l@ngueur\v@lY%
    \delt@=\p@\advance\delt@-#4pt\edef\l@ngueur{\repdecn@mb{\delt@}}%
    \figptbaryR-1:[#5,#6;#4,\l@ngueur]\Figg@tXYa{-1}%
    \advance\v@lXa\v@lY\advance\v@lYa-\v@lX%
    \setc@ntr@l{1}\Figp@intregDD#1:{#2}(\v@lXa,\v@lYa)\resetc@ntr@l\et@tfigptendnormal}%
    \ignorespaces\fi}
\ctr@ld@f\def\figptexcenter#1:#2[#3,#4,#5]{\ifGR@cri{\let@xte={-}
    \Figptexinsc@nter#1:#2[#3,#4,#5]}\ignorespaces\fi}
\ctr@ld@f\def\figptincenter#1:#2[#3,#4,#5]{\ifGR@cri{\let@xte={}
    \Figptexinsc@nter#1:#2[#3,#4,#5]}\ignorespaces\fi}
\ctr@ld@f\let\figptinscribedcenter=\figptincenter
\ctr@ld@f\def\Figptexinsc@nter#1:#2[#3,#4,#5]{%
    \figgetdist\LA@[#4,#5]\figgetdist\LB@[#3,#5]\figgetdist\LC@[#3,#4]%
    \figptbaryR#1:{#2}[#3,#4,#5;\the\let@xte\LA@,\LB@,\LC@]}
\ctr@ln@m\figptinterlineplane
\ctr@ld@f\def\figptinterlineplaneDD{\un@v@ilable{figptinterlineplane}}
\ctr@ld@f\def\figptinterlineplaneTD#1:#2[#3,#4;#5,#6]{\ifGR@cri{\s@uvc@ntr@l\et@tfigptinterlineplane%
    \setc@ntr@l{2}\figvectPTD-1[#3,#5]\vecunit@TD{-2}{#6}%
    \r@pPSTD\v@leur[-2,-1,#4]\edef\v@lcoef{\repdecn@mb{\v@leur}}%
    \figpttraTD#1:{#2}=#3/\v@lcoef,#4/\resetc@ntr@l\et@tfigptinterlineplane}\ignorespaces\fi}
\ctr@ln@m\figptorthocenter
\ctr@ld@f\def\figptorthocenterDD#1:#2[#3,#4,#5]{\ifGR@cri{\s@uvc@ntr@l\et@tfigptorthocenterDD%
    \setc@ntr@l{2}\figvectNDD-3[#3,#4]\figvectNDD-4[#4,#5]%
    \resetc@ntr@l{2}\inters@cDD#1:{#2}[#5,-3;#3,-4]%
    \resetc@ntr@l\et@tfigptorthocenterDD}\ignorespaces\fi}
\ctr@ld@f\def\figptorthocenterTD#1:#2[#3,#4,#5]{\ifGR@cri{\s@uvc@ntr@l\et@tfigptorthocenterTD%
    \setc@ntr@l{2}\figvectNTD-1[#3,#4,#5]%
    \figvectPTD-2[#3,#4]\figvectNVTD-3[-1,-2]%
    \figvectPTD-2[#4,#5]\figvectNVTD-4[-1,-2]%
    \resetc@ntr@l{2}\inters@cTD#1:{#2}[#5,-3;#3,-4]%
    \resetc@ntr@l\et@tfigptorthocenterTD}\ignorespaces\fi}
\ctr@ln@m\figptorthoprojline
\ctr@ld@f\def\figptorthoprojlineDD#1:#2=#3/#4,#5/{\ifGR@cri{\s@uvc@ntr@l\et@tfigptorthoprojlineDD%
    \setc@ntr@l{2}\figvectPDD-3[#4,#5]\figvectNVDD-4[-3]\resetc@ntr@l{2}%
    \inters@cDD#1:{#2}[#3,-4;#4,-3]\resetc@ntr@l\et@tfigptorthoprojlineDD}\ignorespaces\fi}
\ctr@ld@f\def\figptorthoprojlineTD#1:#2=#3/#4,#5/{\ifGR@cri{\s@uvc@ntr@l\et@tfigptorthoprojlineTD%
    \setc@ntr@l{2}\figvectPTD-1[#4,#3]\figvectPTD-2[#4,#5]\vecunit@TD{-2}{-2}%
    \c@lproscalTD\v@leur[-1,-2]\edef\v@lcoef{\repdecn@mb{\v@leur}}%
    \figpttraTD#1:{#2}=#4/\v@lcoef,-2/\resetc@ntr@l\et@tfigptorthoprojlineTD}\ignorespaces\fi}
\ctr@ln@m\figptorthoprojplane
\ctr@ld@f\def\figptorthoprojplaneDD{\un@v@ilable{figptorthoprojplane}}
\ctr@ld@f\def\figptorthoprojplaneTD#1:#2=#3/#4,#5/{\ifGR@cri{\s@uvc@ntr@l\et@tfigptorthoprojplane%
    \setc@ntr@l{2}\figvectPTD-1[#3,#4]\vecunit@TD{-2}{#5}%
    \c@lproscalTD\v@leur[-1,-2]\edef\v@lcoef{\repdecn@mb{\v@leur}}%
    \figpttraTD#1:{#2}=#3/\v@lcoef,-2/\resetc@ntr@l\et@tfigptorthoprojplane}\ignorespaces\fi}
\ctr@ld@f\def\figpthom#1:#2=#3/#4,#5/{\ifGR@cri{\s@uvc@ntr@l\et@tfigpthom%
    \setc@ntr@l{2}\figvectP-1[#4,#3]\figpttra#1:{#2}=#4/#5,-1/%
    \resetc@ntr@l\et@tfigpthom}\ignorespaces\fi}
\ctr@ld@f\def\figptinv#1:#2=#3/#4,#5/{\ifGR@cri{\s@uvc@ntr@l\et@tfigptinv%
    \setc@ntr@l{2}\figvectP-1[#4,#3]\Figg@tXY{-1}%
    \getredf@ctB\f@ctech\n@rmeucC{\delt@}{-1}%
    \delt@=\ptT@unit@\delt@\delt@=\ptT@unit@\delt@%
    \invers@{\delt@}{\delt@}\multiply\f@ctech\f@ctech\divide\delt@\f@ctech%
    \delt@=#5\delt@\edef\v@lcoef{\repdecn@mb{\delt@}}\figpttra#1:{#2}=#4/\v@lcoef,-1/%
    \resetc@ntr@l\et@tfigptinv}\ignorespaces\fi}
\ctr@ln@m\figptrot
\ctr@ld@f\def\figptrotDD#1:#2=#3/#4,#5/{\ifGR@cri{\s@uvc@ntr@l\et@tfigptrotDD%
    \c@ssin{\C@}{\S@}{#5}\setc@ntr@l{2}\figvectPDD-1[#4,#3]\Figg@tXY{-1}%
    \v@lXa=\C@\v@lX\advance\v@lXa-\S@\v@lY%
    \v@lYa=\S@\v@lX\advance\v@lYa\C@\v@lY%
    \Figv@ctCreg-1(\v@lXa,\v@lYa)\figpttraDD#1:{#2}=#4/1,-1/%
    \resetc@ntr@l\et@tfigptrotDD}\ignorespaces\fi}
\ctr@ld@f\def\figptrotTD#1:#2=#3/#4,#5,#6/{\ifGR@cri{\s@uvc@ntr@l\et@tfigptrotTD%
    \c@ssin{\C@}{\S@}{#5}%
    \setc@ntr@l{2}\figptorthoprojplaneTD-3:=#4/#3,#6/\figvectPTD-2[-3,#3]%
    \n@rmeucTD\v@leur{-2}\ifdim\v@leur<\Cepsil@n\Figg@tXYa{#3}\else%
    \edef\v@lcoef{\repdecn@mb{\v@leur}}\figvectNVTD-1[#6,-2]%
    \Figg@tXYa{-1}\v@lXa=\v@lcoef\v@lXa\v@lYa=\v@lcoef\v@lYa\v@lZa=\v@lcoef\v@lZa%
    \v@lXa=\S@\v@lXa\v@lYa=\S@\v@lYa\v@lZa=\S@\v@lZa\Figg@tXY{-2}%
    \advance\v@lXa\C@\v@lX\advance\v@lYa\C@\v@lY\advance\v@lZa\C@\v@lZ%
    \Figg@tXY{-3}\advance\v@lXa\v@lX\advance\v@lYa\v@lY\advance\v@lZa\v@lZ\fi%
    \Figp@intregTD#1:{#2}(\v@lXa,\v@lYa,\v@lZa)\resetc@ntr@l\et@tfigptrotTD}\ignorespaces\fi}
\ctr@ln@m\figptsym
\ctr@ld@f\def\figptsymDD#1:#2=#3/#4,#5/{\ifGR@cri{\s@uvc@ntr@l\et@tfigptsymDD%
    \resetc@ntr@l{2}\figptorthoprojlineDD-5:=#3/#4,#5/\figvectPDD-2[#3,-5]%
    \figpttraDD#1:{#2}=#3/2,-2/\resetc@ntr@l\et@tfigptsymDD}\ignorespaces\fi}
\ctr@ld@f\def\figptsymTD#1:#2=#3/#4,#5/{\ifGR@cri{\s@uvc@ntr@l\et@tfigptsymTD%
    \resetc@ntr@l{2}\figptorthoprojplaneTD-3:=#3/#4,#5/\figvectPTD-2[#3,-3]%
    \figpttraTD#1:{#2}=#3/2,-2/\resetc@ntr@l\et@tfigptsymTD}\ignorespaces\fi}
\ctr@ln@m\figpttra
\ctr@ld@f\def\figpttraDD#1:#2=#3/#4,#5/{\ifGR@cri{\Figg@tXYa{#5}\v@lXa=#4\v@lXa\v@lYa=#4\v@lYa%
    \Figg@tXY{#3}\advance\v@lX\v@lXa\advance\v@lY\v@lYa%
    \Figp@intregDD#1:{#2}(\v@lX,\v@lY)}\ignorespaces\fi}
\ctr@ld@f\def\figpttraTD#1:#2=#3/#4,#5/{\ifGR@cri{\Figg@tXYa{#5}\v@lXa=#4\v@lXa\v@lYa=#4\v@lYa%
    \v@lZa=#4\v@lZa\Figg@tXY{#3}\advance\v@lX\v@lXa\advance\v@lY\v@lYa%
    \advance\v@lZ\v@lZa\Figp@intregTD#1:{#2}(\v@lX,\v@lY,\v@lZ)}\ignorespaces\fi}
\ctr@ln@m\figpttraC
\ctr@ld@f\def\figpttraCDD#1:#2=#3/#4,#5/{\ifGR@cri{\v@lXa=#4\unit@\v@lYa=#5\unit@%
    \Figg@tXY{#3}\advance\v@lX\v@lXa\advance\v@lY\v@lYa%
    \Figp@intregDD#1:{#2}(\v@lX,\v@lY)}\ignorespaces\fi}
\ctr@ld@f\def\figpttraCTD#1:#2=#3/#4,#5,#6/{\ifGR@cri{\v@lXa=#4\unit@\v@lYa=#5\unit@\v@lZa=#6\unit@%
    \Figg@tXY{#3}\advance\v@lX\v@lXa\advance\v@lY\v@lYa\advance\v@lZ\v@lZa%
    \Figp@intregTD#1:{#2}(\v@lX,\v@lY,\v@lZ)}\ignorespaces\fi}
\ctr@ld@f\def\figptsaxes#1:#2(#3){\ifGR@cri{\an@lys@xes#3,:\ifx\t@xt@\empty%
    \ifTr@isDim\Figpts@xes#1:#2(0,#3,0,#3,0,#3)\else\Figpts@xes#1:#2(0,#3,0,#3)\fi%
    \else\Figpts@xes#1:#2(#3)\fi}\ignorespaces\fi}
\ctr@ln@m\Figpts@xes
\ctr@ld@f\def\Figpts@xesDD#1:#2(#3,#4,#5,#6){%
    \s@mme=#1\figpttraC\the\s@mme:$x$=#2/#4,0/%
    \advance\s@mme\@ne\figpttraC\the\s@mme:$y$=#2/0,#6/}
\ctr@ld@f\def\Figpts@xesTD#1:#2(#3,#4,#5,#6,#7,#8){%
    \s@mme=#1\figpttraC\the\s@mme:$x$=#2/#4,0,0/%
    \advance\s@mme\@ne\figpttraC\the\s@mme:$y$=#2/0,#6,0/%
    \advance\s@mme\@ne\figpttraC\the\s@mme:$z$=#2/0,0,#8/}
\ctr@ld@f\def\figptsmap#1=#2/#3/#4/{\ifGR@cri{\s@uvc@ntr@l\et@tfigptsmap%
    \setc@ntr@l{2}\def\list@num{#2}\s@mme=#1%
    \@ecfor\p@int:=\list@num\do{\figvectP-1[#3,\p@int]\Figg@tXY{-1}%
    \pr@dMatV/#4/\figpttra\the\s@mme:=#3/1,-1/\advance\s@mme\@ne}%
    \resetc@ntr@l\et@tfigptsmap}\ignorespaces\fi}
\ctr@ln@m\figptscontrol
\ctr@ld@f\def\figptscontrolDD#1[#2,#3,#4,#5]{\ifGR@cri{\s@uvc@ntr@l\et@tfigptscontrolDD\setc@ntr@l{2}%
    \v@lX=\z@\v@lY=\z@\Figtr@nptDD{-5}{#2}\Figtr@nptDD{2}{#5}%
    \divide\v@lX\@vi\divide\v@lY\@vi%
    \Figtr@nptDD{3}{#3}\Figtr@nptDD{-1.5}{#4}\Figp@intregDD-1:(\v@lX,\v@lY)%
    \v@lX=\z@\v@lY=\z@\Figtr@nptDD{2}{#2}\Figtr@nptDD{-5}{#5}%
    \divide\v@lX\@vi\divide\v@lY\@vi\Figtr@nptDD{-1.5}{#3}\Figtr@nptDD{3}{#4}%
    \s@mme=#1\advance\s@mme\@ne\Figp@intregDD\the\s@mme:(\v@lX,\v@lY)%
    \figptcopyDD#1:/-1/\resetc@ntr@l\et@tfigptscontrolDD}\ignorespaces\fi}
\ctr@ld@f\def\figptscontrolTD#1[#2,#3,#4,#5]{\ifGR@cri{\s@uvc@ntr@l\et@tfigptscontrolTD\setc@ntr@l{2}%
    \v@lX=\z@\v@lY=\z@\v@lZ=\z@\Figtr@nptTD{-5}{#2}\Figtr@nptTD{2}{#5}%
    \divide\v@lX\@vi\divide\v@lY\@vi\divide\v@lZ\@vi%
    \Figtr@nptTD{3}{#3}\Figtr@nptTD{-1.5}{#4}\Figp@intregTD-1:(\v@lX,\v@lY,\v@lZ)%
    \v@lX=\z@\v@lY=\z@\v@lZ=\z@\Figtr@nptTD{2}{#2}\Figtr@nptTD{-5}{#5}%
    \divide\v@lX\@vi\divide\v@lY\@vi\divide\v@lZ\@vi\Figtr@nptTD{-1.5}{#3}\Figtr@nptTD{3}{#4}%
    \s@mme=#1\advance\s@mme\@ne\Figp@intregTD\the\s@mme:(\v@lX,\v@lY,\v@lZ)%
    \figptcopyTD#1:/-1/\resetc@ntr@l\et@tfigptscontrolTD}\ignorespaces\fi}
\ctr@ld@f\def\Figtr@nptDD#1#2{\Figg@tXYa{#2}\v@lXa=#1\v@lXa\v@lYa=#1\v@lYa%
    \advance\v@lX\v@lXa\advance\v@lY\v@lYa}
\ctr@ld@f\def\Figtr@nptTD#1#2{\Figg@tXYa{#2}\v@lXa=#1\v@lXa\v@lYa=#1\v@lYa\v@lZa=#1\v@lZa%
    \advance\v@lX\v@lXa\advance\v@lY\v@lYa\advance\v@lZ\v@lZa}
\ctr@ld@f\def\figptscontrolcurve#1,#2[#3]{\ifGR@cri{\s@uvc@ntr@l\et@tfigptscontrolcurve%
    \def\list@num{#3}\extrairelepremi@r\Ak@\de\list@num%
    \extrairelepremi@r\Ai@\de\list@num\extrairelepremi@r\Aj@\de\list@num%
    \s@mme=#1\figptcopy\the\s@mme:/\Ai@/%
    \setc@ntr@l{2}\figvectP -1[\Ak@,\Aj@]%
    \@ecfor\Ak@:=\list@num\do{\advance\s@mme\@ne\figpttra\the\s@mme:=\Ai@/\curv@roundness,-1/%
       \figvectP -1[\Ai@,\Ak@]\advance\s@mme\@ne\figpttra\the\s@mme:=\Aj@/-\curv@roundness,-1/%
       \advance\s@mme\@ne\figptcopy\the\s@mme:/\Aj@/%
       \edef\Ai@{\Aj@}\edef\Aj@{\Ak@}}\advance\s@mme-#1\divide\s@mme\thr@@%
       \xdef#2{\the\s@mme}%
    \resetc@ntr@l\et@tfigptscontrolcurve}\ignorespaces\fi}
\ctr@ln@m\figptsintercirc
\ctr@ld@f\def\figptsintercircDD#1[#2,#3;#4,#5]{\ifGR@cri{\s@uvc@ntr@l\et@tfigptsintercircDD%
    \setc@ntr@l{2}\let\c@lNVintc=\c@lNVintcDD\Figptsintercirc@#1[#2,#3;#4,#5]%
    \resetc@ntr@l\et@tfigptsintercircDD}\ignorespaces\fi}
\ctr@ld@f\def\figptsintercircTD#1[#2,#3;#4,#5;#6]{\ifGR@cri{\s@uvc@ntr@l\et@tfigptsintercircTD%
    \setc@ntr@l{2}\let\c@lNVintc=\c@lNVintcTD\vecunitC@TD[#2,#6]%
    \Figv@ctCreg-3(\v@lX,\v@lY,\v@lZ)\Figptsintercirc@#1[#2,#3;#4,#5]%
    \resetc@ntr@l\et@tfigptsintercircTD}\ignorespaces\fi}
\ctr@ld@f\def\Figptsintercirc@#1[#2,#3;#4,#5]{\figvectP-1[#2,#4]%
    \vecunit@{-1}{-1}\delt@=\result@t\f@ctech=\result@tent%
    \s@mme=#1\advance\s@mme\@ne\figptcopy#1:/#2/\figptcopy\the\s@mme:/#4/%
    \ifdim\delt@=\z@\else%
    \v@lmin=#3\unit@\v@lmax=#5\unit@\v@leur=\v@lmin\advance\v@leur\v@lmax%
    \ifdim\v@leur>\delt@%
    \v@leur=\v@lmin\advance\v@leur-\v@lmax\maxim@m{\v@leur}{\v@leur}{-\v@leur}%
    \ifdim\v@leur<\delt@%
    \divide\v@lmin\f@ctech\divide\v@lmax\f@ctech\divide\delt@\f@ctech%
    \v@lmin=\repdecn@mb{\v@lmin}\v@lmin\v@lmax=\repdecn@mb{\v@lmax}\v@lmax%
    \invers@{\v@leur}{\delt@}\advance\v@lmax-\v@lmin%
    \v@lmax=-\repdecn@mb{\v@leur}\v@lmax\advance\delt@\v@lmax\delt@=.5\delt@%
    \v@lmax=\delt@\multiply\v@lmax\f@ctech%
    \edef\t@ille{\repdecn@mb{\v@lmax}}\figpttra-2:=#2/\t@ille,-1/%
    \delt@=\repdecn@mb{\delt@}\delt@\advance\v@lmin-\delt@%
    \sqrt@{\v@leur}{\v@lmin}\multiply\v@leur\f@ctech\edef\t@ille{\repdecn@mb{\v@leur}}%
    \c@lNVintc\figpttra#1:=-2/-\t@ille,-1/\figpttra\the\s@mme:=-2/\t@ille,-1/\fi\fi\fi}
\ctr@ld@f\def\c@lNVintcDD{\Figg@tXY{-1}\Figv@ctCreg-1(-\v@lY,\v@lX)} 
\ctr@ld@f\def\c@lNVintcTD{{\Figg@tXY{-3}\v@lmin=\v@lX\v@lmax=\v@lY\v@leur=\v@lZ%
    \Figg@tXY{-1}\c@lprovec{-3}\vecunit@{-3}{-3}
    \Figg@tXY{-1}\v@lmin=\v@lX\v@lmax=\v@lY%
    \v@leur=\v@lZ\Figg@tXY{-3}\c@lprovec{-1}}} 
\ctr@ln@m\figptsinterlinell
\ctr@ld@f\def\figptsinterlinellDD#1[#2,#3,#4,#5;#6,#7]{\ifGR@cri{\s@uvc@ntr@l\et@tfigptsinterlinellDD%
    \figptcopy#1:/#6/\s@mme=#1\advance\s@mme\@ne\figptcopy\the\s@mme:/#7/%
    \v@lmin=#3\unit@\v@lmax=#4\unit@
    \setc@ntr@l{2}\figptbaryDD-4:[#6,#7;1,1]\figptsrotDD-3=-4,#7/#2,-#5/
    \Figg@tXY{-3}\Figg@tXYa{#2}\advance\v@lX-\v@lXa\advance\v@lY-\v@lYa
    \figvectP-1[-3,-2]\Figg@tXYa{-1}\figvectP-3[-4,#7]\Figptsint@rLE{#1}
    \resetc@ntr@l\et@tfigptsinterlinellDD}\ignorespaces\fi}
\ctr@ld@f\def\figptsinterlinellP#1[#2,#3,#4;#5,#6]{\ifGR@cri{\s@uvc@ntr@l\et@tfigptsinterlinellP%
    \figptcopy#1:/#5/\s@mme=#1\advance\s@mme\@ne\figptcopy\the\s@mme:/#6/\setc@ntr@l{2}%
    \figvectP-1[#2,#3]\vecunit@{-1}{-1}\v@lmin=\result@t
    \figvectP-2[#2,#4]\vecunit@{-2}{-2}\v@lmax=\result@t
    \figptbary-4:[#5,#6;1,1]
    \figvectP-3[#2,-4]\c@lproscal\v@lX[-3,-1]\c@lproscal\v@lY[-3,-2]
    \figvectP-3[-4,#6]\c@lproscal\v@lXa[-3,-1]\c@lproscal\v@lYa[-3,-2]
    \Figptsint@rLE{#1}\resetc@ntr@l\et@tfigptsinterlinellP}\ignorespaces\fi}
\ctr@ld@f\def\Figptsint@rLE#1{%
    \getredf@ctDD\f@ctech(\v@lmin,\v@lmax)%
    \getredf@ctDD\p@rtent(\v@lX,\v@lY)\ifnum\p@rtent>\f@ctech\f@ctech=\p@rtent\fi%
    \getredf@ctDD\p@rtent(\v@lXa,\v@lYa)\ifnum\p@rtent>\f@ctech\f@ctech=\p@rtent\fi%
    \divide\v@lmin\f@ctech\divide\v@lmax\f@ctech\divide\v@lX\f@ctech\divide\v@lY\f@ctech%
    \divide\v@lXa\f@ctech\divide\v@lYa\f@ctech%
    \c@rre=\repdecn@mb\v@lXa\v@lmax\mili@u=\repdecn@mb\v@lYa\v@lmin%
    \getredf@ctDD\f@ctech(\c@rre,\mili@u)%
    \c@rre=\repdecn@mb\v@lX\v@lmax\mili@u=\repdecn@mb\v@lY\v@lmin%
    \getredf@ctDD\p@rtent(\c@rre,\mili@u)\ifnum\p@rtent>\f@ctech\f@ctech=\p@rtent\fi%
    \divide\v@lmin\f@ctech\divide\v@lmax\f@ctech\divide\v@lX\f@ctech\divide\v@lY\f@ctech%
    \divide\v@lXa\f@ctech\divide\v@lYa\f@ctech%
    \v@lmin=\repdecn@mb{\v@lmin}\v@lmin\v@lmax=\repdecn@mb{\v@lmax}\v@lmax%
    \edef\G@xde{\repdecn@mb\v@lmin}\edef\P@xde{\repdecn@mb\v@lmax}%
    \c@rre=-\v@lmax\v@leur=\repdecn@mb\v@lY\v@lY\advance\c@rre\v@leur\c@rre=\G@xde\c@rre%
    \v@leur=\repdecn@mb\v@lX\v@lX\v@leur=\P@xde\v@leur\advance\c@rre\v@leur
    \v@lmin=\repdecn@mb\v@lYa\v@lmin\v@lmax=\repdecn@mb\v@lXa\v@lmax%
    \mili@u=\repdecn@mb\v@lX\v@lmax\advance\mili@u\repdecn@mb\v@lY\v@lmin
    \v@lmax=\repdecn@mb\v@lXa\v@lmax\advance\v@lmax\repdecn@mb\v@lYa\v@lmin
    \ifdim\v@lmax>\epsil@n%
    \maxim@m{\v@leur}{\c@rre}{-\c@rre}\maxim@m{\v@lmin}{\mili@u}{-\mili@u}%
    \maxim@m{\v@leur}{\v@leur}{\v@lmin}\maxim@m{\v@lmin}{\v@lmax}{-\v@lmax}%
    \maxim@m{\v@leur}{\v@leur}{\v@lmin}\p@rtentiere{\p@rtent}{\v@leur}\advance\p@rtent\@ne%
    \divide\c@rre\p@rtent\divide\mili@u\p@rtent\divide\v@lmax\p@rtent%
    \delt@=\repdecn@mb{\mili@u}\mili@u\v@leur=\repdecn@mb{\v@lmax}\c@rre%
    \advance\delt@-\v@leur\ifdim\delt@<\z@\else\sqrt@\delt@\delt@%
    \invers@\v@lmax\v@lmax\edef\Uns@rAp{\repdecn@mb\v@lmax}%
    \v@leur=-\mili@u\advance\v@leur-\delt@\v@leur=\Uns@rAp\v@leur%
    \edef\t@ille{\repdecn@mb\v@leur}\figpttra#1:=-4/\t@ille,-3/\s@mme=#1\advance\s@mme\@ne%
    \v@leur=-\mili@u\advance\v@leur\delt@\v@leur=\Uns@rAp\v@leur%
    \edef\t@ille{\repdecn@mb\v@leur}\figpttra\the\s@mme:=-4/\t@ille,-3/\fi\fi}
\ctr@ln@m\figptsorthoprojline
\ctr@ld@f\def\figptsorthoprojlineDD#1=#2/#3,#4/{\ifGR@cri{\s@uvc@ntr@l\et@tfigptsorthoprojlineDD%
    \setc@ntr@l{2}\figvectPDD-3[#3,#4]\figvectNVDD-4[-3]\resetc@ntr@l{2}%
    \def\list@num{#2}\s@mme=#1\@ecfor\p@int:=\list@num\do{%
    \inters@cDD\the\s@mme:[\p@int,-4;#3,-3]\advance\s@mme\@ne}%
    \resetc@ntr@l\et@tfigptsorthoprojlineDD}\ignorespaces\fi}
\ctr@ld@f\def\figptsorthoprojlineTD#1=#2/#3,#4/{\ifGR@cri{\s@uvc@ntr@l\et@tfigptsorthoprojlineTD%
    \setc@ntr@l{2}\figvectPTD-2[#3,#4]\vecunit@TD{-2}{-2}%
    \def\list@num{#2}\s@mme=#1\@ecfor\p@int:=\list@num\do{%
    \figvectPTD-1[#3,\p@int]\c@lproscalTD\v@leur[-1,-2]%
    \edef\v@lcoef{\repdecn@mb{\v@leur}}\figpttraTD\the\s@mme:=#3/\v@lcoef,-2/%
    \advance\s@mme\@ne}\resetc@ntr@l\et@tfigptsorthoprojlineTD}\ignorespaces\fi}
\ctr@ln@m\figptsorthoprojplane
\ctr@ld@f\def\figptsorthoprojplaneDD{\un@v@ilable{figptsorthoprojplane}}
\ctr@ld@f\def\figptsorthoprojplaneTD#1=#2/#3,#4/{\ifGR@cri{\s@uvc@ntr@l\et@tfigptsorthoprojplane%
    \setc@ntr@l{2}\vecunit@TD{-2}{#4}%
    \def\list@num{#2}\s@mme=#1\@ecfor\p@int:=\list@num\do{\figvectPTD-1[\p@int,#3]%
    \c@lproscalTD\v@leur[-1,-2]\edef\v@lcoef{\repdecn@mb{\v@leur}}%
    \figpttraTD\the\s@mme:=\p@int/\v@lcoef,-2/\advance\s@mme\@ne}%
    \resetc@ntr@l\et@tfigptsorthoprojplane}\ignorespaces\fi}
\ctr@ld@f\def\figptshom#1=#2/#3,#4/{\ifGR@cri{\s@uvc@ntr@l\et@tfigptshom%
    \setc@ntr@l{2}\def\list@num{#2}\s@mme=#1%
    \@ecfor\p@int:=\list@num\do{\figvectP-1[#3,\p@int]%
    \figpttra\the\s@mme:=#3/#4,-1/\advance\s@mme\@ne}%
    \resetc@ntr@l\et@tfigptshom}\ignorespaces\fi}
\ctr@ld@f\def\figptsinv#1=#2/#3,#4/{\ifGR@cri{\s@uvc@ntr@l\et@tfigptsinv%
    \setc@ntr@l{2}\def\list@num{#2}\s@mme=#1%
    \@ecfor\p@int:=\list@num\do{\figvectP-1[#3,\p@int]\Figg@tXY{-1}%
    \getredf@ctB\f@ctech\n@rmeucC{\delt@}{-1}%
    \delt@=\ptT@unit@\delt@\delt@=\ptT@unit@\delt@%
    \invers@{\delt@}{\delt@}\multiply\f@ctech\f@ctech\divide\delt@\f@ctech%
    \delt@=#4\delt@\edef\v@lcoef{\repdecn@mb{\delt@}}\figpttra\the\s@mme:=#3/\v@lcoef,-1/%
    \advance\s@mme\@ne}\resetc@ntr@l\et@tfigptsinv}\ignorespaces\fi}
\ctr@ln@m\figptsrot
\ctr@ld@f\def\figptsrotDD#1=#2/#3,#4/{\ifGR@cri{\s@uvc@ntr@l\et@tfigptsrotDD%
    \c@ssin{\C@}{\S@}{#4}\setc@ntr@l{2}\def\list@num{#2}\s@mme=#1%
    \@ecfor\p@int:=\list@num\do{\figvectPDD-1[#3,\p@int]\Figg@tXY{-1}%
    \v@lXa=\C@\v@lX\advance\v@lXa-\S@\v@lY%
    \v@lYa=\S@\v@lX\advance\v@lYa\C@\v@lY%
    \Figv@ctCreg-1(\v@lXa,\v@lYa)\figpttraDD\the\s@mme:=#3/1,-1/\advance\s@mme\@ne}%
    \resetc@ntr@l\et@tfigptsrotDD}\ignorespaces\fi}
\ctr@ld@f\def\figptsrotTD#1=#2/#3,#4,#5/{\ifGR@cri{\s@uvc@ntr@l\et@tfigptsrotTD%
    \c@ssin{\C@}{\S@}{#4}%
    \setc@ntr@l{2}\def\list@num{#2}\s@mme=#1%
    \@ecfor\p@int:=\list@num\do{\figptorthoprojplaneTD-3:=#3/\p@int,#5/%
    \figvectPTD-2[-3,\p@int]%
    \figvectNVTD-1[#5,-2]\n@rmeucTD\v@leur{-2}\edef\v@lcoef{\repdecn@mb{\v@leur}}%
    \Figg@tXYa{-1}\v@lXa=\v@lcoef\v@lXa\v@lYa=\v@lcoef\v@lYa\v@lZa=\v@lcoef\v@lZa%
    \v@lXa=\S@\v@lXa\v@lYa=\S@\v@lYa\v@lZa=\S@\v@lZa\Figg@tXY{-2}%
    \advance\v@lXa\C@\v@lX\advance\v@lYa\C@\v@lY\advance\v@lZa\C@\v@lZ%
    \Figg@tXY{-3}\advance\v@lXa\v@lX\advance\v@lYa\v@lY\advance\v@lZa\v@lZ%
    \Figp@intregTD\the\s@mme:(\v@lXa,\v@lYa,\v@lZa)\advance\s@mme\@ne}%
    \resetc@ntr@l\et@tfigptsrotTD}\ignorespaces\fi}
\ctr@ln@m\figptssym
\ctr@ld@f\def\figptssymDD#1=#2/#3,#4/{\ifGR@cri{\s@uvc@ntr@l\et@tfigptssymDD%
    \setc@ntr@l{2}\figvectPDD-3[#3,#4]\Figg@tXY{-3}\Figv@ctCreg-4(-\v@lY,\v@lX)%
    \resetc@ntr@l{2}\def\list@num{#2}\s@mme=#1%
    \@ecfor\p@int:=\list@num\do{\inters@cDD-5:[#3,-3;\p@int,-4]\figvectPDD-2[\p@int,-5]%
    \figpttraDD\the\s@mme:=\p@int/2,-2/\advance\s@mme\@ne}%
    \resetc@ntr@l\et@tfigptssymDD}\ignorespaces\fi}
\ctr@ld@f\def\figptssymTD#1=#2/#3,#4/{\ifGR@cri{\s@uvc@ntr@l\et@tfigptssymTD%
    \setc@ntr@l{2}\vecunit@TD{-2}{#4}\def\list@num{#2}\s@mme=#1%
    \@ecfor\p@int:=\list@num\do{\figvectPTD-1[\p@int,#3]%
    \c@lproscalTD\v@leur[-1,-2]\v@leur=2\v@leur\edef\v@lcoef{\repdecn@mb{\v@leur}}%
    \figpttraTD\the\s@mme:=\p@int/\v@lcoef,-2/\advance\s@mme\@ne}%
    \resetc@ntr@l\et@tfigptssymTD}\ignorespaces\fi}
\ctr@ln@m\figptstra
\ctr@ld@f\def\figptstraDD#1=#2/#3,#4/{\ifGR@cri{\Figg@tXYa{#4}\v@lXa=#3\v@lXa\v@lYa=#3\v@lYa%
    \def\list@num{#2}\s@mme=#1\@ecfor\p@int:=\list@num\do{\Figg@tXY{\p@int}%
    \advance\v@lX\v@lXa\advance\v@lY\v@lYa%
    \Figp@intregDD\the\s@mme:(\v@lX,\v@lY)\advance\s@mme\@ne}}\ignorespaces\fi}
\ctr@ld@f\def\figptstraTD#1=#2/#3,#4/{\ifGR@cri{\Figg@tXYa{#4}\v@lXa=#3\v@lXa\v@lYa=#3\v@lYa%
    \v@lZa=#3\v@lZa\def\list@num{#2}\s@mme=#1\@ecfor\p@int:=\list@num\do{\Figg@tXY{\p@int}%
    \advance\v@lX\v@lXa\advance\v@lY\v@lYa\advance\v@lZ\v@lZa%
    \Figp@intregTD\the\s@mme:(\v@lX,\v@lY,\v@lZ)\advance\s@mme\@ne}}\ignorespaces\fi}
\ctr@ln@m\figptvisilimSL
\ctr@ld@f\def\figptvisilimSLDD{\un@v@ilable{figptvisilimSL}}
\ctr@ld@f\def\figptvisilimSLTD#1:#2[#3,#4;#5,#6]{\ifGR@cri{\s@uvc@ntr@l\et@tfigptvisilimSLTD%
    \setc@ntr@l{2}\figvectP-1[#3,#4]\n@rminf{\delt@}{-1}%
    \ifcase\CUR@proj\v@lX=\cxa@\p@\v@lY=-\p@\v@lZ=\cxb@\p@
    \Figv@ctCreg-2(\v@lX,\v@lY,\v@lZ)\figvectP-3[#5,#6]\figvectNV-1[-2,-3]%
    \or\figvectP-1[#5,#6]\vecunitCV@TD{-1}\v@lmin=\v@lX\v@lmax=\v@lY
    \v@leur=\v@lZ\v@lX=\cza@\p@\v@lY=\czb@\p@\v@lZ=\czc@\p@\c@lprovec{-1}%
    \or\c@ley@pt{-2}\figvectN-1[#5,#6,-2]\fi
    \edef\Ai@{#3}\edef\Aj@{#4}\figvectP-2[#5,\Ai@]\c@lproscal\v@leur[-1,-2]%
    \ifdim\v@leur>\z@\p@rtent=\@ne\else\p@rtent=\m@ne\fi%
    \figvectP-2[#5,\Aj@]\c@lproscal\v@leur[-1,-2]%
    \ifdim\p@rtent\v@leur>\z@\figptcopy#1:#2/#3/%
    \message{*** \BS@ figptvisilimSL: points are on the same side.}\else%
    \figptcopy-3:/#3/\figptcopy-4:/#4/%
    \loop\figptbary-5:[-3,-4;1,1]\figvectP-2[#5,-5]\c@lproscal\v@leur[-1,-2]%
    \ifdim\p@rtent\v@leur>\z@\figptcopy-3:/-5/\else\figptcopy-4:/-5/\fi%
    \divide\delt@\tw@\ifdim\delt@>\epsil@n\repeat%
    \figptbary#1:#2[-3,-4;1,1]\fi\resetc@ntr@l\et@tfigptvisilimSLTD}\ignorespaces\fi}
\ctr@ld@f\def\c@ley@pt#1{\t@stp@r\ifitis@K\v@lX=\cza@\p@\v@lY=\czb@\p@\v@lZ=\czc@\p@%
    \Figv@ctCreg-1(\v@lX,\v@lY,\v@lZ)\Figp@intreg-2:(\wd\Bt@rget,\ht\Bt@rget,\dp\Bt@rget)%
    \figpttra#1:=-2/-\disob@intern,-1/\else\end\fi}
\ctr@ld@f\def\t@stp@r{\itis@Ktrue\ifnewt@rgetpt\else\itis@Kfalse%
    \message{*** \BS@ figptvisilimXX: target point undefined.}\fi\ifnewdis@b\else%
    \itis@Kfalse\message{*** \BS@ figptvisilimXX: observation distance undefined.}\fi%
    \ifitis@K\else\message{*** This macro must be called after \BS@ figdrawbegin or after
    having set the missing parameter(s) with \BS@ figset proj()}\fi}
\ctr@ld@f\def\figscan#1(#2,#3){{\s@uvc@ntr@l\et@tfigscan\@psfgetbb{#1}\if@psfbbfound\else%
    \def\@psfllx{0}\def\@psflly{20}\def\@psfurx{540}\def\@psfury{640}\fi\figscan@{#2}{#3}%
    \resetc@ntr@l\et@tfigscan}\ignorespaces}
\ctr@ld@f\def\figscan@#1#2{%
    \unit@=\@ne bp\setc@ntr@l{2}\figsetmark{}%
    \def\minst@p{20pt}%
    \v@lX=\@psfllx\p@\v@lX=\Sc@leFact\v@lX\r@undint\v@lX\v@lX%
    \v@lY=\@psflly\p@\v@lY=\Sc@leFact\v@lY\ifdim\v@lY>\z@\r@undint\v@lY\v@lY\fi%
    \delt@=\@psfury\p@\delt@=\Sc@leFact\delt@%
    \advance\delt@-\v@lY\v@lXa=\@psfurx\p@\v@lXa=\Sc@leFact\v@lXa\v@leur=\minst@p%
    \edef\valv@lY{\repdecn@mb{\v@lY}}\edef\LgTr@it{\the\delt@}%
    \loop\ifdim\v@lX<\v@lXa\edef\valv@lX{\repdecn@mb{\v@lX}}%
    \figptDD -1:(\valv@lX,\valv@lY)\figwriten -1:\hbox{\vrule height\LgTr@it}(0)%
    \ifdim\v@leur<\minst@p\else\figsetmark{\raise-8bp\hbox{$\scriptscriptstyle\triangle$}}%
    \figwrites -1:\@ffichnb{0}{\valv@lX}(6)\v@leur=\z@\figsetmark{}\fi%
    \advance\v@leur#1pt\advance\v@lX#1pt\repeat%
    \def\minst@p{10pt}%
    \v@lX=\@psfllx\p@\v@lX=\Sc@leFact\v@lX\ifdim\v@lX>\z@\r@undint\v@lX\v@lX\fi%
    \v@lY=\@psflly\p@\v@lY=\Sc@leFact\v@lY\r@undint\v@lY\v@lY%
    \delt@=\@psfurx\p@\delt@=\Sc@leFact\delt@%
    \advance\delt@-\v@lX\v@lYa=\@psfury\p@\v@lYa=\Sc@leFact\v@lYa\v@leur=\minst@p%
    \edef\valv@lX{\repdecn@mb{\v@lX}}\edef\LgTr@it{\the\delt@}%
    \loop\ifdim\v@lY<\v@lYa\edef\valv@lY{\repdecn@mb{\v@lY}}%
    \figptDD -1:(\valv@lX,\valv@lY)\figwritee -1:\vbox{\hrule width\LgTr@it}(0)%
    \ifdim\v@leur<\minst@p\else\figsetmark{$\triangleright$\kern4bp}%
    \figwritew -1:\@ffichnb{0}{\valv@lY}(6)\v@leur=\z@\figsetmark{}\fi%
    \advance\v@leur#2pt\advance\v@lY#2pt\repeat}
\ctr@ld@f\let\figscanI=\figscan
\ctr@ld@f\def\figscan@E#1(#2,#3){{\s@uvc@ntr@l\et@tfigscan@E%
    \Figdisc@rdLTS{#1}{\t@xt@}\pdfximage{\t@xt@}%
    \setbox\Gb@x=\hbox{\pdfrefximage\pdflastximage}%
    \edef\@psfllx{0}\v@lY=-\dp\Gb@x\edef\@psflly{\repdecn@mb{\v@lY}}%
    \edef\@psfurx{\repdecn@mb{\wd\Gb@x}}%
    \v@lY=\dp\Gb@x\advance\v@lY\ht\Gb@x\edef\@psfury{\repdecn@mb{\v@lY}}%
    \figscan@{#2}{#3}\resetc@ntr@l\et@tfigscan@E}\ignorespaces}
\ctr@ld@f\def\figshowpts[#1,#2]{{\figsetmark{$\bullet$}\figsetptname{\bf ##1}%
    \p@rtent=#2\relax\ifnum\p@rtent<\z@\p@rtent=\z@\fi%
    \s@mme=#1\relax\ifnum\s@mme<\z@\s@mme=\z@\fi%
    \loop\ifnum\s@mme<\p@rtent\pt@rvect{\s@mme}%
    \ifitis@K\figwriten{\the\s@mme}:(4pt)\fi\advance\s@mme\@ne\repeat%
    \pt@rvect{\s@mme}\ifitis@K\figwriten{\the\s@mme}:(4pt)\fi}\ignorespaces}
\ctr@ld@f\def\pt@rvect#1{\set@bjc@de{#1}%
    \expandafter\expandafter\expandafter\inqpt@rvec\csname\objc@de\endcsname:}
\ctr@ld@f\def\inqpt@rvec#1#2:{\if#1\C@dCl@spt\itis@Ktrue\else\itis@Kfalse\fi}
\ctr@ld@f\def\figshowsettings{{%
    \immediate\write16{====================================================================}%
    \immediate\write16{ Current settings are (DDV means "with dynamic default value"):}%
    \immediate\write16{ --- GENERAL ---}%
    \immediate\write16{Scale factor and Unit = \unit@util\space (\the\unit@)
     \space -> \BS@ figinit{ScaleFactorUnit}}%
    \immediate\write16{Update mode = \ifGRupdatem@de yes\else no\fi
     \space-> \BS@ figset(update=yes/no) or \BS@ figsetdefault(update=yes/no)}%
    \immediate\write16{ --- WRITING ---}%
    \immediate\write16{Implicit point name = \ptn@me{i} \space-> \BS@ figset write(ptname={Name})}%
    \immediate\write16{Point marker = \the\c@nsymb \space -> \BS@ figset write(mark=Mark)}%
    \immediate\write16{Print rounded coordinates = \ifr@undcoord yes\else no\fi
     \space-> \BS@ figset write(roundcoord=yes/no)}%
    \immediate\write16{ --- GRAPHICAL (general) ---}%
    \immediate\write16{Color = \CUR@color \space-> \BS@ figset(color=ColorDefinition)}%
    \immediate\write16{Filling mode = \iffillm@de yes\else no\fi
     \space-> \BS@ figset(fillmode=yes/no)}%
    \immediate\write16{Line join = \CUR@join \space-> \BS@ figset(join=miter/round/bevel)}%
    \immediate\write16{Line style = \CUR@dash \space-> \BS@ figset(dash=Index/Pattern)}%
    \immediate\write16{Line width = \CUR@width
     \space-> \BS@ figset(width=real in PostScript units)}%
    \immediate\write16{ --- GRAPHICAL (specific) ---}%
    \immediate\write16{Altitude (all the following attributes are DDV):}%
    \immediate\write16{ Base line color =
     \ifx\DDV@blcolor\D@FTref general color\else\DDV@blcolor\fi
     \space-> \BS@ figset altitude(blcolor=ColorDefinition)}%
    \immediate\write16{ Base line style =
     \ifx\DDV@bldash\D@FTref general style\else\DDV@bldash\fi
     \space-> \BS@ figset altitude(bldash=Index/Pattern)}%
    \immediate\write16{ Base line width =
     \ifx\DDV@blwidth\D@FTref general width\else\DDV@blwidth\fi
     \space-> \BS@ figset altitude(blwidth=real in PostScript units)}%
    \immediate\write16{ Square line color =
     \ifx\DDV@sqcolor\D@FTref general color\else\DDV@sqcolor\fi
     \space-> \BS@ figset altitude(sqcolor=ColorDefinition)}%
    \immediate\write16{ Square line style =
     \ifx\DDV@sqdash\D@FTref general style\else\DDV@sqdash\fi
     \space-> \BS@ figset altitude(sqdash=Index/Pattern)}%
    \immediate\write16{ Square line width =
     \ifx\DDV@sqwidth\D@FTref general width\else\DDV@sqwidth\fi
     \space-> \BS@ figset altitude(sqwidth=real in PostScript units)}%
    \immediate\write16{Arrowhead:}%
    \immediate\write16{ (half-)Angle = \@rrowheadangle
     \space-> \BS@ figset arrowhead(angle=real in degrees)}%
    \immediate\write16{ Filling mode = \if@rrowhfill yes\else no\fi
     \space-> \BS@ figset arrowhead(fillmode=yes/no)}%
    \immediate\write16{ "Outside" = \if@rrowhout yes\else no\fi
     \space-> \BS@ figset arrowhead(out=yes/no)}%
    \immediate\write16{ Length = \@rrowheadlength
     \if@rrowratio\space(not active)\else\space(active)\fi
     \space-> \BS@ figset arrowhead(length=real in user coord.)}%
    \immediate\write16{ Ratio = \@rrowheadratio
     \if@rrowratio\space(active)\else\space(not active)\fi
     \space-> \BS@ figset arrowhead(ratio=real in [0,1])}%
    \immediate\write16{Curve:}%
    \immediate\write16{ Roundness = \curv@roundness
     \space-> \BS@ figset curve(roundness=real in [0,0.5])}%
    \immediate\write16{Flow chart:}%
    \immediate\write16{ Arrow position = \@rrowp@s
     \space-> \BS@ figset flowchart(arrowposition=real in [0,1])}%
    \immediate\write16{ Arrow reference point = \ifcase\@rrowr@fpt start\else end\fi
     \space-> \BS@ figset flowchart(arrowrefpt = start/end)}%
    \immediate\write16{ Background color = \fcbgc@lor
     \space-> \BS@ figset flowchart(bgcolor=ColorDefinition)}%
    \immediate\write16{ Line type = \ifcase\fclin@typ@ curve\else polygon\fi
     \space-> \BS@ figset flowchart(line=polygon/curve)}%
    \immediate\write16{ Padding = (\Xp@dd, \Yp@dd)
     \space-> \BS@ figset flowchart(padding = real in user coord.)}%
    \immediate\write16{\space\space\space\space(or
     \BS@ figset flowchart(xpadding=real, ypadding=real) )}%
    \immediate\write16{ Radius = \fclin@r@d
     \space-> \BS@ figset flowchart(radius=positive real in user coord.)}%
    \immediate\write16{ Shape = \fcsh@pe
     \space-> \BS@ figset flowchart(shape = rectangle, ellipse or lozenge)}%
    \immediate\write16{ Thickness color (DDV) = 
     \ifx\DDV@thickcolor\D@FTref general color\else\DDV@thickcolor\fi
     \space-> \BS@ figset flowchart(thickcolor=ColorDefinition)}%
    \immediate\write16{ Thickness = \thickn@ss
     \space-> \BS@ figset flowchart(thickness = real in user coord.)}%
    \immediate\write16{Mesh:}%
    \immediate\write16{ Diagonal = \c@ntrolmesh
     \space-> \BS@ figset mesh(diag=integer in {-1,0,1})}%
    \immediate\write16{ Lines color (DDV) =
     \ifx\DDV@meshcolor\D@FTref general color\else\DDV@meshcolor\fi
     \space-> \BS@ figset mesh(color=ColorDefinition)}%
    \immediate\write16{ Lines style (DDV) =
     \ifx\DDV@meshdash\D@FTref general style\else\DDV@meshdash\fi
     \space-> \BS@ figset mesh(dash=Index/Pattern)}%
    \immediate\write16{ Lines width (DDV) =
     \ifx\DDV@meshwidth\D@FTref general width\else\DDV@meshwidth\fi
     \space-> \BS@ figset mesh(width=real in PostScript units)}%
    \immediate\write16{Trimesh:}%
    \immediate\write16{ Lines color (DDV) =
     \ifx\DDV@tmeshcolor\D@FTref general color\else\DDV@tmeshcolor\fi
     \space-> \BS@ figset trimesh(color=ColorDefinition)}%
    \immediate\write16{ Lines style (DDV) =
     \ifx\DDV@tmeshdash\D@FTref general style\else\DDV@tmeshdash\fi
     \space-> \BS@ figset trimesh(dash=Index/Pattern)}%
    \immediate\write16{ Lines width (DDV) =
     \ifx\DDV@tmeshwidth\D@FTref general width\else\DDV@tmeshwidth\fi
     \space-> \BS@ figset trimesh(width=real in PostScript units)}%
    \ifTr@isDim%
    \immediate\write16{ --- 3D to 2D PROJECTION ---}%
    \immediate\write16{Projection : \typ@proj \space-> \BS@ figinit{ScaleFactorUnit, ProjType}}%
    \immediate\write16{Longitude (psi) = \v@lPsi \space-> \BS@ figset proj(psi=real in degrees)}%
    \ifcase\CUR@proj\immediate\write16{Depth coeff. (Lambda)
     \space = \v@lTheta \space-> \BS@ figset proj(lambda=real in [0,1])}%
    \else\immediate\write16{Latitude (theta)
     \space = \v@lTheta \space-> \BS@ figset proj(theta=real in degrees)}%
    \fi%
    \ifnum\CUR@proj=\tw@%
    \immediate\write16{Observation distance = \disob@unit
     \space-> \BS@ figset proj(dist=real in user coord.)}%
    \immediate\write16{Target point = \t@rgetpt \space-> \BS@ figset proj(targetpt=pt number)}%
     \v@lX=\ptT@unit@\wd\Bt@rget\v@lY=\ptT@unit@\ht\Bt@rget\v@lZ=\ptT@unit@\dp\Bt@rget%
    \immediate\write16{ Its coordinates are
     (\repdecn@mb{\v@lX}, \repdecn@mb{\v@lY}, \repdecn@mb{\v@lZ})}%
    \fi%
    \fi%
    \immediate\write16{====================================================================}%
    \ignorespaces}}
\ctr@ln@w{newif}\ifitis@vect@r
\ctr@ld@f\def\figvectC#1(#2,#3){{\itis@vect@rtrue\figpt#1:(#2,#3)}\ignorespaces}
\ctr@ld@f\def\Figv@ctCreg#1(#2,#3){{\itis@vect@rtrue\Figp@intreg#1:(#2,#3)}\ignorespaces}
\ctr@ln@m\figvectDBezier
\ctr@ld@f\def\figvectDBezierDD#1:#2,#3[#4,#5,#6,#7]{\ifGR@cri{\s@uvc@ntr@l\et@tfigvectDBezierDD%
    \FigvectDBezier@#2,#3[#4,#5,#6,#7]\v@lX=\c@ef\v@lX\v@lY=\c@ef\v@lY%
    \Figv@ctCreg#1(\v@lX,\v@lY)\resetc@ntr@l\et@tfigvectDBezierDD}\ignorespaces\fi}
\ctr@ld@f\def\figvectDBezierTD#1:#2,#3[#4,#5,#6,#7]{\ifGR@cri{\s@uvc@ntr@l\et@tfigvectDBezierTD%
    \FigvectDBezier@#2,#3[#4,#5,#6,#7]\v@lX=\c@ef\v@lX\v@lY=\c@ef\v@lY\v@lZ=\c@ef\v@lZ%
    \Figv@ctCreg#1(\v@lX,\v@lY,\v@lZ)\resetc@ntr@l\et@tfigvectDBezierTD}\ignorespaces\fi}
\ctr@ld@f\def\FigvectDBezier@#1,#2[#3,#4,#5,#6]{\setc@ntr@l{2}%
    \edef\T@{#2}\v@leur=\p@\advance\v@leur-#2pt\edef\UNmT@{\repdecn@mb{\v@leur}}%
    \ifnum#1=\tw@\def\c@ef{6}\else\def\c@ef{3}\fi%
    \figptcopy-4:/#3/\figptcopy-3:/#4/\figptcopy-2:/#5/\figptcopy-1:/#6/%
    \l@mbd@un=-4 \l@mbd@de=-\thr@@\p@rtent=\m@ne\c@lDecast%
    \ifnum#1=\tw@\c@lDCDeux{-4}{-3}\c@lDCDeux{-3}{-2}\c@lDCDeux{-4}{-3}\else%
    \l@mbd@un=-4 \l@mbd@de=-\thr@@\p@rtent=-\tw@\c@lDecast%
    \c@lDCDeux{-4}{-3}\fi\Figg@tXY{-4}}
\ctr@ln@m\c@lDCDeux
\ctr@ld@f\def\c@lDCDeuxDD#1#2{\Figg@tXY{#2}\Figg@tXYa{#1}%
    \advance\v@lX-\v@lXa\advance\v@lY-\v@lYa\Figp@intregDD#1:(\v@lX,\v@lY)}
\ctr@ld@f\def\c@lDCDeuxTD#1#2{\Figg@tXY{#2}\Figg@tXYa{#1}\advance\v@lX-\v@lXa%
    \advance\v@lY-\v@lYa\advance\v@lZ-\v@lZa\Figp@intregTD#1:(\v@lX,\v@lY,\v@lZ)}
\ctr@ln@m\figvectN
\ctr@ld@f\def\figvectNDD#1[#2,#3]{\ifGR@cri{\Figg@tXYa{#2}\Figg@tXY{#3}%
    \advance\v@lX-\v@lXa\advance\v@lY-\v@lYa%
    \Figv@ctCreg#1(-\v@lY,\v@lX)}\ignorespaces\fi}
\ctr@ld@f\def\figvectNTD#1[#2,#3,#4]{\ifGR@cri{\vecunitC@TD[#2,#4]\v@lmin=\v@lX\v@lmax=\v@lY%
    \v@leur=\v@lZ\vecunitC@TD[#2,#3]\c@lprovec{#1}}\ignorespaces\fi}
\ctr@ln@m\figvectNV
\ctr@ld@f\def\figvectNVDD#1[#2]{\ifGR@cri{\Figg@tXY{#2}\Figv@ctCreg#1(-\v@lY,\v@lX)}\ignorespaces\fi}
\ctr@ld@f\def\figvectNVTD#1[#2,#3]{\ifGR@cri{\vecunitCV@TD{#3}\v@lmin=\v@lX\v@lmax=\v@lY%
    \v@leur=\v@lZ\vecunitCV@TD{#2}\c@lprovec{#1}}\ignorespaces\fi}
\ctr@ln@m\figvectP
\ctr@ld@f\def\figvectPDD#1[#2,#3]{\ifGR@cri{\Figg@tXYa{#2}\Figg@tXY{#3}%
    \advance\v@lX-\v@lXa\advance\v@lY-\v@lYa%
    \Figv@ctCreg#1(\v@lX,\v@lY)}\ignorespaces\fi}
\ctr@ld@f\def\figvectPTD#1[#2,#3]{\ifGR@cri{\Figg@tXYa{#2}\Figg@tXY{#3}%
    \advance\v@lX-\v@lXa\advance\v@lY-\v@lYa\advance\v@lZ-\v@lZa%
    \Figv@ctCreg#1(\v@lX,\v@lY,\v@lZ)}\ignorespaces\fi}
\ctr@ln@m\figvectU
\ctr@ld@f\def\figvectUDD#1[#2]{\ifGR@cri{\n@rmeuc\v@leur{#2}\invers@\v@leur\v@leur%
    \delt@=\repdecn@mb{\v@leur}\unit@\edef\v@ldelt@{\repdecn@mb{\delt@}}%
    \Figg@tXY{#2}\v@lX=\v@ldelt@\v@lX\v@lY=\v@ldelt@\v@lY%
    \Figv@ctCreg#1(\v@lX,\v@lY)}\ignorespaces\fi}
\ctr@ld@f\def\figvectUTD#1[#2]{\ifGR@cri{\n@rmeuc\v@leur{#2}\invers@\v@leur\v@leur%
    \delt@=\repdecn@mb{\v@leur}\unit@\edef\v@ldelt@{\repdecn@mb{\delt@}}%
    \Figg@tXY{#2}\v@lX=\v@ldelt@\v@lX\v@lY=\v@ldelt@\v@lY\v@lZ=\v@ldelt@\v@lZ%
    \Figv@ctCreg#1(\v@lX,\v@lY,\v@lZ)}\ignorespaces\fi}
\ctr@ld@f\def\figvisu#1#2#3{\c@ldefproj\initb@undb@x\xdef\figforTeXFigno{\figforTeXnextFigno}%
    \s@mme=\figforTeXnextFigno\advance\s@mme\@ne\xdef\figforTeXnextFigno{\number\s@mme}%
    \setbox\b@xvisu=\hbox{\ifnum\@utoFN>\z@\figinsert{}\gdef\@utoFInDone{0}\fi\ignorespaces#3}%
    \gdef\@utoFInDone{1}\gdef\@utoFN{0}%
    \v@lXa=-\c@@rdYmin\v@lYa=\c@@rdYmax\advance\v@lYa-\c@@rdYmin%
    \v@lX=\c@@rdXmax\advance\v@lX-\c@@rdXmin%
    \setbox#1=\hbox{#2}\v@lY=-\v@lX\maxim@m{\v@lX}{\v@lX}{\wd#1}%
    \advance\v@lY\v@lX\divide\v@lY\tw@\advance\v@lY-\c@@rdXmin%
    \setbox#1=\vbox{\parindent\z@\hsize=\v@lX\vskip\v@lYa%
    \rlap{\hskip\v@lY\smash{\raise\v@lXa\box\b@xvisu}}%
    \def\t@xt@{#2}\ifx\t@xt@\empty\else\medskip\centerline{#2}\fi}\wd#1=\v@lX}
\ctr@ld@f\def\figDecrementFigno{{\xdef\figforTeXnextFigno{\figforTeXFigno}%
    \s@mme=\figforTeXFigno\advance\s@mme\m@ne\xdef\figforTeXFigno{\number\s@mme}}}
\ctr@ln@w{newbox}\Bt@rget\setbox\Bt@rget=\null
\ctr@ln@w{newbox}\BminTD@\setbox\BminTD@=\null
\ctr@ln@w{newbox}\BmaxTD@\setbox\BmaxTD@=\null
\ctr@ln@w{newif}\ifnewt@rgetpt\ctr@ln@w{newif}\ifnewdis@b
\ctr@ld@f\def\b@undb@xTD#1#2#3{%
    \relax\ifdim#1<\wd\BminTD@\global\wd\BminTD@=#1\fi%
    \relax\ifdim#2<\ht\BminTD@\global\ht\BminTD@=#2\fi%
    \relax\ifdim#3<\dp\BminTD@\global\dp\BminTD@=#3\fi%
    \relax\ifdim#1>\wd\BmaxTD@\global\wd\BmaxTD@=#1\fi%
    \relax\ifdim#2>\ht\BmaxTD@\global\ht\BmaxTD@=#2\fi%
    \relax\ifdim#3>\dp\BmaxTD@\global\dp\BmaxTD@=#3\fi}
\ctr@ld@f\def\c@ldefdisob{{\ifdim\wd\BminTD@<\maxdimen\v@leur=\wd\BmaxTD@\advance\v@leur-\wd\BminTD@%
    \delt@=\ht\BmaxTD@\advance\delt@-\ht\BminTD@\maxim@m{\v@leur}{\v@leur}{\delt@}%
    \delt@=\dp\BmaxTD@\advance\delt@-\dp\BminTD@\maxim@m{\v@leur}{\v@leur}{\delt@}%
    \v@leur=5\v@leur\else\v@leur=800pt\fi\c@ldefdisob@{\v@leur}}}
\ctr@ln@m\disob@intern
\ctr@ln@m\disob@
\ctr@ln@m\divf@ctproj
\ctr@ld@f\def\c@ldefdisob@#1{{\v@leur=#1\ifdim\v@leur<\p@\v@leur=800pt\fi%
    \xdef\disob@intern{\repdecn@mb{\v@leur}}%
    \delt@=\ptT@unit@\v@leur\xdef\disob@unit{\repdecn@mb{\delt@}}%
    \f@ctech=\@ne\loop\ifdim\v@leur>\t@n pt\divide\v@leur\t@n\multiply\f@ctech\t@n\repeat%
    \xdef\disob@{\repdecn@mb{\v@leur}}\xdef\divf@ctproj{\the\f@ctech}}%
    \global\newdis@btrue}
\ctr@ln@m\t@rgetpt
\ctr@ld@f\def\c@ldeft@rgetpt{\newt@rgetpttrue\def\t@rgetpt{CenterBoundBox}{%
    \delt@=\wd\BmaxTD@\advance\delt@-\wd\BminTD@\divide\delt@\tw@%
    \v@leur=\wd\BminTD@\advance\v@leur\delt@\global\wd\Bt@rget=\v@leur%
    \delt@=\ht\BmaxTD@\advance\delt@-\ht\BminTD@\divide\delt@\tw@%
    \v@leur=\ht\BminTD@\advance\v@leur\delt@\global\ht\Bt@rget=\v@leur%
    \delt@=\dp\BmaxTD@\advance\delt@-\dp\BminTD@\divide\delt@\tw@%
    \v@leur=\dp\BminTD@\advance\v@leur\delt@\global\dp\Bt@rget=\v@leur}}
\ctr@ln@m\c@ldefproj
\ctr@ld@f\def\c@ldefprojTD{\ifnewt@rgetpt\else\c@ldeft@rgetpt\fi\ifnewdis@b\else\c@ldefdisob\fi}
\ctr@ld@f\def\c@lprojcav{
    \v@lZa=\cxa@\v@lY\advance\v@lX\v@lZa%
    \v@lZa=\cxb@\v@lY\v@lY=\v@lZ\advance\v@lY\v@lZa\ignorespaces}
\ctr@ln@m\v@lcoef
\ctr@ld@f\def\c@lprojrea{
    \advance\v@lX-\wd\Bt@rget\advance\v@lY-\ht\Bt@rget\advance\v@lZ-\dp\Bt@rget%
    \v@lZa=\cza@\v@lX\advance\v@lZa\czb@\v@lY\advance\v@lZa\czc@\v@lZ%
    \divide\v@lZa\divf@ctproj\advance\v@lZa\disob@ pt\invers@{\v@lZa}{\v@lZa}%
    \v@lZa=\disob@\v@lZa\edef\v@lcoef{\repdecn@mb{\v@lZa}}%
    \v@lXa=\cxa@\v@lX\advance\v@lXa\cxb@\v@lY\v@lXa=\v@lcoef\v@lXa%
    \v@lY=\cyb@\v@lY\advance\v@lY\cya@\v@lX\advance\v@lY\cyc@\v@lZ%
    \v@lY=\v@lcoef\v@lY\v@lX=\v@lXa\ignorespaces}
\ctr@ld@f\def\c@lprojort{
    \v@lXa=\cxa@\v@lX\advance\v@lXa\cxb@\v@lY%
    \v@lY=\cyb@\v@lY\advance\v@lY\cya@\v@lX\advance\v@lY\cyc@\v@lZ%
    \v@lX=\v@lXa\ignorespaces}
\ctr@ld@f\def\Figptpr@j#1:#2/#3/{{\Figg@tXY{#3}\superc@lprojSP%
    \Figp@intregDD#1:{#2}(\v@lX,\v@lY)}\ignorespaces}
\ctr@ln@m\figsetobdist
\ctr@ld@f\def\figsetobdistDD{\un@v@ilable{figsetobdist}}
\ctr@ld@f\def\figsetobdistTD(#1){{\ifCUR@PS\W@rnmesIgn{figset proj(dist=...)}%
    \else\v@leur=#1\unit@\c@ldefdisob@{\v@leur}\fi}\ignorespaces}
\ctr@ln@m\c@lprojSP
\ctr@ln@m\CUR@proj
\ctr@ln@m\typ@proj
\ctr@ln@m\superc@lprojSP
\ctr@ld@f\def\Figs@tproj#1{%
    \if#13 \def@ultproj\else\if#1c\def@ultproj%
    \else\if#1o\xdef\CUR@proj{1}\xdef\typ@proj{orthogonal}%
         \figsetviewTD(\def@ultpsi,\def@ulttheta)%
         \global\let\c@lprojSP=\c@lprojort\global\let\superc@lprojSP=\c@lprojort%
    \else\if#1r\xdef\CUR@proj{2}\xdef\typ@proj{realistic}%
         \figsetviewTD(\def@ultpsi,\def@ulttheta)%
         \global\let\c@lprojSP=\c@lprojrea\global\let\superc@lprojSP=\c@lprojrea%
    \else\def@ultproj\message{*** Unknown projection. Cavalier projection assumed.}%
    \fi\fi\fi\fi}
\ctr@ld@f\def\def@ultproj{\xdef\CUR@proj{0}\xdef\typ@proj{cavalier}\figsetviewTD(\def@ultpsi,0.5)%
         \global\let\c@lprojSP=\c@lprojcav\global\let\superc@lprojSP=\c@lprojcav}
\ctr@ln@m\figsettarget
\ctr@ld@f\def\figsettargetDD{\un@v@ilable{figsettarget}}
\ctr@ld@f\def\figsettargetTD[#1]{{\ifCUR@PS\W@rnmesIgn{figset proj(targetpt=...)}%
    \else\global\newt@rgetpttrue\xdef\t@rgetpt{#1}\Figg@tXY{#1}\global\wd\Bt@rget=\v@lX%
    \global\ht\Bt@rget=\v@lY\global\dp\Bt@rget=\v@lZ\fi}\ignorespaces}
\ctr@ln@m\figsetview
\ctr@ld@f\def\figsetviewDD{\un@v@ilable{figsetview}}
\ctr@ld@f\def\figsetviewTD(#1){\ifCUR@PS\W@rnmesIgn{figset proj(Psi|Theta|Lambda=...)}%
     \else\Figsetview@#1,:\fi\ignorespaces}
\ctr@ld@f\def\Figsetview@#1,#2:{{\xdef\v@lPsi{#1}\def\t@xt@{#2}%
    \ifx\t@xt@\empty\def\@rgdeux{\v@lTheta}\else\X@rgdeux@#2\fi%
    \c@ssin{\costhet@}{\sinthet@}{#1}\v@lmin=\costhet@ pt\v@lmax=\sinthet@ pt%
    \ifcase\CUR@proj%
    \v@leur=\@rgdeux\v@lmin\xdef\cxa@{\repdecn@mb{\v@leur}}%
    \v@leur=\@rgdeux\v@lmax\xdef\cxb@{\repdecn@mb{\v@leur}}\v@leur=\@rgdeux pt%
    \relax\ifdim\v@leur>\p@\message{*** Lambda too large ! See \BS@ figset proj() !}\fi%
    \else%
    \v@lmax=-\v@lmax\xdef\cxa@{\repdecn@mb{\v@lmax}}\xdef\cxb@{\costhet@}%
    \ifx\t@xt@\empty\edef\@rgdeux{\def@ulttheta}\fi\c@ssin{\C@}{\S@}{\@rgdeux}%
    \v@lmax=-\S@ pt%
    \v@leur=\v@lmax\v@leur=\costhet@\v@leur\xdef\cya@{\repdecn@mb{\v@leur}}%
    \v@leur=\v@lmax\v@leur=\sinthet@\v@leur\xdef\cyb@{\repdecn@mb{\v@leur}}%
    \xdef\cyc@{\C@}\v@lmin=-\C@ pt%
    \v@leur=\v@lmin\v@leur=\costhet@\v@leur\xdef\cza@{\repdecn@mb{\v@leur}}%
    \v@leur=\v@lmin\v@leur=\sinthet@\v@leur\xdef\czb@{\repdecn@mb{\v@leur}}%
    \xdef\czc@{\repdecn@mb{\v@lmax}}\fi%
    \xdef\v@lTheta{\@rgdeux}}}
\ctr@ld@f\def\def@ultpsi{40}
\ctr@ld@f\def\def@ulttheta{25}
\ctr@ln@m\l@debut
\ctr@ln@m\n@mref
\ctr@ld@f\def\Figsetpr@j#1=#2|{\keln@mtr#1|%
    \def\n@mref{dep}\ifx\l@debut\n@mref\Figsetd@p{#2}\else
    \def\n@mref{dis}\ifx\l@debut\n@mref%
     \ifnum\CUR@proj=\tw@\figsetobdist(#2)\else\Figset@rr\fi\else
    \def\n@mref{lam}\ifx\l@debut\n@mref\Figsetd@p{#2}\else
    \def\n@mref{lat}\ifx\l@debut\n@mref\Figsetth@{#2}\else
    \def\n@mref{lon}\ifx\l@debut\n@mref\figsetview(#2)\else
    \def\n@mref{psi}\ifx\l@debut\n@mref\figsetview(#2)\else
    \def\n@mref{tar}\ifx\l@debut\n@mref%
     \ifnum\CUR@proj=\tw@\figsettarget[#2]\else\Figset@rr\fi\else
    \def\n@mref{the}\ifx\l@debut\n@mref\Figsetth@{#2}\else
    \W@rnmesAttr{figset proj}{#1}\fi\fi\fi\fi\fi\fi\fi\fi}
\ctr@ld@f\def\Figsetd@p#1{\ifnum\CUR@proj=\z@\figsetview(\v@lPsi,#1)\else\Figset@rr\fi}
\ctr@ld@f\def\Figsetth@#1{\ifnum\CUR@proj=\z@\Figset@rr\else\figsetview(\v@lPsi,#1)\fi}
\ctr@ld@f\def\Figset@rr{\message{*** \BS@ figset proj(): Attribute "\n@mref" ignored, incompatible
    with current projection}}
\ctr@ld@f\def\initb@undb@xTD{\wd\BminTD@=\maxdimen\ht\BminTD@=\maxdimen\dp\BminTD@=\maxdimen%
    \wd\BmaxTD@=-\maxdimen\ht\BmaxTD@=-\maxdimen\dp\BmaxTD@=-\maxdimen}
\ctr@ln@w{newbox}\Gb@x      
\ctr@ln@w{newbox}\Gb@xSC    
\ctr@ln@w{newtoks}\c@nsymb  
\ctr@ln@w{newif}\ifr@undcoord\ctr@ln@w{newif}\ifunitpr@sent
\ctr@ld@f\def\unssqrttw@{0.707106 }
\ctr@ld@f\def\figAst{\raise-1.15ex\hbox{$\ast$}}
\ctr@ld@f\def\figBullet{\raise-1.15ex\hbox{$\bullet$}}
\ctr@ld@f\def\figCirc{\raise-1.15ex\hbox{$\circ$}}
\ctr@ld@f\def\figDiamond{\raise-1.15ex\hbox{$\diamond$}}%
\ctr@ld@f\def\boxit#1#2{\leavevmode\hbox{\vrule\vbox{\hrule\vglue#1%
    \vtop{\hbox{\kern#1{#2}\kern#1}\vglue#1\hrule}}\vrule}}
\ctr@ld@f\def\centertext#1#2{\vbox{\hsize#1\parindent0cm%
    \leftskip=0pt plus 1fil\rightskip=0pt plus 1fil\parfillskip=0pt{#2}}}
\ctr@ld@f\def\lefttext#1#2{\vbox{\hsize#1\parindent0cm\rightskip=0pt plus 1fil#2}}
\ctr@ld@f\def\c@nterpt{\ignorespaces%
    \kern-.5\wd\Gb@xSC%
    \raise-.5\ht\Gb@xSC\rlap{\hbox{\raise.5\dp\Gb@xSC\hbox{\copy\Gb@xSC}}}%
    \kern .5\wd\Gb@xSC\ignorespaces}
\ctr@ld@f\def\b@undb@xSC#1#2{{\v@lXa=#1\v@lYa=#2%
    \v@leur=\ht\Gb@xSC\advance\v@leur\dp\Gb@xSC%
    \advance\v@lXa-.5\wd\Gb@xSC\advance\v@lYa-.5\v@leur\b@undb@x{\v@lXa}{\v@lYa}%
    \advance\v@lXa\wd\Gb@xSC\advance\v@lYa\v@leur\b@undb@x{\v@lXa}{\v@lYa}}}
\ctr@ln@m\Dist@n
\ctr@ln@m\l@suite
\ctr@ld@f\def\@keldist#1#2{\edef\Dist@n{#2}\y@tiunit{\Dist@n}%
    \ifunitpr@sent#1=\Dist@n\else#1=\Dist@n\unit@\fi}
\ctr@ld@f\def\y@tiunit#1{\unitpr@sentfalse\expandafter\y@tiunit@#1:}
\ctr@ld@f\def\y@tiunit@#1#2:{\ifcat#1a\unitpr@senttrue\else\def\l@suite{#2}%
    \ifx\l@suite\empty\else\y@tiunit@#2:\fi\fi}
\ctr@ln@m\figcoord
\ctr@ld@f\def\figcoordDD#1{{\v@lX=\ptT@unit@\v@lX\v@lY=\ptT@unit@\v@lY%
    \ifr@undcoord\ifcase#1\v@leur=0.5pt\or\v@leur=0.05pt\or\v@leur=0.005pt%
    \or\v@leur=0.0005pt\else\v@leur=\z@\fi%
    \ifdim\v@lX<\z@\advance\v@lX-\v@leur\else\advance\v@lX\v@leur\fi%
    \ifdim\v@lY<\z@\advance\v@lY-\v@leur\else\advance\v@lY\v@leur\fi\fi%
    (\@ffichnb{#1}{\repdecn@mb{\v@lX}},\ifmmode\else\thinspace\fi%
    \@ffichnb{#1}{\repdecn@mb{\v@lY}})}}
\ctr@ld@f\def\@ffichnb#1#2{{\def\@@ffich{\@ffich#1(}\edef\n@mbre{#2}%
    \expandafter\@@ffich\n@mbre)}}
\ctr@ld@f\def\@ffich#1(#2.#3){{#2\ifnum#1>\z@.\fi\def\dig@ts{#3}\s@mme=\z@%
    \loop\ifnum\s@mme<#1\expandafter\@ffichdec\dig@ts:\advance\s@mme\@ne\repeat}}
\ctr@ld@f\def\@ffichdec#1#2:{\relax#1\def\dig@ts{#20}}
\ctr@ld@f\def\figcoordTD#1{{\v@lX=\ptT@unit@\v@lX\v@lY=\ptT@unit@\v@lY\v@lZ=\ptT@unit@\v@lZ%
    \ifr@undcoord\ifcase#1\v@leur=0.5pt\or\v@leur=0.05pt\or\v@leur=0.005pt%
    \or\v@leur=0.0005pt\else\v@leur=\z@\fi%
    \ifdim\v@lX<\z@\advance\v@lX-\v@leur\else\advance\v@lX\v@leur\fi%
    \ifdim\v@lY<\z@\advance\v@lY-\v@leur\else\advance\v@lY\v@leur\fi%
    \ifdim\v@lZ<\z@\advance\v@lZ-\v@leur\else\advance\v@lZ\v@leur\fi\fi%
    (\@ffichnb{#1}{\repdecn@mb{\v@lX}},\ifmmode\else\thinspace\fi%
     \@ffichnb{#1}{\repdecn@mb{\v@lY}},\ifmmode\else\thinspace\fi%
     \@ffichnb{#1}{\repdecn@mb{\v@lZ}})}}
\ctr@ld@f\def\figsetroundcoord#1{\expandafter\Figsetr@undcoord#1:\ignorespaces}
\ctr@ld@f\def\Figsetr@undcoord#1#2:{\if#1n\r@undcoordfalse\else\r@undcoordtrue\fi}
\ctr@ld@f\def\Figsetwr@te#1=#2|{\keln@mun#1|%
    \def\n@mref{m}\ifx\l@debut\n@mref\figsetmark{#2}\else
    \def\n@mref{p}\ifx\l@debut\n@mref\figsetptname{#2}\else
    \def\n@mref{r}\ifx\l@debut\n@mref\figsetroundcoord{#2}\else
    \W@rnmesAttr{figset write}{#1}\fi\fi\fi}
\ctr@ld@f\def\figsetmark#1{\c@nsymb={#1}\setbox\Gb@xSC=\hbox{\the\c@nsymb}\ignorespaces}
\ctr@ln@m\ptn@me
\ctr@ld@f\def\figsetptname#1{\def\ptn@me##1{#1}\ignorespaces}
\ctr@ld@f\def\FigWrit@L#1:#2(#3,#4){\ignorespaces\@keldist\v@leur{#3}\@keldist\delt@{#4}%
    \C@rp@r@m\def\list@num{#1}\@ecfor\p@int:=\list@num\do{\FigWrit@pt{\p@int}{#2}}}
\ctr@ld@f\def\FigWrit@pt#1#2{\FigWp@r@m{#1}{#2}\Vc@rrect\figWp@si%
    \ifdim\wd\Gb@xSC>\z@\b@undb@xSC{\v@lX}{\v@lY}\fi\figWBB@x}
\ctr@ld@f\def\FigWp@r@m#1#2{\Figg@tXY{#1}%
    \setbox\Gb@x=\hbox{\def\t@xt@{#2}\ifx\t@xt@\empty\Figg@tT{#1}\else#2\fi}\c@lprojSP}
\ctr@ld@f\let\Vc@rrect=\relax
\ctr@ld@f\let\C@rp@r@m=\relax
\ctr@ld@f\def\figwrite[#1]#2{{\ignorespaces\def\list@num{#1}\@ecfor\p@int:=\list@num\do{%
    \setbox\Gb@x=\hbox{\def\t@xt@{#2}\ifx\t@xt@\empty\Figg@tT{\p@int}\else#2\fi}%
    \Figwrit@{\p@int}}}\ignorespaces}
\ctr@ld@f\def\Figwrit@#1{\Figg@tXY{#1}\c@lprojSP%
    \rlap{\kern\v@lX\raise\v@lY\hbox{\unhcopy\Gb@x}}\v@leur=\v@lY%
    \advance\v@lY\ht\Gb@x\b@undb@x{\v@lX}{\v@lY}\advance\v@lX\wd\Gb@x%
    \v@lY=\v@leur\advance\v@lY-\dp\Gb@x\b@undb@x{\v@lX}{\v@lY}}
\ctr@ld@f\def\figwritec[#1]#2{{\ignorespaces\def\list@num{#1}%
    \@ecfor\p@int:=\list@num\do{\Figwrit@c{\p@int}{#2}}}\ignorespaces}
\ctr@ld@f\def\Figwrit@c#1#2{\FigWp@r@m{#1}{#2}%
    \rlap{\kern\v@lX\raise\v@lY\hbox{\rlap{\kern-.5\wd\Gb@x%
    \raise-.5\ht\Gb@x\hbox{\raise.5\dp\Gb@x\hbox{\unhcopy\Gb@x}}}}}%
    \v@leur=\ht\Gb@x\advance\v@leur\dp\Gb@x%
    \advance\v@lX-.5\wd\Gb@x\advance\v@lY-.5\v@leur\b@undb@x{\v@lX}{\v@lY}%
    \advance\v@lX\wd\Gb@x\advance\v@lY\v@leur\b@undb@x{\v@lX}{\v@lY}}
\ctr@ld@f\def\figwritep[#1]{{\ignorespaces\def\list@num{#1}\setbox\Gb@x=\hbox{\c@nterpt}%
    \@ecfor\p@int:=\list@num\do{\Figwrit@{\p@int}}}\ignorespaces}
\ctr@ld@f\def\figwritew#1:#2(#3){\figwritegcw#1:{#2}(#3,0pt)}
\ctr@ld@f\def\figwritee#1:#2(#3){\figwritegce#1:{#2}(#3,0pt)}
\ctr@ld@f\def\figwriten#1:#2(#3){{\def\Vc@rrect{\v@lZ=\v@leur\advance\v@lZ\dp\Gb@x}%
    \Figwrit@NS#1:{#2}(#3)}\ignorespaces}
\ctr@ld@f\def\figwrites#1:#2(#3){{\def\Vc@rrect{\v@lZ=-\v@leur\advance\v@lZ-\ht\Gb@x}%
    \Figwrit@NS#1:{#2}(#3)}\ignorespaces}
\ctr@ld@f\def\Figwrit@NS#1:#2(#3){\let\figWp@si=\FigWp@siNS\let\figWBB@x=\FigWBB@xNS%
    \FigWrit@L#1:{#2}(#3,0pt)}
\ctr@ld@f\def\FigWp@siNS{\rlap{\kern\v@lX\raise\v@lY\hbox{\rlap{\kern-.5\wd\Gb@x%
    \raise\v@lZ\hbox{\unhcopy\Gb@x}}\c@nterpt}}}
\ctr@ld@f\def\FigWBB@xNS{\advance\v@lY\v@lZ%
    \advance\v@lY-\dp\Gb@x\advance\v@lX-.5\wd\Gb@x\b@undb@x{\v@lX}{\v@lY}%
    \advance\v@lY\ht\Gb@x\advance\v@lY\dp\Gb@x%
    \advance\v@lX\wd\Gb@x\b@undb@x{\v@lX}{\v@lY}}
\ctr@ld@f\def\figwritenw#1:#2(#3){{\let\figWp@si=\FigWp@sigW\let\figWBB@x=\FigWBB@xgWE%
    \def\C@rp@r@m{\v@leur=\unssqrttw@\v@leur\delt@=\v@leur%
    \ifdim\delt@=\z@\delt@=\epsil@n\fi}\let@xte={-}\FigWrit@L#1:{#2}(#3,0pt)}\ignorespaces}
\ctr@ld@f\def\figwritesw#1:#2(#3){{\let\figWp@si=\FigWp@sigW\let\figWBB@x=\FigWBB@xgWE%
    \def\C@rp@r@m{\v@leur=\unssqrttw@\v@leur\delt@=-\v@leur%
    \ifdim\delt@=\z@\delt@=-\epsil@n\fi}\let@xte={-}\FigWrit@L#1:{#2}(#3,0pt)}\ignorespaces}
\ctr@ld@f\def\figwritene#1:#2(#3){{\let\figWp@si=\FigWp@sigE\let\figWBB@x=\FigWBB@xgWE%
    \def\C@rp@r@m{\v@leur=\unssqrttw@\v@leur\delt@=\v@leur%
    \ifdim\delt@=\z@\delt@=\epsil@n\fi}\let@xte={}\FigWrit@L#1:{#2}(#3,0pt)}\ignorespaces}
\ctr@ld@f\def\figwritese#1:#2(#3){{\let\figWp@si=\FigWp@sigE\let\figWBB@x=\FigWBB@xgWE%
    \def\C@rp@r@m{\v@leur=\unssqrttw@\v@leur\delt@=-\v@leur%
    \ifdim\delt@=\z@\delt@=-\epsil@n\fi}\let@xte={}\FigWrit@L#1:{#2}(#3,0pt)}\ignorespaces}
\ctr@ld@f\def\figwritegw#1:#2(#3,#4){{\let\figWp@si=\FigWp@sigW\let\figWBB@x=\FigWBB@xgWE%
    \let@xte={-}\FigWrit@L#1:{#2}(#3,#4)}\ignorespaces}
\ctr@ld@f\def\figwritege#1:#2(#3,#4){{\let\figWp@si=\FigWp@sigE\let\figWBB@x=\FigWBB@xgWE%
    \let@xte={}\FigWrit@L#1:{#2}(#3,#4)}\ignorespaces}
\ctr@ld@f\def\FigWp@sigW{\v@lXa=\z@\v@lYa=\ht\Gb@x\advance\v@lYa\dp\Gb@x%
    \ifdim\delt@>\z@\relax%
    \rlap{\kern\v@lX\raise\v@lY\hbox{\rlap{\kern-\wd\Gb@x\kern-\v@leur%
          \raise\delt@\hbox{\raise\dp\Gb@x\hbox{\unhcopy\Gb@x}}}\c@nterpt}}%
    \else\ifdim\delt@<\z@\relax\v@lYa=-\v@lYa%
    \rlap{\kern\v@lX\raise\v@lY\hbox{\rlap{\kern-\wd\Gb@x\kern-\v@leur%
          \raise\delt@\hbox{\raise-\ht\Gb@x\hbox{\unhcopy\Gb@x}}}\c@nterpt}}%
    \else\v@lXa=-.5\v@lYa%
    \rlap{\kern\v@lX\raise\v@lY\hbox{\rlap{\kern-\wd\Gb@x\kern-\v@leur%
          \raise-.5\ht\Gb@x\hbox{\raise.5\dp\Gb@x\hbox{\unhcopy\Gb@x}}}\c@nterpt}}%
    \fi\fi}
\ctr@ld@f\def\FigWp@sigE{\v@lXa=\z@\v@lYa=\ht\Gb@x\advance\v@lYa\dp\Gb@x%
    \ifdim\delt@>\z@\relax%
    \rlap{\kern\v@lX\raise\v@lY\hbox{\c@nterpt\kern\v@leur%
          \raise\delt@\hbox{\raise\dp\Gb@x\hbox{\unhcopy\Gb@x}}}}%
    \else\ifdim\delt@<\z@\relax\v@lYa=-\v@lYa%
    \rlap{\kern\v@lX\raise\v@lY\hbox{\c@nterpt\kern\v@leur%
          \raise\delt@\hbox{\raise-\ht\Gb@x\hbox{\unhcopy\Gb@x}}}}%
    \else\v@lXa=-.5\v@lYa%
    \rlap{\kern\v@lX\raise\v@lY\hbox{\c@nterpt\kern\v@leur%
          \raise-.5\ht\Gb@x\hbox{\raise.5\dp\Gb@x\hbox{\unhcopy\Gb@x}}}}%
    \fi\fi}
\ctr@ld@f\def\FigWBB@xgWE{\advance\v@lY\delt@%
    \advance\v@lX\the\let@xte\v@leur\advance\v@lY\v@lXa\b@undb@x{\v@lX}{\v@lY}%
    \advance\v@lX\the\let@xte\wd\Gb@x\advance\v@lY\v@lYa\b@undb@x{\v@lX}{\v@lY}}
\ctr@ld@f\def\figwritegcw#1:#2(#3,#4){{\let\figWp@si=\FigWp@sigcW\let\figWBB@x=\FigWBB@xgcWE%
    \let@xte={-}\FigWrit@L#1:{#2}(#3,#4)}\ignorespaces}
\ctr@ld@f\def\figwritegce#1:#2(#3,#4){{\let\figWp@si=\FigWp@sigcE\let\figWBB@x=\FigWBB@xgcWE%
    \let@xte={}\FigWrit@L#1:{#2}(#3,#4)}\ignorespaces}
\ctr@ld@f\def\FigWp@sigcW{\rlap{\kern\v@lX\raise\v@lY\hbox{\rlap{\kern-\wd\Gb@x\kern-\v@leur%
     \raise-.5\ht\Gb@x\hbox{\raise\delt@\hbox{\raise.5\dp\Gb@x\hbox{\unhcopy\Gb@x}}}}%
     \c@nterpt}}}
\ctr@ld@f\def\FigWp@sigcE{\rlap{\kern\v@lX\raise\v@lY\hbox{\c@nterpt\kern\v@leur%
    \raise-.5\ht\Gb@x\hbox{\raise\delt@\hbox{\raise.5\dp\Gb@x\hbox{\unhcopy\Gb@x}}}}}}
\ctr@ld@f\def\FigWBB@xgcWE{\v@lZ=\ht\Gb@x\advance\v@lZ\dp\Gb@x%
    \advance\v@lX\the\let@xte\v@leur\advance\v@lY\delt@\advance\v@lY.5\v@lZ%
    \b@undb@x{\v@lX}{\v@lY}%
    \advance\v@lX\the\let@xte\wd\Gb@x\advance\v@lY-\v@lZ\b@undb@x{\v@lX}{\v@lY}}
\ctr@ld@f\def\figwritebn#1:#2(#3){{\def\Vc@rrect{\v@lZ=\v@leur}\Figwrit@NS#1:{#2}(#3)}\ignorespaces}
\ctr@ld@f\def\figwritebs#1:#2(#3){{\def\Vc@rrect{\v@lZ=-\v@leur}\Figwrit@NS#1:{#2}(#3)}\ignorespaces}
\ctr@ld@f\def\figwritebw#1:#2(#3){{\let\figWp@si=\FigWp@sibW\let\figWBB@x=\FigWBB@xbWE%
    \let@xte={-}\FigWrit@L#1:{#2}(#3,0pt)}\ignorespaces}
\ctr@ld@f\def\figwritebe#1:#2(#3){{\let\figWp@si=\FigWp@sibE\let\figWBB@x=\FigWBB@xbWE%
    \let@xte={}\FigWrit@L#1:{#2}(#3,0pt)}\ignorespaces}
\ctr@ld@f\def\FigWp@sibW{\rlap{\kern\v@lX\raise\v@lY\hbox{\rlap{\kern-\wd\Gb@x\kern-\v@leur%
          \hbox{\unhcopy\Gb@x}}\c@nterpt}}}
\ctr@ld@f\def\FigWp@sibE{\rlap{\kern\v@lX\raise\v@lY\hbox{\c@nterpt\kern\v@leur%
          \hbox{\unhcopy\Gb@x}}}}
\ctr@ld@f\def\FigWBB@xbWE{\v@lZ=\ht\Gb@x\advance\v@lZ\dp\Gb@x%
    \advance\v@lX\the\let@xte\v@leur\advance\v@lY\ht\Gb@x\b@undb@x{\v@lX}{\v@lY}%
    \advance\v@lX\the\let@xte\wd\Gb@x\advance\v@lY-\v@lZ\b@undb@x{\v@lX}{\v@lY}}
\ctr@ln@w{newread}\frf@g  \ctr@ln@w{newwrite}\fwf@g
\ctr@ln@w{newif}\ifCUR@PS
\ctr@ln@w{newif}\ifGR@cri
\ctr@ln@w{newif}\ifUse@llipse
\ctr@ln@w{newif}\ifGRdebugm@de \GRdebugm@defalse 
\ctr@ln@w{newif}\ifPDFm@ke
\ifx\pdfliteral\undefined\else\ifnum\pdfoutput>\z@\PDFm@ketrue\fi\fi
\ctr@ld@f\def\initPDF@rDVI{%
\ifPDFm@ke
 \let\figscan=\figscan@E
 \let\newGr@FN=\newGr@FNPDF
 \ctr@ld@f\def\c@mcurveto{c}
 \ctr@ld@f\def\c@mfill{f}
 \ctr@ld@f\def\c@mgsave{q}
 \ctr@ld@f\def\c@mgrestore{Q}
 \ctr@ld@f\def\c@mlineto{l}
 \ctr@ld@f\def\c@mmoveto{m}
 \ctr@ld@f\def\c@msetgray{g}     \ctr@ld@f\def\c@msetgrayStroke{G}
 \ctr@ld@f\def\c@msetcmykcolor{k}\ctr@ld@f\def\c@msetcmykcolorStroke{K}
 \ctr@ld@f\def\c@msetrgbcolor{rg}\ctr@ld@f\def\c@msetrgbcolorStroke{RG}
 \ctr@ld@f\def\d@fprimarC@lor{\CUR@color\space\CUR@colorc@md%
               \space\CUR@color\space\CUR@colorc@mdStroke}
 \ctr@ld@f\def\c@msetdash{d}
 \ctr@ld@f\def\c@msetlinejoin{j}
 \ctr@ld@f\def\c@msetlinewidth{w}
 \ctr@ld@f\def\f@gclosestroke{\immediate\write\fwf@g{s}}
 \ctr@ld@f\def\f@gfill{\immediate\write\fwf@g{\fillc@md}}
 \ctr@ld@f\def\f@gnewpath{}
 \ctr@ld@f\def\f@gstroke{\immediate\write\fwf@g{S}}
\else
 \let\figinsertE=\figinsert
 \let\newGr@FN=\newGr@FNDVI
 \ctr@ld@f\def\c@mcurveto{curveto}
 \ctr@ld@f\def\c@mfill{fill}
 \ctr@ld@f\def\c@mgsave{gsave}
 \ctr@ld@f\def\c@mgrestore{grestore}
 \ctr@ld@f\def\c@mlineto{lineto}
 \ctr@ld@f\def\c@mmoveto{moveto}
 \ctr@ld@f\def\c@msetgray{setgray}          \ctr@ld@f\def\c@msetgrayStroke{}
 \ctr@ld@f\def\c@msetcmykcolor{setcmykcolor}\ctr@ld@f\def\c@msetcmykcolorStroke{}
 \ctr@ld@f\def\c@msetrgbcolor{setrgbcolor}  \ctr@ld@f\def\c@msetrgbcolorStroke{}
 \ctr@ld@f\def\d@fprimarC@lor{\CUR@color\space\CUR@colorc@md}
 \ctr@ld@f\def\c@msetdash{setdash}
 \ctr@ld@f\def\c@msetlinejoin{setlinejoin}
 \ctr@ld@f\def\c@msetlinewidth{setlinewidth}
 \ctr@ld@f\def\f@gclosestroke{\immediate\write\fwf@g{closepath\space stroke}}
 \ctr@ld@f\def\f@gfill{\immediate\write\fwf@g{\fillc@md}}
 \ctr@ld@f\def\f@gnewpath{\immediate\write\fwf@g{newpath}}
 \ctr@ld@f\def\f@gstroke{\immediate\write\fwf@g{stroke}}
\fi}
\ctr@ld@f\def\c@pypsfile#1#2{\c@pyfil@{\immediate\write#1}{#2}}
\ctr@ld@f\def\Figinclud@PDF#1#2{\openin\frf@g=#1\pdfliteral{q #2 0 0 #2 0 0 cm}%
    \c@pyfil@{\pdfliteral}{\frf@g}\pdfliteral{Q}\closein\frf@g}
\ctr@ln@w{newif}\ifmored@ta
\ctr@ln@m\bl@nkline
\ctr@ld@f\def\c@pyfil@#1#2{\def\bl@nkline{\par}{\catcode`\%=12
    \loop\ifeof#2\mored@tafalse\else\mored@tatrue\immediate\read#2 to\tr@c
    \ifx\tr@c\bl@nkline\else#1{\tr@c}\fi\fi\ifmored@ta\repeat}}
\ctr@ld@f\def\keln@mun#1#2|{\def\l@debut{#1}\def\l@suite{#2}}
\ctr@ld@f\def\keln@mde#1#2#3|{\def\l@debut{#1#2}\def\l@suite{#3}}
\ctr@ld@f\def\keln@mtr#1#2#3#4|{\def\l@debut{#1#2#3}\def\l@suite{#4}}
\ctr@ld@f\def\keln@mqu#1#2#3#4#5|{\def\l@debut{#1#2#3#4}\def\l@suite{#5}}
\ctr@ld@f\let\@psffilein=\frf@g 
\ctr@ln@w{newif}\if@psffileok    
\ctr@ln@w{newif}\if@psfbbfound   
\ctr@ln@w{newif}\if@psfverbose   
\@psfverbosetrue
\ctr@ln@m\@psfllx \ctr@ln@m\@psflly
\ctr@ln@m\@psfurx \ctr@ln@m\@psfury
\ctr@ln@m\resetcolonc@tcode
\ctr@ld@f\def\@psfgetbb#1{\global\@psfbbfoundfalse%
\global\def\@psfllx{0}\global\def\@psflly{0}%
\global\def\@psfurx{30}\global\def\@psfury{30}%
\openin\@psffilein=#1\relax
\ifeof\@psffilein\errmessage{I couldn't open #1, will ignore it}\else
   \edef\resetcolonc@tcode{\catcode`\noexpand\:\the\catcode`\:\relax}%
   {\@psffileoktrue \chardef\other=12
    \def\do##1{\catcode`##1=\other}\dospecials \catcode`\ =10 \resetcolonc@tcode
    \loop
       \read\@psffilein to \@psffileline
       \ifeof\@psffilein\@psffileokfalse\else
          \expandafter\@psfaux\@psffileline:. \\%
       \fi
   \if@psffileok\repeat
   \if@psfbbfound\else
    \if@psfverbose\message{No bounding box comment in #1; using defaults}\fi\fi
   }\closein\@psffilein\fi}%
\ctr@ln@m\@psfbblit
\ctr@ln@m\@psfpercent
{\catcode`\%=12 \global\let\@psfpercent=
\ctr@ln@m\@psfaux
\long\def\@psfaux#1#2:#3\\{\ifx#1\@psfpercent
   \def\testit{#2}\ifx\testit\@psfbblit
      \@psfgrab #3 . . . \\%
      \@psffileokfalse
      \global\@psfbbfoundtrue
   \fi\else\ifx#1\par\else\@psffileokfalse\fi\fi}%
\ctr@ld@f\def\@psfempty{}%
\ctr@ld@f\def\@psfgrab #1 #2 #3 #4 #5\\{%
\global\def\@psfllx{#1}\ifx\@psfllx\@psfempty
      \@psfgrab #2 #3 #4 #5 .\\\else
   \global\def\@psflly{#2}%
   \global\def\@psfurx{#3}\global\def\@psfury{#4}\fi}%
\ctr@ld@f\def\PSwrit@cmd#1#2#3{{\Figg@tXY{#1}\c@lprojSP\b@undb@x{\v@lX}{\v@lY}%
    \v@lX=\ptT@ptps\v@lX\v@lY=\ptT@ptps\v@lY%
    \immediate\write#3{\repdecn@mb{\v@lX}\space\repdecn@mb{\v@lY}\space#2}}}
\ctr@ld@f\def\PSwrit@cmdS#1#2#3#4#5{{\Figg@tXY{#1}\c@lprojSP\b@undb@x{\v@lX}{\v@lY}%
    \global\result@t=\v@lX\global\result@@t=\v@lY%
    \v@lX=\ptT@ptps\v@lX\v@lY=\ptT@ptps\v@lY%
    \immediate\write#3{\repdecn@mb{\v@lX}\space\repdecn@mb{\v@lY}\space#2}}%
    \edef#4{\the\result@t}\edef#5{\the\result@@t}}
\ctr@ld@f\def\update@ttr#1#2#3{\Figdisc@rdLTS{#3}{\n@mref}%
    \ifx\n@mref\D@FTref#2{#1}\else#2{#3}\fi}
\ctr@ld@f\def\D@FTref{default}
\ctr@ld@f\def\W@rnmesAttr#1#2{%
    \immediate\write16{*** Unknown attribute: \BS@ #1(..., #2=...)}}
\ctr@ld@f\def\W@rnmeskwd#1#2{%
    \immediate\write16{*** Unknown keyword #2 in \BS@ #1}}
\ctr@ld@f\def\W@rnmesIgn#1{\immediate\write16{*** \BS@ #1 is ignored inside a
     \BS@ figdrawbegin-\BS@ figdrawend block.}}
\ctr@ld@f\def\Psset@lti#1=#2|{\keln@mtr#1|%
    \def\n@mref{blc}\ifx\l@debut\n@mref\update@ttr\D@FTref\P@setblcolor{#2}\else
    \def\n@mref{bld}\ifx\l@debut\n@mref\update@ttr\D@FTref\P@setbldash{#2}\else
    \def\n@mref{blw}\ifx\l@debut\n@mref\update@ttr\D@FTref\P@setblwidth{#2}\else
    \def\n@mref{sqc}\ifx\l@debut\n@mref\update@ttr\D@FTref\P@setsqcolor{#2}\else
    \def\n@mref{sqd}\ifx\l@debut\n@mref\update@ttr\D@FTref\P@setsqdash{#2}\else
    \def\n@mref{sqw}\ifx\l@debut\n@mref\update@ttr\D@FTref\P@setsqwidth{#2}\else
    \W@rnmesAttr{figset altitude}{#1}\fi\fi\fi\fi\fi\fi}
\ctr@ln@m\DDV@blcolor
\ctr@ld@f\def\P@setblcolor#1{\edef\DDV@blcolor{#1}}
\ctr@ln@m\DDV@bldash
\ctr@ld@f\def\P@setbldash#1{\edef\DDV@bldash{#1}}
\ctr@ln@m\DDV@blwidth
\ctr@ld@f\def\P@setblwidth#1{\edef\DDV@blwidth{#1}}
\ctr@ln@m\DDV@sqcolor
\ctr@ld@f\def\P@setsqcolor#1{\edef\DDV@sqcolor{#1}}
\ctr@ln@m\DDV@sqdash
\ctr@ld@f\def\P@setsqdash#1{\edef\DDV@sqdash{#1}}
\ctr@ln@m\DDV@sqwidth
\ctr@ld@f\def\P@setsqwidth#1{\edef\DDV@sqwidth{#1}}
\ctr@ld@f\def\figdrawaltitude#1[#2,#3,#4]{{\ifCUR@PS\ifGR@cri%
    \PSc@mment{altitude Square Dim=#1, Triangle=[#2 / #3,#4]}%
    \s@uvc@ntr@l\et@tpsaltitude\resetc@ntr@l{2}\figptorthoprojline-5:=#2/#3,#4/%
    \figvectP -1[#3,#4]\n@rminf{\v@leur}{-1}\vecunit@{-3}{-1}%
    \figvectP -1[-5,#3]\n@rminf{\v@lmin}{-1}\figvectP -2[-5,#4]\n@rminf{\v@lmax}{-2}%
    \ifdim\v@lmin<\v@lmax\s@mme=#3\else\v@lmax=\v@lmin\s@mme=#4\fi%
    \figvectP -4[-5,#2]\vecunit@{-4}{-4}\delt@=#1\unit@%
    \edef\t@ille{\repdecn@mb{\delt@}}\figpttra-1:=-5/\t@ille,-3/%
    \figptstra-3=-5,-1/\t@ille,-4/\figdrawline[#2,-5]%
    \Pss@tspecifSt{color=\DDV@sqcolor,dash=\DDV@sqdash,width=\DDV@sqwidth}%
    \figdrawline[-1,-2,-3]%
    \Psrest@reSt{color=\DDV@sqcolor,dash=\DDV@sqdash,width=\DDV@sqwidth}%
    \ifdim\v@leur<\v@lmax%
    \Pss@tspecifSt{color=\DDV@blcolor,dash=\DDV@bldash,width=\DDV@blwidth}%
    \figdrawline[-5,\the\s@mme]%
    \Psrest@reSt{color=\DDV@blcolor,dash=\DDV@bldash,width=\DDV@blwidth}%
    \fi\PSc@mment{End altitude}\resetc@ntr@l\et@tpsaltitude\fi\fi}}
\ctr@ld@f\def\Ps@rcerc#1;#2(#3,#4){\ellBB@x#1;#2,#2(#3,#4,0)%
    \f@gnewpath{\delt@=#2\unit@\delt@=\ptT@ptps\delt@%
    \BdingB@xfalse%
    \PSwrit@cmd{#1}{\repdecn@mb{\delt@}\space #3\space #4\space arc}{\fwf@g}}}
\ctr@ln@m\figdrawarccirc
\ctr@ld@f\def\Q@arccircDD#1;#2(#3,#4){\ifCUR@PS\ifGR@cri%
    \PSc@mment{arccircDD Center=#1 ; Radius=#2 (Ang1=#3, Ang2=#4)}%
    \iffillm@de\Ps@rcerc#1;#2(#3,#4)%
    \f@gfill%
    \else\Ps@rcerc#1;#2(#3,#4)\f@gstroke\fi%
    \PSc@mment{End arccircDD}\fi\fi}
\ctr@ld@f\def\Q@arccircTD#1,#2,#3;#4(#5,#6){{\ifCUR@PS\ifGR@cri\s@uvc@ntr@l\et@tpsarccircTD%
    \PSc@mment{arccircTD Center=#1,P1=#2,P2=#3 ; Radius=#4 (Ang1=#5, Ang2=#6)}%
    \setc@ntr@l{2}\c@lExtAxes#1,#2,#3(#4)\Q@arcellPATD#1,-4,-5(#5,#6)%
    \PSc@mment{End arccircTD}\resetc@ntr@l\et@tpsarccircTD\fi\fi}}
\ctr@ld@f\def\c@lExtAxes#1,#2,#3(#4){%
    \figvectPTD-5[#1,#2]\vecunit@{-5}{-5}\figvectNTD-4[#1,#2,#3]\vecunit@{-4}{-4}%
    \figvectNVTD-3[-4,-5]\delt@=#4\unit@\edef\r@yon{\repdecn@mb{\delt@}}%
    \figpttra-4:=#1/\r@yon,-5/\figpttra-5:=#1/\r@yon,-3/}
\ctr@ln@m\figdrawarccircP
\ctr@ld@f\def\Q@arccircPDD#1;#2[#3,#4]{{\ifCUR@PS\ifGR@cri\s@uvc@ntr@l\et@tpsarccircPDD%
    \PSc@mment{arccircPDD Center=#1; Radius=#2, [P1=#3, P2=#4]}%
    \Ps@ngleparam#1;#2[#3,#4]\ifdim\v@lmin>\v@lmax\advance\v@lmax\DePI@deg\fi%
    \edef\@ngdeb{\repdecn@mb{\v@lmin}}\edef\@ngfin{\repdecn@mb{\v@lmax}}%
    \figdrawarccirc#1;\r@dius(\@ngdeb,\@ngfin)%
    \PSc@mment{End arccircPDD}\resetc@ntr@l\et@tpsarccircPDD\fi\fi}}
\ctr@ld@f\def\Q@arccircPTD#1;#2[#3,#4,#5]{{\ifCUR@PS\ifGR@cri\s@uvc@ntr@l\et@tpsarccircPTD%
    \PSc@mment{arccircPTD Center=#1; Radius=#2, [P1=#3, P2=#4, P3=#5]}%
    \setc@ntr@l{2}\c@lExtAxes#1,#3,#5(#2)\figdrawarcellPP#1,-4,-5[#3,#4]%
    \PSc@mment{End arccircPTD}\resetc@ntr@l\et@tpsarccircPTD\fi\fi}}
\ctr@ld@f\def\Ps@ngleparam#1;#2[#3,#4]{\setc@ntr@l{2}%
    \figvectPDD-1[#1,#3]\vecunit@{-1}{-1}\Figg@tXY{-1}\arct@n\v@lmin(\v@lX,\v@lY)%
    \figvectPDD-2[#1,#4]\vecunit@{-2}{-2}\Figg@tXY{-2}\arct@n\v@lmax(\v@lX,\v@lY)%
    \v@lmin=\rdT@deg\v@lmin\v@lmax=\rdT@deg\v@lmax%
    \v@leur=#2pt\maxim@m{\mili@u}{-\v@leur}{\v@leur}%
    \edef\r@dius{\repdecn@mb{\mili@u}}}
\ctr@ld@f\def\Ps@rcercBz#1;#2(#3,#4){\Ps@rellBz#1;#2,#2(#3,#4,0)}
\ctr@ld@f\def\Ps@rellBz#1;#2,#3(#4,#5,#6){%
    \ellBB@x#1;#2,#3(#4,#5,#6)\BdingB@xfalse%
    \c@lNbarcs{#4}{#5}\v@leur=#4pt\setc@ntr@l{2}\figptell-13::#1;#2,#3(#4,#6)%
    \f@gnewpath\PSwrit@cmd{-13}{\c@mmoveto}{\fwf@g}%
    \s@mme=\z@\bcl@rellBz#1;#2,#3(#6)\BdingB@xtrue}
\ctr@ld@f\def\bcl@rellBz#1;#2,#3(#4){\relax%
    \ifnum\s@mme<\p@rtent\advance\s@mme\@ne%
    \advance\v@leur\delt@\edef\@ngle{\repdecn@mb\v@leur}\figptell-14::#1;#2,#3(\@ngle,#4)%
    \advance\v@leur\delt@\edef\@ngle{\repdecn@mb\v@leur}\figptell-15::#1;#2,#3(\@ngle,#4)%
    \advance\v@leur\delt@\edef\@ngle{\repdecn@mb\v@leur}\figptell-16::#1;#2,#3(\@ngle,#4)%
    \figptscontrolDD-18[-13,-14,-15,-16]%
    \PSwrit@cmd{-18}{}{\fwf@g}\PSwrit@cmd{-17}{}{\fwf@g}%
    \PSwrit@cmd{-16}{\c@mcurveto}{\fwf@g}%
    \figptcopyDD-13:/-16/\bcl@rellBz#1;#2,#3(#4)\fi}
\ctr@ld@f\def\Ps@rell#1;#2,#3(#4,#5,#6){\ellBB@x#1;#2,#3(#4,#5,#6)%
    \f@gnewpath{\v@lmin=#2\unit@\v@lmin=\ptT@ptps\v@lmin%
    \v@lmax=#3\unit@\v@lmax=\ptT@ptps\v@lmax\BdingB@xfalse%
    \PSwrit@cmd{#1}%
    {#6\space\repdecn@mb{\v@lmin}\space\repdecn@mb{\v@lmax}\space #4\space #5\space ellipse}{\fwf@g}}%
    \global\Use@llipsetrue}
\ctr@ln@m\figdrawarcell
\ctr@ld@f\def\Q@arcellDD#1;#2,#3(#4,#5,#6){{\ifCUR@PS\ifGR@cri%
    \PSc@mment{arcellDD Center=#1 ; XRad=#2, YRad=#3 (Ang1=#4, Ang2=#5, Inclination=#6)}%
    \iffillm@de\Ps@rell#1;#2,#3(#4,#5,#6)%
    \f@gfill%
    \else\Ps@rell#1;#2,#3(#4,#5,#6)\f@gstroke\fi%
    \PSc@mment{End arcellDD}\fi\fi}}
\ctr@ld@f\def\Q@arcellTD#1;#2,#3(#4,#5,#6){{\ifCUR@PS\ifGR@cri\s@uvc@ntr@l\et@tpsarcellTD%
    \PSc@mment{arcellTD Center=#1 ; XRad=#2, YRad=#3 (Ang1=#4, Ang2=#5, Inclination=#6)}%
    \setc@ntr@l{2}\figpttraC -8:=#1/#2,0,0/\figpttraC -7:=#1/0,#3,0/%
    \figvectC -4(0,0,1)\figptsrot -8=-8,-7/#1,#6,-4/\Q@arcellPATD#1,-8,-7(#4,#5)%
    \PSc@mment{End arcellTD}\resetc@ntr@l\et@tpsarcellTD\fi\fi}}
\ctr@ln@m\figdrawarcellPA
\ctr@ld@f\def\Q@arcellPADD#1,#2,#3(#4,#5){{\ifCUR@PS\ifGR@cri\s@uvc@ntr@l\et@tpsarcellPADD%
    \PSc@mment{arcellPADD Center=#1,PtAxis1=#2,PtAxis2=#3 (Ang1=#4, Ang2=#5)}%
    \setc@ntr@l{2}\figvectPDD-1[#1,#2]\vecunit@DD{-1}{-1}\v@lX=\ptT@unit@\result@t%
    \edef\XR@d{\repdecn@mb{\v@lX}}\Figg@tXY{-1}\arct@n\v@lmin(\v@lX,\v@lY)%
    \v@lmin=\rdT@deg\v@lmin\edef\Inclin@{\repdecn@mb{\v@lmin}}%
    \figgetdist\YR@d[#1,#3]\Q@arcellDD#1;\XR@d,\YR@d(#4,#5,\Inclin@)%
    \PSc@mment{End arcellPADD}\resetc@ntr@l\et@tpsarcellPADD\fi\fi}}
\ctr@ld@f\def\Q@arcellPATD#1,#2,#3(#4,#5){{\ifCUR@PS\ifGR@cri\s@uvc@ntr@l\et@tpsarcellPATD%
    \PSc@mment{arcellPATD Center=#1,PtAxis1=#2,PtAxis2=#3 (Ang1=#4, Ang2=#5)}%
    \iffillm@de\Ps@rellPATD#1,#2,#3(#4,#5)%
    \f@gfill%
    \else\Ps@rellPATD#1,#2,#3(#4,#5)\f@gstroke\fi%
    \PSc@mment{End arcellPATD}\resetc@ntr@l\et@tpsarcellPATD\fi\fi}}
\ctr@ld@f\def\Ps@rellPATD#1,#2,#3(#4,#5){\let\c@lprojSP=\relax%
    \setc@ntr@l{2}\figvectPTD-1[#1,#2]\figvectPTD-2[#1,#3]\c@lNbarcs{#4}{#5}%
    \v@leur=#4pt\c@lptellP{#1}{-1}{-2}\Figptpr@j-5:/-3/%
    \f@gnewpath\PSwrit@cmdS{-5}{\c@mmoveto}{\fwf@g}{\X@un}{\Y@un}%
    \edef\C@nt@r{#1}\s@mme=\z@\bcl@rellPATD}
\ctr@ld@f\def\bcl@rellPATD{\relax%
    \ifnum\s@mme<\p@rtent\advance\s@mme\@ne%
    \advance\v@leur\delt@\c@lptellP{\C@nt@r}{-1}{-2}\Figptpr@j-4:/-3/%
    \advance\v@leur\delt@\c@lptellP{\C@nt@r}{-1}{-2}\Figptpr@j-6:/-3/%
    \advance\v@leur\delt@\c@lptellP{\C@nt@r}{-1}{-2}\Figptpr@j-3:/-3/%
    \v@lX=\z@\v@lY=\z@\Figtr@nptDD{-5}{-5}\Figtr@nptDD{2}{-3}%
    \divide\v@lX\@vi\divide\v@lY\@vi%
    \Figtr@nptDD{3}{-4}\Figtr@nptDD{-1.5}{-6}\v@lmin=\v@lX\v@lmax=\v@lY%
    \v@lX=\z@\v@lY=\z@\Figtr@nptDD{2}{-5}\Figtr@nptDD{-5}{-3}%
    \divide\v@lX\@vi\divide\v@lY\@vi\Figtr@nptDD{-1.5}{-4}\Figtr@nptDD{3}{-6}%
    \BdingB@xfalse%
    \Figp@intregDD-4:(\v@lmin,\v@lmax)\PSwrit@cmdS{-4}{}{\fwf@g}{\X@de}{\Y@de}%
    \Figp@intregDD-4:(\v@lX,\v@lY)\PSwrit@cmdS{-4}{}{\fwf@g}{\X@tr}{\Y@tr}%
    \BdingB@xtrue\PSwrit@cmdS{-3}{\c@mcurveto}{\fwf@g}{\X@qu}{\Y@qu}%
    \B@zierBB@x{1}{\Y@un}(\X@un,\X@de,\X@tr,\X@qu)%
    \B@zierBB@x{2}{\X@un}(\Y@un,\Y@de,\Y@tr,\Y@qu)%
    \edef\X@un{\X@qu}\edef\Y@un{\Y@qu}\figptcopyDD-5:/-3/\bcl@rellPATD\fi}
\ctr@ld@f\def\c@lNbarcs#1#2{%
    \delt@=#2pt\advance\delt@-#1pt\maxim@m{\v@lmax}{\delt@}{-\delt@}%
    \v@leur=\v@lmax\divide\v@leur45 \p@rtentiere{\p@rtent}{\v@leur}\advance\p@rtent\@ne%
    \s@mme=\p@rtent\multiply\s@mme\thr@@\divide\delt@\s@mme}
\ctr@ld@f\def\figdrawarcellPP#1,#2,#3[#4,#5]{{\ifCUR@PS\ifGR@cri\s@uvc@ntr@l\et@tpsarcellPP%
    \PSc@mment{arcellPP Center=#1,PtAxis1=#2,PtAxis2=#3 [Point1=#4, Point2=#5]}%
    \setc@ntr@l{2}\figvectP-2[#1,#3]\vecunit@{-2}{-2}\v@lmin=\result@t%
    \invers@{\v@lmax}{\v@lmin}%
    \figvectP-1[#1,#2]\vecunit@{-1}{-1}\v@leur=\result@t%
    \v@leur=\repdecn@mb{\v@lmax}\v@leur\edef\AsB@{\repdecn@mb{\v@leur}}
    \c@lAngle{#1}{#4}{\v@lmin}\edef\@ngdeb{\repdecn@mb{\v@lmin}}%
    \c@lAngle{#1}{#5}{\v@lmax}\ifdim\v@lmin>\v@lmax\advance\v@lmax\DePI@deg\fi%
    \edef\@ngfin{\repdecn@mb{\v@lmax}}\figdrawarcellPA#1,#2,#3(\@ngdeb,\@ngfin)%
    \PSc@mment{End arcellPP}\resetc@ntr@l\et@tpsarcellPP\fi\fi}}
\ctr@ld@f\def\c@lAngle#1#2#3{\figvectP-3[#1,#2]%
    \c@lproscal\delt@[-3,-1]\c@lproscal\v@leur[-3,-2]%
    \v@leur=\AsB@\v@leur\arct@n#3(\delt@,\v@leur)#3=\rdT@deg#3}
\ctr@ln@w{newif}\if@rrowratio\@rrowratiotrue
\ctr@ln@w{newif}\if@rrowhfill
\ctr@ln@w{newif}\if@rrowhout
\ctr@ld@f\def\Psset@rrowhe@d#1=#2|{\keln@mun#1|%
    \def\n@mref{a}\ifx\l@debut\n@mref\update@ttr\D@FTarrowheadangle\Q@s@tarrowheadangle{#2}\else
    \def\n@mref{f}\ifx\l@debut\n@mref\update@ttr\D@FTarrowheadfill\Q@s@tarrowheadfill{#2}\else
    \def\n@mref{l}\ifx\l@debut\n@mref\update@ttr\D@FTarrowheadlength\Q@s@tarrowheadlength{#2}\else
    \def\n@mref{o}\ifx\l@debut\n@mref\update@ttr\D@FTarrowheadout\Q@s@tarrowheadout{#2}\else
    \def\n@mref{r}\ifx\l@debut\n@mref\update@ttr\D@FTarrowheadratio\Q@s@tarrowheadratio{#2}\else
    \W@rnmesAttr{figset arrowhead}{#1}\fi\fi\fi\fi\fi}
\ctr@ln@m\@rrowheadangle
\ctr@ln@m\C@AHANG \ctr@ln@m\S@AHANG \ctr@ln@m\UNSS@N
\ctr@ld@f\def\Q@s@tarrowheadangle#1{\edef\@rrowheadangle{#1}{\c@ssin{\C@}{\S@}{#1}%
    \xdef\C@AHANG{\C@}\xdef\S@AHANG{\S@}\v@lmax=\S@ pt%
    \invers@{\v@leur}{\v@lmax}\maxim@m{\v@leur}{\v@leur}{-\v@leur}%
    \xdef\UNSS@N{\the\v@leur}}}
\ctr@ld@f\def\Q@s@tarrowheadfill#1{\expandafter\set@rrowhfill#1:}
\ctr@ld@f\def\set@rrowhfill#1#2:{\if#1n\@rrowhfillfalse\else\@rrowhfilltrue\fi}
\ctr@ld@f\def\Q@s@tarrowheadout#1{\expandafter\set@rrowhout#1:}
\ctr@ld@f\def\set@rrowhout#1#2:{\if#1n\@rrowhoutfalse\else\@rrowhouttrue\fi}
\ctr@ln@m\@rrowheadlength
\ctr@ld@f\def\Q@s@tarrowheadlength#1{\edef\@rrowheadlength{#1}\@rrowratiofalse}
\ctr@ln@m\@rrowheadratio
\ctr@ld@f\def\Q@s@tarrowheadratio#1{\edef\@rrowheadratio{#1}\@rrowratiotrue}
\ctr@ln@m\D@FTarrowheadlength
\ctr@ld@f\def\figresetarrowhead{%
    \Q@s@tarrowheadangle{\D@FTarrowheadangle}%
    \Q@s@tarrowheadfill{\D@FTarrowheadfill}%
    \Q@s@tarrowheadout{\D@FTarrowheadout}%
    \Q@s@tarrowheadratio{\D@FTarrowheadratio}%
    \d@fm@cdim\D@FTarrowheadlength{\D@FTh@rdahlength}
    \Q@s@tarrowheadlength{\D@FTarrowheadlength}}
\ctr@ld@f\def\D@FTarrowheadratio{0.1}
\ctr@ld@f\def\D@FTarrowheadangle{20}
\ctr@ld@f\def\D@FTarrowheadfill{no}
\ctr@ld@f\def\D@FTarrowheadout{no}
\ctr@ld@f\def\D@FTh@rdahlength{8pt}
\ctr@ln@m\figdrawarrow
\ctr@ld@f\def\Q@arrowDD[#1,#2]{{\ifCUR@PS\ifGR@cri\s@uvc@ntr@l\et@tpsarrow%
    \PSc@mment{arrowDD [Pt1,Pt2]=[#1,#2]}\Q@s@tfillmode{no}%
    \Q@arrowheadDD[#1,#2]\setc@ntr@l{2}\figdrawline[#1,-3]%
    \PSc@mment{End arrowDD}\resetc@ntr@l\et@tpsarrow\fi\fi}}
\ctr@ld@f\def\Q@arrowTD[#1,#2]{{\ifCUR@PS\ifGR@cri\s@uvc@ntr@l\et@tpsarrowTD%
    \PSc@mment{arrowTD [Pt1,Pt2]=[#1,#2]}\resetc@ntr@l{2}%
    \Figptpr@j-5:/#1/\Figptpr@j-6:/#2/\let\c@lprojSP=\relax\Q@arrowDD[-5,-6]%
    \PSc@mment{End arrowTD}\resetc@ntr@l\et@tpsarrowTD\fi\fi}}
\ctr@ln@m\figdrawarrowhead
\ctr@ld@f\def\Q@arrowheadDD[#1,#2]{{\ifCUR@PS\ifGR@cri\s@uvc@ntr@l\et@tpsarrowheadDD%
    \if@rrowhfill\def\@hangle{-\@rrowheadangle}\else\def\@hangle{\@rrowheadangle}\fi%
    \if@rrowratio%
    \if@rrowhout\def\@hratio{-\@rrowheadratio}\else\def\@hratio{\@rrowheadratio}\fi%
    \PSc@mment{arrowheadDD Ratio=\@hratio, Angle=\@hangle, [Pt1,Pt2]=[#1,#2]}%
    \Ps@rrowhead\@hratio,\@hangle[#1,#2]%
    \else%
    \if@rrowhout\def\@hlength{-\@rrowheadlength}\else\def\@hlength{\@rrowheadlength}\fi%
    \PSc@mment{arrowheadDD Length=\@hlength, Angle=\@hangle, [Pt1,Pt2]=[#1,#2]}%
    \Ps@rrowheadfd\@hlength,\@hangle[#1,#2]%
    \fi%
    \PSc@mment{End arrowheadDD}\resetc@ntr@l\et@tpsarrowheadDD\fi\fi}}
\ctr@ld@f\def\Q@arrowheadTD[#1,#2]{{\ifCUR@PS\ifGR@cri\s@uvc@ntr@l\et@tpsarrowheadTD%
    \PSc@mment{arrowheadTD [Pt1,Pt2]=[#1,#2]}\resetc@ntr@l{2}%
    \Figptpr@j-5:/#1/\Figptpr@j-6:/#2/\let\c@lprojSP=\relax\Q@arrowheadDD[-5,-6]%
    \PSc@mment{End arrowheadTD}\resetc@ntr@l\et@tpsarrowheadTD\fi\fi}}
\ctr@ld@f\def\Ps@rrowhead#1,#2[#3,#4]{\v@leur=#1\p@\maxim@m{\v@leur}{\v@leur}{-\v@leur}%
    \ifdim\v@leur>\Cepsil@n{
    \PSc@mment{@rrowhead Ratio=#1, Angle=#2, [Pt1,Pt2]=[#3,#4]}\v@leur=\UNSS@N%
    \v@leur=\CUR@width\v@leur\v@leur=\ptpsT@pt\v@leur\delt@=.5\v@leur
    \setc@ntr@l{2}\figvectPDD-3[#4,#3]%
    \Figg@tXY{-3}\v@lX=#1\v@lX\v@lY=#1\v@lY\Figv@ctCreg-3(\v@lX,\v@lY)%
    \vecunit@{-4}{-3}\mili@u=\result@t%
    \ifdim#2pt>\z@\v@lXa=-\C@AHANG\delt@%
     \edef\c@ef{\repdecn@mb{\v@lXa}}\figpttraDD-3:=-3/\c@ef,-4/\fi%
    \edef\c@ef{\repdecn@mb{\delt@}}%
    \v@lXa=\mili@u\v@lXa=\C@AHANG\v@lXa%
    \v@lYa=\ptpsT@pt\p@\v@lYa=\CUR@width\v@lYa\v@lYa=\sDcc@ngle\v@lYa%
    \advance\v@lXa-\v@lYa\gdef\sDcc@ngle{0}%
    \ifdim\v@lXa>\v@leur\edef\c@efendpt{\repdecn@mb{\v@leur}}%
    \else\edef\c@efendpt{\repdecn@mb{\v@lXa}}\fi%
    \Figg@tXY{-3}\v@lmin=\v@lX\v@lmax=\v@lY%
    \v@lXa=\C@AHANG\v@lmin\v@lYa=\S@AHANG\v@lmax\advance\v@lXa\v@lYa%
    \v@lYa=-\S@AHANG\v@lmin\v@lX=\C@AHANG\v@lmax\advance\v@lYa\v@lX%
    \setc@ntr@l{1}\Figg@tXY{#4}\advance\v@lX\v@lXa\advance\v@lY\v@lYa%
    \setc@ntr@l{2}\Figp@intregDD-2:(\v@lX,\v@lY)%
    \v@lXa=\C@AHANG\v@lmin\v@lYa=-\S@AHANG\v@lmax\advance\v@lXa\v@lYa%
    \v@lYa=\S@AHANG\v@lmin\v@lX=\C@AHANG\v@lmax\advance\v@lYa\v@lX%
    \setc@ntr@l{1}\Figg@tXY{#4}\advance\v@lX\v@lXa\advance\v@lY\v@lYa%
    \setc@ntr@l{2}\Figp@intregDD-1:(\v@lX,\v@lY)%
    \ifdim#2pt<\z@\fillm@detrue\figdrawline[-2,#4,-1]
    \else\figptstraDD-3=#4,-2,-1/\c@ef,-4/\s@uvdash{\typ@dash}\Q@s@tdash{\D@FTdash}%
    \figdrawline[-2,-3,-1]\Q@s@tdash{\typ@dash}\fi
    \ifdim#1pt>\z@\figpttraDD-3:=#4/\c@efendpt,-4/\else\figptcopyDD-3:/#4/\fi%
    \PSc@mment{End @rrowhead}}\fi}
\ctr@ld@f\def\sDcc@ngle{0}
\ctr@ld@f\def\Ps@rrowheadfd#1,#2[#3,#4]{{%
    \PSc@mment{@rrowheadfd Length=#1, Angle=#2, [Pt1,Pt2]=[#3,#4]}%
    \setc@ntr@l{2}\figvectPDD-1[#3,#4]\n@rmeucDD{\v@leur}{-1}\v@leur=\ptT@unit@\v@leur%
    \invers@{\v@leur}{\v@leur}\v@leur=#1\v@leur\edef\R@tio{\repdecn@mb{\v@leur}}%
    \Ps@rrowhead\R@tio,#2[#3,#4]\PSc@mment{End @rrowheadfd}}}
\ctr@ln@m\figdrawarrowBezier
\ctr@ld@f\def\Q@arrowBezierDD[#1,#2,#3,#4]{{\ifCUR@PS\ifGR@cri\s@uvc@ntr@l\et@tpsarrowBezierDD%
    \PSc@mment{arrowBezierDD Control points=#1,#2,#3,#4}\setc@ntr@l{2}%
    \if@rrowratio\c@larclengthDD\v@leur,10[#1,#2,#3,#4]\else\v@leur=\z@\fi%
    \Ps@rrowB@zDD\v@leur[#1,#2,#3,#4]%
    \PSc@mment{End arrowBezierDD}\resetc@ntr@l\et@tpsarrowBezierDD\fi\fi}}
\ctr@ld@f\def\Q@arrowBezierTD[#1,#2,#3,#4]{{\ifCUR@PS\ifGR@cri\s@uvc@ntr@l\et@tpsarrowBezierTD%
    \PSc@mment{arrowBezierTD Control points=#1,#2,#3,#4}\resetc@ntr@l{2}%
    \Figptpr@j-7:/#1/\Figptpr@j-8:/#2/\Figptpr@j-9:/#3/\Figptpr@j-10:/#4/%
    \let\c@lprojSP=\relax\ifnum\CUR@proj<\tw@\Q@arrowBezierDD[-7,-8,-9,-10]%
    \else\f@gnewpath\PSwrit@cmd{-7}{\c@mmoveto}{\fwf@g}%
    \if@rrowratio\c@larclengthDD\mili@u,10[-7,-8,-9,-10]\else\mili@u=\z@\fi%
    \p@rtent=\NBz@rcs\advance\p@rtent\m@ne\subB@zierTD\p@rtent[#1,#2,#3,#4]%
    \f@gstroke%
    \advance\v@lmin\p@rtent\delt@
    \v@leur=\v@lmin\advance\v@leur0.33333 \delt@\edef\unti@rs{\repdecn@mb{\v@leur}}%
    \v@leur=\v@lmin\advance\v@leur0.66666 \delt@\edef\deti@rs{\repdecn@mb{\v@leur}}%
    \figptcopyDD-8:/-10/\c@lsubBzarc\unti@rs,\deti@rs[#1,#2,#3,#4]%
    \figptcopyDD-8:/-4/\figptcopyDD-9:/-3/\Ps@rrowB@zDD\mili@u[-7,-8,-9,-10]\fi%
    \PSc@mment{End arrowBezierTD}\resetc@ntr@l\et@tpsarrowBezierTD\fi\fi}}
\ctr@ld@f\def\c@larclengthDD#1,#2[#3,#4,#5,#6]{{\p@rtent=#2\figptcopyDD-5:/#3/%
    \delt@=\p@\divide\delt@\p@rtent\c@rre=\z@\v@leur=\z@\s@mme=\z@%
    \loop\ifnum\s@mme<\p@rtent\advance\s@mme\@ne\advance\v@leur\delt@%
    \edef\T@{\repdecn@mb{\v@leur}}\figptBezierDD-6::\T@[#3,#4,#5,#6]%
    \figvectPDD-1[-5,-6]\n@rmeucDD{\mili@u}{-1}\advance\c@rre\mili@u%
    \figptcopyDD-5:/-6/\repeat\global\result@t=\ptT@unit@\c@rre}#1=\result@t}
\ctr@ld@f\def\Ps@rrowB@zDD#1[#2,#3,#4,#5]{{\Q@s@tfillmode{no}%
    \if@rrowratio\delt@=\@rrowheadratio#1\else\delt@=\@rrowheadlength pt\fi%
    \v@leur=\C@AHANG\delt@\edef\R@dius{\repdecn@mb{\v@leur}}%
    \FigptintercircB@zDD-5::0,\R@dius[#5,#4,#3,#2]%
    \Q@s@tarrowheadlength{\repdecn@mb{\delt@}}\Q@arrowheadDD[-5,#5]%
    \let\n@rmeuc=\n@rmeucDD\figgetdist\R@dius[#5,-3]%
    \FigptintercircB@zDD-6::0,\R@dius[#5,#4,#3,#2]%
    \figptBezierDD-5::0.33333[#5,#4,#3,#2]\figptBezierDD-3::0.66666[#5,#4,#3,#2]%
    \figptscontrolDD-5[-6,-5,-3,#2]\Q@BezierDD1[-6,-5,-4,#2]}}
\ctr@ln@m\figdrawarrowcirc
\ctr@ld@f\def\Q@arrowcircDD#1;#2(#3,#4){{\ifCUR@PS\ifGR@cri\s@uvc@ntr@l\et@tpsarrowcircDD%
    \PSc@mment{arrowcircDD Center=#1 ; Radius=#2 (Ang1=#3,Ang2=#4)}%
    \Q@s@tfillmode{no}\Pscirc@rrowhead#1;#2(#3,#4)%
    \setc@ntr@l{2}\figvectPDD -4[#1,-3]\vecunit@{-4}{-4}%
    \Figg@tXY{-4}\arct@n\v@lmin(\v@lX,\v@lY)%
    \v@lmin=\rdT@deg\v@lmin\v@leur=#4pt\advance\v@leur-\v@lmin%
    \maxim@m{\v@leur}{\v@leur}{-\v@leur}%
    \ifdim\v@leur>\DemiPI@deg\relax\ifdim\v@lmin<#4pt\advance\v@lmin\DePI@deg%
    \else\advance\v@lmin-\DePI@deg\fi\fi\edef\ar@ngle{\repdecn@mb{\v@lmin}}%
    \ifdim#3pt<#4pt\figdrawarccirc#1;#2(#3,\ar@ngle)\else\figdrawarccirc#1;#2(\ar@ngle,#3)\fi%
    \PSc@mment{End arrowcircDD}\resetc@ntr@l\et@tpsarrowcircDD\fi\fi}}
\ctr@ld@f\def\Q@arrowcircTD#1,#2,#3;#4(#5,#6){{\ifCUR@PS\ifGR@cri\s@uvc@ntr@l\et@tpsarrowcircTD%
    \PSc@mment{arrowcircTD Center=#1,P1=#2,P2=#3 ; Radius=#4 (Ang1=#5, Ang2=#6)}%
    \resetc@ntr@l{2}\c@lExtAxes#1,#2,#3(#4)\let\c@lprojSP=\relax%
    \figvectPTD-11[#1,-4]\figvectPTD-12[#1,-5]\c@lNbarcs{#5}{#6}%
    \if@rrowratio\v@lmax=\degT@rd\v@lmax\edef\D@lpha{\repdecn@mb{\v@lmax}}\fi%
    \advance\p@rtent\m@ne\mili@u=\z@%
    \v@leur=#5pt\c@lptellP{#1}{-11}{-12}\Figptpr@j-9:/-3/%
    \f@gnewpath\PSwrit@cmdS{-9}{\c@mmoveto}{\fwf@g}{\X@un}{\Y@un}%
    \edef\C@nt@r{#1}\s@mme=\z@\bcl@rcircTD\f@gstroke%
    \advance\v@leur\delt@\c@lptellP{#1}{-11}{-12}\Figptpr@j-5:/-3/%
    \advance\v@leur\delt@\c@lptellP{#1}{-11}{-12}\Figptpr@j-6:/-3/%
    \advance\v@leur\delt@\c@lptellP{#1}{-11}{-12}\Figptpr@j-10:/-3/%
    \figptscontrolDD-8[-9,-5,-6,-10]%
    \if@rrowratio\c@lcurvradDD0.5[-9,-8,-7,-10]\advance\mili@u\result@t%
    \maxim@m{\mili@u}{\mili@u}{-\mili@u}\mili@u=\ptT@unit@\mili@u%
    \mili@u=\D@lpha\mili@u\advance\p@rtent\@ne\divide\mili@u\p@rtent\fi%
    \Ps@rrowB@zDD\mili@u[-9,-8,-7,-10]%
    \PSc@mment{End arrowcircTD}\resetc@ntr@l\et@tpsarrowcircTD\fi\fi}}
\ctr@ld@f\def\bcl@rcircTD{\relax%
    \ifnum\s@mme<\p@rtent\advance\s@mme\@ne%
    \advance\v@leur\delt@\c@lptellP{\C@nt@r}{-11}{-12}\Figptpr@j-5:/-3/%
    \advance\v@leur\delt@\c@lptellP{\C@nt@r}{-11}{-12}\Figptpr@j-6:/-3/%
    \advance\v@leur\delt@\c@lptellP{\C@nt@r}{-11}{-12}\Figptpr@j-10:/-3/%
    \figptscontrolDD-8[-9,-5,-6,-10]\BdingB@xfalse%
    \PSwrit@cmdS{-8}{}{\fwf@g}{\X@de}{\Y@de}\PSwrit@cmdS{-7}{}{\fwf@g}{\X@tr}{\Y@tr}%
    \BdingB@xtrue\PSwrit@cmdS{-10}{\c@mcurveto}{\fwf@g}{\X@qu}{\Y@qu}%
    \if@rrowratio\c@lcurvradDD0.5[-9,-8,-7,-10]\advance\mili@u\result@t\fi%
    \B@zierBB@x{1}{\Y@un}(\X@un,\X@de,\X@tr,\X@qu)%
    \B@zierBB@x{2}{\X@un}(\Y@un,\Y@de,\Y@tr,\Y@qu)%
    \edef\X@un{\X@qu}\edef\Y@un{\Y@qu}\figptcopyDD-9:/-10/\bcl@rcircTD\fi}
\ctr@ld@f\def\Pscirc@rrowhead#1;#2(#3,#4){{%
    \PSc@mment{circ@rrowhead Center=#1 ; Radius=#2 (Ang1=#3,Ang2=#4)}%
    \v@leur=#2\unit@\edef\s@glen{\repdecn@mb{\v@leur}}\v@lY=\z@\v@lX=\v@leur%
    \resetc@ntr@l{2}\Figv@ctCreg-3(\v@lX,\v@lY)\figpttraDD-5:=#1/1,-3/%
    \figptrotDD-5:=-5/#1,#4/%
    \figvectPDD-3[#1,-5]\Figg@tXY{-3}\v@leur=\v@lX%
    \ifdim#3pt<#4pt\v@lX=\v@lY\v@lY=-\v@leur\else\v@lX=-\v@lY\v@lY=\v@leur\fi%
    \Figv@ctCreg-3(\v@lX,\v@lY)\vecunit@{-3}{-3}%
    \if@rrowratio\v@leur=#4pt\advance\v@leur-#3pt\maxim@m{\mili@u}{-\v@leur}{\v@leur}%
    \mili@u=\degT@rd\mili@u\v@leur=\s@glen\mili@u\edef\s@glen{\repdecn@mb{\v@leur}}%
    \mili@u=#2\mili@u\mili@u=\@rrowheadratio\mili@u\else\mili@u=\@rrowheadlength pt\fi%
    \figpttraDD-6:=-5/\s@glen,-3/\v@leur=#2pt\v@leur=2\v@leur%
    \invers@{\v@leur}{\v@leur}\c@rre=\repdecn@mb{\v@leur}\mili@u
    \mili@u=\c@rre\mili@u=\repdecn@mb{\c@rre}\mili@u%
    \v@leur=\p@\advance\v@leur-\mili@u
    \invers@{\mili@u}{2\v@leur}\delt@=\c@rre\delt@=\repdecn@mb{\mili@u}\delt@%
    \xdef\sDcc@ngle{\repdecn@mb{\delt@}}
    \sqrt@{\mili@u}{\v@leur}\arct@n\v@leur(\mili@u,\c@rre)%
    \v@leur=\rdT@deg\v@leur
    \ifdim#3pt<#4pt\v@leur=-\v@leur\fi%
    \if@rrowhout\v@leur=-\v@leur\fi\edef\cor@ngle{\repdecn@mb{\v@leur}}%
    \figptrotDD-6:=-6/-5,\cor@ngle/\Q@arrowheadDD[-6,-5]%
    \PSc@mment{End circ@rrowhead}}}
\ctr@ln@m\figdrawarrowcircP
\ctr@ld@f\def\Q@arrowcircPDD#1;#2[#3,#4]{{\ifCUR@PS\ifGR@cri%
    \PSc@mment{arrowcircPDD Center=#1; Radius=#2, [P1=#3,P2=#4]}%
    \s@uvc@ntr@l\et@tpsarrowcircPDD\Ps@ngleparam#1;#2[#3,#4]%
    \ifdim\v@leur>\z@\ifdim\v@lmin>\v@lmax\advance\v@lmax\DePI@deg\fi%
    \else\ifdim\v@lmin<\v@lmax\advance\v@lmin\DePI@deg\fi\fi%
    \edef\@ngdeb{\repdecn@mb{\v@lmin}}\edef\@ngfin{\repdecn@mb{\v@lmax}}%
    \figdrawarrowcirc#1;\r@dius(\@ngdeb,\@ngfin)%
    \PSc@mment{End arrowcircPDD}\resetc@ntr@l\et@tpsarrowcircPDD\fi\fi}}
\ctr@ld@f\def\Q@arrowcircPTD#1;#2[#3,#4,#5]{{\ifCUR@PS\ifGR@cri\s@uvc@ntr@l\et@tpsarrowcircPTD%
    \PSc@mment{arrowcircPTD Center=#1; Radius=#2, [P1=#3,P2=#4,P3=#5]}%
    \figgetangleTD\@ngfin[#1,#3,#4,#5]\v@leur=#2pt%
    \maxim@m{\mili@u}{-\v@leur}{\v@leur}\edef\r@dius{\repdecn@mb{\mili@u}}%
    \ifdim\v@leur<\z@\v@lmax=\@ngfin pt\advance\v@lmax-\DePI@deg%
    \edef\@ngfin{\repdecn@mb{\v@lmax}}\fi\Q@arrowcircTD#1,#3,#5;\r@dius(0,\@ngfin)%
    \PSc@mment{End arrowcircPTD}\resetc@ntr@l\et@tpsarrowcircPTD\fi\fi}}
\ctr@ld@f\def\figdrawaxes#1(#2){{\ifCUR@PS\ifGR@cri\s@uvc@ntr@l\et@tpsaxes%
    \PSc@mment{axes Origin=#1 Range=(#2)}\an@lys@xes#2,:\resetc@ntr@l{2}%
    \ifx\t@xt@\empty\ifTr@isDim\Q@@xes#1(0,#2,0,#2,0,#2)\else\Q@@xes#1(0,#2,0,#2)\fi%
    \else\Q@@xes#1(#2)\fi\PSc@mment{End axes}\resetc@ntr@l\et@tpsaxes\fi\fi}}
\ctr@ld@f\def\an@lys@xes#1,#2:{\def\t@xt@{#2}}
\ctr@ln@m\Q@@xes
\ctr@ld@f\def\Q@@xesDD#1(#2,#3,#4,#5){%
    \figpttraC-5:=#1/#2,0/\figpttraC-6:=#1/#3,0/\Q@arrowDD[-5,-6]%
    \figpttraC-5:=#1/0,#4/\figpttraC-6:=#1/0,#5/\Q@arrowDD[-5,-6]}
\ctr@ld@f\def\Q@@xesTD#1(#2,#3,#4,#5,#6,#7){%
    \figpttraC-7:=#1/#2,0,0/\figpttraC-8:=#1/#3,0,0/\Q@arrowTD[-7,-8]%
    \figpttraC-7:=#1/0,#4,0/\figpttraC-8:=#1/0,#5,0/\Q@arrowTD[-7,-8]%
    \figpttraC-7:=#1/0,0,#6/\figpttraC-8:=#1/0,0,#7/\Q@arrowTD[-7,-8]}
\ctr@ln@m\newGr@FN
\ctr@ld@f\def\newGr@FNPDF#1{\s@mme=\Gr@FNb\advance\s@mme\@ne\xdef\Gr@FNb{\number\s@mme}}
\ctr@ld@f\def\newGr@FNDVI#1{\newGr@FNPDF{}\xdef#1{\jobname GI\Gr@FNb.anx}}
\ctr@ld@f\def\figdrawbegin#1{\newGr@FN\DefGIfilen@me\gdef\@utoFN{0}%
    \def\t@xt@{#1}\relax\ifx\t@xt@\empty\GRupdatem@detrue%
    \gdef\@utoFN{1}\Psb@ginfig\DefGIfilen@me\else\expandafter\Psb@ginfigNu@#1 :\fi}
\ctr@ld@f\def\Psb@ginfigNu@#1 #2:{\def\t@xt@{#1}\relax\ifx\t@xt@\empty\def\t@xt@{#2}%
    \ifx\t@xt@\empty\GRupdatem@detrue\gdef\@utoFN{1}\Psb@ginfig\DefGIfilen@me%
    \else\Psb@ginfigNu@#2:\fi\else\Psb@ginfig{#1}\fi}
\ctr@ln@m\PSfilen@me \ctr@ln@m\auxfilen@me
\ctr@ld@f\def\Psb@ginfig#1{\ifCUR@PS\else%
    \edef\PSfilen@me{#1}\edef\auxfilen@me{\jobname.anx}%
    \ifGRupdatem@de\GR@critrue\else\openin\frf@g=\PSfilen@me\relax%
    \ifeof\frf@g\GR@critrue\else\GR@crifalse\fi\closein\frf@g\fi%
    \CUR@PStrue\c@ldefproj\expandafter\setupd@te\D@FTupdate:%
    \ifGR@cri\initb@undb@x%
    \immediate\openout\fwf@g=\auxfilen@me\initpss@ttings\fi%
    \fi}
\ctr@ld@f\def\Gr@FNb{0}
\ctr@ld@f\def\figforTeXFileno{\Gr@FNb}
\ctr@ld@f\def\figforTeXFigno{0 }
\ctr@ld@f\def\figforTeXnextFigno{1 }
\ctr@ld@f\edef\DefGIfilen@me{\jobname GI.anx}
\ctr@ld@f\def\initpss@ttings{\figreset{altitude,arrowhead,curve,general,flowchart,mesh,trimesh}%
    \Use@llipsefalse}
\ctr@ld@f\def\B@zierBB@x#1#2(#3,#4,#5,#6){{\c@rre=\t@n\epsil@n
    \v@lmax=#4\advance\v@lmax-#5\v@lmax=\thr@@\v@lmax\advance\v@lmax#6\advance\v@lmax-#3%
    \mili@u=#4\mili@u=-\tw@\mili@u\advance\mili@u#3\advance\mili@u#5%
    \v@lmin=#4\advance\v@lmin-#3\maxim@m{\v@leur}{-\v@lmax}{\v@lmax}%
    \maxim@m{\delt@}{-\mili@u}{\mili@u}\maxim@m{\v@leur}{\v@leur}{\delt@}%
    \maxim@m{\delt@}{-\v@lmin}{\v@lmin}\maxim@m{\v@leur}{\v@leur}{\delt@}%
    \ifdim\v@leur>\c@rre\invers@{\v@leur}{\v@leur}\edef\Uns@rM@x{\repdecn@mb{\v@leur}}%
    \v@lmax=\Uns@rM@x\v@lmax\mili@u=\Uns@rM@x\mili@u\v@lmin=\Uns@rM@x\v@lmin%
    \maxim@m{\v@leur}{-\v@lmax}{\v@lmax}\ifdim\v@leur<\c@rre%
    \maxim@m{\v@leur}{-\mili@u}{\mili@u}\ifdim\v@leur<\c@rre\else%
    \invers@{\mili@u}{\mili@u}\v@leur=-0.5\v@lmin%
    \v@leur=\repdecn@mb{\mili@u}\v@leur\m@jBBB@x{\v@leur}{#1}{#2}(#3,#4,#5,#6)\fi%
    \else\delt@=\repdecn@mb{\mili@u}\mili@u\v@leur=\repdecn@mb{\v@lmax}\v@lmin%
    \advance\delt@-\v@leur\ifdim\delt@<\z@\else\invers@{\v@lmax}{\v@lmax}%
    \edef\Uns@rAp{\repdecn@mb{\v@lmax}}\sqrt@{\delt@}{\delt@}%
    \v@leur=-\mili@u\advance\v@leur\delt@\v@leur=\Uns@rAp\v@leur%
    \m@jBBB@x{\v@leur}{#1}{#2}(#3,#4,#5,#6)%
    \v@leur=-\mili@u\advance\v@leur-\delt@\v@leur=\Uns@rAp\v@leur%
    \m@jBBB@x{\v@leur}{#1}{#2}(#3,#4,#5,#6)\fi\fi\fi}}
\ctr@ld@f\def\m@jBBB@x#1#2#3(#4,#5,#6,#7){{\relax\ifdim#1>\z@\ifdim#1<\p@%
    \edef\T@{\repdecn@mb{#1}}\v@lX=\p@\advance\v@lX-#1\edef\UNmT@{\repdecn@mb{\v@lX}}%
    \v@lX=#4\v@lY=#5\v@lZ=#6\v@lXa=#7\v@lX=\UNmT@\v@lX\advance\v@lX\T@\v@lY%
    \v@lY=\UNmT@\v@lY\advance\v@lY\T@\v@lZ\v@lZ=\UNmT@\v@lZ\advance\v@lZ\T@\v@lXa%
    \v@lX=\UNmT@\v@lX\advance\v@lX\T@\v@lY\v@lY=\UNmT@\v@lY\advance\v@lY\T@\v@lZ%
    \v@lX=\UNmT@\v@lX\advance\v@lX\T@\v@lY%
    \ifcase#2\or\v@lY=#3\or\v@lY=\v@lX\v@lX=#3\fi\b@undb@x{\v@lX}{\v@lY}\fi\fi}}
\ctr@ld@f\def\PsB@zier#1[#2]{{\f@gnewpath%
    \s@mme=\z@\def\list@num{#2,0}\extrairelepremi@r\p@int\de\list@num%
    \PSwrit@cmdS{\p@int}{\c@mmoveto}{\fwf@g}{\X@un}{\Y@un}\p@rtent=#1\bclB@zier}}
\ctr@ld@f\def\bclB@zier{\relax%
    \ifnum\s@mme<\p@rtent\advance\s@mme\@ne\BdingB@xfalse%
    \extrairelepremi@r\p@int\de\list@num\PSwrit@cmdS{\p@int}{}{\fwf@g}{\X@de}{\Y@de}%
    \extrairelepremi@r\p@int\de\list@num\PSwrit@cmdS{\p@int}{}{\fwf@g}{\X@tr}{\Y@tr}%
    \BdingB@xtrue%
    \extrairelepremi@r\p@int\de\list@num\PSwrit@cmdS{\p@int}{\c@mcurveto}{\fwf@g}{\X@qu}{\Y@qu}%
    \B@zierBB@x{1}{\Y@un}(\X@un,\X@de,\X@tr,\X@qu)%
    \B@zierBB@x{2}{\X@un}(\Y@un,\Y@de,\Y@tr,\Y@qu)%
    \edef\X@un{\X@qu}\edef\Y@un{\Y@qu}\bclB@zier\fi}
\ctr@ln@m\figdrawBezier
\ctr@ld@f\def\Q@BezierDD#1[#2]{\ifCUR@PS\ifGR@cri%
    \PSc@mment{BezierDD N arcs=#1, Control points=#2}%
    \iffillm@de\PsB@zier#1[#2]%
    \f@gfill%
    \else\PsB@zier#1[#2]\f@gstroke\fi%
    \PSc@mment{End BezierDD}\fi\fi}
\ctr@ln@m\et@tpsBezierTD
\ctr@ld@f\def\Q@BezierTD#1[#2]{\ifCUR@PS\ifGR@cri\s@uvc@ntr@l\et@tpsBezierTD%
    \PSc@mment{BezierTD N arcs=#1, Control points=#2}%
    \iffillm@de\PsB@zierTD#1[#2]%
    \f@gfill%
    \else\PsB@zierTD#1[#2]\f@gstroke\fi%
    \PSc@mment{End BezierTD}\resetc@ntr@l\et@tpsBezierTD\fi\fi}
\ctr@ld@f\def\PsB@zierTD#1[#2]{\ifnum\CUR@proj<\tw@\PsB@zier#1[#2]\else\PsB@zier@TD#1[#2]\fi}
\ctr@ld@f\def\PsB@zier@TD#1[#2]{{\f@gnewpath%
    \s@mme=\z@\def\list@num{#2,0}\extrairelepremi@r\p@int\de\list@num%
    \let\c@lprojSP=\relax\setc@ntr@l{2}\Figptpr@j-7:/\p@int/%
    \PSwrit@cmd{-7}{\c@mmoveto}{\fwf@g}%
    \loop\ifnum\s@mme<#1\advance\s@mme\@ne\extrairelepremi@r\p@intun\de\list@num%
    \extrairelepremi@r\p@intde\de\list@num\extrairelepremi@r\p@inttr\de\list@num%
    \subB@zierTD\NBz@rcs[\p@int,\p@intun,\p@intde,\p@inttr]\edef\p@int{\p@inttr}\repeat}}
\ctr@ld@f\def\subB@zierTD#1[#2,#3,#4,#5]{\delt@=\p@\divide\delt@\NBz@rcs\v@lmin=\z@%
    {\Figg@tXY{-7}\edef\X@un{\the\v@lX}\edef\Y@un{\the\v@lY}%
    \s@mme=\z@\loop\ifnum\s@mme<#1\advance\s@mme\@ne%
    \v@leur=\v@lmin\advance\v@leur0.33333 \delt@\edef\unti@rs{\repdecn@mb{\v@leur}}%
    \v@leur=\v@lmin\advance\v@leur0.66666 \delt@\edef\deti@rs{\repdecn@mb{\v@leur}}%
    \advance\v@lmin\delt@\edef\trti@rs{\repdecn@mb{\v@lmin}}%
    \figptBezierTD-8::\trti@rs[#2,#3,#4,#5]\Figptpr@j-8:/-8/%
    \c@lsubBzarc\unti@rs,\deti@rs[#2,#3,#4,#5]\BdingB@xfalse%
    \PSwrit@cmdS{-4}{}{\fwf@g}{\X@de}{\Y@de}\PSwrit@cmdS{-3}{}{\fwf@g}{\X@tr}{\Y@tr}%
    \BdingB@xtrue\PSwrit@cmdS{-8}{\c@mcurveto}{\fwf@g}{\X@qu}{\Y@qu}%
    \B@zierBB@x{1}{\Y@un}(\X@un,\X@de,\X@tr,\X@qu)%
    \B@zierBB@x{2}{\X@un}(\Y@un,\Y@de,\Y@tr,\Y@qu)%
    \edef\X@un{\X@qu}\edef\Y@un{\Y@qu}\figptcopyDD-7:/-8/\repeat}}
\ctr@ld@f\def\NBz@rcs{2}
\ctr@ld@f\def\c@lsubBzarc#1,#2[#3,#4,#5,#6]{\figptBezierTD-5::#1[#3,#4,#5,#6]%
    \figptBezierTD-6::#2[#3,#4,#5,#6]\Figptpr@j-4:/-5/\Figptpr@j-5:/-6/%
    \figptscontrolDD-4[-7,-4,-5,-8]}
\ctr@ln@m\figdrawcirc
\ctr@ld@f\def\Q@circDD#1(#2){\ifCUR@PS\ifGR@cri\PSc@mment{circDD Center=#1 (Radius=#2)}%
    \Q@arccircDD#1;#2(0,360)\PSc@mment{End circDD}\fi\fi}
\ctr@ld@f\def\Q@circTD#1,#2,#3(#4){\ifCUR@PS\ifGR@cri%
    \PSc@mment{circTD Center=#1,P1=#2,P2=#3 (Radius=#4)}%
    \Q@arccircTD#1,#2,#3;#4(0,360)\PSc@mment{End circTD}\fi\fi}
\ctr@ln@m\p@urcent
{\catcode`\%=12\gdef\p@urcent{
\ctr@ld@f\def\PSc@mment#1{\ifGRdebugm@de\immediate\write\fwf@g{\p@urcent\space#1}\fi}
\ctr@ln@m\acc@louv \ctr@ln@m\acc@lfer
{\catcode`\[=1\catcode`\{=12\gdef\acc@louv[{}}
{\catcode`\]=2\catcode`\}=12\gdef\acc@lfer{}]]
\ctr@ld@f\def\PSdict@{\ifUse@llipse%
    \immediate\write\fwf@g{/ellipsedict 9 dict def ellipsedict /mtrx matrix put}%
    \immediate\write\fwf@g{/ellipse \acc@louv ellipsedict begin}%
    \immediate\write\fwf@g{ /endangle exch def /startangle exch def}%
    \immediate\write\fwf@g{ /yrad exch def /xrad exch def}%
    \immediate\write\fwf@g{ /rotangle exch def /y exch def /x exch def}%
    \immediate\write\fwf@g{ /savematrix mtrx currentmatrix def}%
    \immediate\write\fwf@g{ x y translate rotangle rotate xrad yrad scale}%
    \immediate\write\fwf@g{ 0 0 1 startangle endangle arc}%
    \immediate\write\fwf@g{ savematrix setmatrix end\acc@lfer def}%
    \fi\PShe@der{EndProlog}}
\ctr@ld@f\def\Pssetc@rve#1=#2|{\keln@mun#1|%
    \def\n@mref{r}\ifx\l@debut\n@mref\update@ttr\D@FTroundness\Q@s@troundness{#2}\else
    \W@rnmesAttr{figset curve}{#1}\fi}
\ctr@ln@m\curv@roundness
\ctr@ld@f\def\Q@s@troundness#1{\edef\curv@roundness{#1}}
\ctr@ld@f\def\D@FTroundness{0.2} 
\ctr@ln@m\figdrawcurve
\ctr@ld@f\def\Q@curveDD[#1]{{\ifCUR@PS\ifGR@cri\PSc@mment{curveDD Points=#1}%
    \s@uvc@ntr@l\et@tpscurveDD%
    \iffillm@de\Psc@rveDD\curv@roundness[#1]%
    \f@gfill%
    \else\Psc@rveDD\curv@roundness[#1]\f@gstroke\fi%
    \PSc@mment{End curveDD}\resetc@ntr@l\et@tpscurveDD\fi\fi}}
\ctr@ld@f\def\Q@curveTD[#1]{{\ifCUR@PS\ifGR@cri%
    \PSc@mment{curveTD Points=#1}\s@uvc@ntr@l\et@tpscurveTD\let\c@lprojSP=\relax%
    \iffillm@de\Psc@rveTD\curv@roundness[#1]%
    \f@gfill%
    \else\Psc@rveTD\curv@roundness[#1]\f@gstroke\fi%
    \PSc@mment{End curveTD}\resetc@ntr@l\et@tpscurveTD\fi\fi}}
\ctr@ld@f\def\Psc@rveDD#1[#2]{%
    \def\list@num{#2}\extrairelepremi@r\Ak@\de\list@num%
    \extrairelepremi@r\Ai@\de\list@num\extrairelepremi@r\Aj@\de\list@num%
    \f@gnewpath\PSwrit@cmdS{\Ai@}{\c@mmoveto}{\fwf@g}{\X@un}{\Y@un}%
    \setc@ntr@l{2}\figvectPDD -1[\Ak@,\Aj@]%
    \@ecfor\Ak@:=\list@num\do{\figpttraDD-2:=\Ai@/#1,-1/\BdingB@xfalse%
       \PSwrit@cmdS{-2}{}{\fwf@g}{\X@de}{\Y@de}%
       \figvectPDD -1[\Ai@,\Ak@]\figpttraDD-2:=\Aj@/-#1,-1/%
       \PSwrit@cmdS{-2}{}{\fwf@g}{\X@tr}{\Y@tr}\BdingB@xtrue%
       \PSwrit@cmdS{\Aj@}{\c@mcurveto}{\fwf@g}{\X@qu}{\Y@qu}%
       \B@zierBB@x{1}{\Y@un}(\X@un,\X@de,\X@tr,\X@qu)%
       \B@zierBB@x{2}{\X@un}(\Y@un,\Y@de,\Y@tr,\Y@qu)%
       \edef\X@un{\X@qu}\edef\Y@un{\Y@qu}\edef\Ai@{\Aj@}\edef\Aj@{\Ak@}}}
\ctr@ld@f\def\Psc@rveTD#1[#2]{\ifnum\CUR@proj<\tw@\Psc@rvePPTD#1[#2]\else\Psc@rveCPTD#1[#2]\fi}
\ctr@ld@f\def\Psc@rvePPTD#1[#2]{\setc@ntr@l{2}%
    \def\list@num{#2}\extrairelepremi@r\Ak@\de\list@num\Figptpr@j-5:/\Ak@/%
    \extrairelepremi@r\Ai@\de\list@num\Figptpr@j-3:/\Ai@/%
    \extrairelepremi@r\Aj@\de\list@num\Figptpr@j-4:/\Aj@/%
    \f@gnewpath\PSwrit@cmdS{-3}{\c@mmoveto}{\fwf@g}{\X@un}{\Y@un}%
    \figvectPDD -1[-5,-4]%
    \@ecfor\Ak@:=\list@num\do{\Figptpr@j-5:/\Ak@/\figpttraDD-2:=-3/#1,-1/%
       \BdingB@xfalse\PSwrit@cmdS{-2}{}{\fwf@g}{\X@de}{\Y@de}%
       \figvectPDD -1[-3,-5]\figpttraDD-2:=-4/-#1,-1/%
       \PSwrit@cmdS{-2}{}{\fwf@g}{\X@tr}{\Y@tr}\BdingB@xtrue%
       \PSwrit@cmdS{-4}{\c@mcurveto}{\fwf@g}{\X@qu}{\Y@qu}%
       \B@zierBB@x{1}{\Y@un}(\X@un,\X@de,\X@tr,\X@qu)%
       \B@zierBB@x{2}{\X@un}(\Y@un,\Y@de,\Y@tr,\Y@qu)%
       \edef\X@un{\X@qu}\edef\Y@un{\Y@qu}\figptcopyDD-3:/-4/\figptcopyDD-4:/-5/}}
\ctr@ld@f\def\Psc@rveCPTD#1[#2]{\setc@ntr@l{2}%
    \def\list@num{#2}\extrairelepremi@r\Ak@\de\list@num%
    \extrairelepremi@r\Ai@\de\list@num\extrairelepremi@r\Aj@\de\list@num%
    \Figptpr@j-7:/\Ai@/%
    \f@gnewpath\PSwrit@cmd{-7}{\c@mmoveto}{\fwf@g}%
    \figvectPTD -9[\Ak@,\Aj@]%
    \@ecfor\Ak@:=\list@num\do{\figpttraTD-10:=\Ai@/#1,-9/%
       \figvectPTD -9[\Ai@,\Ak@]\figpttraTD-11:=\Aj@/-#1,-9/%
       \subB@zierTD\NBz@rcs[\Ai@,-10,-11,\Aj@]\edef\Ai@{\Aj@}\edef\Aj@{\Ak@}}}
\ctr@ld@f\def\figdrawend{\ifCUR@PS\ifGR@cri\immediate\closeout\fwf@g%
    \immediate\openout\fwf@g=\PSfilen@me\relax%
    \ifPDFm@ke\PSBdingB@x\else%
    \immediate\write\fwf@g{\p@urcent\string!PS-Adobe-2.0 EPSF-2.0}%
    \PShe@der{Creator\string: TeX (fig4tex.tex)}%
    \PShe@der{Title\string: \PSfilen@me}%
    \PShe@der{CreationDate\string: \the\day/\the\month/\the\year}%
    \PSBdingB@x%
    \PShe@der{EndComments}\PSdict@\fi%
    \immediate\write\fwf@g{\c@mgsave}%
    \openin\frf@g=\auxfilen@me\c@pypsfile\fwf@g\frf@g\closein\frf@g%
    \immediate\write\fwf@g{\c@mgrestore}%
    \PSc@mment{End of file.}\immediate\closeout\fwf@g%
    \immediate\openout\fwf@g=\auxfilen@me\immediate\closeout\fwf@g%
    \immediate\write16{File \PSfilen@me\space created.}\fi\fi\CUR@PSfalse\GR@critrue}
\ctr@ld@f\def\PShe@der#1{\immediate\write\fwf@g{\p@urcent\p@urcent#1}}
\ctr@ld@f\def\PSBdingB@x{{\v@lX=\ptT@ptps\c@@rdXmin\v@lY=\ptT@ptps\c@@rdYmin%
     \v@lXa=\ptT@ptps\c@@rdXmax\v@lYa=\ptT@ptps\c@@rdYmax%
     \PShe@der{BoundingBox\string: \repdecn@mb{\v@lX}\space\repdecn@mb{\v@lY}%
     \space\repdecn@mb{\v@lXa}\space\repdecn@mb{\v@lYa}}}}
\ctr@ld@f\def\figdrawfcconnect[#1]{{\ifCUR@PS\ifGR@cri\PSc@mment{fcconnect Points=#1}%
    \Q@s@tfillmode{no}\s@uvc@ntr@l\et@tpsfcconnect\resetc@ntr@l{2}%
    \fcc@nnect@[#1]\resetc@ntr@l\et@tpsfcconnect\PSc@mment{End fcconnect}\fi\fi}}
\ctr@ld@f\def\fcc@nnect@[#1]{\let\N@rm=\n@rmeucDD\def\list@num{#1}%
    \extrairelepremi@r\Ai@\de\list@num\edef\pr@m{\Ai@}\v@leur=\z@\p@rtent=\@ne\c@llgtot%
    \ifcase\fclin@typ@\edef\list@num{[\pr@m,#1,\Ai@}\expandafter\figdrawcurve\list@num]%
    \else\ifdim\fclin@r@d\p@>\z@\Pslin@conge[#1]\else\figdrawline[#1]\fi\fi%
    \v@leur=\@rrowp@s\v@leur\edef\list@num{#1,\Ai@,0}%
    \extrairelepremi@r\Ai@\de\list@num\mili@u=\epsil@n\c@llgpart%
    \advance\mili@u-\epsil@n\advance\mili@u-\delt@\advance\v@leur-\mili@u%
    \ifcase\fclin@typ@\invers@\mili@u\delt@%
    \ifnum\@rrowr@fpt>\z@\advance\delt@-\v@leur\v@leur=\delt@\fi%
    \v@leur=\repdecn@mb\v@leur\mili@u\edef\v@lt{\repdecn@mb\v@leur}%
    \extrairelepremi@r\Ak@\de\list@num%
    \figvectPDD-1[\pr@m,\Aj@]\figpttraDD-6:=\Ai@/\curv@roundness,-1/%
    \figvectPDD-1[\Ak@,\Ai@]\figpttraDD-7:=\Aj@/\curv@roundness,-1/%
    \delt@=\@rrowheadlength\p@\delt@=\C@AHANG\delt@\edef\R@dius{\repdecn@mb{\delt@}}%
    \ifcase\@rrowr@fpt%
    \FigptintercircB@zDD-8::\v@lt,\R@dius[\Ai@,-6,-7,\Aj@]\Q@arrowheadDD[-5,-8]\else%
    \FigptintercircB@zDD-8::\v@lt,\R@dius[\Aj@,-7,-6,\Ai@]\Q@arrowheadDD[-8,-5]\fi%
    \else\advance\delt@-\v@leur%
    \p@rtentiere{\p@rtent}{\delt@}\edef\C@efun{\the\p@rtent}%
    \p@rtentiere{\p@rtent}{\v@leur}\edef\C@efde{\the\p@rtent}%
    \figptbaryDD-5:[\Ai@,\Aj@;\C@efun,\C@efde]\ifcase\@rrowr@fpt%
    \delt@=\@rrowheadlength\unit@\delt@=\C@AHANG\delt@\edef\t@ille{\repdecn@mb{\delt@}}%
    \figvectPDD-2[\Ai@,\Aj@]\vecunit@{-2}{-2}\figpttraDD-5:=-5/\t@ille,-2/\fi%
    \Q@arrowheadDD[\Ai@,-5]\fi}
\ctr@ld@f\def\c@llgtot{\@ecfor\Aj@:=\list@num\do{\figvectP-1[\Ai@,\Aj@]\N@rm\delt@{-1}%
    \advance\v@leur\delt@\advance\p@rtent\@ne\edef\Ai@{\Aj@}}}
\ctr@ld@f\def\c@llgpart{\extrairelepremi@r\Aj@\de\list@num\figvectP-1[\Ai@,\Aj@]\N@rm\delt@{-1}%
    \advance\mili@u\delt@\ifdim\mili@u<\v@leur\edef\pr@m{\Ai@}\edef\Ai@{\Aj@}\c@llgpart\fi}
\ctr@ld@f\def\Pslin@conge[#1]{\ifnum\p@rtent>\tw@{\def\list@num{#1}%
    \extrairelepremi@r\Ai@\de\list@num\extrairelepremi@r\Aj@\de\list@num%
    \figptcopy-6:/\Ai@/\figvectP-3[\Ai@,\Aj@]\vecunit@{-3}{-3}\v@lmax=\result@t%
    \@ecfor\Ak@:=\list@num\do{\figvectP-4[\Aj@,\Ak@]\vecunit@{-4}{-4}%
    \minim@m\v@lmin\v@lmax\result@t\v@lmax=\result@t%
    \det@rm\delt@[-3,-4]\maxim@m\mili@u{\delt@}{-\delt@}\ifdim\mili@u>\Cepsil@n%
    \ifdim\delt@>\z@\figgetangleDD\Angl@[\Aj@,\Ak@,\Ai@]\else%
    \figgetangleDD\Angl@[\Aj@,\Ai@,\Ak@]\fi%
    \v@leur=\PI@deg\advance\v@leur-\Angl@\p@\divide\v@leur\tw@%
    \edef\Angl@{\repdecn@mb\v@leur}\c@ssin{\C@}{\S@}{\Angl@}\v@leur=\fclin@r@d\unit@%
    \v@leur=\S@\v@leur\mili@u=\C@\p@\invers@\mili@u\mili@u%
    \v@leur=\repdecn@mb{\mili@u}\v@leur%
    \minim@m\v@leur\v@leur\v@lmin\edef\t@ille{\repdecn@mb{\v@leur}}%
    \figpttra-5:=\Aj@/-\t@ille,-3/\figdrawline[-6,-5]\figpttra-6:=\Aj@/\t@ille,-4/%
    \figvectNVDD-3[-3]\figvectNVDD-8[-4]\inters@cDD-7:[-5,-3;-6,-8]%
    \ifdim\delt@>\z@\figdrawarccircP-7;\fclin@r@d[-5,-6]\else\figdrawarccircP-7;\fclin@r@d[-6,-5]\fi%
    \else\figdrawline[-6,\Aj@]\figptcopy-6:/\Aj@/\fi
    \edef\Ai@{\Aj@}\edef\Aj@{\Ak@}\figptcopy-3:/-4/}\figdrawline[-6,\Aj@]}\else\figdrawline[#1]\fi}
\ctr@ld@f\def\figdrawfcnode[#1]#2{{\ifCUR@PS\ifGR@cri\PSc@mment{fcnode Points=#1}%
    \s@uvc@ntr@l\et@tpsfcnode\resetc@ntr@l{2}%
    \def\t@xt@{#2}\ifx\t@xt@\empty\def\g@tt@xt{\setbox\Gb@x=\hbox{\Figg@tT{\p@int}}}%
    \else\def\g@tt@xt{\setbox\Gb@x=\hbox{#2}}\fi%
    \v@lmin=\h@rdfcXp@dd\advance\v@lmin\Xp@dd\unit@\multiply\v@lmin\tw@%
    \v@lmax=\h@rdfcYp@dd\advance\v@lmax\Yp@dd\unit@\multiply\v@lmax\tw@%
    \Figv@ctCreg-8(\unit@,-\unit@)\def\list@num{#1}%
    \delt@=\CUR@width bp\divide\delt@\tw@%
    \fcn@de\PSc@mment{End fcnode}\resetc@ntr@l\et@tpsfcnode\fi\fi}}
\ctr@ld@f\def\d@butn@de{\g@tt@xt\v@lX=\wd\Gb@x%
    \v@lY=\ht\Gb@x\advance\v@lY\dp\Gb@x\advance\v@lX\v@lmin\advance\v@lY\v@lmax}
\ctr@ld@f\def\fcn@deE{%
    \@ecfor\p@int:=\list@num\do{\d@butn@de\v@lX=\unssqrttw@\v@lX\v@lY=\unssqrttw@\v@lY%
    \ifdim\thickn@ss\p@>\z@
    \v@lXa=\v@lX\advance\v@lXa\delt@\v@lXa=\ptT@unit@\v@lXa\edef\XR@d{\repdecn@mb\v@lXa}%
    \v@lYa=\v@lY\advance\v@lYa\delt@\v@lYa=\ptT@unit@\v@lYa\edef\YR@d{\repdecn@mb\v@lYa}%
    \arct@n\v@leur(\v@lXa,\v@lYa)\v@leur=\rdT@deg\v@leur\edef\@nglde{\repdecn@mb\v@leur}%
    {\c@lptellDD-2::\p@int;\XR@d,\YR@d(\@nglde)}
    \advance\v@leur-\PI@deg\edef\@nglun{\repdecn@mb\v@leur}%
    {\c@lptellDD-3::\p@int;\XR@d,\YR@d(\@nglun)}%
    \figptstra-6=-3,-2,\p@int/\thickn@ss,-8/\Q@s@tfillmode{yes}%
    \Pss@tspecifSt{color=\DDV@thickcolor}%
    \figdrawline[-2,-3,-6,-5]\figdrawarcell-4;\XR@d,\YR@d(\@nglun,\@nglde,0)%
    \Psrest@reSt{color=\DDV@thickcolor}\fi
    \v@lX=\ptT@unit@\v@lX\v@lY=\ptT@unit@\v@lY%
    \edef\XR@d{\repdecn@mb\v@lX}\edef\YR@d{\repdecn@mb\v@lY}%
    \Q@s@tfillmode{yes}\Pss@tspecifSt{color=\fcbgc@lor}%
    \figdrawarcell\p@int;\XR@d,\YR@d(0,360,0)%
    \Q@s@tfillmode{no}\Psrest@reSt{color=\fcbgc@lor}\figdrawarcell\p@int;\XR@d,\YR@d(0,360,0)}}
\ctr@ld@f\def\fcn@deL{\delt@=\ptT@unit@\delt@\edef\t@ille{\repdecn@mb\delt@}%
    \@ecfor\p@int:=\list@num\do{\Figg@tXYa{\p@int}\d@butn@de%
    \ifdim\v@lX>\v@lY\itis@Ktrue\else\itis@Kfalse\fi%
    \advance\v@lXa-\v@lX\Figp@intreg-1:(\v@lXa,\v@lYa)%
    \advance\v@lXa\v@lX\advance\v@lYa-\v@lY\Figp@intreg-2:(\v@lXa,\v@lYa)%
    \advance\v@lXa\v@lX\advance\v@lYa\v@lY\Figp@intreg-3:(\v@lXa,\v@lYa)%
    \advance\v@lXa-\v@lX\advance\v@lYa\v@lY\Figp@intreg-4:(\v@lXa,\v@lYa)%
    \ifdim\thickn@ss\p@>\z@
    \Figg@tXYa{\p@int}\Q@s@tfillmode{yes}\Pss@tspecifSt{color=\DDV@thickcolor}%
    \c@lpt@xt{-1}{-4}\c@lpt@xt@\v@lXa\v@lYa\v@lX\v@lY\c@rre\delt@%
    \Figp@intregDD-9:(\v@lZ,\v@lYa)\Figp@intregDD-11:(\v@lZa,\v@lYa)%
    \c@lpt@xt{-4}{-3}\c@lpt@xt@\v@lYa\v@lXa\v@lY\v@lX\delt@\c@rre%
    \Figp@intregDD-12:(\v@lXa,\v@lZ)\Figp@intregDD-10:(\v@lXa,\v@lZa)%
    \ifitis@K\figptstra-7=-9,-10,-11/\thickn@ss,-8/\figdrawline[-9,-11,-5,-6,-7]\else%
    \figptstra-7=-10,-11,-12/\thickn@ss,-8/\figdrawline[-10,-12,-5,-6,-7]\fi%
    \Psrest@reSt{color=\DDV@thickcolor}\fi
    \Q@s@tfillmode{yes}\Pss@tspecifSt{color=\fcbgc@lor}\figdrawline[-1,-2,-3,-4]%
    \Q@s@tfillmode{no}\Psrest@reSt{color=\fcbgc@lor}\figdrawline[-1,-2,-3,-4,-1]}}
\ctr@ld@f\def\c@lpt@xt#1#2{\figvectN-7[#1,#2]\vecunit@{-7}{-7}\figpttra-5:=#1/\t@ille,-7/%
    \figvectP-7[#1,#2]\Figg@tXY{-7}\c@rre=\v@lX\delt@=\v@lY\Figg@tXY{-5}}
\ctr@ld@f\def\c@lpt@xt@#1#2#3#4#5#6{\v@lZ=#6\invers@{\v@lZ}{\v@lZ}\v@leur=\repdecn@mb{#5}\v@lZ%
    \v@lZ=#2\advance\v@lZ-#4\mili@u=\repdecn@mb{\v@leur}\v@lZ%
    \v@lZ=#3\advance\v@lZ\mili@u\v@lZa=-\v@lZ\advance\v@lZa\tw@#1}
\ctr@ld@f\def\fcn@deR{\@ecfor\p@int:=\list@num\do{\Figg@tXYa{\p@int}\d@butn@de%
    \advance\v@lXa-0.5\v@lX\advance\v@lYa-0.5\v@lY\Figp@intreg-1:(\v@lXa,\v@lYa)%
    \advance\v@lXa\v@lX\Figp@intreg-2:(\v@lXa,\v@lYa)%
    \advance\v@lYa\v@lY\Figp@intreg-3:(\v@lXa,\v@lYa)%
    \advance\v@lXa-\v@lX\Figp@intreg-4:(\v@lXa,\v@lYa)%
    \ifdim\thickn@ss\p@>\z@
    \Q@s@tfillmode{yes}\Pss@tspecifSt{color=\DDV@thickcolor}%
    \Figv@ctCreg-5(-\delt@,-\delt@)\figpttra-9:=-1/1,-5/%
    \Figv@ctCreg-5(\delt@,-\delt@)\figpttra-10:=-2/1,-5/%
    \Figv@ctCreg-5(\delt@,\delt@)\figpttra-11:=-3/1,-5/%
    \figptstra-7=-9,-10,-11/\thickn@ss,-8/\figdrawline[-9,-11,-5,-6,-7]%
    \Psrest@reSt{color=\DDV@thickcolor}\fi
    \Q@s@tfillmode{yes}\Pss@tspecifSt{color=\fcbgc@lor}\figdrawline[-1,-2,-3,-4]%
    \Q@s@tfillmode{no}\Psrest@reSt{color=\fcbgc@lor}\figdrawline[-1,-2,-3,-4,-1]}}
\ctr@ld@f\def\Pssetfl@wchart#1=#2|{\keln@mtr#1|%
    \def\n@mref{arr}\ifx\l@debut\n@mref\expandafter\keln@mtr\l@suite|%
     \def\n@mref{owp}\ifx\l@debut\n@mref\update@ttr\D@FTfcarrowposition\P@setfcarrowposition{#2}\else
     \def\n@mref{owr}\ifx\l@debut\n@mref\update@ttr\D@FTfcarrowrefpt\P@setfcarrowrefpt{#2}\else
     \W@rnmesAttr{figset flowchart}{#1}\fi\fi\else%
    \def\n@mref{bgc}\ifx\l@debut\n@mref\update@ttr\D@FTfcbgcolor\P@setfcbgcolor{#2}\else
    \def\n@mref{lin}\ifx\l@debut\n@mref\update@ttr\D@FTfcline\P@setfcline{#2}\else
    \def\n@mref{pad}\ifx\l@debut\n@mref\update@ttr\D@FTfcxpadding\P@setfcxpadding{#2}%
                                       \update@ttr\D@FTfcypadding\P@setfcypadding{#2}\else
    \def\n@mref{rad}\ifx\l@debut\n@mref\update@ttr\D@FTfcradius\P@setfcradius{#2}\else
    \def\n@mref{sha}\ifx\l@debut\n@mref\update@ttr\D@FTfcshape\P@setfcshape{#2}\else
    \def\n@mref{thi}\ifx\l@debut\n@mref\expandafter\keln@mtr\l@suite|%
     \def\n@mref{ckc}\ifx\l@debut\n@mref\update@ttr\D@FTref\P@setfcthickcolor{#2}\else
     \def\n@mref{ckn}\ifx\l@debut\n@mref\update@ttr\D@FTfcthickness\P@setfcthickness{#2}\else
     \W@rnmesAttr{figset flowchart}{#1}\fi\fi\else%
    \def\n@mref{xpa}\ifx\l@debut\n@mref\update@ttr\D@FTfcxpadding\P@setfcxpadding{#2}\else
    \def\n@mref{ypa}\ifx\l@debut\n@mref\update@ttr\D@FTfcypadding\P@setfcypadding{#2}\else
    \W@rnmesAttr{figset flowchart}{#1}\fi\fi\fi\fi\fi\fi\fi\fi\fi}
\ctr@ln@m\@rrowp@s
\ctr@ld@f\def\P@setfcarrowposition#1{\edef\@rrowp@s{#1}}
\ctr@ln@m\@rrowr@fpt
\ctr@ld@f\def\P@setfcarrowrefpt#1{\setfcr@fpt#1|}
\ctr@ld@f\def\setfcr@fpt#1#2|{\if#1e\def\@rrowr@fpt{1}\else\def\@rrowr@fpt{0}\fi}
\ctr@ln@m\fcbgc@lor
\ctr@ld@f\def\P@setfcbgcolor#1{\edef\fcbgc@lor{#1}}
\ctr@ln@m\fclin@typ@
\ctr@ld@f\def\P@setfcline#1{\setfccurv@#1|}
\ctr@ld@f\def\setfccurv@#1#2|{\if#1c\def\fclin@typ@{0}\else\def\fclin@typ@{1}\fi}
\ctr@ln@m\fclin@r@d
\ctr@ld@f\def\P@setfcradius#1{\edef\fclin@r@d{#1}}
\ctr@ln@m\fcn@de \ctr@ln@m\fcsh@pe
\ctr@ln@m\h@rdfcXp@dd \ctr@ln@m\h@rdfcYp@dd
\ctr@ld@f\def\P@setfcshape#1{\setfcshap@#1|}
\ctr@ld@f\def\setfcshap@#1#2|{%
    \if#1e\let\fcn@de=\fcn@deE\def\h@rdfcXp@dd{4pt}\def\h@rdfcYp@dd{4pt}%
     \edef\fcsh@pe{ellipse}\else%
    \if#1l\let\fcn@de=\fcn@deL\def\h@rdfcXp@dd{4pt}\def\h@rdfcYp@dd{4pt}%
     \edef\fcsh@pe{lozenge}\else%
          \let\fcn@de=\fcn@deR\def\h@rdfcXp@dd{6pt}\def\h@rdfcYp@dd{6pt}%
     \edef\fcsh@pe{rectangle}\fi\fi}
\ctr@ln@m\DDV@thickcolor
\ctr@ld@f\def\P@setfcthickcolor#1{\edef\DDV@thickcolor{#1}}
\ctr@ln@m\thickn@ss
\ctr@ld@f\def\P@setfcthickness#1{\edef\thickn@ss{#1}}
\ctr@ln@m\Xp@dd
\ctr@ld@f\def\P@setfcxpadding#1{\edef\Xp@dd{#1}}
\ctr@ln@m\Yp@dd
\ctr@ld@f\def\P@setfcypadding#1{\edef\Yp@dd{#1}}
\ctr@ld@f\def\figdrawline[#1]{{\ifCUR@PS\ifGR@cri\PSc@mment{line Points=#1}%
    \let\figdrawlign@=\Pslign@P\Pslin@{#1}\PSc@mment{End line}\fi\fi}}
\ctr@ld@f\def\figdrawlineF#1{{\ifCUR@PS\ifGR@cri\PSc@mment{lineF Filename=#1}%
    \let\figdrawlign@=\Pslign@F\Pslin@{#1}\PSc@mment{End lineF}\fi\fi}}
\ctr@ld@f\def\figdrawlineC(#1){{\ifCUR@PS\ifGR@cri\PSc@mment{lineC}%
    \let\figdrawlign@=\Pslign@C\Pslin@{#1}\PSc@mment{End lineC}\fi\fi}}
\ctr@ld@f\def\Pslin@#1{\iffillm@de\figdrawlign@{#1}%
    \f@gfill%
    \else\figdrawlign@{#1}\ifx\derp@int\premp@int%
    \f@gclosestroke%
    \else\f@gstroke\fi\fi}
\ctr@ld@f\def\Pslign@P#1{\def\list@num{#1}\extrairelepremi@r\p@int\de\list@num%
    \edef\premp@int{\p@int}\f@gnewpath%
    \PSwrit@cmd{\p@int}{\c@mmoveto}{\fwf@g}%
    \@ecfor\p@int:=\list@num\do{\PSwrit@cmd{\p@int}{\c@mlineto}{\fwf@g}%
    \edef\derp@int{\p@int}}}
\ctr@ld@f\def\Pslign@F#1{\s@uvc@ntr@l\et@tPslign@F\setc@ntr@l{2}\openin\frf@g=#1\relax%
    \ifeof\frf@g\message{*** File #1 not found !}\end\else%
    \read\frf@g to\tr@c\edef\premp@int{\tr@c}\expandafter\extr@ctCF\tr@c:%
    \f@gnewpath\PSwrit@cmd{-1}{\c@mmoveto}{\fwf@g}%
    \loop\read\frf@g to\tr@c\ifeof\frf@g\mored@tafalse\else\mored@tatrue\fi%
    \ifmored@ta\expandafter\extr@ctCF\tr@c:\PSwrit@cmd{-1}{\c@mlineto}{\fwf@g}%
    \edef\derp@int{\tr@c}\repeat\fi\closein\frf@g\resetc@ntr@l\et@tPslign@F}
\ctr@ln@m\extr@ctCF
\ctr@ld@f\def\extr@ctCFDD#1 #2:{\v@lX=#1\unit@\v@lY=#2\unit@\Figp@intregDD-1:(\v@lX,\v@lY)}
\ctr@ld@f\def\extr@ctCFTD#1 #2 #3:{\v@lX=#1\unit@\v@lY=#2\unit@\v@lZ=#3\unit@%
    \Figp@intregTD-1:(\v@lX,\v@lY,\v@lZ)}
\ctr@ld@f\def\Pslign@C#1{\s@uvc@ntr@l\et@tPslign@C\setc@ntr@l{2}%
    \def\list@num{#1}\extrairelepremi@r\p@int\de\list@num%
    \edef\premp@int{\p@int}\f@gnewpath%
    \expandafter\Pslign@C@\p@int:\PSwrit@cmd{-1}{\c@mmoveto}{\fwf@g}%
    \@ecfor\p@int:=\list@num\do{\expandafter\Pslign@C@\p@int:%
    \PSwrit@cmd{-1}{\c@mlineto}{\fwf@g}\edef\derp@int{\p@int}}%
    \resetc@ntr@l\et@tPslign@C}
\ctr@ld@f\def\Pslign@C@#1 #2:{{\def\t@xt@{#1}\ifx\t@xt@\empty\Pslign@C@#2:
    \else\extr@ctCF#1 #2:\fi}}
\ctr@ld@f\def\Pssetm@sh#1=#2|{\keln@mde#1|%
    \def\n@mref{co}\ifx\l@debut\n@mref\update@ttr\D@FTref\P@setmeshcolor{#2}\else
    \def\n@mref{da}\ifx\l@debut\n@mref\update@ttr\D@FTref\P@setmeshdash{#2}\else
    \def\n@mref{di}\ifx\l@debut\n@mref\update@ttr\D@FTmeshdiag\Q@s@tmeshdiag{#2}\else
    \def\n@mref{wi}\ifx\l@debut\n@mref\update@ttr\D@FTref\P@setmeshwidth{#2}\else
    \W@rnmesAttr{figset mesh}{#1}\fi\fi\fi\fi}
\ctr@ln@m\c@ntrolmesh
\ctr@ld@f\def\Q@s@tmeshdiag#1{\edef\c@ntrolmesh{#1}}
\ctr@ld@f\def\D@FTmeshdiag{0}    
\ctr@ln@m\DDV@meshcolor
\ctr@ld@f\def\P@setmeshcolor#1{\edef\DDV@meshcolor{#1}}
\ctr@ln@m\DDV@meshdash
\ctr@ld@f\def\P@setmeshdash#1{\edef\DDV@meshdash{#1}}
\ctr@ln@m\DDV@meshwidth
\ctr@ld@f\def\P@setmeshwidth#1{\edef\DDV@meshwidth{#1}}
\ctr@ld@f\def\figdrawmesh#1,#2[#3,#4,#5,#6]{{\ifCUR@PS\ifGR@cri%
    \PSc@mment{mesh N1=#1, N2=#2, Quadrangle=[#3,#4,#5,#6]}\s@uvc@ntr@l\et@tpsmesh%
    \Pss@tspecifSt{color=\DDV@meshcolor,dash=\DDV@meshdash,width=\DDV@meshwidth}%
    \setc@ntr@l{2}%
    \ifnum#1>\@ne\Psmeshp@rt#1[#3,#4,#5,#6]\fi%
    \ifnum#2>\@ne\Psmeshp@rt#2[#4,#5,#6,#3]\fi%
    \ifnum\c@ntrolmesh>\z@\Psmeshdi@g#1,#2[#3,#4,#5,#6]\fi%
    \ifnum\c@ntrolmesh<\z@\Psmeshdi@g#2,#1[#4,#5,#6,#3]\fi%
    \Psrest@reSt{color=\DDV@meshcolor,dash=\DDV@meshdash,width=\DDV@meshwidth}%
    \figdrawline[#3,#4,#5,#6,#3]\PSc@mment{End mesh}\resetc@ntr@l\et@tpsmesh\fi\fi}}
\ctr@ld@f\def\Psmeshp@rt#1[#2,#3,#4,#5]{{\l@mbd@un=\@ne\l@mbd@de=#1\loop%
    \ifnum\l@mbd@un<#1\advance\l@mbd@de\m@ne\figptbary-1:[#2,#3;\l@mbd@de,\l@mbd@un]%
    \figptbary-2:[#5,#4;\l@mbd@de,\l@mbd@un]\figdrawline[-1,-2]\advance\l@mbd@un\@ne\repeat}}
\ctr@ld@f\def\Psmeshdi@g#1,#2[#3,#4,#5,#6]{\figptcopy-2:/#3/\figptcopy-3:/#6/%
    \l@mbd@un=\z@\l@mbd@de=#1\loop\ifnum\l@mbd@un<#1%
    \advance\l@mbd@un\@ne\advance\l@mbd@de\m@ne\figptcopy-1:/-2/\figptcopy-4:/-3/%
    \figptbary-2:[#3,#4;\l@mbd@de,\l@mbd@un]%
    \figptbary-3:[#6,#5;\l@mbd@de,\l@mbd@un]\Psmeshdi@gp@rt#2[-1,-2,-3,-4]\repeat}
\ctr@ld@f\def\Psmeshdi@gp@rt#1[#2,#3,#4,#5]{{\l@mbd@un=\z@\l@mbd@de=#1\loop%
    \ifnum\l@mbd@un<#1\figptbary-5:[#2,#5;\l@mbd@de,\l@mbd@un]%
    \advance\l@mbd@de\m@ne\advance\l@mbd@un\@ne%
    \figptbary-6:[#3,#4;\l@mbd@de,\l@mbd@un]\figdrawline[-5,-6]\repeat}}
\ctr@ln@m\figdrawnormal
\ctr@ld@f\def\Q@normalDD#1,#2[#3,#4]{{\ifCUR@PS\ifGR@cri%
    \PSc@mment{normal Length=#1, Lambda=#2 [Pt1,Pt2]=[#3,#4]}%
    \s@uvc@ntr@l\et@tpsnormal\resetc@ntr@l{2}\figptendnormal-6::#1,#2[#3,#4]%
    \figptcopyDD-5:/-1/\figdrawarrow[-5,-6]%
    \PSc@mment{End normal}\resetc@ntr@l\et@tpsnormal\fi\fi}}
\ctr@ld@f\def\figreset#1{\trtlis@rg{#1}{\Psreset@}}
\ctr@ld@f\def\Psreset@#1|{\def\t@xt@{#1}\ifx\t@xt@\empty\P@resetg@n
    \else\keln@mde#1|%
    \def\n@mref{al}\ifx\l@debut\n@mref%
        \figset altitude(blcolor=default,bldash=default,blwidth=default,%
        sqcolor=default,sqdash=default,sqwidth=default)\else
    \def\n@mref{ar}\ifx\l@debut\n@mref\figresetarrowhead\else
    \def\n@mref{cu}\ifx\l@debut\n@mref\figset curve(roundness=\D@FTroundness)\else
    \def\n@mref{ge}\ifx\l@debut\n@mref\P@resetg@n\else
    \def\n@mref{fl}\ifx\l@debut\n@mref%
        \figset flowchart(arrowp=\D@FTfcarrowposition,arrowr=\D@FTfcarrowrefpt,%
	bgcolor=\D@FTfcbgcolor,line=\D@FTfcline,radius=\D@FTfcradius,%
	shape=\D@FTfcshape,thickcolor=default,thickness=\D@FTfcthickness,%
	xpadd=\D@FTfcxpadding,ypadd=\D@FTfcypadding)\else
    \def\n@mref{me}\ifx\l@debut\n@mref\figset mesh(diag=\D@FTmeshdiag,%
        color=default,dash=default,width=default)\else
    \def\n@mref{tr}\ifx\l@debut\n@mref%
        \figset trimesh(color=default,dash=default,width=default)\else
    \W@rnmeskwd{figreset}{#1}\fi\fi\fi\fi\fi\fi\fi\fi}
\ctr@ld@f\def\P@resetg@n{\figset (color=\D@FTcolor,dash=\D@FTdash,fill=\D@FTfill,%
    join=\D@FTjoin,width=\D@FTwidth)}
\ctr@ld@f\def\figset#1(#2){\def\t@xt@{#1}\ifx\t@xt@\empty\trtlis@rg{#2}{\Pssetg@n}
    \else\keln@mde#1|%
    \def\n@mref{al}\ifx\l@debut\n@mref\trtlis@rg{#2}{\Psset@lti}\else
    \def\n@mref{ar}\ifx\l@debut\n@mref\trtlis@rg{#2}{\Psset@rrowhe@d}\else
    \def\n@mref{cu}\ifx\l@debut\n@mref\trtlis@rg{#2}{\Pssetc@rve}\else
    \def\n@mref{fl}\ifx\l@debut\n@mref\trtlis@rg{#2}{\Pssetfl@wchart}\else
    \def\n@mref{ge}\ifx\l@debut\n@mref\trtlis@rg{#2}{\Pssetg@n}\else
    \def\n@mref{me}\ifx\l@debut\n@mref\trtlis@rg{#2}{\Pssetm@sh}\else
    \def\n@mref{pr}\ifx\l@debut\n@mref\ifCUR@PS\W@rnmesIgn{figset proj(...)}%
     \else\trtlis@rg{#2}{\Figsetpr@j}\fi\else
    \def\n@mref{tr}\ifx\l@debut\n@mref\trtlis@rg{#2}{\Pssettrim@sh}\else
    \def\n@mref{wr}\ifx\l@debut\n@mref\let\M@cro=\Figsetwr@te\trtlis@rgtok{#2,|}\else
    \W@rnmeskwd{figset}{#1}\fi\fi\fi\fi\fi\fi\fi\fi\fi\fi\ignorespaces}
\ctr@ld@f\def\figsetdefault#1(#2){\ifCUR@PS\W@rnmesIgn{figsetdefault}\else%
    \def\t@xt@{#1}\ifx\t@xt@\empty\trtlis@rg{#2}{\Pssd@g@n}\else\keln@mun#1|
    \def\n@mref{a}\ifx\l@debut\n@mref\trtlis@rg{#2}{\Pssd@@rrowhe@d}\else
    \def\n@mref{c}\ifx\l@debut\n@mref\trtlis@rg{#2}{\Pssd@c@rve}\else
    \def\n@mref{g}\ifx\l@debut\n@mref\trtlis@rg{#2}{\Pssd@g@n}\else
    \def\n@mref{f}\ifx\l@debut\n@mref\trtlis@rg{#2}{\Pssd@fl@wchart}\else
    \def\n@mref{m}\ifx\l@debut\n@mref\trtlis@rg{#2}{\Pssd@m@sh}\else
    \W@rnmeskwd{figsetdefault}{#1}\fi\fi\fi\fi\fi\fi\initpss@ttings\fi}
\ctr@ld@f\def\Pssd@g@n#1=#2|{\keln@mun#1|%
    \def\n@mref{c}\ifx\l@debut\n@mref\edef\D@FTcolor{#2}\else
    \def\n@mref{d}\ifx\l@debut\n@mref\edef\D@FTdash{#2}\else
    \def\n@mref{f}\ifx\l@debut\n@mref\edef\D@FTfill{#2}\else
    \def\n@mref{j}\ifx\l@debut\n@mref\edef\D@FTjoin{#2}\else
    \def\n@mref{u}\ifx\l@debut\n@mref\edef\D@FTupdate{#2}\Q@s@tupdate{#2}\else
    \def\n@mref{w}\ifx\l@debut\n@mref\edef\D@FTwidth{#2}\else
    \W@rnmesAttr{figsetdefault}{#1}\fi\fi\fi\fi\fi\fi}
\ctr@ld@f\def\Pssd@@rrowhe@d#1=#2|{\keln@mun#1|%
    \def\n@mref{a}\ifx\l@debut\n@mref\edef\D@FTarrowheadangle{#2}\else
    \def\n@mref{f}\ifx\l@debut\n@mref\edef\D@FTarrowheadfill{#2}\else
    \def\n@mref{l}\ifx\l@debut\n@mref\y@tiunit{#2}\ifunitpr@sent%
     \edef\D@FTh@rdahlength{#2}\else\edef\D@FTh@rdahlength{#2pt}%
     \message{*** \BS@ figsetdefault (..., #1=#2, ...) : unit is missing, pt is assumed.}%
     \fi\else
    \def\n@mref{o}\ifx\l@debut\n@mref\edef\D@FTarrowheadout{#2}\else
    \def\n@mref{r}\ifx\l@debut\n@mref\edef\D@FTarrowheadratio{#2}\else
    \W@rnmesAttr{figsetdefault arrowhead}{#1}\fi\fi\fi\fi\fi}
\ctr@ld@f\def\Pssd@c@rve#1=#2|{\keln@mun#1|%
    \def\n@mref{r}\ifx\l@debut\n@mref\edef\D@FTroundness{#2}\else%
    \W@rnmesAttr{figsetdefault curve}{#1}\fi}
\ctr@ld@f\def\Pssd@fl@wchart#1=#2|{\keln@mtr#1|%
    \def\n@mref{arr}\ifx\l@debut\n@mref\expandafter\keln@mtr\l@suite|%
     \def\n@mref{owp}\ifx\l@debut\n@mref\edef\D@FTfcarrowposition{#2}\else
     \def\n@mref{owr}\ifx\l@debut\n@mref\edef\D@FTfcarrowrefpt{#2}\else
                     \W@rnmesAttr{figsetdefault flowchart}{#1}\fi\fi\else%
    \def\n@mref{bgc}\ifx\l@debut\n@mref\edef\D@FTfcbgcolor{#2}\else
    \def\n@mref{lin}\ifx\l@debut\n@mref\edef\D@FTfcline{#2}\else
    \def\n@mref{pad}\ifx\l@debut\n@mref\edef\D@FTfcxpadding{#2}%
                    \edef\D@FTfcypadding{#2}\else
    \def\n@mref{rad}\ifx\l@debut\n@mref\edef\D@FTfcradius{#2}\else
    \def\n@mref{sha}\ifx\l@debut\n@mref\edef\D@FTfcshape{#2}\else
    \def\n@mref{thi}\ifx\l@debut\n@mref\expandafter\keln@mtr\l@suite|%
     \def\n@mref{ckn}\ifx\l@debut\n@mref\edef\D@FTfcthickness{#2}\else
                     \W@rnmesAttr{figsetdefault flowchart}{#1}\fi\else%
    \def\n@mref{xpa}\ifx\l@debut\n@mref\edef\D@FTfcxpadding{#2}\else
    \def\n@mref{ypa}\ifx\l@debut\n@mref\edef\D@FTfcypadding{#2}\else
    \W@rnmesAttr{figsetdefault flowchart}{#1}\fi\fi\fi\fi\fi\fi\fi\fi\fi}
\ctr@ld@f\def\D@FTfcarrowposition{0.5}
\ctr@ld@f\def\D@FTfcarrowrefpt{start}
\ctr@ld@f\def\D@FTfcbgcolor{1}
\ctr@ld@f\def\D@FTfcline{polygon}
\ctr@ld@f\def\D@FTfcradius{0}
\ctr@ld@f\def\D@FTfcshape{rectangle}
\ctr@ld@f\def\D@FTfcthickness{0}
\ctr@ld@f\def\D@FTfcxpadding{0}
\ctr@ld@f\def\D@FTfcypadding{0}
\ctr@ld@f\def\Pssd@m@sh#1=#2|{\keln@mun#1|%
    \def\n@mref{d}\ifx\l@debut\n@mref\edef\D@FTmeshdiag{#2}\else%
    \W@rnmesAttr{figsetdefault mesh}{#1}\fi}
\ctr@ln@w{newif}\iffillm@de
\ctr@ld@f\def\Q@s@tfillmode#1{\expandafter\setfillm@de#1:}
\ctr@ld@f\def\setfillm@de#1#2:{\if#1n\fillm@defalse\else\fillm@detrue\fi}
\ctr@ld@f\def\D@FTfill{no}     
\ctr@ln@w{newif}\ifGRupdatem@de
\ctr@ld@f\def\Q@s@tupdate#1{\ifCUR@PS\W@rnmesIgn{figset (update=...)}%
    \else\expandafter\setupd@te#1:\fi}
\ctr@ld@f\def\setupd@te#1#2:{\if#1n\GRupdatem@defalse\else\GRupdatem@detrue\fi}
\ctr@ld@f\def\D@FTupdate{no}     
\ctr@ln@m\CUR@color \ctr@ln@m\CUR@colorc@md
\ctr@ld@f\def\s@uvcolor#1{\edef#1{\CUR@color}}
\ctr@ld@f\def\D@FTcolor{0}       
\ctr@ld@f\def\Pssetc@lor#1{\ifGR@cri\result@tent=\@ne\expandafter\c@lnbV@l#1 :%
    \def\CUR@color{}\def\CUR@colorc@md{}%
    \ifcase\result@tent\or\Q@s@tgray{#1}\or\or\Q@s@trgb{#1}\or\Q@s@tcmyk{#1}\fi\fi}
\ctr@ln@m\CUR@colorc@mdStroke
\ctr@ld@f\def\Q@s@tcmyk#1{\ifGR@cri\def\CUR@color{#1}\def\CUR@colorc@md{\c@msetcmykcolor}%
    \def\CUR@colorc@mdStroke{\c@msetcmykcolorStroke}%
    \ifCUR@PS\PSc@mment{setcmyk Color=#1}\us@primarC@lor\fi\fi}
\ctr@ld@f\def\Q@s@trgb#1{\ifGR@cri\def\CUR@color{#1}\def\CUR@colorc@md{\c@msetrgbcolor}%
    \def\CUR@colorc@mdStroke{\c@msetrgbcolorStroke}%
    \ifCUR@PS\PSc@mment{setrgb Color=#1}\us@primarC@lor\fi\fi}
\ctr@ld@f\def\Q@s@tgray#1{\ifGR@cri\def\CUR@color{#1}\def\CUR@colorc@md{\c@msetgray}%
    \def\CUR@colorc@mdStroke{\c@msetgrayStroke}%
    \ifCUR@PS\PSc@mment{setgray Gray level=#1}\us@primarC@lor\fi\fi}
\ctr@ln@m\fillc@md
\ctr@ld@f\def\us@primarC@lor{\immediate\write\fwf@g{\d@fprimarC@lor}%
    \let\fillc@md=\prfillc@md}
\ctr@ld@f\def\prfillc@md{\d@fprimarC@lor\space\c@mfill}
\ctr@ld@f\def\c@lnbV@l#1 #2:{\def\t@xt@{#1}\relax\ifx\t@xt@\empty\c@lnbV@l#2:
    \else\c@lnbV@l@#1 #2:\fi}
\ctr@ld@f\def\c@lnbV@l@#1 #2:{\def\t@xt@{#2}\ifx\t@xt@\empty%
    \def\t@xt@{#1}\ifx\t@xt@\empty\advance\result@tent\m@ne\fi
    \else\advance\result@tent\@ne\c@lnbV@l@#2:\fi}
\ctr@ld@f\def\Blackcmyk{0 0 0 1}
\ctr@ld@f\def\Whitecmyk{0 0 0 0}
\ctr@ld@f\def\Cyancmyk{1 0 0 0}
\ctr@ld@f\def\Magentacmyk{0 1 0 0}
\ctr@ld@f\def\Yellowcmyk{0 0 1 0}
\ctr@ld@f\def\Redcmyk{0 1 1 0}
\ctr@ld@f\def\Greencmyk{1 0 1 0}
\ctr@ld@f\def\Bluecmyk{1 1 0 0}
\ctr@ld@f\def\Graycmyk{0 0 0 0.50}
\ctr@ld@f\def\BrickRedcmyk{0 0.89 0.94 0.28} 
\ctr@ld@f\def\Browncmyk{0 0.81 1 0.60} 
\ctr@ld@f\def\ForestGreencmyk{0.91 0 0.88 0.12} 
\ctr@ld@f\def\Goldenrodcmyk{ 0 0.10 0.84 0} 
\ctr@ld@f\def\Marooncmyk{0 0.87 0.68 0.32} 
\ctr@ld@f\def\Orangecmyk{0 0.61 0.87 0} 
\ctr@ld@f\def\Purplecmyk{0.45 0.86 0 0} 
\ctr@ld@f\def\RoyalBluecmyk{1. 0.50 0 0} 
\ctr@ld@f\def\Violetcmyk{0.79 0.88 0 0} 
\ctr@ld@f\def\Blackrgb{0 0 0}
\ctr@ld@f\def\Whitergb{1 1 1}
\ctr@ld@f\def\Redrgb{1 0 0}
\ctr@ld@f\def\Greenrgb{0 1 0}
\ctr@ld@f\def\Bluergb{0 0 1}
\ctr@ld@f\def\Cyanrgb{0 1 1}
\ctr@ld@f\def\Magentargb{1 0 1}
\ctr@ld@f\def\Yellowrgb{1 1 0}
\ctr@ld@f\def\Grayrgb{0.5 0.5 0.5}
\ctr@ld@f\def\Chocolatergb{0.824 0.412 0.118}
\ctr@ld@f\def\DarkGoldenrodrgb{0.722 0.525 0.043}
\ctr@ld@f\def\DarkOrangergb{1 0.549 0}
\ctr@ld@f\def\Firebrickrgb{0.698 0.133 0.133}
\ctr@ld@f\def\ForestGreenrgb{0.133 0.545 0.133}
\ctr@ld@f\def\Goldrgb{1 0.843 0}
\ctr@ld@f\def\HotPinkrgb{1 0.412 0.706}
\ctr@ld@f\def\Maroonrgb{0.690 0.188 0.376}
\ctr@ld@f\def\Pinkrgb{1 0.753 0.796}
\ctr@ld@f\def\RoyalBluergb{0.255 0.412 0.882}
\ctr@ld@f\def\Pssetg@n#1=#2|{\keln@mun#1|%
    \def\n@mref{c}\ifx\l@debut\n@mref\update@ttr\D@FTcolor\Pssetc@lor{#2}\else
    \def\n@mref{d}\ifx\l@debut\n@mref\update@ttr\D@FTdash\Q@s@tdash{#2}\else
    \def\n@mref{f}\ifx\l@debut\n@mref\update@ttr\D@FTfill\Q@s@tfillmode{#2}\else
    \def\n@mref{j}\ifx\l@debut\n@mref\update@ttr\D@FTjoin\Q@s@tjoin{#2}\else
    \def\n@mref{u}\ifx\l@debut\n@mref\update@ttr\D@FTupdate\Q@s@tupdate{#2}\else
    \def\n@mref{w}\ifx\l@debut\n@mref\update@ttr\D@FTwidth\Q@s@twidth{#2}\else
    \W@rnmesAttr{figset}{#1}\fi\fi\fi\fi\fi\fi}
\ctr@ln@m\CUR@dash
\ctr@ld@f\def\s@uvdash#1{\edef#1{\CUR@dash}}
\ctr@ld@f\def\D@FTdash{1}        
\ctr@ld@f\def\Q@s@tdash#1{\ifGR@cri\edef\CUR@dash{#1}\ifCUR@PS\expandafter\Pssetd@sh#1 :\fi\fi}
\ctr@ld@f\def\Pssetd@shI#1{\PSc@mment{setdash Index=#1}\ifcase#1%
    \or\immediate\write\fwf@g{[] 0 \c@msetdash}
    \or\immediate\write\fwf@g{[6 2] 0 \c@msetdash}
    \or\immediate\write\fwf@g{[4 2] 0 \c@msetdash}
    \or\immediate\write\fwf@g{[2 2] 0 \c@msetdash}
    \or\immediate\write\fwf@g{[1 2] 0 \c@msetdash}
    \or\immediate\write\fwf@g{[2 4] 0 \c@msetdash}
    \or\immediate\write\fwf@g{[3 5] 0 \c@msetdash}
    \or\immediate\write\fwf@g{[3 3] 0 \c@msetdash}
    \or\immediate\write\fwf@g{[3 5 1 5] 0 \c@msetdash}
    \or\immediate\write\fwf@g{[6 4 2 4] 0 \c@msetdash}
    \fi}
\ctr@ld@f\def\Pssetd@sh#1 #2:{{\def\t@xt@{#1}\ifx\t@xt@\empty\Pssetd@sh#2:
    \else\def\t@xt@{#2}\ifx\t@xt@\empty\Pssetd@shI{#1}\else\s@mme=\@ne\def\debutp@t{#1}%
    \an@lysd@sh#2:\ifodd\s@mme\edef\debutp@t{\debutp@t\space\finp@t}\def\finp@t{0}\fi%
    \PSc@mment{setdash Pattern=#1 #2}%
    \immediate\write\fwf@g{[\debutp@t] \finp@t\space\c@msetdash}\fi\fi}}
\ctr@ld@f\def\an@lysd@sh#1 #2:{\def\t@xt@{#2}\ifx\t@xt@\empty\def\finp@t{#1}\else%
    \edef\debutp@t{\debutp@t\space#1}\advance\s@mme\@ne\an@lysd@sh#2:\fi}
\ctr@ln@m\CUR@width
\ctr@ld@f\def\s@uvwidth#1{\edef#1{\CUR@width}}
\ctr@ld@f\def\D@FTwidth{0.4}     
\ctr@ld@f\def\Q@s@twidth#1{\ifGR@cri\edef\CUR@width{#1}\ifCUR@PS%
    \PSc@mment{setwidth Width=#1}\immediate\write\fwf@g{#1 \c@msetlinewidth}\fi\fi}
\ctr@ln@m\CUR@join
\ctr@ld@f\def\s@uvjoin#1{\edef#1{\CUR@join}}
\ctr@ld@f\def\D@FTjoin{miter}   
\ctr@ld@f\def\Q@s@tjoin#1{\ifGR@cri\edef\CUR@join{#1}\ifCUR@PS\expandafter\Pssetj@in#1:\fi\fi}
\ctr@ld@f\def\Pssetj@in#1#2:{\PSc@mment{setjoin join=#1}%
    \if#1r\def\t@xt@{1}\else\if#1b\def\t@xt@{2}\else\def\t@xt@{0}\fi\fi%
    \immediate\write\fwf@g{\t@xt@\space\c@msetlinejoin}}
\ctr@ld@f\def\Pss@tspecifSt#1{\trtlis@rg{#1}{\Pss@tspecifSt@}}
\ctr@ld@f\def\Pss@tspecifSt@#1=#2|{\keln@mun#1|%
    \def\n@mref{c}\ifx\l@debut\n@mref\def\n@mref{#2}\ifx\n@mref\D@FTref\else%
     \s@uvcolor{\typ@color}\Pssetc@lor{#2}\fi\else
    \def\n@mref{d}\ifx\l@debut\n@mref\def\n@mref{#2}\ifx\n@mref\D@FTref\else%
     \s@uvdash{\typ@dash}\Q@s@tdash{#2}\fi\else
    \def\n@mref{j}\ifx\l@debut\n@mref\def\n@mref{#2}\ifx\n@mref\D@FTref\else%
     \s@uvjoin{\typ@join}\Q@s@tjoin{#2}\fi\else
    \def\n@mref{w}\ifx\l@debut\n@mref\def\n@mref{#2}\ifx\n@mref\D@FTref\else%
     \s@uvwidth{\typ@width}\Q@s@twidth{#2}\fi\else
    \W@rnmeskwd{Pss@tspecifSt}{#1}\fi\fi\fi\fi}
\ctr@ld@f\def\Psrest@reSt#1{\trtlis@rg{#1}{\Psrest@reSt@}}
\ctr@ld@f\def\Psrest@reSt@#1=#2|{\keln@mun#1|%
    \def\n@mref{c}\ifx\l@debut\n@mref\def\n@mref{#2}\ifx\n@mref\D@FTref\else%
     \Pssetc@lor{\typ@color}\fi\else
    \def\n@mref{d}\ifx\l@debut\n@mref\def\n@mref{#2}\ifx\n@mref\D@FTref\else%
     \Q@s@tdash{\typ@dash}\fi\else
    \def\n@mref{j}\ifx\l@debut\n@mref\def\n@mref{#2}\ifx\n@mref\D@FTref\else%
     \Q@s@tjoin{\typ@join}\fi\else
    \def\n@mref{w}\ifx\l@debut\n@mref\def\n@mref{#2}\ifx\n@mref\D@FTref\else%
     \Q@s@twidth{\typ@width}\fi\else
    \W@rnmeskwd{Psrest@reSt}{#1}\fi\fi\fi\fi}
\ctr@ld@f\def\Pssettrim@sh#1=#2|{\keln@mde#1|%
    \def\n@mref{co}\ifx\l@debut\n@mref\update@ttr\D@FTref\P@settmeshcolor{#2}\else
    \def\n@mref{da}\ifx\l@debut\n@mref\update@ttr\D@FTref\P@settmeshdash{#2}\else
    \def\n@mref{wi}\ifx\l@debut\n@mref\update@ttr\D@FTref\P@settmeshwidth{#2}\else
    \W@rnmesAttr{figset trimesh}{#1}\fi\fi\fi}
\ctr@ln@m\DDV@tmeshcolor
\ctr@ld@f\def\P@settmeshcolor#1{\edef\DDV@tmeshcolor{#1}}
\ctr@ln@m\DDV@tmeshdash
\ctr@ld@f\def\P@settmeshdash#1{\edef\DDV@tmeshdash{#1}}
\ctr@ln@m\DDV@tmeshwidth
\ctr@ld@f\def\P@settmeshwidth#1{\edef\DDV@tmeshwidth{#1}}
\ctr@ld@f\def\figdrawtrimesh#1[#2,#3,#4]{{\ifCUR@PS\ifGR@cri%
    \PSc@mment{trimesh Type=#1, Triangle=[#2,#3,#4]}%
    \s@uvc@ntr@l\et@tpstrimesh\ifnum#1>\@ne%
    \Pss@tspecifSt{color=\DDV@tmeshcolor,dash=\DDV@tmeshdash,width=\DDV@tmeshwidth}%
    \setc@ntr@l{2}%
    \Pstrimeshp@rt#1[#2,#3,#4]\Pstrimeshp@rt#1[#3,#4,#2]\Pstrimeshp@rt#1[#4,#2,#3]%
    \Psrest@reSt{color=\DDV@tmeshcolor,dash=\DDV@tmeshdash,width=\DDV@tmeshwidth}%
    \fi\figdrawline[#2,#3,#4,#2]%
    \PSc@mment{End trimesh}\resetc@ntr@l\et@tpstrimesh\fi\fi}}
\ctr@ld@f\def\Pstrimeshp@rt#1[#2,#3,#4]{{\l@mbd@un=\@ne\l@mbd@de=#1\loop\ifnum\l@mbd@de>\@ne%
    \advance\l@mbd@de\m@ne\figptbary-1:[#2,#3;\l@mbd@de,\l@mbd@un]%
    \figptbary-2:[#2,#4;\l@mbd@de,\l@mbd@un]\figdrawline[-1,-2]%
    \advance\l@mbd@un\@ne\repeat}}
\initpr@lim\initpss@ttings\initPDF@rDVI
\ctr@ln@w{newbox}\figBoxA
\ctr@ln@w{newbox}\figBoxB
\ctr@ln@w{newbox}\figBoxC
\catcode`\@=12

\pssetdefault(update=yes)
\newbox\figbox

\part{Introduction}
\label{part:1}
\chapter{Introduction of the problem and main results}
\label{sec:intro}

In this work we investigate the ground state energy of the magnetic Laplacian associated with a large magnetic field, posed on a bounded three-dimensional domain and completed by Neumann boundary conditions. This problem can be obtained by linearization from a Ginzburg-Landau equation modelling the surface superconductivity in presence of an exterior magnetic field the intensity of which is close to a (large) critical value, see {\it e.g.} \cite{BeSt98,BauPhTan98,DpFeSt00}. Then the works \cite{LuPan99,HePa03,FouHe05,FouHe06-2,BoFou07,FouHe09} highlight the link between the bottom of the spectrum of a semiclassical Schr\"odinger operator with magnetic field with the behavior of the minimizer of the Ginzburg-Landau functional.
The operator can also be viewed as a Schr\"odinger operator with magnetic field. The problematics of large magnetic field for the magnetic Laplacian is trivially equivalent to the semiclassical limit of the  Schr\"odinger operator as the small parameter $h$ tends to $0$. This problem has been addressed in numerous works in various situations (smooth two- or three-dimensional domains,  see {\it e.g. }the papers \cite{BeSt98,DpFeSt00,LuPan00,HeMo01,HeMo04,Ray09-3d} and the book \cite{FouHe10}, and polygonal domains in dimension 2, see {\it e.g.} \cite{Ja01,Pan02,BonDau06,BoDauMaVial07}). Much less is known for corner three-dimensional domains,  see {\it e.g.} \cite{Pan02,PoRay12}, and this is our aim to provide a unified treatment of smooth and corner domains, possibly in any space dimension $n$. As we will see, we have succeeded at this level of generality for $n=2$ and $3$, and have also obtained somewhat less precise results for any dimension $n$.

The semiclassical limit of the ground state energy is provided by the infimum of \emph{local energies}\index{Local ground energy} defined at each point of the closure of the domain. Local energies are ground state energies of adapted \emph{tangent operators}\index{Tangent operator} at each point. The notion of tangent operator fits in with the problematic that one wants to solve. For example if one is interested in Fredholm theory for elliptic boundary value problems, tangent operators are obtained by taking the principal part of the operator frozen at each point. Another example is the semiclassical limit of the Schr\"odinger operator with electric field. For a rough estimate, tangent operators are then obtained by freezing the electric field at each point, and, for more information on the semiclassical limit, the Hessian at each point has to be included in the tangent operator.

In our situation, tangent operators are obtained by freezing the magnetic field at each point, that is, taking the \emph{linear part of the magnetic potential} at each point. The domain on which the tangent operator is acting is the tangent model\index{Tangent domain} domain at this point. For smooth domains, this notion is obvious (the full space if the point is sitting inside the domain, and the tangent half-space if the point belongs to the boundary). For corner domains, various infinite cones have to be added to the collection of tangent domains.

Almost all known results concerning the semiclassical limit of the ground state energy rely on an {\it a priori} knowledge (or assumptions) on where the local energy is minimal. For instance, this is known if the domain is smooth, or if it is a polygon with openings $\le\frac\pi2$ and constant magnetic field. By contrast, for three-dimensional polyhedra, possible configurations involving edges and corners are much more intricate, and nowadays this is impossible to know where the local energy attains its minimum. Up until recently, it was not even known whether the infimum is attained. 

In this work, we investigate the behavior of the local energy in general 3D corner domains and we prove in particular that it attains its minimum. The properties that we show allow us to obtain an asymptotics with remainder for the ground state energy of the Schr\"odinger operator with magnetic field. 
In some situations, the remainder is optimal. 
We also have partial results for the natural class of $n$-dimensional corner domains.
Let us now present our problematics and results in more detail.

\section{The magnetic Laplacian and its lowest eigenvalue}
The Schr\"odinger operator with magnetic field (also called magnetic Laplacian) in a $n$-dimensional space takes the form
\[
   (-i\nabla+\bA)^2 = \sum_{j=1}^n(-i\partial_{x_{j}}+A_{j})^2,
\]
where $\bA=(A_{1},\ldots,A_{n})$ is a given vector field and $\partial_{x_{j}}$ is the partial derivatives with respect to $x_j$ with $\bx=(x_1,\ldots,x_n)$ denoting Cartesian variables. The field $\bA$ represents the magnetic potential.
When set on a domain $\Omega$ of $\R^n$, this elliptic operator is completed by the magnetic Neumann boundary conditions $(-i\nabla+\bA)\psi\cdot\bn=0$ on $\partial\Omega$, where $\bn$ denotes the unit normal vector to the boundary. We assume in the whole work that the field $\bA$ is twice differentiable on the closure $\overline\Omega$ of $\Omega$, which we write:
\begin{equation}
\label{eq:W2}
   \bA \in \sC^{2}(\overline{\Omega}).
\end{equation} 
This Neumann realization is denoted by $\OP(\bA,\Omega)$. If $\Omega$ is bounded with a Lipschitz boundary\index{Lipschitz domain}\footnote{Or more generally if $\Omega$ is a finite union of bounded Lipschitz domains, cf.\ \cite[Chapter 1]{MazyaBook11} for instance.}, the form domain of $\OP(\bA,\Omega)$ is the standard Sobolev space $H^1(\Omega)$ and $\OP(\bA,\Omega)$ is self-adjoint, non negative, and with compact resolvent. A ground state of $\OP(\bA,\Omega)$ is an eigenpair $(\lambda,\psi)$
associated with the lowest eigenvalue $\lambda$. 
If $\Omega$ is simply connected, its eigenvalues only depend on the magnetic field defined as follows, cf.\ \cite[\S 1.1]{FouHe10}. 
If $\omega_{\bA}$ denotes the 1-form associated with the vector field $\bA$
\begin{equation}\label{eq:omegaA}
   \omega_{\bA}=\sum_{j=1}^n A_{j} \,\rd x_{j},
\end{equation}
the corresponding 2-form $\sigma_{\bB}$ 
\begin{equation}\label{eq:sigmaB}
  \sigma_{\bB}=\rd\omega_{\bA}=\sum_{j<k}B_{jk} \,\rd x_{j}\wedge \rd x_{k}
\end{equation}
is called the magnetic field.
In dimension $n=2$ or $n=3$, $\sigma_{\bB}$ can be identified with 
\begin{equation}
\label{eq:B}
   \bB = \curl\bA.
\end{equation}
When the domain $\Omega$ is simply connected (which will be assumed everywhere unless otherwise stated), the eigenvectors corresponding to two different instances of $\bA$ for the same $\bB$ are obtained from each other by a {\em gauge transform}\index{Gauge transform} and the eigenvalues depend on $\bB$ only. 

Introducing a (small) parameter $h>0$ and setting
\[
   \OP_{h}(\bA,\Omega) = (-ih\nabla+\bA)^2
   \quad\mbox{with magnetic Neumann b.c. on $\partial\Omega$},
\]
we get the relation
\begin{equation}
\label{eq:h2}
   \OP_{h}(\bA,\Omega) = h^2 \OP\Big(\frac{\bA}{h},\Omega\Big)
\end{equation}
linking the problem with large magnetic field to the semiclassical limit $h\to0$ for the Schr\"odinger operator with magnetic potential. Reminding that eigenvalues depend only on the magnetic field, we denote by $\lambda_h=\lambda_h(\bB,\Omega)$ the smallest eigenvalue of $\OP_{h}(\bA,\Omega)$ and by $\psi_h$ an associated eigenvector, so that
\begin{equation}
\label{eq:pbh}
   \begin{cases}
   (-ih\nabla+\bA)^2\psi_h=\lambda_h\psi_h\ \ &\mbox{in}\ \ \Omega \,,\\
   (-ih\nabla+\bA)\psi_{h}\cdot\bn=0\ \ &\mbox{on}\ \ \partial\Omega\,.
   \end{cases}
\end{equation}
The behavior of $\lambda_h(\bB,\Omega)$ as $h\to0$ clearly provide equivalent information about the lowest eigenvalue of $\OP(\breve\bA,\Omega)$ when $\breve\bB$ is large, especially in the parametric case when $\breve\bB = B\ee\bB$ where the real number $B$ tends to $+\infty$ and $\bB$ is a chosen reference magnetic field.

From now on, we consider that $\bB$ is fixed. We assume that it is smooth enough  and, unless otherwise mentioned, does not vanish on $\overline\Omega$. 
The question of the semiclassical behavior of $\lambda_h(\bB,\Omega)$ has been considered in many papers for a variety of domains, with constant or variable magnetic fields: Smooth domains \cite{BeSt98,LuPan99-2, HeMo01, FouHe06,Ara07, Ray09} and polygons \cite{Ja01,Pan02,Bo05, BonDau06, BoDauMaVial07} in dimension $n=2$, and mainly smooth domains \cite{LuPan00, HeMo02, HeMo04, Ray09-3d, FouHe10} in dimension $n=3$. Until now, three-dimensional non-smooth domains were only addressed in two particular configurations---rectangular cuboids \cite{Pan02} and lenses \cite[Chap. 8]{PoTh} and \cite{PoRay12}, with special orientations of the magnetic field (that is supposed to be constant). We give more detail and references about the state of the art in Chapter \ref{sec:art}.

\section{Local ground state energies}
Let us make precise what we call local energy in the three-dimensional setting. The domains that we are considering are members of a very general class of corner domains\index{Corner domain} defined by recursion over the dimension $n$ (these definitions are set in Chapter \ref{sec:chain}).
In the three-dimensional case, each point $\bx$ in the closure of a corner domain $\Omega$
is associated with a dilation invariant, tangent open\index{Tangent cone} set $\Pi_\bx$, according to the following cases:
\begin{enumerate}
\item If $\bx$ is an interior point, $\Pi_\bx=\R^3$,
\item If $\bx$ belongs to a {\em face $\bff$} ({\it i.e.}, a connected component of the smooth part of $\partial\Omega$), $\Pi_\bx$ is a half-space,
\item If $\bx$ belongs to an {\em edge} $\be$, $\Pi_\bx$ is an infinite wedge,
\item If $\bx$ is a {\em vertex} $\bv$, $\Pi_\bx$ is an infinite cone.
\end{enumerate}
Let $\bB_\bx$ be the magnetic field frozen at $\bx$. The tangent operator at $\bx$ is the magnetic Laplacian $\OP(\bA_\bx \ee,\Pi_\bx)$ where $\bA_\bx$ is the linear approximation\index{Linearized magnetic potential} of $\bA$ at $\bx$, so that
\[
   \curl\bA_\bx = \bB_\bx \,.
\] 
We define the \emph{local energy}\index{Local ground energy} $\En(\bB_\bx \ee,\Pi_\bx)$ at $\bx$ as the ground state energy of the tangent operator\index{Tangent operator} $\OP(\bA_\bx \ee,\Pi_\bx)$ and we introduce the global quantity {\it (lowest local energy)}\index{Lowest local energy}
\begin{equation}
\label{eq:s}
   \sE(\bB \ee,\Omega) := \inf_{\bx\in\overline\Omega} \En(\bB_\bx \ee,\Pi_\bx).
\end{equation}
One of our objectives is to show the existence of a minimizer for these ground state energies, achieved by a certain tangent cone\index{Tangent cone} associated with suitable generalized eigenfunctions, as we will specify later on.

The tangent operators $\OP(\bA_\bx \ee,\Pi_\bx)$ are magnetic Laplacians set on unbounded domains and with constant magnetic field. So they have mainly an essential spectrum and, only in some cases when $\bx$ is a vertex, discrete spectrum. This fact makes it difficult to study continuity properties of the ground energy and to construct quasimodes for the initial operator.

In the regular case, the tangent operators are magnetic Laplacians associated respectively with interior points and boundary points, acting respectively on the full space and on half-spaces. The spectrum of the operator on the full space is well-known and corresponds to Landau modes. The case of the half-spaces has also been investigated for a long time (\cite{LuPan00, HeMo04}): The ground state energy depends now on the angle between the (constant) magnetic field and the boundary of the half-space. 
It is continuous and increasing with this angle, so that the ground state is minimal for a magnetic field tangent to the boundary, and maximal for a magnetic field normal to the boundary. In all cases, it is possible to find a bounded generalized eigenfunction satisfying locally the boundary conditions. 

For two dimensional domains with corners, new tangent model operators have to be considered, now acting on infinite sectors (\cite{Pan02, Bo05}). For openings $\le\frac\pi2$, the ground state energy is an eigenvalue strictly less than in the regular case for the same value of $\bB$. But for larger openings in 2D and conical or polyhedral singularities in 3D, it becomes harder to compare ground state energies, and for a given tangent operator, it is not clear whether there exist associated generalized eigenfunctions. Moreover, it is not clear anymore whether the infimum of the ground state energies over all tangent operators is reached.

In this work, for two or three dimensions of space,  we provide positive answers to the questions of existence for a minimum  in \eqref{eq:s} and for related generalized eigenvectors associated with the minimum energy.  First we have proved very general continuity and semicontinuity properties for the function $\bx\mapsto\En(\bB_\bx \ee,\Pi_\bx)$ as described now.
Let $\gF$ be the set of faces\index{Face} $\bff$, $\gE$ the set of edges\index{Edge} $\be$ and $\gV$ the set of vertices\index{Vertex} of $\Omega$. They form a partition of the closure of $\Omega$, called stratification\index{Stratification}
\begin{equation}
\label{eq:stratif}
   \overline\Omega = \Omega \cup \big(\bigcup_{\bff\in\gF}\ \bff\ \big)
   \cup \big(\bigcup_{\be\in\gE}\ \be\ \big)
   \cup \big(\bigcup_{\bv\in\gV}\ \bv\ \big) .
\end{equation}
The sets $\Omega$, $\bff$, $\be$ and $\bv$ are open sets called the strata\index{Stratum} of $\overline\Omega$, compare with \cite{MazyaPlamenevskii77} and \cite[Ch. 9]{NazarovPlamenevskii94}. We denote them generically by $\bt$ and their set by $\gT$. Note that strata do not contain their boundaries: faces do not include edges or vertices, and edges do not include vertices. We will show the following facts
\begin{enumerate}[{\quad}(a)]
\item For each stratum $\bt\in\gT$, the function $\bx\mapsto \En(\bB_\bx \ee,\Pi_\bx)$ is continuous on $\bt$. 
\item The function $\bx\mapsto \En(\bB_\bx \ee,\Pi_\bx)$ is lower semicontinuous on $\overline\Omega$.
\end{enumerate}
As a consequence, the infimum determining the limit $\sE(\bB,\Omega)$ in \eqref{eq:s} is a minimum
\begin{equation}
\label{eq:s,min}
   \sE(\bB \ee,\Omega) = \min_{\bx\in\overline\Omega} \,\En(\bB_\bx \ee,\Pi_\bx) \,.
\end{equation}
From this we can deduce in particular that $\sE(\bB,\Omega)>0$ as soon as $\bB$ does not vanish on $\overline\Omega$.

But we need more than properties a) and b) to show an upper bound for $\lambda_h(\bB,\Omega)$ as $h\to0$. We need to construct quasimodes\index{Quasimode} whatever is the geometry of $\Omega$ near the minimizers of the local energy. For this we define a second level of energy attached to each point $\bx\in\overline\Omega$ which we denote by $\seE(\bB_{\bx},\Pi_{\bx})$ and call {\it energy on tangent substructures}\index{Energy on tangent substructures}. This quantity has been introduced on the emblematic example of edges in \cite{Pop13}: If $\bx$ belongs to an edge, then $\Pi_\bx$ is a wedge. This wedge has two faces defining two half-spaces $\Pi^\pm_\bx$ in a natural way: This provides, in addition with the full space $\R^3$, what we call the \emph{tangent substructures}\index{Tangent substructure} of $\Pi_\bx$. In this situation $\seE(\bB_{\bx},\Pi_{\bx})$ is defined as
\[
   \seE(\bB_{\bx},\Pi_{\bx}) = 
   \min\big\{\En(\bB_\bx \ee,\Pi^+_\bx),\En(\bB_\bx \ee,\Pi^-_\bx),\En(\bB_\bx \ee,\R^3) \big\}.
\]
For a general point $\bx\in\overline\Omega$, $\seE(\bB_{\bx},\Pi_{\bx})$ is the infimum of local energies associated with the tangent substructures of $\Pi_\bx$, that is all cones $\Pi_\by$ associated with points $\by\in\overline\Pi_\bx\setminus\bt_0$ where $\bt_0$ is the stratum of $\Pi_\bx$ containing the origin (for the example of a wedge, $\bt_0$ is its edge). 
Equivalently, $\seE(\bB_{\bx},\Pi_{\bx})$ yields $\liminf_{\by\to \bx} E(\bB_{\bx},\Pi_{\by})$ for points $\by\in\overline\Omega$ that are not in the same stratum as $\bx$. We show that $ \En(\bB_\bx,\Pi_\bx) \leq \seE(\bB_\bx,\Pi_\bx)$. This may be understood as a monotonicity property of the ground state energy for a tangent cone and its tangent substructures. 

The quantity $\seE(\bB_\bx,\Pi_\bx)$ has a spectral interpretation: For a vertex $\bx$ of $\Omega$,  $\seE(\bB_\bx,\Pi_\bx)$ is the bottom of the essential spectrum of $H(\bA_\bx,\Pi_\bx)$ so that if $\En(\bB_\bx,\Pi_\bx) < \seE(\bB_\bx,\Pi_\bx)$, there exists an eigenfunction associated with $\En(\bB_\bx,\Pi_\bx)$. For $\bx$ other than a vertex, the interpretation of $\seE(\bB_\bx,\Pi_\bx)$ is less standard: We show that if $\En(\bB,\Pi_\bx) < \seE(\bB_\bx,\Pi_\bx)$, then there exists a bounded \emph{generalized eigenvector}\index{Generalized eigenvector} associated with $\En(\bB_\bx,\Pi_\bx)$. 

However, it remains possible that $\En(\bB_\bx,\Pi_\bx)$ equals $\seE(\bB_\bx,\Pi_\bx)$. This case seems at first glance to be problematic, but we provide a solution issued from the recursive properties of  corner domains: We show that there always exists a tangent substructure of $\Pi_\bx$ providing generalized eigenfunctions for the same level of energy.

\section{Asymptotic formulas with remainders}
\subsubsection*{Case of 3D domains}
A thorough investigation of local energies $\En(\bB_\bx \ee,\Pi_\bx)$ and $\seE(\bB_{\bx},\Pi_{\bx})$ allows us to find asymptotic formulas with remainders for the ground state energy $\lambda_h(\bB \ee,\Omega)$ of the magnetic Laplacian on any 3D corner domain $\Omega$ as $h\to0$. Our remainders depend on the singularities of $\Omega$: The convergence rate is improved in the case of \emph{polyhedral domains}\index{Polyhedral domain} in which, by contrast with conical domains, the main curvatures at any smooth point of the boundary remain uniformly bounded. Figure \ref{F1} gives several examples of corner domains: Both edge domains in Figure \ref{F1B} are polyhedral, such as the Fichera corner in the left part of Figure \ref{F1C}, whereas the three other domains (Figure \ref{F1A} and Figure \ref{F1C}-right) have conical points where one main curvature tends to infinity.
\begin{figure}[ht]

\noindent\centering
    \begin{subfigure}[h]{0.32\textwidth}
\includegraphics[keepaspectratio=true,width=5.3cm]{./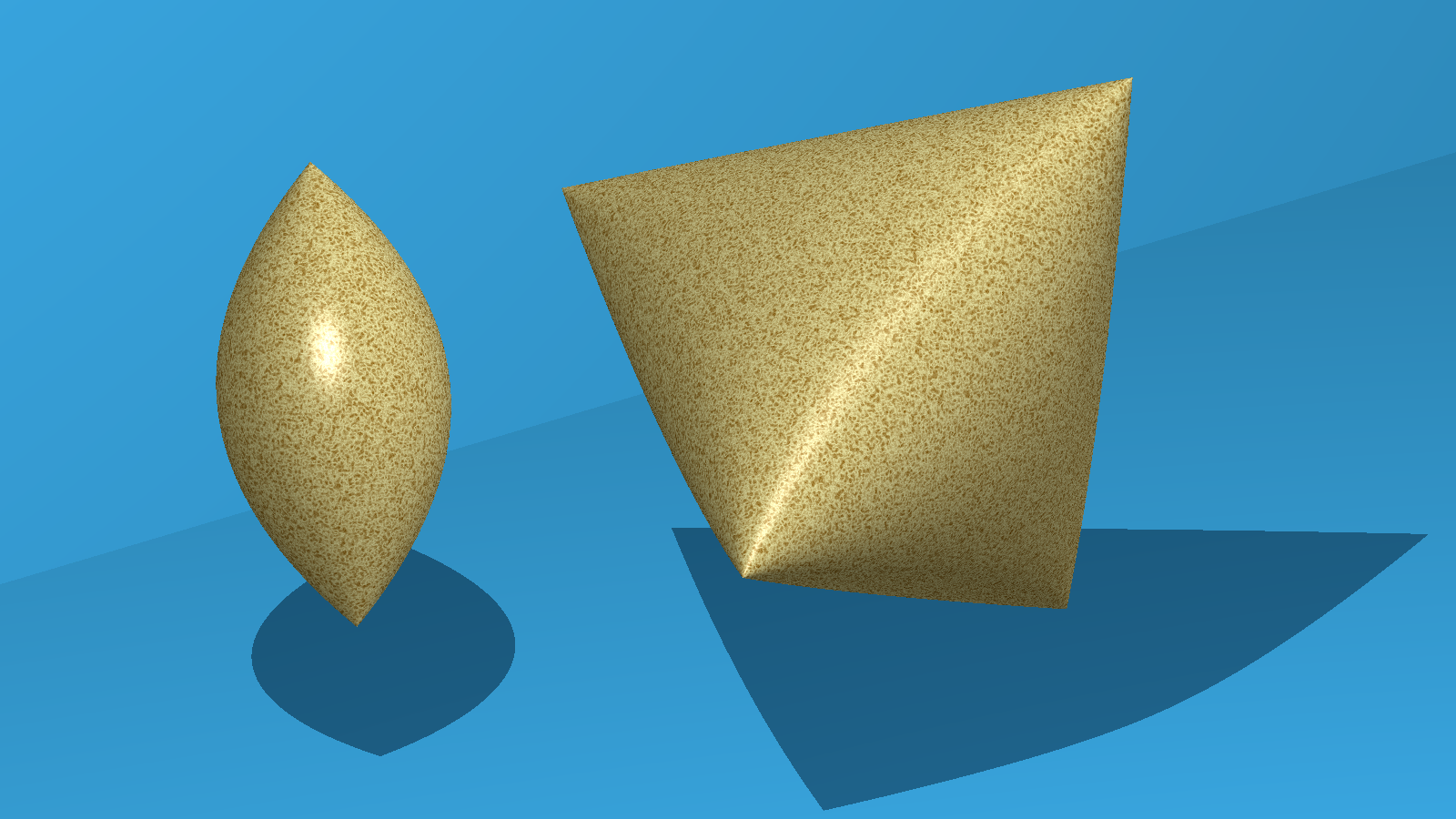}
        \caption{Domains with corners}
        \label{F1A}
    \end{subfigure}
\
            \begin{subfigure}[h]{0.32\textwidth}
\includegraphics[keepaspectratio=true,width=5.3cm]{./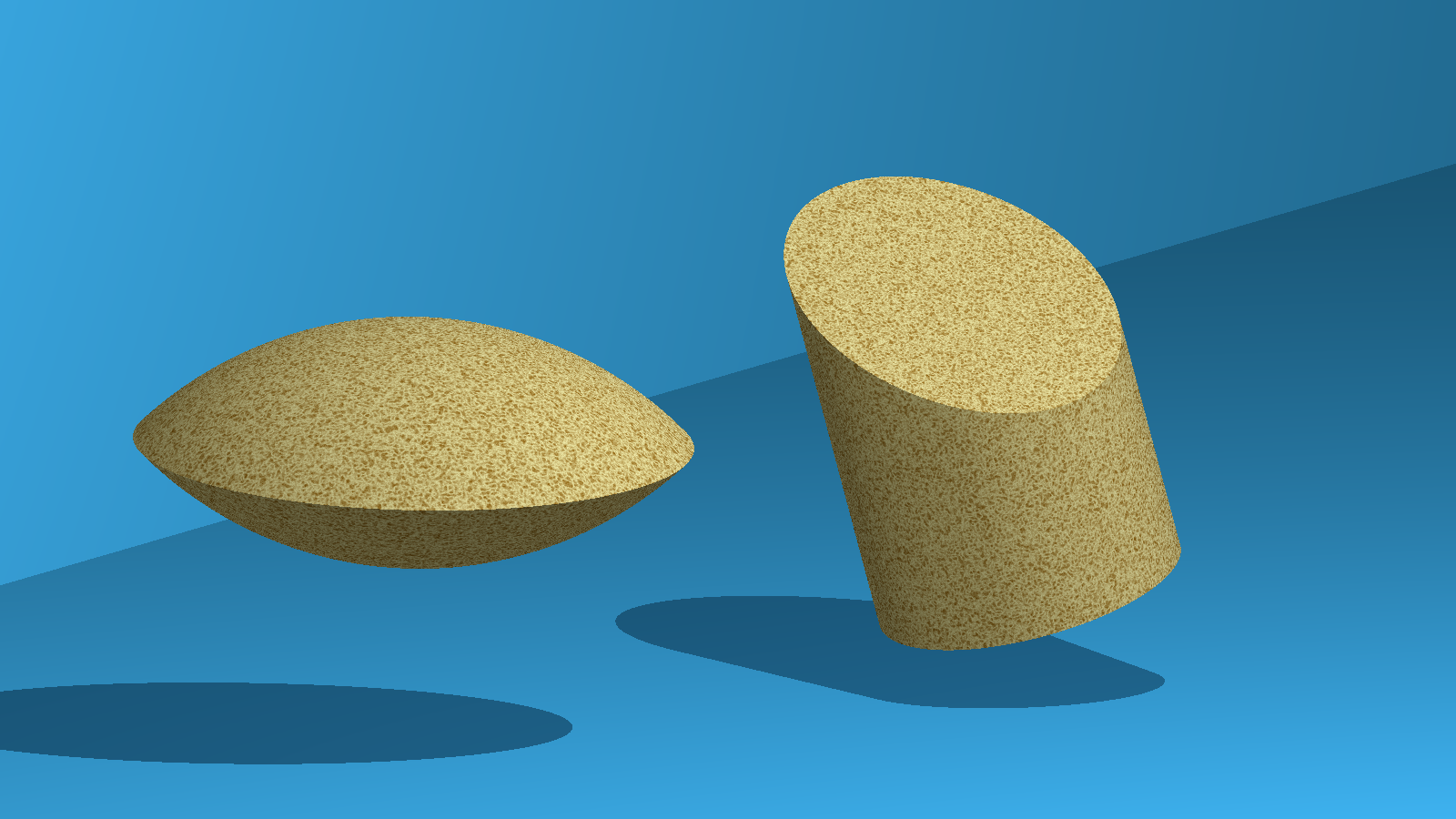}
        \caption{Domains with edges}
        \label{F1B}
    \end{subfigure}
    \
        \begin{subfigure}[h]{0.32\textwidth}
\includegraphics[keepaspectratio=true,width=5.3cm]{./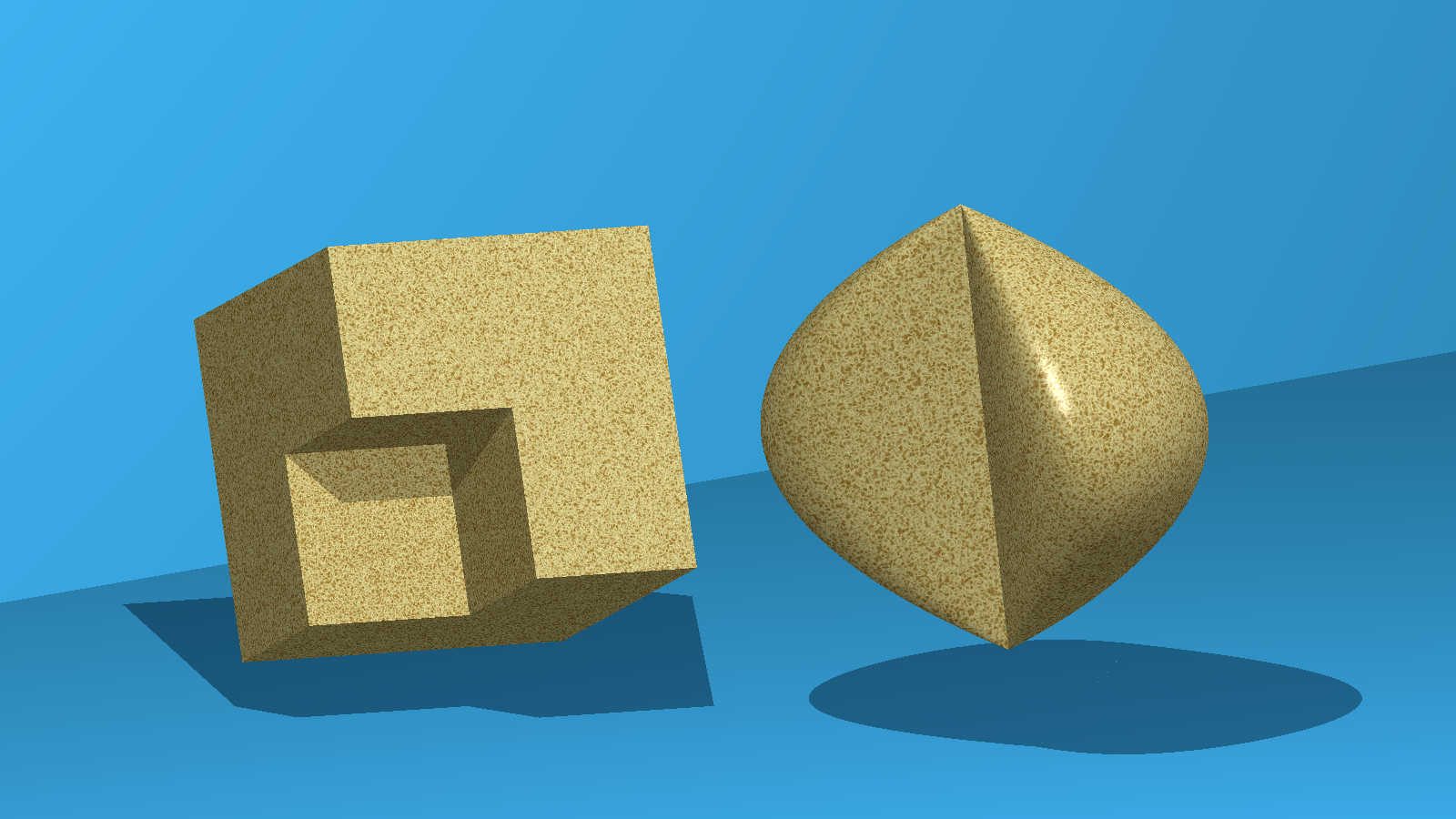}
        \caption{Domains with both}
        \label{F1C}
    \end{subfigure}
\caption{Examples of 3D corner domains (Figures drawn by M. Costabel with POV-ray)}
\label{F1}

\end{figure}

Our main results can be stated as follows (Theorems~\ref{T:generalLB} and \ref{T:generalUB}) as $h\to0$
\begin{equation}
\label{eq:conv}
   \big|\lambda_h(\bB \ee,\Omega) - h\ee \sE(\bB \ee,\Omega) \big|\le
   \begin{cases}
   C_\Omega \big(1+ \|\bA\|^2_{W^{2,\infty}(\Omega)}\big) \, h^{11/10} ,
    &\mbox{$\Omega$ corner domain,} 
   \\[0.5ex]
   C_\Omega \big(1+ \|\bA\|^2_{W^{2,\infty}(\Omega)}\big) \, h^{5/4} ,
    &\mbox{$\Omega$ polyhedral domain.}
   \end{cases}
\end{equation}
Here the constant $C_\Omega$ only depends on the domain $\Omega$ (and not on $\bA$, nor on $h$), and $\|\bA\|_{W^{2,\infty}(\Omega)}$ denotes the standard $L^\infty$ Sobolev norm on $\sC^{2}(\overline{\Omega})$:
\[
   \|\bA\|_{W^{2,\infty}(\Omega)} = 
   \max_{1\le j\le n} \max_{|\alpha|\le2} \|\partial^\alpha_\bx A_j \|_{L^{\infty}(\Omega)} .
\]
Note that the lower bound in \eqref{eq:conv} for the polyhedral case coincides with the one obtained in the smooth case in dimensions 2 and 3 when no further assumptions are imposed, cf.\ Section \ref{SS:dim3} below.

Besides, if $\bB$ vanishes somewhere in $\overline\Omega$, the lowest local energy $\sE(\bB \ee,\Omega)$ is zero, and we obtain the upper bound in any 3D corner domain $\Omega$ (Theorem \ref{T:generalUB})
\begin{equation}
\label{eq:convBnul}
  \lambda_h(\bB \ee,\Omega) \le    C_\Omega \big(1+ \|\bA\|^2_{W^{2,\infty}(\Omega)}\big) \, h^{4/3} ,
\end{equation}
which, in view of \cite{HeMo96,DoRa13}, is optimal. Indeed, we also improve the upper bound in \eqref{eq:conv} recovering the power $h^{4/3}$ for general potentials that are $3$ times differentiable in polyhedral domains, namely
\begin{equation}
\label{eq:convimp}
   \lambda_h(\bB \ee,\Omega) \le h\ee \sE(\bB \ee,\Omega) +
   \begin{cases}
   C_\Omega \big(1+ \|\bA\|^2_{W^{3,\infty}(\Omega)}\big) \, h^{9/8} ,
    &\mbox{$\Omega$ corner domain,} 
   \\[0.5ex]
   C_\Omega \big(1+ \|\bA\|^2_{W^{3,\infty}(\Omega)}\big) \, h^{4/3} ,
    &\mbox{$\Omega$ polyhedral domain.}
   \end{cases}
\end{equation}
Note that the $h^{4/3}$ rate was known for smooth three-dimensional domains, \cite[Proposition 6.1 \& Remark 6.2]{HeMo04} and that \eqref{eq:convimp} extends this result to polyhedral domains without loss.

Two-dimensional corner domains are curvilinear polygons. 
The curvature of their boundary satisfies the same property of uniform boundedness 
than polyhedral domains. That is why the asymptotic formulas with remainder in $h^{5/4}$ (and even $h^{4/3}$ for the upper bound) are valid.

With the point of view of large magnetic fields in the parametric case $\breve\bB = B\ee\bB$, the identity \eqref{eq:h2} used with $h=B^{-1}$ provides  
\begin{equation}
\label{lienBh}
\lambda(\breve\bB,\Omega)=B^2\lambda_{B^{-1}}(\bB,\Omega), 
\end{equation}
therefore \eqref{eq:conv} yields obviously as $B\to\infty$
\begin{equation}
\label{eq:convB}
   \big|\lambda(\breve\bB \ee,\Omega) - B\ee \sE(\bB \ee,\Omega) \big|\le 
   \begin{cases}
    C_\Omega \big(1+ \|\bA\|^2_{W^{2,\infty}(\Omega)}\big) \, B^{9/10}, 
    &\mbox{$\Omega$ corner domain,} 
    \\[0.5ex]
   C_\Omega \big(1+ \|\bA\|^2_{W^{2,\infty}(\Omega)}\big) \, B^{3/4}, 
    &\mbox{$\Omega$ polyhedral domain,} 
\end{cases}
\end{equation}
where $\bA$ is a potential associated with $\bB$. Note that $B\ee \sE(\bB \ee,\Omega)=\ee \sE(\breve\bB \ee,\Omega)$ by homogeneity. In the same spirit, improved upper bounds \eqref{eq:convimp} can be written as
\begin{equation}
\label{eq:convimpB}
   \lambda(\breve\bB \ee,\Omega) \le B\ee \sE(\bB \ee,\Omega)  +
   \begin{cases}
   C_\Omega \big(1+ \|\bA\|^2_{W^{3,\infty}(\Omega)}\big) \, B^{7/8} ,
    &\mbox{$\Omega$ corner domain,} 
   \\[0.5ex]
   C_\Omega \big(1+ \|\bA\|^2_{W^{3,\infty}(\Omega)}\big) \, B^{2/3} ,
    &\mbox{$\Omega$ polyhedral domain.}
   \end{cases}
\end{equation}

\subsubsection*{Estimates involving $\bB$ only}
In formulas \eqref{eq:conv} the remainder estimates depend on the magnetic potential $\bA$. 
It is possible to obtain estimates depending on the magnetic field $\bB$ and not on the potential as long as $\Omega$ is simply connected. For this, we consider $\bB$ as a datum and associate a potential $\bA$ with it. Operators $\sA:\bB\mapsto\bA$ lifting the curl ({\it i.e.}, such that $\curl\circ\,\sA=\Id$) and satisfying suitable estimates have been considered in the literature. We quote \cite{CostabelMcIntosh10} in which it is proved that such lifting can be constructed as a pseudo-differential operator of order $-1$. As a consequence $\sA$ is continuous between H\"older classes of non integer order:
\[
   \forall\ell\in\N,\ \forall\alpha\in(0,1),\quad
   \exists K_{\ell,\alpha} >0,\quad
   \|\sA\bB\|_{W^{\ell+1+\alpha,\infty}(\Omega)} \le K_{\ell,\alpha}  
   \|\bB\|_{W^{\ell+\alpha,\infty}(\Omega)} \,.
\]
Choosing $\bA=\sA\bB$ with $\ell=2$ and $\alpha>0$ in \eqref{eq:conv}, or with $\ell=3$ and $\alpha>0$ in \eqref{eq:convimp}, we obtain remainder estimates depending on $\bB$ only.

\subsubsection*{Generalization to $n$-dimensional corner domains}
We have also obtained a weaker result valid in any space dimension $n$, $n\ge4$. Combining Sections \ref{ss:genup} and \ref{ss:genlow} we can see that the quotient $\lambda_{h}(\bB \ee,\Omega)/h$ converges to $\sE(\bB \ee,\Omega)$ as $h\to0$ and that a general lower bound with remainder is valid. For a $n$-dimensional polyhedral domain, this lower bound is the same as in dimension 3:
\begin{equation}
\label{eq:polyhnD}
   - C_\Omega \big(1+ \|\bA\|^2_{W^{2,\infty}(\Omega)}\big) \, h^{5/4} 
   \le  \lambda_h(\bB \ee,\Omega) - h\ee \sE(\bB \ee,\Omega).
\end{equation}

\subsubsection*{Generalization to non simply connected domains}
If $\Omega$ is not simply connected, the first eigenvalue of the operator $\OP(\bA,\Omega)$ will depend on $\bA$, and not only on $\bB$. A manifestation of this is the Aharonov Bohm effect, see \cite{Helffer88a} for instance. Our results \eqref{eq:conv}--\eqref{eq:convBnul} still hold for the first eigenvalue $\lambda_h=\lambda_h(\bA,\Omega)$ of $\OP_{h}(\bA,\Omega)$. Note that, by contrast, the ground state energies of tangent operators $\OP(\bA_\bx,\Pi_\bx)$ only depend on the (constant) magnetic field $\bB_\bx$ because the potential $\bA_\bx$ is linear by definition. Therefore the lowest local energy only depends on the magnetic field and can still be denoted by $\sE(\bB,\Omega)$ even in the non simply connected case.

\section{Contents}
Our work is organized in five parts. Part \ref{part:1} is introductory and contains two chapters, the present introduction and Chapter \ref{sec:art} where we review related literature. Part \ref{part:2} is devoted to the relevant classes of corner domains and associated model tangent structures. The proof of lower bounds for the quotient $\lambda_{h}(\bB \ee,\Omega)/h$ is also presented in this part since it does not require finer tools. In Part \ref{part:3} we investigate more specific features of the (two- and) three-dimensional model magnetic Laplacians, and prove several different upper bounds. Part \ref{part:4} deals with improvements and generalizations in various directions. The last part gather appendices.\\

Let us give more details on the contents of the core parts (\ref{part:2} to \ref{part:4}) of this work.

\subsubsection*{Part \ref{part:2}}
In Chapter \ref{sec:chain} we define recursively our class of corner domains $\Omega$ in dimension $n$, alongside with their tangent cones $\Pi_\bx$ and singular chains\index{Singular chain} $\dx=(\bx_0,\bx_1,\ldots)$. We particularize these notions in the case $n=3$ and prove weighted estimates for the local maps\index{Local map} and their derivatives. The weights are powers of the distance to conical vertices\index{Conical point} around which one main curvature blows up. We investigate a special class of functions acting on singular chains. The local energy enters this class. 

In Chapter~\ref{sec:chgvar}, we introduce the tangent operators\index{Tangent operator} attached to each magnetic Laplacian on a corner domain and establish weighted estimates of the linearization error. We deduce a rough general upper bound for the quotient $\lambda_{h}(\bB \ee,\Omega)/h$ for corner domains in any dimension $n\ge2$.

In Chapter \ref{sec:low} we prove the lower bound 
$h\ee \sE(\bB \ee,\Omega) - C h^{11/10} \le \lambda_h(\bB \ee,\Omega)$ for general 3D corner domains by an IMS-type formula\index{IMS formula} based on a two-scale partition of unity\index{Partition of unity}. In the particular case of polyhedra, a one-scale standard partition suffices, which yields the improved lower bound  $h\ee \sE(\bB \ee,\Omega) - C h^{5/4} \le \lambda_h(\bB \ee,\Omega)$. We can generalize these lower bounds to any dimension $n$, letting appear the power $1+1/(3\cdot2^{\nu+1}-2)$ of $h$ with an integer $\nu\in[0,n]$ depending on the corner domain $\Omega$.

\subsubsection*{Part \ref{part:3}}
In Chapter \ref{sec:tax} we introduce the lowest energy $\seE(\bB,\Pi)$\index{Local ground energy} on tangent substructures of a model cone $\Pi$ associated with a constant magnetic field $\bB$. Then we classify magnetic model problems on three-dimensional model cones (taxonomy)\index{Taxonomy}: We characterize as precisely as possible their ground state energy, their lowest energy on tangent substructures, and their essential spectrum. 

We show in Chapter~\ref{sec:dicho} one of the most original results of our work, in view of the construction of quasimodes: To each point $\bx_0$ in $\overline\Omega$ are associated its tangent structures $\Pi_\dx$ characterized by a singular chain $\dx$ originating at $\bx_0$. Among them, there exists one for which the tangent operator $\OP(\bA_\dx,\Pi_\dx)$ possesses suitable bounded generalized eigenvectors (said \emph{admissible})\index{Admissible Generalized Eigenvector} with the same energy as the local energy at $\bx_0$:
\[
    \En(\bB_{\dx} \ee,\Pi_{\dx}) = \En(\bB_{\bx_0} \ee,\Pi_{\bx_0}).
\]

Chapter~\ref{sec:sci} is devoted to the investigation of various continuity properties of the local ground energy $\En(\bB_{\bx},\Pi_{\bx})$.

In Chapter \ref{sec:up}, by a construction of quasimodes based on admissible generalized eigenvectors for tangent problems, we prove the upper bounds 
\begin{equation}
\label{eq:up}
   \lambda_h(\bB \ee,\Omega) \le h\ee \sE(\bB \ee,\Omega) + C h^\kappa,
\end{equation}
with $\kappa={11/10}$ or $\kappa=5/4$ depending on whether $\Omega$ is a corner domain or a polyhedral domain. Our construction critically depends on the length\index{Singular chain!Length} $\nu$ of the singular chain $\dx$ that provides the generalized eigenvector. When $\nu=1$, we are in the classical situation: It suffices to concentrate the support of the quasimode around $\bx_0$, and we qualify it as \emph{sitting}\index{Quasimode!Sitting}. When $\nu=2$, the chain has the form $\dx=(\bx_0,\bx_1)$: Our quasimode is decentered in the direction provided by $\bx_1$, has a two-scale structure in general, and we qualify it as \emph{sliding}\index{Quasimode!Sliding}. When $\nu=3$, the chain has the form $\dx=(\bx_0,\bx_1,\bx_2)$ and our quasimode is \emph{doubly sliding}\index{Quasimode!Doubly sliding}. In dimension $n=3$, considering chains of length $\nu\le3$ is sufficient to conclude.

\subsubsection*{Part \ref{part:4}}
To show the improved upper bounds \eqref{eq:convimp}, we revisit,  in Chapter \ref{sec:Stab}, admissible generalized eigenvectors by analyzing the stability of their structure under perturbation. In Chapter \ref{sec:Improv}, we prove refined upper bounds of type \eqref{eq:up} with improved rates $\kappa={9/8}$ and $\kappa=4/3$ when $\Omega$ is a general corner domain and a polyhedral domain, respectively, but with a constant $C$ involving now the norm $W^{3,\infty}$ of the magnetic potential instead of the norm $W^{2,\infty}$. This proof is based on the same stratification\index{Stratification} as the previous one, combined with a new classification depending on the number of directions along which the admissible generalized eigenvector is exponentially decaying. 

In Chapter \ref{sec:conc}, we address various improvements or extensions of our results. We mention in particular the situation where one has a corner concentration\index{Corner concentration}, that is a genuine eigenvector in a tangent cone associated with the lowest local energy. This provides the existence of asymptotics as $h\to0$ for the first eigenpairs on the corner domain. We conclude our work by sketching the similarities with another, simpler, problem issued from the superconductivity, namely the Robin boundary conditions for the plain Laplace operator.

\section{Notations} 
\label{ss:not}
We denote by $\langle\cdot,\cdot\rangle_{\cO}$ the $L^2$ Hilbert product on the open set $\cO$ of $\R^n$
$$\big\langle f,g\big\rangle_{\cO}=\int_{\cO}f(\bx)\,\overline{g}(\bx)\ \rd\bx.$$
When there is no confusion, we simply write $\langle f,g\rangle$ and $\|f\|= \langle f,f\rangle^{1/2}$.

For a generic (unbounded) self-adjoint operator $L$
we denote by $\dom(L)$ its domain and $\gS(L)$ its spectrum. 
Likewise the domain of a quadratic form $q$ is denoted by $\dom(q)$.

Domains as open simply connected subsets of $\R^n$ are in general denoted by $\cO$ if they are generic, $\Pi$ if they are invariant by dilatation (cones) and $\Omega$ if they are bounded.

The quadratic forms of interest are those associated with magnetic Laplacians, namely, for a positive constant $h$, a magnetic potential $\bA\in \sC^{2}(\overline{\Omega})$, and a generic domain $\cO$
\begin{equation}
\label{D:fq}
   q_{h}[\bA,\cO](f):= \big\langle (-ih\nabla+\bA)f,(-ih\nabla+\bA)f\big\rangle_{\cO}=
   \int_{\cO}(-ih\nabla+\bA)f\cdot \overline{(-ih\nabla+\bA)f}\;\rd\bx ,
\end{equation}
with its domain 
$
   \dom(q_{h}[\bA,\cO]) = \{f\in L^2(\cO), \ (-ih\nabla+\bA)f\in L^2(\cO)\}
$.
For a bounded domain $\Omega$, $\dom(q_{h}[\bA,\Omega])$ coincides with $H^1(\Omega)$. For $h=1$, we omit the index $h$, denoting the quadratic form by $q[\bA,\cO]$. In the same way we introduce the following notation for Rayleigh quotients\index{Rayleigh quotient|textbf}
\index{Rayleigh quotient!$\QR_{h}$|textbf}
\begin{equation}
\label{eq:RayQuot}
   \QR_{h}[\bA,\cO](f) = \frac{q_{h}[\bA,\cO](f)}{\langle f,f\rangle_{\cO}},
   \quad f\in\dom(q_{h}[\bA,\cO]),\ f\neq0,
\end{equation}
and recall that, by the min-max principle\index{Min-max principle}
\begin{equation}
\label{eq:minmax}
   \lambda_h(\bB,\Omega) = \min_{ f\in\dom(q_{h}[\bA,\Omega])\,\setminus\, \{0\}} 
   \QR_{h}[\bA,\Omega](f)\,.
\end{equation}
In relation with changes of variables, we will also use the more general form with metric:
\begin{equation}
\label{D:fqG}
   q_{h}[\bA,\cO,\rG](f)=
   \int_{\cO}(-ih\nabla+\bA)f\cdot \rG \big(\,\overline{\!(-ih\nabla+\bA)f\!}\,\big)
   \; |\rG|^{-1/2} \,\rd\bx ,
\end{equation}
where $\rG$ is a smooth function with values in $3\times3$ positive symmetric matrices and $|\rG|=\det \rG$.  Its domain is
$
   \dom(q_{h}[\bA,\cO,\rG]) = \{f\in L^2_\rG(\cO), \ \rG^{1/2}(-ih\nabla+\bA)f\in L^2_\rG(\cO)\}\,,
$
where $L^2_{\rG}(\cO)$ is the space of the square-integrable functions for the weight $|\rG|^{-1/2}$ and $\rG^{1/2}$ is the square root of the matrix $\rG$. The corresponding Rayleigh quotient is denoted by $\QR_{h}[\bA,\cO,\rG]$.

The domain of the magnetic Laplacian with Neumann boundary conditions on the  set $\cO$ is
\begin{multline}
\label{eq:dom}
   \dom\left(\OP_h(\bA \ee,\cO)\right) =
   \big\{f\in \dom(q_{h}[\bA,\cO]),\\ 
   (-ih\nabla+\bA)^2f\in L^2(\cO) \ \ \mbox{and}\ \ 
   (-ih\nabla+\bA)f\cdot\bn=0\ \mbox{on}\ \partial\cO \big\} \, .
\end{multline}

We will also use the space of the functions which are {\em locally}\footnote{Here $H^m_\loc(\overline{\cO})$ denotes for $m=0,1$ the space of functions  which are in $H^m(\cO\cap\cB)$ for any ball $\cB$.} in the domain of $\OP_h(\bA,\cO)$:
\begin{multline}
\label{D:domloc}
   \dom_{\,\loc} \left(\OP_h(\bA,\cO)\right) := 
   \{f\in H^1_\loc(\overline{\cO}), \\  
   (-ih\nabla+\bA)^2f\in H^0_\loc(\overline{\cO}) \ \ \mbox{and}\ \  
  (-ih\nabla+\bA)f\cdot\bn=0 \ \mbox{on} \ \partial\cO\}.
\end{multline}
When $h=1$, we omit the index $h$ in \eqref{eq:dom} and \eqref{D:domloc}. 

\chapter{State of the art}
\label{sec:art}

Here we collect some results from the literature about the semiclassical limit for the first eigenvalue of the magnetic Laplacian depending on the geometry of the domain and the variation of the magnetic field. We briefly mention in Section \ref{SS:wbc} the case where the domain has no boundary, or when Dirichlet boundary conditions are considered. No restriction of dimension is imposed in these cases. Then we review in more detail what is known on bounded domains with Neumann boundary conditions in dimension 2 and 3, in Sections \ref{SS:dim2} and \ref{SS:dim3}, respectively. To keep this chapter short and easy to read, we mainly focus on results related with our problematics, {\it i.e.}, the general asymptotic behavior of the ground state energy without any further assumption on the minimum local energy.

\section{Without boundary or with Dirichlet conditions}
\label{SS:wbc}
Here $M$ is either a compact Riemannian manifold without boundary or $\R^n$, and $\OP_{h}(\bA,M)$ is the magnetic Laplacian associated with the 1-form $\omega_{\bA}$ defined in \eqref{eq:omegaA}. In this general framework, the magnetic field $\bB$ is the antisymmetric matrix corresponding to the 2-form $\sigma_{\bB}$ introduced in \eqref{eq:sigmaB}. Then for each $\bx\in M$ the local energy at $\bx$ is the intensity  
\begin{equation}\label{eq.b(x)}
b(\bx) := \tfrac12 \operatorname{\rm Tr}([\bB^*(\bx)\cdot\bB(\bx)]^{1/2})
\end{equation}
and $\sE(\bB,M)=b_0:=\inf_{\bx\in M} b(\bx)$. 
It is proved by Helffer and Mohamed in \cite{HeMo96} that if $b_0$ is positive and under a condition at infinity if $M=\R^n$, then 
$$
   - Ch^{5/4} \leq
   \lambda_h(\bB \ee,M)-h\sE(\bB \ee,M) \leq Ch^{4/3}  \ .
$$
More precise results can be proved in dimension $2$ when $b$ admits a unique positive non-degenerate minimum \cite{HeKo11,RayVuN13}. Note that the cancellation case $b_{0}=0$ has also been considered in various situations, see for example \cite{HeMo96,HeKo09, DoRa13,BoRay15}. 
Finally, the case of Dirichlet boundary conditions is very close to the case without boundary, see \cite{HeMo96,HeMo01} and Section \ref{SS:Dir}.

\section{Neumann conditions in dimension 2}
\label{SS:dim2}
By contrast, when Neumann boundary conditions are considered on the boundary, the local energy drops significantly as it was established in \cite{SaGe63} by Saint-James and de Gennes as early as 1963.
In this review of the dimension $n=2$, we classify the domains into two categories: those with a regular boundary and those with a polygonal boundary.

\subsection{Regular domains}
Let $\Omega\subset \R^2$ be a regular domain and $B$ be a regular non-vanishing scalar magnetic field on $\overline\Omega$. To each $\bx\in\overline\Omega$ is associated a tangent problem. According to whether $\bx$ is an interior point or a boundary point, the tangent problem is the magnetic Laplacian on the plane $\R^2$ or the half-plane $\Pi_\bx$ tangent to $\Omega$ at $\bx$, with the constant magnetic field $B_\bx\equiv B(\bx)$. The associated spectral quantities $\En(B_\bx,\R^2)$ and $\En(B_\bx,\Pi_\bx)$ are respectively equal to $|B_\bx|$ and $|B_\bx|\Theta_{0}$ where $\Theta_{0}:=\En(1,\R^2_+)$ is a universal constant whose value is close to $0.59$ (see \cite{SaGe63}). With the quantities 
\begin{equation}
\label{D:betbprime}
   b=\inf_{\bx\in\overline \Omega} |B(\bx)|, \quad  
   b'=\inf_{\bx\in \partial\Omega}|B(\bx)|, \quad \mbox{and} \quad  
   \sE(B \ee,\Omega)=\min(b,b'\Theta_{0})
\end{equation}
the asymptotic limit
\begin{equation}
\label{Metaordre0}
   \lim_{h\to0}\frac{\lambda_h(B \ee,\Omega)}{h} = \sE(B,\Omega) 
\end{equation} 
is proved by Lu and Pan in \cite{LuPan99-2}. Improvements of this result depend on the geometry and the variation of the magnetic field as we describe now.

\subsubsection*{Constant magnetic field} 
If the magnetic field is constant and normalized to $1$, then $\sE(B \ee,\Omega)=\Theta_{0}$. The following estimate is proved by Helffer and Morame:
$$
  -Ch^{3/2} \leq \lambda_h(1 \ee,\Omega)-h\Theta_{0} \leq Ch^{3/2} \ ,
$$
for $h$ small enough \cite[\S 10]{HeMo01}, while the upper bound was already given by Bernoff and Sternberg \cite{BeSt98}. 
This result is improved in \cite[\S 11]{HeMo01} in which a two-term asymptotics is proved, showing that a remainder in $O(h^{3/2})$ is optimal. Under the additional assumption that the curvature of the boundary admits a unique and non-degenerate maximum, a complete expansion of $\lambda_h(1 \ee,\Omega)$ is provided by Fournais and Helffer \cite{FouHe06}, moreover they also give a complete asymptotic expansion of the higher eigenvalues and of the associated eigenfunctions.

\subsubsection*{Variable magnetic field}
In \cite[\S 9]{HeMo01}, several different estimates for remainders are proved, function of the place where the local energy attains its minimum: In any case
\[
   -Ch^{\kappa^-} \le \lambda_h(B \ee,\Omega)-h\sE(B \ee,\Omega) \le Ch^{\kappa^+}.
\] 
with (a) $\kappa^-=\kappa^+=2$ if the minimum is attained inside the domain and (b) $\kappa^-=5/4$, $\kappa^+=3/2$ if the minimum is attained on the boundary.
Under non-degeneracy hypotheses, the optimality in the first case (a) is a consequence of \cite{HeKo11}, whereas the eigenvalue asymptotics provided in \cite{Ray09,Ray13} yields that the upper bound in the latter case (b) is sharp. Note that in \cite{Ray13}, the full asymptotic expansion of all the low-lying eigenpairs is obtained under these hypotheses, completing the analysis from \cite{FouHe06}.

\subsection{Polygonal domains}
\label{sec:2.1.2}
Let $\Omega$ be a curvilinear polygon and let $\gV$ be the (finite) set of its vertices. 
In this case, new model operators appear on infinite sectors\index{Sector} $\Pi_\bx$ tangent to $\Omega$ at vertices $\bx\in\gV$. By homogeneity $\En(B_\bx \ee,\Pi_\bx)=|B(\bx)|\En(1 \ee,\Pi_\bx)$ and by rotation invariance, $\En(1 \ee,\Pi_\bx)$ only depends on the opening $\alpha(\bx)$ of the sector $\Pi_\bx$. Let $\cS_\alpha$ be a model sector of opening $\alpha\in(0,2\pi)$. Then
$$
   \sE(B \ee,\Omega) = \min\big(b,b'\Theta_{0},
   \min_{\bx\in\gV} |B(\bx)|\,\En(1 \ee,\cS_{\alpha(\bx)})\big) \ . 
$$
In \cite[\S 11]{Bo05}, it is proved that 
$
   -Ch^{5/4} \leq \lambda_h(B \ee,\Omega)-h\sE(B \ee,\Omega) \leq Ch^{9/8}
$.
Moreover, under the assumption that a corner attracts the minimum energy
\begin{equation}
\label{eq:sless}
   \sE(B \ee,\Omega) < \min (b,b'\Theta_{0}) ,
\end{equation}
the asymptotics provided in \cite{BonDau06} yield the sharp estimates from above and below with power $h^{3/2}$.

From \cite{Ja01,Bo05} follows that for all $\alpha\in(0,\frac\pi2]$:
\begin{equation}
\label{eq:alpha}
   \En(1 \ee,\cS_{\alpha})<\Theta_{0}.
\end{equation}
Therefore condition \eqref{eq:sless} holds for constant magnetic fields as long as there is an angle opening $\alpha_\bx\le\frac\pi2$. 
Finite element computations by Galerkin projection as presented in \cite{BoDauMaVial07} suggest that \eqref{eq:alpha} still holds for all $\alpha\in(0,\pi)$. Let us finally mention that if $\Omega$ has straight sides and $B$ is constant, the convergence of $\lambda_h(B \ee,\Omega)$ to $h \sE(B \ee,\Omega)$ is exponential \cite{BonDau06}.

The asymptotic expansion of eigenfunctions and higher eigenvalues is also performed in \cite{BonDau06} under an hypothesis on the spectrum of the tangent operators at corners. We will describe these results in more details in Section \ref{SS:CornerC}.

\section{Neumann conditions in dimension 3}
\label{SS:dim3}

\subsection{Regular domains}
\label{SS:regulardomain}
For a continuous magnetic field $\bB$ it is known (\cite{LuPan00} and \cite{HeMo02}) that \eqref{Metaordre0} holds. In that case 
$$
   \sE(\bB \ee,\Omega)=
   \min\big(\inf_{\bx\in\Omega}|\bB(\bx)|,
   \inf_{\bx\in\partial\Omega}|\bB(\bx)| \ee\sigma(\theta(\bx))\big),
$$ 
where $\theta(\bx)\in[0,\frac\pi2]$ denotes the unoriented angle between the magnetic field and the boundary at point $\bx$, and the quantity $\sigma(\theta)$ is the bottom of the spectrum of a model problem, cf.\ Section \ref{SS:HS}.

\subsubsection*{Constant magnetic field}
Here the magnetic field $\bB$ is assumed without restriction to be of unit length. Then there exists a non-empty set $\Sigma$ of $\partial\Omega$ on which $\bB(\bx)$ is tangent to the boundary, which implies that
$
   \sE(\bB \ee,\Omega)=\Theta_{0} 
$. 
Then Theorem 1.1 of \cite{HeMo04} states that
$$
  |\lambda_h(\bB \ee,\Omega)-h\sE(\bB \ee,\Omega)| \leq Ch^{4/3}.
$$
Under some extra assumptions on $\Sigma$, Theorem 1.2 of \cite{HeMo04} yields a two-term asymptotics for $\lambda_h(\bB \ee,\Omega)$ showing the optimality of the previous estimate.

\subsubsection*{Variable magnetic field}
For a smooth non-vanishing magnetic field there holds \cite[Theorem 9.1.1]{FouHe10} (see also \cite{LuPan00})
$
   |\lambda_h(\bB \ee,\Omega)-h\sE(\bB \ee,\Omega)| \leq Ch^{5/4}
$.
In \cite[Remark 6.2]{HeMo04}, the upper bound is improved to $Ch^{4/3}$.
Finally, under extra assumptions,
a three-term quasimode is constructed in \cite{Ray09-3d}, providing the sharp upper bound $C h^{3/2}$.

\subsection{Singular domains}
Until now, two examples of non-smooth domains have been addressed in the literature. In both cases, the magnetic field $\bB$ is assumed to be constant.
 
\subsubsection*{Rectangular cuboids}
This case is considered by Pan \cite{Pan02}:
The asymptotic limit \eqref{Metaordre0} holds for such a domain and there exists a vertex $\bv\in\gV$ such that $\sE(\bB \ee,\Omega)=\En(\bB \ee,\Pi_\bv)$.
Moreover, in the case where the magnetic field is tangent to a face but is not tangent to any edge, we have   
$$
   \En(\bB \ee,\Pi_\bv) < 
   \inf_{\bx\in \overline{\Omega}\setminus \gV }\En(\bB \ee,\Pi_{\bx}).
$$

\subsubsection*{Lenses} 
The domain $\Omega$ is supposed to have two faces separated by an edge $\be$ that is a regular loop contained in the plane $x_3=0$. The magnetic field considered is $\bB=(0,0,1)$.
It is proved in \cite{PoTh} that, 
if the opening angle $\alpha$ of the lens is constant and $\leq 0.38\pi$,
$$
   \inf_{\bx\in \be}\En(\bB,\Pi_{\bx}) < 
   \inf_{\bx\in \overline{\Omega}\setminus \be}\En(\bB,\Pi_{\bx})  
$$
and that the asymptotic limit \eqref{Metaordre0} holds with an estimate in $C h^{5/4}$ from above and below. 
When the opening angle of the lens is variable and under some non-degeneracy hypotheses, a complete eigenvalue asymptotics is obtained in \cite{PoRay12} resulting into the optimal error estimate in $C h^{3/2}$.

\part{Corner structure and lower bounds}
\label{part:2}

\chapter{Domains with corners and their singular chains}
\label{sec:chain}
\index{Chain|see{Singular chain}}
\index{Plane sector|see{Sector}}

Domains with corners are widely addressed in the subject of Partial Differential Equations, mainly in connection with elliptic boundary problems. The pioneering work in this area is the paper \cite{Kondratev67} by Kondrat'ev devoted to domains with conical singularities. Such a domain is locally diffeomorphic to cones with smooth sections. It is singular at a finite number of points, called vertices or corners, see Figure \ref{F1A}, p.~\pageref{F1}. Domains with edges are locally diffeomorphic to a wedge and singular points form a submanifold of the boundary, see Figure \ref{F1B}. They were addressed in \cite{Kondratev70,MazyaPlamenevskii80b,Mazzeo91} among others. A combination of corners and edges in dimension 3 or higher produces a delicate interaction of several distinct singular types, see Figure \ref{F1C}. Such domains can be classified as ``corner domains'' or ``manifold with corners''. A Fredholm theory was initiated by Maz'ya and Plamenevskii \cite{MazyaPlamenevskii73,MazyaPlamenevskii77}. Since then, different aspects have been addressed, singularities \cite{Dau88,KozlovMazyaRossmann01}, pseudodifferential calculus \cite{Schulze91,Melrose91,Melrose92,MelroseBook,Schulze11}, regularity in analytic weighted spaces \cite{GuoBabuska97a,CoDaNi12}, among many others, and without mentioning the huge literature on numerical approximation.

In this work, for the sake of completeness and for ease of further discussion, we introduce a class of corner domains\index{Corner domain} with a Cartesian structure 
in any space dimension $n$. This definition is recursive over the dimension, through two intertwining classes of domains
\begin{itemize}
\item[a)] $\gP_n$, a class of infinite open cones in $\R^n$. 
\item[b)] 
$\gD(M)$, a class of bounded connected open subsets of a smooth manifold without boundary --- actually, $M=\R^n$ or $M=\dS^n$, with $\dS^n$ the unit sphere of $\R^{n+1}$.
\end{itemize}
Such definition is in the same spirit as \cite[Section 2]{Dau88}.

\section{Tangent cones and corner domains}
\label{SS:tangent}
We call a \emph{cone}\index{Cone|textbf} any open subset $\Pi$ of $\R^n$ satisfying 
\[
   \forall\rho>0 \ \ \mbox{and}\ \ \bx\in\Pi,\quad \rho \bx\in\Pi,
\]
and the \emph{section}\index{Section of a cone|textbf} of the cone $\Pi$ is its subset $\Pi\cap\dS^{n-1}$. Note that $\dS^0=\{-1,1\}$.

\begin{definition}[\sc Tangent cone]\index{Tangent cone|textbf}
\label{D:Tangentcones}
Let $\Omega$ be an open subset of $M=\R^n$ or $M=\dS^n$. Let $\bx_{0}\in \overline{\Omega}$. 
The cone $\Pi_{\bx_{0}}$\index{Tangent cone!$\Pi_{\bx_{0}}$|textbf} is said to be \emph{tangent to} $\Omega$ at $\bx_{0}$ if there exists a local $\sC^\infty$ diffeomorphism $\diffeo^{\bx_{0}}$ which maps a neighborhood $\cU_{\bx_{0}}$ of $\bx_{0}$ in $M$ onto a neighborhood $\cV_{\bx_{0}}$ of $\bfz$ in $\R^n$ and such that
\begin{equation}
\label{eq:diffeo}
   \diffeo^{\bx_{0}}(\bx_0) = \bfz,\quad
   \diffeo^{\bx_{0}}(\cU_{\bx_{0}}\cap\Omega) = \cV_{\bx_{0}}\cap\Pi_{\bx_{0}} 
   \quad\mbox{and}\quad
   \diffeo^{\bx_{0}}(\cU_{\bx_{0}}\cap\partial\Omega) = \cV_{\bx_{0}}\cap\partial\Pi_{\bx_{0}} .
\end{equation}
We denote by $\rJ^{\bx_{0}}$\index{Jacobian!$\rJ^{\bx_{0}}$|textbf} the Jacobian\index{Jacobian|textbf} of the inverse of $\diffeo^{\bx_{0}}$, that is 
\begin{equation}\label{eq.jacob}
\rJ^{\bx_{0}}({\bv}):=\rd_{\bv}(\diffeo^{\bx_{0}})^{-1}(\bv), \qquad \forall{\bv}\in\cV_{\bx_{0}}\, .
\end{equation}
We also assume that the Jacobian at $\bfz$ is the identity matrix: $\rJ^{\bx_{0}}(\bfz)=\Id_{n}$.
The open set $\cU_{\bx_{0}}$ 
is called a \emph{map-neighborhood}\index{Map-neighborhood|textbf}
and $(\cU_{\bx_{0}},\diffeo^{\bx_{0}})$\index{Local map!$(\cU_{\bx_{0}},\diffeo^{\bx_{0}})$|textbf} a \emph{local map}\index{Local map|textbf}.

The metric\index{Metric|textbf} associated with the local map $(\cU_{\bx_{0}},\diffeo^{\bx_{0}})$ is denoted by $\rG^{\bx_0}$\index{Metric!$\rG^{\bx_0}$|textbf} and defined as
\begin{equation}
\label{eq:metric}
   \rG^{\bx_0} = (\rJ^{\bx_0})^{-1} ((\rJ^{\bx_0})^{-1})^\top.
\end{equation} 
The metric $\rG^{\bx_0}$ at $\bfz$ is the identity matrix. 
\end{definition}

Because of the constraint $\rJ^{\bx_{0}}(\bfz)=\Id_{n}$, the tangent cone $\Pi_{\bx_{0}}$ does not depend on the choice of the map-neighborhood $\cU_{\bx_{0}}$ or the local map $(\cU_{\bx_{0}},\diffeo^{\bx_{0}})$. Therefore when there exists a tangent cone to $\Omega$ at $\bx_{0}$, it is unique. Note also that the constraint $\rJ^{\bx_{0}}(\bfz)=\Id_{n}$ is not restrictive for the domains: If there exists a local map $\rJ$ at $\bx_0$ that does not fulfil this constraint, it suffices to consider the new map $\rJ(\bfz)^{-1}\circ\rJ$ to remedy this.

\begin{definition}[\sc Class of corner domains]\index{Corner domain|textbf}
\label{D:cornerdomains}
The classes of corner domains $\gD(M)$\index{Corner domain!class of --\quad$\gD(M)$\ |textbf} ($M=\R^n$ or $M=\dS^n$) and tangent cones $\gP_{n}$ are defined as follows: 

\noindent{\sc Initialization}, $n=0$: 
\begin{enumerate}
\item $\gP_0$ has one element, $\{0\}$,
\item $\gD(\dS^0)$ is formed by all (non empty) subsets of $\dS^0$.
\end{enumerate}

\noindent{\sc Recurrence}: For $n\ge1$,
\begin{enumerate}
\item $\Pi\in\gP_n$ if and only if the section of $\Pi$ belongs to $\gD(\dS^{n-1})$,
\item \label{machin}
$\Omega\in\gD(M)$ if and only if for any $\bx_{0}\in\overline\Omega$, 
there exists a tangent cone $\Pi_{\bx_{0}}\in\gP_n$ to $\Omega$ at $\bx_{0}$.
\end{enumerate}
\end{definition}

Polyhedral domains and polyhedral cones form important subclasses of $\gD(M)$ and $\gP_n$.

\begin{definition}[\sc Class of polyhedral cones and domains] \index{Polyhedral domain|textbf} \index{Polyhedral cone}
The classes of polyhedral domains $\ogD(M)$\index{Polyhedral domain!set of -- \quad$\ogD(M)$\ |textbf} ($M=\R^n$ or $M=\dS^n$) and polyhedral cones $\ogP_{n}$\index{Polyhedral cone!set of -- \quad$\ogP_{n}$\ |textbf} are defined as follows: 

\begin{enumerate}
\item The cone $\Pi\in \gP_n$ is a polyhedral cone if its boundary is contained in a finite union of subspaces of codimension $1$. We write $\Pi\in\ogP_n$.
\item  The domain $\Omega\in\gD(M)$ is a polyhedral domain if all its tangent cones $\Pi_\bx$ are polyhedral. We write $\Omega\in\ogD(M)$.
\end{enumerate}
\end{definition}

Here is a rapid description of corner domains in lower dimensions $n=1,2,3$.
\pagebreak[3]

\begin{enumerate}[(i)]
\item $n=1$
\begin{itemize}
\item[a)] The elements of $\gP_1$ are $\R$, $\R_+$ and $\R_-$.
\item[b)] The elements of $\gD(\dS^1)$ are $\dS^1$ and all open intervals $\cI\subset\dS^1$ such that $\overline \cI\neq\dS^1$.
\end{itemize}

\item $n=2$ 
\begin{itemize}
\item[a)] The elements of $\gP_2$ are $\R^2$ and all plane sectors\index{Sector|textbf} with opening $\alpha\in(0,2\pi)$, including half-planes ($\alpha=\pi$).  
\item[b)] The elements of $\gD(\R^2)$ are curvilinear polygons\index{Curvilinear polygon} with piecewise non-tangent smooth sides (corner angles $\alpha\neq0,\pi,2\pi$). Note that $\gD(\R^2)$ includes smooth domains.
\item[c)] The elements of $\gD(\dS^2)$ are $\dS^2$ and all curvilinear polygons with piecewise non-tangent smooth sides in the sphere $\dS^2$.
\end{itemize}

\item $n=3$
\begin{itemize}
\item[a)] The elements of $\gP_3$ are all cones\index{Cone} with section in $\gD(\dS^2)$. This includes $\R^3$, half-spaces\index{Half-space}, wedges\index{Wedge} and many different cones like octants\index{Octant} or circular cones\index{Circular cone}.  
\item[b)] The elements of $\gD(\R^3)$ are tangent in each point $\bx_0$ to a cone $\Pi_{\bx_0}\in\gP_3$. Note that the nature of the section of the tangent cone determines whether the 3D domain has a vertex\index{Vertex}, an edge\index{Edge}, or is regular near $\bx_0$. 
\end{itemize}

\end{enumerate}

\begin{figure}[ht]
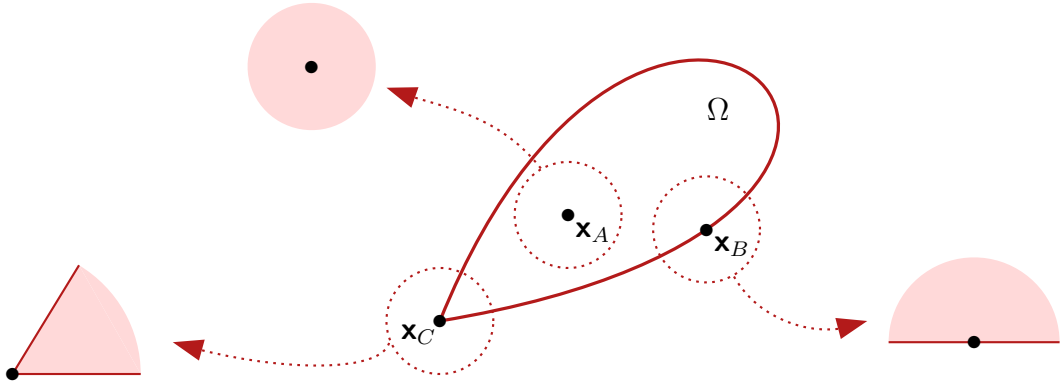

     \figinit{4pt}
     \figpt 1:(0,0)
     \figpt 2:(60,10)
     \figpt 3:(20,50)
     \figpt 4:(12,10)
     \figptBezier 5::.2[1,2,3,1]
     \figdrawbegin{}
     \figset (width=1.2)
     \figset (color = .7 .1 .1)
     \figdrawBezier 1 [1,2,3,1]
     \figset (width=.8,dash=5)
     \figset arrowhead (fill=yes,length=3)
     \figdrawcirc 1(5)
     \figdrawcirc 4(5)
     \figdrawcirc 5(5)
     \figptcirc 21::4;5 (120)
     \figpt 27:(5,20)
     \figpt 28:(0,20)
     \figpt 29:(-5,22)
     \figdrawarrowBezier [21,27,28,29]
     \figpt 22:(-12,24)
     \figset (fill=yes,color = 1. .85 .85)
     \figdrawcirc 22(6)
     \figset (fill=no,color = .7 .1 .1)
     \figptcirc 31::5;5 (300)
     \figpt 37:(30,0)
     \figpt 38:(35,-2)
     \figpt 39:(40,0)
     \figdrawarrowBezier [31,37,38,39]
     \figpt 32:(50,-2)
     \figpt 33:(42,-2)
     \figpt 34:(58,-2)
     \figset (fill=yes,color = 1. .85 .85)
     \figdrawarccirc 32; 8 (0,180)
     \figset (fill=no,color = .7 .1 .1,dash=1)
     \figdrawline [33,34]
     \figset (dash=5)
     \figptcirc 11::1;5 (200)
     \figpt 17:(-5,-5)
     \figpt 18:(-15,-5)
     \figpt 19:(-25,-2)
     \figdrawarrowBezier [11,17,18,19]
     \figpt 12:(-40,-5)
     \figgetangle \Value [1,2,3]
     \figpt 13:(-28,-5)
     \figptrot 14:=13/12,\Value/
     \figset (fill=yes,color = 1. .85 .85)
     \figgetdist \len [12,13]
     \figdrawarccircP 12; \len[13,14]
     \figdrawline[13,12,14]
     \figset (fill=no,color = .7 .1 .1,dash=1)
     \figdrawline[13,12,14]
     \figdrawend
     \figvisu{\figbox}{}{%
     \figpt 0:(25,20)
     \figwritee 0:{$\Omega$}(0pt)
     \figsetmark{$\figBullet$}
     \figwritesw 1:{$\bx_{C}$}(0.5)
     \figwritese 5:{$\bx_{B}$}(1)
     \figwritese 4:{$\bx_{A}$}(1)
     \figwritew 12:$$(0pt)
     \figwritew 22:$$(0pt)
     \figwritew 32:$$(0pt)}
     \centerline{\box\figbox}
\caption{Tangent sectors for a curvilinear polygonal domain\label{fig.dim2}}
\end{figure}

In Figure~\ref{fig.dim2}, we show an example of a domain belonging to $\gD(\R^2)$ with some of its tangent sectors. 
For the dimension $n=3$, examples are given in Figure \ref{F1}, p.~\pageref{F1}. Those in Figure \ref{F1A} have corners and are not polyhedral, whereas those in Figure \ref{F1B} have only edges and are polyhedral. In Figure \ref{F1C}, domains have both corners and edges, the first one is polyhedral whereas the second is not.
In Figure  \ref{F3} we display two of these examples with their tangent cones at one of their vertices.

\begin{figure}[ht]
\centering
    \begin{subfigure}[h]{0.48\textwidth}
\includegraphics[keepaspectratio=true,width=7.75cm]{./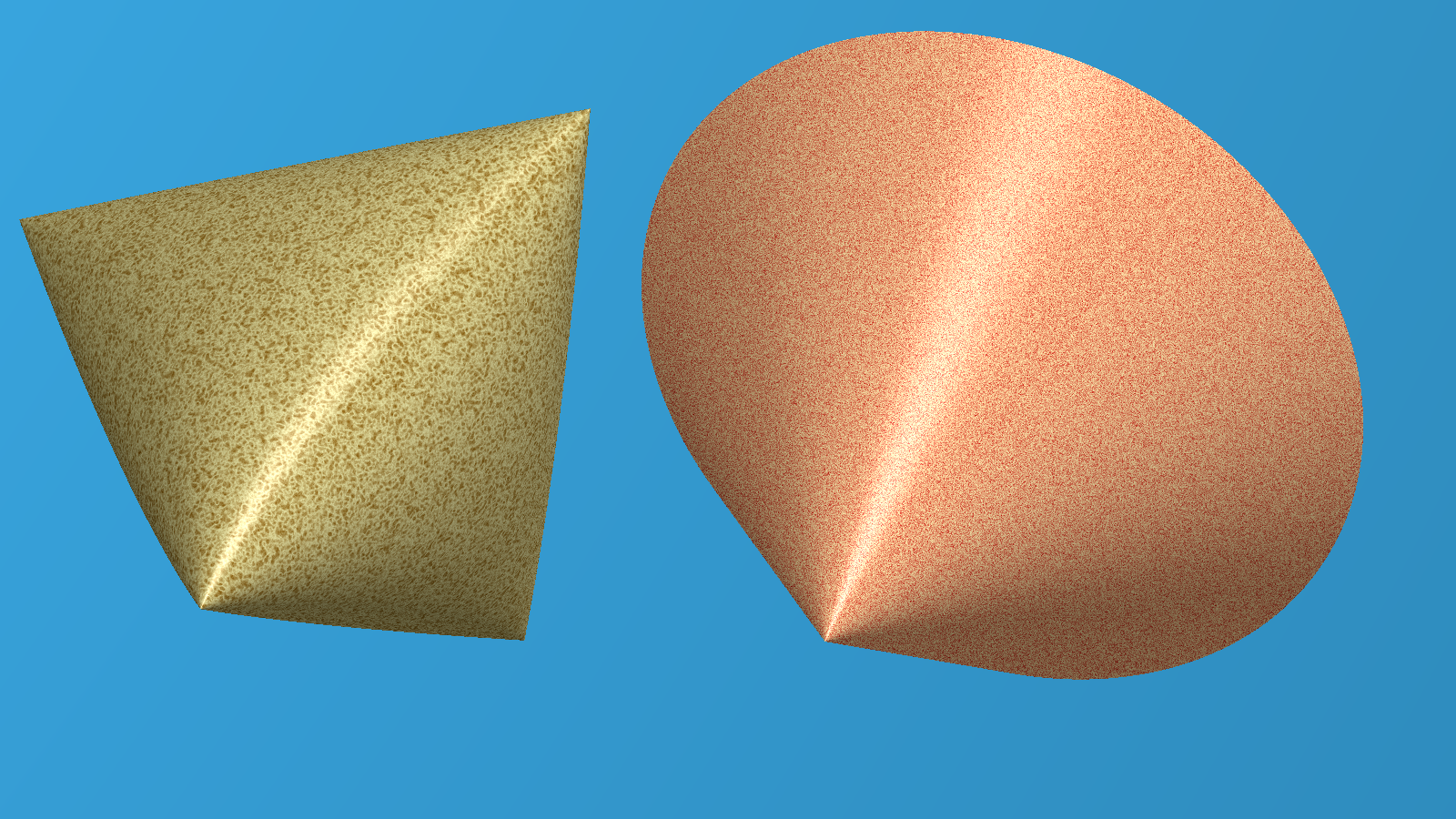}
        \caption{The Cayley tetrahedron}
        \label{F3A}
    \end{subfigure}
\
            \begin{subfigure}[h]{0.48\textwidth}
\includegraphics[keepaspectratio=true,width=7.75cm]{./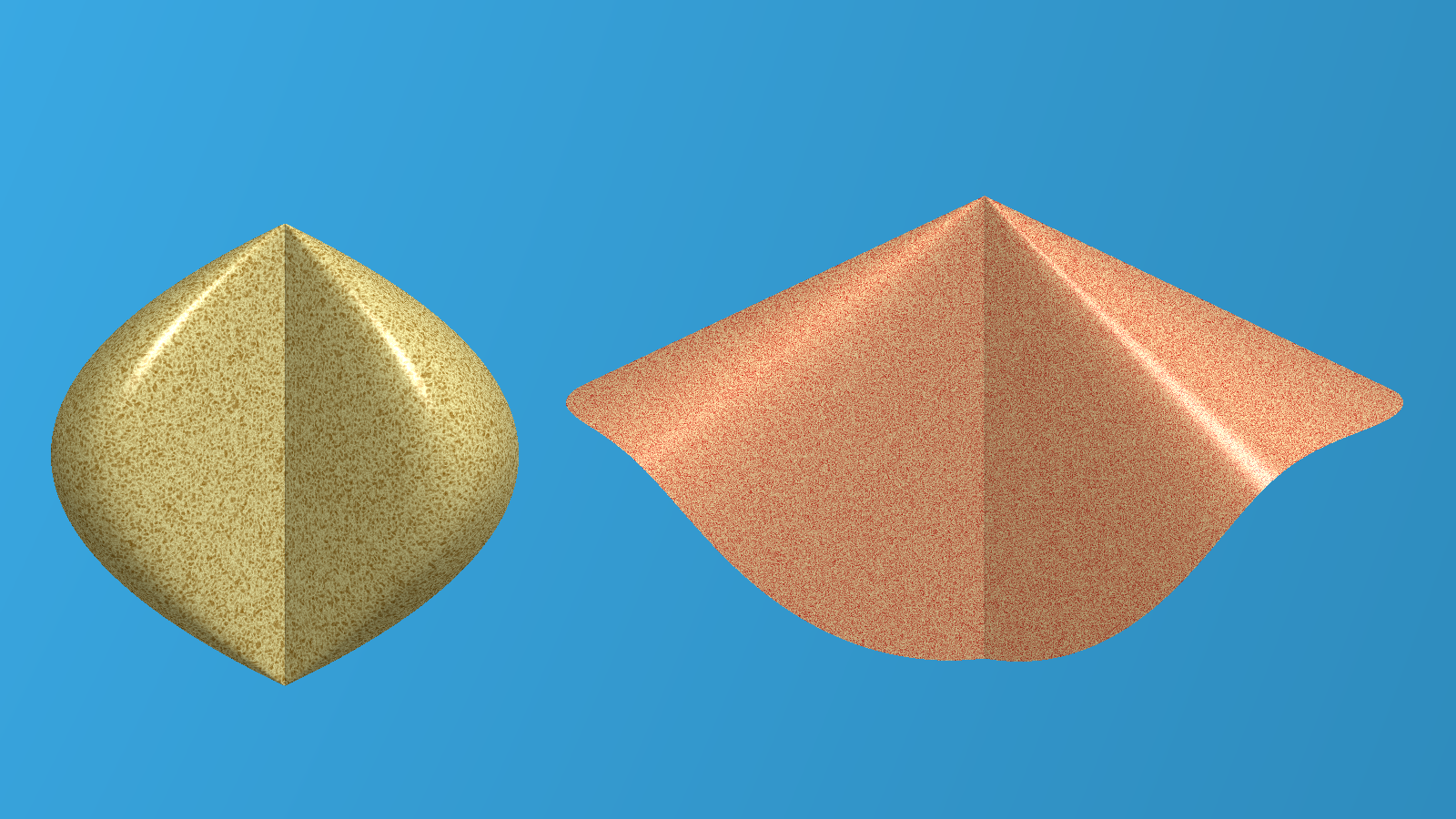}
        \caption{A domain with edge and corners}
        \label{F3B}
    \end{subfigure}
\caption{Examples of 3D corner domains with their tangent cones at vertices}
\label{F3}
\end{figure}

We will give later on (Section \ref{SS:3D}) a more exhaustive description of the class $\gD(\R^3)$ of 3D corner domains.

\begin{remark}
\label{R:2D}
In dimension 2, the cones are sectors\index{Sector}. So their sides are contained in one-dimensional subspaces, and they are ``polyhedral''. We deduce that
\begin{equation}
\label{E:poly2D}
   \gP_2=\ogP_2 \quad\mbox{and}\quad \gD(M) = \ogD(M)\ \ \mbox{for}\ \ M=\R^2 \ \mbox{or}\ \dS^2.
\end{equation} 
In dimension 3, a non-degenerate circular cone\index{Circular cone} ({\it i.e.}, different from $\R^3$ or a half-space) is not polyhedral, whereas an octant is.
\end{remark}

The recursive procedure of Definition \ref{D:cornerdomains} may generate various classes of domains. Let us give two examples:
\begin{enumerate}
\item In \cite{MazyaPlamenevskii77}, recursive sytem of {\em cylindrical coordinates} are used to define corner domains. This provides a larger class than ours. For instance, in dimension $n=2$, any piecewise smooth domain is admissible, except outward cusps. This definition of domains fits with operators that are regular with respect to such system of coordinates, and not only in Cartesian coordinates.
\item In \cite{Dau88}, cracks and slits of any dimension are admissible. The recursive definition is similar to ours with the exception that a boundary point can be associated with several distinct maps. Such a framework does not seem to be essential for our study, although this generalization would be possible.
\end{enumerate}
The definition of manifolds with corners \cite{MelroseBook} is not recursive: Manifolds are defined through an atlas \index{Atlas} of maps with domains contained in $\R^k_+\times\R^{n-k}$ for any $k=0,\ldots,n$. Any manifolds with corners that is a domain in $\R^n$ belongs to $\gD(\R^n)$ and even to $\ogD(\R^n)$ (it is polyhedral), but the converse is not true.

\begin{remark}
\label{rem:Lip}
In dimension $n=2$, any domain in $\gD(\R^n)$ has a Lipschitz boundary\index{Lipschitz domain}, but in dimension $n\ge3$, this is no longer true. However any corner domain is a finite union of Lipschitz domains, cf.\ \cite[Lemma (AA.9)]{Dau88}.
\end{remark}

\section{Admissible atlases}
We are going to introduce the notion of admissible atlas for a corner domain, so that the associated diffeomorphisms satisfy some uniformity properties. We need some definitions and preliminary results first.

\begin{notation}\label{not.N}
For $\bv\in \R^n$, we denote by $\langle \bv\rangle$ the vector space generated by $\bv$. 
For $r>0$, we denote by 
$\rN_{r}(\bv):=r^{-1}\bv $ the scaling of ratio $r^{-1}$.  
Note that $\rN_{r^{-1}}=\rN_r^{-1}$.
\end{notation}

The following lemma illustrates the coherence 
of Definition \ref{D:Tangentcones}.

\begin{lemma}
\label{L:pointtoopen}
Let $\Omega$ be an open subset of $M$ and $\bx_{0}\in \overline{\Omega}$ such that there exists a tangent cone $\Pi_{\bx_{0}}\in \gP_{n}$ to $\Omega$ at $\bx_{0}$ with map-neighborhood $\cU_{\bx_{0}}$. Then for all $\bu_0\in \cU_{\bx_{0}}\cap \overline{\Omega}$ there exists a tangent cone $\Pi_{\bu_0}\in \gP_{n}$ to $\Omega$ at $\bu_0$.
\end{lemma}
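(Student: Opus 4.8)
The plan is to reduce the problem at the new point $\bu_0$ to the known situation at $\bx_0$ by composing diffeomorphisms. Recall that by hypothesis there is a local map $(\cU_{\bx_0},\diffeo^{\bx_0})$ sending $\cU_{\bx_0}\cap\Omega$ onto $\cV_{\bx_0}\cap\Pi_{\bx_0}$ with $\diffeo^{\bx_0}(\bx_0)=\bfz$ and $\rJ^{\bx_0}(\bfz)=\Id_n$. Set $\bv_0:=\diffeo^{\bx_0}(\bu_0)\in\cV_{\bx_0}\cap\overline{\Pi_{\bx_0}}$. Since $\diffeo^{\bx_0}$ is a diffeomorphism near $\bu_0$, the existence of a tangent cone to $\Omega$ at $\bu_0$ is equivalent to the existence of a tangent cone to the cone $\Pi_{\bx_0}$ at the point $\bv_0$. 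So the first reduction is: \emph{it suffices to prove the statement when $\Omega$ itself is a cone $\Pi\in\gP_n$ and $\bx_0=\bv_0$ is an arbitrary point of $\overline{\Pi}$.} (One must check that composing with $\diffeo^{\bx_0}$, and post-composing with the affine renormalization to restore the Jacobian-identity normalization at $\bv_0$, preserves all the requirements of Definition \ref{D:Tangentcones}; this is the routine bookkeeping mentioned in the remark after that definition.)

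Next I would treat the reduced problem by induction on $n$, using the recursive Definition \ref{D:cornerdomains}. The case $n=0$ (and the trivial case $\bv_0=\bfz$, where $\Pi_{\bv_0}=\Pi$) is immediate. For $n\ge1$ and $\bv_0\neq\bfz$, write $\bv_0=r\,\bm$ with $r=|\bv_0|>0$ and $\bm=\bv_0/|\bv_0|\in\overline{\Pi}\cap\dS^{n-1}=\overline{\Pi\cap\dS^{n-1}}$. Because $\Pi$ is dilation invariant, studying $\Pi$ near $\bv_0$ is the same as studying it near $\bm$ after the scaling $\rN_r$ of Notation \ref{not.N}; so we may assume $|\bv_0|=1$, i.e. $\bv_0\in\overline{\dS^{n-1}\cap\Pi}$. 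Now use that the section $\Sigma:=\Pi\cap\dS^{n-1}$ belongs to $\gD(\dS^{n-1})$ (first recurrence clause); by the very definition of that class there is a tangent cone $\Theta\in\gP_{n-1}$ to $\Sigma$ at $\bv_0$, realized by a local map on $\dS^{n-1}$ at $\bv_0$. The tangent cone to $\Pi$ at $\bv_0$ is then built from $\Theta$ and the radial direction $\langle\bv_0\rangle$: concretely $\Pi_{\bv_0}$ is (diffeomorphic to) the product-type cone $\R\,\bv_0 \oplus \mathrm{Cone}(\Theta)$ in the splitting $\R^n = \langle\bv_0\rangle \oplus \bv_0^{\perp}$, whose section lies in $\gD(\dS^{n-1})$ because it is obtained from $\Theta\in\gP_{n-1}$ by adding a free direction (this uses exactly the kind of "adding a flat factor" stability that is implicit in the recursive setup). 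By the second recurrence clause, $\Pi_{\bv_0}\in\gP_n$. The local diffeomorphism realizing tangency at $\bv_0$ is assembled from polar-type coordinates $(\rho,\omega)\mapsto\rho\,\omega$ near $\bv_0$ composed with (the cone over) the given local map for $\Sigma$ at $\bv_0$; one finally post-composes with an affine map to enforce $\rJ(\bfz)=\Id_n$, as allowed by Definition \ref{D:Tangentcones}.

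The main obstacle I expect is the \emph{bookkeeping of the differential/Jacobian normalizations} under these compositions: one must verify that the composite map is a genuine $\sC^\infty$ diffeomorphism of a neighborhood of $\bu_0$ onto a neighborhood of the origin, that it carries $\Omega$ to $\Pi_{\bu_0}$ and $\partial\Omega$ to $\partial\Pi_{\bu_0}$ exactly (not just up to lower-order terms), and that after the affine correction the normalization $\rJ(\bfz)=\Id_n$ holds — together with checking that the resulting tangent cone is independent of the choices, so that Definition \ref{D:Tangentcones} is genuinely fulfilled. The passage through spherical coordinates near a point of $\dS^{n-1}$, and the identification of the cone over a tangent cone of the section with the tangent cone of $\Pi$, is geometrically clear but needs a clean coordinate description to be airtight; everything else (the induction, the dilation invariance, the reduction to the case where $\Omega$ is a cone) is straightforward.
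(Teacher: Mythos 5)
Your proposal is correct and follows essentially the same route as the paper: transfer to the cone $\Pi_{\bx_0}$ via $\diffeo^{\bx_0}$, pass to polar coordinates, use the recursive definition of $\gD(\dS^{n-1})$ to obtain a tangent cone to the section at $\theta(\bv_0)$, take $\Pi_{\bv_0}=\langle\bv_0\rangle\times\Pi_{\theta(\bv_0)}$, scale by $\rN_{r(\bv_0)}$, and finally correct the differential at the origin by composing with $\rJ^{\bx_0}(\bv_0)$. The only cosmetic difference is that you frame the key step as an ``induction on $n$'', whereas the recursive Definition~\ref{D:cornerdomains} already guarantees, directly, that every point of $\overline{\widehat\Omega}_{\bx_0}\in\gD(\dS^{n-1})$ has a tangent cone in $\gP_{n-1}$, so no separate inductive hypothesis is needed.
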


\begin{proof}
Let $\bu_0\in \cU_{\bx_{0}}\cap \overline{\Omega}$. We have to prove that there exists a tangent cone $\Pi_{\bu_0}$ at $\bu_0$ in the sense of Definition \ref{D:Tangentcones} and that $\Pi_{\bu_0}\in \gP_{n}$. Let $\widehat\Omega_{\bx_{0}}=\Pi_{\bx_{0}}\cap \dS^{n-1}$ be the section of $\Pi_{\bx_{0}}$. Let 
$(\cU_{\bx_{0}},\diffeo^{\bx_{0}})$ be a local map and
$\bv_0=\diffeo^{\bx_{0}}(\bu_0) \in \overline\Pi_{\bx_{0}}$. We denote by $(r(\bv_0),\theta(\bv_0))\in (0,+\infty)\times \overline{\widehat\Omega}_{\bx_0}$ its polar coordinates:
\begin{equation}
\label{D:thetaby}
   r(\bv_0):=\|\bv_0\|\qquad\mbox{ and }\qquad\theta(\bv_0):=\frac{\bv_0}{\| \bv_0 \|}\,.
\end{equation}
By the recursive definition there exists a tangent cone $\Pi_{\theta(\bv_0)}\in \gP_{n-1}$ to $\widehat\Omega_{\bx_{0}}$ at $\theta(\bv_0)$. Let $\diffeo^{\theta(\bv_0)}$ be an associated diffeomorphism which sends a map-neighborhood $\cU_{\theta(\bv_0)}$ of $\theta(\bv_0)$ onto a neighborhood $\cV_{\theta(\bv_0)}$ of $\bfz\in \R^{n-1}$. We may assume without restriction that there exists a $n$-dimensional ball with center $\theta(\bv_0)$ and radius $\rho_1\in(0,1)$ such that
\begin{equation}
\label{eq:cUtv0}
   \cU_{\theta(\bv_0)} = \cB(\theta(\bv_0),\rho_1) \cap \dS^{n-1}.
\end{equation}
Then we set\footnote{We distinguish between the point $\theta(\bv_{0})\in\widehat\Omega_{\bx_{0}}$ and its polar coordinates $(1,\theta(\bv_{0}))$.} $\cU_{(1,\theta(\bv_0))}=\cB(\theta(\bv_0),\rho_1)$ and define on $\cU_{(1,\theta(\bv_0))}$ the diffeomorphism---using polar coordinates $(r(\bv),\theta(\bv))$:
\begin{equation}
\label{D:diffeotheta}
\diffeo^{(1,\theta(\bv_0))}:\bv \mapsto (r(\bv)-1,\diffeo^{\theta(\bv_0)}(\theta(\bv))) \, .
\end{equation}
There holds $\rd_{(1,\theta(\bv_0))}\diffeo^{(1,\theta(\bv_0))}=\Id_{n}$. Define 
\begin{equation}
\label{D:Piy}
\Pi_{\bv_0}:=\langle \bv_0 \rangle \times \Pi_{\theta(\bv_0)} \,.
\end{equation}
 Notice that $\Pi_{\bv_0}\in \gP_{n}$. It is the tangent cone to $\Pi_{\bx_{0}}$ at the point $(1,\theta(\bv_0))$ and $\diffeo^{(1,\theta(\bv_0))}$ maps $\cU_{(1,\theta(\bv_0))}$ on a neighborhood of $\bfz\in \R^{n}$.
 Let
\begin{equation}
\label{D:diffeoy}
\diffeo^{\bv_0}:=\rN_{r(\bv_0)}^{-1}\circ \diffeo^{(1,\theta(\bv_0))} \circ \rN_{r(\bv_0)}\,.
\end{equation}
Then $\diffeo^{\bv_0}$ is a diffeomorphism defined on
\begin{equation}
\label{eq:mapv0}
   \cU_{\bv_0}:=\|\bv_0\|\ \cU_{(1,\theta(\bv_0))} = \cB(\bv_0,\rho_1 \|\bv_0\|).
\end{equation} 
Let us define
\begin{equation}
\label{eq:mapu0}
   \cU_{\bu_0} := (\diffeo^{\bx_{0}})^{-1}(\cU_{\bv_0})\,.
\end{equation} 
It is a neighborhood of $\bu_0$. 
Let 
\begin{equation}
\label{D:diffeox}
\diffeo^{\bu_0}(\bu):=
\rJ^{\bx_{0}}(\bv_0) \  (\diffeo^{\bv_0}\circ \diffeo^{\bx_{0}}(\bu)) \,
\end{equation}
be defined for $\bu\in\cU_{\bu_0}$. Note that the differential of $\diffeo^{\bu_0}$ at the point $\bu_0$ is the identity matrix $\Id_n$. 
Let us set finally 
\begin{equation}
\label{D:Pix}
\Pi_{\bu_0}:=
\rJ^{\bx_{0}}(\bv_0)(\Pi_{\bv_0}) \, .
\end{equation}
 Then the map-neighborhood $\cU_{\bu_0}$,  the diffeomorphism $\diffeo^{\bu_0}$ and the cone $\Pi_{\bu_0}$ satisfy the requirements of Definition \ref{D:Tangentcones} and $\Pi_{\bu_0}$ is the tangent cone to $\Omega$ at $\bu_0$. Since $\Pi_{\bv_0}\in \gP_{n}$, there holds $\Pi_{\bu_0}\in \gP_{n}$.
\end{proof}

\begin{remark}
\label{rem:pto}
If the tangent cone $\Pi_{\bx_0}$ is \emph{polyhedral}\index{Polyhedral cone}, the procedure for constructing $\diffeo^{\bu_0}$ can be simplified as follows: We define $\bv_0$ and its polar coordinates $(r(\bv_{0}),\theta(\bv_{0}))$ as before. Since $\Pi_{\bx_0}$ is polyhedral, the ball $\cB(\theta(\bv_0),\rho_1)$ \eqref{eq:cUtv0} is such that the set $\widetilde\cU:=\overline{\cB(\theta(\bv_0),\rho_1)}\cap\Pi_{\bx_0}$ is homogeneous with respect to $\theta(\bv_0)$, that is
\[
   \bv\in\widetilde\cU\ \ \mbox{and}\ \  \rho\in \Big[0,\frac{\rho_1}{\|\bv-\theta(\bv_{0})\|}\Big]
   \quad\Longrightarrow\quad
   \rho\bv+(1-\rho)\theta(\bv_0)\in\widetilde\cU.
\] 
The set $\widetilde\cV:=\{\bv\in\R^n|\ \bv+\theta(\bv_0) \in \widetilde\cU\}$ defines a polyhedral cone $\widetilde\Pi$ in a natural way by $\{\bv\in\R^n|\ \exists\rho>0\ \rho\bv\in\widetilde\cV\}$. Defining $\diffeo^{\bv_0}$ as the translation $\rT_{\bv_{0}}: \bv\mapsto\bv-\bv_0$, we find that $\widetilde\Pi=\Pi_{\bv_0}$. Then, with this simple definition of $\diffeo^{\bv_0}$ we still define $\diffeo^{\bu_0}$ by \eqref{D:diffeox}. On the other hand, by uniqueness of tangent cones, the new definition of $\Pi_{\bv_0}$ coincides with the old one \eqref{D:Piy}. Finally, $\Pi_{\bu_0}$ is still defined by \eqref{D:Pix}.
\end{remark}

\begin{lemma}
\label{L:eqnorm}
Let $(\cU_{\bx_0},\diffeo^{\bx_0})$ be a local map\index{Local map} with image a neighborhood $\cV_{\bx_0}$ of $\bfz$, and such that $\rJ^{\bx_0}(\bfz)=\Id_n$. There exists $r_0>0$ such that $\cB(\bfz,r_0)\subset \cV_{\bx_0}$ and for any $\bv,\bv'\in\cB(\bfz,r_0)$
\begin{equation}
\label{eq:eqnormu} 
   \|\bu'-\bu-(\bv'-\bv)\| \le 
   \tfrac12\, \|\bv'-\bv\|,\qquad\mbox{ with }\quad 
   \bu = (\diffeo^{\bx_0})^{-1}(\bv),\quad \bu' = (\diffeo^{\bx_0})^{-1}(\bv')\,.
\end{equation}
\end{lemma}

\begin{proof}
Let $r_{1}$ be such that $\bv,\bv'\in \cB(\bfz,r_{1})\subset \cV_{\bx_{0}}$. A Taylor expansion of $(\diffeo^{\bx_0})^{-1}(\bv')$ around $\bv$ gives 
$$\|(\diffeo^{\bx_0})^{-1}(\bv') - (\diffeo^{\bx_0})^{-1}(\bv) - \rJ^{\bx_0}(\bv)(\bv'-\bv)\|
\leq \tfrac12\, \| \rd \rJ^{\bx_0}\|_{L^\infty(\cB(\bfz,r_{1}))} \|\bv'-\bv\|^2.$$
Another Taylor expansion of $\rJ^{\bx_0}(\bv)$ around $\bfz$ gives 
$$
\| \rJ^{\bx_0}(\bv) - \rJ^{\bx_0}(\bfz)\|
\leq  \| \rd \rJ^{\bx_{0}}\|_{L^\infty(\cB(\bfz,r_{1}))} \|\bv\|\ .
$$
Since $(\diffeo^{\bx_0})^{-1}(\bv)=\bu$, $(\diffeo^{\bx_0})^{-1}(\bv')=\bu'$ and $\rJ^{\bx_0}(\bfz)=\Id_{n}$, we deduce
$$\|(\bu'-\bu)-(\bv'-\bv)\|\leq \Big(\| \rd \rJ^{\bx_{0}}\|_{L^\infty(\cB(\bfz,r_{1}))} \|\bv\|\ 
+\tfrac12\, \| \rd \rJ^{\bx_0}\|_{L^\infty(\cB(\bfz,r_{1}))} \|\bv'-\bv\|\Big)\ \|\bv'-\bv\|.
$$
If we choose $r_{0}\leq \min\big\{r_{1},\, 1/(4\| \rd \rJ^{\bx_{0}}\|_{L^\infty(\cB(\bfz,r_{1}))})\big\} $, we have
$$\| \rd \rJ^{\bx_{0}}\|_{L^\infty(\cB(\bfz,r_{1}))} \|\bv\|\ 
+\tfrac12\, \| \rd \rJ^{\bx_0}\|_{L^\infty(\cB(\bfz,r_{1}))} \|\bv'-\bv\| \leq \tfrac 12,\qquad
\forall \bv,\bv'\in\cB(\bfz,r_{0}),$$
which ends the proof.
\end{proof}

\begin{proposition}
\label{P:recouvrementfini}
(i) The domain $\Omega$ belongs to $\gD(\R^n)$ if and only if there exists a finite set $\gX\subset \overline{\Omega}$  satisfying the two following conditions 
\begin{enumerate}
\item For each $\bx_0\in\gX$, there exists a cone $\Pi_{\bx_0}\in \gP_{n}$ and a local map $(\cU_{\bx_{0}},\diffeo^{\bx_{0}})$ such that \eqref{eq:diffeo} holds,
\item The set $\overline\Omega$ is covered by the union of the map neighborhoods $\cU_{\bx_0}$ for $\bx_0\in\gX$. 
\end{enumerate}
\noindent
(ii) The equivalence (i) still holds if one requires that for all $\bx_0\in\gX$ and all $\bu,\bu'\in\cU_{\bx_0}$, 
\eqref{eq:eqnormu} holds.
\end{proposition}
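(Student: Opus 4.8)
The plan is to prove the equivalence in two rounds: first the basic version (i), then upgrade to (ii) by shrinking the map-neighborhoods.

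\medskip

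\noindent\textbf{Proof of (i).} The implication from (i) to $\Omega\in\gD(\R^n)$ is the substantive direction, and it is essentially Lemma~\ref{L:pointtoopen}. Indeed, suppose such a finite set $\gX$ exists. I must show that for \emph{every} $\bx\in\overline\Omega$ there is a tangent cone $\Pi_\bx\in\gP_n$ to $\Omega$ at $\bx$. Given $\bx\in\overline\Omega$, condition 2 provides some $\bx_0\in\gX$ with $\bx\in\cU_{\bx_0}$, and condition 1 gives a tangent cone $\Pi_{\bx_0}\in\gP_n$ to $\Omega$ at $\bx_0$ with map-neighborhood $\cU_{\bx_0}$. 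Since $\bx\in\cU_{\bx_0}\cap\overline\Omega$, Lemma~\ref{L:pointtoopen} produces a tangent cone $\Pi_\bx\in\gP_n$ to $\Omega$ at $\bx$. As $\bx$ was arbitrary, $\Omega\in\gD(\R^n)$ by Definition~\ref{D:cornerdomains}\ref{machin}.

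For the converse, assume $\Omega\in\gD(\R^n)$. By definition, each $\bx\in\overline\Omega$ carries a tangent cone $\Pi_\bx\in\gP_n$ realised by a local map $(\cU_\bx,\diffeo^\bx)$ satisfying \eqref{eq:diffeo}. The family $\{\cU_\bx\}_{\bx\in\overline\Omega}$ is an open cover of $\overline\Omega$. Here $\overline\Omega$ is compact: if $M=\R^n$ this is because $\Omega\in\gD(\R^n)$ is bounded (the class $\gD(M)$ consists of bounded sets), and if $M=\dS^n$ the ambient sphere is compact. Hence we may extract a finite subcover, indexed by a finite set $\gX\subset\overline\Omega$, and conditions 1 and 2 hold by construction.

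\medskip

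\noindent\textbf{Proof of (ii).} Only the forward direction needs attention, since adding the hypothesis \eqref{eq:eqnormu} can only strengthen (i)$\Rightarrow\Omega\in\gD(\R^n)$ (the extra requirement is not used in that implication). So assume $\Omega\in\gD(\R^n)$. For each $\bx\in\overline\Omega$, take the local map $(\cU_\bx,\diffeo^\bx)$ as above; by construction $\rJ^\bx(\bfz)=\Id_n$, so Lemma~\ref{L:eqnorm} yields a radius $r_0=r_0(\bx)>0$ with $\cB(\bfz,r_0(\bx))\subset\cV_\bx$ and such that \eqref{eq:eqnormu} holds for all $\bv,\bv'\in\cB(\bfz,r_0(\bx))$. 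Replace $\cU_\bx$ by the smaller map-neighborhood $\cU_\bx':=(\diffeo^\bx)^{-1}\!\big(\cB(\bfz,r_0(\bx))\big)$, keeping the same diffeomorphism restricted to $\cU_\bx'$ and the same cone $\Pi_\bx$; relation \eqref{eq:diffeo} still holds on $\cU_\bx'$, and now any $\bu,\bu'\in\cU_\bx'$ satisfy \eqref{eq:eqnormu}. The shrunken family $\{\cU_\bx'\}_{\bx\in\overline\Omega}$ is still an open cover of the compact set $\overline\Omega$ (since $\bx\in\cU_\bx'$), so a finite subcover indexed by a finite $\gX\subset\overline\Omega$ gives conditions 1, 2 together with the uniform estimate \eqref{eq:eqnormu} on each $\cU_{\bx_0}$.

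\medskip

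\noindent\textbf{Main obstacle.} There is no deep difficulty; the only real point is realising that the two nontrivial ingredients are already available: Lemma~\ref{L:pointtoopen} for the propagation of the tangent-cone property from the finitely many base points to all of $\overline\Omega$, and the compactness of $\overline\Omega$ for the extraction of a finite subcover. The minor care needed is to check that shrinking a map-neighborhood preserves \eqref{eq:diffeo} (it does, since the relevant identities are local) and that $\bx\in\cU_\bx'$ so that the shrunken sets still cover.
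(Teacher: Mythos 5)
Your proof is correct and follows the same route as the paper's: Lemma~\ref{L:pointtoopen} propagates tangent cones from the finitely many reference points to all of $\overline\Omega$, compactness furnishes the finite subcover in the other direction, and Lemma~\ref{L:eqnorm} together with a further finite subcover upgrades (i) to (ii). The only small improvement over the paper's terse statement is that you spell out the shrinking step for (ii) and note explicitly why the shrunken $\cU_\bx'$ still cover.
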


\begin{proof}
 (i) The ``if'' direction is a consequence of the definition of $\gD(\R^n)$ and, in particular, the fact that $\overline{\Omega}$ is compact and can be covered by a finite number of map-neighborhoods. The ``only if'' direction is a consequence of the compactness of $\overline{\Omega}$ and of Lemma \ref{L:pointtoopen}.

(ii) is then a consequence of Lemma \ref{L:eqnorm} (and of the compactness of $\overline\Omega$, of course).
\end{proof}

\begin{definition}[\sc Admissible atlas]\label{def.atlas}\index{Atlas!Admissible atlas|textbf}
Let $\Omega\in \gD(M)$. An atlas $(\cU_{\bx},\diffeo^{\bx})_{\bx\in \overline{\Omega}}$ 
is called \emph{admissible} if it comes from the following recursive procedure:
 \begin{enumerate}
 \item Take a finite set $\gX\subset \overline{\Omega}$ as in Proposition \ref{P:recouvrementfini} together with the associated map-neighbor\-hoods and diffeomorphisms $(\cU_{\bx_{0}},\diffeo^{\bx_{0}})$ for $\bx_{0}\in \gX$, satisfying moreover \eqref{eq:eqnormu}. 
 \item Assume that for each $\bx_0\in\gX$ the map-neighborhood $\cU_{\bx_{0}}$  contains a ball $\cB(\bx_0,2R_{\bx_0})$ for some $R_{\bx_0}>0$ and that the balls with half-radius  $\cB(\bx_0,R_{\bx_0})$ cover $\overline\Omega$.
 \item All the other map-neighborhoods and diffeomorphisms $(\cU_{\bx},\diffeo^{\bx})$ with $\bx\in\overline{\Omega}\setminus \gX$ are constructed by the recursive procedure \eqref{D:thetaby}--\eqref{D:diffeox}, based on admissible atlases for the sections $\widehat\Omega_{\bx_0}$ associated with the set of reference points $\bx_0\in\gX$. In the polyhedral case, the straightforward construction described in Remark \ref{rem:pto} is preferred.
\end{enumerate}
\end{definition}

As a direct consequence of Lemmas \ref{L:pointtoopen}, \ref{L:eqnorm}, and Proposition \ref{P:recouvrementfini}, we obtain the existence of admissible atlases.

\begin{theorem}
\label{th:atlas}
Let $\Omega$ be a corner domain in $\gD(M)$. Then $\Omega$ admits an admissible atlas.
\end{theorem}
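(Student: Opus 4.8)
The plan is to construct an admissible atlas by a doubly recursive argument, following the blueprint laid out in Definition \ref{def.atlas}. I would argue by induction on the space dimension $n$, the base case $n=0$ (or $n=1$) being trivial since the only cones are $\{0\}$ (resp. $\R$, $\R_{\pm}$) and the relevant diffeomorphisms are identities. For the inductive step, assume every corner domain in $\gD(\dS^{n-1})$ admits an admissible atlas; we must produce one for a given $\Omega\in\gD(M)$ with $M=\R^n$ or $M=\dS^n$.

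First I would invoke Proposition \ref{P:recouvrementfini}(ii) to obtain a finite reference set $\gX\subset\overline\Omega$ together with local maps $(\cU_{\bx_0},\diffeo^{\bx_0})$ for $\bx_0\in\gX$ satisfying \eqref{eq:diffeo} and the bi-Lipschitz estimate \eqref{eq:eqnormu}; this handles item (1) of Definition \ref{def.atlas}. For item (2), since $\overline\Omega$ is compact and the $\cU_{\bx_0}$ form an open cover, I can shrink and, if necessary, enlarge the family: a standard Lebesgue-number / partition argument lets me replace $\gX$ by a possibly larger finite set so that each $\cU_{\bx_0}$ contains a ball $\cB(\bx_0,2R_{\bx_0})$ and the half-radius balls $\cB(\bx_0,R_{\bx_0})$ still cover $\overline\Omega$ (concretely, pick $R_{\bx_0}$ with $\cB(\bx_0,2R_{\bx_0})\subset\cU_{\bx_0}$, use compactness to extract a finite subcover by the $\cB(\bx_0,R_{\bx_0})$, and keep only those reference points). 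For item (3), at each $\bx_0\in\gX$ the tangent cone $\Pi_{\bx_0}\in\gP_n$ has section $\widehat\Omega_{\bx_0}\in\gD(\dS^{n-1})$, so by the induction hypothesis $\widehat\Omega_{\bx_0}$ carries an admissible atlas; I then feed this atlas into the explicit construction \eqref{D:thetaby}--\eqref{D:diffeox} of Lemma \ref{L:pointtoopen} to define $(\cU_{\bx},\diffeo^{\bx})$ for every $\bx\in\overline\Omega\setminus\gX$, using the simplified translation-based construction of Remark \ref{rem:pto} at polyhedral vertices. Lemma \ref{L:pointtoopen} guarantees these data genuinely satisfy the tangent-cone requirements of Definition \ref{D:Tangentcones}, and Lemma \ref{L:eqnorm} (applied to the newly built maps, whose Jacobians at the base point are $\Id_n$ by \eqref{D:diffeotheta} and the chain rule) supplies the uniformity estimate \eqref{eq:eqnormu} on a suitable sub-ball, so the recursively generated maps are themselves of the admissible type.

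The subtle point — and the one I would treat most carefully — is the coherence of the recursion: one must check that the construction \eqref{D:diffeotheta}--\eqref{D:diffeox}, when applied to points $\bx$ lying in the overlap of two reference neighborhoods $\cU_{\bx_0}\cap\cU_{\bx_0'}$, does not need to be consistent, because the definition of an admissible atlas fixes for each such $\bx$ a single reference point (e.g. the one realizing the covering by half-radius balls, chosen once and for all). Thus there is really no compatibility condition to verify across charts; what must be verified is only that each individual chart $(\cU_{\bx},\diffeo^{\bx})$ meets Definition \ref{D:Tangentcones} and that its construction bottoms out after finitely many dimensional reductions — which it does, since each application of \eqref{D:diffeotheta} passes from dimension $n$ to the section in dimension $n-1$, and the induction terminates at $n=0$. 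Assembling these observations, the family $(\cU_{\bx},\diffeo^{\bx})_{\bx\in\overline\Omega}$ is an admissible atlas for $\Omega$, completing the induction and hence the proof.

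I expect the only real obstacle to be bookkeeping: making precise the selection rule that assigns to each non-reference point $\bx$ its governing reference point $\bx_0\in\gX$ and the associated section-atlas, and confirming that the radii $\rho_1$, $r_0$ produced by Lemmas \ref{L:pointtoopen} and \ref{L:eqnorm} can be chosen uniformly over $\bx_0\in\gX$ (possible since $\gX$ is finite). None of this requires new ideas beyond the lemmas already proved; it is an organizational matter of threading the recursion in both dimension and geometry.
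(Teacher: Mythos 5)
Your proposal is correct and follows essentially the same route as the paper's: the paper dispatches the theorem as "a direct consequence of Lemmas \ref{L:pointtoopen}, \ref{L:eqnorm}, and Proposition \ref{P:recouvrementfini}," and you have unpacked exactly that combination — induction on dimension for the section atlases, Proposition \ref{P:recouvrementfini}(ii) for the finite reference set with \eqref{eq:eqnormu}, compactness for the ball conditions, and the construction \eqref{D:thetaby}--\eqref{D:diffeox} (plus Remark \ref{rem:pto}) for the remaining charts. Your extra remarks on the absence of a compatibility condition across charts and on the finiteness of $\gX$ giving uniform radii are accurate and merely make explicit what the paper leaves implicit.
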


For an admissible atlas, we can express the derivative of the diffeomorphism as follows: Let $\bx_{0}\in \gX$, $\bu_0\in \cU_{\bx_{0}}$ and $\bv_0:=\diffeo^{\bx_{0}}(\bu_0)$. 
 Differentiating \eqref{D:diffeox}, we get
\begin{equation}
\label{E:composeediffeo}
   \forall \bv\in \cV_{\bu_0}, \quad \rJ^{\bu_0}(\bv)=
   \rJ^{\bx_{0}}(\bv)\ \rJ^{\bv_0}(\diffeo^{\bv_0}(\bv))\ (\rJ^{\bx_{0}}(\bv_0))^{-1} \, ,
\end{equation}
and \eqref{D:diffeoy} provides:
\begin{equation}
\label{E:composeediffeobis}
  \rJ^{\bv_0}(\diffeo^{\bv_0}(\bv)) = 
  \rJ^{(1,\theta(\bv_0))}\left(\diffeo^{(1,\theta(\bv_0))}
  (\tfrac{\bv}{\|\bv_0\|})\right) .
\end{equation}

\section{Estimates for local Jacobian matrices}

We give in Proposition~\ref{P:estimatejacobian} several estimates for the Jacobians\index{Jacobian} $\rJ^{\bx_{0}}$ \eqref{eq.jacob} and the metric $\rG^{\bx_{0}}$ \eqref{eq:metric} of all the diffeomorphisms  contained in an admissible atlas of a corner domain $\Omega$. All estimates are consequence of local bounds in $L^\infty$ norm on the derivative of Jacobian functions. We denote for any $\bx_0\in\overline\Omega$
\begin{equation}
\label{eq:diffJacob}
   \rK^{\bx_0}(\bv) = \rd_{\bv}\rJ^{\bx_0}(\bv),\quad \bv\in\cV_{\bx_0}\,.
\end{equation}
After considering the case of reference points $\bx_{0}\in\gX$, we deal with points $\bu_{0}\in \overline{\Omega}$ close to a reference point $\bx_0$ such that $\Pi_{\bx_{0}}\in \ogP_{n}$: in that case the quantities $\rK^{\bu_{0}}$ for $\bu_{0}\in \cU_{\bx_{0}}$ remain bounded uniformly in $\cU_{\bx_{0}}$. The next estimate is a global version of the first one when assuming that $\Omega\in \ogD(M)$. The last estimate deals with points $\bu_{0}$ close to a reference point $\bx_0$ such that the section $\widehat\Omega_{\bx_0}$ of $\Pi_{\bx_{0}}$ is polyhedral\footnote{But this does not imply that the tangent cone $\Pi_{\bx_{0}}$ is polyhedral.}: in that case we show that for $\bu_{0}\in \cU_{\bx_{0}}$, the quantity $\rK^{\bu_{0}}$ is controlled by $\|\bu_{0}-\bx_{0}\|^{-1}$.  These estimates will be useful when using change of variables on quadratic form defined on corner domains in dimension 3. An important feature of these estimates is a recursive control of their domain of validity: In each case we exhibit such domains as balls with explicit centers and implicit radii. The principle is to start from the finite number of reference points $\bx_0\in\gX$ provided by an admissible atlas and proceed with points $\bu_0$ which are not in this set using Lemma \ref{L:pointtoopen} and Remark \ref{rem:pto}. The outcome is that estimates are valid in a ball around $\bu_0$ with radius $\rho(\bu_0)$ proportional to the distance $\dist(\bu_0,\gX)$ of $\bu_0$ to the set of reference points, the proportion ratio $\rho(\widehat\bu_1)$ being a similar radius associated with the section $\widehat\Omega_{\bx_0}\in\gD(\dS^{n-1})$. 

\begin{proposition}
\label{P:estimatejacobian}
Let $\Omega\in \gD(M)$ and $(\cU_{\bx},\diffeo^{\bx})_{\bx\in \overline{\Omega}}$ be an admissible atlas with set of reference points $\gX\subset \overline{\Omega}$. Then we have the following assertions: 
\begin{enumerate}[(a)]
\item
\label{P:DLx0}
Let $\bx_{0}\in\gX$. With $R_{\bx_{0}}$ introduced in Definition~\Ref{def.atlas}, there exists $c(\bx_{0})$ such that
\begin{equation}\label{eq:KJGx0}
\begin{aligned}
&      \|\rK^{\bx_0}\|_{L^{\infty}(\cB(\bfz,R_{\bx_0}))} \le c(\bx_{0}), \\[0.5ex]
&   \|\rJ^{\bx_0}-\Id_n\|_{L^{\infty}(\cB(\bfz,r))} +
   \|\rG^{\bx_0}-\Id_n\|_{L^{\infty}(\cB(\bfz,r))} \leq r \ee c(\bx_{0}) 
   \quad\mbox{for all $r\le R_{\bx_0} $}\, .
\end{aligned}
\end{equation}

\item
\label{P:DLmetricpolyedre}
Let $\bx_{0}\in\gX$ such that $\Pi_{\bx_{0}}\in \ogP_{n}$. Then there exists a constant $c(\bx_{0})$ such that for all $\bu_0\in\overline{\Omega}\cap \cB(\bx_{0},R_{\bx_0})$, $\bu_{0}\neq\bx_{0}$,  there holds, denoting $\widehat\bu_1 := \diffeo^{\bx_0}\bu_0 / \|\diffeo^{\bx_0}\bu_0  \| \in\widehat\Omega_{\bx_{0}}$
\begin{equation}\label{eq:KJGu01}
\begin{aligned}
&      \|\rK^{\bu_0}\|_{L^{\infty}(\cB(\bfz,\rho(\bu_0)))} \le c(\bx_{0})
      \quad\mbox{with}\quad \rho(\bu_0) = \tfrac13\ee\rho(\widehat\bu_1)\,\|\bu_0-\bx_0\|, \\[0.5ex]
&   \|\rJ^{\bu_0}-\Id_n\|_{L^{\infty}(\cB(\bfz,r))} +
   \|\rG^{\bu_0}-\Id_n\|_{L^{\infty}(\cB(\bfz,r))} \leq r \ee c(\bx_{0}) 
   \quad\mbox{for all $r\le \rho(\bu_0) $}\, .
\end{aligned}
\end{equation}

\item
\label{C:DLmetricpolyedre}
Let $\Omega\in \ogD(\R^{n})$, then there exists $c(\Omega)$ such that for all $\bu_0\in \overline\Omega$,  there holds, with $\widehat\bu_1$ as above,  
\begin{equation}\label{eq:KJGu02}
\begin{aligned}
&   \|\rK^{\bu_0}\|_{L^{\infty}(\cB(\bfz,\rho(\bu_0)))} \le c(\Omega)
    \quad\mbox{with}\quad \rho(\bu_0) = \tfrac13\ee\rho(\widehat\bu_1)\dist(\bu_0,\gX), \\[0.5ex]
&   \|\rJ^{\bu_0}-\Id_n\|_{L^{\infty}(\cB(\bfz,r))} +
   \|\rG^{\bu_0}-\Id_n\|_{L^{\infty}(\cB(\bfz,r))} \leq r \ee c(\Omega) 
   \quad\mbox{for all $r\le \rho(\bu_0)$}\, .
\end{aligned}
\end{equation}

\item
\label{P:DLmetriccoin}
Let $\bx_{0}\in \gX$ be such that the section $\widehat\Omega_{\bx_{0}}=\Pi_{\bx_{0}}\cap \dS^{n-1}$ belongs to $\ogD(\dS^{n-1})$. Then there exists $c(\bx_{0})$ such that for all $\bu_0\in \overline\Omega\cap\cB(\bx_{0},R_{\bx_0})$, $\bu_{0}\neq\bx_{0}$:
\begin{equation}\label{eq:KJGu03}
\begin{aligned}
&      \|\rK^{\bu_0}\|_{L^{\infty}(\cB(\bfz,\rho(\bu_0)))} \le 
      \frac{1}{\|\bu_0-\bx_{0}\|} \ c(\bx_{0})
      \quad\mbox{with}\quad \rho(\bu_0) = \tfrac13\ee\rho(\widehat\bu_1)\,\|\bu_0-\bx_0\|, \\[0.5ex]
&   \|\rJ^{\bu_0}-\Id_n\|_{L^{\infty}(\cB(\bfz,r))} +
   \|\rG^{\bu_0}-\Id_n\|_{L^{\infty}(\cB(\bfz,r))} \leq \frac{r}{\|\bu_0-\bx_{0}\|} \,c(\bx_{0}) 
   \quad\mbox{for all $r\le \rho(\bu_0) $}\, .
\end{aligned}
\end{equation}
 \end{enumerate}
\end{proposition}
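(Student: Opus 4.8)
The plan is to argue by induction on the space dimension $n$, producing at each level simultaneously the constants $c(\cdot)$ and the radius functions $\rho(\cdot)$ that enter the four assertions; this is consistent with the recursive way the admissible atlas of $\Omega\in\gD(M)$ is built from admissible atlases of the sections $\widehat\Omega_{\bx_0}\in\gD(\dS^{n-1})$ in Definition~\Ref{def.atlas}. For $n\le1$ all diffeomorphisms involved are affine, so $\rK\equiv0$ and everything is trivial. Assertion~\Ref{P:DLx0} concerns only the finitely many reference points $\bx_0\in\gX$, whose local maps are fixed $\sC^\infty$ diffeomorphisms; hence $\rK^{\bx_0}=\rd\rJ^{\bx_0}$ is continuous, so bounded on the compact ball $\cB(\bfz,R_{\bx_0})$ (after shrinking $R_{\bx_0}$ if needed so this ball lies in $\cV_{\bx_0}$ and $\rJ^{\bx_0}$ stays invertible on it), which gives the first line of \eqref{eq:KJGx0}. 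The second line is the mean value inequality: $\rJ^{\bx_0}(\bfz)=\Id_n$ forces $\|\rJ^{\bx_0}(\bv)-\Id_n\|\le c(\bx_0)\|\bv\|$, and $\rG^{\bx_0}=(\rJ^{\bx_0})^{-1}((\rJ^{\bx_0})^{-1})^\top$ is a smooth function of $\rJ^{\bx_0}$ equal to $\Id_n$ at $\bfz$, so it is Lipschitz near $\bfz$ with a constant absorbed into $c(\bx_0)$.

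For the other assertions, the diffeomorphism $\diffeo^{\bu_0}$ attached to a non-reference point $\bu_0$ near some $\bx_0\in\gX$ comes from the recursive construction \eqref{D:thetaby}--\eqref{D:diffeox} (or its simplification in Remark~\Ref{rem:pto} when $\Pi_{\bx_0}$ is polyhedral), so its Jacobian obeys the composition identities \eqref{E:composeediffeo}--\eqref{E:composeediffeobis}:
\[
   \rJ^{\bu_0}(\bv)=\rJ^{\bx_0}(\bv)\,\rJ^{\bv_0}(\diffeo^{\bv_0}(\bv))\,(\rJ^{\bx_0}(\bv_0))^{-1},
   \qquad
   \rJ^{\bv_0}(\diffeo^{\bv_0}(\bv))=\rJ^{(1,\theta(\bv_0))}\big(\diffeo^{(1,\theta(\bv_0))}(\tfrac{\bv}{\|\bv_0\|})\big),
\]
with $\bv_0=\diffeo^{\bx_0}(\bu_0)$ and $\|\bv_0\|$ comparable to $\|\bu_0-\bx_0\|$ by Lemma~\Ref{L:eqnorm}. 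Differentiating the left identity in $\bv$, the term carrying $\rd_\bv\rJ^{\bx_0}=\rK^{\bx_0}$ is controlled by \eqref{eq:KJGx0} as soon as $\|\bv_0\|$ is small enough that $\|(\rJ^{\bx_0}(\bv_0))^{-1}\|\le2$; the remaining term carries $\rd_\bv[\rJ^{\bv_0}\circ\diffeo^{\bv_0}]$, and here the geometry enters. In assertion~\Ref{P:DLmetricpolyedre} the map $\diffeo^{\bv_0}$ is a translation, so $\rJ^{\bv_0}\equiv\Id_n$, that term vanishes, and $\rK^{\bu_0}$ stays bounded; assertion~\Ref{C:DLmetricpolyedre} is then its uniform version, obtained by covering $\overline\Omega$ with the balls $\cB(\bx_0,R_{\bx_0})$, $\bx_0\in\gX$, putting $c(\Omega)=\max_{\bx_0\in\gX}c(\bx_0)$, and bounding $\|\bu_0-\bx_0\|$ from below by $\dist(\bu_0,\gX)$. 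In assertion~\Ref{P:DLmetriccoin}, differentiating the right identity by the chain rule produces the factor $\|\bv_0\|^{-1}$ coming from $\bv\mapsto\bv/\|\bv_0\|$, while the other factors $\rd\diffeo^{(1,\theta(\bv_0))}$ and $\rd\rJ^{(1,\theta(\bv_0))}$---which are built from the section diffeomorphism $\diffeo^{\theta(\bv_0)}$ via \eqref{D:diffeotheta}---are bounded uniformly over $\theta(\bv_0)\in\overline{\widehat\Omega}_{\bx_0}$ by the dimension-$(n-1)$ estimates~\Ref{P:DLx0} and \Ref{P:DLmetricpolyedre} applied to the polyhedral section $\widehat\Omega_{\bx_0}$ (taking the maximum of the finitely many constants); this gives $\|\rK^{\bu_0}\|\le c(\bx_0)/\|\bu_0-\bx_0\|$. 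In every case the second line follows from the first by the mean value argument of \Ref{P:DLx0} (possibly shrinking the implicit radius), using $\rJ^{\bu_0}(\bfz)=\Id_n$.

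The main obstacle---and the reason the radii are stated only implicitly---is the recursive control of the domains of validity. The argument $\bv/\|\bv_0\|$ in the composition formula must remain inside the ball of radius $\rho(\widehat\bu_1)$ on which the dimension-$(n-1)$ estimates for the section hold (here $\widehat\bu_1=\bv_0/\|\bv_0\|$), which forces $\|\bv\|\lesssim\rho(\widehat\bu_1)\|\bv_0\|$; combining this with the factor-$2$ distortion of Lemma~\Ref{L:eqnorm} and the slack needed to keep $\rJ^{\bu_0}$ close to $\Id_n$, and hence invertible, produces exactly the admissible radius $\rho(\bu_0)=\tfrac13\ee\rho(\widehat\bu_1)\|\bu_0-\bx_0\|$, with $\dist(\bu_0,\gX)$ replacing $\|\bu_0-\bx_0\|$ in \Ref{C:DLmetricpolyedre}. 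Keeping track of these nested radii cleanly through the induction is the only genuinely delicate bookkeeping; once they are fixed, each individual bound is a short consequence of Taylor's formula.
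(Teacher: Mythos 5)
Your proposal is correct and follows essentially the same route as the paper: assertion (a) by compactness and a mean-value/Taylor bound at the reference point, (b) by exploiting that $\diffeo^{\bv_0}$ is a translation (so the extra term in the differentiated composition formula vanishes), (c) by taking a finite maximum over $\gX$, and (d) by differentiating the nested composition formulas so that the rescaling $\bv\mapsto\bv/\|\bv_0\|$ produces the $\|\bv_0\|^{-1}\simeq\|\bu_0-\bx_0\|^{-1}$ factor, with the section-level bounds supplied by (c) in dimension $n-1$ (you phrase this as an induction; the paper applies (c) to the polyhedral section directly, which amounts to the same thing). Your remarks about the nested radii $\rho(\bu_0)=\tfrac13\rho(\widehat\bu_1)\|\bu_0-\bx_0\|$ match the paper's own caveat about recursive control of the domain of validity.
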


\begin{proof}
\emph{(\ref{P:DLx0})}
The estimate for $\rK^{\bx_{0}}$ in \eqref{eq:KJGx0} comes from the definition of a map-neighborhood. The bound in \eqref{eq:KJGx0} on $\rJ^{\bx_0}-\Id_n$ follows immediately because of the Taylor estimate
\begin{equation}\label{eq.Jxo-I}
   \|\rJ^{\bx_0}(\bv)-\Id_n\| \leq 
   \|\bv\| \,\|\rK^{\bx_0}\|_{L^{\infty}(\cB(\bfz,\|\bv\|))} ,\quad \bv\in\cV_{\bx_0} \,.
\end{equation}
Concerning the bound \eqref{eq:KJGx0} on $\rG^{\bx_0}-\Id_n$, we rely on the Taylor estimate
\begin{equation}
\label{eq:TaylorGx0}
   \|\rG^{\bx_0}(\bv)-\Id_n\| \leq 
   \|\bv\| \, \|\rK^{\bx_0}\|_{L^{\infty}(\cB(\bfz,\|\bv\|))} \,  
   \|(\rJ^{\bx_0})^{-1}\|_{L^{\infty}(\cB(\bfz,\|\bv\|))}^3\,.
\end{equation}

\emph{(\ref{P:DLmetricpolyedre})} 
Since $\Pi_{\bx_{0}}$ is polyhedral, we can take advantage of Remark \ref{rem:pto}: For $\bu_0$ in the ball $\cB(\bx_0,R_{\bx_0})$, the local map $(\cU_{\bu_0},\diffeo^{\bu_0})$ is defined by \eqref{eq:mapv0}--\eqref{D:diffeox} where, for some $\rho_1<1$,
\[
   \bv_0=\diffeo^{\bx_0}(\bu_0),\quad
   \cU_{\bv_0} = \cB(\bv_0,\rho_1\|\bv_0\|),\quad\mbox{and}\quad
   \diffeo^{\bv_0}(\bv) = \bv-\bv_0\,.
\]
Note that the radius $\rho_1$ is the radius $\rho(\widehat\bu_1)$ of a map neighborhood of $\widehat\bu_1:=\bv_0/\|\bv_0\|$, which plays the same role as $\rho(\bu_0)$ in one dimension less. 

We recall that our admissible atlas satisfies Condition (1) of Definition~\ref{def.atlas}. 
Applying \eqref{eq:eqnormu} with the couples $\{(\bu,\bu_{0}), (\bv,\bv_{0})\}$ and $\{(\bu_{0},\bx_{0}), (\bv_{0},\bfz)\}$, we deduce that $\cU_{\bu_0}$ contains the ball $\cB(\bu_0,\frac13 \rho_1\|\bu_0-\bx_0\|)$. On the other hand,
in this case \eqref{E:composeediffeo} reduces to
\begin{equation}
\label{eq:Ju0}
   \forall \bv\in \cV_{\bu_0}, \quad 
   \rJ^{\bu_0}(\bv)=\rJ^{\bx_{0}}(\bv)\ (\rJ^{\bx_{0}}(\bv_0))^{-1}.
\end{equation}
Thus, we deduce from the above formula that 
\begin{equation}
\label{E:majorerJpoly}
   \|\rK^{\bu_0}\|_{L^{\infty}(\cV_{\bu_0})} \leq
   \|\rK^{\bx_{0}}\|_{L^{\infty}(\cV_{\bx_0})} 
   \|(\rJ^{\bx_{0}})^{-1}\|_{L^{\infty}(\cU_{\bx_0})} \, .
\end{equation}
All of this proves estimate for $\rK^{\bu_{0}}$ in \eqref{eq:KJGu01}.

The bound in \eqref{eq:KJGu01} on $\rJ^{\bu_0}-\Id_n$ follows immediately because of the Taylor estimate \eqref{eq.Jxo-I} where $\bx_{0}$ is replaced by $\bu_{0}$.
Concerning the bound on $\rG^{\bu_0}-\Id_n$, we start from the Taylor estimate \eqref{eq:TaylorGx0} where we replace $\bx_{0}$ by $\bu_{0}$. 
It remains to bound $\|(\rJ^{\bu_0})^{-1}\|$. We note that we have, thanks to \eqref{eq:Ju0}
\[
   \rJ^{\bu_0}(\bv)^{-1} = (\rJ^{\bx_{0}}(\bv_0))\, (\rJ^{\bx_{0}}(\bv))^{-1}\ .
\]
Whence the bound \eqref{eq:KJGu01} on $\rG^{\bu_0}-\Id_n$.
 
\emph{(\ref{C:DLmetricpolyedre})} Applying Proposition \ref{P:recouvrementfini} to $\Omega\in \ogD(M)$, we deduce from \eqref{E:majorerJpoly}: 
 \begin{equation}
\label{E:majorerKpoly}
   \sup_{x\in \overline{\Omega}} \|\rK^{\bx}\|_{L^{\infty}(\cV_{\bx})} \leq 
   \max_{\bx_{0}\in \gX} \left(\|\rK^{\bx_{0}}\|_{L^{\infty}(\cV_{\bx})} 
   \|(\rJ^{\bx_{0}})^{-1}\|_{L^{\infty}(\cU_{\bx})} \right)<+\infty .
\end{equation}

\emph{(\ref{P:DLmetriccoin})} Differentiating \eqref{E:composeediffeo} with respect to $\bv$ yields
\begin{equation}
\label{E:Kbu}
   \rK^{\bu_0}(\bv) =
   \rK^{\bx_{0}}(\bv) \ \rJ^{\bv_0} (\diffeo^{\bv_0}(\bv) )\ (\rJ^{\bx_{0}}(\bv_0))^{-1} +
   \rJ^{\bx_{0}}(\bv) \ \rd_\bv \rJ^{\bv_0} (\diffeo^{\bv_0}(\bv) )\ \ (\rJ^{\bx_{0}}(\bv_0))^{-1}.
\end{equation}
Using in turn \eqref{E:composeediffeobis} we calculate
\begin{align}
   \rd_\bv \rJ^{\bv_0} (\diffeo^{\bv_0}(\bv) ) &=
   \rd_\bv \Big\{\rJ^{(1,\theta(\bv_0))}\left(\diffeo^{(1,\theta(\bv_0))} 
  (\tfrac{\bv}{\|\bv_0\|})\right) \Big\} \nonumber\\
\label{eq:dJ}
   &=\tfrac{1}{\|\bv_0\|}\ 
   \rK^{(1,\theta(\bv_0))}\left(\diffeo^{(1,\theta(\bv_0))}(\tfrac{\bv}{\|\bv_0\|}) \right)\ 
   \left(\rJ^{(1,\theta(\bv_0))}\left(
   \diffeo^{(1,\theta(\bv_0)}(\tfrac{\bv}{\|\bv_0\|})\right)\right)^{-1} \, .
\end{align}
Recall that $\diffeo^{(1,\theta)}$ is deduced from $\diffeo^{\theta}$ by formula \eqref{D:diffeotheta} on the domain $\cU_{(1,\theta(\bv_0))}=\cB(\theta(\bv_0),\rho_0)$, cf.\ \eqref{eq:cUtv0}. Therefore there exists a constant $c(\rho_0)\ge1$ such that
$$
   \|\rJ^{(1,\theta)}\|_{L^{\infty}(\cV_{(1,\theta)})} \le 
   c(\rho_0)\|\rJ^{\theta}\|_{L^{\infty}(\cV_{\theta})}
   \quad\mbox{and}\quad
   \|\rK^{(1,\theta)}\|_{L^{\infty}(\cV_{(1,\theta)})} \le
   c(\rho_0) \|\rK^{\theta}\|_{L^{\infty}(\cV_{\theta})} \, . 
$$
We deduce 
\begin{equation}
\label{E:normKbx} 
   \|\rK^{\bu_0}\| \leq  
   c'(\rho_0) \left(\|\rK^{\bx_{0}}\|\, \|\rJ^{\theta(\bv_0)}\| \,\|(\rJ^{\bx_{0}})^{-1}\| \, + \,
   \frac{\!\|(\rJ^{\theta(\bv_0)})^{-1}\|\!}{\|\bv_0\|}\ \|\rJ^{\bx_{0}}\|  \,
   \|\rK^{\theta(\bv_0)}\|\,  \|(\rJ^{\bx_{0}})^{-1}\|\right)
\end{equation}
where we have omitted the mention of the $L^{\infty}$ norms.
Since the section $\widehat\Omega_{\bx_{0}}$ belongs to $\ogD(\dS^{n-1})$, we deduce from \emph{(\ref{C:DLmetricpolyedre})} and \eqref{E:majorerKpoly} applied to the section $\widehat\Omega_{\bx_{0}}$ that 
$$
   \sup_{\theta\in\overline{\widehat\Omega}_{\bx_{0}}} \| \rJ^{\theta}\|_{L^{\infty}(\cV_{\theta})}< +\infty
   \quad \mbox{and}\quad 
   \sup_{\theta\in\overline{\widehat\Omega}_{\bx_{0}}} \| \rK^{\theta}\|_{L^{\infty}(\cV_{\theta})}< +\infty
    \, .
$$
Therefore the r.h.s. of \eqref{E:normKbx} is controlled by $c(\bx_{0})/\|\bv_0\|$. Using \eqref{eq:eqnormu} we obtain that $\|\bv_0\|\simeq\|\bu_0-\bx_{0}\|$, whence the bound \eqref{eq:KJGu03} on $\rK^{\bu_0}$. The bound \eqref{eq:KJGu03} for $\rJ^{\bu_0}-\Id_n$ follows immediately as in point \emph{(\ref{P:DLx0})}. Finally, to prove the bound on $\rG^{\bu_0}-\Id_n$, we combine the Taylor estimate \eqref{eq:TaylorGx0} (at $\bu_0$) with the estimate of $\rK^{\bu_{0}}$ in \eqref{eq:KJGu03} and the formula for $(\rJ^{\bu_0})^{-1}$
\[
   (\rJ^{\bu_0}(\bv))^{-1} = (\rJ^{\bx_{0}}(\bv_0))
   \ (\rJ^{\bv_0}(\diffeo^{\bv_0}(\bv)))^{-1}\ (\rJ^{\bx_{0}}(\bv))^{-1}\, ,
\] 
deduced from \eqref{E:composeediffeo}. It remains to use \eqref{E:composeediffeobis} to bound $(\rJ^{\bv_0}(\diffeo^{\bv_0}(\bv)))^{-1}$, which ends the proof.
\end{proof}

\begin{remark}
In dimension $n=2$, domains $\Omega\in\gD(\R^2)$ are always in case \emph{(\ref{P:DLmetricpolyedre})} or \emph{(\ref{C:DLmetricpolyedre})} of Proposition~\ref{P:estimatejacobian} since $\gD(\R^2)=\ogD(\R^2)$, cf.\ \eqref{E:poly2D}. In dimension $n=3$, Proposition~\ref{P:estimatejacobian} still covers all possibilities: Indeed, since $\gD(\dS^2)=\ogD(\dS^2)$, one is at least in case \emph{(\ref{P:DLmetriccoin})}. In higher dimensions $n\ge4$, Proposition~\ref{P:estimatejacobian} does not provide estimates for all possible singular points. General estimates would involve distance to non-discrete sets of points, see \eqref{eq:Kgen} later on. However Proposition~\ref{P:estimatejacobian} is sufficient for the core of our investigation, which, for independent reasons, is limited to dimension $n\le3$.
\end{remark}

\begin{remark}\label{R:dKu0}
We can use the computation of $\rK^{\bu_{0}}$ in the proof of Proposition~\ref{P:estimatejacobian} to obtain estimates for its differentials $\rd^{\ell}\rK^{\bu_{0}}$, $\ell=1,2,\ldots$ Note that in \eqref{E:normKbx}, the worst term is $1/\|\bv_{0}\|$. By differentiating $\ell$ times \eqref{E:Kbu}, we obtain an upper bound in $1/\|\bv_{0}\|^{\ell+1}$. Thus we have the following improvements in Proposition~\ref{P:estimatejacobian}:
\begin{enumerate}
\item In cases \emph{(\ref{P:DLx0})}, \emph{(\ref{P:DLmetricpolyedre})} and \emph{(\ref{C:DLmetricpolyedre})}, 
the estimates for $\rK^{\bx_{0}}$ and $\rK^{\bu_{0}}$ are still valid for their differentials $\rd^{\ell}\rK^{\bx_{0}}$ and $\rd^{\ell}\rK^{\bu_{0}}$, respectively.
\item Let $\bx_{0}\in \gX$ such that $\widehat\Omega_{\bx_{0}}=\Pi_{\bx_{0}}\cap \dS^{n-1}$ belongs to $\ogD(\dS^{n-1})$. Then there exists $c(\bx_{0})$ such that for all $\bu_0\in \overline\Omega\cap\cB(\bx_{0},R_{\bx_0})$, $\bu_{0}\neq\bx_{0}$, there holds, with $\widehat\bu_1 := \diffeo^{\bx_0}\bu_0 / \|\diffeo^{\bx_0}\bu_0  \|$  
\begin{equation}\label{eq:dKJGu03}
      \|\rd^{\ell}\rK^{\bu_0}\|_{L^{\infty}(\cB(\bfz,\rho(\bu_0)))} \le 
      \frac{1}{\|\bu_0-\bx_{0}\|^{\ell+1}} \ c(\bx_{0})
      \quad\mbox{with}\quad \rho(\bu_0) = \tfrac13\ee\rho(\widehat\bu_1)\,\|\bu_0-\bx_0\|.
\end{equation}
\end{enumerate}
\end{remark}

\section{Strata and singular chains}
\label{ss:chains}
In this section, we exhibit a canonical structure 
of tangent cones\index{Tangent cone} and corner domains\index{Corner domain}.
\begin{definition}
\label{def:redcone}
Let $\gO_n$\index{Orthogonal linear transformation!$\gO_n$} denote the group of orthogonal linear transformations\index{Orthogonal linear transformation} of $\R^n$. 
\begin{enumerate}[a)]
\item We say that a cone $\Pi$ is {\em equivalent}\index{Cone!Equivalence} to another cone $\Pi'$ and denote
$
   \Pi \equiv \Pi'
$
if there exists $\udiffeo\in\gO_n$ such that $\udiffeo\Pi=\Pi'$. 
\item Let $\Pi\in\gP_n$. If $\Pi$ is equivalent to $\R^{n-d}\times\Gamma$ with $\Gamma\in\gP_d$ and $d$ is minimal for such an equivalence, $\Gamma$ is said to be {\em a minimal reduced cone}\index{Minimal reduced cone|textbf} associated with $\Pi$ and we denote by $d(\Pi):=d$\index{Reduced dimension!-- of a cone $\Pi$\quad$d(\Pi)$\ } the \emph{reduced dimension}\index{Reduced dimension|textbf} of the cone $\Pi$. 
\item Let $\bx\in\overline\Omega$ and let $\Pi_\bx$ be its tangent cone. We denote by $d_0(\bx)$\index{Reduced dimension!-- of $\Omega$ at $\bx$\quad $d_{0}(\bx)$\ } the dimension of the minimal reduced cone associated with $\Pi_\bx$. We call this integer the {\em reduced dimension of $\Omega$ at $\bx$.}
\end{enumerate}
\end{definition}

\begin{remark}\label{R:isoPi}
If there exists a linear isomorphism between $\Pi$ and $\Pi'$ then $d(\Pi)=d(\Pi')$.
\end{remark}

\subsection{Recursive definition of the singular chains}
The notation $\gC(\Omega)$\index{Singular chain!$\gC(\Omega)$|textbf} represents the set of the singular chains of $\Omega$, which are defined as follows:

\begin{definition}[\sc Singular chains]
A singular chain\index{Singular chain|textbf}
$\dx=(\bx_0,\bx_1,\ldots,\bx_\pp)\in\gC(\Omega)$\index{Singular chain!$\dx$} (with $\pp$ a non negative integer) is a finite collection of points defined according to the following recursive procedure. 

\noindent {\sc Initialization}: $\bx_0\in\overline\Omega$, 
\begin{itemize}
\item Let $C_{\bx_0}$ be the tangent cone to $\Omega$ at $\bx_0$ (here $C_{\bx_0}=\Pi_{\bx_0}$).
\item Let $\Gamma_{\bx_0}\in\gP_{d_0}$ be its minimal reduced cone: 
$C_{\bx_0}=\udiffeo^0(\R^{n-d_0}\times\Gamma_{\bx_0})$.
\item Alternative:  \begin{itemize}
\item If $\pp=0$, stop here.
\item If $\pp>0$, then\footnote{If $d_{0}=0$, we have necessarily $\pp=0$.} $d_0>0$ and let $\Omega_{\bx_0}\in\gD(\dS^{d_{0}-1})$ be the section of $\Gamma_{\bx_0}$
\end{itemize}
\end{itemize}

\noindent {\sc  Recurrence}: $\bx_j\in\overline\Omega_{\bx_0,\ldots,\bx_{j-1}}\in\gD(\dS^{d_{j-1}-1})$.
If $d_{j-1}=1$, stop here ($\pp=j$). If not:
\begin{itemize}
\item Let $C_{\bx_0,\ldots,\bx_j}$ be the tangent cone to $\Omega_{\bx_0,\ldots,\bx_{j-1}}$ at $\bx_j$, 
\item Let $\Gamma_{\bx_0,\ldots,\bx_j}\in\gP_{d_j}$ be its minimal reduced cone: 
$C_{\bx_0,\ldots,\bx_j}=\udiffeo^j(\R^{d_{j-1}-1 -d_j}\times\Gamma_{\bx_0,\ldots,\bx_j})$.
\item Alternative: \begin{itemize}
\item If $\pp=j$, stop here.
\item If $\pp>j$, then $d_j>0$ and let $\Omega_{\bx_0,\ldots,\bx_j}\in\gD(\dS^{d_{j}-1})$ be the section of $\Gamma_{\bx_0,\ldots,\bx_j}$.
\end{itemize}
\end{itemize}
\end{definition}

Note that $n\ge d_0>d_1>\ldots>d_\pp$. Hence $\pp\le n$. Note also that for $\pp=0$, we obtain the trivial one element chain $(\bx_0)$ for any $\bx_0\in\overline\Omega$.

\begin{notation}\label{def.Cx}
For any $\bx\in\overline\Omega$, we denote by $\gC_\bx(\Omega)$\index{Singular chain!$\gC_\bx(\Omega)$} the subset of chains $\dx\in \gC(\Omega)$ originating at $\bx$, {\it i.e.}, the set of chains $\dx=(\bx_0,\ldots,\bx_\pp)$ with $\bx_0=\bx$. Note that the one element chain $(\bx)$ belongs to $\gC_\bx(\Omega)$. We also set\index{Singular chain!$\gC^*_\bx(\Omega)$}
\begin{equation}
\label{eq:gCx}
   \gC^*_\bx(\Omega) = \{\dx\in\gC_\bx(\Omega),\  \pp>0\} = \gC_\bx(\Omega)\setminus\{(\bx)\}.
\end{equation}
\end{notation}

We set finally, with the notation $\langle \by\rangle$ for the vector space generated by $\by$,\index{Tangent cone!$\Pi_\dx$}
\begin{equation}
\label{eq:PiX}
   \Pi_\dx = 
   \begin{cases}
     C_{\bx_0} =\Pi_{\bx_0} & \ \mbox{if} \ \  \pp=0,\\[0.5ex]
     \udiffeo^0 \big(\R^{n-d_0}\times\langle \bx_1\rangle \times 
     C_{\bx_0,\bx_1} \big)  &\ \mbox{if} \ \  \pp=1, \\[0.5ex]
     \udiffeo^0 \Big(\R^{n-d_0}\times\langle \bx_1\rangle \times  \ldots \times
     \udiffeo^{\pp-1} \big(\R^{d_{\pp-2}-1-d_{\pp-1}}\times\langle \bx_\pp\rangle \times 
     C_{\bx_0,\ldots,\bx_{\pp}}\big)
     \ldots \Big)  &\ \mbox{if} \ \  \pp\ge2.
   \end{cases}
\end{equation}
Note that if $d_\pp=0$, the cone $C_{\bx_0,\ldots,\bx_{\pp}}$ coincides with $\R^{d_{\pp-1}-1}$, leading to $\Pi_\dx=\R^n$.  

\begin{definition}
\label{def:chaineq}
Let $\dx=(\bx_0,\ldots,\bx_\pp)$ be a chain in $\gC(\Omega)$.
\begin{enumerate}[(i)]
\item The cone $\Pi_{\dx}$ defined in \eqref{eq:PiX} is called a \emph{tangent structure}\index{Tangent structure|textbf} [of $\Omega$] at $\bx_0$, and if $\dx\neq(\bx_0)$, $\Pi_{\dx}$ is called a \emph{tangent substructure}\index{Tangent substructure|textbf} of $\Pi_{\bx_0}$.
\item Let $\dx'=(\bx'_0,\ldots,\bx'_{\pp'})$ be another chain in $\gC(\Omega)$. We say that $\dx'$ is equivalent\index{Singular chain!Equivalence} to $\dx$ if $\bx'_0=\bx_0$ and $\Pi_{\dx'}=\Pi_{\dx}$.
\end{enumerate}
\end{definition}
This notion of equivalence is well suited to the class of operators that we consider in this paper.

The reader interested in examples of singular chains may find in Section \ref{SS:Sc3D} an enumeration of all possible singular chains in dimension 3, with reference to Figure \ref{F1} for illustration.

\subsection{Strata of a corner domain}\index{Stratum|textbf}
We introduce a partition of $\overline\Omega$ according to the value of the reduced dimension $d_0$ at each point.
\begin{definition}
Let $\Omega\in\gD(\R^n)$. For $d\in\{0,\ldots,n\}$, let
\begin{equation}
\label{eq:Ad}
   \gA_d(\Omega) = \{\bx\in\overline\Omega,\quad d_0(\bx)=d\},
\end{equation}
where $d_0(\bx)$ si the reduced dimension of $\Omega$ at $\bx$, see Definition \ref{def:redcone}. We call {\em stratum}, or {\em $d$-stratum} of $\overline\Omega$ any connected components of $\gA_d(\Omega)$. The strata are generically denoted by $\bt$\index{Stratum!$\bt$} and their set by $\gT$\index{Stratum!$\gT$}.
\end{definition}

\noindent{\bf Particular cases:}
\begin{itemize}
\item $\gA_0(\Omega)$ coincides with $\Omega$.
\item $\gA_1(\Omega)$ is the subset of $\partial\Omega$ of the regular points\index{Regular point} of the boundary (the corresponding strata being the faces in dimension $n=3$ and the sides in dimension $n=2$).
\item If $n=2$, $\gA_2(\Omega)$ is the set of corners\index{Corner}.
\item If $n=3$, $\gA_2(\Omega)$ is the set of edge points\index{Edge}.
\item If $n=3$, $\gA_3(\Omega)$ is the set of corners.
\end{itemize}

\begin{proposition}
\label{prop:stata}
Let $\bt\in \gA_{d}(\Omega)$ be a stratum. Then $\bt$ is a smooth submanifold\footnote{This means that for each $\bx_0\in\bt$ there exists a neighborhood $\cU\subset\bt$ of $\bx_0$ and an associate local diffeomorphism from $\cU$ onto an open set in $\R^{n-d}$.} of codimension $d$. In particular $\gA_{n}(\Omega)$ is a finite subset of $\partial\Omega$.
\end{proposition}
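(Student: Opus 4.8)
The plan is to prove the statement by induction on the codimension $d$, or equivalently by using the recursive structure of corner domains together with the recursive construction of tangent cones in Lemma \ref{L:pointtoopen}. The key observation is that near a point $\bx_0\in\bt\subset\gA_d(\Omega)$, the reduced dimension function $\bx\mapsto d_0(\bx)$ is locally constant \emph{along} a distinguished submanifold through $\bx_0$, namely the ``edge-type'' subspace along which the tangent cone $\Pi_{\bx_0}$ is invariant. Concretely, write $\Pi_{\bx_0}\equiv\R^{n-d}\times\Gamma_{\bx_0}$ with $\Gamma_{\bx_0}\in\gP_d$ a minimal reduced cone, and recall from the definition that $d=d_0(\bx_0)$ is characterized by the minimality of the $\R^{n-d}$ factor. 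The candidate chart for $\bt$ near $\bx_0$ is obtained by composing the local diffeomorphism $\diffeo^{\bx_0}$ of Definition \ref{D:Tangentcones} with the linear map sending $\Pi_{\bx_0}$ to $\R^{n-d}\times\Gamma_{\bx_0}$; then $\bt$ should correspond locally to $(\R^{n-d}\times\{\bfz_d\})\cap\cV_{\bx_0}$, an $(n-d)$-dimensional plane slice.

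The key steps, in order, are as follows. First I would show that for $\bx_0\in\gA_d(\Omega)$ and $\bu_0$ in a sufficiently small map-neighborhood $\cU_{\bx_0}$, the tangent cone $\Pi_{\bu_0}$ constructed in Lemma \ref{L:pointtoopen} is again of the form $\rJ^{\bx_0}(\bv_0)(\R^{n-d_0(\bx_0)}\times\Pi_{\theta(\bv_0)})$ where $\bv_0=\diffeo^{\bx_0}(\bu_0)$; hence, by Remark \ref{R:isoPi}, $d_0(\bu_0)$ depends only on the position $\theta(\bv_0)$ on the section and on whether $\bv_0$ lies on the axis $\langle e_1,\dots,e_{n-d}\rangle$. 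Second, using the recursion hypothesis applied to the section $\widehat\Omega_{\bx_0}\in\gD(\dS^{n-1})$ (a domain of dimension $n-1$, so the strata of the section are already known to be smooth submanifolds), I would identify the set $\{\bu_0\in\cU_{\bx_0}:\ d_0(\bu_0)=d\}$: it consists exactly of the points whose image under $\diffeo^{\bx_0}$ lies on the linear subspace $\R^{n-d}\times\{\bfz\}$ transverse to the reduced cone $\Gamma_{\bx_0}$, because adding an $\R$-direction to a minimal reduced cone raises the full-space factor by one, while moving $\theta(\bv_0)$ into the interior of a lower-dimensional stratum of the section section strictly lowers $d_0$. Third, I would check that the chart $\diffeo^{\bx_0}$ (composed with the appropriate orthogonal transformation $\udiffeo^0$) sends $\bt\cap\cU_{\bx_0}$ diffeomorphically onto an open subset of an $(n-d)$-dimensional affine subspace; since $\diffeo^{\bx_0}$ is a $\sC^\infty$ diffeomorphism, this exhibits $\bt$ as a smooth submanifold of codimension $d$ near each of its points. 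Finally, for the last assertion, $d=n$ forces the reduced cone $\Gamma_{\bx_0}$ to fill all of $\R^n$ with no translation-invariance, which by the above local analysis means the stratum is $0$-dimensional, hence discrete; combined with compactness of $\overline\Omega$ and the fact that there are finitely many map-neighborhoods (Proposition \ref{P:recouvrementfini}), $\gA_n(\Omega)$ is finite.

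The main obstacle I anticipate is step two: rigorously justifying that the reduced dimension $d_0$ is \emph{exactly} constant on the plane slice and \emph{strictly} drops off it. This requires a careful bookkeeping of how the reduced dimension behaves under the operations ``take the tangent cone at a point of the section'' and ``prepend an $\R$ or $\langle\bx_j\rangle$ factor'', i.e.\ tracking the chain of inequalities $n\ge d_0>d_1>\dots$ from the definition of singular chains and relating $d_0(\bu_0)$ to $d_0(\bx_0)$ and to the reduced dimension of $\theta(\bv_0)$ within the section. One must use that $\Gamma_{\bx_0}$ is a \emph{minimal} reduced cone to rule out spurious invariant directions, and that the tangent cone to a cone at an interior-of-a-stratum point recovers (a linear image of) a product of $\R$ with a lower cone — essentially the content of \eqref{D:Piy} in Lemma \ref{L:pointtoopen}. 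Once this monotonicity/constancy dichotomy is established, the smooth-submanifold conclusion and the finiteness of $\gA_n(\Omega)$ follow quickly from the diffeomorphism property of the charts and compactness.
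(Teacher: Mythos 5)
Your plan follows essentially the same route as the paper: compose $\diffeo^{\bx_0}$ with the rotation $\udiffeo^0$, write the tangent cone to $\Pi_{\bx_0}$ at $\bv=\diffeo^{\bx_0}(\bu)$ in cylindrical coordinates, and characterize $\gA_d(\Omega)\cap\cU_{\bx_0}$ as the preimage of the axis $\R^{n-d}\times\{\bfz\}$. However, you flag step two (constancy of $d_0$ on the axis slice and strict drop off it) as the main obstacle and propose to resolve it by inducting on dimension and tracking the inequalities $d_0>d_1>\cdots$ from the singular chain machinery. The paper does not need any of this. It is a direct one-step observation: if $\pi^\perp(\bv)=0$ then $\diffeo^{\bv}$ is a translation so $\Pi_\bv=\Pi_{\bx_0}$ and $d_0(\bu)=d$; if $\pi^\perp(\bv)\neq0$ then $\Pi_\bv=\R^{n-d}\times\langle\pi^\perp(\bv)\rangle\times\Pi_{\theta(\bv)}$ with $\Pi_{\theta(\bv)}\in\gP_{d-1}$, so $\Pi_\bv$ has at least $n-d+1$ translation-invariant directions and $d(\Pi_\bv)\le d-1<d$. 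Combined with $\Pi_\bu=\rJ^{\bx_0}(\bv)(\Pi_\bv)$ and Remark~\ref{R:isoPi} (reduced dimension is invariant under linear isomorphisms), this gives the dichotomy with no recursion and no information about the strata of the section. Note also a small slip in your step one: you write $\Pi_{\bu_0}\cong\rJ^{\bx_0}(\bv_0)(\R^{n-d}\times\Pi_{\theta(\bv_0)})$, which has the wrong dimension; the factor $\langle\pi^\perp(\bv_0)\rangle$ must appear, and it is precisely this extra invariant line that makes the reduced dimension drop strictly — so leaving it out obscures the very mechanism that makes the argument work. The finiteness of $\gA_n(\Omega)$ you argue correctly (codimension $n$ gives a discrete set, compactness gives finiteness).
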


Thus, the strata of a corner domain have a structure of manifold ``from inside'', but not up to the boundary in general. By contrast, the strata of a manifold with corners are themselves manifold with corners.

\begin{proof}
Let $\bx_{0}\in \bt$ and $(\cU_{\bx_{0}},\diffeo^{\bx_{0}})$ be an associated local map. The tangent cone at $\bx_{0}$ writes $\Pi_{\bx_{0}}=\udiffeo\left(\R^{n-d}\times \Gamma_{\bx_{0}}\right)$, with 
$\Gamma_{\bx_{0}}\in\gP_{d}$. For simplicity, we may assume that $\udiffeo=\Id_{n}$. Denote by 
$\pi$ the orthogonal projection on $\R^{n-d}$ and set $\pi^{\perp}:=\Id_{n}-\pi$.
Let $\bu\in \cU_{\bx_{0}}$ and $\bv=\diffeo^{\bx_{0}}(\bu)$. 
According as $\pi^{\perp}(\bv)$ is $0$ or not, the tangent cone $\Pi_{\bv}$ at $\bv$ to $\Pi_{\bx_0}$ has distinct expressions. 
\begin{enumerate}
\item If $\pi^{\perp}(\bv)=0$, then $\diffeo^{\bv}$ can be taken as the translation by $\bv$ and $\Pi_{\bv}=\Pi_{\bx_{0}}$. 
\item If $\pi^{\perp}(\bv)\neq0$, we introduce the cylindrical coordinates $(r(\bv),\theta(\bv),\pi(\bv))$ of $\bv$ with:
\begin{equation}
\label{eq:cylcoord}
   r(\bv)=\|\pi^{\perp}(\bv)\|,\quad
   \theta(\bv)=\frac{\pi^{\perp}(\bv)}{\|\pi^{\perp}(\bv)\|} 
   \in \overline\Omega_{\bx_{0}}  
   \quad \mbox{with} \quad \Omega_{\bx_{0}}=\Gamma_{\bx_{0}}\cap \dS^{d-1}\, .
\end{equation}
Let $\Pi_{\theta(\bv)}\in \gP_{d-1}$ be the tangent cone to $\Omega_{\bx_{0}}$ at $\theta(\bv)$. We have, cf.\ proof of Lemma~\ref{L:pointtoopen},
\begin{equation}
\label{D:Piybis}
\Pi_{\bv}:=\R^{n-d}\times\langle \pi^{\perp}(\bv) \rangle \times \Pi_{\theta(\bv)} \,.
\end{equation}
\end{enumerate}
In any case, the tangent cone $\Pi_{\bu}$ is linked to $\Pi_{\bv}$ by the formula
$\Pi_{\bu}=\rJ^{\bx_{0}}(\bv)(\Pi_{\bv})$.
We deduce: 
\begin{enumerate}
\item If $\pi^{\perp}(\bv)=0$, then $d(\Pi_{\bu})= d(\Pi_{\bx_{0}})$ (cf.\ Remark~\ref{R:isoPi}), therefore $d_{0}(\bu)=d_{0}(\bx_{0})=d$ and $\bu\in \gA_{d}(\Omega)$.
\item If $\pi^{\perp}(\bv)\neq0$, then $d(\Pi_{\bu})=d(\Pi_{\bv})$ and we have $d_{0}(\bu) \leq d-1< d_{0}(\bx_{0})=d$.
\end{enumerate}
 Therefore $\bu\in \gA_{d}(\Omega)$ if and only if $\pi^{\perp}(\bv)=0$. We conclude that
$$\gA_{d}(\Omega)\cap\cU_{\bx_{0}}=(\diffeo^{\bx_{0}})^{-1}(\pi(\cV_{\bx_{0}})).$$
Hence the stratum $\bt$ is a smooth submanifold of codimension $d$.
\end{proof}

\begin{remark}\label{R:dgX}
Let $\Omega$ be a corner domain and $\gX$ be the set of reference points of an admissible atlas, cf.\ Definition~\ref{def.atlas}. 
Let $\bx_{0}\in\gX$. As a consequence of the above proof we find that for any $\bu_{0}\in\cB(\bx_{0},R_{\bx_{0}})$, we have the inequality $d_{0}(\bu_{0})\leq d_{0}(\bx_{0})$. Thus, in particular, the set of corners $\gA_{n}(\Omega)$ has to be contained in $\gX$.
\end{remark}

\subsection{Topology on singular chains}
Here we introduce a distance on equivalence classes of the set of chains $\gC(\Omega)$, for the equivalence already introduced in Definition \ref{def:chaineq}. This will allow to introduce natural notions of continuity and lower semicontinuity on chains.

Let us denote by $\mathsf{BGL}(n)$ the ring of linear isomorphisms $L$ with norm $\|L\|\le1$, where
\[
   \|L\| = \max_{\bx\in\R^n\setminus\{0\}} \frac{\|L\bx\|}{\|\bx\|}\,.
\]

\begin{definition}
\label{def:distchain}
Let $\dx=(\bx_{0},\ldots,\bx_{\pp})$ and $\dx'=(\bx_{0}',\ldots,\bx_{\pp'}')$ be two singular chains in $\gC(\Omega)$. We define the distance\index{Distance of chains|textbf} $\dD(\dx,\dx')\in\R_+\cup\{+\infty\}$ as \index{Distance of chains!$\dD(\dx,\dx')$}
\[
   \dD(\dx,\dx') = \|\bx_0-\bx'_0\| + \frac12\bigg\{
   \min_{\substack {L\in\mathsf{BGL}(n) \\ L\Pi_\dx = \Pi_{\dx'}}}
   \|L-\Id_n\|
      +\min_{\substack {L\in\mathsf{BGL}(n) \\ L\Pi_{\dx'} = \Pi_{\dx}}}
   \|L-\Id_n\| \bigg\}\,,
\]
where the second term is set to $+\infty$ if $\Pi_\dx$ and $\Pi_{\dx'}$ do not belong to the same orbit for the action of $\mathsf{BGL}(n)$ on $\gP_{n}$.
\end{definition}

\begin{remark}
\label{rem:distchain}
\begin{enumerate}[(a)]
\item The distance $\dD(\dx,\dx')$ is zero if and only if the chains $\dx$ and $\dx'$ are equivalent.
\item As a consequence of the proof of Proposition \ref{prop:stata}, the strata of $\overline\Omega$ are contained in orbits of the natural action of $\mathsf{BGL}(n)$ on chains. 
\item The distance between two chains $\dx$ and $\dx'$ is infinite when the associated tangent structures $\Pi_\dx$ and $\Pi_{\dx'}$ cannot be mapped from each other by a linear application. For example, this is the case when the reduced dimensions of $\Pi_\dx$ and $\Pi_{\dx'}$ are distinct. The components of $\gC(\Omega)$ separated by an infinte distance are, in certain sense, the closure of the statra, see Remark \ref{rem:str3} for a description in dimension $n=3$.
\item Inside each stratum of a polyhedral domain, the distance $\dD$ between chains of length $1$ is equivalent to the standard distance in $\R^n$. This is no longer true for strata containing conical points in their closure for the standard distance. Conical points are ``blown up'' by the distance $\dD$, cf.\ Remark \ref{rem:str3} again.
\item If $\Omega$ is a manifold with corners of dimension $n$, each tangent structure $\Pi_\dx$ is homeomorphic to $\R^d_+\times\R^{n-d}$ where $d$ is its reduced dimension. Thus the distance $\dD$ splits $\gC(\Omega)$ in at most $n+1$ components separated from each other by an infinite distance. Each of these components may contain several distinct connected components.
\end{enumerate}
\end{remark}

We define a partial order on chains.
\begin{definition}
\label{D:Ordrechain}
Let $\dx=(\bx_{0},\ldots,\bx_{\pp})$ and $\dx'=(\bx_{0}',\ldots,\bx_{\pp'}')$ be two singular chains in $\gC(\Omega)$. We say that $\dx \leq \dx'$ if 
$\pp\leq \pp'$ and $\bx_{j}=\bx_{j}'$ for all $0 \leq j\leq \pp$.
\end{definition}

\begin{theorem}
\label{th:scichain}
Let $\Omega$ be a corner domain in $\gD(M)$ with $M=\R^n$ or $\dS^n$, and $F:\gC(\Omega)\to \R$ be a function such that 
\begin{enumerate}[(i)]
\item $F$ is continuous on $\gC(\Omega)$ for the distance $\dD$
\item $F$ is order-preserving on $\gC(\Omega)$ ({\it i.e.}, $\dx \leq \dx'$ implies $F(\dx) \leq F(\dx')$).
\end{enumerate}
Then for all chain $\dx=(\bx_{0},\ldots,\bx_{\pp})\cup\{\emptyset\}$, 
the function (with the convention that $\Omega_{\emptyset}=\Omega$)
$$
   \overline{\Omega}_{\bx_0,\ldots,\bx_{\pp}} \ni\bx \longmapsto 
   F((\bx_0,\ldots,\bx_{\pp},\bx))
$$
is lower semicontinuous. In particular 
$ \overline{\Omega} \ni\bx \mapsto F((\bx))$
is lower semicontinuous.\index{Semicontinuity}
\end{theorem}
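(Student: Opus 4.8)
The plan is to isolate a purely geometric approximation property of singular chains and then deduce the theorem from it by combining it with hypotheses (i)--(ii). The property is the following \emph{Geometric Lemma}: given $\Omega\in\gD(M)$ ($M=\R^n$ or $\dS^n$), a point $\bx_{*}\in\overline\Omega$, and a sequence $(\bx_m)_m\subset\overline\Omega$ with $\bx_m\to\bx_{*}$, there exist, after extraction of a subsequence, a chain $\dx^\infty\in\gC_{\bx_{*}}(\Omega)$ such that $\dD\big((\bx_m),\dx^\infty\big)\to0$. Granting this, the ``in particular'' assertion is immediate: choose $\bx_m\to\bx_{*}$ realising $\liminf_{\bx\to\bx_{*}}F((\bx))$, extract the subsequence and the chain $\dx^\infty$ of the lemma, apply continuity (i) to get $F((\bx_m))\to F(\dx^\infty)$, and note that $(\bx_{*})\le\dx^\infty$ because $\dx^\infty$ originates at $\bx_{*}$ (Definition~\ref{D:Ordrechain}); the order-preserving property (ii) then gives $F((\bx_{*}))\le F(\dx^\infty)=\liminf_{\bx\to\bx_{*}}F((\bx))$, i.e.\ lower semicontinuity. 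The general assertion reduces to this: for a prefix $(\bx_0,\ldots,\bx_\pp)$ the section $\Omega_{\bx_0,\ldots,\bx_\pp}$ is again a corner domain, and the prepending map $\dx'\mapsto(\bx_0,\ldots,\bx_\pp)\frown\dx'$ sends $\gC(\Omega_{\bx_0,\ldots,\bx_\pp})$ into $\gC(\Omega)$ in an order-preserving, $\dD$-Lipschitz way — a bookkeeping verification on the recursive definition of singular chains, on formula \eqref{eq:PiX} and on Definition~\ref{def:distchain}, the prefix contributing only the fixed product factor $\udiffeo^0(\R^{n-d_0}\times\langle\bx_1\rangle\times\cdots)$ — so $\dx'\mapsto F((\bx_0,\ldots,\bx_\pp)\frown\dx')$ still satisfies (i)--(ii) on $\gC(\Omega_{\bx_0,\ldots,\bx_\pp})$, and the claimed lower semicontinuity of $\bx\mapsto F((\bx_0,\ldots,\bx_\pp,\bx))$ is exactly the ``in particular'' assertion for that corner domain.

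I would prove the Geometric Lemma by induction on $n$; for $n=0$ it is trivial. Assume it in dimensions $<n$. Discarding the terms $\bx_m=\bx_{*}$ (for which $\dx^\infty=(\bx_{*})$ works) and passing to a subsequence, either (a) every $\bx_m$ lies in the stratum of $\bx_{*}$, or (b) none does. Fix a local map $(\cU_{\bx_{*}},\diffeo^{\bx_{*}})$, put $\bv_m=\diffeo^{\bx_{*}}(\bx_m)$ — so $\rJ^{\bx_{*}}(\bv_m)\to\Id_n$ by continuity of $\rJ^{\bx_{*}}$ and $\rJ^{\bx_{*}}(\bfz)=\Id_n$ (Definition~\ref{D:Tangentcones}) — and write $\Pi_{\bx_{*}}=\udiffeo^0(\R^{n-d_0}\times\Gamma_{\bx_{*}})$ with $\Gamma_{\bx_{*}}$ minimal. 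In case (a), the proof of Proposition~\ref{prop:stata} gives $\Pi_{\bx_m}=\rJ^{\bx_{*}}(\bv_m)\,\Pi_{\bx_{*}}$, hence $\dD((\bx_m),(\bx_{*}))\to0$ and $\dx^\infty=(\bx_{*})$. In case (b) necessarily $d_0\ge1$; let $\Omega_{\bx_{*}}=\Gamma_{\bx_{*}}\cap\dS^{d_0-1}\in\gD(\dS^{d_0-1})$ be the section. The same proof (formula \eqref{D:Piybis}, built on the construction of Lemma~\ref{L:pointtoopen}) shows $\Pi_{\bx_m}=\rJ^{\bx_{*}}(\bv_m)\,\Pi_{(\bx_{*},\theta_m)}$, where $\theta_m\in\overline\Omega_{\bx_{*}}$ is the angular part of $\bv_m$ relative to $\Gamma_{\bx_{*}}$ and $\Pi_{(\bx_{*},\theta_m)}$ is the tangent structure of the chain $(\bx_{*},\theta_m)$. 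Extract so that $\theta_m\to\theta_\infty\in\overline\Omega_{\bx_{*}}$, apply the induction hypothesis to $\Omega_{\bx_{*}}$ and $(\theta_m)_m$ to obtain (after one more extraction) $\dx''\in\gC_{\theta_\infty}(\Omega_{\bx_{*}})$ with $\dD^{\Omega_{\bx_{*}}}\big((\theta_m),\dx''\big)\to0$, and set $\dx^\infty=(\bx_{*})\frown\dx''\in\gC_{\bx_{*}}(\Omega)$. By \eqref{eq:PiX} both $\Pi_{\dx^\infty}$ and $\Pi_{(\bx_{*},\theta_m)}$ equal $\udiffeo^0$ applied to $\R^{n-d_0}\times\langle\,\cdot\,\rangle\times(\,\cdot\,)$, with inner cones $\Pi^{\Omega_{\bx_{*}}}_{\dx''}$, resp.\ $\Pi^{\Omega_{\bx_{*}}}_{(\theta_m)}$, so $\dD^{\Omega_{\bx_{*}}}((\theta_m),\dx'')\to0$ upgrades to isomorphisms $N_m\to\Id_n$ with $N_m\Pi_{\dx^\infty}=\Pi_{(\bx_{*},\theta_m)}$ (the conjugation by $\udiffeo^0$ cancelling). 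Hence $\Pi_{\bx_m}=\rJ^{\bx_{*}}(\bv_m)N_m\,\Pi_{\dx^\infty}$ with $\rJ^{\bx_{*}}(\bv_m)N_m\to\Id_n$; rescaling the relating isomorphisms into $\mathsf{BGL}(n)$ (cones are scale invariant) gives $\dD((\bx_m),\dx^\infty)\to0$, which closes the induction and proves the theorem.

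The main obstacle is the bookkeeping that makes the conjugation by $\udiffeo^0$ disappear in the last estimate: namely the precise compatibility, across one turn of the recursion, between the minimal-reduced-cone normalisation built into \eqref{eq:PiX} and the distance of Definition~\ref{def:distchain} — equivalently, the fact that prepending a point to a chain is an order-preserving, $\dD$-Lipschitz operation, which is exactly what the reduction step in the first paragraph also needs. Everything else — the identity $\Pi_{\bx_m}=\rJ^{\bx_{*}}(\bv_m)\Pi_{(\bx_{*},\theta_m)}$ describing tangent cones at nearby points for an arbitrary $\bx_{*}\in\overline\Omega$ (not merely on a stratum), and the extraction/limit argument — is the routine machinery of Lemma~\ref{L:pointtoopen} and Proposition~\ref{prop:stata}.
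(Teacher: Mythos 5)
Your proof is correct and follows the same recursion-over-dimension strategy as the paper's, resting on the same geometric inputs (Lemma~\ref{L:pointtoopen}, the proof of Proposition~\ref{prop:stata}, and the $\dD$-Lipschitz behaviour of chain prepending, which is the paper's step a)). Where you depart is in the treatment of lower semicontinuity at $\bx_0$ (the paper's step b)). The paper shows $\dD\big((\bu),(\bx_0,\bx_1(\bu))\big)\to0$ and then passes ``by continuity'' to $F((\bu))\to F((\bx_0,\bx_1(\bu)))$ --- but the comparison chain $(\bx_0,\bx_1(\bu))$ moves with $\bu$, so pointwise continuity of $F$ only yields this along subsequences for which the comparison chain itself converges in $\dD$. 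Your Geometric Lemma supplies exactly that missing step: after extraction one obtains a single fixed chain $\dx^\infty\in\gC_{\bx_0}(\Omega)$ with $\dD((\bx_m),\dx^\infty)\to0$ (recursing on the section to handle the angular part), and continuity need only be invoked at $\dx^\infty$; the order relation $(\bx_0)\le\dx^\infty$ and monotonicity then close the argument. This is a tightening rather than a genuinely new route: you pay by isolating and proving the compactness lemma, and you gain that hypothesis (i) is used exactly as stated rather than through an implicit uniform version. The bookkeeping about $\udiffeo^0$ and formula \eqref{eq:PiX} that you flag at the end is the same verification the paper's step a) needs to show prepending is $1$-Lipschitz, so no new difficulty appears there.
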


\begin{proof}
The proof is recursive over the dimension $n$.

\emph{Initialization}.
$n=1$. Let $\Omega$ belong to $\gD(M)$ with $M=\R$ or $\dS^1$. Then $\Omega$ is an open interval $(\bc,\bc')$. The chains in $\gC(\Omega)$ are
\begin{itemize}
\item $\dx=(\bx_0)$ for $\bx_0\in(\bc,\bc')$ with $\Pi_{\dx}=\R$,
\item $\dx=(\bx_0)$ for $\bx_0=\bc$ and $\bx_0=\bc'$, with $\Pi_{\dx}=\R_+$ and $\R_{-}$, respectively,
\item $\dx=(\bx_0,\bx_1)$ for $\bx_0=\bc$ or $\bx_0=\bc'$, and $\bx_1=1$, with $\Pi_{\dx}=\R$.
\end{itemize}
The function $F$ is continuous on $\gC(\Omega)$. By definition of the distance $\dD$:
\[
   \dD\big((\bx),(\bc,1)\big) = \|\bx-\bc\|
   \quad\mbox{and}\quad
   \dD\big((\bx),(\bc',1)\big) = \|\bx-\bc'\|,
   \quad\forall\bx\in(\bc,\bc')\,.
\]
Therefore, as $\bx\to\bc$, with $\bx\neq\bc$, $F((\bx))$ tends to $F((\bc,1))$. By assumption $F((\bc,1))\ge F((\bc))$, and the same at the other end $\bc'$. This proves that $F$ is lower semicontinuous on $\overline\Omega=[\bc,\bc']$.

\emph{Recurrence}.
We assume that Theorem \ref{th:scichain} holds for any dimension $n^\star<n$. Let us prove it for the dimension $n$.

a) Let $\dx_0$ be a non-empty chain in $\gC(\Omega)$. Then $\Omega_{\dx_0}$ belongs to $\gD(\dS^{n^\star})$ for a $n^\star<n$. The chains $\dy\in\gC(\Omega_{\dx_0})$ correspond to the chains $(\dx_0,\dy)$ in $\gC(\Omega)$ and the corresponding tangent substructures $\Pi_\dy\in\gP_{n^\star}$  and $\Pi_{\dx_0,\dy}\in\gP_{n}$ are linked by a relation of the type, cf.\ \eqref{eq:PiX}
\[
   \Pi_{\dx_0,\dy} = \udiffeo^0 \big(\R^{n-d_0}\times\langle \bx_1\rangle \times  \ldots \times
   \Pi_\dy \big).
\]
Hence the distances $\dD\big((\dx_0,\dy),(\dx_0,\dy') \big)$ and $\dD\big(\dy,\dy' \big)$ can  be compared:
\begin{align*}
   \dD\big((\dx_0,\dy),(\dx_0,\dy') \big) &= 
   \frac12 \bigg\{ \min_{\substack {L\in\mathsf{BGL}(n) \\ L\Pi_{\dx_0,\dy} = \Pi_{\dx_0,\dy'}}}
   \|L-\Id_n\| +\min_{\substack {L\in\mathsf{BGL}(n) \\ L\Pi_{\dx_0,\dy'} = \Pi_{\dx_0,\dy}}}
   \|L-\Id_n\| \bigg\}
   \\
   &\le \frac12 \bigg\{
   \min_{\substack {L^\star\in\mathsf{BGL}(n^\star) \\ L^\star\Pi_{\dy} = \Pi_{\dy'}}}
   \|L^\star-\Id_{n^\star}\|+\min_{\substack {L^\star\in\mathsf{BGL}(n^\star) \\ L^\star\Pi_{\dy'} = \Pi_{\dy}}}
   \|L^\star-\Id_{n^\star}\| \bigg\}
   \\
   & \le \ \dD\big(\dy,\dy' \big).
\end{align*}
Let us define the function $F^\star$ on $\gC(\Omega_{\dx_0})$ by the partial application
\[
   F^\star(\dy) = F((\dx_0,\dy)),\quad \dy\in\gC(\Omega_{\dx_0}).
\] 
Since $F$ is continuous on $\gC(\Omega)$, the above inequality between distances proves that $F^\star$ is continuous on $\gC(\Omega_{\dx_0})$. Likewise the monotonicity property is obviously transported from $F$ to $F^\star$. Therefore the recurrence assumption provides the lower semicontinuity of $F^\star$ on $\overline\Omega_{\dx_0}$, hence of $\bx\mapsto F((\dx_0,\bx))$ on the same set.

b) It remains to prove that $\bx\mapsto F((\bx))$ is lower semicontinuous on $\overline\Omega$. Let $\bx_0\in\overline\Omega$. At this point we follow the proof of Proposition \ref{prop:stata}. For any $\bu\in\cU_{\bx_0}$, we define $\pi$, $\pi^\perp$ and $\bv$ like there and encounter the same two cases:
\begin{enumerate}
\item If $\pi^{\perp}(\bv)=0$, then $\Pi_{\bv}=\Pi_{\bx_{0}}$. Hence $\Pi_{\bu}=\rJ^{\bx_{0}}(\bv)(\Pi_{\bx_0})$. Since $\rJ^{\bx_{0}}(\bv)$ tends to $\Id_n$ as $\bv\to\bfz$, the distance $\dD((\bx_0),(\bu))$ tends to $0$ as $\bu$ tends to $\bx_0$. By the continuity assumption, $F((\bu))$ tends to $F((\bx_0))$.
\item If $\pi^{\perp}(\bv)\neq0$, let $\bx_1$ be the element of $\overline\Omega_{\bx_{0}}$ defined by 
$\bx_1=\pi^{\perp}(\bv)\,\|\pi^{\perp}(\bv)\|^{-1}$.
Let $\Pi_{\bx_1}\in \gP_{d-1}$ be the tangent cone to $\Omega_{\bx_{0}}$ at $\bx_1$. We find
\begin{equation*}
\Pi_{\bv}=\R^{n-d}\times\langle \pi^{\perp}(\bv) \rangle \times \Pi_{\bx_1} = \Pi_{\bx_0,\bx_1}\,.
\end{equation*}
Hence $\Pi_{\bu}=\rJ^{\bx_{0}}(\bv)(\Pi_{\bx_0,\bx_1})$. Like before, we deduce that the distance $\dD((\bx_0,\bx_1),(\bu))$ tends to $0$ as $\bu$ tends to $\bx_0$. By the continuity assumption, $F((\bu))$ tends to $F((\bx_0,\bx_1))$, which by the monotonicity assumption, is larger than $F((\bx_0))$.
\end{enumerate}
This ends the proof of the theorem.
\end{proof}

\subsection{Singular chains and admissible atlases}
\label{sss:schain_atl}
The aim of this section is to provide an overview of map-neighborhoods and Jacobian estimates in the framework of singular chains. In their generality, these facts are not needed for our study of magnetic Laplacians, which is restricted to dimension $n\le3$ for distinct reasons that we will explain later on. Nevertheless, full generality sheds some light on the recursive process present in the very definition of admissible atlases and in the domain of validity of estimates in Proposition \ref{P:estimatejacobian}.

\subsubsection*{Chains of atlases}
Denote by $\gX(\Omega)$ the set of reference points of an admissible atlas\index{Atlas!Admissible atlas} for a corner domain $\Omega$. The chain of atlases\index{Atlas!Chain of atlases} of a corner domain $\Omega$ is defined as follows:
\begin{enumerate}
\item[(0)] Start from the set $\gX(\Omega)$ of reference points $\bx_0\in\overline\Omega$, as in Definition \ref{def.atlas}.
\item For each $\bx_0\in\gX(\Omega)$, choose an admissible atlas of the section $\Omega_{\bx_0}\in\gD(\dS^{d_0-1})$, with set $\gX(\Omega_{\bx_0})$ of reference points $\bx_1\in\overline\Omega_{\bx_0}$.
\item For each $\bx_1\in\gX(\Omega_{\bx_0})$, choose an admissible atlas of the section $\Omega_{\bx_0,\bx_1}\in\gD(\dS^{d_1-1})$, with set $\gX(\Omega_{\bx_0,\bx_1})$ of reference points $\bx_2\in\overline\Omega_{\bx_0,\bx_1}$. And so on...
\end{enumerate}

\subsubsection*{Cylindrical coordinates}
The natural coordinates associated with chains of atlases are recursively defined cylindrical coordinates\index{Cylindrical coordinates}. Let $\bu_0\in\overline\Omega$.
\begin{enumerate}
\item If $\bu_0\not\in\gX(\Omega)$, pick $\bx_0\in\gX(\Omega)$ such that $\bu_0\in\cB^n(\bx_0,R_{\bx_0})$ ($n$-dimensional ball). Then define $\bv_0=\diffeo^{\bx_0}\bu_0$ and, if $d_0>0$, its cylindrical coordinates 
\[
   \qquad\pi_0(\bv_0)\in\R^{n-d_0},\ \ 
   r(\bv_0)=\|\bv_0-\pi_0(\bv_0)\|,\ \ \mbox{and}\ \  
   \bu_1=\frac{\bv_0-\pi_0(\bv_0)}{r(\bv_0)} 
   \in \overline\Omega_{\bx_{0}} \,. 
\]
If $d_0=0$, $\pi_0=\Id_n$, then stop.
\item  If $\bu_1\not\in\gX(\Omega_{\bx_0})$, pick $\bx_1\in\gX(\Omega_{\bx_0})$ such that $\bu_1\in\cB^{d_0}(\bx_1,R_{\bx_1})\cap \dS^{d_{0}-1}$. Then define $\bv_1=\diffeo^{\bx_0,\bx_1}\bu_1$ and, if $d_1>0$, its cylindrical coordinates 
\[
   \qquad\pi_1(\bv_1)\in\R^{d_0-1-d_1},\ \ 
   r(\bv_1)=\|\bv_1-\pi_1(\bv_1)\|,\ \ \mbox{and}\ \  
   \bu_2=\frac{\bv_1-\pi_1(\bv_1)}{r(\bv_1)} 
   \in \overline\Omega_{\bx_{0},\bx_1} \,. 
\]
If $d_1=0$, $\pi_1=\Id_n$, then stop. And so on...
\end{enumerate}
Let $\bv_{\pp_*}$ be the last element of the sequence $\bv_0,\bv_1,\ldots\,$. In any case $\pp_*\le n$.

\subsubsection*{Local maps}
The local maps\index{Local map} are recursively constructed using the natural coordinates associated with chains.
\begin{enumerate}
\item[(0)] If $\bu_0=\bx_0\in\gX(\Omega)$, use the local map $(\cU_{\bx_0},\diffeo^{\bx_0})$ and stop.
\item If $\bu_0\not\in\gX(\Omega)$, a local map $(\cU_{\bu_0},\diffeo^{\bu_0})$ is defined by the formulas hereafter. The map neighborhood $\cU_{\bu_0}$ can be chosen as $(\diffeo^{\bx_{0}})^{-1}(\cU_{\bv_0})$ with 
\[
   \qquad\cU_{\bv_0} = \cB^{n-d_0}(\pi_0(\bv_0),R_{\bx_0}) 
   \, \times\, r(\bv_0)\ \cU_{(1,\bu_1)},\quad
   \cU_{(1,\bu_1)} = \cB^{d_0}(\bu_1,\rho_1),\quad
   \cU_{\bu_1} =  \cU_{(1,\bu_1)}\cap \dS^{d_0-1}.
\]
The diffeomorphism $\diffeo^{\bu_0}$ is defined by $\rJ^{\bx_{0}}(\bv_0) \  (\diffeo^{\bv_0}\circ \diffeo^{\bx_{0}})$ with
\[
   \qquad \diffeo^{\bv_0} = \big( \rT_{\pi_0(\bv_0)} \,,\,
   \rN_{r(\bv_0)}^{-1}\circ \diffeo^{(1,\bu_1)} \circ \rN_{r(\bv_0)}\big)
   \quad\mbox{and}\quad
   \diffeo^{(1,\bu_1)} = (\rT_1\,,\, \diffeo^{\bu_1})\,,
\]
where $\rT_{\pi_0(\bv_0)}$ is the translation $\bv\mapsto\bv-\pi_0(\bv_0)$ in $\R^{n-d_0}$, and $\rT_1$ is the translation by $1$ for the radius in polar coordinates. If $\bu_1=\bx_1\in\gX(\Omega_{\bx_0})$, stop. 

\item  If $\bu_1\not\in\gX(\Omega_{\bx_0})$, a local map $(\cU_{\bu_1},\diffeo^{\bu_1})$ is defined like in step (1), replacing $\bx_0$ by $\bx_1$, $\bv_0$ by $\bv_1$, $\cB^{n-d_0}$ by $\cB^{d_0-1-d_1}$, $\pi_0(\bv_0)$ by $\pi_1(\bv_1)$, $\cB^{d_0}$ by $\cB^{d_1}$, and finally $\bu_1$ by $\bu_2$\ldots
\end{enumerate}

\subsubsection*{Estimates on Jacobian matrices}
Let $\bu_0\in\overline\Omega$. As explained in Remark \ref{rem:pto}, as soon as a polyhedral cone $\Gamma_{\bx_0,\ldots,\bx_{\pp}}$ is reached in the construction, the corresponding diffeomorphism $\diffeo^{(1,\bu_{\pp+1})}$ is chosen as a translation, so it is the same for $\diffeo^{\bu_{\pp+1}}$, and the norm of its differential is bounded. By recursion, this implies the estimate for the differential $\rK^{\bu_0}$ of $\rJ^{\bu_0}$
\begin{equation}
\label{eq:Kgen}
   \| \rK^{\bu_0} \| \le \frac{c(\Omega)}{r(\bv_0)\, \cdots \,r(\bv_{\pp-1})}
\end{equation}
with the convention that if $\pp-1<0$, the denominator is $1$.The same estimate is valid if $\bu_{\pp}\in\gX(\Omega_{\bx_0,\ldots,\bx_{\pp-1}})$ with the convention that $\Omega_{\bx_0,\ldots,\bx_{\pp-1}}=\Omega$ if $\pp-1<0$. Note that $\pp=0$ for any $\bu_0$ if the domain $\Omega$ is polyhedral. In turn, the domain of validity of estimates \eqref{eq:Kgen} is (at least) a ball centered at $\bu_0$ of radius 
\begin{equation}
\label{eq:rad}
   \rho(\bu_{0}) = r(\Omega)\, r(\bv_0)\, \cdots \,r(\bv_{\pp_*})\,.
\end{equation}

\section{3D domains}\label{SS:3D}
In this section we refine our analysis for the particular case of 3D domains. In each case we provide an exhaustive description of the possible singular chains. We also determine some consequences of Proposition \ref{P:estimatejacobian}.

\subsection{Faces, edges and corners}
\begin{definition}
Let $\Omega\in \gD(\R^3)$. We denote by $\gF$ the set of the connected components of $\gA_{1}(\Omega)$  (faces)\index{Face}, $\gE$ those of $\gA_{2}(\Omega)$ (edges)\index{Edge} and $\gV$ the finite set $\gA_{3}(\Omega)$ (corners)\index{Corner}\index{Corner!set of --\quad$\gV$}.\\
Let $\bx_{0}\in \gA_{d}(\Omega)$ with $d<3$, then $\Pi_{\bx_{0}}\in \ogP_{3}$. 
Let $\bx_{0}\in \gV$, we distinguish between two cases: 
\begin{enumerate}
\item If $\Pi_{\bx_{0}}\in \ogP_{3}$, then $\bx_{0}$ is a polyhedral corner. 
\item If $\Pi_{\bx_{0}}\notin \ogP_{3}$, then $\bx_{0}$ is a conical point\index{Conical point|textbf}. We denote by $\gVc$ \index{Conical point!set of --\quad$\gVc$\ } the set of conical points.
 \end{enumerate} 
\end{definition}

Combining Proposition \ref{P:estimatejacobian} and Remark \ref{R:2D}, we obtain local estimates for the Jacobian matrix and the metric issued from changes of variables pertaining to an admissible atlas:

\begin{corollary}
\label{C:expandJ}
Let $\Omega\in \gD(\R^{3})$ and $(\cU_{\bx},\diffeo^{\bx})_{\bx\in \overline{\Omega}}$ be an admissible atlas. 
Note that the set of its reference points $\gX$ contains $\gV$ (cf.\ Remark~\Ref{R:dgX}), thus in particular the set of conical corners $\gVc$.  There exists $c(\Omega)>0$ such that
\begin{enumerate}[(a)]
\item for all $\bx_{0}\in\gX$:
\begin{equation*}
\|\rJ^{\bx_{0}}-\Id_{3}\|_{L^{\infty}(\cB(\bfz,r))} +
\|\rG^{\bx_{0}}-\Id_{3}\|_{L^{\infty}(\cB(\bfz,r))} 
\leq r \ee c(\Omega), \quad \mbox{ for all }r\leq R_{\bx_{0}} \, ,
\end{equation*}

\item for all $\bu_{0}\in \overline{\Omega}\setminus\gX$:  
\begin{equation*}
\|\rJ^{\bu_{0}}-\Id_{n}\|_{L^{\infty}(\cB(\bfz,r))} +
\|\rG^{\bu_{0}}-\Id_{n}\|_{L^{\infty}(\cB(\bfz,r))} \leq \frac{r}{d_{\gVc}(\bu_{0})} \,c(\Omega), \quad \mbox{ for all }r\leq \rho(\bu_{0}) \, ,
\end{equation*}
with $\rho(\bu_{0})$ as in Proposition~\Ref{P:estimatejacobian} and 
\begin{equation}
\label{D:domega}
d_{\gVc}(\bu_{0})=
\left\{
\begin{aligned}
&1 \quad \mbox{if} \quad \gVc=\emptyset,
\\
&\dist(\bu_{0},\gVc) \quad \mbox{else}. 
\end{aligned}
\right.
\end{equation}
\end{enumerate}
\end{corollary}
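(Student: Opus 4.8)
The plan is to read both bounds directly off Proposition~\ref{P:estimatejacobian}, using the two features specific to dimension~$3$ — that the tangent cone at a point of $\gA_d(\Omega)$ with $d\le2$ is automatically polyhedral (as recorded just before the statement) and that $\gD(\dS^2)=\ogD(\dS^2)$ by Remark~\ref{R:2D} — and then to replace the point-dependent constants of that proposition by a single constant depending only on $\Omega$, which is licit since $\gX$ is finite. Set once and for all $D_\Omega:=\max(1,\diam\overline\Omega)$ and, with the constants $c(\bx_0)$ appearing below, $c(\Omega):=D_\Omega\,\max_{\bx_0\in\gX}c(\bx_0)$.

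Assertion~(a) is immediate: every $\bx_0\in\gX$ falls under case~\emph{(\ref{P:DLx0})} of Proposition~\ref{P:estimatejacobian}, which yields $c(\bx_0)$ with $\|\rJ^{\bx_0}-\Id_3\|_{L^\infty(\cB(\bfz,r))}+\|\rG^{\bx_0}-\Id_3\|_{L^\infty(\cB(\bfz,r))}\le r\,c(\bx_0)$ for all $r\le R_{\bx_0}$, and $c(\bx_0)\le c(\Omega)$ because $D_\Omega\ge1$.

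For assertion~(b), fix $\bu_0\in\overline\Omega\setminus\gX$. By condition~(2) of Definition~\ref{def.atlas} the half-radius balls $\cB(\bx_0,R_{\bx_0})$, $\bx_0\in\gX$, cover $\overline\Omega$, so I would pick one such $\bx_0$ with $\bu_0\in\cB(\bx_0,R_{\bx_0})$; note $\bu_0\neq\bx_0$. There are exactly two possibilities: since $\bx_0\in\gA_d(\Omega)$ with $d\le2$ forces $\Pi_{\bx_0}\in\ogP_3$, either $\Pi_{\bx_0}\in\ogP_3$, or $\bx_0$ is a corner with $\Pi_{\bx_0}\notin\ogP_3$, i.e.\ $\bx_0\in\gVc$. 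In the first case, case~\emph{(\ref{P:DLmetricpolyedre})} of Proposition~\ref{P:estimatejacobian} applies and gives, for $r\le\rho(\bu_0)$, a bound $r\,c(\bx_0)\le\frac{r}{d_{\gVc}(\bu_0)}\,D_\Omega\,c(\bx_0)\le\frac{r}{d_{\gVc}(\bu_0)}\,c(\Omega)$, the first step using $d_{\gVc}(\bu_0)\le D_\Omega$. In the second case, the section $\widehat\Omega_{\bx_0}=\Pi_{\bx_0}\cap\dS^2$ lies in $\gD(\dS^2)=\ogD(\dS^2)$, so case~\emph{(\ref{P:DLmetriccoin})} applies and gives, for $r\le\rho(\bu_0)$, a bound $\frac{r}{\|\bu_0-\bx_0\|}\,c(\bx_0)$; since $\bx_0\in\gVc$ we have $\|\bu_0-\bx_0\|\ge\dist(\bu_0,\gVc)=d_{\gVc}(\bu_0)$, hence this bound is at most $\frac{r}{d_{\gVc}(\bu_0)}\,c(\Omega)$. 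In both cases the radius $\rho(\bu_0)$ is precisely the one furnished by Proposition~\ref{P:estimatejacobian} for the chosen $\bx_0$, as required.

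The only step carrying real content is the reduction to two cases for reference points in 3D: one must combine the classification quoted above the statement (points of a stratum $\gA_d$ with $d\le2$ have polyhedral tangent cone) with $\gD(\dS^2)=\ogD(\dS^2)$ to guarantee that \emph{every} $\bx_0\in\gX$ is covered by case~\emph{(\ref{P:DLmetricpolyedre})} or by case~\emph{(\ref{P:DLmetriccoin})}. The remainder — passing to the maximum over the finite set $\gX$, absorbing the factor $D_\Omega$, and the comparison $\dist(\bu_0,\gVc)\le\|\bu_0-\bx_0\|$ valid when $\bx_0\in\gVc$ — is routine bookkeeping.
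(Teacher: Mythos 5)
Your proof is correct and follows exactly the route the paper intends (the paper simply says ``Combining Proposition~\ref{P:estimatejacobian} and Remark~\ref{R:2D}'' and leaves the bookkeeping implicit). You correctly identify the one substantive observation — that in dimension~$3$ every reference point $\bx_0\in\gX$ falls into case~\emph{(\ref{P:DLmetricpolyedre})} or case~\emph{(\ref{P:DLmetriccoin})} of Proposition~\ref{P:estimatejacobian}, since non-polyhedral tangent cones occur only at conical vertices and $\gD(\dS^2)=\ogD(\dS^2)$ — and the rest (finiteness of $\gX$, the comparison $d_{\gVc}(\bu_0)\le\|\bu_0-\bx_0\|$ when $\bx_0\in\gVc$, absorbing $\diam\overline\Omega$ into the constant) is handled cleanly.
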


\begin{remark}
Note that estimate {\em (b)} blows up when we get closer to a conical point without reaching it, while at any conical point $\bx_0\in\gVc$, we have the good estimate {\em (a)}. This will lead to distinct analyses depending on how far $\bx_0$ is from $\gVc$.
\end{remark}

\subsection{Singular chains of 3D corner domains}
\label{SS:Sc3D}
\begin{proposition}\label{prop.3Dpolychaine}
Let $\Omega\in \gD(\R^3)$. Then chains of length $\leq3$ are sufficient to describe all equivalence\index{Singular chain!Equivalence} classes of the set of chains\index{Singular chain} $\gC(\Omega)$. 
If moreover $\Omega\in \ogD(\R^3)$, chains of length $2$ are sufficient.
\end{proposition}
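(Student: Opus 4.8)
The plan is to run through the recursive construction of chains in dimension $3$, organised by the strictly decreasing sequence of reduced dimensions $d_0>d_1>\cdots$, and to check that every chain with more than three terms --- and, when $\Omega$ is polyhedral, every chain with more than two terms --- is equivalent, in the sense of Definition~\ref{def:chaineq}, to a strictly shorter one. The starting point is the crude length bound: a chain $\dx=(\bx_0,\ldots,\bx_\pp)$ satisfies $3\ge d_0>d_1>\cdots>d_\pp\ge 0$, so $\pp\le d_0\le 3$ and a chain has at most four terms. Examining the construction more closely, a four-term chain can occur only through the reduced dimensions $(d_0,d_1,d_2)=(3,2,1)$ followed by the ``$d_{\pp-1}=1$'' stopping rule, which forces $d_3=0$.

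The first reduction is then read off from \eqref{eq:PiX} together with the observation following it: as soon as $d_\pp=0$ --- which holds for \emph{every} four-term chain --- the last reduced cone is a full Euclidean space and $\Pi_\dx=\R^3$. Since a four-term chain has $d_0=3$, the section $\Omega_{\bx_0}\in\gD(\dS^2)$ is a nonempty open set; choosing any interior point $\bx_1$ of it gives $C_{\bx_0,\bx_1}=\R^2$, hence $d_1=0$ and again $\Pi_{(\bx_0,\bx_1)}=\R^3$. Therefore $\dx\equiv(\bx_0,\bx_1)$, a two-term chain, and chains of length $\le 3$ exhaust all equivalence classes.

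When $\Omega$ is polyhedral there is no conical point, so every vertex is a polyhedral corner, and it remains to reduce the three-term chains. Their reduced-dimension triples $(d_0,d_1,d_2)$ can only be $(2,1,0)$, $(3,1,0)$, $(3,2,0)$ or $(3,2,1)$. For the first three, $d_2=0$, hence $\Pi_\dx=\R^3$ by \eqref{eq:PiX}, and, exactly as above, $\dx\equiv(\bx_0,\bx_1)$ with $\bx_1$ an interior point of the section $\Omega_{\bx_0}$. The one genuinely geometric case is $(3,2,1)$: here $\bx_1$ is a corner of the section $\Omega_{\bx_0}\in\gD(\dS^2)$ where two of its sides meet, and $\bx_2$ is one of the two endpoints of the interval $\Omega_{\bx_0,\bx_1}$, that is, the direction along one of those sides, say $S$. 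Because the cone $\Pi_{\bx_0}$ is polyhedral, $\partial\Omega_{\bx_0}$ is contained in a finite union of great circles $P_\ell\cap\dS^2$ with $P_\ell$ a $2$-plane through the origin, so $S$ lies in one such plane $P$. I would then unwind \eqref{eq:PiX} in this configuration: for every interior point $\bx'_1$ of $S$ the tangent cone $C_{\bx_0,\bx'_1}$ is the half-plane of the tangent plane $T_{\bx'_1}\dS^2$ whose boundary line is $P\cap T_{\bx'_1}\dS^2$, and, since $\bx'_1\in P$, the plane $\langle\bx'_1\rangle\oplus(P\cap T_{\bx'_1}\dS^2)$ is exactly $P$; consequently $\Pi_{(\bx_0,\bx'_1)}$ --- and, by the same unwinding, $\Pi_\dx$ --- is one and the same half-space, independent of $\bx'_1\in S$. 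Hence $\dx\equiv(\bx_0,\bx'_1)$, a two-term chain, which settles the polyhedral case.

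The hard part is precisely this last identification: one must verify, by a direct computation with the tangent-cone construction of Lemma~\ref{L:pointtoopen} and the formula \eqref{eq:PiX}, that the tangent structure of the ``corner plus side-direction'' chain $(\bx_0,\bx_1,\bx_2)$ agrees with that of the plain ``side-point'' chain $(\bx_0,\bx'_1)$, and that --- this is where flatness of the faces of $\Pi_{\bx_0}$ is used --- it does not depend on $\bx'_1\in S$. At a conical vertex $S$ is curved, $C_{\bx_0,\bx'_1}$ and hence $\Pi_{(\bx_0,\bx'_1)}$ vary with $\bx'_1$, and $\Pi_{(\bx_0,\bx_1,\bx_2)}$ is only the limit of those half-spaces as $\bx'_1\to\bx_1$; a three-term chain is then genuinely necessary, which also shows that the bound $3$ is sharp for general corner domains.
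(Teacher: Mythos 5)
Your proof is correct and reaches the same conclusion by what is essentially the paper's route, namely an exhaustive run through the chains permitted by the strictly decreasing sequence $d_0>d_1>\dots>d_\pp$ in dimension~$3$. The paper's proof is organised geometrically (Description~\ref{description}, the tree of singular chains indexed by whether $\bx_0$ is an interior, face, edge, or vertex point), while yours is organised combinatorially by the admissible reduced-dimension tuples; these are the same enumeration in two different bookkeeping schemes. The one place where your write-up adds something the paper only asserts is the reduction of the $(3,2,1)$ triple at a polyhedral vertex: the paper simply lists the $2N+2$ equivalence classes at such a vertex without explaining why the chains $(\bx_0,\bx_1,\bx_2)$ (corner of $\Omega_{\bx_0}$ plus side direction) collapse onto the face-point chains $(\bx_0,\bx'_1)$, whereas you supply the explicit computation showing that both tangent structures are the half-space bounded by the supporting $2$-plane $P\supset S$, precisely because $S$ is a great-circle arc. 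That computation --- identifying $\langle\bx_1\rangle\oplus\langle\bx_2\rangle=\langle\bx'_1\rangle\oplus(P\cap T_{\bx'_1}\dS^2)=P$ and matching orientations --- is indeed the crux, and your final remark that this is exactly what fails at a conical vertex (where $\Pi_{(\bx_0,\bx'_1)}$ rotates with $\bx'_1$ and the three-term chain is a genuine limit) matches the paper's discussion of the irreducible case $(4)$-$(c)$-$(ii)$-$(B)$.
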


\begin{proof}
Let $\bx_{0}\in\overline\Omega$. In Description~\ref{description} we enumerate all chains starting from $\bx_{0}$ with their tangent substructures according as $\bx_{0}$ is an interior point, a face point, an edge point, or a vertex. 

\begin{figure}[ht]
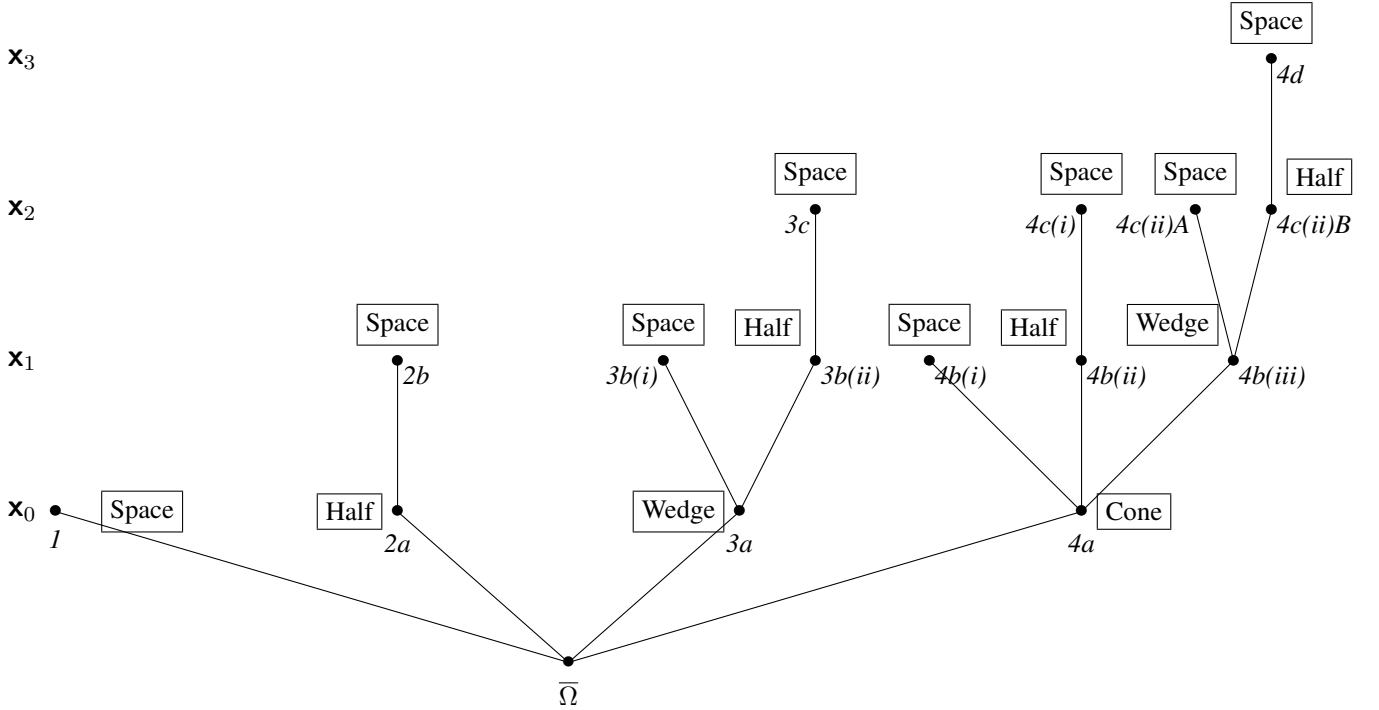

     \figinit{1mm}
     \figpt 1:(60, 0)
     \figpt 2:(60, 20)
     \figpt 3:(82.5, 0)
     \figpt 4:(70, 0)
     \figpt 5:(65, 0)
     \figvectP 10 [1,2]
     \figvectP 20 [1,3]
     \figpttra 21 := 2 / -3, 20/
     \figpttra 22 := 2 / -1, 20/
     \figpttra 23 := 2 /  1, 20/
     \figpttra 24 := 2 /  3, 20/
     \figvectP 30 [1,4]
     \figpttra 31 := 21 / 1, 10/
     \figpttra 32 := 22 / 1, 10/
     \figpttra 33 := 23 / 1, 10/
     \figpttra 34 := 33 / -1, 30/
     \figpttra 35 := 33 / 1, 30/
     \figpttra 36 := 24 / 1, 10/
     \figpttra 37 := 36 / -2, 30/
     \figpttra 38 := 36 / 2, 30/
     \figvectP 40 [1,5]
     \figpttra 48 := 31 / 1, 10/
     \figpttra 41 := 35 / 1, 10/
     \figpttra 42 := 36 / 1, 10/
     \figpttra 43 := 38 / 1, 10/
     \figpttra 44 := 43 / -1, 40/
     \figpttra 45 := 43 / 1, 40/
     \figpttra 58 := 48 / 1, 10/
     \figpttra 51 := 45 / 1, 10/

     \figdrawbegin{}
     \figdrawline [1,21]
     \figdrawline [1,22]
     \figdrawline [1,23]
     \figdrawline [1,24]
     \figdrawline [22,32]
     \figdrawline [23,34]
     \figdrawline [23,35]
     \figdrawline [24,37]
     \figdrawline [24,36]
     \figdrawline [24,38]
     \figdrawline [35,41]
     \figdrawline [36,42]
     \figdrawline [38,44]
     \figdrawline [38,45]
     \figdrawline [45,51]

     \figdrawend
     \figvisu{\figbox}{}{%
     \figwritew 21 :{$\bx_0$}(2.5)
     \figwritew 31 :{$\bx_1$}(2.5)
     \figwritew 48 :{$\bx_2$}(2.5)
     \figwritew 58 :{$\bx_3$}(2.5)
     {\footnotesize
     \figset write(mark=$\bullet$)
     \figwrites 1 :{$\overline\Omega$}(2.5)
     \figwrites 21 :{\emph{1}}(2.5)
     \figwritee 21 :{\framebox{Space}}(6)
     \figwrites 22 :{\emph{2a}}(3)
     \figwritew 22 :{\framebox{Half}}(2)
     \figwrites 23 :{\emph{3a}}(3)
     \figwritew 23 :{\framebox{Wedge}}(2)
     \figwrites 24 :{\emph{4a}}(3)
     \figwritee 24 :{\framebox{Cone}}(2)
     \figwriten 32 :{\framebox{Space}}(2)
     \figwritese 32 :{\emph{2b}}(1)
     \figwriten 34 :{\framebox{Space}}(2)
     \figwritesw 34 :{\emph{3b(i)}}(1)
     \figwritenw 35 :{\framebox{Half}}(2.8)
     \figwritese 35 :{\emph{3b(ii)}}(1)
     \figwriten 37 :{\framebox{Space}}(2)
     \figwritese 37 :{\emph{4b(i)}}(1)
     \figwritenw 36 :{\framebox{Half}}(2.8)
     \figwritese 36 :{\emph{4b(ii)}}(1)
     \figwritenw 38 :{\framebox{Wedge}}(2.8)
     \figwritese 38 :{\emph{4b(iii)}}(1)
     \figwriten 41 :{\framebox{Space}}(2)
     \figwritesw 41 :{\emph{3c}}(1)
     \figwriten 42 :{\framebox{Space}}(2)
     \figwritesw 42 :{\emph{4c(i)}}(1)
     \figwriten 44 :{\framebox{Space}}(2)
     \figwritesw 44 :{\emph{4c(ii)A}}(1)
     \figwritene 45 :{\framebox{Half}}(2.8)
     \figwritese 45 :{\emph{4c(ii)B}}(1)
     \figwriten 51 :{\framebox{Space}}(2)
     \figwritese 51 :{\emph{4d}}(1)
     }
     }
     \centerline{\box\figbox}
    \caption{The tree of singular chains with numbering according to Description \ref{description} (Half is for half-space)}
\label{F:tree}
\end{figure}

\begin{desc}(see examples \ref{E:scains3} for an illustration)\label{description}\ 
\begin{enumerate}
\item Interior point $\bx_0\in\Omega$. Only one chain in $\gC_{\bx_0}(\Omega)$: $\dx=(\bx_0)$.  $\Pi_\dx\equiv\R^3$.
\smallskip

\item  
Let $\bx_0$ belong to a face. There are two chains in $\gC_{\bx_0}(\Omega)$:
\begin{enumerate}
\item $\dx = (\bx_0)$ with $\Pi_\dx=\Pi_{\bx_0}$, the tangent half-space. $\Pi_\dx\equiv\R^2\times\R_+$.
\item $\dx = (\bx_0,\bx_1)$ where $\bx_1=1$ is the only element in $\R_+\cap\dS^0$. 
Thus $\Pi_\dx=\R^3$. 
\end{enumerate}
\smallskip

\item 
Let $\bx_0$ belong to an edge. 
There are three possible lengths for chains in $\gC_{\bx_0}(\Omega)$:
\begin{enumerate}
\item $\dx = (\bx_0)$ with $\Pi_\dx=\Pi_{\bx_0}$, the tangent wedge (which is not a half-space). The reduced cone of $\Pi_{\bx_0}$ is a sector $\Gamma_{\bx_0}$ the section of which is an interval $\cI_{\bx_0}\subset\dS^1$.
\item $\dx = (\bx_0,\bx_1)$ where $\bx_1\in \overline \cI_{\bx_0}$. 
\begin{enumerate}
\item If $\bx_1$ is interior to $\cI_{\bx_0}$, $\Pi_\dx=\R^3$. No further chain.
\item If $\bx_1$ is a boundary point of $\cI_{\bx_0}$, $\Pi_\dx$ is a half-space, containing one of the two faces $\partial^\pm\Pi_{\bx_0}$ of the wedge $\Pi_{\bx_0}$.
\end{enumerate}
\item $\dx=(\bx_0,\bx_1,\bx_2)$ where $\bx_1\in \partial \cI_{\bx_0}$, $\bx_2=1$ and $\Pi_\dx=\R^3$.
\end{enumerate}
\smallskip
\pagebreak[3]

\item 
Let $\bx_{0}$ be a corner. 
There are four possible lengths for chains in $\gC_{\bx_0}(\Omega)$:
\begin{enumerate}
\item $\dx = (\bx_0)$ with $\Pi_\dx=\Pi_{\bx_0}$, the tangent cone (which is not a wedge). It coincides with its reduced cone. Its section $\Omega_{\bx_0}$ is a polygonal domain in $\dS^2$.
\item $\dx = (\bx_0,\bx_1)$ where $\bx_1\in \overline \Omega_{\bx_0}$. 
\begin{enumerate}
\item If $\bx_1$ is interior to $\Omega_{\bx_0}$, $\Pi_\dx=\R^3$. No further chain.
\item If $\bx_1$ is in a side of $\Omega_{\bx_0}$, $\Pi_\dx$ is a half-space.
\item If $\bx_1$ is a corner of $\Omega_{\bx_0}$, $\Pi_\dx$ is a wedge. Its edge contains one of the edges of $\Pi_{\bx_0}$.
\end{enumerate}
\item $\dx=(\bx_0,\bx_1,\bx_2)$ where $\bx_1\in\partial\Omega_{\bx_0}$ 
\begin{enumerate}
\item If $\bx_1$ is in a side of $\Omega_{\bx_0}$, $\bx_2=1$, $\Pi_\dx=\R^3$. No further chain.
\item If $\bx_1$ is a corner of $\Omega_{\bx_0}$, $C_{\bx_0,\bx_1}$ is plane sector, and $\bx_2\in\overline\cI_{\bx_0,\bx_1}$ where the interval $\cI_{\bx_0,\bx_1}$ is its section.
\begin{enumerate}
\item If $\bx_2$ is an interior point of $\cI_{\bx_0,\bx_1}$, then $\Pi_\dx=\R^3$. 
\item If $\bx_2$ is a boundary point of $\cI_{\bx_0,\bx_1}$, then $\Pi_\dx$ is a half-space.
\end{enumerate}
\end{enumerate}
\item $\dx=(\bx_0,\bx_1,\bx_2,\bx_3)$ where $\bx_1$ is a corner of $\Omega_{\bx_0}$, $\bx_2\in\partial\cI_{\bx_0,\bx_1}$ and $\bx_3=1$. Then $\Pi_\dx=\R^3$.
\end{enumerate}
\end{enumerate}
\end{desc}

As a consequence of this description we may identify equivalence classes in $\gC_{\bx_0}(\Omega)$. It remains to consider edge points and corners:

--- If $\bx_0$ is an edge point, there are 4 equivalence classes: $\dx=(\bx_0)$,
$\dx=(\bx_0,\bx_1^\pm)$ with $\bx_1^-,\bx_1^+$ the ends of $\cI_{\bx_0}$,
and $\dx=(\bx_0,\bx_1^\circ)$ with $\bx_1^\circ$ any chosen point in $\cI_{\bx_0}$.

--- If $\bx_0$ is a polyhedral corner, the set of the equivalence classes of $\gC_{\bx_0}(\Omega)$ is finite according to the following description. 
Let $\bx_1^j$, $1\le j\le N$, be the corners of $\Omega_{\bx_0}$, and $\bff^j_1$, $1\le j\le N$, be its sides (notice that there are as many corners as sides). There are $2N+2$ equivalence classes:
$\dx=(\bx_0)$ (vertex),
$\dx=(\bx_0,\bx_1^j)$ with $1\le j\le N$ (edge-point limit),
$\dx=(\bx_0,\bx_1^{\circ,j})$ with $\bx_1^{\circ,j}$ any chosen point inside $\bff^j_1$ (face-point limit), and
$\dx=(\bx_0,\bx_1^\circ)$ with $\bx_1^\circ$ any chosen point in $\Omega_{\bx_0}$ (interior point limit).

--- If $\bx_0$ belongs to $\gV^\circ$, the set of chains which are face-point limits is infinite. Moreover, chains $(\bx_0,\bx_1,\bx_2)$ obtained by the general above procedure (4)-(c)-(ii)-(B) can be irreducible: Such chains represent the limit of a conical face close to an edge.
\end{proof}

\begin{example}
\label{E:scains3}
We link some cases of the enumeration in Description \ref{description} with the corner domains in Figure \ref{F1}: 
\begin{itemize}
\item Cases 1 and 2 occur for all interior points and all points inside a face (regular points of the boundary), respectively. 
\item Case 3 (points inside an edge) occurs for domains in Figure \ref{F1B} and \ref{F1C}. Together with cases 1 and 2, case 3 is sufficient to describe all singular chains of domains in Figure \ref{F1B}.
\item Case 4 (corner point) occurs in Figures \ref{F1A} and \ref{F1C}. For figure \ref{F1A}, cases 4(a), 4(bi) and 4(bii) are enough to describe all singular chains issued from a corner, whereas for Figure \ref{F1C}, all subcases of case 4 are needed.
\end{itemize}
\end{example}

\begin{remark}
\label{rem:str3}
The exhaustion of chains done in Description \ref{description} allows to figure out what are the connected components of the set of chains $\gC(\Omega)$ for the distance $\dD$:
\begin{itemize}
\item The corner chains  $\dx=(\bx_0)$ are isolated from each other.
\item Let $\be$ be an edge. The chains $\dx=(\bx_0)$ with $\bx_0\in\be$ are completed by suitable corner chains $\dx=(\bx_0,\bx_1)$ such that $\bx_0\in\partial\be$ and $\Pi_\dx$ is a wedge. The resulting set endowed with distance $\dD$ is homoemorphic to $\overline\be$ with the standard distance (except if $\be$ has only one end, in analogy with the shape of the boundary presented in Figure \ref{fig.dim2}). Convex and nonconvex edges are at infinite distance from each other.
\item If $\Omega$ is polyhedral, we have something similar for the faces: With $\bff$ a chosen face, the chains $\dx=(\bx_0)$ with $\bx_0\in\bff$ are completed by suitable edge chains $\dx=(\bx_0,\bx_1)$ and suitable corner chains $\dx=(\bx_0,\bx_1,\bx_2)$ such that $\Pi_\dx$ is a half-space. The resulting set endowed with distance $\dD$ is homoemorphic to $\overline\bff$ with the standard distance (with a few exceptions as above).
\item If $\Omega$ is not polyhedral, and if the face $\bff$ contains a conical point $\bx_0$, the contribution of the corner chains $\dx=(\bx_0,\bx_1,\bx_2)$ does not reduce to a single chain with a single half-space $\Pi_\dx$. We have now a {\em blow up} of the boundary of $\bff$ near $\bx_0$. Distinct faces are at finite nonzero distance from each other in general (the exception is when two faces share a corner and a tangent plane passing by this corner). 
\item Finally, $\overline\Omega$ is homoemorphic to the union of all chains $\dx$ starting with any $\bx_0\in\overline\Omega$ and such that $\Pi_\dx=\R^3$.
\end{itemize}
\end{remark}

\chapter{Magnetic Laplacians and their tangent operators}
\label{sec:chgvar}
Let $\bA$ be  a magnetic potential associated with the magnetic field $\bB$ on a corner domain $\Omega\in\gD(\R^3)$. We recall that $\Omega$ is assumed to be simply connected, and that the corresponding magnetic Laplacian\index{Magnetic Laplacian} is \index{Magnetic Laplacian!$\OP_{h}(\bA,\Omega)$}
$
   \OP_{h}(\bA,\Omega) = (-ih\nabla+\bA)^2
$. 
At each point $\bx_0\in\overline\Omega$ is associated a local map\index{Local map} $(\cU_{\bx_0},\diffeo^{\bx_0})$ and a tangent cone\index{Tangent cone} $\Pi_{\bx_0}$, cf.\ \eqref{eq:diffeo}. We will associate a tangent magnetic potential\index{Tangent magnetic potential} to $\Pi_{\bx_0}$ and provide formulas and estimates for the operator transformed by the local map $(\cU_{\bx_0},\diffeo^{\bx_0})$ from the magnetic Laplacian $\OP_{h}(\bA,\Omega)$.

\section{Change of variables}
\label{SS:CV}
Let $\Omega\in\gD(\R^3)$. We consider a magnetic potential $\bA\in \sC^{1}(\overline\Omega)$. Let $\bx_{0}\in\overline\Omega$. 
Let us recall that with $\bx_0$ are associated the local smooth diffeomorphism $\diffeo^{\bx_0}$ \eqref{eq:diffeo}, the Jacobian\index{Jacobian} matrix $\rJ^{\bx_0}$ \eqref{eq.jacob} of the inverse of $\diffeo^{\bx_0}$ and the associated metric\index{Metric} $\rG^{\bx_0}$ \eqref{eq:metric}. 
According to formulas \eqref{E:Atilde}--\eqref{D:tbB}, we introduce the magnetic potential $\bA^{\bx_0}$ and magnetic field $\bB^{\bx_0}=\curl\bA^{\bx_0}$ transformed by $\diffeo^{\bx_0}$ in $\cV_{\bx_0}\cap\Pi_{\bx_0}$
\begin{equation}
\label{E:ABtilde}
   \bA^{\bx_0}:=(\rJ^{\bx_0})^{\top} \big((\bA-\bA(\bx_0))\circ (\diffeo^{\bx_0})^{-1} \big) \quad 
   \mbox{and}\quad 
   \bB^{\bx_0}:=|\det \rJ^{\bx_0}|\,(\rJ^{\bx_0})^{-1} \big(\bB\circ (\diffeo^{\bx_0})^{-1} \big) \ . 
\end{equation}
We also introduce the phase shift \index{Phase shift}
\begin{equation}
\label{E:phase}
   \zeta^{\bx_0}_h(\bx) = \re^{i\langle \bA(\bx_0),\ee\bx\rangle/h},\quad \bx\in\Omega,
   \index{Phase shift!$\zeta^{\bx_0}_h(\bx)$}
\end{equation}
so that there holds for any $f$ in $H^1(\Omega)$
\begin{equation}
\label{eq:shift}
   q_{h}[\bA,\Omega](f)=q_{h}[\bA-\bA(\bx_0),\Omega](\zeta^{\bx_0}_h f).
\end{equation}
To $f\in H^1(\Omega)$ with support in $\cU_{x_0}$ we associate the function $\psi$
\begin{equation}
\label{E:psi}
   \psi:= (\zeta^{\bx_0}_h f)\circ (\diffeo^{\bx_0})^{-1},
\end{equation} 
defined in $\Pi_{\bx_0}$, with support in $\cV_{\bx_0}$. 
For any $h>0$ Lemma \ref{L:chgvar} provides the identities
\begin{equation}
\label{E:chgGx0}
   q_{h}[\bA,\Omega](f)
   = q_{h}[\bA^{\bx_{0}},\Pi_{\bx_0},\rG^{\bx_0}](\psi) 
   \quad \mbox{and} \quad 
   \| f\|_{L^2(\Omega)}=\| \psi \|_{L^2_{\rG^{\bx_0}}(\Pi_{\bx_0})}\,,
\end{equation}
where the quadratic forms $q_{h}[\bA,\Omega]$ and $q_{h}[\bA^{\bx_0},\Pi_{\bx_0},\rG^{\bx_0}]$ are defined in \eqref{D:fq}  and \eqref{D:fqG}, respectively. 
Using the Rayleigh quotient\index{Rayleigh quotient}, we immediately deduce
\begin{equation}\label{E:QRmetr}
\QR_{h}[\bA,\Omega](f)
   = \QR_{h}[\bA^{\bx_{0}},\Pi_{\bx_0},\rG^{\bx_0}](\psi).
\end{equation}

\section{Model and tangent operators}
\begin{definition}\label{def.model}
We call {\emph{model operator}}\index{Model operator} any magnetic Laplacian
$\OP(\bA,\Pi)$ where $\Pi\in\gP_{3}$ and $\bA$ is a linear potential associated with the constant magnetic field $\bB$.  
We denote by $\En(\bB \ee,\Pi)$ the bottom of the spectrum (ground state energy) of $\OP(\bA \ee,\Pi)$ and by $\lambda_{\rm ess}(\bB,\Pi)$\index{Essential spectrum!$\lambda_{\rm ess}(\bB,\Pi)$} the bottom of its essential spectrum.\index{Essential spectrum} 
\end{definition}

Let $\Omega\in\gD(\R^3)$ and $\bA\in \sC^{1}(\overline{\Omega})$. 
For each $\bx_0\in\overline\Omega$ we set
\begin{equation}
\label{eq:tangent0}
   \bB_{\bx_0} = \bB(\bx_0) \quad\mbox{and}\quad
   \bA_{\bx_0}(\bv) = \nabla\bA(\bx_0)\cdot\bv,\ \ \bv\in\Pi_{\bx_0},
\end{equation}
so that $\bB_{\bx_0}$ is the magnetic field frozen at $\bx_0$ and $\bA_{\bx_0}$\index{Tangent magnetic potential! $\bA_{\bx_0}$} the linear part\footnote{In \eqref{eq:tangent0}, $\nabla\bA$ is the $3\times3$ matrix with entries $\partial_kA_j$, $1\le j,k\le 3$, and $\cdot\,\bv$ denotes the multiplication by the column vector $\bv=(v_1,v_2,v_3)^\top$.} of the potential at $\bx_0$. 

By extension, for each singular chain $\dx=(\bx_0,\bx_1,\ldots,\bx_\pp)\in\gC(\Omega)$ we set\index{Tangent magnetic potential!$\bA_\dx$}
\begin{equation}
\label{eq:tangent}
   \bB_\dx = \bB(\bx_0) \quad\mbox{and}\quad
   \bA_\dx(\bx) = \nabla\bA(\bx_0)\cdot\bx,\ \ \bx\in\Pi_\dx.
\end{equation}
We have obviously
\[
   \curl\bA_\dx = \bB_\dx\,.
\]

\begin{definition}\label{def.tgt}
Let $\Omega\in\gD(\R^3)$ and $\bA\in \sC^{1}(\overline{\Omega})$.
Let $\dx\in\gC(\Omega)$ be a singular chain of $\Omega$. The model operator  $\OP(\bA_\dx \ee,\Pi_\dx)$\index{Tangent operator!$\OP(\bA_\dx \ee,\Pi_\dx)$} is called a \emph{tangent operator}\index{Tangent operator}. 
\end{definition}

\begin{remark}\label{rem:chainOP}
The notion of equivalence classes between singular chains as introduced in Definition \ref{def:chaineq} is sufficient for the analysis of operators $\OP_{h}(\bA,\Omega)$ in the case of magnetic fields $\bB$ smooth in Cartesian variables. Should $\bB$ be smooth in polar variables only, the whole hierarchy of singular chains would be needed.
\end{remark}

The potential $\bA_{\bx_0}$ and the field  $\bB_{\bx_0}$ are connected to the potential $\bA^{\bx_0}$ and field $\bB^{\bx_0}$ \eqref{E:ABtilde} obtained through the local map:\index{Local map} Since $\rd \diffeo^{\bx_{0}}(\bx_{0})=\Id_{3}$ by definition, we have
\begin{equation}
\label{eq:field0}
\bB^{\bx_0}(\bfz)=\bB_{\bx_{0}}\,. 
\end{equation}
Likewise, let $\bA^{\bx_0}_{\bfz}$\index{Linearized magnetic potential!$\bA^{\bx_0}_{\bfz}$} be the linear part of $\bA^{\bx_0}$ at the vertex $\bfz$ of $\Pi_{x_0}$. 
Then:
\begin{equation}
\label{eq:pot0}
   \bA^{\bx_0}(\bfz)=0 \quad\mbox{and}\quad
   \bA^{\bx_0}_{\bfz} = \bA_{\bx_{0}}.
\end{equation} 

Local and minimum energies are introduced as follows.

\begin{definition}
\label{not:En}
Let $\Omega\in\gD(\R^3)$ and $\bB\in \sC^0(\overline\Omega)$.
The application  $\bx\mapsto\En(\bB_\bx \ee,\Pi_\bx)$\index{Local ground energy!$\En(\bB_\bx \ee,\Pi_\bx)$} is called \emph{local ground energy}\index{Local ground energy} (with $\En(\bB,\Pi) $ introduced in Definition~\ref{def.model}).
We define the \emph{lowest local energy}\index{Lowest local energy} of $\bB$ on $\overline\Omega$ by
\begin{equation}\index{Lowest local energy!$\sE(\bB \ee,\Omega)$}
\label{eq:sbis}
   \sE(\bB \ee,\Omega) := \inf_{\bx\in\overline\Omega} \En(\bB_\bx \ee,\Pi_\bx).
 \end{equation}
\\[-2\baselineskip]
\mbox{} \hfill$\blacksquare$
\end{definition}
The relations with singular chains and the question whether $\sE(\bB \ee,\Omega)$ is a minimum are addressed later on Chapter \ref{sec:sci}.

\section{Linearization}\label{SS:lin}
Starting from the identity \eqref{E:chgGx0} $q_{h}[\bA,\Omega](f)
= q_{h}[\bA^{\bx_{0}},\Pi_{\bx_0},\rG^{\bx_{0}}](\psi)$, we want to compare $q_{h}[\bA^{\bx_{0}},\Pi_{\bx_0},\rG^{\bx_{0}}](\psi)$ with the term $q_{h}[\bA^{\bx_{0}}_\bfz,\Pi_{\bx_0}](\psi) = q_{h}[\bA_{\bx_{0}},\Pi_{\bx_0}](\psi)$ obtained by linearizing the potential and the metric. 

\subsection{Change of metric}
Here we compare $L^2$ norm and quadratic forms associated with the metric $\rG^{\bx_0}$, with the corresponding quantities associated with the trivial metric $\Id_3$. Like in Proposition \ref{P:estimatejacobian} and Corollary \ref{C:expandJ}, and for the same reasons, we have essentially two distinct cases, resulting into a uniform approximation in a polyhedral domain, and a controlled blow up close to conical points when they are present.

\begin{lemma}
\label{L:chgvarloc}
Let $\Omega\in \gD(\R^{3})$ and $(\cU_{\bx},\diffeo^{\bx})_{\bx\in \overline{\Omega}}$ be an admissible atlas\index{Atlas!Admissible atlas}. We recall that the set of reference points $\gX$ contains the set of conical vertices $\gVc$. 
Let $\bA\in \sC^{2}(\overline{\Omega})$ be a magnetic potential and, for $\bx_{0}\in\overline\Omega$, let $\bA^{\bx_0}$ be the potential \eqref{E:ABtilde} produced by the local map $\diffeo^{\bx_{0}}$. 
There exists $c(\Omega)$ such that 
\begin{enumerate}[(a)]
\item \label{It:cvloc} for all $\bx_{0}\in\gX$ and $r\in(0,R_{\bx_{0}})$, for all $\psi\in H^{1}(\Pi_{\bx_{0}})$ satisfying $\supp(\psi)\subset \cB(\bfz,r)$, we have:
\begin{equation}
\label{eq:psiG1}
   \begin{gathered}
   \big|q_{h}[\bA^{\bx_0},\Pi_{\bx_{0}},\rG^{\bx_0}](\psi) -
    q_{h}[\bA^{\bx_0},\Pi_{\bx_{0}}](\psi) \big|   \leq 
    c(\Omega) \,r \, q_{h}[\bA^{\bx_0},\Pi_{\bx_{0}},\rG^{\bx_0}](\psi) , \\[0.5ex]
    \big| \| \psi\|_{L^2_{\rG^{\bx_{0}}}(\Pi_{\bx_{0}})} -
    \|\psi \|_{L^2(\Pi_{\bx_{0}})} \big| \leq 
    c(\Omega)\,  r \,\| \psi\|_{L^2(\Pi_{\bx_{0}})}\,.
\end{gathered}
\end{equation}
\item \label{It:cvlocbis} for all $\bu_0\in \overline{\Omega}\setminus \gX$ and $r\in(0, \rho(\bu_{0}))$ (with $\rho(\bu_{0})$ given by Proposition~\Ref{P:estimatejacobian}), for all $\psi\in H^{1}(\Pi_{\bu_0})$ satisfying $\supp(\psi)\subset \cB(\bfz,r)$, we have:
\begin{equation}
\label{eq:psiG2}
   \begin{gathered}
   \big|q_{h}[\bA^{\bu_0},\Pi_{\bu_{0}},\rG^{\bu_0}](\psi) -
    q_{h}[\bA^{\bu_0},\Pi_{\bu_{0}}](\psi) \big|   \leq 
    c(\Omega) \,\frac{r}{d_{\gVc}(\bu_{0})} \, 
    q_{h}[\bA^{\bu_0},\Pi_{\bu_{0}},\rG^{\bu_0}](\psi) , \\[0.5ex]
    \big| \| \psi\|_{L^2_{\rG^{\bu_{0}}}(\Pi_{\bu_{0}})} -
    \|\psi \|_{L^2(\Pi_{\bu_{0}})} \big| \leq 
    c(\Omega)\,  \frac{r}{d_{\gVc}(\bu_{0})} \,\| \psi\|_{L^2(\Pi_{\bu_{0}})}\,,
\end{gathered}
\end{equation}
with $d_{\gVc}$ defined in \eqref{D:domega}.
\end{enumerate}
\end{lemma}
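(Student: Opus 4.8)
The plan is to reduce the estimate on the quadratic form with metric to a pointwise bound on $\rG^{\bx_0}-\Id_3$, which is exactly what Corollary~\ref{C:expandJ} provides. First I would write out both quadratic forms explicitly. By definition \eqref{D:fqG},
\[
   q_{h}[\bA^{\bx_0},\Pi_{\bx_0},\rG^{\bx_0}](\psi) =
   \int_{\Pi_{\bx_0}} (-ih\nabla+\bA^{\bx_0})\psi \cdot \rG^{\bx_0}
   \big(\overline{(-ih\nabla+\bA^{\bx_0})\psi}\big)\, |\rG^{\bx_0}|^{-1/2}\,\rd\bv,
\]
whereas $q_{h}[\bA^{\bx_0},\Pi_{\bx_0}](\psi)$ is the same integral with $\rG^{\bx_0}$ replaced by $\Id_3$ and $|\rG^{\bx_0}|^{-1/2}$ by $1$. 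Writing $g:=(-ih\nabla+\bA^{\bx_0})\psi$, the difference is the integral of $g\cdot(\rG^{\bx_0}|\rG^{\bx_0}|^{-1/2}-\Id_3)\bar g$. So the key algebraic point is that the matrix-valued function $M:=\rG^{\bx_0}|\rG^{\bx_0}|^{-1/2}-\Id_3$ satisfies $\|M(\bv)\|\le C\,\|\rG^{\bx_0}(\bv)-\Id_3\|$ whenever $\|\rG^{\bx_0}(\bv)-\Id_3\|$ is small (say $\le 1/2$): indeed $|\rG^{\bx_0}|=1+O(\|\rG^{\bx_0}-\Id_3\|)$ and hence $|\rG^{\bx_0}|^{-1/2}=1+O(\|\rG^{\bx_0}-\Id_3\|)$, so $M$ is a smooth function of $\rG^{\bx_0}-\Id_3$ vanishing at $0$. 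This is the only place a (routine) Taylor/continuity argument on the determinant is needed.

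Next, on the support of $\psi$, which lies in $\cB(\bfz,r)$ with $r\le R_{\bx_0}$ (case (a)) or $r\le\rho(\bu_0)$ (case (b)), Corollary~\ref{C:expandJ} gives $\|\rG^{\bx_0}(\bv)-\Id_3\|\le r\,c(\Omega)$ respectively $\|\rG^{\bu_0}(\bv)-\Id_3\|\le \frac{r}{d_{\gVc}(\bu_0)}c(\Omega)$ for $\|\bv\|\le r$. Shrinking $R_{\bx_0}$ (resp.\ using that $\rho(\bu_0)$ is proportional to $d_{\gVc}(\bu_0)$ up to the universal radii) if necessary so that the right-hand side is $\le 1/2$, we get $\|M(\bv)\|\le C'\,r\,c(\Omega)$, resp.\ $\le C'\,\frac{r}{d_{\gVc}(\bu_0)}c(\Omega)$, uniformly on $\supp\psi$. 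Then
\[
   \big|q_{h}[\bA^{\bx_0},\Pi_{\bx_0},\rG^{\bx_0}](\psi)-q_{h}[\bA^{\bx_0},\Pi_{\bx_0}](\psi)\big|
   \le \sup_{\supp\psi}\|M\|\;\int_{\Pi_{\bx_0}}|g|^2\,\rd\bv
   = \sup_{\supp\psi}\|M\|\;q_{h}[\bA^{\bx_0},\Pi_{\bx_0}](\psi).
\]
To land the inequality in the stated form, with $q_{h}[\bA^{\bx_0},\Pi_{\bx_0},\rG^{\bx_0}](\psi)$ on the right, I would use the reverse comparison: since $|\rG^{\bx_0}|^{-1/2}$ and the eigenvalues of $\rG^{\bx_0}$ are within $O(r\,c(\Omega))$ of $1$, the densities $|g|^2$ and $g\cdot\rG^{\bx_0}\bar g\,|\rG^{\bx_0}|^{-1/2}$ are comparable with ratio $1+O(r\,c(\Omega))$, so $q_{h}[\bA^{\bx_0},\Pi_{\bx_0}](\psi)\le 2\,q_{h}[\bA^{\bx_0},\Pi_{\bx_0},\rG^{\bx_0}](\psi)$ after shrinking constants; absorbing the factor $2$ into $c(\Omega)$ yields \eqref{eq:psiG1} (resp.\ \eqref{eq:psiG2}). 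The $L^2$-norm bound is the same argument applied to the scalar weight: $\|\psi\|_{L^2_{\rG^{\bx_0}}}^2-\|\psi\|_{L^2}^2=\int(|\rG^{\bx_0}|^{-1/2}-1)|\psi|^2$, and $||\rG^{\bx_0}|^{-1/2}-1|\le C\|\rG^{\bx_0}-\Id_3\|\le C\,r\,c(\Omega)$ on $\supp\psi$; then $\big|\,\|\psi\|_{L^2_{\rG^{\bx_0}}}-\|\psi\|_{L^2}\,\big|\le\big|\,\|\psi\|_{L^2_{\rG^{\bx_0}}}^2-\|\psi\|_{L^2}^2\,\big|/\big(\|\psi\|_{L^2_{\rG^{\bx_0}}}+\|\psi\|_{L^2}\big)\le C\,r\,c(\Omega)\,\|\psi\|_{L^2}$.

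The only genuine subtlety — not so much an obstacle as a bookkeeping point — is case (b): the bound from Corollary~\ref{C:expandJ}(b) has the factor $r/d_{\gVc}(\bu_0)$, and to make it $\le 1/2$ one needs $r\le \rho(\bu_0)$ together with the fact that $\rho(\bu_0)=\tfrac13\rho(\widehat\bu_1)\dist(\bu_0,\gX)$ (or the analogue with $\|\bu_0-\bx_0\|$) is itself bounded by a constant multiple of $d_{\gVc}(\bu_0)$, since $\gVc\subset\gX$ so $\dist(\bu_0,\gX)\le\dist(\bu_0,\gVc)=d_{\gVc}(\bu_0)$ when $\gVc\ne\emptyset$. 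Thus $r/d_{\gVc}(\bu_0)\le \rho(\widehat\bu_1)/3\le 1/3$ automatically, and the smallness needed to control $|\rG^{\bu_0}|^{-1/2}$ comes for free. I would state this comparison explicitly at the start of the proof of part (b) and then the two cases run in parallel. Everything else is the elementary matrix estimate and the two-sided comparison of densities sketched above, which I would not write out in full.
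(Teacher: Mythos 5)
Your proof is correct and follows essentially the same approach as the paper's own (very terse) argument: invoke Corollary~\ref{C:expandJ} to bound $\rG^{\bx_0}-\Id_3$ on the support of $\psi$, deduce that the eigenvalues of $\rG^{\bx_0}|\rG^{\bx_0}|^{-1/2}$ are within $O(rc(\Omega))$ of $1$, and compare the two quadratic forms (and the two $L^2$ weights); your bookkeeping via the matrix $M$ is a cosmetic variant of the paper's remark about eigenvalues $\tau_i$, and your explicit two-sided comparison to place $q_h[\cdot,\rG]$ rather than $q_h[\cdot,\Id]$ on the right is a point the paper silently glosses over. One small imprecision: you suggest \emph{shrinking} $R_{\bx_0}$, but $R_{\bx_0}$ is fixed by the admissible atlas and the lemma is asserted for all $r\in(0,R_{\bx_0})$; the correct fix for the regime where $rc(\Omega)$ is not small is simply to note that on the fixed compact ball $\cB(\bfz,R_{\bx_0})$ the metric $\rG^{\bx_0}$ is uniformly bounded and uniformly positive definite, so the two quadratic forms are comparable with a fixed ratio and the desired inequality holds trivially after enlarging $c(\Omega)$.
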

 
\begin{proof}
The lemma is a direct consequence of Corollary \ref{C:expandJ} providing estimates for the $L^\infty$ norm of the difference $\rG^{\bx_0}-\Id_3$. Let $\tau_i=\tau_i(\bx)$ be the eigenvalues of $\rG^{\bx_0}(\bx)$. The estimate on $\rG^{\bx_0}-\Id_3$ implies a similar estimate for $\max\{ \|\tau_i-1\|_{L^\infty},1\leq i\leq 3\}$,
which allows to compare the quadratic forms associated with $\rG^{\bx_0}$ and with $\Id_3$.
\end{proof}

Combining the identities \eqref{E:chgGx0} with Lemma~\ref{L:chgvarloc}, we see that it is equivalent to deal with $q_{h}[\bA,\Omega](f)$ or $q_{h}[\bA^{\bx_0},\Pi_{\bx_{0}}](\psi)$ modulo a well-controlled error. This will be useful later on when we will estimate the corresponding Rayleigh quotients (see Chapters~\ref{sec:low} and \ref{sec:up}). 

\subsection{Linearization of the potential}
We estimate the remainders due to the linearization\index{Linearized magnetic potential} $\bA^{\bx_{0}}_{\bfz}$\index{Linearized magnetic potential!$\bA^{\bx_0}_{\bfz}$} at the vertex $\bfz$ of the tangent cone $\Pi_{\bx_0}$ of the potential $\bA^{\bx_{0}}$ resulting from a local map. 
For this, we first use a Taylor expansion around $\bfz$ in $\Pi_{\bx_0}$.

\begin{lemma}\label{lem.TaylorA}
Let $\bx_{0}\in\overline\Omega$. For any $r>0$ such that $\cV_{\bx_0}\supset\cB(\bfz,r)$ 
\begin{equation}
\label{E:taylorA}
  \forall \bv\in\cB(\bfz,r)\cap\Pi_{\bx_{0}},\quad |\bA^{\bx_0}(\bv)-\bA^{\bx_0}_\bfz(\bv)|\leq 
  \tfrac1{2} \|\bA^{\bx_0}\|_{W^{2,\infty}(\cB(\bfz,r)\cap\Pi_{\bx_{0}})} \,|\bv|^2\, .
\end{equation}
\end{lemma}

So we have to estimate the second derivatives of the mapped potentials $\bA^{\bx_0}$.
\begin{lemma}\label{L:d2A}
Let $\Omega\in \gD(\R^{3})$ with an associated admissible atlas with set of reference points $\gX$. 
Let $\bA\in \sC^{2}(\overline{\Omega})$ be a magnetic potential. For $\bx_{0}\in\overline\Omega$, let $\bA^{\bx_0}$ be the potential \eqref{E:ABtilde}.
There exists $c(\Omega)$ such that 
\begin{enumerate}[(a)\ ]
\item \label{It:Ld2A} for all $\bx_{0}\in\gX$, 
\begin{equation}\label{E:D2Ax0}
\|\rd^2\bA^{\bx_{0}}(\bv)\| \leq c(\Omega) \|\bA\|_{W^{2,\infty}(\Omega)},\qquad
      \forall \bv\in \cB(\bfz,R_{\bx_{0}}).
\end{equation}
\item \label{It:Ld2Abis} for all $\bu_0\in \overline{\Omega}\setminus \gX$,  with $\rho(\bu_{0})$ given in Proposition~\emph{\ref{P:estimatejacobian}} and  $d_{\gVc}$ defined in \eqref{D:domega},
\begin{equation}\label{E:D2Au0}
\|\rd^2\bA^{\bu_{0}}(\bv)\| \leq c(\Omega)\left(\frac{\|\bA\|_{W^{1,\infty}(\Omega)}}{d_{\gVc}(\bu_0)}
      + \|\bA\|_{W^{2,\infty}(\Omega)}\right),\qquad
      \forall \bv\in \cB(\bfz,\rho(\bu_{0})).
\end{equation}
\end{enumerate}
\end{lemma}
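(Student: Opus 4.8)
The plan is to compute $\rd^2\bA^{\bx_0}$ by differentiating the defining formula \eqref{E:ABtilde} twice, and then to insert the Jacobian estimates from Proposition~\ref{P:estimatejacobian} (in the 3D packaging of Corollary~\ref{C:expandJ}). Recall $\bA^{\bx_0}=(\rJ^{\bx_0})^{\top}\big((\bA-\bA(\bx_0))\circ(\diffeo^{\bx_0})^{-1}\big)$. Write $\bB^{\bx_0}$-free: set $g:=(\bA-\bA(\bx_0))\circ(\diffeo^{\bx_0})^{-1}$. By the chain rule, $\rd_\bv g = \big((\rd\bA)\circ(\diffeo^{\bx_0})^{-1}\big)\,\rJ^{\bx_0}$, and a second differentiation produces two kinds of terms: one carrying $(\rd^2\bA)\circ(\diffeo^{\bx_0})^{-1}$ together with two factors of $\rJ^{\bx_0}$, and one carrying $(\rd\bA)\circ(\diffeo^{\bx_0})^{-1}$ together with $\rd\rJ^{\bx_0}=\rK^{\bx_0}$. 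Differentiating the prefactor $(\rJ^{\bx_0})^{\top}$ brings in $\rK^{\bx_0}$ paired with $\rd g$ (hence one factor of $\rd\bA$ and one of $\rJ^{\bx_0}$), and differentiating it twice brings in $\rd\rK^{\bx_0}=\rd^2\rJ^{\bx_0}$ paired with $g$ (hence one factor of $\bA-\bA(\bx_0)$). Schematically, on $\cB(\bfz,r)\cap\Pi_{\bx_0}$,
\[
   \rd^2\bA^{\bx_0} = (\rJ^{\bx_0})^{\top}\,(\rd^2\bA)\,(\rJ^{\bx_0})^{\otimes 2}
   + \rK^{\bx_0}\star(\rd\bA)\star\rJ^{\bx_0}
   + (\rd\rK^{\bx_0})\star\big(\bA-\bA(\bx_0)\big),
\]
where $\star$ denotes contractions of tensors whose precise index pattern is irrelevant for the norm bound, and the arguments of $\rd^2\bA,\rd\bA,\bA$ are evaluated at $(\diffeo^{\bx_0})^{-1}(\bv)$.

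For part (a), let $\bx_0\in\gX$ and $\bv\in\cB(\bfz,R_{\bx_0})$. By Corollary~\ref{C:expandJ}(a) (and its extension to differentials in Remark~\ref{R:dKu0}(1)), $\|\rJ^{\bx_0}\|$, $\|\rK^{\bx_0}\|$ and $\|\rd\rK^{\bx_0}\|$ are all bounded by $c(\Omega)$ on $\cB(\bfz,R_{\bx_0})$; moreover $\|(\diffeo^{\bx_0})^{-1}(\bv)-\bx_0\|\le c(\Omega)R_{\bx_0}$, so $\|\bA-\bA(\bx_0)\|\le c(\Omega)\|\bA\|_{W^{1,\infty}(\Omega)}$ at that point. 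Plugging these into the schematic identity, every term is $\le c(\Omega)\|\bA\|_{W^{2,\infty}(\Omega)}$, which is \eqref{E:D2Ax0}.

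For part (b), let $\bu_0\in\overline\Omega\setminus\gX$ and $\bv\in\cB(\bfz,\rho(\bu_0))$. Now the only change is that $\rK^{\bu_0}$ and $\rd\rK^{\bu_0}$ are no longer uniformly bounded near conical points: by Corollary~\ref{C:expandJ}(b) one has $\|\rK^{\bu_0}\|\le c(\Omega)/d_{\gVc}(\bu_0)$ on $\cB(\bfz,\rho(\bu_0))$, while $\|\rJ^{\bu_0}-\Id_3\|\le \rho(\bu_0)c(\Omega)/d_{\gVc}(\bu_0)\le c(\Omega)$ so $\|\rJ^{\bu_0}\|$ stays bounded; and Remark~\ref{R:dKu0}(2) (or \eqref{eq:dKJGu03} with $\ell=1$) gives $\|\rd\rK^{\bu_0}\|\le c(\Omega)/\|\bu_0-\bx_0\|^2$, which, once combined with the factor $\|\bA-\bA(\bx_0)\|$ in the last term, I would control using $\|(\diffeo^{\bu_0})^{-1}(\bv)-\bu_0\|\le\rho(\bu_0)\le c(\Omega)d_{\gVc}(\bu_0)$ to convert one power of $1/\|\bu_0-\bx_0\|$ into a bounded contribution, leaving the single $1/d_{\gVc}(\bu_0)$ displayed in \eqref{E:D2Au0}. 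Thus the $\rd^2\bA$ term is $\le c(\Omega)\|\bA\|_{W^{2,\infty}(\Omega)}$, while the two terms with a $\rK$ or $\rd\rK$ factor are $\le c(\Omega)\|\bA\|_{W^{1,\infty}(\Omega)}/d_{\gVc}(\bu_0)$, giving the stated bound.

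I expect the bookkeeping around the last term — the product of $\rd^2\rJ^{\bu_0}$ (which blows up like $1/\|\bu_0-\bx_0\|^2$) with $\bA-\bA(\bx_0)$ (which vanishes like $\rho(\bu_0)\simeq d_{\gVc}(\bu_0)\|\bu_0-\bx_0\|$ in the worst direction) — to be the one delicate point, since one must verify that exactly one net power of $1/d_{\gVc}(\bu_0)$ survives and no spurious factor of $1/\|\bu_0-\bx_0\|$ remains; this is precisely where the recursive structure of the radii $\rho(\bu_0)$ in Proposition~\ref{P:estimatejacobian} and the comparison \eqref{eq:eqnormu} between $\|\bv_0\|$ and $\|\bu_0-\bx_0\|$ are used. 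Everything else is a routine application of the product rule together with the already-established $L^\infty$ bounds on Jacobians and their derivatives.
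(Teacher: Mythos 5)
Your proposal is correct and takes essentially the same route as the paper's proof: differentiate \eqref{E:ABtilde} twice to obtain the three-term schematic bound in $\rd^2\bA$, $\rK\cdot\rd\bA$, and $\rd\rK\cdot(\bA-\bA(\bx_0))$, then insert the Jacobian estimates from Proposition~\ref{P:estimatejacobian} and Remark~\ref{R:dKu0}. The "delicate" cancellation you flag in the last term is handled in the paper by the inequality $|\bu-\bu_0|\le|\bu_0-\bx_0|$ (a property of the admissible atlas), which is the same mechanism you invoke via $\rho(\bu_0)\lesssim\|\bu_0-\bx_0\|$.
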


\begin{proof}
Let $\bu_{0}\in\overline\Omega$. 
Differentiating twice \eqref{E:ABtilde}, we obtain, for $\bu\in\cU_{\bx_{0}}$ and $\bv=\diffeo^{\bu_{0}}(\bu)$,
\begin{equation*}
\|\rd^2\bA^{\bu_{0}}(\bv)\| 
\lesssim \|\rd\rK^{\bu_{0}}(\bv)\|\ |\bA(\bu)-\bA(\bu_{0})| 
      +\|\rK^{\bu_{0}}(\bv)\|\ \|\rJ^{\bu_{0}}(\bv)\|\ \|\rd\bA(\bu)\|
      +\|\rJ^{\bu_{0}}(\bv)\|^3\ \|\rd^2\bA(\bu)\|.
\end{equation*}
\begin{enumerate}[(a)]
\item When $\bu_{0}=\bx_{0}\in\gX$, \eqref{E:D2Ax0} is a consequence of Proposition~\ref{P:estimatejacobian} and Remark~\ref{R:dKu0} (1).
\item Let $\bu_{0}\in\overline\Omega\setminus\gX$ and $\bx_{0}\in\gX$ such that $\bu_{0}\in\cU_{\bx_{0}}$. The above inequality, Proposition~\ref{P:estimatejacobian} and Remark~\ref{R:dKu0} (2) yield for $\bv\in\cB(\bfz,\rho(\bu_{0}))$,
\begin{eqnarray*}
\|\rd^2\bA^{\bu_{0}}(\bv)\| 
&\lesssim& \frac{|\bu-\bu_{0}|}{|\bu_{0}-\bx_{0}|^2}\|\bA\|_{W^{1,\infty}}
      +\frac{1}{|\bu_{0}-\bx_{0}|}\|\bA\|_{W^{1,\infty}}
      + \|\bA\|_{W^{2,\infty}}\\
&\lesssim& \frac{1}{|\bu_{0}-\bx_{0}|}\|\bA\|_{W^{1,\infty}}
      + \|\bA\|_{W^{2,\infty}}.
\end{eqnarray*}
Here we have used the inequality $|\bu-\bu_{0}|\leq|\bu_{0}-\bx_{0}|$ which holds by construction of the admissible atlas.
\end{enumerate}
\vspace{-\baselineskip}
\end{proof}

Estimates between $\bA^{\bx_{0}}$ and $\bA^{\bx_{0}}_{\bfz}$ deduced from the combination of Lemmas~\ref{lem.TaylorA} and \ref{L:d2A} allow to compare $q_{h}[\bA^{\bx_{0}},\Pi_{\bx_{0}}](\psi)$ and $q_{h}[\bA^{\bx_{0}}_{\bfz},\Pi_{\bx_{0}}](\psi)$ via identity \eqref{eq:diffAA'}  which writes
$$
  q_{h}[\bA^{\bx_{0}},{\Pi_{\bx_{0}}}](\psi) =   
  q_{h}[\bA^{\bx_{0}}_{\bfz},\Pi_{\bx_{0}}](\psi) 
  +2\Re\big\langle (-ih\nabla+\bA^{\bx_{0}}_{\bfz})\psi,(\bA^{\bx_{0}}-\bA^{\bx_{0}}_{\bfz})\psi\big\rangle 
  + \|(\bA^{\bx_{0}}-\bA^{\bx_{0}}_{\bfz})\psi\|^2.
$$
This will be extensively used in Chapters~\ref{sec:low} and \ref{sec:up}.

\section{A general rough upper bound}
\label{ss:genup}
 Before tackling lower bounds in the next chapter, relying on the perturbation estimates provided by Lemmas \ref{L:chgvarloc} and \ref{L:d2A}, we are going to prove a very general rough upper bound
for the Rayleigh quotients\index{Rayleigh quotient} $\QR_{h}[\bA,\Omega]$ \eqref{eq:RayQuot} as $h\to0$. This proof does use any specific feature of three-dimensional problems. So we present it in the $n$-dimensional framework.

In the $n$-dimensional case, the magnetic field is a 2-form and associated magnetic potentials are 1-forms that we write by using their representation as vector fields in a canonical basis of $\R^n$, see \eqref{eq:omegaA}--\eqref{eq:sigmaB}. In dimension $n$, $\En(\bB,\Pi)$ and $\sE(\bB,\Omega)$ are defined as in Definition \ref{not:En}. 

In this context we prove a rough upper bound on the first eigenvalue of $\OP_h(\bA,\Omega)$ by using only elementary arguments. We need the following Lemma, that will also be useful later: 

\begin{lemma}
\label{L:ConsQMEps}
Let $\Omega\in \gD(\R^n)$ and let $\bA\in \sC^{2}(\overline{\Omega})$ be a magnetic potential associated with the magnetic field $\bB$. 
Let $\bx_{0}\in \overline{\Omega}$ be a chosen point and let $\varepsilon>0$. Then there exists $h_{0}>0$ such that for all $h\in (0,h_{0})$ there exists a function $f_{h}$ supported near $\bx_0$ satisfying 
$$
   \QR_{h}[\bA,\Omega](f_{h})
   \leq h\big( E(\bB_{\bx_{0}},\Pi_{\bx_{0}})+\varepsilon \big) \, ,
$$
where $E(\bB_{\bx_{0}},\Pi_{\bx_{0}})$ is the ground state energy\index{Ground state energy|textbf} of $\OP(\bA_{\bx_{0}},\Pi_{\bx_{0}})$. 
\end{lemma}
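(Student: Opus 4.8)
The plan is to construct $f_h$ by pulling back a suitable quasimode for the tangent operator $\OP(\bA_{\bx_0},\Pi_{\bx_0})$ through the local map $\diffeo^{\bx_0}$, then controlling all the errors (metric, potential linearization) by the estimates of Lemmas~\ref{L:chgvarloc}, \ref{lem.TaylorA} and \ref{L:d2A}. First, by definition of $\En(\bB_{\bx_0},\Pi_{\bx_0})$ as the infimum of the spectrum of $\OP(\bA_{\bx_0},\Pi_{\bx_0})$, the min-max principle gives, for any $\varepsilon>0$, a function $\Phi\in\dom(q_1[\bA_{\bx_0},\Pi_{\bx_0}])$, $\Phi\neq0$, with $\QR_1[\bA_{\bx_0},\Pi_{\bx_0}](\Phi)\le\En(\bB_{\bx_0},\Pi_{\bx_0})+\tfrac{\varepsilon}{4}$. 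Since the form domain functions with compact support are dense in the form domain (the quadratic form is a magnetic Dirichlet form, so truncation by cutoffs converges), we may assume $\Phi$ has compact support, say $\supp\Phi\subset\cB(\bfz,\rho)$ for some $\rho>0$.

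Next I would rescale. Using the homogeneity of the model problem (the cone $\Pi_{\bx_0}$ is dilation invariant and $\bA_{\bx_0}$ is linear), set $\Phi_h(\bv):=\Phi(\bv/\sqrt h)$ for small $h$; a direct computation with the change of variables $\bv\mapsto\bv/\sqrt h$ shows $\QR_h[\bA_{\bx_0},\Pi_{\bx_0}](\Phi_h)=h\,\QR_1[\bA_{\bx_0},\Pi_{\bx_0}](\Phi)\le h\,(\En(\bB_{\bx_0},\Pi_{\bx_0})+\tfrac{\varepsilon}{4})$, while $\supp\Phi_h\subset\cB(\bfz,\rho\sqrt h)$, so for $h$ small enough this support lies inside $\cV_{\bx_0}$ and inside the ball where the Jacobian and second-derivative estimates hold. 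Then define $f_h$ on $\Omega$ by inverting \eqref{E:psi}: $f_h:=\overline{\zeta^{\bx_0}_h}\,\big(\Phi_h\circ\diffeo^{\bx_0}\big)$, which is supported near $\bx_0$ inside $\cU_{\bx_0}$, so $f_h\in H^1(\Omega)$. By \eqref{E:QRmetr} we have $\QR_h[\bA,\Omega](f_h)=\QR_h[\bA^{\bx_0},\Pi_{\bx_0},\rG^{\bx_0}](\Phi_h)$.

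It remains to pass from $\QR_h[\bA^{\bx_0},\Pi_{\bx_0},\rG^{\bx_0}](\Phi_h)$ to $\QR_h[\bA_{\bx_0},\Pi_{\bx_0}](\Phi_h)$. This is done in two steps. The metric error: Lemma~\ref{L:chgvarloc} (case \ref{It:cvloc} if $\bx_0\in\gX$, case \ref{It:cvlocbis} otherwise, noting $d_{\gVc}(\bx_0)>0$ is a fixed positive constant once $\bx_0$ is fixed) gives $|q_h[\bA^{\bx_0},\Pi_{\bx_0},\rG^{\bx_0}](\Phi_h)-q_h[\bA^{\bx_0},\Pi_{\bx_0}](\Phi_h)|\le C\rho\sqrt h\;q_h[\bA^{\bx_0},\Pi_{\bx_0},\rG^{\bx_0}](\Phi_h)$ and a comparable control on the norms, so the Rayleigh quotients differ by a factor $1+O(\sqrt h)$. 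The linearization error: by Lemmas~\ref{lem.TaylorA} and \ref{L:d2A}, on $\cB(\bfz,\rho\sqrt h)$ we have $|\bA^{\bx_0}(\bv)-\bA_{\bx_0}(\bv)|\le C|\bv|^2\le C\rho^2 h$; plugging this into the identity $q_h[\bA^{\bx_0},\Pi_{\bx_0}](\Phi_h)=q_h[\bA_{\bx_0},\Pi_{\bx_0}](\Phi_h)+2\Re\langle(-ih\nabla+\bA_{\bx_0})\Phi_h,(\bA^{\bx_0}-\bA_{\bx_0})\Phi_h\rangle+\|(\bA^{\bx_0}-\bA_{\bx_0})\Phi_h\|^2$ and using Cauchy--Schwarz together with $q_h[\bA_{\bx_0},\Pi_{\bx_0}](\Phi_h)=h\,\QR_1(\Phi)\|\Phi_h\|^2\cdot(1)$ being of size $h\|\Phi_h\|^2$, one finds the cross term is $O(h^{3/2})\|\Phi_h\|^2$ and the quadratic term $O(h^2)\|\Phi_h\|^2$. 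Combining, $\QR_h[\bA,\Omega](f_h)\le h\,\En(\bB_{\bx_0},\Pi_{\bx_0})+h\,\tfrac{\varepsilon}{4}+Ch^{3/2}\le h\,(\En(\bB_{\bx_0},\Pi_{\bx_0})+\varepsilon)$ for $h$ small enough, which is the claim. The main obstacle — really the only delicate point — is making the density/truncation argument for compactly supported quasimodes clean on an unbounded cone with magnetic Neumann form domain; everything else is bookkeeping with the already-established estimates.
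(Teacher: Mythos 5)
Your proposal is correct and follows essentially the same route as the paper's proof: pick a compactly supported near-minimizer for the model quadratic form, rescale by $\sqrt h$, push it onto $\Omega$ through the local map with a phase shift, and control the metric and potential-linearization errors by Lemmas~\ref{L:chgvarloc}, \ref{lem.TaylorA} and \ref{L:d2A} at the fixed point $\bx_0$. The only cosmetic difference is that the paper first gets a bound $\En+\varepsilon/4$ for a generic form-domain quasimode and then explicitly passes to a compactly supported one with bound $\En+\varepsilon/2$, whereas you assert the truncation at bound $\varepsilon/4$; strictly you should budget a little slack for that truncation, but this is bookkeeping, not a gap.
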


\begin{proof}
Let $(\cU_{\bx_{0}},\diffeo^{\bx_{0}})$ be a local map with $\diffeo^{\bx_{0}}:\cU_{\bx_{0}}\mapsto \cV_{\bx_{0}}\subset \Pi_{\bx_{0}}$,  cf.\ \eqref{eq:diffeo}. 
This change of variables transforms the magnetic potential into  $\bA^{\bx_{0}}$ given by \eqref{E:ABtilde}:
\begin{equation*}
   \bA^{\bx_{0}}=(\rJ^{\bx_{0}})^{\top} \big((\bA-\bA(\bx_{0}))\circ (\diffeo^{\bx_{0}})^{-1} \big) \, .
\end{equation*} 
Denote by $\bA^{\bx_{0}}_{\bfz}$ its linear part. Recall that $\curl\bA^{\bx_{0}}_{\bfz}=\bB_{\bx_{0}}$.  
By definition of $E(\bB_{\bx_{0}},\Pi_{\bx_{0}})$ there exists $\psi \in \dom(q[\bA^{\bx_{0}}_{\bfz},\Pi_{\bx_{0}}])$ a $L^2$-normalized function such that
$$
   q[\bA^{\bx_{0}}_{\bfz},\Pi_{\bx_{0}}](\psi) \le 
   \En(\bB_{\bx_{0}},\Pi_{\bx_{0}})+\tfrac{\varepsilon}{4} \, .
$$
Let us consider a smooth cut-off function $\chi$ with support in $\cB(\bfz,1)$ and equal to $1$ on $\cB(\bfz,\frac12)$. Then the functions with compact support
$
   \bx\longmapsto\chi(\tfrac{\bx}{R})\,\psi(\bx)
$
converge to $\psi$ in $\dom(q[\bA^{\bx_{0}}_{\bfz},\Pi_{\bx_{0}}])$ as $R\to\infty$. Therefore there exists $R=R(\varepsilon,\bx_0)>0$ and a new function $\psi \in \dom(q[\bA^{\bx_{0}}_{\bfz},\Pi_{\bx_{0}}])$ with support in $\cB(\bfz,R)$ which satisfies
$$
   q[\bA^{\bx_{0}}_{\bfz},\Pi_{\bx_{0}}](\psi) \le 
  \En(\bB_{\bx_{0}},\Pi_{\bx_{0}})+\tfrac{\varepsilon}{2} \, .
$$
For $h>0$, define the $L^2$-normalized function
$\psi_{h}(\bx)=h^{-n/4}\psi(h^{-1/2}\bx)$
so that, cf.\ Lemma \ref{lem.dilatation},
$$ q_h[\bA^{\bx_{0}}_{\bfz},\Pi_{\bx_{0}}](\psi_{h}) \le 
h\big(\En(\bB_{\bx_{0}},\Pi_{\bx_{0}})+\tfrac{\varepsilon}{2}\big) \, .$$
We have the inclusion $\supp(\psi_{h}) \subset \cB(\bfz,h^{1/2}R)$ and therefore there exists $h_{\varepsilon}>0$ such that for all 
$ h\in (0,h_{\varepsilon})$,   
$\supp(\psi_{h})\subset \cV_{\bx_{0}}$.
Combining \eqref{eq:diffAA'} with a Cauchy-Schwarz inequality we find
\begin{multline}
\label{E:CSMajGen}
  q_{ h}[\bA^{\bx_{0}},{\Pi_{\bx_{0}}}](\psi_{h}) 
  \leq   
  q_{ h}[\bA^{\bx_{0}}_{\bfz},\Pi_{\bx_{0}}](\psi_{h}) \\
  +2\sqrt{q_{ h}[\bA^{\bx_{0}}_{\bfz},\Pi_{\bx_{0}}](\psi_{h}) } \ 
  \|(\bA^{\bx_{0}}-\bA^{\bx_{0}}_{\bfz})\psi_{h}\|
  + \|(\bA^{\bx_{0}}-\bA^{\bx_{0}}_{\bfz})\psi_{h}\|^2.
\end{multline}
Notice now that the estimates \emph{(\ref{P:DLx0})} of Proposition \ref{P:estimatejacobian} are still valid for any chosen $\bx_{0}$ in $\overline{\Omega}$ with constants $c(\bx_0)$ and radius $R_{\bx_0}$ depending on $\bx_0$. Hence estimates \emph{(\ref{It:Ld2A})} of Lemma \ref{L:d2A} holds at $\bx_0$ with a constant $c(\bx_0)$ replacing the uniform constant $c(\Omega)$. Therefore applying Lemma~\ref{lem.TaylorA} with $r=h^{1/2}R$ we get $c=c(\varepsilon,\bx_0)>0$ such that
\begin{equation*}
\|(\bA^{\bx_{0}}-\bA^{\bx_{0}}_{\bfz})\psi_{h}\|
\leq c R^2  h\|\psi_{h}\|,\quad \forall h\in(0,h_\varepsilon).
\end{equation*}
Let $\rG^{\bx_{0}}$ be the metric associated with the change of variables (see Section \ref{SS:CV}). Again \emph{(\ref{It:cvloc})} of Lemma \ref{L:chgvarloc} is valid for all $\bx_{0}\in \overline{\Omega}$ with $c(\bx_{0})$ instead of $c(\Omega)$. Applying this with $r=h^{1/2}R$ provides another constant $c=c(\varepsilon,\bx_0)>0$ such that
\begin{align}
   \big|q_{ h}[\bA^{\bx_{0}},\Pi_{\bx_{0}},\rG^{\bx_{0}}](\psi_{h}) - q_{ h}[\bA^{\bx_{0}},\Pi_{\bx_{0}}](\psi_{h}) \big|   
    &\leq 
     c  \,Rh^{1/2} \, 
    q_{ h}[\bA^{\bx_{0}},\Pi_{\bx_{0}}](\psi_{ h}) , \\
    \left|\|\psi_{h}\|_{L^2_{\rG^{\bx_{0}}}(\Pi_{\bx_{0}})}-\|\psi_{h}\|_{L^2(\Pi_{\bx_{0}})}\right| 
    &\leq c \,Rh^{1/2} \, \|\psi_{ h}\|_{L^2(\Pi_{\bx_{0}})} \, .
    \label{E:LMMajGen}
\end{align}
According to  Section~\ref{SS:CV}  \eqref{E:ABtilde}--\eqref{E:chgGx0}, we define for $h\in (0,h_{\varepsilon})$:
$$
   f_{h}:= (\zeta^{\bx_{0}}_h)^{-1} \, \psi_{h}\circ \diffeo^{\bx_{0}} \qquad \mbox{with} \qquad
   \zeta^{\bx_{0}}_h(\bx) = \re^{i\langle \bA(\bx_{0}),\ee\bx\rangle/h},\quad 
   \bx\in \cU_{\bx_{0}}\cap \overline{\Omega} \, 
$$
and we have
\begin{equation*}
q_{h}[\bA,\Omega](f_{h})
=q_{h}[\bA^{\bx_{0}},\Pi_{\bx_{0}},\rG^{\bx_{0}}](\psi_{h})
\quad\mbox{and}\quad
\|f_{h}\|_{L^2(\Omega)}=\|\psi_{h}\|_{L^2_{\rG^{\bx_{0}}}(\Pi_{\bx_{0}})}.
\end{equation*}
Thus, combining with \eqref{E:CSMajGen}--\eqref{E:LMMajGen} we deduce
\begin{eqnarray*}
\QR_{h}[\bA,\Omega](f_{ h})
&\leq& (1+c R h^{1/2})\Big(\QR_{h}[\bA^{\bx_{0}}_{\bfz},\Pi_{\bx_{0}}](\psi_{h}) +c\left( R^2 h^{3/2} + R^4 h^{2}\right)\Big)\\[0.25ex]
&\leq& (1+cR h^{1/2})\Big( h\big(\En(\bB_{\bx_{0}},\Pi_{\bx_{0}}) 
+\tfrac{\varepsilon}{2}\big)+c\left( R^2 h^{3/2} + R^4 h^{2}\right)\Big).
\end{eqnarray*}
We can write this in the form 
$$\QR_{ h}[\bA,\Omega](f_{ h})\leq 
h\big( \En(\bB_{\bx_{0}},\Pi_{\bx_{0}})+\tfrac{\varepsilon}{2}+h^{1/2}M_\varepsilon(h)\big),$$
where $M_\varepsilon(h)$ is a bounded function for $h\in[0,h_\varepsilon]$ that depends on $\varepsilon>0$. 
We deduce the lemma by choosing $h$ so small that $h^{1/2}M_{\varepsilon}(h) \leq \frac{\varepsilon}{2}$.
\end{proof}

As a consequence of Lemma \ref{L:ConsQMEps} and the min-max principle we obtain:

\begin{proposition} 
Let $\Omega\in \gD(\R^n)$ and let $\bA\in \sC^{2}(\overline{\Omega})$ be a magnetic potential associated with the magnetic field $\bB$. 
Then the first eigenvalue $\lambda_{h}(\bB,\Omega)$ of $\OP(\bA,\Omega)$ satisfies
$$\limsup_{h\to0}\frac{\lambda_{h}(\bB,\Omega)}{h} \leq \sE(\bB,\Omega) \, .$$ 
\end{proposition}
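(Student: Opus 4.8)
The plan is to deduce the proposition directly from Lemma~\ref{L:ConsQMEps} combined with the min-max characterization \eqref{eq:minmax} of $\lambda_h(\bB,\Omega)$. The key observation is that the lemma gives us, for every chosen point $\bx_0\in\overline\Omega$ and every $\varepsilon>0$, a family of admissible test functions $f_h\in\dom(q_h[\bA,\Omega])\setminus\{0\}$ (supported near $\bx_0$, hence in $H^1(\Omega)$) whose Rayleigh quotient is bounded by $h\big(\En(\bB_{\bx_0},\Pi_{\bx_0})+\varepsilon\big)$ for all sufficiently small $h$.

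First I would fix $\varepsilon>0$ and an arbitrary point $\bx_0\in\overline\Omega$. Applying Lemma~\ref{L:ConsQMEps} at $\bx_0$, we obtain $h_0>0$ and test functions $f_h$ for $h\in(0,h_0)$ with $\QR_h[\bA,\Omega](f_h)\le h\big(\En(\bB_{\bx_0},\Pi_{\bx_0})+\varepsilon\big)$. By the min-max principle \eqref{eq:minmax}, since $f_h$ is an admissible nonzero element of $\dom(q_h[\bA,\Omega])$, we get $\lambda_h(\bB,\Omega)\le \QR_h[\bA,\Omega](f_h)\le h\big(\En(\bB_{\bx_0},\Pi_{\bx_0})+\varepsilon\big)$ for all $h\in(0,h_0)$. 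Dividing by $h$ and passing to the $\limsup$ as $h\to0$ yields
\[
   \limsup_{h\to0}\frac{\lambda_h(\bB,\Omega)}{h}\le \En(\bB_{\bx_0},\Pi_{\bx_0})+\varepsilon.
\]
Since $\varepsilon>0$ was arbitrary, the left-hand side is $\le \En(\bB_{\bx_0},\Pi_{\bx_0})$.

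Finally, since this holds for every $\bx_0\in\overline\Omega$ and the left-hand side does not depend on $\bx_0$, I would take the infimum over $\bx_0\in\overline\Omega$ on the right, which by the definition \eqref{eq:sbis} of $\sE(\bB,\Omega)$ gives $\limsup_{h\to0}\lambda_h(\bB,\Omega)/h\le \sE(\bB,\Omega)$, as claimed. There is essentially no obstacle here: the entire substantive work---the construction of the concentrated quasimode, the control of the linearization error of the potential, and the control of the metric perturbation---has already been carried out in Lemma~\ref{L:ConsQMEps}, so this proof is a one-line consequence of that lemma and the min-max principle.
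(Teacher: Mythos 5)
Your proof is correct and is precisely the argument the paper intends: it states the proposition immediately after Lemma~\ref{L:ConsQMEps} with the remark that it follows from that lemma and the min-max principle, which is exactly the two-step deduction you spelled out. No discrepancy.
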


\chapter{Lower bounds for ground state energy in corner domains}
\label{sec:low}

In this section we establish a lower bound 
for the first eigenvalue $\lambda_{h}(\bB,\Omega)$ of the magnetic Laplacian $\OP_{h}(\bA,\Omega)$ with Neumann boundary conditions.

\begin{theorem}
\label{T:generalLB}
Let $\Omega\in \gD(\R^3)$ be a corner domain\index{Corner domain}, and let $\bA\in \sC^{2}(\overline{\Omega})$ be a magnetic potential.  Then there exist $C_\Omega>0$ and $h_{0}>0$ such that for all $h\in (0,h_{0})$:
\begin{equation}
\label{eq:below}
   \lambda_h(\bB \ee,\Omega) \geq  
   \begin{cases}
   h\ee \sE(\bB \ee,\Omega) - C_\Omega \big(1+ \|\bA\|^2_{W^{2,\infty}(\Omega)}\big) \, h^{11/10} ,
    &\mbox{$\Omega$ general corner domain,} 
   \\[0.5ex]
   h\ee \sE(\bB \ee,\Omega) - C_\Omega \big(1+ \|\bA\|^2_{W^{2,\infty}(\Omega)}\big) \, h^{5/4} ,
    &\mbox{$\Omega$ polyhedral domain.}
   \end{cases}
\end{equation}
We recall that the quantity $\sE(\bB,\Omega)$ is the lowest local energy\index{Lowest local energy} defined in \eqref{eq:sbis}.
\end{theorem}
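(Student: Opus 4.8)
The plan is to prove the lower bound by an IMS-type localization argument based on an admissible atlas and an adequate partition of unity, with different scales in the general corner case versus the polyhedral case.

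\textbf{Setup and partition.} First I would fix an admissible atlas $(\cU_\bx,\diffeo^\bx)$ with set of reference points $\gX$ (Theorem~\ref{th:atlas}), and recall that $\gX\supset\gV\supset\gVc$. The key difficulty is that near a conical point $\bx_0\in\gVc$ the change-of-variables error blows up like $r/d_{\gVc}(\bu_0)$ (Corollary~\ref{C:expandJ}(b), Lemma~\ref{L:chgvarloc}(b), Lemma~\ref{L:d2A}(b)), so a single-scale partition is not enough. I would therefore introduce a \emph{two-scale} partition of unity: a coarse scale $h^\rho$ around the conical points $\gVc$, and inside each coarse ``conical collar'' a second, finer, partition at scale $d_{\gVc}(\bu_0)\,h^{\rho'}$ adapted to the distance to $\gVc$, together with a standard scale-$h^{1/2-\delta}$ partition away from $\gVc$. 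For a polyhedral domain $\gVc=\emptyset$, so a single standard partition at one scale suffices, which is what produces the better exponent $5/4$.

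\textbf{IMS formula and localization.} With a partition $(\chi_j)_j$ subordinate to these balls, the IMS formula gives
\[
   q_h[\bA,\Omega](\psi_h) = \sum_j q_h[\bA,\Omega](\chi_j\psi_h) - h^2\sum_j \|\nabla\chi_j\|^2\,\|\psi_h\|^2_{\mathrm{on\ supp}\,\chi_j}\,,
\]
so $\lambda_h = \QR_h[\bA,\Omega](\psi_h) \ge \min_j \QR_h[\bA,\Omega](\chi_j\psi_h) - Ch^2\max_j\|\nabla\chi_j\|_\infty^2$. On each piece I would transport $\chi_j\psi_h$ by the local map using \eqref{E:chgGx0}, replace the metric $\rG^{\bx_0}$ by $\Id$ (Lemma~\ref{L:chgvarloc}) and the potential $\bA^{\bx_0}$ by its linear part $\bA_{\bx_0}$ (Lemmas~\ref{lem.TaylorA}, \ref{L:d2A}), all at the cost of multiplicative and additive errors controlled by the local radius and $\|\bA\|_{W^{2,\infty}}$, and then bound below by $h\,\En(\bB_{\bx_0},\Pi_{\bx_0})\ge h\,\sE(\bB,\Omega)$ using the min-max on the model operator and the dilation scaling $\psi\mapsto h^{-n/4}\psi(h^{-1/2}\cdot)$ (Lemma~\ref{lem.dilatation}). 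Since the piece is small, $\bB_{\bx_0}$ is close to $\bB(\bu_j)$ for $\bu_j$ in the support, but here one must use the lower semicontinuity of $\bx\mapsto\En(\bB_\bx,\Pi_\bx)$ together with property (a) (continuity on strata) — or, more robustly, simply bound each local model energy below by $\sE(\bB,\Omega)$ directly, absorbing the Lipschitz variation of $\bB$ into the error term.

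\textbf{Optimizing the exponents.} The main work, and the main obstacle, is the bookkeeping of the competing error terms: the IMS curvature cost $h^2/\ell^2$ where $\ell$ is the local scale; the metric error $\sim \ell/d_{\gVc}$; the linearization error $\sim (\ell^2 + \ell\,h)\cdot(\|\bA\|_{W^{1,\infty}}/d_{\gVc} + \|\bA\|_{W^{2,\infty}})$ coming from the cross term $2\Re\langle(-ih\nabla+\bA_{\bx_0})\psi,(\bA^{\bx_0}-\bA_{\bx_0})\psi\rangle$ via a Cauchy–Schwarz/Young inequality with weight; and the variation of $\bB$ over the support. In the two-scale conical region the extra factor $1/d_{\gVc}$ forces $d_{\gVc}\gtrsim h^{\mu}$ on the fine pieces, and one balances the coarse scale $h^\rho$, the exponent $\mu$, and the fine scale against $h^{11/10}$; in the polyhedral/away-from-$\gVc$ region one balances $h^2/\ell^2$ against $\ell\cdot h$ giving the optimal $\ell\sim h^{1/4}$ and remainder $h^{5/4}$. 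I would carry out this optimization explicitly, checking that $h^2\max\|\nabla\chi_j\|^2$, the metric errors, and the linearization errors are all $\le C_\Omega(1+\|\bA\|^2_{W^{2,\infty}})\,h^{11/10}$ (resp.\ $h^{5/4}$), and that all implicit constants depend only on $\Omega$ through the atlas data. The dependence $(1+\|\bA\|^2_{W^{2,\infty}})$ (quadratic, not linear) arises because the cross term is handled by Young's inequality $2ab\le \epsilon a^2 + \epsilon^{-1}b^2$ with $\epsilon$ chosen of order $h^{1/10}$ (resp.\ $h^{1/4}$), producing a $\|\bA^{\bx_0}-\bA_{\bx_0}\|^2$ term.
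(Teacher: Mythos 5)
Your proposal follows the paper's strategy very closely: fix an admissible atlas, separate off the conical points $\gVc$ with a macro cutoff, use a finer two-scale partition near $\gVc$ and a one-scale partition elsewhere, apply the IMS formula, transport each localized piece by the local map (Lemma~\ref{L:chgvarloc} for the metric, Lemmas~\ref{lem.TaylorA}--\ref{L:d2A} for the potential), bound below by $h\ee\sE(\bB,\Omega)$ using the min-max on the tangent operator, and optimize the competing errors. Your ``variable radii $\sim d_{\gVc}(\bu_0)h^{\rho'}$'' for the fine balls is a cosmetic variant of the paper's choice (all fine balls of the same radius $\sim h^{\de0+\de1}$ covering the annulus $h^{\de0}\le|\bx-\bx_0|<R_{\bx_0}$); either works, with essentially the same exponents because the paper only uses $\rho_\bc/d_{\gVc}(\bc)\lesssim h^{\de1}$.

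The one concrete slip is in your optimization for the polyhedral/away-from-$\gVc$ region. You identify the dominant additive errors as $h^2/\ell^2$ (IMS) versus $\ell\cdot h$, and conclude $\ell\sim h^{1/4}$, remainder $h^{5/4}$. This is both internally inconsistent (balancing $h^2/\ell^2 = \ell h$ would give $\ell\sim h^{1/3}$ and remainder $h^{4/3}$) and incorrect in substance. The cross term $2\Re\langle(-ih\nabla+\bA_{\bx_0})\psi,(\bA^{\bx_0}-\bA_{\bx_0})\psi\rangle$ is, after Cauchy--Schwarz and Young with parameter $\eta$, of size $\eta h + \eta^{-1}\ell^4$ (since $|\bA^{\bx_0}-\bA_{\bx_0}|\lesssim\ell^2$ on the support and $\sqrt{q_h}\lesssim\sqrt{h}\,\|\psi\|$ to leading order). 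Optimizing $\eta=\ell^2 h^{-1/2}$ yields a linearization error $\sim\ell^2 h^{1/2}$, not $\ell h$. The term $\ell h$ you wrote is actually the metric error (multiplicative error $\cO(\ell)$ on a Rayleigh quotient $\cO(h)$), which is subdominant. The correct balance is therefore $h^{2-2\delta}=h^{1/2+2\delta}$, giving $\delta=3/8$ (so $\ell\sim h^{3/8}$) and the announced remainder $h^{5/4}$. The same bookkeeping error propagates into the two-scale optimization, where the paper's exponents are $\de0=3/10$ and $\de1=3/20$. This is a local slip in the balancing, not a flaw in the structure of the argument: with the correct expression $\ell^2 h^{1/2}$ for the Young'd linearization error, your outline recovers the paper's proof.
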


\begin{remark}
If the magnetic field $\bB$ vanishes, then $\sE(\bB,\Omega)=0$ and Theorem~\ref{T:generalLB} is obvious. 
By contrast, if $\bB$ does not vanish on $\overline\Omega$, we will see in Corollary~\ref{cor.sE>0} that $\sE(\bB,\Omega)>0$.
\end{remark}

\subsubsection*{Structure of the proof}
The proof proceeds from an IMS partition argument coupled with the analysis of  remainders due to the cut-off effects, the local maps and the linearization of the potential. 
The less classical piece of the analysis is our special construction of cut-off functions in regions close to conical points $\bx_0\in\gVc$, where a second, smaller, scale is introduced.

We choose first an admissible atlas\index{Atlas!Admissible atlas} on $\overline\Omega$ according to Definition \ref{def.atlas} and we recall that the conical points are part of the set $\gX$ of its reference points.

\subsubsection*{Splitting off the conical points}
We start with a (smooth) macro partition of unity\index{Partition of unity} on $\overline\Omega$, independent of $h$, $(\Xi_0,(\Xi_{\bx})_{\bx\in\gVc})$ which aims at separating the conical points, {\it i.e.}, such that 
\begin{itemize}
\item $\overline{\supp\Xi_{0}}\cap \gVc =\emptyset$,
\item for any $\bx_{0}\in\gVc$, $\supp\Xi_{\bx_{0}}
\subset \cB(\bx_{0},R_{\bx_{0}})$.
\end{itemize}
Here $R_{\bx_0}$ is the radius associated with the reference point $\bx_0$ in the admissible atlas.
In the polyhedral case, {\it i.e.}, when $\gVc=\emptyset$, we simply set $\Xi_0\equiv1$.

For any $f\in H^1(\Omega)$ IMS formula\index{IMS formula} (see Lemma~\ref{lem:IMS}) gives 
\begin{align}
q_{h}[\bA,\Omega](f) 
&= q_{h}[\bA,\Omega](\Xi_{0}f) +\sum_{\bx\in\gVc} q_{h}[\bA,\Omega](\Xi_{\bx}f) -
h^2\Big(\| (\nabla\Xi_{0}) f\|^2+\sum_{\bx\in\gVc}\|(\nabla\Xi_{\bx}) f\|^2\Big)
\nonumber\\
&\geq q_{h}[\bA,\Omega](\Xi_{0}f) +\sum_{\bx_{0}\in\gVc} q_{h}[\bA,\Omega](\Xi_{\bx}f) -C h^2\|f\|^2.\label{eq.IMSmacro}
\end{align}
In Section~\ref{SS.minopoly}, we give a lower bound of $q_{h}[\bA,\Omega](\Xi_{0}f)$. In the polyhedral case, this will finish the proof.
Section~\ref{SS.minoconic} is devoted to conical points and estimates of $q_{h}[\bA,\Omega](\Xi_{\bx}f)$.

\section{Estimates outside conical points}\label{SS.minopoly}
Here we prove a lower bound for $q_{h}[\bA,\Omega](\Xi_{0}f)$. 

\subsubsection*{IMS localization}
Let $\delta\in(0,\frac12)$ be an exponent which will be determined later on. 
Now, we make a $h$-dependent partition of $\supp\Xi_{0}\cap\overline\Omega$ with size $h^\delta$. 
Relying on Lemma \ref{lem:IMScov}, we can choose for $0<h\le h_0$ ($h_0$ small enough) a finite set $\sC(h)$ of points $\bc\in\overline\Omega$ together with radii $\rho_\bc$ equivalent to $h^\delta$ (with uniformity as $h\to0$) such that
\begin{enumerate}
\item The union of balls $\cB(\bc,\rho_\bc)$ covers $\supp\Xi_{0}\cap\overline\Omega$
\item Each ball $\cB(\bc,2\rho_\bc)$ is contained in a map-neighborhood of the admissible atlas
\item The finite covering condition holds
\end{enumerate}
Relying on Lemma \ref{lem:IMSpart}, we choose an associate partition of unity\index{Partition of unity} $\big(\tronc\big)_{\bc\in\sC(h)}$ such that 
$$
   \tronc\in \sC^{\infty}_0(\cB(\bc,\rho_{\bc})),\quad\forall\bc\in\sC(h) 
   \qquad\mbox{and}\qquad
   \Xi_{0} \sum_{\bc\in\sC(h)} \tronc^2=\Xi_{0}\quad\mbox{on}\quad \overline{\Omega},  
$$
and satisfying the uniform estimate of gradients 
\begin{equation}
\label{E:controlegradtronc}
   \exists C>0,\quad \forall h\in (0,h_{0}), \quad \forall \bc\in\sC(h),\quad 
   \|\nabla\tronc\|_{L^{\infty}(\Omega)} \leq C h^{-\delta} \, . 
\end{equation}
The IMS formula\index{IMS formula} (see Lemma \ref{lem:IMS}) provides for all $f\in H^1(\Omega)$
\[
   q_{h}[\bA,\Omega](\Xi_{0}f) = \sum_{\bc\in\sC(h)} q_{h}[\bA,\Omega](\tronc\, \Xi_{0}f)
   - h^2 \sum_{\bc\in\sC(h)} \|\nabla\tronc\ \Xi_{0}f\|_{L^2(\Omega)}^2
\]
and using \eqref{E:controlegradtronc} we get $C=C(\Omega)>0$ such that
\begin{equation}
\label{E:minorationpartition}
   q_{h}[\bA,\Omega](\Xi_{0}f) \geq \sum_{\bc\in\sC(h)} q_{h}[\bA,\Omega](\tronc\ee \Xi_{0}f)
   -C\, h^{2-2\delta}\|\Xi_{0}f\|_{L^2(\Omega)}^2 \, .
\end{equation}

\subsubsection*{Local control of the energy}
For each center $\bc\in\sC(h)$, we are going to bound from below the term $q_{h}[\bA,\Omega](\tronc\, \Xi_{0}f)$ appearing in \eqref{E:minorationpartition}. By construction $\supp(\tronc\, \Xi_{0}f)$ is contained in the map-neighborhood $\cU_{\bc}$. Using \eqref{E:phase} and \eqref{E:psi}, we set
\begin{equation}
\label{E:psic}
   \psi_{\bc}:= (\zeta^{\bc}_h \,\tronc\, \Xi_{0}f)\circ (\diffeo^{\bc})^{-1},
   \qquad\mbox{ with } \qquad  
   \zeta^{\bc}_h(\bx) = \re^{i\langle \bA(\bc),\ee\bx\rangle/h}.\index{Phase shift}
\end{equation}
According to \eqref{E:chgGx0} with $\bx_{0}$ replaced by $\bc$, we have
\begin{equation}
\label{E:chgGc}
   q_{h}[\bA,\Omega](\tronc\ee \Xi_{0}f)
   = q_{h}[\bA^{\bc},\Pi_{\bc},\rG^{\bc}](\psi_{\bc}) 
   \quad \mbox{and} \quad 
   \| \tronc\, \Xi_{0}f\|_{L^2(\Omega)}=\| \psi_{\bc} \|_{L^2_{\rG^{\bc}}(\Pi_{\bc})}\,.
\end{equation}
In order to replace the metric $\rG^{\bc}$ by the identity, we apply Lemma~\ref{L:chgvarloc} with $r\simeq h^{\delta}$. Using that the distance $d_{\gVc}$ to conical points is bounded from below by a positive number on $\supp\Xi_0$, we obtain the existence of a constant $c(\Omega)>0$ such that for all centers $\bc\in\sC(h)$ 
\begin{equation}
\label{E:f-psi}
   \QR_{h}[\bA^{\bc},\Pi_{\bc},\rG^{\bc}](\psi_{\bc})
   \geq (1-c(\Omega)h^{\delta}) \QR_{h}[\bA^{\bc},\Pi_{\bc}](\psi_{\bc})\,.
\end{equation}
We now want to replace $\bA^{\bc}$ in the above Rayleigh quotient\index{Rayleigh quotient} by its linear part $\bA^{\bc}_{\bfz}$ at $\bfz$. For this we use identity \eqref{eq:diffAA'} with $\psi=\psi_{\bc}$ and $\cO=\Pi_{\bc}$:
\begin{multline}
\label{eq:diff}
  q_{h}[\bA^{\bc},{\Pi_{\bc}}](\psi_{\bc}) =   
  q_{h}[\bA^{\bc}_{\bfz},\Pi_{\bc}](\psi_{\bc}) \\
  +2\Re\big\langle (-ih\nabla+\bA^{\bc}_{\bfz})\psi_{\bc},(\bA^{\bc}-\bA^{\bc}_{\bfz})\psi_{\bc}\big\rangle 
  + \|(\bA^{\bc}-\bA^{\bc}_{\bfz})\psi_{\bc}\|^2.
\end{multline}
This yields
$
  q_{h}[\bA^{\bc},{\Pi_{\bc}}](\psi_{\bc}) \geq   
  q_{h}[\bA^{\bc}_{\bfz},\Pi_{\bc}](\psi_{\bc})
  -2  \left(q_{h}[\bA^{\bc}_{\bfz},\Pi_{\bc}](\psi_{\bc})\right)^{1/2}
  \|(\bA^{\bc}-\bA^{\bc}_{\bfz})\psi_{\bc}\|
$
by Cauchy-Schwarz inequality, leading to the parametric estimate (based on inequality $2ab\leq \eta a^2+\eta^{-1}b^2$)
\begin{equation}\label{eq.minoeta}
   \forall \eta>0, \quad 
   q_{h}[\bA^{\bc},{\Pi_{\bc}}](\psi_{\bc}) \geq  
   (1-\eta) q_{h}[\bA^{\bc}_{\bfz},\Pi_{\bc}](\psi_{\bc})
  -\eta^{-1}\|(\bA^{\bc}-\bA^{\bc}_{\bfz})\psi_{\bc}\|^2\,.
\end{equation}
Since $\curl \bA^{\bc}_{\bfz}=\bB_{\bc}$, we have the lower bound by the minimum local energy at $\bc$:
\begin{align}
\label{eq.hEnc}
   q_{h}[\bA^{\bc}_{\bfz},\Pi_{\bc}](\psi_{\bc}) 
   &\geq    h\ee\En(\bB_{\bc},\Pi_{\bc})\|\psi_{\bc}\|^2\,
   \\
   &\geq    h\ee\sE(\bB,\Omega)\|\psi_{\bc}\|^2\, .
\label{eq.hEn}
\end{align}
According to Lemmas~\ref{lem.TaylorA} and \ref{L:d2A} (note that $d_{\gVc}\ge r_0>0$ on $\supp\Xi_0$), we have
\begin{equation}
\label{eq.AmAc}
   \|(\bA^{\bc}-\bA^{\bc}_{\bfz})\psi_{\bc}\| \leq 
   c(\Omega) \| \bA \|_{W^{2,\infty}({\Omega})} h^{2\delta} \|\psi_{\bc}\|\ . 
\end{equation}
Combining \eqref{eq.minoeta}--\eqref{eq.AmAc} we deduce for all $\eta>0$:
$$
   q_{h}[\bA^{\bc},{\Pi_{\bc}}](\psi_{\bc}) \geq   
   (1-\eta)h\ee\sE(\bB,\Omega)\|\psi_{\bc}\|^2
  -\eta^{-1} h^{4\delta} c(\Omega)^2 \| \bA\|_{W^{2,\infty}({\Omega})}^2 \|\psi_{\bc}\|^2 .
$$
Choosing $\eta=h^{2\delta-\frac12}$ to equilibrate $\eta h$ and $\eta^{-1} h^{4\delta}$, we get the following lower bound
\begin{equation}
\label{eq:psijh}
  q_{h}[\bA^{\bc},{\Pi_{\bc}}](\psi_{\bc})  
    \geq \left(h\ee\sE(\bB,\Omega)-C_\Omega \big(1+\| \bA\|_{W^{2,\infty}({\Omega})}^2\big)
   h^{2\delta+\frac12}\right)\|\psi_{\bc}\|^2,\quad \forall\bc\in\sC(h).
\end{equation}
\subsubsection*{Conclusion}
Combining the previous localized estimate \eqref{eq:psijh} with \eqref{E:f-psi} we deduce:
\begin{equation}\label{eq.fjpolyh}
   q_{h}[\bA,\Omega](\tronc\,\Xi_{0}f)  \geq
   \left(h\sE(\bB,\Omega)-C_\Omega(1+\| \bA\|_{W^{2,\infty}({\Omega})}^2)
   (h^{2\delta+\frac12}+h^{1+\delta}) \right)  \|\tronc\,\Xi_{0}f\|^2.
\end{equation}
Summing up in $\bc\in\sC(h)$, we obtain
\begin{equation}
\label{E:minoration99}
   \frac{\sum_{\bc\in\sC(h)} q_{h}[\bA,\Omega](\tronc\,\Xi_{0}f)}{\|\Xi_{0}f\|^2_{L^2(\Omega)}}  \geq
   h\sE(\bB,\Omega)-C_\Omega(1+\| \bA\|_{W^{2,\infty}({\Omega})}^2)
   (h^{2\delta+\frac12}+h^{1+\delta}).
\end{equation}
Using \eqref{E:minorationpartition}, we get another constant $C_\Omega>0$ such that for all $f\in H^1(\Omega)$,
\begin{equation}\label{eq.minoO'}
   \QR_{h}[\bA,\Omega](\Xi_{0}f)  \geq 
   h\sE(\bB,\Omega)-C_\Omega(1+\| \bA\|_{W^{2,\infty}({\Omega})}^2)
   \big(h^{2\delta+\frac12}+h^{1+\delta}+h^{2-2\delta}\big).
\end{equation}
In the polyhedral case, $\Xi_0\equiv1$ and the remainders are optimized by taking $\delta=\frac{3}{8}$ in \eqref{eq.minoO'}, which implies Theorem \ref{T:generalLB} in this case.

\section{Estimates near conical points}\label{SS.minoconic}
Let $\bx_{0}\in\gVc$\index{Conical point}. We estimate $q_{h}[\bA,\Omega](\Xi_{\bx_{0}}f)$ from below. 

\subsubsection*{IMS partition}
For $h>0$ small enough we construct a special covering of the support of $\Xi_{\bx_{0}}$. We recall that this support is included in the ball $\cB(\bx_0,R_{\bx_0})$. We cover $\cB(\bx_0,R_{\bx_0})\cap\overline\Omega$ by a finite collection of $h$-dependent balls $\cB(\bc,\rho_\bc)$:
\begin{itemize}
\item The first ball is centered at $\bx_0$ itself and its radius is $2h^{\de0}$: $\cB(\bc,\rho_\bc) = \cB(\bx_0,2h^{\de0})$. Here the exponent $\de0\in(0,\frac12)$ will be chosen later on.
\item The other balls $\cB(\bc,\rho_\bc)$ cover the annular region $h^{\de0}\le|\bx-\bx_0|<R_{\bx_0}$ and their radii are $\simeq h^{\de0+\de1}$ where the new exponent $\de1>0$ is such that $\de0+\de1<\frac12$ and will be also chosen later on. Thanks to Lemma \ref{lem:IMScov} the set $\sC(h,\bx_0)$ of the centers and the corresponding radii can be taken so that the conditions of this lemma are satisfied (inclusion in map-neighborhoods, finite covering), see previous case Section\ref{SS.minopoly}.
\end{itemize}
So this covering contains a ``large'' ball centered at the corner and a whole bunch of smaller ones covering the remaining part.

Relying on Lemma \ref{lem:IMSpart}, we choose an associate partition of unity $\big(\tronc\big)_{\bc\in\{\bx_{0}\}\cup\sC(h,\bx_{0})}$ such that 
$$
   \tronc\in \sC^{\infty}_0(\cB(\bc,\rho_{\bc})), \ \ \forall\bc\in\{\bx_{0}\}\cup\sC(h,\bx_{0}),
   \qquad\mbox{and}\qquad
   \Xi_{\bx_0} \hskip-0.5em \sum_{\bc\in\{\bx_{0}\}\cup\sC(h,\bx_0)} \hskip-0.5em
   \tronc^2=\Xi_{\bx_0}\quad\mbox{on}\quad \overline{\Omega},  
$$
and satisfying the following uniform estimate of gradients for all $h\in (0,h_{0})$: 
\begin{equation}
\label{E:contgradtronc}
   \mbox{for}\ \ \bc=\bx_0,\ \  
   \|\nabla\tronc\|_{L^{\infty}(\Omega)} \leq C h^{-\de0} 
    \quad \mbox{and}\quad
    \forall \bc\in\sC(h,\bx_0),\ \  
   \|\nabla\tronc\|_{L^{\infty}(\Omega)} \leq C h^{-\de0-\de1} \, . 
\end{equation}
Using the IMS formula (see Lemma~\ref{lem:IMS}), we have like previously in \eqref{E:minorationpartition}
\begin{equation}
\label{E:minoIMS}
   q_{h}[\bA,\Omega](\Xi_{\bx_0}f) 
   \geq q_{h}[\bA,\Omega](\xi_{\bx_{0}}\ \Xi_{\bx_0}f)+\hskip-0.5em\sum_{\bc\in\sC(h,\bx_{0})}
   \hskip-0.5em q_{h}[\bA,\Omega](\tronc\ \Xi_{\bx_0}f) -
   C h^{2-2(\de0+\de1)} 
   \|\Xi_{\bx_0}f\|^2. 
\end{equation}

\subsubsection*{Local control of the energy}
When $\bc=\bx_0$, we can proceed in the same way as in the polyhedral case due to the ``good'' estimates stated in Lemma~\ref{L:chgvarloc}\,\eqref{It:cvloc} and Lemma \ref{L:d2A}\,\eqref{It:Ld2A}. So we obtain a similar estimate as in \eqref{eq.fjpolyh}: There exists a constant $C=C(\Omega)$ such that for any function $f\in H^1(\Omega)$ 
\begin{equation}\label{eq.minoJ1}
   q_{h}[\bA,\Omega](\xi_{\bx_0}\Xi_{\bx_0}f)  \geq
   \left(h\sE(\bB,\Omega)- C (1+\| \bA\|_{W^{2,\infty}({\Omega})}^2)
   (h^{2\de0+\frac12}+h^{1+\de0}) \right)  \|\xi_{\bx_0}\Xi_{\bx_0}f\|^2.
\end{equation}

When $\bc\in\sC(h,\bx_{0})$, we have to revisit the arguments leading from \eqref{E:psic} to the final individual estimate \eqref{eq.fjpolyh}. First we define $\psi_\bc$ like in \eqref{E:psic}, replacing the cut-off $\Xi_0$ by $\Xi_{\bx_0}$. Then we have \eqref{E:chgGc} \emph{mutatis mutandis}. Next we have to use Lemma \ref{L:chgvarloc}\,\eqref{It:cvlocbis} with $\bu_0=\bc$ to flatten the metric. Here we have to take the distance $d_{\gVc}(\bc)$ to conical points into account. By construction $d_{\gVc}(\bc)$ coincides with $|\bc-\bx_0|$, so is larger than $h^{\de0}$, while the quantity $r$ equals $\rho_\bc$, thus is $\lesssim h^{\de0+\de1}$: In short
\[
   \frac{r}{d_{\gVc}(\bc)} = \frac{\rho_\bc}{|\bc-\bx_0|} \lesssim h^{\de1}.
\]
Hence, we obtain in place of \eqref{E:f-psi}:
\begin{equation}
\label{E:f-psic}
   \QR_{h}[\bA^{\bc},\Pi_{\bc},\rG^{\bc}](\psi_{\bc}) 
   \geq (1-c(\Omega)h^{\de1}) \QR_{h}[\bA^{\bc},\Pi_{\bc}](\psi_{\bc})\,.
\end{equation}
For the linearization of the potential $\bA^\bc$, the expressions \eqref{eq:diff}--\eqref{eq.hEn} are still valid, leading to the parametric estimate
\begin{equation}\label{eq.minoetaEn}
   \forall \eta>0, \quad 
   q_{h}[\bA^{\bc},{\Pi_{\bc}}](\psi_{\bc}) \geq  
   (1-\eta) h\sE(\bB,\Omega)\|\psi_{\bc}\|^2
  -\eta^{-1}\|(\bA^{\bc}-\bA^{\bc}_{\bfz})\psi_{\bc}\|^2\,.
\end{equation}
Here we use Lemmas~\ref{lem.TaylorA} and \ref{L:d2A}\,\eqref{It:Ld2Abis} and obtain, since $\rho_\bc\lesssim h^{\de0+\de1}$ and $d_{\gVc}(\bc)\ge h^{\de0}$
\begin{equation}
\label{eq.AmAc2}
   \|(\bA^{\bc}-\bA^{\bc}_{\bfz})\psi_{\bc}\| \leq 
   c(\Omega)\frac{\rho_{\bc}^2}{d_{\gVc}(\bc)}
   \| \bA \|_{W^{2,\infty}({\Omega})} \|\psi_{\bc}\|
   \leq   c(\Omega) h^{\de0+2\de1}\| \bA \|_{W^{2,\infty}({\Omega})} \|\psi_{\bc}\|\,. 
\end{equation}
Combining \eqref{eq.minoetaEn} with \eqref{eq.AmAc2} and taking $\eta=h^{\de0+2\de1-\frac 12}$
we deduce
\begin{equation}
\label{eq:psijh2}
   q_{h}[\bA^{\bc},{\Pi_{\bc}}](\psi_{\bc}) 
   \geq \left(h\ee\sE(\bB,\Omega)-C(\Omega) \big(1+\| \bA\|_{W^{2,\infty}({\Omega})}^2\big)
   h^{\de0+2\de1+\frac12}\right)\|\psi_{\bc}\|^2 ,\quad \forall\bc\in\sC(h,\bx_0),
\end{equation}
and then with \eqref{E:f-psic} (and \eqref{E:chgGc} with $\Xi_{\bx_0}$)
\begin{equation}\label{eq.minocoin}
   q_{h}[\bA,\Omega](\xi_{\bc}\Xi_{\bx_0}f)  \geq
   \left(h\sE(\bB,\Omega)- C (1+\| \bA\|_{W^{2,\infty}({\Omega})}^2)
   (h^{\de0+2\de1+\frac12}+h^{1+\de1}) \right)  \|\xi_{\bc}\Xi_{\bx_0}f\|^2.\hskip-1.em
\end{equation}
Summing up \eqref{eq.minoJ1} and \eqref{eq.minocoin} for $\bc\in\sC(h,\bx_0)$, and combining with the IMS formula, we deduce
\begin{equation}\label{eq.minoconic}
\QR_{h}[\bA,\Omega](\Xi_{\bx_{0}}f)
\geq h \sE(\bB,\Omega)-C( h^{2\de0+\frac12}+h^{1+\delta_{0}}+h^{\frac 12+\de0+2\de1}+h^{1+\de1} +h^{2-2(\de0+\de1)}),
\end{equation}
with $C=c(\Omega)   (1+\| \bA\|_{W^{2,\infty}({\Omega})}^2)$.

\subsubsection*{Conclusion}
Combining \eqref{eq.IMSmacro}, \eqref{eq.minoO'} and \eqref{eq.minoconic}, we deduce
\begin{multline}\label{eq.miniglo}
\QR_{h}[\bA,\Omega](f)  
\geq h \sE(\bB,\Omega) - Ch^2
-C\left(h^{2\delta+\frac 12} +h^{1+\delta}+h^{2-2\delta}\right)\\
-C \left( h^{2\de0+\frac12}+h^{1+\delta_{0}}+h^{\frac 12+\de0+2\de1}+h^{1+\de1} +h^{2-2(\de0+\de1)}\right),
\end{multline}
with $C=c(\Omega)   (1+\| \bA\|_{W^{2,\infty}({\Omega})}^2)$. 

Remind that the error with power $\de0$ and $\de1$ only appears when $\Omega$ has conical points. 
To optimize the remainder, we first choose $\delta=3/8$. 
We have now to optimize parameters $\de0, \de1$ under the constraints $0< \de0+\de1<\frac 12$, $\de0>0$, $\de1>0$.
We have 
$$\min (1+\de0,\tfrac12+2\de0)=\tfrac12+2\de0,$$
and
$$\min (1+\de1,\tfrac12+\de0+2\de1)=\tfrac12+\de0+2\de1.$$
We are reduced to solve
$$\begin{cases}
\frac12+2\de0=\frac12+\de0+2\de1\\
\frac12+2\de0= 2-2\de0-2\de1
\end{cases}
\Longleftrightarrow\quad
\begin{cases}
2\de1 = \de0\\
\frac32=4\de0+2\de1
\end{cases}
\Longleftrightarrow \quad\de0=\frac 3{10} \ \mbox{and}\ \de1=\frac 3{20}.
$$
Then we get $C(\Omega)>0$ such that
\begin{equation}
\label{E:Minfinle}
   \forall f\in H^1(\Omega), \quad  
   \QR_{h}[\bA,\Omega](f) \geq 
   h\sE(\bB,\Omega)-C(\Omega)
   \big(1+\| \bA\|_{W^{2,\infty}({\Omega})}^2\big)h^{11/10}.
\end{equation}

For further use we extract the following corollary of the previous proof: 
\begin{corollary}
\label{C:Metalbloc}
Let $\bx_{0}\in \overline{\Omega}$ and $K:=\cB(\bx_{0},\delta)$ with $\delta>0$. We define 
$$\sE_{K}(\bB,\Omega):=\inf_{\bx\in \overline{\Omega}\cap \overline K}E(\bB_{\bx},\Pi_{\bx}) \, .$$
Then there exists $C>0$ and $h_{0}>0$ such that for all $h\in (0,h_{0})$ and for all $f\in \dom(q_{h}[\bA,\Omega])$ with support $\supp f \subset\subset K$:
$$ \QR_{h}[\bA,\Omega](f) \geq h\sE_{K}(\bB,\Omega)-C h^{11/10}\,.$$ 
\end{corollary}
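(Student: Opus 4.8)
The statement to prove is Corollary~\ref{C:Metalbloc}, a localized version of the lower bound established in Theorem~\ref{T:generalLB}. Let me think through how I would prove it.

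The plan is to treat Corollary~\ref{C:Metalbloc} as a localized version of Theorem~\ref{T:generalLB} and rerun that proof, keeping track only of the places where the global infimum $\sE(\bB,\Omega)$ was invoked. Throughout, $\bA$ is fixed, so the factor $\big(1+\|\bA\|^2_{W^{2,\infty}(\Omega)}\big)$ that decorates all the intermediate estimates of Chapter~\ref{sec:low} can simply be absorbed into the final constant $C$.

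Since $\supp f\subset\subset K=\cB(\bx_0,\delta)$, I would first fix $\delta'\in(0,\delta)$ with $\supp f\subset\cB(\bx_0,\delta')$, and then reproduce, essentially word for word, the IMS construction of Sections~\ref{SS.minopoly}--\ref{SS.minoconic}, but covering $\supp f$ rather than all of $\overline\Omega$: the macro partition $(\Xi_0,(\Xi_\bx)_{\bx\in\gVc})$ separating conical points, then on $\supp\Xi_0$ an $h^{\delta}$-scale partition $(\tronc)_{\bc\in\sC(h)}$ obtained from Lemmas~\ref{lem:IMScov} and~\ref{lem:IMSpart}, and near each $\bx\in\gVc$ with $\supp(\Xi_\bx f)\neq\emptyset$ the two-scale covering (large ball of radius $2h^{\de0}$ at $\bx$, small balls of radius $\simeq h^{\de0+\de1}$ on the annulus). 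The one new point is quantitative: all centers $\bc$ of all these balls lie within distance $o(1)$ of $\supp f\subset\cB(\bx_0,\delta')$ as $h\to0$, so for $h\le h_0$ (with $h_0$ depending on $\delta-\delta'$) every center satisfies $\bc\in\overline\Omega\cap\overline K$. Therefore, at the step \eqref{eq.hEnc}--\eqref{eq.hEn} (respectively its analogue \eqref{eq.minoetaEn} near conical points) where the proof uses $\En(\bB_\bc,\Pi_\bc)\ge\sE(\bB,\Omega)$, one may instead use $\En(\bB_\bc,\Pi_\bc)\ge\sE_K(\bB,\Omega)$ by the very definition of $\sE_K$.

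With this single substitution the whole chain of perturbation estimates goes through unchanged: the change of variables \eqref{E:psic}--\eqref{E:chgGc}, the metric flattening via Lemma~\ref{L:chgvarloc}\,\eqref{It:cvloc} or~\eqref{It:cvlocbis} (relying on Corollary~\ref{C:expandJ}), the Taylor/linearization bounds of Lemmas~\ref{lem.TaylorA} and~\ref{L:d2A}, and the Cauchy--Schwarz parametric inequality \eqref{eq.minoeta}. This produces the individual lower bounds \eqref{eq.fjpolyh} and \eqref{eq.minocoin} with $\sE_K(\bB,\Omega)$ replacing $\sE(\bB,\Omega)$; summing over $\bc$ through the IMS formula (Lemma~\ref{lem:IMS}) gives, exactly as in \eqref{eq.miniglo}, a lower bound of the form $h\,\sE_K(\bB,\Omega)$ minus the same collection of error powers, and the optimization $\delta=\tfrac38$, $\de0=\tfrac3{10}$, $\de1=\tfrac3{20}$ yields the remainder $O(h^{11/10})$. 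The only genuinely delicate bookkeeping --- and the point I would flag as the main obstacle --- is the localization near conical points: one must check both that the two-scale covering attached to a conical point $\bx\in\gVc\cap\overline K$ stays inside $K$ for $h$ small, and that a conical point $\bx\in\gVc$ not lying in $\overline{\supp f}$ influences $\supp f$ only across a region at fixed positive distance from $\bx$, where $d_{\gVc}$ is bounded below so that the ``good'' estimate of Lemma~\ref{L:chgvarloc}\,\eqref{It:cvloc} applies with no second scale. Everything else is a direct transcription of the proof of Theorem~\ref{T:generalLB}.
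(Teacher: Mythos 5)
Your proposal is correct and follows essentially the same route as the paper's proof: both simply rerun the IMS argument of Theorem~\ref{T:generalLB} with a covering localized to (a neighborhood of $\supp f$ inside) $K$ rather than all of $\overline\Omega$, so that every center $\bc$ lies in $\overline\Omega\cap\overline K$ and the lower bound \eqref{eq.hEn} can invoke $\sE_K(\bB,\Omega)$ in place of $\sE(\bB,\Omega)$, all other perturbation estimates and the optimization $\delta=\tfrac38$, $\de0=\tfrac3{10}$, $\de1=\tfrac3{20}$ being unchanged. The extra bookkeeping you flag about conical points and the $\delta'$-margin is sound and slightly more explicit than what the paper writes, but it does not change the argument.
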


\begin{proof}
The corollary is obtained by slight modifications in the above proof. First we make a covering of $\overline{\Omega}\cap K$ instead in $\overline{\Omega}$. Therefore in the lower bound \eqref{eq.hEnc}, we only have to consider $\bc\in K$, and the energy is bounded below by $\sE_{K}(\bB,\Omega)$ in \eqref{eq.hEn}. We finally reached \eqref{E:Minfinle} and deduce the Corollary. 
\end{proof}

\section{Generalization}
\label{ss:genlow}
For the proofs above, we used very little knowledge on the magnetic Lapla\-cians---essentially the change of gauge\index{Gauge transform}, the change of variables, and the perturbation identity \eqref{eq:diffAA'}. 
The finest part of the analysis is related to the corner structure. With the same approach and relying on the general estimates presented in Section \ref{sss:schain_atl}, we are able to establish lower bounds for the ground state energy of magnetic Laplacians in $n$-dimensional corner domains.

Let $\Omega\in\gD(\R^n)$, and let us introduce $\nu$ as the maximal integer such that there exists a singular chain $(\bx_0,\ldots,\bx_{\nu-1})$ of length\index{Singular chain!Length} $\nu$ with a non-polyhedral reduced cone $\Gamma_{\bx_0,\ldots,\bx_{\nu-1}}$. We make the convention that $\nu=0$ if all tangent cones are polyhedral.

Using an IMS partition on a hierarchy of balls of size $h^{\de0}$, $h^{\de0+\de1}$, \ldots, $h^{\de0+\de1+\ldots+\delta_\nu}$ according to the position of their centers, and taking advantage of estimates \eqref{eq:Kgen}, we arrive to the following collection of errors
\begin{align*}
& h^{1+\de0},\ h^{1+\de1},\ \ldots,\ h^{1+\delta_\nu} \\
& h^{\frac12+2\de0}, \ h^{\frac 12+\de0+2\de1},\ \ldots,\ 
h^{\frac 12+\de0+\ldots+\delta_{\nu-1}+2\delta_\nu} \\
&h^{2-2(\de0+\de1+\ldots+\delta_\nu)},
\end{align*}
which is optimized choosing
\[
   \delta_k = 2^{\nu-k}\delta_\nu,\ k=0,\ldots,\nu,\quad\mbox{with}\quad
   \delta_\nu = \frac{3}{3\cdot 2^{\nu+2}-4}\,.
\]
The outcome is the following lower bound
\[
   \lambda_{h}(\bB,\Omega) \geq
   h\sE(\bB,\Omega)-C(\Omega) \big(1+\|\bA\|_{W^{2,\infty}(\Omega)}^2 \big)
   h^{1+1/(3\cdot2^{\nu+1}-2)}\ . 
\]
Here $\sE(\bB,\Omega)$ is the natural generalization of \eqref{eq:sbis} to $n$-dimensional domains.
The results of Theorem \ref{T:generalLB} correspond to the values $\nu=1$ and $\nu=0$. Note that the remainder $\cO(h^{5/4})$ is valid in a polyhedral domain in any dimension ($\nu=0$).

\part{Upper bounds}
\label{part:3}

\chapter{Taxonomy of model problems}
\label{sec:tax}
\index{Taxonomy|textbf}
Refined estimates for an \emph{upper bound}
of the ground state energy $\lambda_{h}(\bB,\Omega)$ will be obtained with the help of quasimode\index{Quasimode} constructions. This relies on a better knowledge of tangent model problems\index{Tangent operator} $\OP(\bA_\dx,\Pi_\dx)$ for any singular chain $\dx$ of $\Omega$. In this section, we review and, when required, complete, essential facts concerning three-dimensional model problems, that is magnetic Laplacians $\OP(\bA,\Pi)$ where $\Pi$ is a cone in $\gP_3$ and $\bA$ is a linear potential. 

With the aim of constructing quasimodes for our original problem on $\Omega$, we need (bounded) generalized eigenvectors for its tangent problems. To introduce such eigenvectors we make use of the localized domain $\dom_{\,\loc} \left(\OP(\bA,\Pi)\right)$ of the model magnetic Laplacian $\OP(\bA,\Pi)$ as introduced in \eqref{D:domloc}:

\begin{definition}[Generalized eigenvector]
\label{def:geneig}
Let $\Pi\in\gP_3$ be a cone and $\bA$ a linear magnetic potential. We call {\em generalized eigenvector}\index{Generalized eigenvector} for $\OP(\bA\ee,\Pi)$ a nonzero function $\Psi\in\dom_{\,\loc}(\OP(\bA\ee,\Pi))$ associated with a real number $\Lambda$, so that
\begin{equation}
\label{eq:geneig}
\begin{cases}
(-i\nabla+\bA)^2\Psi=\Lambda\Psi &\mbox{in } \Pi,\\
(-i\nabla+\bA)\Psi\cdot\bn =0 &\mbox{on } \partial\Pi.
\end{cases}
\end{equation}
\end{definition}

Let $\Pi\in\gP_3$ be a 3D cone and let $\bB$ be a constant magnetic field associated with a linear potential $\bA$. Let $d$ be the reduced dimension of $\Pi$ and $\Gamma\in\gP_d$ be a minimal reduced cone\index{Minimal reduced cone} associated with $\Pi$. We recall from Definition \ref{def:redcone} that this means that $\Pi\equiv\R^{3-d}\times\Gamma$ and that the dimension $d$ is minimal for such an equivalence. By analogy with Definition~\ref{def.Cx}, $\gC_\bfz(\Pi)$ denotes the set of singular chains of $\Pi$ originating at its vertex $\bfz$ and $\gC^*_\bfz(\Pi)$ is the subset of chains of length $\ge2$. Note that $\gC^*_\bfz(\Pi)$ is empty if and only if $\Pi=\R^3$, {\it i.e.}, if $d=0$. We introduce the 
energy on tangent substructures:

\begin{definition}[Energy on tangent substructures]\label{def.seE}\index{Energy on tangent substructures}
We define the quantity\index{Energy on tangent substructures!$\seE(\bB \ee,\Pi)$}
\begin{equation}
\label{eq:s*}
 \seE(\bB \ee,\Pi):=
 \begin{cases}
  \inf_{\dx\in\gC^{*}_0(\Pi)}\En(\bB \ee,\Pi_{\dx})  & \mbox{if $d>0$,} \\
  +\infty  & \mbox{if $d=0$,} \\
\end{cases}
\end{equation}
which is the infimum of the ground state energy of the magnetic Laplacian over all the singular chains of length $\geq2$.
\end{definition} 

We will see later in Chapter \ref{sec:dicho} that this quantity plays a key role 
in the existence of generalized eigenvectors that have exponential decay properties in certain directions. 

Now, in each of Sections \ref{subs:R3}--\ref{subs:coin} we consider one value of the reduced dimension\index{Reduced dimension} $d$, ranging from $0$ to $3$ and give in each case relations between the ground state energy $\En(\bB,\Pi)$ and the energy on tangent substructures $\seE(\bB,\Pi)$, and we provide generalized eigenvectors $\Psi$ if they exist. 

On the one hand, thanks to Lemma~\ref{lem.dilatation}, we may reduce the arguments  to the case of a magnetic field of unit length: $|\bB|=1$.
On the other hand, quantities $\En(\bB,\Pi)$ and $\seE(\bB,\Pi)$ are independent of a choice of Cartesian coordinates. 
Thus, once $\Pi$ and a constant magnetic field $\bB$ of unit length are chosen, we exhibit a system of Cartesian coordinates $\bx=(x_1,x_2,x_3)$ that allows the simplest possible description of the configuration $(\bB,\Pi)$. In these coordinates, the magnetic field can be viewed as a reference field, and for convenience, we denote it by $\uB=(b_{0},b_{1},b_{2})$. We also choose a corresponding reference linear potential $\uA$, since we have gauge independence by virtue of Lemma \ref{lem:gauge}.

\section{Full space \ ($d=0$) }
\label{subs:R3}
$\Pi$ is the full space. We take coordinates $\bx=(x_1,x_2,x_3)$ so that
\[
   \Pi=\R^3 \quad\mbox{and}\quad \uB = (1,0,0),
\]
and choose as reference potential
$
   \uA = (0,-\tfrac{x_{3}}{2},\tfrac{x_{2}}{2})
$.
It is classical (see \cite{LauLifIII}) that the spectrum of $ \OP(\uA \ee,\R^3)$ is $[1,+\infty)$. Therefore 
\begin{equation}
\label{E:spectrespace}
\En(\uB \ee,\R^3) = 1 \ .
\end{equation}
A generalized eigenvector associated with the ground state energy is 
\begin{equation}\label{eq:aged0}
\Psi(\bx)= \re^{-(x_{2}^2+x_{3}^2)/4} \quad\mbox{with}\quad \Lambda=1. 
\end{equation}

\section{Half-space \ ($d=1$)} \index{Half-space|textbf}
\label{SS:HS}
$\Pi$ is a half-space. We take coordinates $\bx=(x_1,x_2,x_3)$ so that
\[
   \Pi = \R^2\times\R_+ := \{(x_1,x_2,x_3)\in\R^3,\ x_{3}>0\} 
   \quad\mbox{and}\quad
   \uB=(0,b_{1},b_{2}) \ \mbox{with}\ b_1^2+b_2^2=1 \, ,
\]
and choose as reference potential
$
   \uA = (b_{1}x_{3}-b_{2}x_{2},0,0)
$.
We note that 
\begin{equation}
\label{eq:En1}
  \seE(\uB \ee,\R^2\times\R_+) = \En(\uB,\R^3)=1.
\end{equation}
There exists $\theta\in[0,2\pi)$ such that $b_1=\cos\theta$ and $b_2=\sin\theta$. Due to symmetries we can reduce to $\theta\in[0,\frac{\pi}{2}]$.
Denote by $\cF_{1}$ the Fourier transform in $x_{1}$-variable and by $\tau$ the dual variable. We have:  
$$
   \cF_{1} \ H(\uA,\R^2\times\R_+)\ \cF_{1}^*=
   \int^{\bigoplus}_{\tau\in\R} \widehat \OP_{\tau}(\uA \ee,\R^2\times\R_+)\,\rd \tau . 
$$
where 
$ 
   \widehat \OP_{\tau}(\uA \ee,\R^2\times\R_+) 
   =  (\tau+b_{1}x_{3}-b_{2}x_{2})^2 - \partial_2^2  - \partial_3^2 .
$ 
We discriminate three cases:

\subsection{Tangent field}
$\theta=0$, then $\widehat \OP_{\tau}(\uA\ee,\R^2\times\R_+) = (\tau + x_3)^2 - \partial_2^2  - \partial_3^2$. Let $\xi$ be the partial Fourier variable associated with $x_{2}$. Define the operators 
$
   \widehat \OP_{\xi,\tau}(\uA\ee,\R^2\times\R_+) =  (\tau + x_{3})^2 + \xi^2 - \partial_{3}^2$
   and
   $\DG\tau = \rD_{3}^2 + (\tau + x_{3})^2\,,
$
where $\DG\tau$ \index{de Gennes operator}(sometimes called the de Gennes operator) acts on $L^2(\R_{+})$ with Neumann boundary conditions. Its first eigenvalue is denoted by $\mu(\tau)$, moreover 
\[
   \inf\gS(\widehat \OP_{\tau,\xi}(\uA\ee,\R^2\times\R_+)) = \mu(\tau)+\xi^2.
\]
From \cite{DauHe93}) we know that $\mu$ admits a unique minimum denoted by $\Theta_0\simeq0.59$ for the value $\tau_0=-\sqrt{\Theta_0}$.
Hence
\begin{equation}
\label{DicoHS1}
   \En(\uB \ee,\R^2\times\R_+) = \Theta_0 < \seE(\uB \ee,\R^2\times\R_+).
\end{equation}
If $\Phi$ denotes an eigenvector of $\DG{\tau_0}$, the corresponding generalized eigenvector for $\OP(\uA,\Pi)$ is
\begin{equation}
\label{eq:d1.1}
   \Psi(\bx) = \re^{-\ri\ee\sqrt{\Theta_0}\, x_{1}} \,\Phi(x_{3})
   \quad\mbox{with}\quad \Lambda=\Theta_0.
\end{equation}

\subsection{Normal field}
$\theta=\frac\pi2$, then $\widehat \OP_{\tau}(\uA,\R^2\times\R_+) = (\tau - x_2)^2 -\partial_2^2 - \partial_3^2$. There holds
for all $\tau\in \R$, $\inf\gS(\widehat \OP_{\tau}(\uA,\R^2\times\R_+))=1$ (see \cite[Theorem 3.1]{LuPan00}), hence
\begin{equation}
\label{DicoHS2}
   \En(\uB \ee,\R^2\times\R_+) = 1 = \seE(\uB \ee,\R^2\times\R_+).
\end{equation}

\subsection{Neither tangent nor normal}
$\theta\in(0,\frac\pi2)$. Then for any $\tau\in\R$, $\widehat \OP_{\tau}(\uA,\R^2\times\R_+) $ is isospectral to $\widehat \OP_{0}(\uA,\R^2\times\R_+)$ the ground state energy of which is an eigenvalue $\sigma(\theta)<1$, cf.\ \cite{HeMo02}. We deduce
\begin{equation}
\label{DicoHS3}
   \En(\uB,\R^2\times\R_+) = \sigma(\theta) <1 = \seE(\uB \ee,\R^2\times\R_+)..
\end{equation}
This eigenvalue $\sigma(\theta)$ is associated with an exponentially decreasing eigenvector $\Phi$ that is a function of $(x_{2},x_{3})\in\R\times\R_+$. The corresponding generalized eigenvector for $\OP(\uA,\Pi)$ is
\begin{equation}
\label{eq:d1.3}
   \Psi(\bx) = \Phi(x_{2},x_{3}) \quad\mbox{with}\quad \Lambda=\sigma(\theta).
\end{equation}
We recall from the literature:
\begin{lemma}
\label{P:continuitesigma}
The function $\theta\mapsto \sigma(\theta)$ is continuous and increasing on $(0,\frac{\pi}{2})$ (\cite{HeMo02,LuPan00}). Set $\sigma(0)=\Theta_0$ and $\sigma(\frac{\pi}{2})=1$. Then the function $\theta\mapsto \sigma(\theta)$ is of class $\sC^1$ on $[0,\frac{\pi}{2}]$ (\cite{BoDauPopRay12}).
\end{lemma}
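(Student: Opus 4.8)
\textbf{Proof plan for Lemma~\ref{P:continuitesigma}.}

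The statement collects three facts about the bottom $\sigma(\theta)$ of the spectrum of the model operator $\widehat\OP_{0}(\uA,\R^2\times\R_+)=(b_1 x_3-b_2 x_2)^2-\partial_2^2-\partial_3^2$ on the half-plane $\{x_3>0\}$ with Neumann boundary conditions, where $b_1=\cos\theta$, $b_2=\sin\theta$: continuity and strict monotonicity on $(0,\frac\pi2)$, and $\sC^1$ regularity on $[0,\frac\pi2]$ once we set $\sigma(0)=\Theta_0$ and $\sigma(\frac\pi2)=1$. Since this is a compilation of results already in the literature, my plan is to cite \cite{HeMo02,LuPan00} for continuity and monotonicity, and \cite{BoDauPopRay12} for the $\sC^1$ property, but to sketch the arguments so the statement is self-contained.

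First I would recall that after the partial Fourier transform in $x_1$ (already performed in the excerpt) the operator $\widehat\OP_{\tau}(\uA,\R^2\times\R_+)$ is unitarily equivalent to $\widehat\OP_{0}(\uA,\R^2\times\R_+)$ by the translation $x_2\mapsto x_2+ \tau/b_2$ (valid since $b_2=\sin\theta\neq 0$ on $(0,\frac\pi2)$), so the fibered decomposition gives $\sigma(\theta)=\inf\gS(\widehat\OP_0(\uA,\R^2\times\R_+))$ with no infimum over $\tau$ needed; this also shows $\sigma(\theta)$ is a genuine eigenvalue with an exponentially decaying eigenfunction $\Phi_\theta$, as stated. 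For \emph{monotonicity}, I would use the min-max characterization: rotating coordinates in the $(x_2,x_3)$ plane so that the linear form $b_1x_3-b_2x_2$ becomes $|\uB| \, t = t$ for a rotated variable $t$, the operator reads $t^2-\partial_s^2-\partial_t^2$ on a half-plane whose boundary makes angle $\theta$ with the direction of $t$; the dependence on $\theta$ is then entirely through the shape of the domain, and an explicit monotone family of test functions (or the quadratic-form monotonicity under the nested family of rotated half-planes) yields that $\theta\mapsto\sigma(\theta)$ is nondecreasing, with strictness coming from uniqueness of the ground state. For \emph{continuity} on the open interval, I would note that $\widehat\OP_0(\uA,\R^2\times\R_+)$ depends on $\theta$ as a holomorphic (type (A)) family of operators — the coefficient $b_1x_3-b_2x_2$ is entire in $\theta$ and the form domain is independent of $\theta$ — so the bottom eigenvalue, being isolated below the essential spectrum by the strict inequality $\sigma(\theta)<1=\seE$ from \eqref{DicoHS3}, depends analytically (in particular continuously and $\sC^1$) on $\theta$ in $(0,\frac\pi2)$ by Kato's perturbation theory.

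The only genuinely delicate points are the two endpoints. At $\theta=0$ the fiber operator degenerates: $b_2=0$, so the $x_2$-variable no longer decouples by translation and one recovers the de~Gennes operator $\mu(\tau)$ with its infimum $\Theta_0$ — hence the definition $\sigma(0):=\Theta_0$ is the natural limit, and showing $\sigma(\theta)\to\Theta_0$ as $\theta\to0^+$ together with $\sC^1$-matching of the derivative is exactly the content of \cite{BoDauPopRay12}; I expect this to be the main obstacle, as it requires a careful analysis of the splitting of the essential spectrum near $\theta=0$ and a non-degeneracy argument for $\mu$ at $\tau_0$. At $\theta=\frac\pi2$ one has the normal-field case where $\sigma(\frac\pi2)=1$ touches the bottom of the essential spectrum, so the eigenvalue is no longer isolated and continuity from the left must be obtained by a direct comparison argument (upper bound by explicit Landau-type quasimodes giving $\limsup\sigma(\theta)\le 1$, lower bound $\sigma(\theta)\ge\Theta_0>$ something is not enough, so one uses instead that $\sigma(\theta)\uparrow$ is bounded by $1$ and a quasimode construction showing the limit is $\ge 1-\varepsilon$). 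I would package both endpoint statements by referring to \cite{BoDauPopRay12}, where the $\sC^1$ regularity on the closed interval $[0,\frac\pi2]$ is proved, and to \cite{HeMo02,LuPan00} for the interior behaviour, which suffices for our purposes.
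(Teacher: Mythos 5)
The paper itself gives no proof here: the sentence immediately before the statement, ``We recall from the literature:'', signals that the lemma is only a compilation with references to \cite{HeMo02,LuPan00,BoDauPopRay12}. Your plan, which is essentially ``cite the literature and sketch why the claims are plausible'', therefore matches the paper's approach. The problems are in the sketches you added.

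For monotonicity, the ``nested family of rotated half-planes'' does not exist. After rotating $(x_2,x_3)$ so that the potential reads $t^2$, the domain becomes $\{t\cos\theta + s\sin\theta > 0\}$; as $\theta$ moves, its boundary line rotates about the origin, and any two such half-planes are congruent but pairwise non-nested, so there is no quadratic-form domain-monotonicity to exploit, and I do not see what ``monotone family of test functions'' you have in mind that could work in its place. The actual monotonicity argument in \cite{LuPan00,HeMo02} (see also Fournais--Helffer, \emph{Spectral methods in surface superconductivity}, \S 3.4) is a Feynman--Hellmann differentiation of $\sigma(\theta)$ combined with a commutator/virial identity to determine the sign of $\sigma'(\theta)$, not a comparison of domains. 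Second, your appeal to a Kato type (A) analytic family relies on the form domain being $\theta$-independent, and it is not: the direction in which $(b_1x_3-b_2x_2)^2$ grows linearly rotates with $\theta$, so $(b_1x_3-b_2x_2)u\in L^2$ for one angle does not imply $(b_1'x_3-b_2'x_2)u\in L^2$ for another (take $u$ concentrated along the degenerate direction of the first potential). Regularity of $\theta\mapsto\sigma(\theta)$ on the open interval is still true, but needs a further device (e.g.\ a $\theta$-dependent unitary transport to a fixed cone at the price of a $\theta$-dependent metric, or a direct resolvent argument), which is exactly why the $\mathcal C^1$ statement up to the endpoints is a genuine theorem of \cite{BoDauPopRay12} rather than a soft perturbation-theory consequence. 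None of this is fatal to your plan, since you fall back on the citations, but it is worth understanding why the abstract arguments you reach for do not close the proof.
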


\section{Wedges \ ($d=2$)} \index{Wedge|textbf} \label{subs:diedre}
$\Pi$ is a wedge and let $\alpha\in(0,\pi)\cup(\pi,2\pi)$ denote its opening. 
Let us introduce the model sector\index{Sector} $\cS_{\alpha}$ and the model wedge $\cW_{\alpha}$
\begin{equation}
\label{eq:Wa}
   \cS_{\alpha} = 
   \begin{cases}
   \{x=(x_2,x_3),\ x_2\tan\tfrac\alpha2>|x_3|\big\}   & \mbox{if $\alpha\in(0,\pi)$} \\
   \{x=(x_2,x_3),\ x_2\tan\tfrac\alpha2>-|x_3|\big\}  & \mbox{if $\alpha\in(\pi,2\pi)$}
\end{cases} 
   \quad\mbox{and}\quad
   \cW_\alpha = \R\times\cS_\alpha \,.
\end{equation}
We take coordinates $\bx=(x_1,x_2,x_3)$ so that
\[
   \Pi=\cW_\alpha \quad\mbox{and}\quad
   \uB=(b_{0},b_{1},b_{2}) \ \mbox{with}\ b_{0}^2+b_1^2+b_2^2=1 \, ,
\]
and choose as reference potential
$
   \uA = (b_{1}x_{3}-b_{2}x_{2},0,b_{0}x_{2})\,.
$
The singular chains of $\gC^*_\bfz(\cW_\alpha)$ have three equivalence classes, cf.\ Definition \ref{def:chaineq} and Description \ref{description}~(3): The full space $\R^3$ and the two half-spaces $\Pi^\pm_\alpha$ corresponding to the two faces $\partial^\pm\cW_\alpha$ of $\cW_\alpha$. 
Thus
\[
   \seE(\uB,\cW_\alpha) = \min\{\En(\uB,\R^3) ,\, \En(\uB,\Pi^+_\alpha), \,\En(\uB,\Pi^-_\alpha)
   \}.
\]
Let $\theta^{\pm}\in[0,\frac{\pi}{2}]$ be the angle between $\uB$ and the face $\partial\Pi^\pm_{\alpha}$. We have, cf.\ Lemma \ref{P:continuitesigma},
\begin{equation}
\label{eq:s*Da}
   \seE(\uB,\cW_\alpha) = \min\{ 1,\, \sigma(\theta^+), \sigma(\theta^-)\} =
   \sigma(\min\{\theta^+, \,\theta^-\}).
\end{equation}
With $\tau$ the dual variable of $x_1$ and
\begin{equation}
\label{D:Hhatsector}
   \widehat \OP_{\tau}(\uA \ee,\cW_\alpha) 
   =   (\tau+b_{1}x_{3}-b_{2}x_{2})^2 - \partial_2 ^2 + (-i\partial_3 + b_0x_2 )^2
\end{equation}
we have 
$$
   \cF_{1}\ H(\uA,\cW_{\alpha})\ \cF_{1}^*=
   \int^{\bigoplus}_{\tau\in\R} \widehat \OP_{\tau}(\uA \ee,\cW_\alpha) \,\rd \tau \,.
$$
Thus 
\begin{equation}
\label{E:relslambda}
 \En(\uB \ee,\cW_{\alpha}) =\inf_{\tau\in\R} s(\uB,\cS_{\alpha};\tau) 
 \quad\mbox{with}\quad
  s(\uB,\cS_{\alpha};\tau) := \inf\gS( \widehat \OP_{\tau}(\uA \ee,\cW_\alpha) )\,.
\end{equation}
We quote from \cite[Theorem 3.5]{Pop13}:

\begin{lemma}
\label{lem:pop}
Let $\alpha\in(0,\pi)\cup(\pi,2\pi)$. 
There holds the inequality 
\begin{equation}
\label{DichoW}
\En(\uB,\cW_{\alpha})\leq\seE(\uB,\cW_{\alpha}) .
\end{equation}
Moreover, if $\En(\uB,\cW_{\alpha})<\seE(\uB,\cW_{\alpha})$, then the function $\tau\mapsto s(\uB,\cS_{\alpha};\tau)$ reaches its infimum. Let $\tau^{*}$ be a minimizer. Then $\En(\uB,\cW_{\alpha})$ is the first eigenvalue of the operator $\widehat \OP_{\tau^{*}}(\uA \ee,\cW_\alpha) $ and any associated eigenfunction $\Phi$ has exponential decay.
The function 
\begin{equation}\label{eq:aged2}
   \Psi(\bx) = \re^{\ri\tau^*x_{1}}\Phi(x_{2},x_{3})
\end{equation}
is a generalized eigenvector for the operator $\OP(\uA,\cW_{\alpha})$ associated with $\Lambda=\En(\uB,\cW_{\alpha})$. 
\end{lemma}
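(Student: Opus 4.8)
The statement to prove is Lemma~\ref{lem:pop}, attributed to \cite[Theorem 3.5]{Pop13}, concerning the magnetic Laplacian on a wedge $\cW_\alpha$. Since it is quoted from the literature, my plan would be to either cite it directly or reconstruct the argument, which relies on the fibered structure already set up in the excerpt.

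\textbf{Outline of the approach.}
The starting point is the partial Fourier transform $\cF_1$ in the edge variable $x_1$, which has already been used to write $H(\uA,\cW_\alpha)$ as a direct integral $\int^{\bigoplus}_{\tau\in\R}\widehat\OP_\tau(\uA\ee,\cW_\alpha)\,\rd\tau$, so that by \eqref{E:relslambda} we have $\En(\uB\ee,\cW_\alpha)=\inf_{\tau\in\R}s(\uB,\cS_\alpha;\tau)$ with $s(\uB,\cS_\alpha;\tau)=\inf\gS(\widehat\OP_\tau(\uA\ee,\cW_\alpha))$. The first step is to establish the inequality $\En(\uB,\cW_\alpha)\le\seE(\uB,\cW_\alpha)$ \eqref{DichoW}. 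Here I would use the characterization \eqref{eq:s*Da} identifying $\seE(\uB,\cW_\alpha)$ with $\min\{1,\sigma(\theta^+),\sigma(\theta^-)\}$ and construct quasimodes for $\widehat\OP_\tau$ concentrated near one of the two faces $\partial^\pm\cW_\alpha$: restricting a rescaled half-space ground state (the eigenvector realizing $\sigma(\theta^\pm)$, which decays exponentially away from the boundary by \eqref{eq:d1.3}) and translating its support far along the direction transverse to the edge, one produces test functions whose Rayleigh quotient tends to $\sigma(\theta^\pm)$; the metric and potential are exactly constant on $\cW_\alpha$ so no perturbation error enters, only a cutoff error that vanishes in the limit. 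Taking the better of the two faces and also the Landau quasimode deep in the interior gives $\inf_\tau s(\uB,\cS_\alpha;\tau)\le\seE(\uB,\cW_\alpha)$, hence \eqref{DichoW}.

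\textbf{The dichotomy and existence of a minimizer.}
Assume now the strict inequality $\En(\uB,\cW_\alpha)<\seE(\uB,\cW_\alpha)$. The key analytic fact is that $\seE(\uB,\cW_\alpha)$ is the bottom of the essential spectrum of $\widehat\OP_\tau(\uA\ee,\cW_\alpha)$, \emph{uniformly} in $\tau$ — or more precisely that $\liminf_{|\tau|\to\infty}s(\uB,\cS_\alpha;\tau)\ge\seE(\uB,\cW_\alpha)$ and that for fixed $\tau$ the essential spectrum of $\widehat\OP_\tau$ starts at $\seE(\uB,\cW_\alpha)$. The latter is a Persson-type statement: the two-dimensional operator $\widehat\OP_\tau$ on the sector $\cS_\alpha$ has essential spectrum governed by its behavior at infinity, which along the two boundary rays reduces to half-plane de Gennes-type operators with the relevant field angles (giving $\sigma(\theta^\pm)$) and in the bulk to the Landau operator (giving $1$). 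Granting this, $\tau\mapsto s(\uB,\cS_\alpha;\tau)$ is continuous, bounded below by $\En(\uB,\cW_\alpha)$, with $\liminf$ at infinity at least $\seE(\uB,\cW_\alpha)>\En(\uB,\cW_\alpha)=\inf_\tau s$; hence the infimum is attained at some $\tau^*$, and at that $\tau^*$ we have $s(\uB,\cS_\alpha;\tau^*)=\En(\uB,\cW_\alpha)<\seE(\uB,\cW_\alpha)=\inf\gS_{\ess}(\widehat\OP_{\tau^*})$, so $\En(\uB,\cW_\alpha)$ is a genuine (discrete) eigenvalue of $\widehat\OP_{\tau^*}(\uA\ee,\cW_\alpha)$ with an eigenfunction $\Phi\in L^2(\cS_\alpha)$.

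\textbf{Exponential decay and the generalized eigenvector.}
Exponential decay of $\Phi$ follows from the spectral gap below the essential spectrum by a standard Agmon estimate: multiplying the eigenvalue equation by $\re^{2\Phi_{\mathrm{Ag}}}\Phi$ with an Agmon weight $\Phi_{\mathrm{Ag}}(x_2,x_3)$ growing linearly and chosen so that $|\nabla\Phi_{\mathrm{Ag}}|^2$ stays below $\seE(\uB,\cW_\alpha)-\En(\uB,\cW_\alpha)-\epsilon$ near infinity, one absorbs the gradient term and obtains $\|\re^{\Phi_{\mathrm{Ag}}}\Phi\|_{H^1}<\infty$. Finally, setting $\Psi(\bx)=\re^{\ri\tau^*x_1}\Phi(x_2,x_3)$ and undoing the Fourier transform, a direct computation shows $(-i\nabla+\uA)^2\Psi=\En(\uB,\cW_\alpha)\Psi$ in $\cW_\alpha$ with the Neumann condition on $\partial\cW_\alpha$ inherited from that of $\Phi$ on $\partial\cS_\alpha$, and $\Psi\in\dom_{\loc}(\OP(\uA,\cW_\alpha))$ since it is bounded (indeed $\Phi$ is bounded, being an exponentially decaying $H^2_{\loc}$ eigenfunction of an elliptic operator). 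This gives \eqref{eq:aged2}.

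\textbf{Main obstacle.}
The crux — and the only genuinely nontrivial point — is the uniform-in-$\tau$ lower bound $\liminf_{|\tau|\to\infty}s(\uB,\cS_\alpha;\tau)\ge\seE(\uB,\cW_\alpha)$, together with the precise identification of $\inf\gS_{\ess}(\widehat\OP_\tau)$ with $\seE(\uB,\cW_\alpha)$. Both require a careful partition-of-unity (IMS) argument on the unbounded sector $\cS_\alpha$, separating a neighborhood of each boundary ray, the vertex, and the interior, and comparing with the model half-plane and Landau operators; the behavior as $|\tau|\to\infty$ additionally uses that large $\tau$ pushes the effective potential $(\tau+b_1x_3-b_2x_2)^2$ to create a confining well whose bottom is controlled by the face energies. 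This is exactly the content of \cite[Theorem 3.5]{Pop13}, and in a self-contained treatment it would be the main work; everything else is routine once it is in hand.
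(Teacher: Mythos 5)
The paper does not prove Lemma~\ref{lem:pop}; it is a verbatim quotation from \cite[Theorem 3.5]{Pop13}, so there is no internal proof to compare against. Your sketch is a faithful reconstruction of the architecture of the proof in that reference: \eqref{DichoW} by translating tangent-substructure ground states into the wedge with a constant gauge correction (valid because $\uA$ is linear and the model operator has no metric or linearization errors); existence of a minimizer $\tau^*$ via continuity of $\tau\mapsto s(\uB,\cS_\alpha;\tau)$, a Persson-type identification of the bottom of the essential spectrum of $\widehat\OP_\tau(\uA\ee,\cW_\alpha)$ with $\seE(\uB,\cW_\alpha)$, and the lower bound $\liminf_{|\tau|\to\infty}s(\uB,\cS_\alpha;\tau)\ge\seE(\uB,\cW_\alpha)$, which together with the strict inequality yield a discrete eigenvalue of $\widehat\OP_{\tau^*}$; Agmon decay for $\Phi$ from the resulting spectral gap; and undoing the partial Fourier transform to obtain \eqref{eq:aged2}. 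You correctly single out the uniform-in-$\tau$ Persson/IMS estimate on the sector as the non-trivial analytic core that the cited reference supplies, and since the paper's own choice is to cite rather than reprove, your proposal is appropriately proportioned in the same way.
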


Finally, let us quote now the continuity result on wedges from \cite[Theorem 4.5]{Pop13}:
\begin{lemma}
\label{lem:contwedge}
The function $(\bB,\alpha)\mapsto \En(\bB,\cW_{\alpha})$ is continuous on $\dS^2\times ((0,\pi)\cup(\pi,2\pi))$. 
\end{lemma}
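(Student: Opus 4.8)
The plan is to derive the continuity from the fibered representation \eqref{E:relslambda} together with a two-sided semicontinuity argument. By Lemma~\ref{lem.dilatation} one may assume $|\bB|=1$, so $\uB$ ranges over $\dS^2$, and by Lemma~\ref{lem:gauge} the particular choice of linear potential $\uA$ associated with $\uB$ plays no role. Recall $\En(\uB\ee,\cW_\alpha)=\inf_{\tau\in\R}s(\uB,\cS_\alpha;\tau)$ with $s(\uB,\cS_\alpha;\tau)=\inf\gS(\widehat\OP_\tau(\uA\ee,\cW_\alpha))$, and note that, after flattening $\cS_\alpha$ onto a fixed reference sector by a linear map acting on $(x_2,x_3)$, the sesquilinear forms attached to $\widehat\OP_\tau(\uA\ee,\cW_\alpha)$ depend continuously on $(\uB,\alpha,\tau)$. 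A preliminary remark is that $\seE(\uB\ee,\cW_\alpha)$ is continuous: by \eqref{eq:s*Da} it equals $\sigma(\min\{\theta^+,\theta^-\})$, where the angles $\theta^\pm$ between $\uB$ and the two faces of $\cW_\alpha$ vary continuously with $(\uB,\alpha)$ and $\sigma$ is continuous by Lemma~\ref{P:continuitesigma}; since $\En\le\seE$ by Lemma~\ref{lem:pop}, this already provides a continuous a priori majorant.

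For \emph{upper semicontinuity} I would transplant quasimodes. Fix $(\uB_0,\alpha_0)$ and $\varepsilon>0$, and using density of compactly supported functions in the form domain pick a normalized $\qm$ supported in a ball with $q[\uA_0,\cW_{\alpha_0}](\qm)\le\En(\uB_0\ee,\cW_{\alpha_0})+\varepsilon$. For $(\uB,\alpha)$ near $(\uB_0,\alpha_0)$ transplant $\qm$ by the linear map sending $\cS_{\alpha_0}$ onto $\cS_\alpha$ (which tends to the identity), and replace $\uA_0$ by a linear potential for $\uB$; since the associated metric tends to $\Id$ and the potential varies continuously on the fixed support, the Rayleigh quotient converges, so $\limsup_{(\uB,\alpha)\to(\uB_0,\alpha_0)}\En(\uB\ee,\cW_\alpha)\le\En(\uB_0\ee,\cW_{\alpha_0})$.

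The delicate point is \emph{lower semicontinuity}. Take $(\uB_n,\alpha_n)\to(\uB_0,\alpha_0)$ realizing $\ell:=\liminf\En(\uB_n\ee,\cW_{\alpha_n})$. The indices with $\En(\uB_n\ee,\cW_{\alpha_n})=\seE(\uB_n\ee,\cW_{\alpha_n})$ contribute $\seE(\uB_n\ee,\cW_{\alpha_n})\to\seE(\uB_0\ee,\cW_{\alpha_0})\ge\En(\uB_0\ee,\cW_{\alpha_0})$, so assume $\En(\uB_n\ee,\cW_{\alpha_n})<\seE(\uB_n\ee,\cW_{\alpha_n})$ for all $n$; Lemma~\ref{lem:pop} then provides fiber minimizers $\tau_n^*$ and exponentially decaying $L^2$-normalized eigenfunctions $\Phi_n$ of $\widehat\OP_{\tau_n^*}(\uA_n\ee,\cW_{\alpha_n})$ with eigenvalue $\En(\uB_n\ee,\cW_{\alpha_n})$. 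If $\ell\ge\seE(\uB_0\ee,\cW_{\alpha_0})$ we are done since $\seE\ge\En$. If $\ell<\seE(\uB_0\ee,\cW_{\alpha_0})$, then for large $n$ there is a uniform spectral gap $\En(\uB_n\ee,\cW_{\alpha_n})\le\ell'<\seE(\uB_n\ee,\cW_{\alpha_n})$; using that $s(\uB,\cS_\alpha;\tau)\ge\seE(\uB\ee,\cW_\alpha)-o(1)$ as $|\tau|\to\infty$, uniformly for $(\uB,\alpha)$ in a compact neighborhood of $(\uB_0,\alpha_0)$, one confines the $\tau_n^*$ to a bounded interval and extracts $\tau_n^*\to\tau^*$. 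Uniform Agmon estimates (legitimate thanks to the gap) together with interior elliptic estimates on the flattened sector yield, up to a subsequence, strong $L^2$ convergence $\Phi_n\to\Phi_\infty\neq0$; passing to the limit in the eigenvalue equation identifies $\Phi_\infty$ as an eigenfunction of $\widehat\OP_{\tau^*}(\uA_0\ee,\cW_{\alpha_0})$ with eigenvalue $\lim\En(\uB_n\ee,\cW_{\alpha_n})$, whence $\ell\ge\En(\uB_0\ee,\cW_{\alpha_0})$. Combining the two semicontinuities gives the claim.

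The step I expect to be the main obstacle is exactly this compactness argument: it requires a version of Lemma~\ref{lem:pop} that is \emph{uniform} across nearby configurations, controlling the fiber minimizers $\tau_n^*$ and guaranteeing uniform exponential decay of the fibered ground states, so that no $L^2$ mass escapes to infinity in the limit. The dichotomy on whether $\ell$ sits strictly below or exactly at $\seE(\uB_0\ee,\cW_{\alpha_0})$ is what lets the argument go through also at configurations where $\En=\seE$, for which no eigenfunction is available at the limit to pass to.
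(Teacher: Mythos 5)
The paper does not prove this lemma: it is quoted directly from \cite[Theorem 4.5]{Pop13}, so there is no in-paper argument to compare against. That said, your proposal reproduces what is in substance the standard (and, as far as I can tell, Popoff's) line of attack. Upper semicontinuity by transplanting a compactly supported near-minimizer through the linear map that identifies $\cS_{\alpha_0}$ with $\cS_\alpha$ is routine. For lower semicontinuity, the decisive observation is the dichotomy $\En=\seE$ versus $\En<\seE$: in the first case the continuity of $\seE$, obtained from \eqref{eq:s*Da} and Lemma~\ref{P:continuitesigma}, together with $\En\le\seE$ (Lemma~\ref{lem:pop}) closes the argument; in the second case one must extract a converging subsequence of fiber minimizers $\tau_n^*$ and of the associated fiber eigenfunctions, which requires precisely the two inputs you flag: a uniform lower bound $s(\uB,\cS_\alpha;\tau)\ge\seE(\uB,\cW_\alpha)-o(1)$ as $|\tau|\to\infty$, locally uniform in $(\uB,\alpha)$, and uniform exponential (Agmon) decay of the fiber ground states under a uniform spectral gap. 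Those two uniformity estimates are not decoration --- without them the compactness argument collapses because the $L^2$ mass of $\Phi_n$ may escape to infinity and the limit may be zero --- and they constitute the bulk of the actual proof in \cite{Pop13}; note that the paper itself records the uniform Agmon decay for wedges only later, in Lemma~\ref{L:Elip}(b), again by citation to \cite{Pop13}, so you cannot appeal to it here without circularity. Your proposal is therefore a correct roadmap, contingent on supplying those two lemmas, which you yourself correctly identify as the crux.
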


\section{3D cones \ ($d=3$)} \index{Cone} \label{subs:coin}
Denote by $\lambda_\ess(\bB,\Pi)$ the bottom of the essential spectrum  of $\OP(\bA,\Pi)$.
\begin{theorem}
\label{th:cone-ess}
Let $\Pi\in\gP_3$ be a cone with $d=3$, which means that $\Pi$ is not a wedge, nor a half-space, nor the full space. Let $\bB$ be a constant magnetic field. With the quantity $\seE(\bB,\Pi)$ introduced in \eqref{eq:s*}, we have 
\[
   \lambda_\ess(\bB,\Pi) = \seE(\bB,\Pi)\,.
\]
\end{theorem}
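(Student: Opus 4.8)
\textbf{Proof strategy for Theorem \ref{th:cone-ess}.}

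The plan is to prove the two inequalities $\lambda_\ess(\bB,\Pi)\ge\seE(\bB,\Pi)$ and $\lambda_\ess(\bB,\Pi)\le\seE(\bB,\Pi)$ separately, using a Persson-type characterization of the bottom of the essential spectrum together with the geometry of the cone $\Pi$ near the sphere at infinity.

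For the lower bound $\lambda_\ess(\bB,\Pi)\ge\seE(\bB,\Pi)$, I would first record the Persson lemma: $\lambda_\ess(\bB,\Pi)=\sup_{R>0}\inf\{\QR[\bA,\Pi](u):u\in\sC^\infty_c(\overline\Pi\setminus\cB(\bfz,R)),\ u\neq0\}$. Then, since $\Pi$ is a cone, any point of $\overline\Pi\setminus\{\bfz\}$ lies on a ray through a point $\theta$ of the section $\widehat\Omega=\Pi\cap\dS^2$, and for $\bx$ far from the origin the tangent cone $\Pi_\bx$ to $\Pi$ is exactly one of the tangent substructures $\Pi_\dx$ with $\dx=(\bfz,\theta,\ldots)$ a chain of length $\ge2$. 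I would cover the unit sphere $\overline{\widehat\Omega}$ by a finite number of map-neighborhoods on which, after the change of variables of Section \ref{SS:CV} and a rescaling by the (large) radial variable, the operator $\OP(\bA,\Pi)$ restricted to an annular region is close — modulo errors controlled by Lemmas \ref{L:chgvarloc} and \ref{L:d2A} applied to the cone, which shrink as one moves away from the origin since the linear potential is exact — to the corresponding tangent model operator $\OP(\bA_\dx,\Pi_\dx)$. An IMS partition of unity adapted to this finite cover on the annulus $\{R<|\bx|<2R\}$, $\{2R<|\bx|<4R\}$, etc., with cutoff gradients $O(R^{-1})$, then yields $\QR[\bA,\Pi](u)\ge\inf_{\dx}\En(\bB_\dx,\Pi_\dx)-o(1)=\seE(\bB,\Pi)-o(1)$ for $u$ supported outside $\cB(\bfz,R)$, and letting $R\to\infty$ gives the bound. (Here I use crucially that $\bB_\dx=\bB$ is constant, so there is no frozen-field error, only the change-of-variables and localization errors.)

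For the upper bound $\lambda_\ess(\bB,\Pi)\le\seE(\bB,\Pi)$, I would construct a singular (Weyl) sequence. Pick any chain $\dx\in\gC^*_\bfz(\Pi)$ with $\En(\bB,\Pi_\dx)$ arbitrarily close to $\seE(\bB,\Pi)$; by the case analysis of Sections \ref{subs:R3}--\ref{subs:diedre} (since $\Pi_\dx$ has reduced dimension $\le2$, being $\R^3$, a half-space, or a wedge), there is a bounded generalized eigenvector $\Psi$ on $\Pi_\dx$ with energy $\Lambda=\En(\bB,\Pi_\dx)$, and — again by that case analysis — $\Psi$ decays exponentially in the directions transverse to the translation-invariance directions of $\Pi_\dx$. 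I would transplant a truncated, translated copy of $\Psi$ onto $\Pi$: place its essential support in a ball $\cB(\bx_j,r_j)$ with $\bx_j\to\infty$ along a suitable direction so that near $\bx_j$ the cone $\Pi$ genuinely looks like $\Pi_\dx$, multiply by a cutoff, apply the inverse local map and the gauge phase, and estimate that $\|(\OP(\bA,\Pi)-\Lambda)u_j\|/\|u_j\|\to0$. The terms to control are: the commutator with the cutoff (small because $\Psi$ decays and the support can be taken large), the change-of-metric error (small by Lemma \ref{L:chgvarloc} since $\bx_j\to\infty$), and the potential-linearization error (zero in the leading order since the model potential is already linear, with only curvature-of-the-cone corrections which vanish at infinity). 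Since $\|u_j\|$ stays bounded below and $u_j\rightharpoonup 0$, this produces a Weyl sequence at energy $\Lambda$, whence $\lambda_\ess(\bB,\Pi)\le\Lambda\le\seE(\bB,\Pi)+\varepsilon$ for every $\varepsilon>0$.

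The main obstacle I anticipate is the construction in the upper bound when $\Pi_\dx$ is a wedge (the case $d(\Pi_\dx)=2$, subcase 4(c)(ii) of Description \ref{description} in the general cone): there the generalized eigenvector from Lemma \ref{lem:pop} is invariant only along one edge direction and has exponential decay in the two transverse variables only under the strict inequality $\En(\uB,\cW_\alpha)<\seE(\uB,\cW_\alpha)$; if equality holds one must instead descend recursively to a half-space substructure of that wedge, where a clean decaying profile is available, and check that $\seE(\bB,\Pi)$ is still attained there — this is where the infimum structure of \eqref{eq:s*} and the monotonicity $\En(\bB,\Pi_\dx)\le\seE(\bB,\Pi_\dx)$ must be used carefully to make sure the transplanted quasimode really sits at the energy $\seE(\bB,\Pi)$ and decays enough for the cutoff errors to be negligible. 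Making the placement of $\bx_j$ compatible simultaneously with "$\Pi$ looks like $\Pi_\dx$ near $\bx_j$" and "$\Psi$ decays in all the needed directions" is the delicate bookkeeping of the argument.
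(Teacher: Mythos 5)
Your lower bound follows the paper's route: Persson's lemma plus an IMS partition, using that on a cone every tangent cone at a non-origin point is exactly a tangent substructure (by dilation-invariance), so the infimum of local energies outside any ball equals $\seE(\bB,\Pi)$. The paper packages this slightly differently: it uses the scaling identity of Lemma~\ref{lem.dilatation} to rewrite Persson's formula as a semiclassical limit $h\to0$ with test functions supported outside $\cB(\bfz,1)$ (formula~\eqref{E:Perssonbis}), and then quotes the already-established localized IMS lower bound of Corollary~\ref{C:Metalbloc}. Your explicit dyadic-annulus partition is a valid alternative realization of the same idea, but it re-derives machinery the paper already has in hand.

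Your upper bound, however, takes a genuinely heavier route than the paper's, and the obstacle you flag at the end is entirely self-inflicted. You propose to transplant a generalized eigenvector $\Psi$ on $\Pi_\dx$ onto $\Pi$ and then check decay and error terms. This forces you to worry about whether an AGE with the right decay exists on a wedge, and whether you must recurse to a substructure when $\En=\seE$ on that wedge. None of this is needed. The paper instead uses Lemma~\ref{L:ConsQMEps}, which constructs, near any chosen point $\bx_0$, a quasimode whose Rayleigh quotient is $h(\En(\bB_{\bx_0},\Pi_{\bx_0})+\varepsilon)$ from nothing more than an $\varepsilon$-approximate minimizer of the form $q[\bA_{\bx_0},\Pi_{\bx_0}]$ and a compactly supported truncation; no generalized eigenvector, no decay, no dichotomy. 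Combined with Lemma~\ref{lem:chainL2} (which says $\seE(\bB,\Pi)$ is already an infimum over chains of length~2, so over points $\bx$ of the section $\overline\Omega_\bfz$), the upper bound becomes: pick $\bx$ with $\En(\bB,\Pi_{\bfz,\bx})<\seE(\bB,\Pi)+\varepsilon$, set $\bx'=2\bx$ so that $\Pi_{\bx'}=\Pi_{\bfz,\bx}$ exactly (not approximately — cones are self-similar along rays), apply Lemma~\ref{L:ConsQMEps} at $\bx'$, and observe that the resulting $f_h$ has support outside $\cB(\bfz,1)$ for $h$ small. The recursion you anticipate to handle the wedge case when $\En=\seE$ is precisely the content of the Dichotomy Theorem~\ref{th:dicho}, whose $d=3$ case in turn \emph{uses} Theorem~\ref{th:cone-ess}; while the $d\le2$ recursion you need here is not circular, importing that machinery obscures the fact that the upper bound is really an elementary test-function estimate. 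You should rewrite the upper bound without AGE's.
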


Recall Persson's Lemma\index{Persson's Lemma} \cite{Pers60} that gives a characterization of the bottom of the essential spectrum:
\begin{lemma}
\label{L:Persson}
Let $\Pi\in\gP_3$ and let $\bA$ be a linear magnetic potential associated with $\bB$. For $R>0$, we define
$ \dom_{0}^{R} (q[\bA,\Pi])$ as the subspace of functions $\Psi$ in $\dom (q[\bA,\Pi])$ with compact support, and $\supp\Psi\cap\cB(\bfz,R)=\emptyset$.
Then we have \index{Essential spectrum}
$$
  \lambda_\ess(\bB,\Pi)=\lim_{R\to +\infty}\left(
  \inf_{\Psi\,\in \, \dom_{0}^R (q[\bA,\Pi]) \,\setminus\, \{0\}
  }\QR[\bA,\Pi](\Psi)\right) \, .
$$
\end{lemma}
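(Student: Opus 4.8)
The plan is to establish the two inequalities $\lambda_\ess(\bB,\Pi)\ge\Sigma$ and $\lambda_\ess(\bB,\Pi)\le\Sigma$, where $\Sigma:=\lim_{R\to\infty}\Sigma(R)$ with $\Sigma(R):=\inf_{\Psi\in\dom_0^R(q[\bA,\Pi])\setminus\{0\}}\QR[\bA,\Pi](\Psi)$; the limit exists because $R\mapsto\Sigma(R)$ is non-decreasing (a larger $R$ shrinks the admissible set). I will write $H=\OP(\bA,\Pi)$, self-adjoint and $\ge0$, and use repeatedly that $\dom(q[\bA,\Pi])$ is the magnetic Sobolev space (the Neumann condition being natural, not imposed on the form), that it is stable under multiplication by smooth functions with bounded gradient, and that $\chi_m g\to g$ in $\dom(q)$ when $\chi_m$ is a standard cutoff to $\cB(\bfz,m)$. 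This last fact lets me drop the compact-support requirement in the infimum, so that $\Sigma(R)=\inf\{\QR[\bA,\Pi](\Psi):\Psi\in\dom(q)\setminus\{0\},\ \supp\Psi\cap\cB(\bfz,R)=\emptyset\}$.

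For $\lambda_\ess\ge\Sigma$ I argue by contradiction. If $\lambda:=\lambda_\ess(\bB,\Pi)<\Sigma$, then $\sigma_\ess(H)$ is nonempty and closed, so $\lambda\in\sigma_\ess(H)$, and Weyl's criterion gives an orthonormal sequence $(\Psi_n)\subset\dom(H)$ with $\|(H-\lambda)\Psi_n\|\to0$; hence $\Psi_n\rightharpoonup0$ in $L^2$ and $q[\bA,\Pi](\Psi_n)=\lambda+\langle(H-\lambda)\Psi_n,\Psi_n\rangle\to\lambda$, so $(\Psi_n)$ is bounded in $\dom(q)$. Fixing $R$ and a smooth cutoff $\tilde\chi_R$ that vanishes on $\cB(\bfz,R)$, equals $1$ outside $\cB(\bfz,R+1)$ and has bounded gradient, I use that $\Pi\cap\cB(\bfz,R+1)$ is a finite union of bounded Lipschitz domains (Remark~\ref{rem:Lip}) and that $(\Psi_n)$ is bounded in $H^1(\Pi\cap\cB(\bfz,R+1))$ (the linear potential $\bA$ being bounded there) to conclude, by Rellich's theorem, that $\Psi_n\to0$ in $L^2(\Pi\cap\cB(\bfz,R+1))$. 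Then $\|\tilde\chi_R\Psi_n\|\to1$ and $\|(\nabla\tilde\chi_R)\Psi_n\|\to0$, and expanding $(-i\nabla+\bA)(\tilde\chi_R\Psi_n)=\tilde\chi_R(-i\nabla+\bA)\Psi_n-i(\nabla\tilde\chi_R)\Psi_n$ gives $q[\bA,\Pi](\tilde\chi_R\Psi_n)\to\lambda$. Since $\tilde\chi_R\Psi_n\in\dom(q)$ is supported away from $\cB(\bfz,R)$, this yields $\Sigma(R)\le\lambda$ for every $R$, hence $\Sigma\le\lambda<\Sigma$, a contradiction.

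For $\lambda_\ess\le\Sigma$ I again argue by contradiction. If $\Sigma<\mu<\lambda_\ess$, then from $\Sigma(n)\le\Sigma<\mu$ I can choose $\Psi_n\in\dom(q)$ with $\|\Psi_n\|=1$, $\supp\Psi_n\cap\cB(\bfz,n)=\emptyset$ and $q[\bA,\Pi](\Psi_n)<\mu$; the escaping supports force $\Psi_n\rightharpoonup0$. Fixing $\mu'\in(\mu,\lambda_\ess)$, the spectral projection $E:=\mathbf{1}_{(-\infty,\mu']}(H)$ has finite rank (the spectrum of $H$ below $\lambda_\ess$ being discrete), so $E\Psi_n\to0$ strongly and $\|(1-E)\Psi_n\|\to1$. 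Since $E$ commutes with $H$ and $H\ge0$, the form splits with vanishing cross term: $q[\bA,\Pi](\Psi_n)=q(E\Psi_n)+q((1-E)\Psi_n)\ge q((1-E)\Psi_n)\ge\mu'\|(1-E)\Psi_n\|^2\to\mu'>\mu$, contradicting $q[\bA,\Pi](\Psi_n)<\mu$. Combining the two inequalities gives $\lambda_\ess(\bB,\Pi)=\Sigma=\lim_{R\to\infty}\Sigma(R)$, which is the assertion.

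The localization estimates themselves are routine; the points requiring care are the form-domain bookkeeping — producing trial functions that are simultaneously in $\dom(q)$, supported away from $\cB(\bfz,R)$, and have Rayleigh quotient close to the target — and the application of Rellich compactness on $\Pi\cap\cB(\bfz,R+1)$, which is where the structure of $\Pi$ as a finite union of Lipschitz domains enters. The remaining ingredients (Weyl's criterion, monotonicity of $R\mapsto\Sigma(R)$, and the finite rank of $\mathbf{1}_{(-\infty,\mu']}(H)$ for $\mu'<\lambda_\ess$) are standard self-adjoint operator theory.
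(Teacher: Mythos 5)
Your proof is correct. Note that the paper itself does not prove Lemma~\ref{L:Persson}: it simply recalls it from Persson's original article \cite{Pers60}, so there is no internal argument to compare against. What you have written is the standard proof (Weyl sequence plus cut-off for $\lambda_\ess\ge\Sigma$; escaping supports plus the finite-rank spectral projection $\mathbf{1}_{(-\infty,\mu']}(H)$ for $\lambda_\ess\le\Sigma$), and all the steps check out: the monotonicity of $R\mapsto\Sigma(R)$, the density argument removing the compact-support restriction, the vanishing of the cross term $q(E\Psi_n,(1-E)\Psi_n)$ because $E$ commutes with $H^{1/2}$, and the localization identity for $q[\bA,\Pi](\tilde\chi_R\Psi_n)$ (where, strictly speaking, the immediate bound is $\limsup_n\QR[\bA,\Pi](\tilde\chi_R\Psi_n)\le\lambda$, which already suffices; the full convergence would follow from the IMS-type identity of Lemma~\ref{lem:tronc}). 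You correctly isolate the only genuinely geometric input, namely the compactness of $H^1(\Pi\cap\cB(\bfz,R+1))\hookrightarrow L^2(\Pi\cap\cB(\bfz,R+1))$, which is exactly what Remark~\ref{rem:Lip} (corner domains are finite unions of Lipschitz domains) is there to guarantee; the same point is implicitly used elsewhere in the paper when asserting that $\OP(\bA,\Omega)$ has compact resolvent on bounded corner domains.
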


Before proving Theorem~\ref{th:cone-ess}, we show 
\begin{lemma}
\label{lem:chainL2}
Let $\Pi\in\gP_3$ be a cone with $d=3$, let $\Omega_{\bfz}=\Pi\cap \dS^2$ be its section. Then $\seE(\bB,\Pi)$ 
coincides with the infimum of the local energy over singular chains of length 2: 
\begin{equation}
\label{E:infClg2}
\seE(\bB,\Pi)=\inf_{\bx_{1}\in \overline\Omega_{\bfz}} \En(\bB,\Pi_{\bfz,\bx_{1}}) \, .
\end{equation}
\end{lemma}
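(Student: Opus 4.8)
The statement to prove is that for a 3D cone $\Pi\in\gP_3$ with reduced dimension $d=3$ and section $\Omega_\bfz=\Pi\cap\dS^2$, one has $\seE(\bB,\Pi)=\inf_{\bx_1\in\overline\Omega_\bfz}\En(\bB,\Pi_{\bfz,\bx_1})$. By definition \eqref{eq:s*}, $\seE(\bB,\Pi)=\inf_{\dx\in\gC^*_\bfz(\Pi)}\En(\bB,\Pi_\dx)$, the infimum over \emph{all} singular chains originating at $\bfz$ of length $\ge2$. Since the chains $\dx=(\bfz,\bx_1)$ with $\bx_1\in\overline\Omega_\bfz$ are exactly the chains of length $2$ (using that $d=3$, so no reduction intervenes at the first step and $C_\bfz=\Pi_\bfz=\Pi$ already coincides with its minimal reduced cone), the inequality $\seE(\bB,\Pi)\le\inf_{\bx_1\in\overline\Omega_\bfz}\En(\bB,\Pi_{\bfz,\bx_1})$ is immediate: the right-hand side is an infimum over a subset of $\gC^*_\bfz(\Pi)$. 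The content is the reverse inequality: every longer chain $\dx=(\bfz,\bx_1,\bx_2,\ldots)$ produces a tangent substructure $\Pi_\dx$ whose local energy is bounded below by $\En(\bB,\Pi_{\bfz,\bx_1})$ for some $\bx_1\in\partial\Omega_\bfz$.

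The key tool is the monotonicity property $\En(\bB_\bx,\Pi_\bx)\le\seE(\bB_\bx,\Pi_\bx)$ announced in the introduction, specialized to model cones: for any cone $\Gamma\in\gP_3$ with a constant magnetic field, $\En(\bB,\Gamma)\le\En(\bB,\Gamma')$ for every tangent substructure $\Gamma'$ of $\Gamma$. I would argue as follows. Let $\dx=(\bfz,\bx_1,\ldots,\bx_\pp)$ be any chain of length $\pp+1\ge3$. Its first reduction step attaches to $\bx_1\in\overline\Omega_\bfz$ the cone $\Pi_{\bfz,\bx_1}$, and by construction $\Pi_\dx$ is a tangent substructure of $\Pi_{\bfz,\bx_1}$ — indeed $\Pi_{\bfz,\bx_1,\ldots,\bx_\pp}$ arises from $\Pi_{\bfz,\bx_1}$ by exactly the same recursive substructure construction \eqref{eq:PiX} applied starting from $\bx_1$ (more precisely, the chain $(\bx_1,\ldots,\bx_\pp)$ is a chain of $\Pi_{\bfz,\bx_1}$ originating at its ``vertex direction''). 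Since $\bB$ is the same constant field throughout ($\bB_\dx=\bB_\bfz=\bB$), the monotonicity property gives $\En(\bB,\Pi_{\bfz,\bx_1})\le\En(\bB,\Pi_\dx)$. Taking the infimum over all such $\dx$ yields $\inf_{\bx_1\in\overline\Omega_\bfz}\En(\bB,\Pi_{\bfz,\bx_1})\le\seE(\bB,\Pi)$, which is the desired reverse inequality.

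\textbf{Main obstacle.} The delicate point is establishing the monotonicity $\En(\bB,\Gamma)\le\En(\bB,\Gamma')$ for a cone and its tangent substructures when $\bB$ is constant, and verifying the claimed nesting of substructures. For the half-space and wedge cases this is already recorded in the excerpt: \eqref{eq:En1}, \eqref{DicoHS1}--\eqref{DicoHS3} for $d=1$ and Lemma~\ref{lem:pop} (inequality \eqref{DichoW}) for $d=2$. For the general $d=3$ cone, I expect one proves $\En(\bB,\Gamma)\le\En(\bB,\Pi_{\bfz,\bx_1})$ directly by a quasimode/test-function argument: take an eigenfunction (or near-minimizer) $\Phi$ for $\OP(\bA_{\bfz,\bx_1},\Pi_{\bfz,\bx_1})$, which lives on a cone of the form $\R\times(\text{half-space or wedge})$ after reduction, translate and truncate it far out along the direction $\bx_1$ inside $\Gamma$ — where $\Gamma$ looks locally like $\Pi_{\bfz,\bx_1}$ up to the scaling invariance of cones — and estimate the error in the Rayleigh quotient, which vanishes as the truncation is pushed to infinity because a constant magnetic field is invariant under the translations and dilations involved. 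This is essentially Persson's Lemma \ref{L:Persson} run in reverse. I would also need to treat the degenerate sub-cases where $\Pi_{\bfz,\bx_1}$ is the whole space $\R^3$ (then $\En=1$ and the bound $\En(\bB,\Gamma)\le1$ is classical by comparison with Landau levels), a half-space, or a wedge, using the already-cited facts; the genuinely new estimate is when $\bx_1$ is a corner of $\Omega_\bfz$ so that $\Pi_{\bfz,\bx_1}$ is a wedge, handled by Lemma~\ref{lem:pop}. Assembling these cases and a clean statement of the substructure-nesting is where the real work lies; everything else is bookkeeping on the recursive definitions.
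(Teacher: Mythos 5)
Your plan of proof is correct and is essentially the paper's argument: the nontrivial direction is the monotonicity $\En(\bB,\Pi_\dx)\le\En(\bB,\Pi_{\dx'})$ for nested chains $\dx\le\dx'$ in $\gC^*_\bfz(\Pi)$, and this follows from the already-recorded facts \eqref{eq:En1}, \eqref{DicoHS1}--\eqref{DicoHS3} (half-spaces) and \eqref{DichoW} / Lemma~\ref{lem:pop} (wedges), because the cone $\Pi_{\bfz,\bx_1}$ always has reduced dimension $\le2$.

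However, your ``main obstacle'' paragraph worries about a step that you do not need — and that you must not try to supply here, because it would make the argument circular. You write that ``for the general $d=3$ cone, I expect one proves $\En(\bB,\Gamma)\le\En(\bB,\Pi_{\bfz,\bx_1})$ directly by a quasimode/test-function argument.'' But the only monotonicity used in this lemma compares a cone $\Pi_{\bfz,\bx_1}$ (whose reduced dimension is at most $2$, since it sits strictly below $\Pi$ in the chain hierarchy) with its own substructures $\Pi_\dx$; the $d=3$ comparison $\En(\bB,\Pi)\le\seE(\bB,\Pi)$ is never invoked. Indeed it cannot be: that inequality for $d=3$ is exactly the content of Theorem~\ref{th:cone-ess} together with the Dichotomy Theorem, and Theorem~\ref{th:cone-ess} is proved \emph{after} Lemma~\ref{lem:chainL2} and \emph{uses} it (Lemma~\ref{lem:chainL2} is how one passes from the infimum over all chains $\gC^*_\bfz(\Pi)$ to an infimum over the compact set $\overline\Omega_\bfz$, which is the input to the Persson argument). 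The Persson-type translation-and-truncation sketch you outline is precisely what the paper does later for Theorem~\ref{th:cone-ess}; importing it into this lemma would short-circuit the logical ordering. So the correct reading of your own first two paragraphs is that they already constitute a complete proof, with no remaining obstacle.
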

\begin{proof}
For all singular chains $\dx$ and $\dx'$ in $\gC^{*}(\Pi)$ such that $\dx \leq \dx'$ , we have $E(\Pi_{\dx},\bB) \leq E(\Pi_{\dx'},\bB)$ as a consequence of \eqref{DicoHS1}, \eqref{DicoHS2}, \eqref{DicoHS3}, and \eqref{DichoW}. Hence \eqref{E:infClg2}. 
\end{proof}

\begin{proof}[Proof of Theorem \Ref{th:cone-ess}]
Combining Lemmas \ref{L:Persson} and \ref{lem.dilatation}, we get that 
\begin{equation}
\label{E:Perssonbis}
\lambda_\ess(\bB,\Pi)
=\lim_{h\to0}\left(h^{-1}
\inf_{ \Psi\,\in\,  \dom_{0}^{1} (q_{h}[\bA,\Pi]) \, \setminus\, \{0\}}
\QR_{h}[\bA,\Pi](\Psi) \right).
 \end{equation}

\paragraph*{Upper bound for $\lambda_\ess(\bB,\Pi)$}
Let $\varepsilon>0$.  By Lemma \ref{lem:chainL2} there exist $\bx\in\overline\Omega_{\bfz}$ and an associated chain $\dx=(\bfz,\bx)$ of length 2 such that 
\begin{equation}
\label{E:trouveunesuiteminimisbis}
\En(\bB,\Pi_{\dx}) < \seE(\bB,\Pi)+\varepsilon \ .
\end{equation}
Let $\bx':=2\bx$. Notice that the tangent cone to $\Pi$ at $\bx'$ is $\Pi_{\bx'}=\Pi_{\dx}$ and therefore $\En(\bB,\Pi_{\bx'})=\En(\bB,\Pi_{\dx})$. We use Lemma \ref{L:ConsQMEps} (that clearly applies even though $\Pi$ is unbounded): So there exists $h_{0}>0$ such that for all $h\in (0,h_{0})$ we can find $f_{h}$ normalized and supported near $\bx'$ satisfying $h^{-1}\QR_{h}[\bA,\Pi](f_{h}) \leq \En(\bB,\Pi_{\dx})+\varepsilon$. Since $|\bx'|=2$, we may assume without restriction that $\supp(f_{h}) \cap \cB(0,1)=\emptyset$. Combining this with \eqref{E:trouveunesuiteminimisbis} we get
$$\frac{1}{h}\QR_{h}[\bA,\Pi](f_{h}) \leq \seE(\bB,\Pi)+2\varepsilon \, ,$$
and therefore deduce from \eqref{E:Perssonbis} the upper bound of $\lambda_\ess(\bB,\Pi)$ by $\seE(\bB,\Pi)$.

\paragraph*{Lower bound for $\lambda_\ess(\bB,\Pi)$}
Notice that for all $\bx\in \overline{\Pi}\setminus\cB(0,1)$, we have $\Pi_{\bx}=\Pi_{\dx}$ where $\dx=(\bfz,\bx/|\bx|)$. Therefore (see \eqref{E:infClg2}):
$$\inf_{\bx\in \overline{\Pi}\setminus\cB(0,1)}\En(\bB,\Pi_{\bx})=\seE(\bB,\Pi) \, .$$
Then we easily deduce the lower bound from Corollary \ref{C:Metalbloc} and \eqref{E:Perssonbis}.
\end{proof}

\begin{corollary}
\label{C:expodecay}
Let $\Pi\in\gP_3$ be a cone with $d=3$. Assume that $\En(\bB,\Pi) < \seE(\bB,\Pi)$. Then any eigenfunction $\Psi$ of $\OP(\bA,\Pi)$ associated with the lowest eigenvalue $\En(\bB,\Pi)$, satisfies the following exponential decay estimates\index{Exponential decay}:
$$\forall c<\sqrt{\seE(\bB,\Pi)-\En(\bB,\Pi)}, \quad\exists C>0, \quad \|e^{c|\bx|}\Psi\| \leq C \|\Psi\|.$$
\end{corollary}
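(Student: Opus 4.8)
The statement to prove is Corollary~\ref{C:expodecay}: exponential decay of ground state eigenfunctions for a 3D cone $\Pi$ with $d=3$, with decay rate governed by the gap between $\seE(\bB,\Pi)$ and $\En(\bB,\Pi)$. This is a classical Agmon-type estimate, and the plan is to exploit that, by Theorem~\ref{th:cone-ess}, $\seE(\bB,\Pi)$ is exactly the bottom of the essential spectrum $\lambdaess(\bB,\Pi)$.

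The plan is to use the Agmon method with the linear weight $\Phi_c(\bx) = c|\bx|$. First I would fix $c < \sqrt{\seE(\bB,\Pi)-\En(\bB,\Pi)}$ and pick $c' \in (c, \sqrt{\seE(\bB,\Pi)-\En(\bB,\Pi)})$. Writing $\Lambda = \En(\bB,\Pi)$ and letting $\Psi$ be an associated eigenfunction, the standard IMS-type localization identity (the same $2ab \le \eta a^2 + \eta^{-1}b^2$ bookkeeping used throughout Chapter~\ref{sec:low}) gives, for any real Lipschitz weight $\Phi$ with $|\nabla\Phi|\le c'$ bounded,
\[
   q[\bA,\Pi](e^\Phi\Psi) - \|\,|\nabla\Phi|\, e^\Phi\Psi\|^2 = \Lambda \|e^\Phi\Psi\|^2 .
\]
This is the magnetic Agmon identity; it follows from expanding $(-i\nabla+\bA)(e^\Phi\Psi) = e^\Phi(-i\nabla+\bA)\Psi + e^\Phi(\nabla\Phi)\Psi$ and using that $\Psi$ solves the eigenvalue problem together with the Neumann condition so no boundary term appears. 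Hence
\[
   q[\bA,\Pi](e^\Phi\Psi) - (\Lambda + c'^2)\|e^\Phi\Psi\|^2 \le 0 .
\]

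Next I would split the integral into a neighborhood $\cB(\bfz,R)$ of the vertex and its complement. Outside $\cB(\bfz,R)$, by Persson's Lemma~\ref{L:Persson} combined with Theorem~\ref{th:cone-ess}, for any $\varepsilon>0$ one can choose $R=R(\varepsilon)$ large enough so that $q[\bA,\Pi](u) \ge (\seE(\bB,\Pi)-\varepsilon)\|u\|^2$ for all $u\in\dom(q[\bA,\Pi])$ supported outside $\cB(\bfz,R)$. Applying this to $u = \chi\, e^\Phi\Psi$ where $\chi$ is a smooth cutoff vanishing on $\cB(\bfz,R)$ and equal to $1$ outside $\cB(\bfz,2R)$ (paying the usual IMS commutator cost $\|\nabla\chi\|_\infty^2$, which is an $O(R^{-2})$ correction absorbable since $c'^2 < \seE-\Lambda$ strictly), one obtains
\[
   \big(\seE(\bB,\Pi) - \varepsilon - \Lambda - c'^2 - O(R^{-2})\big)\,\|e^\Phi\Psi\|^2_{L^2(\Pi\setminus\cB(\bfz,2R))}
   \le \int_{\cB(\bfz,2R)} (\text{bounded})\, e^{2\Phi}|\Psi|^2\,\rd\bx .
\]
Choosing $\varepsilon$ small and $R$ large, the coefficient on the left is a fixed positive constant. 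The right-hand side, with $\Phi$ now taken as a bounded truncation of $c|\bx|$ — namely $\Phi_N(\bx)=\min(c|\bx|,N)$ so that $|\nabla\Phi_N|\le c < c'$ everywhere — is bounded by $e^{4cR}\|\Psi\|^2$ independently of $N$. Letting $N\to\infty$ via monotone convergence gives $\|e^{c|\bx|}\Psi\|_{L^2(\Pi\setminus\cB(\bfz,2R))} \le C\|\Psi\|$, and on the bounded set $\cB(\bfz,2R)$ the bound $\|e^{c|\bx|}\Psi\|_{L^2(\cB(\bfz,2R))}\le e^{2cR}\|\Psi\|$ is trivial; adding the two yields the claim.

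The main obstacle is making the Agmon identity fully rigorous on the unbounded Lipschitz domain $\Pi$: one must justify the integration by parts (density of nice functions in $\dom(q[\bA,\Pi])$, absence of a boundary contribution because $\Psi\in\dom(\OP(\bA,\Pi))$ satisfies the magnetic Neumann condition and $e^\Phi$ is smooth), and one must work throughout with the \emph{bounded} truncated weights $\Phi_N$ so that $e^{\Phi_N}\Psi$ genuinely lies in the form domain, only passing to the limit $N\to\infty$ at the very end by monotone convergence. The rest — the IMS commutator estimate for $\chi$ and the parametric inequality — is routine and entirely parallel to computations already carried out in Chapter~\ref{sec:low}.
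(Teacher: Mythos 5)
Your proposal is correct and is precisely the standard Agmon argument that the paper invokes without writing out: the proof in the text merely records that $\seE(\bB,\Pi)=\lambdaess(\bB,\Pi)$ (Theorem~\ref{th:cone-ess}) places us in Agmon's framework and cites \cite{Ag85} and \cite[Section 7]{Bo05}. The only blemish is a notational slip in the intermediate expansion — it should read $(-i\nabla+\bA)(e^\Phi\Psi)=e^\Phi(-i\nabla+\bA)\Psi-i\,e^\Phi(\nabla\Phi)\Psi$, with a factor $-i$ on the gradient term — but the magnetic Agmon identity you actually use, $q[\bA,\Pi](e^\Phi\Psi)-\|\,|\nabla\Phi|\,e^\Phi\Psi\|^2=\Lambda\|e^\Phi\Psi\|^2$, is stated correctly, and your bookkeeping with truncated weights $\Phi_N$ and the Persson bound outside a large ball is exactly right.
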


\begin{proof}
Recall that Theorem \ref{th:cone-ess} states that the bottom of the essential spectrum is $\seE(\bB,\Pi)$. Therefore we are in the standard framework for the techniques {\it \`a la} Agmon, see \cite{Ag85}, and also \cite[Section 7]{Bo05} for its application on plane sectors.
\end{proof}

\chapter{Dichotomy and substructures for model problems}
\label{sec:dicho}

Relying on the exhaustive description of model problems provided above, we arrive to one of the main results, the ``dichotomy'' Theorem \ref{th:dicho}
that states the existence of a generalized eigenvector (called {\it admissible}) living on a tangent structure of a cone $\Pi\in\gP_3$ and associated with the ground state energy. 
 In this section, the local energies $\En(\bB,\Pi_{\dx})$ related to singular chains $\dx\in\gC_{\bfz}(\Pi)$, play for the first time a major role in the analysis.

\section{Admissible Generalized Eigenvectors}
\begin{definition}[Admissible Generalized Eigenvector]\index{Admissible Generalized Eigenvector|textbf}\index{Admissible Generalized Eigenvector!AGE|textbf}
\label{D:Age}
Let $\Pi\in \gP_{3}$ be a cone. Recall that $d(\Pi)\in[0,3]$ is the dimension of its minimal reduced cone\index{Minimal reduced cone}. Let $\bA$ be a linear magnetic potential. A generalized eigenvector $\Psi$ for $\OP(\bA\ee,\Pi)$ (cf.\ Definition \ref{def:geneig}) is said to be {\em admissible} if there exist an integer $k\geq d(\Pi)$ and a rotation $\udiffeo:\bx\mapsto(\by,\bz)$ that maps $\Pi$ onto the product $\R^{3-k}\times\Upsilon$ with $\Upsilon$ a cone in $\gP_k$, and such that
\begin{equation}
\label{eq:age1}
   \Psi\circ \udiffeo^{-1} (\by,\bz) = \re^{\ri\ee\vartheta(\by,\bz)}\,\Phi(\bz)\
    \qquad \forall\by\in \R^{3-k},\ \ \forall\bz\in\Upsilon,
\end{equation}
with some real polynomial function $\vartheta$ of degree $\le2$ and some exponentially decreasing function $\Phi$, namely there exist positive constants $c_\Psi$ and $C_\Psi$ such that\index{Exponential decay}
\begin{equation}
\label{eq:agmongeneig}
\|\re^{c_\Phi|\bz|}\Phi\|_{L^2(\Upsilon)}\leq C_\Phi \|\Phi \|_{L^2(\Upsilon)}  \, .
\end{equation}
``Admissible Generalized Eigenvector'' will be shortened as AGE.
\end{definition}

The following lemma will be used for going from any tangent operator to one of the reference situations described in Chapter \ref{sec:tax}. Its proof is straightforward and relies on Lemmas \ref{lem:gauge}, \ref{L:chgvar}, \ref{lem.dilatation}, and \ref{lem:sense}.
\begin{lemma}
\label{lem:refop}
Let $\Pi\in \gP_{3}$ be a cone and $\bA$ be a linear potential. Assume that $\Psi$ is an AGE for $H(\bA,\Pi)$ associated with the energy $\En(\bB,\Pi)$, of the form \eqref{eq:age1}.
\begin{itemize}
\item[a1)]
For all $b>0$, the function
$$\Psi_{b}:\bx\mapsto \Psi(\frac{\bx}{\sqrt{b}}),$$
is an AGE for $H(b^{-1}\bA,\Pi)$ associated with the energy $\En(b^{-1}\bB,\Pi)=b^{-1}\En(\bB,\Pi)$. This AGE has the form \eqref{eq:age1} with $\udiffeo_{b}=\udiffeo$, $\vartheta_{b}(\by,\bz)=\vartheta(b^{-1/2}\by,b^{-1/2}\bz)$ and $\Phi_{b}(\bz)=\Phi(b^{-1/2}\bz)$.
 \item[a2)]The function
$$\Psi_{-}:\bx\mapsto \overline{\Psi(\bx)},$$
is an AGE for $H(-\bA,\Pi)$ associated with the energy $\En(-\bB,\Pi)=\En(\bB,\Pi)$. This AGE has the form \eqref{eq:age1}, with $\udiffeo^{-}=\udiffeo$, $\vartheta^{-}(\by,\bz)=-\vartheta(\by,\bz)$ and $\Phi^{-}(\bz)=\overline{\Phi(\bz)}$.
\item[b)] Let $\bA'$ be another linear potential such that $\curl \bA'=\curl\bA$. Then there exists a polynomial $\phi$ of degree $\leq2$ such that $\bA'=\bA+\nabla\phi$. The function\index{Gauge transform}\index{Shift function}
$$\Psi':\bx \mapsto e^{-i\phi(\bx)}\Psi(\bx) ,$$
is an AGE for $H(\bA',\Pi)$ associated with $\En(\bB,\Pi)$. This AGE has the form \eqref{eq:age1}, with $\udiffeo'=\udiffeo$, $\vartheta'=\vartheta-\phi\circ\udiffeo^{-1}$ and $\Phi'=\Phi$.
\item[c)] Let $\rJ\in\gO_3$ be a rotation, $\Pi_{\rJ}:=\rJ(\Pi)$ and $\bA_{\rJ}:=\rJ\circ \bA\circ \rJ^{-1}$. Introduce the constant magnetic field $\bB_{\rJ}=\rJ(\bB)$, so that $\curl \bA_{\rJ}=\bB_{\rJ}$. Then 
$$\Psi_{\rJ}:\bx\mapsto \Psi\circ \rJ^{-1}(\bx)$$
is an AGE for $H(\bA_{\rJ},\Pi_{\rJ})$ associated with $\En(\bB_{\rJ},\Pi_{\rJ})=\En(\bB,\Pi)$. It has the form \eqref{eq:age1}, with $\udiffeo_{\rJ}=\udiffeo\circ \rJ^{-1}$, $\vartheta_{\rJ}=\vartheta$ and $\Phi_{\rJ}=\Phi$. 
\end{itemize}
\end{lemma}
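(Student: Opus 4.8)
The plan is to treat the four assertions uniformly. In each case one is given a transformation $T$ (a dilation, complex conjugation, a gauge change, or a rotation) and one must check exactly two things. First, that $T\Psi$ solves the boundary value problem \eqref{eq:geneig} attached to the transformed data, with the announced value of $\Lambda$, so that $T\Psi$ is a generalized eigenvector in the sense of Definition~\ref{def:geneig}; the underlying operator identities are precisely the covariance statements recorded in Lemmas~\ref{lem:gauge}, \ref{L:chgvar}, \ref{lem.dilatation} and \ref{lem:sense}, here applied on $\dom_{\,\loc}$ instead of on $L^2$. Second, that the admissibility normal form \eqref{eq:age1}--\eqref{eq:agmongeneig} of Definition~\ref{D:Age} survives, \emph{i.e.}, that $T\Psi$, read in the appropriate Cartesian coordinates, still splits as a unimodular phase $\re^{\ri\vartheta}$ with $\vartheta$ a real polynomial of degree $\le2$, times an exponentially decaying profile. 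Membership of $T\Psi$ in $\dom_{\,\loc}(\OP(\cdot,\cdot))$ and preservation of the Neumann condition are automatic throughout, since $T$ is either composition with a global $\sC^\infty$ diffeomorphism sending $\Pi$ to the transformed cone and its normals to its normals, or multiplication by a smooth function of modulus one.

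For a1), with $\Psi_b(\bx)=\Psi(\bx/\sqrt b)$ and $\by=\bx/\sqrt b$, linearity of $\bA$ gives $b^{-1}\bA(\bx)=b^{-1/2}\bA(\by)$, hence $(-\ri\nabla_\bx+b^{-1}\bA)\Psi_b=b^{-1/2}\big[(-\ri\nabla+\bA)\Psi\big](\by)$ and, squaring, $(-\ri\nabla+b^{-1}\bA)^2\Psi_b=b^{-1}\Lambda\,\Psi_b$; since a cone is dilation invariant and $b^{-1}\En(\bB,\Pi)=\En(b^{-1}\bB,\Pi)$ by Lemma~\ref{lem.dilatation}, this is the claim. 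As $\udiffeo$ is linear, $\Psi_b\circ\udiffeo^{-1}(\by,\bz)=\re^{\ri\vartheta(b^{-1/2}\by,\,b^{-1/2}\bz)}\Phi(b^{-1/2}\bz)$, which has the form \eqref{eq:age1} with the stated $\vartheta_b,\Phi_b$ (the degree of $\vartheta$ is unchanged, and $\Phi_b$ still satisfies \eqref{eq:agmongeneig}, with decay rate divided by $\sqrt b$). For a2), conjugating \eqref{eq:geneig} and using $\overline{-\ri\partial_j+A_j}=-(-\ri\partial_j-A_j)$ yields $(-\ri\nabla-\bA)^2\overline\Psi=\Lambda\overline\Psi$ with $\curl(-\bA)=-\bB$; Lemma~\ref{lem:sense} gives $\En(-\bB,\Pi)=\En(\bB,\Pi)$, and $\overline\Psi\circ\udiffeo^{-1}=\re^{-\ri\vartheta}\overline\Phi$ is again of the form \eqref{eq:age1}.

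For b), since $\bA'-\bA$ is a $\curl$-free vector field with affine components, it equals $\nabla\phi$ for a polynomial $\phi$ of degree $\le2$ (this is Lemma~\ref{lem:gauge} in the linear setting); the elementary identity $(-\ri\nabla+\bA')(\re^{-\ri\phi}u)=\re^{-\ri\phi}(-\ri\nabla+\bA)u$ then gives $(-\ri\nabla+\bA')^2\Psi'=\Lambda\Psi'$ and preserves the Neumann condition, while $\Psi'\circ\udiffeo^{-1}=\re^{\ri(\vartheta-\phi\circ\udiffeo^{-1})}\Phi$ has the form \eqref{eq:age1} because $\phi\circ\udiffeo^{-1}$ is still polynomial of degree $\le2$. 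For c), using that $\rJ\in\gO_3$ is orthogonal one gets $\nabla(u\circ\rJ^{-1})=\rJ\big((\nabla u)\circ\rJ^{-1}\big)$ and $\bA_\rJ=\rJ\,\bA\circ\rJ^{-1}$, whence $(-\ri\nabla+\bA_\rJ)(\Psi\circ\rJ^{-1})=\rJ\,\big[(-\ri\nabla+\bA)\Psi\big]\circ\rJ^{-1}$ and $(-\ri\nabla+\bA_\rJ)^2\Psi_\rJ=\Lambda\,\Psi_\rJ$ on $\Pi_\rJ=\rJ(\Pi)$; this is Lemma~\ref{L:chgvar} for the change of variables $\rJ$, and it also yields $\En(\bB_\rJ,\Pi_\rJ)=\En(\bB,\Pi)$. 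Taking $\udiffeo_\rJ=\udiffeo\circ\rJ^{-1}$, which maps $\Pi_\rJ$ onto $\R^{3-k}\times\Upsilon$ with the same $k\ge d(\Pi)$ and $\Upsilon\in\gP_k$, we find $\Psi_\rJ\circ\udiffeo_\rJ^{-1}=\Psi\circ\udiffeo^{-1}=\re^{\ri\vartheta}\Phi$, so the normal form is preserved verbatim with $\vartheta_\rJ=\vartheta$, $\Phi_\rJ=\Phi$.

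I expect no serious obstacle: the only points needing a little care are bookkeeping ones — checking that composing $\vartheta$ (or subtracting $\phi$) with a linear map does not raise its degree above $2$, that the exponential weight in \eqref{eq:agmongeneig} is merely rescaled (not destroyed) under the dilation a1), and that the reduction index $k$ and the target cone $\Upsilon$ are transported correctly (literally unchanged in a1), a2), b), and simply conjugated by $\rJ^{-1}$ in c)). Assembling these four short verifications completes the proof.
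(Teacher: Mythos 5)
Your proof is correct and follows exactly the route the paper indicates: the paper does not write out the argument but simply remarks that it "is straightforward and relies on Lemmas~\ref{lem:gauge}, \ref{L:chgvar}, \ref{lem.dilatation}, and \ref{lem:sense}," and you supply precisely that verification, checking in each case both the transformed eigenvalue equation and the preservation of the normal form \eqref{eq:age1}--\eqref{eq:agmongeneig}. No gaps; your bookkeeping of the degree of $\vartheta$, the rescaled decay constant in a1), and the transported pair $(k,\Upsilon)$ in c) are all accurate.
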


\section{Dichotomy Theorem}
\begin{theorem}[Dichotomy Theorem]\index{Dichotomy|textbf}
\label{th:dicho}
Let $\Pi\in\gP_3$ be a cone and $\bB\neq0$ be a constant magnetic field.
Let $\bA$ be any associated linear magnetic potential.\index{Linearized magnetic potential}
Recall that $\En(\bB,\Pi)$ is the ground state energy\index{Ground state energy} of $\OP(\bA,\Pi)$ and $\seE(\bB,\Pi)$ is the energy on tangent substructures\index{Energy on tangent substructures}, see Definition \Ref{def.seE}. Then,
\begin{equation}
\label{eq:comp}
   \En(\bB,\Pi) \leq \seE(\bB,\Pi), \ 
\end{equation}
and we have the dichotomy:
\begin{enumerate}[(i)]
\item If $\En(\bB,\Pi)<\seE(\bB,\Pi)$, then $\OP(\bA,\Pi)$ admits an Admissible Generalized Eigenvector associated with the value $\En(\bB,\Pi)$.\smallskip
\item If $\En(\bB,\Pi)=\seE(\bB,\Pi)$, then there exists a singular chain $\dx\in \gC^{*}_{\bfz}(\Pi)$ such that 
$$
   \En(\bB,\Pi_\dx) = \En(\bB,\Pi) \quad\mbox{and}\quad
   \En(\bB,\Pi_\dx)<\seE(\bB,\Pi_\dx).
$$
\end{enumerate}
\end{theorem}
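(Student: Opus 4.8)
The plan is to prove the Dichotomy Theorem by induction on the reduced dimension $d=d(\Pi)$, using the taxonomy of Chapter~\ref{sec:tax} as the engine for each step. The inequality \eqref{eq:comp} is the statement $\En(\bB,\Pi)\le\seE(\bB,\Pi)$, which has already been established case by case in Chapter~\ref{sec:tax}: it is trivial when $d=0$ (the right-hand side is $+\infty$); it is \eqref{eq:En1} together with \eqref{DicoHS1}, \eqref{DicoHS2}, \eqref{DicoHS3} when $d=1$; it is \eqref{DichoW} (Lemma~\ref{lem:pop}, from \cite{Pop13}) when $d=2$; and for $d=3$ it follows from Theorem~\ref{th:cone-ess}, since $\seE(\bB,\Pi)=\lambda_\ess(\bB,\Pi)\ge\En(\bB,\Pi)$ always. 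So \eqref{eq:comp} needs no new argument and I would simply collect these references. After normalizing $|\bB|=1$ via Lemma~\ref{lem.dilatation} and fixing convenient Cartesian coordinates, Lemma~\ref{lem:refop} lets me transfer any produced AGE back to the original $(\bA,\Pi)$, so it suffices to work with the reference configurations $(\uB,\uA)$.

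For the dichotomy itself, I would organize the argument around whether the strict inequality $\En(\bB,\Pi)<\seE(\bB,\Pi)$ holds. \emph{Case $d=0$}: here $\Pi=\R^3$, $\seE=+\infty$, so we are always in alternative (i), and the explicit Gaussian \eqref{eq:aged0} is an AGE ($k=0$, $\vartheta$ linear after a gauge choice, $\Phi$ Gaussian hence exponentially decaying). \emph{Case $d=1$} ($\Pi$ a half-space): if the field is tangent or oblique we are in alternative (i) and \eqref{eq:d1.1} resp.\ \eqref{eq:d1.3} provide the AGE (with $k=1$ or $k=2$ and $\Phi$ the exponentially decaying de Gennes or half-plane eigenfunction); if the field is normal, \eqref{DicoHS2} gives $\En=\seE=1$, and alternative (ii) holds with the chain $\dx=(\bfz,\bx_1)$ whose tangent cone is $\R^3$, where $\En(\bB,\R^3)=1<+\infty=\seE(\bB,\R^3)$. \emph{Case $d=2$} ($\Pi$ a wedge $\cW_\alpha$): if $\En(\uB,\cW_\alpha)<\seE(\uB,\cW_\alpha)$, Lemma~\ref{lem:pop} yields the minimizer $\tau^*$ and the exponentially decaying $\Phi$, and \eqref{eq:aged2} is an AGE with $k=2$; if $\En(\uB,\cW_\alpha)=\seE(\uB,\cW_\alpha)$, then by \eqref{eq:s*Da} the min is attained at some half-space tangent substructure $\Pi^\pm_\alpha$ (or at $\R^3$), and on that half-space the $d=1$ analysis shows the strict inequality holds there — this is where the recursion ``descends.'' \emph{Case $d=3$}: if $\En(\bB,\Pi)<\seE(\bB,\Pi)$, then by Theorem~\ref{th:cone-ess} $\En(\bB,\Pi)$ lies below the essential spectrum, hence is a genuine eigenvalue with eigenfunction $\Psi$; Corollary~\ref{C:expodecay} gives its exponential decay, so $\Psi$ is an AGE with $k=3$, $\vartheta=0$, $\Upsilon=\Pi$. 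If $\En(\bB,\Pi)=\seE(\bB,\Pi)$, then by Lemma~\ref{lem:chainL2} the infimum defining $\seE(\bB,\Pi)$ is attained at a chain of length $2$, i.e.\ there is $\bx_1\in\overline\Omega_\bfz$ with $\En(\bB,\Pi_{\bfz,\bx_1})=\seE(\bB,\Pi)=\En(\bB,\Pi)$; and $\Pi_{\bfz,\bx_1}$ has reduced dimension $\le 2$, so applying the already-treated lower-dimensional cases of the theorem to $\Pi_{\bfz,\bx_1}$ either directly gives the strict inequality $\En(\bB,\Pi_{\bfz,\bx_1})<\seE(\bB,\Pi_{\bfz,\bx_1})$ (then (ii) is proved), or pushes us to a still shorter chain; since chains have length $\le n$, the descent terminates at a substructure where the strict inequality holds, and composing the chains gives the required $\dx\in\gC^*_\bfz(\Pi)$.

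Concretely I would phrase the induction as: \emph{for every cone $\Pi\in\gP_3$, either} (i) \emph{holds, or} (ii) \emph{holds}. The key structural point making the recursion go through is that a tangent substructure $\Pi_\dx$ of $\Pi$ with $\dx$ of length $\ge2$ is itself (after the rotation $\udiffeo^0$ and the splitting in \eqref{eq:PiX}) a product $\R^{n-\text{something}}\times\Gamma$ with $\Gamma$ a cone of strictly smaller reduced dimension, whose own tangent substructures are among the tangent substructures of $\Pi$ (the ``$\dx\le\dx'$ implies $\En(\bB,\Pi_\dx)\le\En(\bB,\Pi_{\dx'})$'' monotonicity, already recorded in the proof of Lemma~\ref{lem:chainL2}). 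Thus when $\En(\bB,\Pi)=\seE(\bB,\Pi)$, picking a substructure realizing the infimum and iterating, the energy stays constant along the chain while the reduced dimension strictly decreases, so after at most three steps we reach a cone $\Pi_{\dx}$ with $\En(\bB,\Pi_{\dx})=\En(\bB,\Pi)$ and $\En(\bB,\Pi_{\dx})<\seE(\bB,\Pi_{\dx})$ — because at the bottom ($d=0$, the full space) the right-hand side is $+\infty$, so a strict inequality is forced to appear before we get there.

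The main obstacle I anticipate is \textbf{bookkeeping the chain composition and the form \eqref{eq:age1} of the AGE across the recursion}, rather than any single hard estimate: one must check that when a substructure $\Pi_{\dx}$ of $\Pi$ carries an AGE of the form $\re^{\ri\vartheta}\Phi$ with $\Phi$ exponentially decaying on some $\Upsilon\in\gP_k$, this same function, read through the rotation $\udiffeo^0$ and the product decomposition relating $\Pi_{\dx}$ to $\Pi$, still matches Definition~\ref{D:Age} with the \emph{same} $\Upsilon$ and a possibly larger flat factor — which is exactly what Lemma~\ref{lem:refop}(c) (rotation) and the product structure \eqref{eq:PiX} deliver, but it needs to be stated carefully. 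All the genuinely analytic inputs — the de Gennes operator, $\sigma(\theta)$, the wedge result of \cite{Pop13}, the essential-spectrum identification in Theorem~\ref{th:cone-ess}, and the Agmon decay in Corollary~\ref{C:expodecay} — are already available, so the proof is essentially an assembly of the taxonomy plus a finite descent on the reduced dimension.
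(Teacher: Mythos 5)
Your overall architecture — normalize by Lemma~\ref{lem:refop}, establish \eqref{eq:comp} from the taxonomy, then run a finite descent on the reduced dimension $d$, handling (i) by the explicit or Agmon-type eigenfunctions and (ii) by passing to a substructure realizing $\seE$ — is exactly the paper's algorithm, and your treatments of $d=0,1,2$ are correct.

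There is, however, a genuine gap in your $d=3$, case (ii) step. You write that ``by Lemma~\ref{lem:chainL2} the infimum defining $\seE(\bB,\Pi)$ is attained at a chain of length $2$,'' but Lemma~\ref{lem:chainL2} asserts only that $\seE(\bB,\Pi)=\inf_{\bx_1\in\overline\Omega_\bfz}\En(\bB,\Pi_{\bfz,\bx_1})$ — an infimum over a (generally infinite) family, not a minimum. For $d\le 2$ attainment is trivial because there are only finitely many equivalence classes of substructures; for $d=3$ the section $\Omega_\bfz$ is a curvilinear polygon on $\dS^2$ and the infimum is over all of $\overline\Omega_\bfz$, so one must actually prove it is reached. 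The paper does this by defining $F^\star(\dy)=\En(\bB,\Pi_{\bfz,\dy})$ on $\gC(\Omega_\bfz)$, showing that $F^\star$ is continuous for the distance $\dD$ (using the $\sC^1$ regularity of $\sigma$ from Lemma~\ref{P:continuitesigma} for the half-space chains and isolation of the finitely many wedge chains) and order-preserving (via \eqref{eq:comp} for $d\le2$), then invoking Theorem~\ref{th:scichain} to get lower semicontinuity of $\bx_1\mapsto F^\star((\bx_1))$ on the compact set $\overline\Omega_\bfz$. This semicontinuity-plus-compactness argument is the real content of the $d=3$ case of (ii), and your proposal skips it entirely. Without it, your ``finite descent'' cannot get started at $d=3$, because you have no substructure at which to descend.

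A secondary, smaller comment: the obstacle you flag — transporting the explicit form \eqref{eq:age1} of the AGE from a substructure $\Pi_\dx$ back to $\Pi$ — is not actually required by the statement. In case (ii) the theorem only asks for a chain $\dx$ with the stated energy properties; the AGE it produces (via Remark~\ref{rem:chaine}) lives on $\Pi_\dx$, not on $\Pi$, so no compatibility of product decompositions across the chain needs to be checked. The chain bookkeeping is used later, in the quasimode construction of Chapter~\ref{sec:up}, not in the Dichotomy Theorem itself.
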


\begin{remark}\label{rem:chaine}
In the case (ii), we note that by statement (i) applied to the cone $\Pi_\dx$, $\OP(\bA,\Pi_\dx)$ admits an AGE associated with the value $\En(\bB,\Pi)$.
\end{remark}

\begin{remark}\label{rem:Bnul}
If $\bB=0$, there is no magnetic field and $E(\Pi,\bB)=0$. An associated AGE is the constant function $\Psi\equiv1$.
\end{remark}

\begin{proof}[Proof of Theorem \Ref{th:dicho}.]
The proof relies on an exhaustion of cases based on Chapter \ref{sec:tax} combined with a hierarchical classification of model problems on tangent structures of a cone $\Pi$.

\subsubsection*{Geometrical invariance}
 Thanks to Lemma~\ref{lem:refop}, we may assume that $\bB$ is of unit length, choose any suitable Cartesian coordinates and any suitable linear potential. Hence, to prove the theorem, we may reduce to the reference configurations investigated in  Sections \ref{subs:R3}--\ref{subs:diedre}.

\subsubsection*{Algorithm of the proof}
 We first establish the theorem when $d=0$, then we apply the following analysis for increasing values of $d=d(\Pi)$ from 1 to 3:
\begin{enumerate}
\item Check inequality \eqref{eq:comp}.
\item Check assertion (i). 
\item Prove that there exists  a singular chain $\dx\in \gC^{*}_{\bfz}(\Pi)$ such that $\seE(\bB,\Pi)=\En(\bB,\Pi_{\dx})$. Since $d(\Pi_{\dx})<d$, assertion (ii) will be a consequence of the analysis made for lower dimensions.
\end{enumerate}
This procedure applied to reference problems described in Chapter \ref{sec:tax} will provide the theorem.

\subsubsection*{$d=0$}Here $\Pi=\R^3$, see Section \ref{subs:R3}. We have $\En(\bB,\R^3)=1$ and $\seE(\bB,\R^3)=+\infty$, moreover there always exists an admissible generalized eigenvector associated with 1, see \eqref{eq:aged0}. Theorem \ref{th:dicho} is proved for $d=0$. 

\subsubsection*{$d=1$} The model cone is $\R^2\times\R_+$, see Section \ref{SS:HS}. Inequality \eqref{eq:comp} has already been proved, see \eqref{DicoHS1}, \eqref{DicoHS2}, \eqref{DicoHS3}. We also know that $\En(\bB,\R^2\times\R_+)<\seE(\bB,\R^2\times\R_+)$ if and only if $\bB$ is not normal to the boundary. In this case, AGE have already been written, 
see \eqref{eq:d1.1} and \eqref{eq:d1.3}, so point (i) of Theorem \ref{th:dicho} holds in the non-normal case. When $\bB$ is normal, $\En(\bB,\R^2\times\R_+)=\seE(\bB,\R^2\times\R_+)$. The sole tangent substructure is $\R^3$ and we have $\seE(\bB,\R^2\times\R_+)=\En(\bB,\R^3)<\seE(\bB,\R^3)$ (see the above paragraph $d=0$). Therefore Theorem \ref{th:dicho} is proved for $d=1$.

\subsubsection*{$d=2$}
The model cone is the wedge $\cW_{\alpha}$, see Section \ref{subs:diedre}. Inequality \eqref{eq:comp} and assertion (i) come from Lemma \ref{lem:pop}. To deal with case (ii), 
we define $\circ\in \{-,+\}$ satisfying $\theta^{\circ}=\min(\theta^{-},\theta^{+})$ and $\Pi_{\alpha}^\circ$ as the corresponding face.  Due to \eqref{eq:s*Da}
$\seE(\bB,\cW_{\alpha})=\sigma(\theta^{\circ})=\En(\bB,\Pi_{\alpha}^{\circ})$. Therefore in case (ii) we reduce to the situation $d=1$ and Theorem \ref{th:dicho} is proved for $d=2$.

\subsubsection*{$d=3$}

Due to Theorem \ref{th:cone-ess}, we have $\seE(\bB,\Pi)=\lambda_\ess(\bB,\Pi)$ and therefore \eqref{eq:comp}. Moreover if $\En(\bB,\Pi)<\seE(\bB,\Pi)$, the existence of an eigenfunction with exponential decay is stated in Corollary \ref{C:expodecay}. Therefore (i) is proved. 

 It remains to find $\dx\in \gC_{0}^{*}(\Pi)$ such that $\seE(\bB,\Pi)=\En(\bB,\Pi_{\dx})$. Define on $\gC^{*}_{\bfz}(\Pi)$ the function $F(\dx)=\En(\bB,\Pi_{\dx})$. Let $\Omega_{\bfz}$ denotes the section of $\Pi$, define the function $F^\star$ on $\gC(\Omega_{\bfz})$ by the partial application
\[
   F^\star(\dy) = F((\bfz,\dy)),\quad \dy\in\gC(\Omega_{\bfz}).
\] 
   Since \eqref{eq:comp} has already been proved for $d\leq 2$, we have for all $\dy$ and $\dy'$ in $\gC(\Omega_{\bfz})$:
\begin{equation}
\label{E:monoFstar}
\dy \leq \dy' \implies  F^{\star}(\dy) \leq F^{\star}(\dy') \, .
\end{equation}
Let us show that $F^{\star}$ is continuous with respect to the distance $\dD$ introduced in Definition \ref{def:distchain}. Since $\Omega_{\bfz}$ has a finite number of vertices, the chains $\dy\in \gC(\Omega_{\bfz})$ such that $\Pi_{\dy}$ is a sector (and $\Pi_{\dx}=\Pi_{(\bfz,\dy)}$ is a wedge) are isolated for the topology associated with the distance $\dD$. If $\dy$ is such that $\Pi_{(\bfz,\dy)}=\R^3$, then $F^{\star}(\dy)=1$ (see \eqref{E:spectrespace}). Therefore it remains to treat the case where the tangent substructures $\Pi_{(\bfz,\dy)}$ are half-spaces. Let $\dy$ and $\dy'$ be such chains. Denote by $\theta$ (resp. $\theta'$) the unoriented angle in $[0,\frac{\pi}{2})$ between $\bB$ and $\Pi_{\dx}$ (resp. between $\bB$ and $\Pi_{\dx'}$). We have $|\theta-\theta'|\to 0$ as $\dD(\dy,\dy')\to0$. Moreover 
$$F^{\star}(\dy)-F^{\star}(\dy')=\En(\bB,\Pi_{\dx})-\En(\bB,\Pi_{\dx'})=\sigma(\theta)-\sigma(\theta') \, .$$ 
As a consequence of the continuity of the function $\sigma$, see Lemma \ref{P:continuitesigma}, we get that $F^{\star}(\dy)-F^{\star}(\dy')$ goes to 0 as $\dD(\dy,\dy')$ goes to 0. This shows that $F^{\star}$ is continuous on $\gC(\Omega_{\bfz})$. Thanks to \eqref{E:monoFstar}, we can apply Theorem \ref{th:scichain}: the function $\overline\Omega_{\bfz}\ni\bx\mapsto F^{\star}((\bx))=\En(\bB,\Pi_{\bfz,\bx})$ is lower semicontinuous on $\overline\Omega_{\bfz}$. Since $\overline\Omega_{\bfz}$ is compact, it reaches its infimum. Combining this with Lemma \ref{lem:chainL2}, we get:
 $$\exists\bx_{1}\in \overline\Omega_{\bfz}, \quad \seE(\bB,\Pi)=E(\bB,\Pi_{\bfz,\bx_{1}}) \, .$$
Therefore (ii) follows from the analysis of lower dimensions and Theorem \ref{th:dicho} is proved.
\end{proof}

\begin{remark}
Any AGE provided by case (i) of Theorem \ref{th:dicho} satisfies:\index{Corner concentration}
$$ \forall c_{\Phi} <\sqrt{\seE(\bB,\Pi)-\En(\bB,\Pi)},\quad\exists C_{\Phi}>0, \quad  \|\re^{c_\Phi|\bz|}\Phi\|_{L^2(\Upsilon)}\leq C_\Phi \|\Phi \|_{L^2(\Upsilon)}.$$

This is a consequence of the exponential decays given by \cite[Theorem 1.3]{BoDauPopRay12} for half-planes, \cite[Proposition 4.2]{Pop13} for wedges, and Corollary \ref{C:expodecay} for 3D cones.
\end{remark}

\section{Examples}
In the case $d=1$, {\it i.e.}, when the model cone $\Pi$ is a half-space, it is known whether we are in situation (i) or (ii) of the Dichotomy Theorem. This is not the case in general for model cones $\Pi$ with $d\geq2$, and only in few cases it is known whether inequality \eqref{eq:comp} is strict or not. We provide below some examples of wedges and 3D cones where $\En(\bB,\Pi)$ has been studied. In this whole section $\bB\in\dS^2$ is a constant magnetic field of unit length.

\begin{example}[Wedges]\index{Wedge}
Let $\alpha\in(0,\pi)\cup(\pi,2\pi)$.
\begin{enumerate}[(a)]
\item For $\alpha$ small enough there holds $\En(\bB,\cW_{\alpha})<\seE(\bB,\cW_{\alpha})$, see \cite{Pop13}  
and \cite[Ch.\ 7]{PoTh}. 

\item Let $\bB=(0,0,1)$ be tangent to the edge. Then $\seE(\bB,\cW_{\alpha})=\Theta_{0}$ and $\En(\bB,\cW_{\alpha})=\En(1 \ee,\cS_{\alpha})$, cf.\ Section \ref{sec:2.1.2}. According to whether the ground state energy $\En(1 \ee,\cS_{\alpha})$ of the plane sector\index{Sector} $\cS_{\alpha}$ is less than $\Theta_0$ or equal to $\Theta_0$, we are in case (i) or (ii) of the dichotomy.

\item Let $\bB$ be tangent to a face of the wedge and normal to the edge. Then $\seE(\bB,\cW_{\alpha})=\Theta_{0}$. It is proved in \cite{Pof13T} that
$\En(\bB,\cW_{\alpha})=\Theta_{0}$ for $\alpha\in[\frac{\pi}{2},\pi)$ (case (ii)). 
\end{enumerate}
\end{example}

\begin{example}[Octant]\index{Octant}
Let $\Pi=(\R_{+})^3$ be the model octant. We quote from \cite[\S 8]{Pan02}:
\begin{enumerate}[(a)]
\item If the magnetic field $\bB$ is tangent to a face but not to an edge of $\Pi$, there exists an edge $\be$ such that $\seE(\bB,\Pi)=\En(\bB,\Pi_{\be})$ and there holds $\En(\bB,\Pi)<\En(\bB,\Pi_{\be})$. We are in case (i).

\item If the magnetic field $\bB$ is tangent to an edge $\be$ of $\Pi$, $\seE(\bB,\Pi)=\En(\bB,\Pi_{\be})=\En(\bB,\Pi)$. 
Moreover by \cite[\S 4]{Pan02}, $\En(\bB,\Pi_{\be})=\En(1,\cS_{\pi/2})<\Theta_{0}=\seE(\bB,\Pi_{\be})$. We are in case (ii).
\end{enumerate}
\end{example}
 
\begin{example}[Circular cone]\index{Cone}
Let ${\mathcal C}_{\alpha}$ be the right circular cone\index{Circular cone} of angular opening $\alpha\in(0,\pi)$.
 It is proved in \cite{BoRay13,BoRay14} that
\begin{enumerate}[(a)]
\item For $\alpha$ small enough, $\En(\bB,\cC_{\alpha})<\seE(\bB,\cC_{\alpha})$.
\item If $\bB=(0,0,1)$, then $\seE(\bB,\cC_{\alpha})=\sigma(\alpha/2)$. 
\end{enumerate}
\end{example}
 
\begin{example}[Sharp cone]
The above result on circular cones is generalized in \cite{BoDaPoRa15} to sharp cones of any section in the following sense. Let $\omega$ be a curvilinear polygon in $\gD(\R^2)$ and for $\alpha>0$, let the cone $\Pi_\alpha$ be defined as
\[
   \Pi_\alpha = \left\{\bx = (x_1,x_2,x_3)\in \R^3,\quad  x_3>0\quad\mbox{and}\quad
   \frac{1}{\alpha}\left(\frac{x_{1}}{x_{3}},\frac{x_{2}}{x_{3}}\right)\in \omega\right\}.
\]
When $\alpha$ is small, $\Pi_\alpha$ can be qualified as ``sharp''. It is proved in \cite{BoDaPoRa15} that
for $\alpha$ small enough, $\En(\bB,\cC_{\alpha})<\seE(\bB,\cC_{\alpha})$.
\end{example}

\section{Scaling and truncating Admissible Generalized Eigenvectors}
AGE's are corner-stones for our construction of quasimodes.
Here, as a preparatory step towards final construction, we show a couple of useful properties when suitable scalings and cut-off are performed.

Let $\OP(\bA,\Pi)$ be a model operator that has an AGE $\Psi$ associated with the value $\Lambda$. Then for any positive $h$, the scaled function
\begin{equation}\label{eq.AGEsc}
\Psi_{h}(\bx):=\Psi\Big(\frac{\bx}{\sqrt{h}}\Big) ,\quad\mbox{for}\quad \bx\in\Pi,
\end{equation}
defines an AGE for the operator $H_{h}(\bA,\Pi)$ associated with $h\Lambda$:
\begin{equation}
\label{eq:genEPh}
\begin{cases}
   (-ih\nabla+\bA)^2\Psi_h=h\Lambda\Psi_h &\mbox{in } \Pi,\\
   (-ih\nabla+\bA)\Psi_h\cdot\bn=0 &\mbox{on } \partial\Pi.
\end{cases}
\end{equation}
We will need to localize $\Psi_h$.
For doing this, let us choose, once for all, a model cut-off function $\underline\chi\in \sC^\infty(\R^+)$ such that
\begin{equation}
\label{D:chi}
\underline\chi(r)=1\ \mbox{ if }\ r\leq 1\quad\mbox{and}\quad 
\underline\chi(r)=0\ \mbox{ if }\ r\geq 2.
\end{equation}
For any $R>0$, let $\underline\chi_{R}$ be the cut-off function defined by
$
\underline\chi_{R}(r)=\underline\chi \left(\frac{r}{R} \right),
$
and, finally
\begin{equation}\label{eq.chih}
\chi_{h}(\bx) = \underline\chi_{R}\left(\frac{|\bx|}{h^\delta} \right) 
= \underline\chi\left(\frac{|\bx|}{Rh^\delta} \right) 
\quad\mbox{ with }\quad 0\le \delta\le \frac 12\,.
\end{equation}
Here the exponent $\delta$ is the decay rate of the cut-off. It will be tuned later to optimize remainders. 

Since $\Psi_{h}$ belongs to $\dom_{\,\loc} (\OP_{h}(\bA,\Pi))$, we can rely on Lemma~\ref{lem:tronc} to obtain the following identity for the Rayleigh quotient\index{Rayleigh quotient} of $\chi_h\Psi_{h}$:\index{Cut-off|textbf}
\begin{equation}\label{eq:rhoh}
   \QR_{h}[\bA,\Pi](\chi_{h}\Psi_{h}) =
   h\Lambda + h^2\rho_{h} \qquad\mbox{with}\qquad
  \rho_{h}=\frac{\| \,|\nabla\chi_{h}|\, \Psi_{h}\|^2}{\|\chi_{h}\Psi_{h}\|^2}.
\end{equation}

The following lemma estimates the remainder $\rho_h$:

\begin{lemma}\label{lem:rhoh}
Let $\Psi$ be an AGE for the model operator $\OP(\bA,\Pi)$. Let $k$ be the number of independent decaying directions of $\Psi$, cf.\ \eqref{eq:age1}--\eqref{eq:agmongeneig}. Let $\Psi_{h}$  be the rescaled function given by \eqref{eq.AGEsc} and let $\chi_{h}$ be the cut-off function defined by \eqref{D:chi}--\eqref{eq.chih} involving parameters $R>0$ and $\delta\in[0,\frac12]$. 
Then there exist constants $C_0>0$ and $c_0>0$ depending only on $h_0>0$, $R_0>0$ and $\Psi$ such that 
$$
   \rho_{h}=\frac{\|\,|\nabla\chi_{h}|\, \Psi_{h}\|^2}{\|\chi_{h}\Psi_{h}\|^2}\leq
   \begin{cases} C_0\, h^{-2\delta} & \mbox{ if }k<3,\\[0.5ex]
   C_0\, h^{-2\delta} \,\re^{-c_0h^{\delta-1/2}} & \mbox{ if }k=3, 
   \end{cases}
   \qquad \forall R\ge R_0,\ \forall h\le h_0,\ \forall\delta\in[0,\tfrac12]\,.
$$
\end{lemma}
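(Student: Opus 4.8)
The plan is to compute numerator and denominator of $\rho_h$ separately, using the explicit rescaling \eqref{eq.AGEsc} together with the structure \eqref{eq:age1} of an AGE. First I would change variables $\bx = \sqrt{h}\ee\by$ in both integrals. Because $\Psi_h(\bx)=\Psi(\bx/\sqrt h)$ and $\chi_h(\bx)=\underline\chi(|\bx|/(Rh^\delta))=\underline\chi_R(|\bx/\sqrt h|/h^{\delta-1/2})$, this substitution turns the problem into one for the \emph{unscaled} AGE $\Psi$ truncated at radius $\sim R\ee h^{\delta-1/2}$. Explicitly, with $S := R\ee h^{\delta-1/2}$ (which tends to $+\infty$ as $h\to0$ since $\delta<\frac12$; the borderline case $\delta=\frac12$ is trivial because then $\chi_h$ is a fixed cutoff and $\rho_h$ is bounded), one gets after the $h^{3/2}$ Jacobian cancels between numerator and denominator
\[
   \rho_h = \frac{1}{h}\,
   \frac{\displaystyle\int_{\Pi} |\nabla\underline\chi_S(|\by|)|^2\,|\Psi(\by)|^2\,\rd\by}
        {\displaystyle\int_{\Pi} \underline\chi_S(|\by|)^2\,|\Psi(\by)|^2\,\rd\by}.
\]
The factor $\frac1h$ and the support of $\nabla\underline\chi_S$ in the annulus $\{S\le|\by|\le 2S\}$, where $|\nabla\underline\chi_S|\le C/S = C/(R h^{\delta-1/2})$, already produce the prefactor: $|\nabla\underline\chi_S|^2\lesssim h^{1-2\delta}/R^2$, so $\frac1h$ times that is $\lesssim h^{-2\delta}/R^2 \le h^{-2\delta}$ for $R\ge R_0\ge 1$.

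Next I would bound the two annulus/ball integrals of $|\Psi|^2$. Using \eqref{eq:age1} one has $|\Psi\circ\udiffeo^{-1}(\by',\bz)|^2 = |\Phi(\bz)|^2$ with $\bz\in\Upsilon\subset\R^k$ and $\by'\in\R^{3-k}$ a ``flat'' variable along which $\Psi$ does not decay. The denominator $\int_\Pi\underline\chi_S^2|\Psi|^2$ is, up to the rotation $\udiffeo$, bounded below by $c\,S^{3-k}\|\Phi\|^2_{L^2(\Upsilon)}$ for $S$ large: integrating over the $(3-k)$-dimensional flat directions inside a ball of radius $S$ gives a volume $\gtrsim S^{3-k}$, while the $\Phi$-integral over the bounded region of $\Upsilon$ converges to a fixed positive constant by the $L^2$-integrability (indeed exponential decay) of $\Phi$. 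For $k<3$ this is the mechanism giving the polynomial lower bound $S^{3-k}\ge 1$; combined with the numerator estimate this yields $\rho_h\le C_0 h^{-2\delta}$. For $k=3$ there are no flat directions, $\Phi=\Phi(\bz)$ with $\bz$ ranging over all of $\Pi$, so the denominator converges to the fixed constant $\|\Phi\|^2_{L^2(\Pi)}$, whereas the numerator is controlled by $\int_{|\by|\ge S}|\Phi(\by)|^2\,\rd\by$, which by the Agmon-type exponential bound \eqref{eq:agmongeneig} is $\le C\ee\re^{-2c_\Phi S}\|\Phi\|^2$ for any $c_\Phi$ below the decay rate. Plugging $S=Rh^{\delta-1/2}\ge R_0 h^{\delta-1/2}$ gives the exponential gain $\re^{-c_0 h^{\delta-1/2}}$ with $c_0 = 2 c_\Phi R_0$, on top of the $h^{-2\delta}$ prefactor from $|\nabla\underline\chi_S|^2/h$.

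Finally I would assemble these estimates and check uniformity: all constants depend only on $\Psi$ (through $\|\Phi\|_{L^2}$, $k$, the exponential decay rate and constant $c_\Phi,C_\Phi$, and the dimension of the flat directions) and on the fixed model cutoff $\underline\chi$, hence may be chosen uniformly in $R\ge R_0$, $h\le h_0$, $\delta\in[0,\frac12]$. The main obstacle, such as it is, is purely bookkeeping: one must carry the rotation $\udiffeo$ through both integrals (which is harmless since $\udiffeo\in\gO_3$ preserves Lebesgue measure and the Euclidean norm appearing in $\chi_h$), handle the degenerate endpoint $\delta=\frac12$ separately (where $S$ stays bounded and $\rho_h=O(1)$, absorbed into $C_0 h^{-2\delta}=C_0/h$... more simply, one notes $\chi_h$ is then a genuine compactly supported cutoff and the estimate is classical), and make sure the lower bound $S^{3-k}\gtrsim 1$ really uses only $S\ge S_0$ for some fixed $S_0$, which holds once $R\ge R_0$ and $h\le h_0$ are suitably fixed. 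No deep new idea is required beyond the explicit form \eqref{eq:age1} and the decay \eqref{eq:agmongeneig} already provided by the Dichotomy Theorem.
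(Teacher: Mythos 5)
Your proposal is correct and follows essentially the same route as the paper's proof: both estimate the numerator and denominator separately, use the tensor structure $|\Psi(\by',\bz)|=|\Phi(\bz)|$ to factor out a volume $S^{3-k}$ (or $T^{3-k}$ in the paper's unrescaled variable $T=Rh^\delta$) that cancels between top and bottom, extract the $h^{-2\delta}$ prefactor from $|\nabla\chi_h|^2$, and use the Agmon bound \eqref{eq:agmongeneig} on the annulus for the $k=3$ gain. The one noticeable difference is in how you establish the lower bound on the denominator: you invoke the exponential tail estimate directly, so that $\int_{|\bz|\le S}|\Phi|^2 \ge (1 - C_\Phi^2 e^{-2c_\Phi S})\|\Phi\|^2 \ge \tfrac12\|\Phi\|^2$ once $S\ge S_0$, which is uniform for $R\ge R_0$, $h\le h_0$, $\delta\in[0,\tfrac12]$ since then $S = Rh^{\delta-1/2}\ge R_0$. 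The paper instead introduces the radial mass function $\cI(S)$ and argues it is increasing and positive on $(0,\infty)$ because $\Phi$ is analytic inside $\Upsilon$. Both close the gap; your route is arguably leaner since it uses only the Agmon estimate already built into the definition of an AGE (and is, incidentally, the argument the paper itself uses in the uniform variant Lemma \ref{lem:rhohb}). Your separate discussion of the endpoint $\delta=\tfrac12$ is superfluous, as you yourself note at the end: since $h\le h_0<1$, one has $h^{\delta-1/2}\ge1$ for all $\delta\in[0,\tfrac12]$, so $S\ge R_0$ holds uniformly and no case distinction is required.
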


\begin{proof}
By assumption $\Psi(\bx) = \re^{\ri\vartheta(\by,\bz)}\,\Phi(\bz)$ for $\udiffeo\bx=(\by,\bz)\in \R^{3-k}\times \Upsilon$, where $\udiffeo$ is a suitable rotation, and there exist positive constants $c_\Psi,C_\Psi$ controlling the exponential decay of $\Phi$ in the cone $\Upsilon\in\gP_k$, cf.\ \eqref{eq:agmongeneig}.
 Let us set $T=Rh^\delta$, so that $\chi_h(\bx) = \underline{\chi}(|\bx|/T)$.

Let us first give an upper bound for $\|\,|\nabla\chi_{h}|\,\Psi_{h}\|$:\\
If $k<3$, then 
\[
   \|\,|\nabla\chi_{h}|\,\Psi_{h}\|^2 
   \leq CT^{-2} \int_{|\by|\leq 2T} \!\rd \by\ 
   \int_{\Upsilon\, \cap\, \{|\bz|\leq2T\}} 
   \left|\Phi\Big(\frac{\bz}{\sqrt h}\Big)\right|^2\rd \bz  
   = C T^{-2}\, T^{3-k} \, h^{k/2} \|\Phi\|^2_{L^2(\Upsilon)},
\] 
else, if $k=3$ 
\begin{eqnarray*}
   \|\,|\nabla\chi_{h}|\,\Psi_{h}\|^2 
   &\leq& C T^{-2} \hskip-1em\int\limits_{\Upsilon\, \cap\, \{T \leq |\bz|\leq2T\}} 
   \left|\Phi\Big(\frac{\bz}{\sqrt h}\Big)\right|^2\rd \bz \ \ = \ \ 
   C T^{-2} \, h^{k/2} \hskip-3em\int\limits_{\Upsilon\, \cap\, 
   \big\{Th^{-\frac12} \leq |\bz|\leq2Th^{-\frac12}\big\}} 
   \hskip-3em \left|\Phi (\bz)\right|^2\rd \bz \\
&\leq& C T^{-2} \, h^{k/2}  \ \re^{-2 c_\Psi T /\sqrt{h}}
   \int_{\Upsilon\, \cap\, 
   \big\{Th^{-\frac12} \leq |\bz|\leq2Th^{-\frac12}\big\}} 
   \re^{2 c|\bz|} \left|\Phi(\bz)\right|^2\rd \bz\\
&\leq& C T^{-2} \, h^{k/2}  \ \re^{-2 c_\Psi T/\sqrt{h}}\, \|\Phi\|^2_{L^2(\Upsilon)}.
\end{eqnarray*}
Let us now consider $\|\chi_{h}\Psi_{h}\|$ (we use that $2|\by|<R$ and $2|\bz|<R$ implies $|\bx|<R$):
\begin{eqnarray*}
   \|\chi_{h}\Psi_{h}\|^2  &\geq& \int_{ 2|\by|\leq T} \! \rd \by\ 
   \int_{\Upsilon\, \cap\, \{2|\bz|\leq T\}} 
   \left|\Phi\Big(\frac{\bz}{\sqrt h}\Big)\right|^2\rd \bz =
   C T^{3-k} h^{k/2} \, \int_{\Upsilon\, \cap\, \big\{2|\bz|\leq Th^{-\frac12}\big\}} 
   \left|\Phi(\bz)\right|^2\rd \bz \\
   &\geq & C T^{3-k} h^{k/2} \, \cI(Th^{-\frac12}) \, \|\Phi\|^2_{L^2(\Upsilon)}
\end{eqnarray*}
where we have set for any $S\ge0$
\[
   \cI(S) := \bigg(\int_{\Upsilon\, \cap\, \{2|\bz|\leq S\}} 
   \left|\Phi(\bz)\right|^2\rd \bz \bigg)  \bigg(
   \int_{\Upsilon} 
   \left|\Phi(\bz)\right|^2\rd \bz\bigg)^{-1}.
\]
The function $S\mapsto\cI(S)$ is continuous, non-negative and non-decreasing on $[0,+\infty)$. It is moreover {\em increasing and positive} on $(0,\infty)$ since $\Phi$, as a solution of an elliptic equation with polynomial coefficients and null right hand side, is analytic inside $\Upsilon$.  
Consequently, $\cI(T h^{-\frac12})=\cI(R h^{\delta-\frac12})$ is uniformly bounded from below for $R\geq R_{0}$, $h\in (0,h_{0})$, $\delta\in [0,\frac 12]$ and thus\index{Cut-off}
\begin{eqnarray*}
\rho_{h}&\leq& 
\begin{cases}
C T^{-2}\,\big\{\cI(Th^{-\frac12})\big\}^{-1}
\le C_0 h^{-2\delta}
&\mbox{ if }k<3,\\[0.5ex]
C T^{-2} \ \re^{-2 c_\Psi T/\sqrt{h}} \,\big\{\cI(Th^{-\frac12})\big\}^{-1}
\le C_0 h^{-2\delta} \re^{-c_0 h^{\delta-1/2}} &\mbox{ if }k=3,
\end{cases}
\end{eqnarray*}
where the constants $C_0$ and $c_0$ in the above estimation depend only on the lower bound $R_0$ on $R$, the upper bound $h_0$ on $h$, and on the model problem associated with $\bx_{0}$, provided $\delta\in[0,\frac12]$. Lemma \ref{lem:rhoh} is proved. 
\end{proof}

\begin{remark}
\label{rem:k=0}
The estimate of $\rho_h$ provided by Lemma \ref{lem:rhoh} is still true when $k=0$, {\it i.e.}, when $\Psi$ has no decay direction (but is of modulus $1$ everywhere).
\end{remark}

\chapter{Properties of the local ground state energy}
\label{sec:sci}
In this chapter we describe the regularity properties of the local ground state energy. The main result of this section is that the function $\bx\mapsto \En(\bB_{\bx},\Pi_{\bx})$\index{Local ground energy} is lower semicontinuous on a corner domain and therefore it reaches its infimum. 

\section{Lower semicontinuity}\label{sec:cont}
\begin{theorem}
\label{T:sci}
Let $\Omega\in \gD(\R^3)$ and let $\bB\in\sC^0(\overline\Omega)$ be a continuous magnetic field. Then 
the function $\Lambda_{\overline\Omega}:\bx\mapsto \En(\bB_{\bx},\Pi_{\bx})$ is lower semicontinuous\index{Semicontinuity} on $\overline{\Omega}$.
\end{theorem}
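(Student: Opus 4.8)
The plan is to establish lower semicontinuity of $\Lambda_{\overline\Omega}:\bx\mapsto\En(\bB_\bx,\Pi_\bx)$ by reducing it to the abstract semicontinuity mechanism already built in Chapter~\ref{sec:chain}, namely Theorem~\ref{th:scichain}. That theorem says that if a function $F$ defined on the set of singular chains $\gC(\Omega)$ is \emph{continuous} for the chain distance $\dD$ and \emph{order-preserving} for the partial order $\le$ on chains, then $\bx\mapsto F((\bx))$ is lower semicontinuous on $\overline\Omega$. So the whole proof amounts to verifying that the function $F(\dx):=\En(\bB_\dx,\Pi_\dx)$, where $\bB_\dx=\bB(\bx_0)$ for $\dx=(\bx_0,\ldots,\bx_\pp)$ and $\Pi_\dx$ is the tangent structure from \eqref{eq:PiX}, meets these two hypotheses. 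Note that $F((\bx))=\En(\bB_\bx,\Pi_\bx)=\Lambda_{\overline\Omega}(\bx)$, so the conclusion of Theorem~\ref{th:scichain} is exactly Theorem~\ref{T:sci}.

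First I would check the order-preserving property. If $\dx\le\dx'$ then $\dx$ and $\dx'$ share the same origin $\bx_0$, so $\bB_\dx=\bB_{\dx'}$, and $\Pi_{\dx'}$ is a tangent substructure of $\Pi_\dx$; more precisely $\Pi_{\dx'}\in\gC_\bfz^*(\Pi_\dx)$ after the natural identification. Hence $F(\dx)=\En(\bB_{\bx_0},\Pi_\dx)$ and $F(\dx')=\En(\bB_{\bx_0},\Pi_{\dx'})\ge\seE(\bB_{\bx_0},\Pi_\dx)\ge\En(\bB_{\bx_0},\Pi_\dx)=F(\dx)$, where the last inequality is precisely \eqref{eq:comp} from the Dichotomy Theorem~\ref{th:dicho}. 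This handles monotonicity with no extra work. One has to be slightly careful that the comparison is applied at the level of a generic intermediate tangent structure and not only to chains issued from $\overline\Omega$ itself, but since each $\Pi_\dx$ lies in $\gP_3$ and \eqref{eq:comp} holds for every cone in $\gP_3$, this is immediate.

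Second, and this is where the real content lies, I would check continuity of $F$ for $\dD$. Using Lemma~\ref{lem:refop} (gauge, rotation, dilation invariance of $\En$), $F(\dx)$ depends on $\dx$ only through the pair $(\bx_0,\Pi_\dx)$ up to an orthogonal transformation applied to both $\bB$ and $\Pi_\dx$, and by homogeneity $\En(\bB_\dx,\Pi_\dx)=|\bB(\bx_0)|\,\En(\bB_\dx/|\bB(\bx_0)|,\Pi_\dx)$. Continuity of $\bx_0\mapsto\bB(\bx_0)$ is given. So it suffices to show: if $\dD(\dx_j,\dx)\to0$, then $\En(\bB_{\dx_j},\Pi_{\dx_j})\to\En(\bB_\dx,\Pi_\dx)$. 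By Description~\ref{description} (exhaustion of 3D singular chains) one knows $\Pi_\dx$ is, up to rotation, $\R^3$, a half-space, a wedge $\cW_\alpha$, or a genuine 3D cone; and $\dD$-nearby chains have $\Pi$ in the same $\mathsf{BGL}(3)$-orbit, hence of the same nature. The four cases are then handled by the continuity statements already in the paper: trivial for $\R^3$ by \eqref{E:spectrespace}; for half-spaces by Lemma~\ref{P:continuitesigma} (the map $\theta\mapsto\sigma(\theta)$ is $\sC^1$ on $[0,\tfrac\pi2]$, and $\dD$-convergence forces the angle $\theta_j$ between $\bB_{\dx_j}$ and $\partial\Pi_{\dx_j}$ to converge); for wedges by Lemma~\ref{lem:contwedge} (continuity of $(\bB,\alpha)\mapsto\En(\bB,\cW_\alpha)$, noting $\dD$-convergence of $\Pi_{\dx_j}$ to $\cW_\alpha$ controls both the opening $\alpha_j\to\alpha$ and the field direction). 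The only genuinely delicate case is the 3D cone case: within a fixed isolated $\mathsf{BGL}(3)$-orbit the geometry of $\Pi_{\dx_j}$ is ``rigid'' (the $\dD$-component of a 3D cone chain $\dx=(\bx_0)$ is isolated, cf.\ Remark~\ref{rem:str3}), so $\Pi_{\dx_j}=L_j\Pi_\dx$ with $L_j\to\Id_3$, and then continuity of $\En$ reduces to the joint continuity of $\bB\mapsto\En(\bB,\Pi)$ for a \emph{fixed} cone $\Pi$, which follows from the min-max characterization together with the gauge/rotation normalizations; one uses that a small perturbation of $\bB$ (equivalently, a small $L_j$) yields a quantitatively small relative perturbation of the quadratic form on a suitably normalized test space.

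The main obstacle I anticipate is precisely the continuity of $F$ on the components of $\gC(\Omega)$ corresponding to 3D cones together with the bookkeeping needed to match the abstract distance $\dD$ (which ``blows up'' conical points, cf.\ Remark~\ref{rem:str3}) against the concrete angle/opening parameters governing $\sigma$ and $\En(\cdot,\cW_\alpha)$. Once the translation between $\dD$-convergence and convergence of these geometric parameters is made precise — which is essentially the content of Description~\ref{description} and Remark~\ref{rem:str3} — the four-case verification is mechanical, and Theorem~\ref{th:scichain} then delivers lower semicontinuity of $\Lambda_{\overline\Omega}$ on all of $\overline\Omega$, in particular (since $\overline\Omega$ is compact) the attainment of the infimum $\sE(\bB,\Omega)$ announced in \eqref{eq:s,min}.
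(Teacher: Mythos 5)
Your proof takes essentially the same route as the paper's: define $F(\dx)=\En(\bB_{\bx_0},\Pi_\dx)$ on $\gC(\Omega)$, verify that $F$ is order-preserving via inequality~\eqref{eq:comp} from the Dichotomy Theorem, verify that $F$ is continuous for $\dD$ by working through the four cases $\Pi_\dx\equiv\R^3$, half-space, wedge, and 3D cone (using~\eqref{E:spectrespace}, Lemma~\ref{P:continuitesigma}, and Lemma~\ref{lem:contwedge} respectively), and then invoke Theorem~\ref{th:scichain}. The only place you diverge is the 3D cone case: the paper simply observes that such chains have length one and are isolated for $\dD$ (since $\gA_3(\Omega)$ is finite by Proposition~\ref{prop:stata}), so continuity at those chains is vacuous. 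You note the isolation but then nonetheless sketch a joint-continuity argument for $\bB\mapsto\En(\bB,\Pi)$ on a fixed 3D cone via the min-max characterization. That extra step is not needed here (once $\dD(\dx_j,\dx)$ is small enough, $\dx_j$ is equivalent to $\dx$, so $F(\dx_j)=F(\dx)$ exactly), and the continuity property you appeal to is only asserted, not proved, in the paper (Remark~\ref{rem:10.3}); stopping at the isolation observation keeps the proof self-contained.
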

\begin{proof}
For $\dx=(\bx_{0},\ldots)\in\gC(\Omega)$, define the function $F(\dx):=E(\bB_{\bx_{0}},\Pi_{\dx})$, which coincides on the chains of length 1 with the function $\Lambda_{\overline\Omega}$: $F((\bx_0))=\Lambda_{\overline\Omega}(\bx_0)$. Recall that we have introduced a partial order on $\gC(\Omega)$, see Definition \ref{D:Ordrechain}. Then due to \eqref{eq:comp} applied to $\Pi_\dx$ for any chain $\dx$, the function $F:\gC(\Omega)\mapsto \R_{+}$ is clearly order preserving. 

Let us show that it is continuous with respect to the distance\index{Distance of chains} $\dD$ (see Definition \ref{def:distchain}). Let $\dx\in \gC(\Omega)$ and $\dx'$ tending to $\dx$. This means that $\bx_{0}'$ tends to $\bx_{0}$ in $\R^3$ and that there exists $J\in \mathsf{BGL}(3)$ tending to the identity $\Id_3$ such that $J(\Pi_{\dx})=\Pi_{\dx'}$. In particular for $\dx'$ close enough to $\dx$,  the reduced dimensions of the cones $\Pi_{\dx}$ and $\Pi_{\dx'}$ are equal: $d(\Pi_{\dx'})=d(\Pi_{\dx})$.

\begin{enumerate}[(1)]
\item If $\Pi_{\dx}=\R^3$, then $F(\dx)=|\bB_{\bx_{0}}|$ and $F(\dx')=|\bB_{\bx_{0}'}|$, and since $\bB$ is continuous, $F(\dx')$ converges toward $F(\dx)$ when $\dD(\dx',\dx)\to0$. 
\item When $\Pi_{\dx}$ is a half-space, we denote by $\theta(\dx)$ the angle between $\Pi_{\dx}$ and $\bB_{\bx_{0}}$. We have $\theta(\dx')\to \theta(\dx)$ when $\dD(\dx',\dx)\to0$. Moreover 
$$F(\dx')-F(\dx)=|\bB_{\bx_{0}'}|\sigma(\theta(\dx')) -|\bB_{\bx_{0}}|\sigma(\theta(\dx)),$$ 
therefore $F(\dx')$ tends to $F(\dx)$ due to Lemma \ref{P:continuitesigma} and the continuity of $\bB$. 
\item When $\Pi_{\dx}$ is a wedge, there exists $(\udiffeo,\udiffeo')$ in $\gO_{3}$ and $(\alpha,\alpha')$ in $(0,\pi)\cup(\pi,2\pi)$ such that $\udiffeo(\Pi_{\dx})=\cW_{\alpha}$ and $\udiffeo'(\Pi_{\dx'})=\cW_{\alpha'}$. Therefore 
$$F(\dx')-F(\dx)=\En(\udiffeo(\bB_{\bx_{0}}),\cW_{\alpha})-\En(\udiffeo'(\bB_{\bx_{0}'}),\cW_{\alpha'}),$$ 
with $\alpha'\to \alpha$ and $\udiffeo'\to \udiffeo$ when $\dD(\dx',\dx)\to0$. 
Lemma \ref{lem:contwedge} and the continuity of $\bB$ ensure that $F(\dx')$ tends to $F(\dx)$. 
\item Finally chains $\dx$ such that $\Pi_{\dx}$ is a 3D cone are of length 1 and are isolated in $\gC(\Omega)$ for the topology associated with $\dD$ (see Proposition \ref{prop:stata}).
 \end{enumerate}
 
Therefore $F$ is continuous on $\gC(\Omega)$. We apply Theorem \ref{th:scichain}: So the function $\bx\mapsto F((\bx))=\Lambda_{\overline\Omega}(\bx)$ is lower semicontinuous on $\overline{\Omega}$.
\end{proof} 

As a consequence of the above theorem, the function $\bx\mapsto \Lambda_{\overline\Omega}(\bx)$ reaches its infimum over $\overline{\Omega}$. This fact will be one of the key ingredients to prove an upper bound with remainder for $\lambda_{h}(\bB,\Omega)$ in the semiclassical limit.

\begin{remark}
Recall that any stratum\index{Stratum} $\bt\in \gT$ has a smooth submanifold structure (see Proposition \ref{prop:stata}). Denote by $\Lambda_{\bt}$ the restriction of the local ground energy to $\bt$. Then it follows from above that $\Lambda_{\bt}$ is continuous.  Moreover if $\Omega\in \ogD(\R^3)$, one can prove that $\Lambda_{\bt}$ admits a continuous extension to $\overline{\bt}$. But this is not true anymore if $\overline{\bt}$ contains a conical point.
\end{remark}

\begin{remark}
\label{R:straight}
Let $\bB$ be a constant magnetic field and $\Omega$ be a straight polyhedron. So, its faces are plane polygons and its edges are segments of lines. The following properties hold.
\begin{enumerate}[a)]
\item For each stratum $\bt \in \gT $, the function $\Lambda_{\bt}:\bt\ni\bx\mapsto \En(\bB,\Pi_{\bx})$ is constant.
\item As a consequence of \eqref{eq:comp} and of the lower semicontinuity, $\sE(\bB \ee,\Omega)$ is the minimum of the corner local energies: 
$$\sE(\bB \ee,\Omega) = \min_{\bv\in\gV}\En(\bB,\Pi_{\bv}).$$
\item A stratum $\bt\in \gT$ being chosen we have
$$
   \forall \bx\in \bt, \quad  \seE(\bB,\Pi_{\bx})=\min_{\bt'\in \gN(\bt)} \Lambda_{\bt'},
$$ 
where $\gN(\bt):= \{\bt'\in \gT,\ \bt\subset \partial \bt' \}\setminus \{\bt\}$  is the set of the strata adjacent to $\bt$.
\item As a consequence of a), c) and the Dichotomy Theorem, there exists $\bx_{0}\in \overline{\Omega}$ such that 
$$\sE(\bB,\Omega)=E(\bB,\Pi_{\bx_{0}})< \seE(\bB,\Pi_{\bx_{0}}).$$
\end{enumerate}
\end{remark}

\section{Positivity of the ground state energy} 
The classical diamagnetic inequality (see \cite{Ka72,Si76} for example) implies that the ground state energy is in general larger than the one without magnetic field, that is 0 in our case due to Neumann boundary conditions. Usually it is harder to show that this inequality is strict. A strict diamagnetic inequality has been proved for the Neumann magnetic Laplacian in a bounded regular domain, in \cite[Section 2.2]{FouHe10}. For our unbounded domains $\Pi$ with constant magnetic field, we have: 
\begin{proposition}
Let $\Pi\in \gP_{3}$ and $\bB\neq0$ be a constant magnetic field. Then $\En(\bB,\Pi)>0$.
\end{proposition}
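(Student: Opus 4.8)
The statement to prove is that for any cone $\Pi\in\gP_3$ and any nonzero constant magnetic field $\bB$, the ground state energy $\En(\bB,\Pi)$ is strictly positive. My plan is to argue by contradiction combined with a scaling argument. Suppose $\En(\bB,\Pi)=0$. First I would reduce, using Lemma~\ref{lem.dilatation} and a rotation (Lemma~\ref{lem:refop}), to the case $|\bB|=1$, and fix a linear potential $\bA$ with $\curl\bA=\bB$. The key point is that $\En(\bB,\Pi)=0$ forces the existence of a minimizing sequence $(\psi_n)\subset\dom(q[\bA,\Pi])$, normalized in $L^2(\Pi)$, with $q[\bA,\Pi](\psi_n)\to0$, i.e. $\|(-i\nabla+\bA)\psi_n\|_{L^2(\Pi)}\to 0$.

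\textbf{Main step: a quantitative local lower bound on balls.} The heart of the matter is to show that on the unit ball the magnetic Dirichlet energy controls the $L^2$ mass \emph{from below}, up to a dilation: there is a constant $\gamma(\bB)>0$ such that for all $u\in H^1(\Pi\cap\cB(\bfz,1))$ one has $q[\bA,\Pi\cap\cB](u)\ge\gamma(\bB)\|u\|^2_{L^2(\Pi\cap\cB)}$ modulo lower-order terms --- more precisely, I would invoke the diamagnetic inequality $|\nabla|u||\le|(-i\nabla+\bA)u|$ together with the fact that, because $\bB\neq0$, the magnetic form cannot vanish on any $L^2$-normalized function supported in a ball of radius $r$: quantitatively, a rescaled Poincaré-type inequality gives $\QR[\bA,\cO](u)\ge c(\bB)\operatorname{diam}(\cO)^{-2}$ is \emph{wrong} for Neumann data, so instead the right tool is: if $q[\bA,\cO](u)$ is very small then $|u|$ is nearly constant by diamagnetism, but then the magnetic flux through $\cO$ forces a lower bound on the phase oscillation, contradicting smallness. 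The cleanest way to make this rigorous is to use Corollary~\ref{C:Metalbloc}: taking $K=\cB(\bx_0,\delta)$ for a suitable $\bx_0$ and $\delta$, one gets $\QR_h[\bA,\Omega](f)\ge h\sE_K(\bB,\Omega)-Ch^{11/10}$ for $f$ supported in $K$; applied to the model cone (the remark in the proof of Theorem~\ref{th:cone-ess} notes that Corollary~\ref{C:Metalbloc} applies to unbounded $\Pi$ as well), and combined with the fact that $\sE_K(\bB,\Pi)>0$ --- which follows because each local energy $\En(\bB_\bx,\Pi_\bx)$ appearing in the infimum defining $\sE_K$ is one of the explicitly computed quantities $1$, $\Theta_0$, $\sigma(\theta)$, or $\En(\bB,\cW_\alpha)$, all strictly positive by the taxonomy of Chapter~\ref{sec:tax} --- one gets, after rescaling $h\to0$ and using Lemma~\ref{lem.dilatation}, a genuine positive lower bound on $\QR[\bA,\Pi](f)$ for $f$ supported in a fixed ball.

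\textbf{From local to global via a partition of unity.} Having the local lower bound, I would cover $\overline\Pi$ by balls (fixed size, as in the IMS arguments of Chapter~\ref{sec:low}), apply the IMS formula (Lemma~\ref{lem:IMS}) to a minimizing sequence $\psi_n$, and bound each localized piece from below; the IMS error terms are controlled because the partition is $h$-independent here. This yields $q[\bA,\Pi](\psi_n)\ge c\|\psi_n\|^2 - C\|\psi_n\|^2_{L^2(\text{overlap})}$, and a more careful accounting (or simply applying Corollary~\ref{C:Metalbloc} directly to the semiclassical operator $\OP_h(\bA,\Pi)$ and letting $h\to0$) gives $\En(\bB,\Pi)\ge$ the infimum over local energies, which is $\ge\sE(\bB,\Pi)>0$ by the taxonomy. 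Actually the shortest route: by Persson's Lemma (Lemma~\ref{L:Persson}) and Theorem~\ref{th:cone-ess}, $\lambda_{\rm ess}(\bB,\Pi)=\seE(\bB,\Pi)$, which by the Dichotomy inequality \eqref{eq:comp} and induction on $d$ is bounded below by a positive constant; so either $\En(\bB,\Pi)=\lambda_{\rm ess}(\bB,\Pi)>0$, or $\En(\bB,\Pi)$ is a genuine eigenvalue below the essential spectrum, and for an eigenfunction $\Psi$ one uses the local estimate of Corollary~\ref{C:Metalbloc} near an arbitrary point together with Agmon-type decay (Corollary~\ref{C:expodecay}) to show $\QR[\bA,\Pi](\Psi)>0$ strictly.

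\textbf{Main obstacle.} The delicate point is the genuinely local statement that the magnetic Dirichlet energy on a \emph{fixed-size} ball is bounded below by a positive multiple of the $L^2$ norm when $\bB\neq0$ --- with Neumann (not Dirichlet) conditions this is false without using the magnetic field, so one must genuinely exploit $\curl\bA=\bB\neq0$; the cleanest encapsulation of this is precisely the chain Corollary~\ref{C:Metalbloc} $\Rightarrow$ positivity of $\sE_K$ $\Rightarrow$ the taxonomy, and I expect the write-up's main work to be checking that $\sE_K(\bB,\Pi)>0$, i.e. that none of the explicitly identified model ground energies vanishes when $|\bB|=1$, which is immediate from \eqref{E:spectrespace}, \eqref{DicoHS1}--\eqref{DicoHS3}, and \eqref{DichoW} together with $\sigma(\theta)\ge\Theta_0>0$ and $\En(\bB,\cW_\alpha)\ge\sigma(\min\{\theta^+,\theta^-\})>0$, proceeding by induction on the reduced dimension $d$ exactly as in the proof of Theorem~\ref{th:dicho}.
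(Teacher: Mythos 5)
Your proposal gestures at the right ingredients (reduction to $|\bB|=1$, the taxonomy of model cones, the dichotomy, the diamagnetic inequality) but the argument as written has two genuine gaps, and the overall structure is considerably more roundabout than the paper's, which is a short induction on the reduced dimension $d(\Pi)$.

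The first gap is in the $d=2$ case. You assert that $\En(\bB,\cW_\alpha)\ge\sigma(\min\{\theta^+,\theta^-\})>0$ and that this is ``immediate'' from the taxonomy. But the inequality from Lemma~\ref{lem:pop} (equation \eqref{DichoW}) goes the \emph{other} way: $\En(\bB,\cW_\alpha)\le\seE(\bB,\cW_\alpha)=\sigma(\min\{\theta^+,\theta^-\})$. Strict positivity of $\En(\bB,\cW_\alpha)$ is \emph{not} immediate and does not follow from the positivity of the tangent substructure energies; the paper cites a separate result (Corollary 3.9 of \cite{Pop13}) for it. Your chain of reasoning would prove an upper bound where you need a lower bound.

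The second gap is in the $d=3$ case (i), when $\En(\bB,\Pi)<\seE(\bB,\Pi)$. You propose to handle this via Corollary~\ref{C:Metalbloc} together with Agmon decay, but $\sE_K(\bB,\Pi)=\inf_{\bx\in\overline\Pi\cap\overline K}\En(\bB_\bx,\Pi_\bx)$ includes $\En(\bB,\Pi)$ itself when $K$ contains the vertex $\bfz$, making this circular; and Agmon decay of an eigenfunction does not by itself force the eigenvalue to be positive. What is actually needed — and what the paper does in two lines — is the diamagnetic inequality applied to a genuine $L^2$ eigenfunction $\Psi$: if $\En(\bB,\Pi)=0$ then $\int_\Pi|\nabla|\Psi||^2\le\int_\Pi|(-i\nabla+\bA)\Psi|^2=0$, so $|\Psi|$ is constant, and since $\Psi\in L^2(\Pi)$ on the unbounded cone $\Pi$, $\Psi\equiv0$, a contradiction. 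You allude to this mechanism in your ``main step'' paragraph but never carry it out; instead it is tangled together with a discussion of flux quantization and a covering argument that the paper never needs. The case $\En(\bB,\Pi)=\seE(\bB,\Pi)$ (case (ii) of the dichotomy) then reduces, as you correctly note, to the already-settled cases $d\le2$ via a tangent substructure. The rest of the apparatus you invoke — minimizing sequences, IMS partitions, Corollary~\ref{C:Metalbloc}, Persson's lemma — is not needed for this proposition.
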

\begin{proof}
It is enough to make the proof for magnetic field of unit length, see Lemma \ref{lem.dilatation}. Let $d\in [0,3]$ be the reduced dimension of the cone $\Pi$. If $d=0$, then $\En(\bB,\Pi)=1$ (see \eqref{E:spectrespace}). If $d=1$, then $\En(\bB,\Pi)$ is expressed with the function $\sigma$ that satisfies $\sigma(\theta) \geq\Theta_{0}>0$ for all $\theta\in [0,\frac{\pi}{2}]$, see Lemma \ref{P:continuitesigma}. When $d=2$, the strict positivity has been shown in \cite[Corollary 3.9]{Pop13}. 

Assume now that $d=3$. If we are in case (i) of Theorem \ref{th:dicho}, then there exists an eigenfunction $\Psi\in L^2(\Pi)$ for $H(\bA,\Pi)$ associated with $E(\bB,\Pi)$. Assume that $E(\bB,\Pi)=0$, then due to the standard diamagnetic inequality\index{Diamagnetic inequality} (see \cite[Lemma A]{Ka72}), we have 
$$0 \leq \int_{\Pi}\big|\nabla |\Psi|\big|^2 \leq \int_{\Pi}|(-i\nabla-\bA)\Psi|^2 =0 , $$
that leads to $\Psi=0$, which is a contradiction. If we are in case (ii) of Theorem \ref{th:dicho}, then there exists a tangent substructure $\Pi_{\dx}$ of $\Pi$ with $d(\Pi_{\dx})<3$ such that $\En(\bB,\Pi)=\En(\bB,\Pi_{\dx})$ that is strictly positive due to the analysis of the cases $d\leq2$, see above.
\end{proof}

Combining the above proposition with Theorem \ref{T:sci}, we get: 

\begin{corollary}\label{cor.sE>0}
Let $\Omega\in \gD(\R^3)$ and let $\bB\in\sC^0(\overline\Omega)$ be non-vanishing. Then we have $\sE(\bB,\Omega)>0$.
\end{corollary}

\chapter{Upper bounds for ground state energy in corner domains}
\label{sec:up}

In this section, we prove an upper bound involving error estimates that contains the same powers of $h$ as the lower bound in Theorem \ref{T:generalLB}.
\begin{theorem}
\label{T:generalUB}
Let $\Omega\in \gD(\R^3)$ be a general 3D corner domain, and let $\bA\in \sC^{2}(\overline{\Omega})$ be a magnetic potential.
\begin{enumerate}[(a)] 

\item
 Then there exist $C_\Omega>0$ and $h_{0}>0$ such that
\begin{equation}
\label{eq:above3}
   \forall h\in (0,h_{0}), \quad \lambda_{h}(\bB,\Omega) \leq
   h\sE(\bB,\Omega)+C_\Omega \big(1+\|\bA\|_{W^{2,\infty}(\Omega)}^2 \big)h^{11/10}\ .  
\end{equation}

\item  If $\Omega$ is a polyhedral domain, this upper bound is improved: 
\begin{equation}
\label{eq:above1}
   \forall h\in (0,h_{0}), \quad \lambda_{h}(\bB,\Omega) \leq
   h\sE(\bB,\Omega)+C_\Omega \big(1+\|\bA\|_{W^{2,\infty}(\Omega)}^2 \big)h^{5/4} \ . 
\end{equation}

\item  If there exists a point $\bx_0\in\overline\Omega$ such that $\bB(\bx_0)=0$, then $\sE(\bB,\Omega)=0$ and we have the optimal upper bound
\begin{equation}
\label{eq:above0}
   \forall h\in (0,h_{0}), \quad \lambda_{h}(\bB,\Omega) \leq
   C_\Omega \big(1+\|\bA\|_{W^{2,\infty}(\Omega)}^2 \big)h^{4/3} \ . 
\end{equation}

\item  If there exists a corner $\bx_{0}$ such that 
$\sE(\bB,\Omega)=\En(\bB_{\bx_{0}},\Pi_{\bx_{0}})<\seE(\bB_{\bx_{0}},\Pi_{\bx_{0}})$
then 
\begin{equation}
\label{eq:corner}
   \forall h\in (0,h_{0}), \quad \lambda_{h}(\bB,\Omega) \leq 
   h\sE(\bB,\Omega)+C_\Omega(1+\|\bA\|_{W^{2,\infty}(\Omega)}^2)\, h^{3/2}|\log h| \, .
\end{equation}

\item If $\Omega$ is a straight polyhedron and $\bB$ is constant,
\begin{equation}
\label{E:straight1}
\forall h\in (0,h_{0}), \quad \lambda_h(\bB,\Omega) \leq h\sE(\bB \ee,\Omega) + C h^2 \ . 
\end{equation}
\end{enumerate}
\end{theorem}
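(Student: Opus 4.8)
The plan is to prove Theorem~\ref{T:generalUB} by constructing suitable quasimodes. All five assertions follow from a single construction whose precision is tuned by the choice of a cut-off exponent and by the structure of the admissible generalized eigenvector (AGE) provided by the Dichotomy Theorem~\ref{th:dicho}. I first treat the general upper bounds (a)--(b), then specialize to obtain (c)--(e).

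\textbf{Step 1: Reduction to a tangent problem via the Dichotomy Theorem.} By Theorem~\ref{T:sci}, the local ground energy $\bx\mapsto\En(\bB_\bx,\Pi_\bx)$ is lower semicontinuous on the compact set $\overline\Omega$, so it attains its infimum $\sE(\bB,\Omega)$ at some point $\bx_*\in\overline\Omega$; that is, $\En(\bB_{\bx_*},\Pi_{\bx_*})=\sE(\bB,\Omega)$. Applying Theorem~\ref{th:dicho} to the cone $\Pi_{\bx_*}$, there is a singular chain $\dx=(\bx_*,\bx_1,\ldots,\bx_{\nu-1})\in\gC_{\bx_*}(\Omega)$ (of some length $\nu\le3$, with $\nu=1$ meaning the trivial chain) such that $\OP(\bA_\dx,\Pi_\dx)$ possesses an AGE $\Psi$ of the form \eqref{eq:age1} associated with the energy $\En(\bB_\dx,\Pi_\dx)=\sE(\bB,\Omega)$, and such that $\En(\bB_\dx,\Pi_\dx)<\seE(\bB_\dx,\Pi_\dx)$ (using Remark~\ref{rem:chaine} in case (ii)). The idea is to transplant a rescaled and truncated copy of $\Psi$ back to $\Omega$, concentrated near $\bx_*$ but, when $\nu>1$, \emph{decentered} along the directions prescribed by $\bx_1,\ldots,\bx_{\nu-1}$ — this is the sitting/sliding/doubly-sliding trichotomy described in the introduction.

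\textbf{Step 2: Building the quasimode.} Using Lemma~\ref{lem:refop} we normalize $|\bB_\dx|$ and the linear potential, then rescale $\Psi$ to $\Psi_h(\bx)=\Psi(\bx/\sqrt h)$ as in \eqref{eq.AGEsc}, which is a generalized eigenvector of $H_h(\bA_\dx,\Pi_\dx)$ with energy $h\sE(\bB,\Omega)$, cf.\ \eqref{eq:genEPh}. For $\nu=1$ I localize with $\chi_h$ from \eqref{eq.chih} around $\bfz$ and use \eqref{eq:rhoh} together with Lemma~\ref{lem:rhoh} to get $\QR_h[\bA_\dx,\Pi_\dx](\chi_h\Psi_h)\le h\sE(\bB,\Omega)+Ch^{2-2\delta}$ (or exponentially smaller if $k=3$). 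For $\nu\ge2$ the support must be slid to a point $\bx'$ at distance $\sim h^{\delta_0}$ (resp.\ a second point at distance $\sim h^{\delta_0+\delta_1}$) from $\bx_*$ so that the larger cone seen near $\bx'$ is exactly $\Pi_\dx$'s relevant substructure; here the two-scale structure of $\Psi$ (polynomial phase $\times$ exponentially decaying profile in $k\ge d(\Pi_\dx)$ directions) is essential to control the cut-off remainder. I then pull this quasimode back to $\Omega$ through the local map $\diffeo^{\bx_*}$ (and its iterates along the chain), producing $f_h$ supported in $\cU_{\bx_*}$ with, by the change-of-variables identity \eqref{E:chgGx0}, $\QR_h[\bA,\Omega](f_h)=\QR_h[\bA^{\bx_*},\Pi_{\bx_*},\rG^{\bx_*}](\psi_h\circ\cdots)$.

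\textbf{Step 3: Error bookkeeping and optimization.} The total discrepancy between $\QR_h[\bA,\Omega](f_h)$ and $h\sE(\bB,\Omega)$ decomposes into: (i) the metric-flattening error from Lemma~\ref{L:chgvarloc}, of relative size $\lesssim h^{\delta}$ near a regular/polyhedral point and involving $d_{\gVc}(\cdot)$ near a conical point; (ii) the potential-linearization error from Lemmas~\ref{lem.TaylorA}--\ref{L:d2A}, of size $\lesssim\|\bA\|_{W^{2,\infty}}h^{2\delta}\|\psi_h\|$ (again with a $1/d_{\gVc}$ loss near conical points), handled through \eqref{eq:diffAA'} and a Cauchy--Schwarz/Young inequality as in \eqref{eq.minoeta}; (iii) the cut-off remainder $h^2\rho_h\lesssim h^{2-2\delta}$ from Lemma~\ref{lem:rhoh}. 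Balancing these as in the lower-bound proof (Chapter~\ref{sec:low}) gives the exponent: for a polyhedral domain one optimizes $h^{2\delta+1/2}$ against $h^{2-2\delta}$ with $\delta=3/8$, yielding $h^{5/4}$; for a general corner domain, when a conical point is involved one introduces the second scale $\delta_0=3/10$, $\delta_1=3/20$ exactly as in \eqref{eq.miniglo}, yielding $h^{11/10}$. The min-max principle \eqref{eq:minmax} then gives \eqref{eq:above3} and \eqref{eq:above1}.

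\textbf{Step 4: The special cases.} For (c), if $\bB(\bx_0)=0$ then $\En(\bB_{\bx_0},\Pi_{\bx_0})=0=\sE(\bB,\Omega)$ by Remark~\ref{rem:Bnul}; taking $\Psi\equiv1$ (modulus one, $k=0$, cf.\ Remark~\ref{rem:k=0}) as AGE and redoing Step 3 with only the linearization error $\lesssim h^{2\delta}\|\bA\|_{W^{2,\infty}}$ (now the leading non-gauge term, since the curl vanishes at $\bx_0$ one gains an extra order) against $h^{2-2\delta}$ gives $\delta=1/3$ and the bound $h^{4/3}$, known to be optimal by \cite{HeMo96,DoRa13}. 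For (d), the hypothesis $\sE(\bB,\Omega)=\En(\bB_{\bx_0},\Pi_{\bx_0})<\seE(\bB_{\bx_0},\Pi_{\bx_0})$ with $\bx_0$ a corner means case (i) of Theorem~\ref{th:dicho} with a \emph{genuine} $L^2$-eigenfunction decaying exponentially in \emph{all three} directions ($k=3$, Corollary~\ref{C:expodecay}); then Lemma~\ref{lem:rhoh} gives $\rho_h\lesssim h^{-2\delta}e^{-c_0h^{\delta-1/2}}$, the cut-off error is superpolynomially small, only the linearization error $\sim h^{2\delta}$ survives, and choosing $\delta$ with $h^\delta\simeq h^{1/2}|\log h|^{1/2}$ (i.e.\ a logarithmic correction to $\delta=1/2$) produces $h^{3/2}|\log h|$. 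For (e), a straight polyhedron with constant $\bB$ has $\bA^{\bx_*}\equiv\bA_{\bx_*}$ and $\rG^{\bx_*}\equiv\Id$ exactly (the maps are affine), so errors (i) and (ii) vanish identically; the only remainder is the cut-off term $h^2\rho_h$, which by Remark~\ref{R:straight}(d) combined with Theorem~\ref{th:dicho}(i) and the $k=3$ exponential decay is in fact $O(h^\infty)$ — a fortiori $O(h^2)$ — giving \eqref{E:straight1}.

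\textbf{Main obstacle.} The delicate point is the sliding construction for $\nu\ge2$: one must verify that decentering the support of the rescaled AGE to a point at distance $\sim h^{\delta_0}$ (or $h^{\delta_0+\delta_1}$) from $\bx_*$ genuinely realizes the substructure cone $\Pi_\dx$ as the tangent cone seen at scale $\sqrt h$, and that the geometric distortion from the (non-affine, possibly conically degenerate) local map composed along the chain is controlled by Proposition~\ref{P:estimatejacobian} and Corollary~\ref{C:expandJ} with the correct powers of $d_{\gVc}$. This is where the two-scale partition and the recursive atlas structure of Chapter~\ref{sec:chain} are indispensable, and where the exponents $\delta_0=3/10$, $\delta_1=3/20$ are forced; everything else is bookkeeping with the perturbation identities already established.
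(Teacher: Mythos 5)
Your proposal follows exactly the paper's route: lower semicontinuity produces a minimizer $\bx_*$, the Dichotomy Theorem~\ref{th:dicho} produces a chain $\dx$ of length $\nu\le3$ carrying an AGE, the AGE is rescaled, truncated with $\chi_h$, and pulled back through the (iterated) local maps, with errors from metric flattening (Lemma~\ref{L:chgvarloc}), potential linearization (Lemmas~\ref{lem.TaylorA}--\ref{L:d2A}), and cut-off (Lemma~\ref{lem:rhoh}) optimized by the choice of exponents; the soft/hard sliding distinction depending on whether $\bx_*\in\gVc$ is what separates the $h^{5/4}$ and $h^{11/10}$ rates. Parts (a), (b), (c), (e) of your argument match the paper in substance.

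There is however a genuine computational gap in part (d). You write that taking $h^\delta\simeq h^{1/2}|\log h|^{1/2}$ produces the $h^{3/2}|\log h|$ bound, but with this choice the cut-off remainder is \emph{not} dominated: one gets $h^{2-2\delta}\re^{-ch^{\delta-1/2}} = (h/|\log h|)\,\re^{-c|\log h|^{1/2}}$, and comparing logarithms, $-c|\log h|^{1/2} \gg -\tfrac12|\log h|+2\log|\log h|$ as $h\to 0$, so this term strictly exceeds $h^{3/2}|\log h|$. The correct scaling (as in the paper's computation) is $h^{-\varepsilon(h)}\sim c'|\log h|$, i.e.\ $h^\delta\sim c'h^{1/2}|\log h|$, not $|\log h|^{1/2}$: only then does the exponential factor decay polynomially fast enough, $\re^{-ch^{\delta-1/2}}\lesssim h^{c'}$, to make the cut-off term negligible against the linearization term $h^{1+\delta}\sim h^{3/2}|\log h|$. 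Your idea of a logarithmic correction to $\delta=1/2$ is right, but the stated choice of $\delta$ would fail to deliver the claimed bound. Similarly small but worth noting: in (c) the improvement to $h^{4/3}$ comes from $\Lambda_\dx=0$ (so $\mu_h^{[1]}=\cO(h^{2-2\delta})$ rather than $\cO(h)$, which changes the cross term in the Cauchy--Schwarz estimate), not directly from ``gaining an order because $\curl$ vanishes''; and in (e) the minimizing $\bx_0$ of Remark~\ref{R:straight}(d) need not be a vertex, so one cannot always assume $k=3$ decay -- but Lemma~\ref{lem:rhoh} with $\delta=0$ gives $\rho_h\le C_0$ also for $k<3$, so the $\cO(h^2)$ conclusion still holds.
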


We recall the notation $\QR_{h}[\bA,\Omega](\varphi)$ \eqref{eq:RayQuot} for Rayleigh quotients\index{Rayleigh quotient} and the min-max principle\index{Min-max principle}
\[
   \lambda_h(\bB,\Omega) = \min_{ \varphi\,\in\, H^1(\Omega) \,\setminus\, \{0\}} 
   \QR_{h}[\bA,\Omega](\varphi)\,.
\]

\section{Principles of construction for quasimodes}\index{Quasimode}
\label{SS:QM}
By lower semicontinuity (see Theorem \ref{T:sci}), the energy $\bx\mapsto \En(\bB_{\bx},\Pi_{\bx})$\index{Local ground energy} reaches its infimum over $\overline\Omega$. Let $\bx_{0}\in\overline\Omega$ be a point such that 
$$\En(\bB_{\bx_{0}},\Pi_{\bx_{0}})=\sE(\bB,\Omega).$$
By the dichotomy result (Theorem~\ref{th:dicho}) there exists a singular chain $\dx$ starting at $\bx_0$ such that  (see also notation \eqref{eq:tangent}):\index{Lowest local energy}
\[
   \En(\bB_{\dx},\Pi_{\dx}) = \En(\bB_{\bx_{0}},\Pi_{\bx_{0}})\quad\mbox{and}\quad
   \En(\bB_{\dx},\Pi_{\dx}) < \seE(\bB_{\dx},\Pi_\dx).
\]
For shortness, we denote $\Lx =\En(\bB_{\dx},\Pi_{\dx})$. Still by Theorem \ref{th:dicho}, there exists an AGE\index{Admissible Generalized Eigenvector!AGE} for the tangent model operator $\OP(\bA_{\dx},\Pi_{\dx})$ denoted by $\Psi^{\dx}$ and associated with $\Lx$ \index{Energy on tangent substructures}
\begin{equation}
\label{eq:gendx}
\begin{cases}
(-i\nabla+\bA_{\dx})^2\Psi^{\dx}=\Lx \Psi^{\dx} &\mbox{in } \Pi_{\dx},\\
(-i\nabla+\bA_{\dx})\Psi^{\dx}\cdot\bn =0 &\mbox{on } \partial\Pi_{\dx}.
\end{cases}
\end{equation}
For $h>0$, we define $\Psi^{\dx}_{h}$ by using the canonical scaling \eqref{eq.AGEsc}. This gives an AGE for the operator $\OP_{h}(\bA_{\dx},\Pi_{\dx})$ associated with the value $h\Lx$.
Let $\chi_{h}$ be the cut-off function defined by \eqref{D:chi}--\eqref{eq.chih} involving the parameter $R>0$ and the exponent $\delta\in(0,\frac12)$. Then the function
\begin{equation}\label{eq.phinu}
   (\chi_{h}\Psi^\dx_h)(\bx)  = 
   \underline\chi\left(\frac{|\bx|}{Rh^\delta} \right) 
   \Psi^\dx\Big(\frac{\bx}{\sqrt{h}}\Big) ,\quad\mbox{for}\quad \bx\in\Pi_\dx\,,
\end{equation}
is a canonical quasimode on the tangent structure $\Pi_\dx$ for the model operator $\OP_{h}(\bA_{\dx},\Pi_{\dx})$: Indeed the identity \eqref{eq:rhoh} and Lemma \ref{lem:rhoh} yield
\begin{equation}
\label{eq:psinu}
   \QR_{h}[\bA_\dx,\Pi_\dx](\chi_{h}\Psi^\dx_{h}) =
   h\Lx + \cO(h^{2-2\delta}).
\end{equation}
Let us recall that the fact that $\Psi^\dx_{h}$ belongs to $\dom_{\,\loc} (\OP_{h}(\bA_{\dx},\Pi_{\dx}))$ is essential for the validity of the identity above.

In order to prove Theorem \ref{T:generalUB}, we are going to construct a family of quasimodes $\phihX0\in H^1(\Omega)$ satisfying the estimate for $h>0$ small enough and the suitable power $\kappa$
\begin{equation}
\label{eq:qm}
   \QR_{h}[\bA,\Omega](\phihX0) \leq 
   h\Lambda_{\dx}+C_{\Omega}(1+\|\bA\|_{W^{2,\infty}(\Omega)}^2)h^{\kappa}.
\end{equation}
The rationale of this construction is to build a link between the canonical quasimode $\chi_{h}\Psi^\dx_h$ on the tangent structure $\Pi_\dx$ with our original operator $\OP_{h}(\bA,\Omega)$.

Let $\nu$ be the length of the chain $\dx$. By Proposition \ref{prop.3Dpolychaine}, we can always reduce to $\nu\le3$. We write
\[
   \dx = (\bx_{0},\ldots,\bx_{\nu-1}) \quad\mbox{with}\quad \nu\in\{1,2,3\}.
\]
Our quasimode $\phihX{0}$ will have distinct features according to the value of $\nu$: We will need $\nu-1$ intermediaries $\phihX{j}$, $0<j<\nu$, between $\phihX{0}$ and the final object $\phihX{\nu}$ defined by the truncated AGE given in  \eqref{eq.phinu}, {\it i.e.}, \index{Quasimode}
\begin{equation}\label{eq:psih}
   \phihX{\nu} = \chi_{h}\Psi^\dx_h \,.
\end{equation}
For $j=1,\ldots,\nu$, the function $\phihX{j}$ is defined in the tangent structure $\Pi_{\bx_{0},\ldots, \bx_{j-1}}$.
At a glance
\begin{enumerate}
\item[$\nu=1$] The quasimode $\phihX{0}$ is deduced from $\phihX{1} = \chi_{h}\Psi^\dx_h$ through the local map $\diffeo^{\bx_0}$. This is the classical construction: We say that the quasimode is \emph{sitting}\index{Quasimode!Sitting} because as $h\to0$ the supports of $\phihX{0}$ are included in each other and concentrate to $\bx_0$, see Figure \ref{F:sitting}.

\item[$\nu=2$] The quasimode $\phihX{0}$ is deduced from $\phihX{1}$ through the local map $\diffeo^{\bx_0}$, and $\phihX{1}$ is itself deduced from $\phihX{2} = \chi_{h}\Psi^\dx_h$ through another local map $\diffeo^{\dec_1}$ connected to the second element $\bx_1$ of the chain. We say that the quasimode is \emph{sliding}\index{Quasimode!Sliding} because as $h\to0$ the supports of $\phihX{0}$ are shifted along a direction $\dir_1$ determined by $\bx_1$. At this point, the construction will be very different depending on whether $\bx_{0}$ is a conical point or not, and we say that the quasimodes are respectively {\it hard sliding} and {\it soft sliding}, see Figure \ref{F:sliding}.

\item[$\nu=3$] The quasimode $\phihX{0}$ is still deduced from $\phihX{1}$ through $\diffeo^{\bx_0}$, and $\phihX{1}$ from $\phihX{2}$ through $\diffeo^{\dec_1}$. Finally $\phihX{2}$ is itself deduced from  $\phihX{3} = \chi_{h}\Psi^\dx_h$ through a third local map $\diffeo^{\dec_2}$ connected to the third element $\bx_2$ of the chain. We say that the quasimode is \emph{doubly sliding}\index{Quasimode!Doubly sliding} because as $h\to0$ the supports of $\phihX{0}$ are shifted along two directions $\dir_1$ and $\dir_2$ determined by $\bx_1$ and $\bx_2$, respectively. 
\end{enumerate}
At each level of these constructions, different transformations of the quadratic form will be performed. We organize them in 3 steps [a], [b], and [c]: 
\begin{enumerate}
\item[\mbox{[a]}] \  for a change of variable into a higher tangent substructure,\index{Tangent substructure}
\item[\mbox{[b]}] \  for a linearization of the metrics,\index{Metric}
\item[\mbox{[c]}] \  for a linearization of the potential.\index{Linearized magnetic potential}
\end{enumerate} 

This construction is illustrated in Figure~\ref{F:QM}.

Let us introduce some notation.
\begin{notation}
\label{not:U*Z}
\begin{enumerate}[(1)]
\item If $\diffeo$ is a diffeomorphism, let $\diffeo_*$ be the operator of composition:
$
   \diffeo_* (f) = f\circ \diffeo
$.
\item
 If $\zeta^\dec_h$ is a phase\index{Phase shift}, let $\diffeoZ^\dec_h$ be the operator of multiplication
$
   \diffeoZ^\dec_h(f) = f\,\overline{\zeta^\dec_h}
$.
\end{enumerate}
\end{notation}

We are going to define recursively functions $\phihX{j}$ assuming that $\phihX{j+1}$ is known. 
Typically, these relations will take the form
\begin{equation}\label{E:phihXj}
\phihX{j} = \diffeoZ_{h}^{\bv_{j}} \circ \diffeo_{*}^{\bv_{j}}(\phihX{j+1}).
\end{equation}

\begin{remark}
Since $\bx_0$ is determined, we can always assume that $\bx_0$ belongs to the reference set $\gX$ of an admissible atlas. The error rate that we will obtain in the end will depend on whether $\nu=1$ or is larger, and on whether $\bx_0$ is a conical point or not.
\end{remark}

\diagramstyle[height=2\baselineskip]

\begin{figure}[ht]
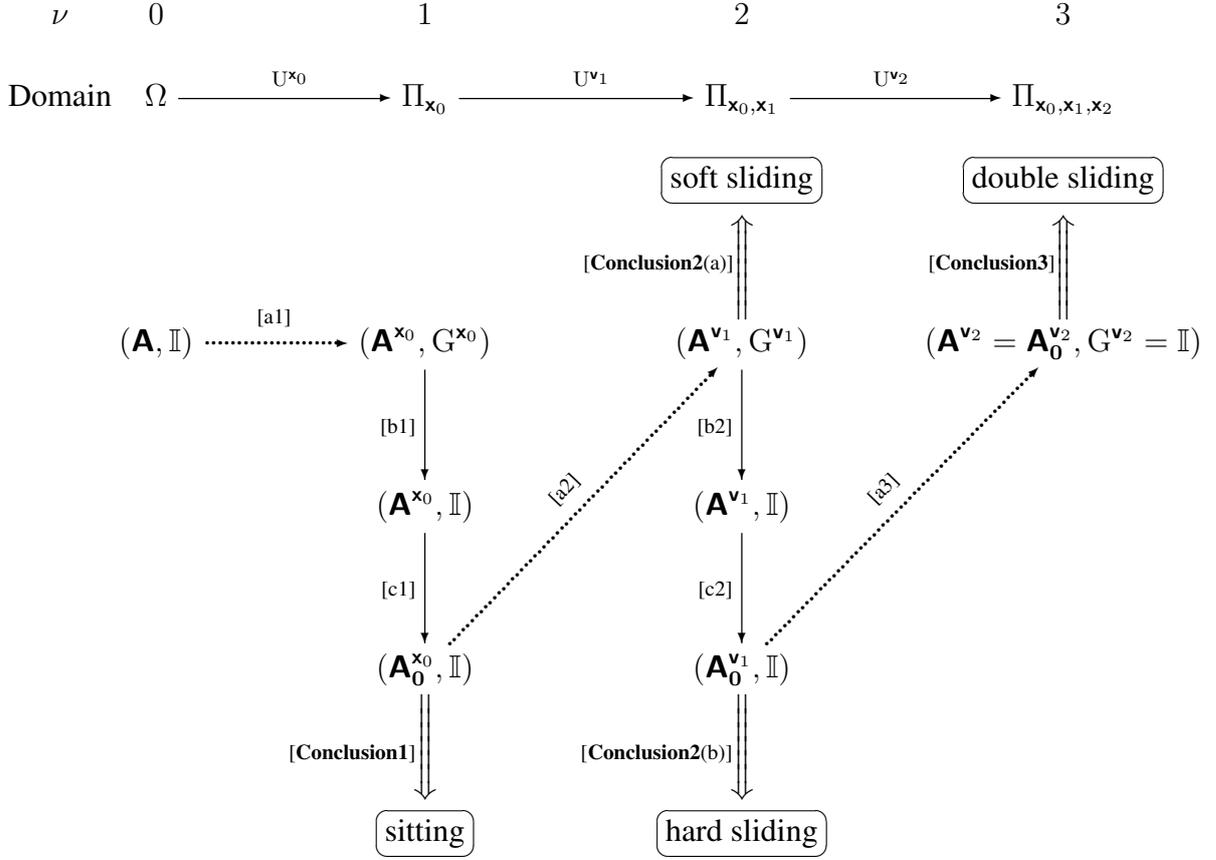

\begin{diagram}
\nu & 0 & & 1 & & 2 & & 3 \\
{\text{Domain}} & \Omega & \rTo^{\qquad\diffeo^{\bx_{0}}\quad}  
                & \Pi_{\bx_{0}} & \rTo^{\qquad\diffeo^{\dec_{1}}\quad} 
                & \Pi_{\bx_{0},\bx_{1}} & \rTo^{\quad\diffeo^{\bv_{2}}} 
                & \Pi_{\bx_{0},\bx_{1},\bx_{2}}\\
&&&&& \ovalbox{soft sliding} &&  \ovalbox{double sliding} \\
&&&&& \uImplies^{\mbox{\tiny[{\bf Conclusion2}(a)]}} && \uImplies^{\mbox{\tiny[{\bf Conclusion3}]}}\\
   & (\bA,\Id) &\rDotsto^{\mbox{\tiny[a1]}} 
   & (\bA^{\bx_{0}},\rG^{\bx_{0}}) & 
   & (\bA^{\dec_{1}},\rG^{\dec_{1}}) & 
   & (\bA^{\bv_{2}}=\bA^{\bv_{2}}_{\bfz},\rG^{\bv_{2}}=\Id) \\
   & & &\dTo^{\mbox{\tiny[b1]}} & \ruDotsto(2,4)^{\mbox{\tiny[a2]}} 
   & \dTo^{\mbox{\tiny[b2]}} &  \ruDotsto(2,4)^{\mbox{\tiny[a3]}} &  \\
   & & & (\bA^{\bx_{0}},\Id)   &  & (\bA^{\dec_{1}},\Id)  & & \\
   & & & \dTo^{\mbox{\tiny[c1]}} &  & \dTo^{\mbox{\tiny[c2]}} &  & \\
   & & & (\bA^{\bx_{0}}_{\bfz},\Id)  &  & (\bA^{\dec_{1}}_{\bfz},\Id)  &  &  \\
   & & & \dImplies^{ \mbox{\tiny[{\bf Conclusion1}]}} && \dImplies^{\mbox{\tiny[{\bf Conclusion2}(b)]}}\\
 &&&  \ovalbox{sitting} &&  \ovalbox{hard sliding}
\end{diagram}
\caption{Construction of quasimodes}
\label{F:QM}
\end{figure}

\section{First level of construction and sitting quasimodes\label{SS.phihX0}}\index{Quasimode!Sitting|textbf}
We perform the first change of variables as in Section~\ref{SS:CV}: The local diffeomorphism $\diffeo^{\bx_0}$ sends (a neighborhood of) $\bx_0$ in $\overline\Omega$ to (a neighborhood of) $\bfz$ in $\overline\Pi_{\bx_0}$.

\subsubsection*{\em[a1]} Let $\bA^{\bx_0}$ be the new potential \eqref{E:ABtilde} deduced from $\bA-\bA(\bx_0)$ by the local map $\diffeo^{\bx_0}$. Let $\zeta^{\bx_0}_h(\bx)=\re^{\ri\langle\bA(\bx_0),\,{\bx}/{h}\rangle}$, for $\bx\in\Omega$. Let us introduce the relation 
\begin{equation}\label{eq:phihX0}
\phihX0 = \diffeoZ^{\bx_0}_h \circ \diffeo_*^{\bx_0}(\phihX1),
\end{equation}
and let $r^{[1]}_h$ be the radius of the smallest ball centered at $\bfz$ containing the support of $\phihX1$ in $\overline\Pi_{\bx_0}$. The number $r^{[1]}_h$ is intended to converge to $0$ as $h$ tends to $0$, see Figure \ref{F:sitting} for a representation of the support of $\phihX0$.

\begin{figure}[ht]
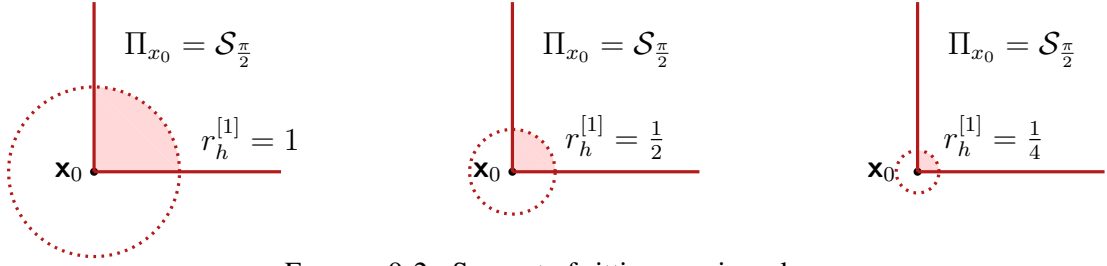

\begin{minipage}{0.32\textwidth}
\figinit{pt}
\def\R{32}
\figpt 1:(0,0)
\figpt 3:(60,0)
\figpt 4:(0,64)
\figpt 5:(70,0)
\figpt 6:(0,\R)
\figpt 7:(\R,0)
\figpt 8:(-40,-40)
\figdrawbegin{}

\figset(fillmode=yes,color = 1. .85 .85)
\figdrawline[1,6,7,1]
\figdrawarccircP 1; \R [5,4]
\figset(color=0)
\figdrawcirc 1 (1.5)
\figset(fillmode=no)
\figset (width=1.2)
\figset (color = .7 .1 .1)
\figdrawline[1,4]
\figdrawline[1,5]
\figset (dash=5)
\figdrawcirc 1 (\R)
\figdrawend
\figvisu{\figBoxA}{}
{
\figwritese 4:${\Pi_{x_{0}}=\cS_{\frac{\pi}{2}}}$ (16)
\figwriten 3:{$r^{[1]}_h=1$} (5)
\figwritesw 8:$\ $ (5)
\figwritew 1: {${\bx_0}$} (1)
}
\centerline{\box\figBoxA}
\end{minipage}
\ 
\begin{minipage}{0.32\textwidth}
\figinit{pt}
\def\R{16}
\figpt 1:(0,0)
\figpt 3:(40,0)
\figpt 4:(0,64)
\figpt 5:(70,0)
\figpt 6:(0,\R)
\figpt 7:(\R,0)
\figpt 8:(-40,-40)
\figdrawbegin{}

\figset(fillmode=yes,color = 1. .85 .85)
\figdrawline[1,6,7,1]
\figdrawarccircP 1; \R [5,4]
\figset(color=0)
\figdrawcirc 1 (1.5)
\figset(fillmode=no)
\figset (width=1.2)
\figset (color = .7 .1 .1)
\figdrawline[1,4]
\figdrawline[1,5]
\figset (dash=5)
\figdrawcirc 1 (\R)
\figdrawend
\figvisu{\figBoxA}{}
{
\figwritese 4:${\Pi_{x_{0}}=\cS_{\frac{\pi}{2}}}$ (16)
\figwriten 3:{$r^{[1]}_h=\frac12$} (5)
\figwritesw 8:$\ $ (5)
\figwritew 1: {${\bx_0}$} (1)
}
\centerline{\box\figBoxA}
\end{minipage}
\ 
\begin{minipage}{0.32\textwidth}
\figinit{pt}
\def\R{8}
\figpt 1:(0,0)
\figpt 3:(30,0)
\figpt 4:(0,64)
\figpt 5:(70,0)
\figpt 6:(0,\R)
\figpt 7:(\R,0)
\figpt 8:(-40,-40)
\figdrawbegin{}

\figset(fillmode=yes,color = 1. .85 .85)
\figdrawline[1,6,7,1]
\figdrawarccircP 1; \R [5,4]
\figset(color=0)
\figdrawcirc 1 (1.5)
\figset(fillmode=no)
\figset (width=1.2)
\figset (color = .7 .1 .1)
\figdrawline[1,4]
\figdrawline[1,5]
\figset (dash=5)
\figdrawcirc 1 (\R)
\figdrawend
\figvisu{\figBoxA}{}
{
\figwritese 4:${\Pi_{x_{0}}=\cS_{\frac{\pi}{2}}}$ (16)
\figwriten 3:{$r^{[1]}_h=\frac14$} (5)
\figwritesw 8:$\ $ (5)
\figwritew 1: {${\bx_0}$} (5)
}
\centerline{\box\figBoxA}
\end{minipage}
\vskip-2em
\caption{Support of sitting quasi-modes}
\label{F:sitting}
\end{figure}

Using \eqref{E:chgGx0}, we have
\begin{equation}\label{E:Qc1orig0}
   \QR_h[\bA,\Omega](\phihX0)
   =   \QR_h[\bA^{\bx_0},\Pi_{\bx_0},\rG^{\bx_{0}}](\phihX1). 
\end{equation}

\subsubsection*{\em[b1]}
We now linearize the metric $\rG^{\bx_{0}}$ in \eqref{E:Qc1orig0} by using Lemma \ref{L:chgvarloc}, case (\ref{It:cvloc}). We find the relation between the Rayleigh quotients
\begin{equation}\label{E:Qc1orig}
   \QR_h[\bA,\Omega](\phihX0)
   =   \QR_h[\bA^{\bx_0},\Pi_{\bx_0}](\phihX1) \,\big(1+\cO(r^{[1]}_h)\big),
\end{equation}
which implies
\begin{equation}\label{E:Qc1}
  \Big| \QR_h[\bA,\Omega](\phihX0) -
   \QR_h[\bA^{\bx_0},\Pi_{\bx_0}](\phihX1) \Big| \le 
   C_\Omega \,r^{[1]}_h\,\QR_h[\bA^{\bx_0},\Pi_{\bx_0}](\phihX1). 
\end{equation}

\subsubsection*{\em[c1]} We recall that $\bA^{\bx_0}_\bfz$ is the linear part of $\bA^{\bx_0}$ at $\bfz$. Using relation \eqref{eq:diffAA'} with $\bA=\bA^{\bx_0}$ and $\bA'=\bA^{\bx_0}_{\bfz}$ and a Cauchy-Schwarz inequality, we obtain
\begin{equation}\label{E:Qd1}
   \Big|q_h[\bA^{\bx_0},\Pi_{\bx_0}](\phihX1) -
   q_h[\bA^{\bx_0}_{\bfz},\Pi_{\bx_0}](\phihX1) \Big| \le
   2\Big( a^{[1]}_{h} \sqrt{\mu^{[1]}_{h}}+ \big(a^{[1]}_{h}\big)^2 \Big)\|\phihX1\|^2,
\end{equation}
where we have set 
\begin{equation}\label{E:Qe1}
   \mu^{[1]}_{h} = \QR_h[\bA^{\bx_0}_{\bfz},\Pi_{\bx_0}](\phihX1)
   \quad\mbox{and}\quad
   a^{[1]}_{h}=\frac{\|(\bA^{\bx_0}-\bA^{\bx_0}_{\bfz})\phihX1\|}{\|\phihX1\|}. 
\end{equation}
By Lemmas \ref{lem.TaylorA} and \ref{L:d2A} (\ref{It:Ld2A}), and since $\phihX1$ is supported in the ball $\cB(\bfz,r^{[1]}_h)$, we have
\begin{equation}\label{E:Qf1}
   a^{[1]}_h\le C(\bA)\, \big(r^{[1]}_h\big)^2\quad\mbox{with}\quad
   C(\bA) = C_\Omega\big(1+\|\bA\|_{W^{2,\infty}(\Omega)}^2 \big).
\end{equation}
Putting together \eqref{E:Qd1}--\eqref{E:Qf1}, we obtain
\begin{equation}
\label{E:QRd1}
   \Big| \QR_h[\bA^{\bx_0},\Pi_{\bx_0}](\phihX1) -
   \QR_h[\bA^{\bx_0}_{\bfz},\Pi_{\bx_0}](\phihX1) \Big| \le
   C(\bA)\Big( \big(r^{[1]}_h\big)^2 \sqrt{\mu^{[1]}_{h}}+ \big(r^{[1]}_h\big)^4 \Big). 
\end{equation}
Using the above estimate \eqref{E:QRd1}, we have
$$
r^{[1]}_h\,\QR_h[\bA^{\bx_0},\Pi_{\bx_0}](\phihX1)
\leq r^{[1]}_h\,\left(
 \QR_h[\bA^{\bx_0}_{\bfz},\Pi_{\bx_0}](\phihX1)
 +C(\bA)\Big( \big(r^{[1]}_h\big)^2 \sqrt{\mu^{[1]}_{h}}+ \big(r^{[1]}_h\big)^4 \Big)
\right).
$$
Combining this last inequality, \eqref{E:QRd1} and \eqref{E:Qc1}, we have for $r^{[1]}_h$ small enough
\begin{align}\label{E:Qg1}
\left|   \QR_h[\bA,\Omega](\phihX0) -\QR_h[\bA^{\bx_0}_{\bfz},\Pi_{\bx_0}](\phihX1)\right|
   &\leq  C(\bA)\Big(  \mu^{[1]}_{h}r^{[1]}_h 
   + \big(r^{[1]}_h\big)^2 \sqrt{\mu^{[1]}_{h}}+ \big(r^{[1]}_h\big)^4 \Big). 
\end{align}

\subsubsection*{\bf[Conclusion1]}
If $\nu=1$, we set, as already mentioned, $\phihX{1} = \chi_{h}\Psi^\dx_h$. Note that $\bA^{\bx_0}_{\bfz}$ coincides with $\bA_\dx$. To tune the cut-off $\chi_h$, we choose the exponent $\delta$ as $\de0$ and the radius $R$ as $1$. Therefore $r^{[1]}_h=\cO(h^{\de0})$ and by \eqref{eq:psinu} $\mu^{[1]}_{h}=\cO(h)$. Using \eqref{E:Qg1} and again \eqref{eq:psinu}, we deduce 
\begin{equation}\label{eq.phihX0chgvar}
   \QR_h[\bA,\Omega](\phihX0)
   \leq h\Lx + C(\bA) \big( h^{2-2\de0}+h^{1+\de0}+h^{\frac12+2\de0}+h^{4\de0} \big).
\end{equation}
So we can conclude in the sitting case. Choosing $\de0=3/8$, we optimize remainders and we get the upper bound
\[
\lambda_{h}(\bB,\Omega) \leq
   h\sE(\bB,\Omega)+C_\Omega \big(1+\|\bA\|_{W^{2,\infty}(\Omega)}^2 \big)h^{5/4} \ . 
\]

\subsubsection*{Case when $\bB(\bx_0)=0$}
If $\bB(\bx_0)=0$, the function $\Psi^\dx\equiv1$ is an AGE on $\Pi_{\bx_0}$ associated with the value $\Lambda_\dx=0$. We are in the sitting case $\nu=1$ and the estimate \eqref{E:Qg1} is still valid. But now \eqref{eq:psinu} (combined with Remark \ref{rem:k=0}) yields
\[
   \QR_{h}[\bA_\dx,\Pi_\dx](\chi_{h}\Psi^\dx_{h}) \le
   C h^{2-2\delta}.
\]
Choosing $\delta$ as $\de0$ as above, we deduce $\mu^{[1]}_{h}=\cO(h^{2-2\de0})$. Hence
\begin{equation}\label{eq.phihX0chgvar0}
   \QR_h[\bA,\Omega](\phihX0)
   \leq C \big( h^{2-2\de0}+h^{2-2\de0+\de0}+h^{1-\de0+2\de0}+h^{4\de0} \big).
\end{equation}
Choosing $\de0=1/3$, we optimize remainders and we get the upper bound
\[
\lambda_{h}(\bB,\Omega) \leq
   C_\Omega \big(1+\|\bA\|_{W^{2,\infty}(\Omega)}^2 \big)h^{4/3} \ . 
\]

\subsubsection*{Case when $\bx_0$ is a corner and $\Psi^\dx$ is an eigenvector}
Since $\En(\bB_{\bx_{0}},\Pi_{\bx_{0}})<\seE(\bB_{\bx_{0}},\Pi_{\bx_{0}})$ and $\lambda_\ess(\bB_{\bx_{0}},\Pi_{\bx_{0}})=\seE(\bB_{\bx_{0}},\Pi_{\bx_{0}})$ by Theorem \ref{th:cone-ess}, the generalized eigenfunction $\Psi^{\dx}$ of $\OP(\bA_{\bx_{0}},\Pi_{\bx_{0}})$ provided by Theorem \ref{th:dicho} is an eigenfunction and has exponential decay. Here $\dx=(\bx_0)$ and the quasimode $\phihX{0}$ is sitting. Using \eqref{E:taylorA} and Lemma~\ref{L:d2A} (\ref{It:Ld2A}), we get $C_\Omega>0$ such that
$$\forall \bx\in \supp (\phihX1), \quad |(\bA^{\bx_{0}}-\bA^{\bx_{0}}_\bfz)(\bx)| \leq C_\Omega \|\bA^{\bx_{0}}\|_{W^{2,\infty}(\supp(\phihX1))}|\bx|^2\, .$$
Using the change of variable ${\bf X}=\bx h^{-1/2}$ and the exponential decay of $\Psi^{\dx}$ we get
\begin{equation}\label{eq:ah1+}
a_{h}^{[1]}=\frac{\|(\bA^{\bx_{0}}-\bA^{\bx_{0}}_\bfz)\phihX1\|}{\|\phihX1\|} \leq C_\Omega  \|\bA^{\bx_{0}}\|_{W^{2,\infty}(\supp(\phihX1))} h.
\end{equation}
Using \eqref{E:Qd1} with estimate \eqref{eq:ah1+} and Lemma \ref{lem:rhoh}, for any $\delta\in(0,\frac12]$, we get
\begin{eqnarray*}
\QR_{h}[\bA^{\bx_{0}},\Pi_{\bx_{0}}](\phihX1)
&\leq& h\Lambda_{\dx}+ C \Big( h^{2-2\delta}\re^{-ch^{\delta-\frac 12}}
 + \| \bA \|_{W^{2,\infty}(\Omega)} h^{\frac32}+ \| \bA \|^2_{W^{2,\infty}(\Omega)} h^2 \Big)\\
&\leq& h\Lambda_{\dx}+ C \big(1+\|\bA \|^2_{W^{2,\infty}(\Omega)}\big) 
 \, \big(h^{2-2\delta}\re^{-ch^{\delta-\frac 12}}+h^{\frac32}\big).
\end{eqnarray*}
Thanks to \eqref{E:Qc1}, the quasimode $\phihX0$ satisfies
\begin{eqnarray*}
\QR_{h}[\bA,\Omega](\phihX0)
&\leq& \big(1+\cO(h^\delta)\big) \big\{h\Lambda_{\dx}+ C\big(1+\|\bA \|^2_{W^{2,\infty}(\Omega)}\big) 
 (h^{2-2\delta}\re^{-ch^{\delta-\frac 12}}+h^{3/2})\big\} \\
&\leq& h\Lambda_{\dx}+ C\big(1+\|\bA \|^2_{W^{2,\infty}(\Omega)}\big) 
 \big\{ h^{1+\delta} + h^{2-2\delta}\re^{-ch^{\delta-\frac 12}}+h^{3/2}\big\} .
\end{eqnarray*}
Here $C$ denotes various constants depending on $\Omega$ but independent from $h\le h_0$ and $\delta\le\frac12$.
We optimize this by taking $\delta=\frac12-\varepsilon(h)$ with $\varepsilon(h)$ so that
$h^{1+\delta} = h^{2-2\delta}\re^{-ch^{\delta-\frac 12}}$, {\it i.e.},
\[
   h^{\frac32-\varepsilon(h)} = h^{1+2\varepsilon(h)}\re^{-ch^{-\varepsilon(h)}} .
\] 
We find
\[
   \re^{ch^{-\varepsilon(h)}} = h^{-\frac12+3\varepsilon(h)},\quad\mbox{{\it i.e.},}\quad
   h^{-\varepsilon(h)} = \tfrac{1}{c}(-\tfrac12+3\varepsilon(h))\log h \,.
\] 
The latter equation has one solution $\varepsilon(h)$ which tends to $0$ as $h$ tends to $0$.
Replacing $h^{-\varepsilon(h)}$ by the value above in $h^{\frac32-\varepsilon(h)}$, we find that the remainder is a $\cO(h^{3/2}|\log h|)$.

\subsubsection*{Case when $\Omega$ is a straight polyhedron and $\bB$ constant}
According to Remark \ref{R:straight} d), we may assume that $(\bB,\Pi_{\bx_{0}})$ is in case (i) of the Dichotomy Theorem. We construct a sitting quasimode near $\bx_{0}$. Since the magnetic field is constant, we may associate a linear magnetic potential $\bA$. Define now $\phihX0$ from $\phihX1$ as in \eqref{eq:phihX0} and tune the cut-off by choosing $\delta=0$ and $R>0$ large enough such that the support of $\chi_{h}$ is contained in a map-neighborhood $\cV_{\bx_{0}}$ of $\bfz$ in $\Pi_{\bx_{0}}$.

Notice that $\diffeo^{\bx_{0}}$ is the translation $\bx\mapsto \bx-\bx_{0}$ and that the linear part of the potential satisfies $\bA_{\bfz}^{\bx_{0}}=\bA^{\bx_{0}}$. Therefore the error terms due to the change of variables and the linearization of the potential appearing in step [b1] are zero, and \eqref{E:Qg1} is improved in
$$ \QR_h[\bA,\Omega](\phihX0)=\QR_h[\bA^{\bx_0}_{\bfz},\Pi_{\bx_0}](\phihX1) \, .$$
Estimate \eqref{E:straight1} is then a direct consequence of identity \eqref{eq:rhoh} combined with Lemma \ref{lem:rhoh}.

\section{Second level of construction and sliding quasimodes\label{SS.phihX1}}\index{Quasimode!Sliding|textbf}
We have now to deal with the case $\nu\geq 2$. So $\dx=(\bx_0,\bx_1)$ or $(\bx_0,\bx_1,\bx_2)$.

Here we use the same notation as the introduction of singular chains in Section \ref{ss:chains}.
Let $\udiffeo^{0}\in\gO_3$ such that $\Pi_{\bx_{0}}=\udiffeo^{0}(\R^{3-d_0}\times \Gamma_{\bx_0})$ where $\Gamma_{\bx_0}$ is the reduced cone of $\Pi_{\bx_{0}}$. Let $\Omega_{\bx_0}=\Gamma_{\bx_0}\cap\dS^{d_0-1}$ be the section of $\Gamma_{\bx_0}$. By definition of chains, $\bx_{1}$ belongs to $\overline\Omega_{\bx_0}$ and let $C_{\bx_0,\bx_1}$ be the tangent cone to $\Omega_{\bx_0}$ at $\bx_1$. Then the tangent substructure $\Pi_{\bx_0,\bx_1}$ is determined by the formula
\[
  \Pi_{\bx_0,\bx_1} = \udiffeo^0 \big(\R^{3-d_0}\times\langle \bx_1\rangle \times 
  C_{\bx_0,\bx_1} \big)\,.
\]
Let us define the unit vector $\dir_1$ by the formulas
\begin{equation}
\label{def:tau}
   \underline{\dir_1}:=(0,\bx_{1})\in \R^{3-d_0}\times \Gamma_{\bx_0}
   \quad\mbox{and}\quad
   \dir_1=\udiffeo^{0}\,\underline{\dir_1}\in\overline\Pi_{\bx_{0}}\cap\dS^2.
\end{equation}
With this definition, the substructure $\Pi_{\bx_0,\bx_1}$ is the tangent cone to $\Pi_{\bx_0}$ at the point $\dir_1$. Note that in the case when $\bx_0$ is a vertex of $\Omega$, the above formulas simplify: $\Pi_{\bx_0}$ is its own reduced cone, $\Pi_{\bx_0,\bx_1} = \langle \bx_1\rangle \times C_{\bx_0,\bx_1}$, and $\dir_1$ coincides with $\bx_1$. 
 
Note also that the cone $\Pi_{\bx_0,\bx_1}$ can be the full space, a half-space or a wedge, and that $\dir_1$ gives a direction associated with $\Pi_{\bx_0,\bx_1}$ starting from the origin $\bfz$ of $\Pi_{\bx_{0}}$:
\begin{enumerate}
\item If $\Pi_{\bx_0,\bx_1}\equiv\R^3$, then $\dir_1$ belongs to the interior of $\Pi_{\bx_{0}}$.
\item If $\Pi_{\bx_0,\bx_1}\equiv\R^2\times\R_+$, then $\dir_1$ belongs to a face of $\Pi_{\bx_{0}}$.
\item If $\Pi_{\bx_0,\bx_1}\equiv\cW_{\alpha}$, then $\dir_1$ belongs to an edge  of $\Pi_{\bx_{0}}$. 
\end{enumerate}
Unless we are in the latter case ($\Pi_{\bx_0,\bx_1}$ is a wedge), the choice of $\dir_1$ is not unique.

Set $\dec_{1} = d^{[1]}_h\dir_{1}$ where $d^{[1]}_h$ is a positive quantity intended to converge to $0$ with $h$. The vector $\dec_1$ is a shift that allows to pass from the cone $\Pi_{\bx_0}$ to the substructure $\Pi_{\bx_0,\bx_1}$, which is also the tangent cone to $\Pi_{\bx_0}$ at the point $\dec_1$.
Let $\diffeo^{\dec_1}$ be a local diffeomorphism that sends 
(a neighborhood $\cU_{\dec_1}$ of) $\dec_1$ in $\Pi_{\bx_0}$ to (a neighborhood $\cV_{\dec_1}$ of) $\bfz$ in $\Pi_{\bx_0,\bx_1}$.
We can assume without restriction that $\diffeo^{\dec_1}$ is part of an admissible atlas on $\Pi_{\bx_{0}}$.

\begin{figure}[ht]
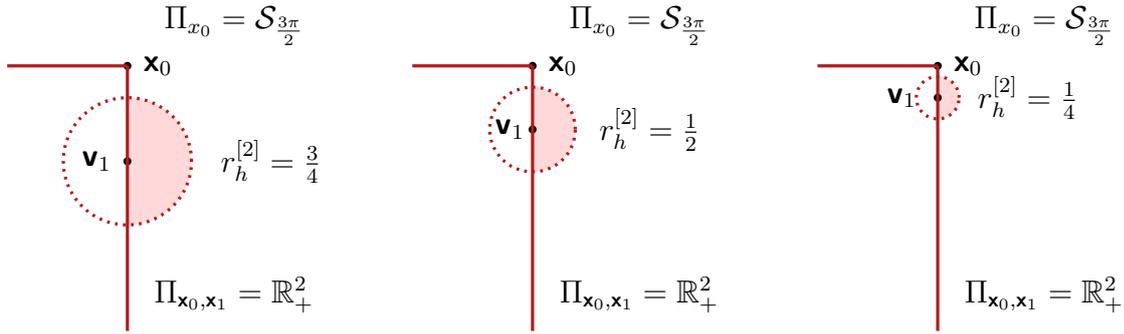

\begin{minipage}{0.32\textwidth}
\figinit{pt}
\def\R{24}
\def\c{36}
\figpt 1:(0,0)
\figpt 2:(0,-\c)
\figpt 3:(40,0)
\figpt 4:(0,-100)
\figpt 5:(-45,0)
\figpt 6:(0,\R)
\figpt 7:(\R,0)
\figpt 8:(100,20)
\figdrawbegin{}

\figset(fillmode=yes,color = 1. .85 .85)
\figdrawarccirc 2; \R  (-90,90)
\figset(color=0)
\figdrawcirc 1 (1.5)
\figdrawcirc 2 (1.5)
\figset(fillmode=no)
\figset (width=1.2)
\figset (color = .7 .1 .1)
\figdrawline[1,4]
\figdrawline[1,5]
\figset (dash=5)
\figdrawcirc 2 (\R)
\figdrawend
\figvisu{\figBoxA}{}
{
\figwritene 8: $\ $ (5)
\figwriten  3: ${\Pi_{x_{0}}=\cS_{\frac{3\pi}{2}}}$ (8)
\figwritene 4: ${\Pi_{\bx_0,\bx_1}=\mathbb{R}^{2}_{+}}$ (10)
\figwritee 1: {$\bx_0$} (3)
\figwritew 2: {$\bv_1$} (3)
\figwritee 2:{$r^{[2]}_h=\frac34$} (35)
}
\centerline{\box\figBoxA}
\end{minipage}
\ 
\begin{minipage}{0.32\textwidth}
\figinit{pt}
\def\R{16}
\def\c{24}
\figpt 1:(0,0)
\figpt 2:(0,-\c)
\figpt 3:(40,0)
\figpt 4:(0,-100)
\figpt 5:(-45,0)
\figpt 6:(0,\R)
\figpt 7:(\R,0)
\figpt 8:(100,20)
\figdrawbegin{}

\figset(fillmode=yes,color = 1. .85 .85)
\figdrawarccirc 2; \R  (-90,90)
\figset(color=0)
\figdrawcirc 1 (1.5)
\figdrawcirc 2 (1.5)
\figset(fillmode=no)
\figset (width=1.2)
\figset (color = .7 .1 .1)
\figdrawline[1,4]
\figdrawline[1,5]
\figset (dash=5)
\figdrawcirc 2 (\R)
\figdrawend
\figvisu{\figBoxA}{}
{
\figwritene 8: $\ $ (5)
\figwriten  3: ${\Pi_{x_{0}}=\cS_{\frac{3\pi}{2}}}$ (8)
\figwritene 4: ${\Pi_{\bx_0,\bx_1}=\mathbb{R}^{2}_{+}}$ (10)
\figwritee 1: {$\bx_0$} (3)
\figwritew 2: {$\bv_1$} (0)
\figwritee 2:{$r^{[2]}_h=\frac12$} (25)
}
\centerline{\box\figBoxA}
\end{minipage}
\ 
\begin{minipage}{0.32\textwidth}
\figinit{pt}
\def\R{8}
\def\c{12}
\figpt 1:(0,0)
\figpt 2:(0,-\c)
\figpt 3:(40,0)
\figpt 4:(0,-100)
\figpt 5:(-45,0)
\figpt 6:(0,\R)
\figpt 7:(\R,0)
\figpt 8:(100,20)
\figdrawbegin{}

\figset(fillmode=yes,color = 1. .85 .85)
\figdrawarccirc 2; \R  (-90,90)
\figset(color=0)
\figdrawcirc 1 (1.5)
\figdrawcirc 2 (1.5)
\figset(fillmode=no)
\figset (width=1.2)
\figset (color = .7 .1 .1)
\figdrawline[1,4]
\figdrawline[1,5]
\figset (dash=5)
\figdrawcirc 2 (\R)
\figdrawend
\figvisu{\figBoxA}{}
{
\figwritene 8: $\ $ (5)
\figwriten  3: ${\Pi_{x_{0}}=\cS_{\frac{3\pi}{2}}}$ (8)
\figwritene 4: ${\Pi_{\bx_0,\bx_1}=\mathbb{R}^{2}_{+}}$ (10)
\figwritee 1: {$\bx_0$} (3)
\figwritew 2: {$\bv_1$} (5)
\figwritee 2:{$r^{[2]}_h=\frac14$} (15)
}
\centerline{\box\figBoxA}
\end{minipage}
\caption{Sliding quasi-modes}
\label{F:sliding}
\end{figure}

\subsubsection*{\em[a2]} 
By the change of variable $\diffeo^{\dec_1}$, the potential $\bA^{\bx_0}_{\bfz}-\bA^{\bx_0}_{\bfz}(\dec_1)$ becomes $\bA^{\dec_1}$ (cf.\ \eqref{E:ABtilde})
\begin{equation*}
    \bA^{\dec_1} = 
   (\rJ^{\dec_1})^\top \Big(\big( \bA^{\bx_0}_{\bfz}-\bA^{\bx_0}_{\bfz}({\dec_{1}})\big)
    \circ (\diffeo^{\dec_1})^{-1}\Big)
   \quad\mbox{with}\quad
   \rJ^{\dec_1} = \rd (\diffeo^{\dec_1})^{-1}.
\end{equation*}
Let $\zeta^{\dec_1}_h(\bx)=\re^{\ri\langle\bA^{\bx_0}_\bfz(\dec_1),\,{\bx}/{h}\rangle}$, for $\bx\in\Pi_{\bx_0}$.
We introduce the relation
\begin{equation}\label{eq.c2}
\phihX1 = \diffeoZ^{\dec_{1}}_h \circ \diffeo_*^{\dec_1}(\phihX2),
\end{equation}
and let $r^{[2]}_h$ be the radius of the smallest ball centered at $\bfz$ containing the support of $\phihX2$ in $\overline\Pi_{\bx_0,\bx_1}$, see Figure \ref{F:sliding} for a representation of the support of $\phihX0$. This new quantity is also intended to converge to $0$ with $h$.

We now have a turning point of the algorithm: if $\bx_{0}$ is not a conical point, we use the fact that $\diffeo^{\dec_{1}}$ is a translation. Then $\rG^{\dec_{1}}=\Id$ and $\bA^{\dec_{1}}$ coincides with its linear part $\bA^{\dec_{1}}_{\bfz}$. Steps [b] and [c] are replaced by the following identity: 
\begin{equation}
\label{E:isoenergieSS}
 \QR_h[\bA_{\bfz}^{\bx_{0}},\Pi_{\bx_{0}}](\phihX1) = 
   \QR_h[\bA^{\bv_1}_{\bfz},\Pi_{\dx}](\phihX2),
   \end{equation}
and we are able to make a direct estimation of the quasimodes, see the [Conclusion2(a)] below. We will called them {\it soft sliding quasimodes}.

If $\bx_{0}$ is a conical point, we continue the algorithm as described below:
\subsubsection*{\em[b2]} 
 Using \eqref{E:chgGx0} and \eqref{eq:psiG2} in Lemma \ref{L:chgvarloc}, we find a relation between  Rayleigh quotients of the same form as \eqref{E:Qc1orig}, with $\cO(r^{[1]}_h)$ replaced by $\cO(r^{[2]}_h/d^{[1]}_h)$. Like for \eqref{E:Qc1}, we deduce
\begin{align}\label{E:Qc2}
  \Big| \QR_h[\bA^{\bx_0}_{\bfz},\Pi_{\bx_0}](\phihX1) -
   \QR_h[\bA^{\dec_1},\Pi_{\bx_0,\bx_1}](\phihX2) \Big| \lesssim
 \dfrac{r^{[2]}_h}{d^{[1]}_h} \QR_h[\bA^{\dec_1},\Pi_{\bx_0,\bx_1}](\phihX2).
\end{align}

\subsubsection*{\em[c2]} Let $\bA^{\dec_1}_\bfz$ be the linear part of $\bA^{\dec_1}$ at $\bfz\in\Pi_{\bx_0,\bx_1}$. Thus, by relation~\eqref{eq:diffAA'} and a Cauchy-Schwarz inequality, we have
\begin{equation}\label{E:Qd2}
   \Big|q_h[\bA^{\dec_1},\Pi_{\bx_0,\bx_1}](\phihX2) -
   q_h[\bA^{\dec_1}_{\bfz},\Pi_{\bx_0,\bx_1}](\phihX2) \Big| \le
   C \Big( a^{[2]}_{h}\sqrt{\mu^{[2]}_{h}} + \big(a^{[2]}_{h}\big)^2\Big) \|\phihX2\|^2,
\end{equation}
with 
\begin{equation}\label{E:Qe2}
   \mu^{[2]}_{h} = \QR_h[\bA^{\dec_1}_{\bfz},\Pi_{\bx_0,\bx_{1}}](\phihX2)
   \quad\mbox{and}\quad
   a^{[2]}_{h}=\frac{\|(\bA^{\dec_1}-\bA^{\dec_1}_{\bfz})\phihX2\|} {\|\phihX2\|}.
\end{equation}
By Lemmas \ref{lem.TaylorA}--\ref{L:d2A}, case \eqref{It:Ld2Abis}, and since $\phihX2$ is supported in the ball $\cB(\bfz,r^{[2]}_h)$, we have
\begin{equation}\label{E:Qf2}
   a^{[2]}_h\lesssim \frac{\big(r^{[2]}_h\big)^2}{d^{[1]}_h} .
\end{equation}
Using \eqref{E:Qc2}--\eqref{E:Qf2}, we find, if $r^{[2]}_h/d^{[1]}_h$ is small enough,
\begin{equation}\label{E:Qg2}
   \left|\QR_h[\bA^{\bx_0}_{\bfz},\Pi_{\bx_0}](\phihX1) -
   \QR_h[\bA^{\dec_1}_{\bfz},\Pi_{\bx_0,\bx_{1}}](\phihX2)\right|
    \ \lesssim \  \mu^{[2]}_{h}\frac{r^{[2]}_h}{d^{[1]}_h} + 
    \frac{\big(r^{[2]}_h\big)^2}{d^{[1]}_h} \sqrt{\mu^{[2]}_{h}}+ 
    \frac{\big(r^{[2]}_h\big)^4}{\big(d^{[1]}_h\big)^2} \,.
\end{equation}

\subsubsection*{\bf[Conclusion2]}
If $\nu=2$, we set, as already mentioned, $\phihX{2} = \chi_{h}\Psi^\dx_h$. 
Note that $\bA^{\dec_1}_{\bfz}$ coincides with $\bA_\dx$.
We have now to distinguish two cases, according as $\bx_0$ is or not a conical point. 

(a) {\it Soft sliding}. If $\bx_{0}$ is not a conical point, {\it i.e.}, $\bx_0\not\in\gVc$, the local map $\diffeo^{\dec_1}$ is the translation $\bx\mapsto\bx-\dec_1$. To tune the cut-off $\chi_h$, we choose the exponent $\delta$ as $\de0$ and the shift $d^{[1]}_h$ as $h^{\de0}$. We choose the radius $R$ for the cut-off $\chi_h$ \eqref{eq.chih} so that the support of $\underline\chi_{R}$ is contained in a map neighborhood $\cV_{\dec_1}$ of $\bfz$ in $\overline\Pi_{\bx_0,\bx_1}$, {\it i.e.}, a neighborhood such that:
\[
   \diffeo^{\dec_1}(\cU_{\dec_1} \cap \Pi_{\bx_0}) = \cV_{\dec_1} \cap \Pi_{\bx_0,\bx_1},
\]
where $\diffeo^{\dec_1}(\bx)=\bx-\dec_1$ and $\cU_{\dec_1}=\cV_{\dec_1}+\dec_1$. Then the quantities $r^{[1]}_h$ and $r^{[2]}_h$ are both $\cO(h^{\de0})$ and we can combine \eqref{E:isoenergieSS} with \eqref{E:Qg1} and the cut-off estimate \eqref{eq:psinu}. Moreover for $h$ small enough, the quantities $\mu^{[1]}_h$ is $\cO(h)$, and we deduce the estimate \eqref{eq.phihX0chgvar} as in the case $\nu=1$, which leads, like in the sitting case, to the upper bound \eqref{eq:above1} with $h^{5/4}$. The latter step ends in particular the handling of the polyhedral case since we can always reduce to chains of length $\nu\le2$ in polyhedral domains, cf.\ Proposition \ref{prop.3Dpolychaine}.

(b) {\it Hard sliding}. If $\bx_{0}$ is a conical point, to tune the cut-off $\chi_h$, we choose the exponent $\delta$ as $\de0+\de1$ and the shift $d^{[1]}_h$ as $h^{\de0}$, with $\de0,\de1>0$ such that $\de0+\de1<\frac12$. We choose the radius $R$ equal to $1$. Therefore $r^{[2]}_h=\cO(h^{\de0+\de1})$ and $r^{[1]}_h=\cO(h^{\de0})$. By \eqref{eq:psinu} $\mu^{[2]}_{h}=\cO(h)$ and, since for $h$ small enough, $r^{[2]}_h/d^{[1]}_h$ is arbitrarily small, we also deduce with the help of \eqref{E:Qg2} that $\mu^{[1]}_{h}=\cO(h)$. 
Putting this together with \eqref{E:Qg1} and \eqref{E:Qg2}, and using \eqref{eq:psinu} once more, we deduce the estimate
\begin{multline}\label{eq.phihX0e2}
   \QR_h[\bA,\Omega](\phihX0)
   \leq h\Lx + C\,\big( h^{1+\de0}+h^{\frac12+2\de0}+h^{4\de0} \big) \\
   +C \,\big(h^{2-2\de0-2\de1}+h^{1+\de1}+h^{\frac12+\de0+2\de1}+h^{2\de0+4\de1}\big).
\end{multline}
The exponents that appear here are the same as for the lower bound \eqref{eq.miniglo}. Thus taking $\de0= 3/{10}$ and $\de1= 3/{20}$, we optimize remainders and deduce
\[
\lambda_{h}(\bB,\Omega) \leq
   h\sE(\bB,\Omega)+C_\Omega \big(1+\|\bA\|_{W^{2,\infty}(\Omega)}^2 \big)h^{11/10} \ . 
\]

\section{Third level of construction and doubly sliding quasimodes\label{SS.phihX2}}\index{Quasimode!Doubly sliding|textbf}
It remains to deal the case $\nu=3$. In that case, the chain $\dx=(\bx_{0},\bx_{1},\bx_{2})$ is such that
\begin{itemize}
\item $\bx_{0}$ is a conical point, 
\item $\bx_{1}$ is a vertex of $\Omega_{\bx_{0}}$, $\dir_1$ coincides with $\bx_1$, the corresponding edge of $\Pi_{\bx_{0}}$ is generated by $\bx_1$, and $\Pi_{\bx_{0},\bx_{1}}$ is a wedge,
\item $\bx_{2}$ is an end of the interval $\Omega_{\bx_{0},\bx_{1}}$, it corresponds to a point $\dir_{2}$ on a face of
$\Pi_{\bx_{0},\bx_{1}}$, defined as  in \eqref{def:tau}. Finally $\Pi_{\bx_{1},\bx_{1},\bx_{2}}=\Pi_{\dx}$ is a half-space. 
\end{itemize}
Set $\dec_{2} = d^{[2]}_h\dir_{2}$ where $d^{[2]}_h$ is a positive quantity intended to converge to $0$ with $h$. 
Let $\diffeo^{\dec_2}$ be the translation that sends 
(a neighborhood of) $\dec_2$ in $\Pi_{\bx_0,\bx_{1}}$ to (a neighborhood of) $\bfz$ in $\Pi_{\dx}=\Pi_{\bx_0,\bx_1,\bx_{2}}$. 

\subsubsection*{\em[a3]} 
By the change of variable $\diffeo^{\dec_2}$, since $\rJ^{\bv_{2}}=\Id_{3}$, the potential $\bA^{\bv_1}_{\bfz}-\bA^{\bv_1}_{\bfz}(\dec_2)$ becomes 
\begin{equation*}
    \bA^{\dec_2} = 
   \big( \bA^{\bv_1}_{\bfz}-\bA^{\bv_1}_{\bfz}({\dec_{2}})\big)
    \circ (\diffeo^{\dec_2})^{-1},
\end{equation*}
and it coincides with its linear part $\bA^{\dec_2}_\bfz$. Let $\zeta^{\dec_2}_h(\bx)=\re^{\ri\langle\bA^{\dec_1}_\bfz(\dec_2),\,{\bx}/{h}\rangle}$, for $\bx\in\Pi_{\bx_0,\bx_1}$. We define
\begin{equation}
   \phihX2 = Z_{h}^{\dec_{2}} \circ\diffeo_{*}^{\dec_2} (\phihX3).
\end{equation}
  Since $\rG^{\bv_{2}}=\Id_{3}$, we have
 \begin{equation}\label{E:Qc3}
   \QR_h[\bA_{\bfz}^{\dec_{1}},\Pi_{\bx_{0},\bx_{1}}](\phihX2) = 
   \QR_h[\bA^{\bv_2}_{\bfz},\Pi_{\dx}](\phihX3). 
\end{equation}

\subsubsection*{\bf[Conclusion3]} We set, as already mentioned $\phihX{3}=\chi_{h}\Psi_{h}^\dx$. We have $\bA_{\bfz}^{\bv_{2}}=\bA_{\dx}$. 
We choose the exponent $\delta$ as $\de0+\de1$, the shifts $d_{h}^{[2]}$ as $h^{\de0+\de1}$ and $d^{[1]}_h$ as $h^{\de0}$, with $\de0,\de1>0$ such that $\de0+\de1<\frac12$. 
We conclude as the conical case at level 2 and obtain again \eqref{eq.phihX0e2}.  
We deduce
\[
\lambda_{h}(\bB,\Omega) \leq
   h\sE(\bB,\Omega)+C_\Omega \big(1+\|\bA\|_{W^{2,\infty}(\Omega)}^2 \big)h^{11/10} \ . 
\]

\section{Conclusion}
The outcome of the last four sections is the achievement of the proof of Theorem \ref{T:generalUB}. We may notice that there is only one configuration where we cannot prove the convergence rate $h^{5/4}$: This is the case when all points with minimal local energy $\bx_0$ satisfy all the following conditions
\begin{enumerate}
\item $\bx_0$ is a conical point ($\bx_0\in\gVc$),
\item The model operator $\OP(\bA_{\bx_0},\Pi_{\bx_0})$ has no eigenvalue below its essential spectrum,
\item The geometry around $\bx_0$ is not trivial {\it i.e.}, the derivative $\rK^{\bx_0}(\bfz)$ of the Jacobian  is not zero.
\end{enumerate}

\part{Improved upper bounds}
\label{part:4}

\chapter{Stability of Admissible Generalized Eigenvectors}
\label{sec:Stab}

In order to confirm our claim for the improved upper bounds \eqref{eq:convimp}, we need to revisit AGE's (Admissible Generalized Eigenvectors)\index{Admissible Generalized Eigenvector} of model problems\index{Model operator} $\OP(\bA,\Pi)$. In particular we want to know what are their stability properties under perturbation of the constant magnetic field $\bB = \curl\bA$. 

\section{Structure of AGE's}
\label{s:age}
In this section we recall from Chapter \ref{sec:tax} the model reference configurations $(\bB,\Pi)$ owning an AGE and give a comprehensive overview of their structure in a table.

Let $\bB$ be a constant magnetic field and $\Pi$ a cone in $\gP_{3}$. Remind that $d=d(\Pi)$ is the reduced dimension of $\Pi$, cf.\ Definition \ref{def:redcone}. Let us assume that $\En(\bB,\Pi)<\seE(\bB,\Pi)$. Therefore by Theorem \ref{th:dicho} there exists an AGE $\Psi$ that has the form \eqref{eq:age1}. We recall the discriminant parameter $k\in \{1,2,3 \}$ that is the number of directions in which the generalized eigenvector has an exponential decay\index{Exponential decay}. For further use we call (G1), (G2), and (G3) the situation where $k=1$, $2$, and $3$, respectively.
 As a consequence of Lemma \ref{lem:refop}, it is enough to concentrate on \emph{reference configurations} for the magnetic field $\bB$, its potential $\bA$ and the cone $\Pi$. 
 In such a reference configuration the AGE writes as
\[
    \Psi (\by,\bz) = \re^{\ri\ee\vartheta(\by,\bz)}\,\Phi(\bz)\
    \qquad \forall\by\in \R^{3-k},\ \ \forall\bz\in\Upsilon.
\]
\vspace{-1em}

\begin{table}[ht!]
{\footnotesize
\renewcommand{\arraystretch}{1.5}
\begin{tabular}{| p{14mm}|p{27mm}|p{31mm}|p{12mm}|p{20mm}|p{33mm}|}
  \hline
\centering $(k,d)$ $(\by,\bz)$ 
& \centering Reference field $\bB$ and cone $\Pi$ & Reference potential $\bA$ 
&  $\Upsilon$ & Explicit $\Psi$  & $\Phi$ eigenvector of  \\
  \hline
  \hline
\centering $(1,1)$ $(y_1,y_2,z)$ 
& \centering $(0,1,0)$ \quad \ \ \ \ \ \ \ $\Pi=\R^2\times \R_{+}$ & $(z,0,0)$ &  $\R_{+}=\Gamma$ & $\re^{-i\sqrt{\Theta_{0}}y_{1}} \Phi(z)$ & $-\partial_{z}^2+(z-\sqrt{\Theta_{0}})^2$
\\
\hline
\centering $(2,0)$  $(y,z_1,z_2)$ 
& \centering $(1,0,0)$ \quad \ \ \ \ \ \ \ $\Pi=\R^3$ & $(0,-\frac12 z_2, \frac12 z_1)$ & $\R^2$ & 
$\re^{-|\bz|^2/4}$ & $-\Delta_\bz+i\bz\times\!\nabla_\bz+\tfrac{|\bz|^2}{4}$
\\
\hline
\centering $(2,1)$ $(y,z_1,z_2)$  &
 \centering $(0,b_{1},b_{2})$, $b_{2}\neq 0$ \ \ 
 $\Pi=\R^2\times \R_{+}$ & $(b_{1}z_{2}-b_{2}z_{1},0,0)$ &  $\R\times \R_{+}$  & $\Phi(\bz)$ & $-\Delta_{\bz}+(b_1z_2-b_2z_1)^2$
\\
\hline
\centering $(2,2)$ $(y,z_1,z_2)$ 
&  \centering $(b_{0},b_{1},b_{2})$ \ \ \  \ \ \  $\Pi=\R\times \cS_{\alpha}$ & $(b_1z_2-b_2z_1,0,b_0z_1)$ &  $\cS_{\alpha}=\Gamma$ & $\re^{i\tau^{*}y}\Phi(\bz)$ &
  $ \widehat \OP_{\tau}(\uA \ee,\cW_\alpha) $, cf.\ \eqref{D:Hhatsector}
\\
\hline
\centering $(3,3)$  
& \hrulefill & \hrulefill  &  $\Pi=\Gamma$ & $\Phi(\bz)$ & $\OP(\bA,\Pi)$ 
 \\
\hline
\end{tabular}}
\mbox{}\\[1ex]
\caption{AGE of $\OP(\bA,\Pi)$ when $\En(\bB,\Pi) < \seE(\bB,\Pi)$, written in variables $(\by,\bz)$.}
\label{T:age} 
\end{table}

In Table \ref{T:age} we gather all possible situations for the couple of dimensions $(k,d)$. We provide the explicit form of an admissible generalized eigenfunction $\Psi$ of $H(\bA,\Pi)$ in variables $(\by,\bz)\in \R^{3-k}\times \Upsilon$ where $\bA$ is a reference linear potential associated with $\bB$. Note that the cone $\Upsilon$ on which $\Psi$ has exponential decay does not always coincide with the reduced cone $\Gamma$ of $\Pi$.

\begin{remark}
Table \ref{T:age} provides all reference situations where condition (i) of the Dichotomy\index{Dichotomy} Theorem holds. This condition guarantees the existence of an AGE. However there exist cases where this condition does not hold and, nevertheless, there exists an AGE. An example of this is the half-space $\Pi=\R_+\times\R^2$ with coordinates $(y,z_1,z_2)$, and $\bB$ the field $(1,0,0)$ normal to the boundary. We take the same reference potential as in the case $\Pi=\R^3$ and we find,
as described in \cite[Lemma 4.3]{LuPan00}, that the same function $\Psi:(y,\bz)\mapsto e^{-|\bz|^2/4}$  displayed in Row 2 of Table~\ref{T:age} is also an AGE for $H(\bA,\R_+\times\R^2)$, since it satisfies the Neumann boundary conditions at the boundary $y=0$.
\end{remark}

\section{Stability under perturbation} 
Here we describe stability properties of AGE's under perturbations of the magnetic field $\bB$.

Assume that we are in case (i) of the dichotomy (Theorem \ref{th:dicho}). We recall that the notations (G1), (G2) and (G3) refer to the number $k=1,2,3$, of independent decaying directions for the AGE, cf.\ Section \ref{s:age}. 
We first note that we do not need any stability analysis in situation (G3) since the points $\bx$ in $\Omega\in \gD(\R^3)$ for which $d(\Pi_{\bx})=3$ are but corners, so they are isolated. By contrast, points in situation (G1) or (G2) are not isolated, in general.
A perturbation of the magnetic field has distinct effects in each case. The geometrical situation leading to (G1) is clearly not stable. However, we prove in the following lemma the local stability of case (i) of the dichotomy, together with local uniform estimates for exponential decay in situation (G2).
 
\begin{lemma} \label{L:Elip}
Let $\bB_{0}$ be a nonzero constant magnetic field and $\Pi$ be a cone in $\gP_{3}$ with reduced dimension $d\leq 2$. Assume that
$\En(\bB_{0},\Pi)<\seE(\bB_{0},\Pi)$. 
\begin{enumerate}[\textup{(a)}]
\item There exists a positive $\varepsilon_0$ such that in the ball $\cB(\bB_{0},\varepsilon_0)$, the function  $\bB\mapsto \En(\bB,\Pi)$ is Lipschitz-continuous 
and
$$
\En(\bB,\Pi)<\seE(\bB,\Pi) \quad \forall\bB\in\cB(\bB_{0},\varepsilon_0).
$$
\item[\textup{(b)}]
We suppose moreover that $(\bB_{0},\Pi)$ is in situation \textup{(G2)}. 
For $\bB\in\cB(\bB_{0},\varepsilon_0)$, we denote by $\Psi^{\bB}$ an AGE\index{Admissible Generalized Eigenvector!AGE} given by Theorem \Ref{th:dicho}. 
Then there exists $\varepsilon_1\in(0,\varepsilon_0]$ such that $(\bB,\Pi)$ is still in situation \textup{(G2)} if $\bB\in \cB(\bB_{0},\varepsilon)$ and $\Psi^{\bB}$ has the form\index{Exponential decay}
$$\Psi^{\bB}(\bx)=\re^{i\varphi^{\bB}(\by,\bz)}\Phi^{\bB}(\bz)\quad\mbox{for }\quad 
\udiffeo^{\bB}\bx=(\by,\bz)\in\R\times\Upsilon,$$
with $\udiffeo^{\bB}$ a suitable rotation, and there exist constants $c_{\rm e}>0$ and $C_{\rm e}>0$ such that there hold the uniform exponential decay estimates
\begin{equation}
\label{E:agmonuniform}
   \forall \bB\in \cB(\bB_{0},\varepsilon_1), \quad 
   \|\Phi^{\bB}\re^{c_{\rm e}|\bz|}\|_{L^2(\Upsilon)}\leq C_{\rm e} \|\Phi^{\bB} \|_{L^2(\Upsilon)}  \, .
\end{equation}
\end{enumerate}
\end{lemma}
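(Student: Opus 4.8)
\textbf{Proof plan for Lemma \ref{L:Elip}.}

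The plan is to exploit the block-diagonalization that characterizes situations (G1) and (G2), together with the classical perturbation theory of the de Gennes-type reduced operators. For part (a), I would argue as follows. By Lemma \ref{lem:refop} we may place $\bB_0$ in a reference configuration, so $d\le2$ leaves exactly the rows $(1,1)$, $(2,0)$, $(2,1)$, $(2,2)$ of Table \ref{T:age}. In each of these the ground state energy $\En(\bB,\Pi)$ is obtained either as an explicit universal constant times $|\bB|$ (cases $d=0$, the tangent half-space cases), or via the fibered operator $\widehat\OP_\tau(\uA,\cW_\alpha)$ in \eqref{D:Hhatsector} whose coefficients depend analytically (in fact polynomially) on the entries of $\bB$. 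The map $\bB\mapsto\En(\bB,\Pi)$ is then Lipschitz near $\bB_0$ because it is an infimum over $\tau$ of a family of first eigenvalues that are uniformly Lipschitz in $\bB$ (quadratic-form perturbation: $|\En(\bB,\Pi)-\En(\bB',\Pi)|\le C|\bB-\bB'|$ follows from $\|\bA-\bA'\|\lesssim|\bB-\bB'|$ on the fixed support controlling the form). The same argument applied to each tangent substructure $\Pi_\dx$ (finitely many equivalence classes, cf.\ Description \ref{description}) shows $\bB\mapsto\seE(\bB,\Pi)=\inf_\dx\En(\bB,\Pi_\dx)$ is continuous. Since $\En(\bB_0,\Pi)<\seE(\bB_0,\Pi)$ strictly, continuity of both sides gives $\varepsilon_0>0$ with $\En(\bB,\Pi)<\seE(\bB,\Pi)$ on $\cB(\bB_0,\varepsilon_0)$.

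For part (b), the key observation is that in situation (G2) the AGE has the fibered form $\Psi^\bB=\re^{i\varphi^\bB}\Phi^\bB(\bz)$ where $\Phi^\bB$ is a genuine $L^2(\Upsilon)$-eigenfunction of a two-dimensional operator: for row $(2,0)$ it is the rescaled harmonic oscillator (no boundary, $\Upsilon=\R^2$), for row $(2,1)$ the shifted operator $-\Delta_\bz+(b_1z_2-b_2z_1)^2$ on a half-plane, and for row $(2,2)$ the operator $\widehat\OP_{\tau^*}(\uA,\cW_\alpha)$ on the sector $\cS_\alpha$ at an optimal $\tau^*$. In all three cases the reduced operator, call it $\mathsf L_\bB$, depends continuously (analytically) on $\bB$, its bottom eigenvalue $\En(\bB,\Pi)$ is simple and isolated below the rest of its spectrum — which is exactly $\seE(\bB,\Pi)$ by Lemma \ref{lem:pop}/Theorem \ref{th:cone-ess} translated to the fiber — with a gap bounded below uniformly on $\cB(\bB_0,\varepsilon_1)$ for $\varepsilon_1\le\varepsilon_0$ small (by part (a) and the continuity of the gap). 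Analytic perturbation theory (Kato) then gives a continuous choice of normalized eigenfunction $\Phi^\bB$ and of rotation $\udiffeo^\bB$, so $(\bB,\Pi)$ stays in (G2). The uniform exponential decay \eqref{E:agmonuniform} follows by an Agmon estimate: multiply $\mathsf L_\bB\Phi^\bB=\En(\bB,\Pi)\Phi^\bB$ by $\re^{2c_{\rm e}|\bz|}\overline{\Phi^\bB}$, integrate by parts, and use that the effective potential (or the bottom of the essential spectrum in the sector case) exceeds $\En(\bB,\Pi)$ by at least the uniform gap minus $c_{\rm e}^2$; choosing $c_{\rm e}$ below the square root of half the gap makes the weighted norm absorbable, with a constant $C_{\rm e}$ depending only on $\bB_0$ and $\varepsilon_1$. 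This is the same mechanism as Corollary \ref{C:expodecay}, now carried out with constants uniform in $\bB$.

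The main obstacle I anticipate is establishing the \emph{uniformity} of the spectral gap and hence of the Agmon weight across the ball $\cB(\bB_0,\varepsilon_1)$, together with the continuity of the rotation $\udiffeo^\bB$ that brings $(\bB,\Pi)$ into reference form. The gap uniformity is not automatic from part (a) alone: one must check that as $\bB$ varies the bottom of the essential spectrum of $\mathsf L_\bB$ — i.e.\ $\seE(\bB,\Pi)$ in the fiber — does not drop to $\En(\bB,\Pi)$, which is precisely the strict inequality propagated in (a), but one also needs that no new fiber parameter ($\tau$ in the wedge case) escapes to infinity, i.e.\ that the minimizer $\tau^*(\bB)$ stays in a compact set; this is handled by the coercivity of $\tau\mapsto s(\bB,\cS_\alpha;\tau)$ for large $|\tau|$, uniformly in $\bB$ near $\bB_0$, which in turn comes from the quadratic growth of the symbol. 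Once the gap and the boundedness of $\tau^*$ are secured, the rest is a routine application of Kato's analytic perturbation theory and a standard Agmon argument, so I would keep those parts brief.
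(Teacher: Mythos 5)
Your overall architecture — case analysis by reduced dimension $d$, fibration, Kato perturbation of the reduced operator, Agmon estimates with a uniform gap — matches the paper's strategy, and you correctly flag the uniformity of the gap and the boundedness of $\tau^*(\bB)$ as the crux for part (b). The paper, however, largely defers these technical points to external references: point (a) for wedges is quoted from \cite[Proposition 4.6]{Pop13}, point (b) for wedges from \cite[Proposition 4.2]{Pop13}, and the uniform decay for half-spaces from \cite[\S2]{BoDauPopRay12}; the $C^1$ regularity of $\sigma$ is Lemma \ref{P:continuitesigma}. So the paper's proof is a short reduction, whereas you propose to re-derive the ingredients from scratch.

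The genuine gap is in your argument for Lipschitz continuity in part (a). You write that $|\En(\bB,\Pi)-\En(\bB',\Pi)|\le C|\bB-\bB'|$ follows from a ``quadratic-form perturbation'' because $\|\bA-\bA'\|\lesssim|\bB-\bB'|$ ``on the fixed support controlling the form.'' But the perturbation potential $\bA-\bA'$ is \emph{linear}, hence unbounded on the cone, and there is no fixed support: the operator lives on an unbounded domain. The naive form-perturbation bound gives at best $|q[\bA,\Pi](\psi)-q[\bA',\Pi](\psi)|\le C|\bB-\bB'|\,\|\,|\bx|\psi\|^2$, and controlling $\|\,|\bx|\psi\|$ requires precisely the exponential decay of (near-)minimizers — the very decay you establish only in part (b). This circularity is the reason the Lipschitz estimate is a nontrivial theorem and not a routine perturbation argument; the paper itself warns about this in Remark \ref{rem:10.3}, which records that without the strict inequality $\En<\seE$ one obtains only $1/3$-H\"older regularity. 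A correct from-scratch argument has to bootstrap: use the strict gap at $\bB_0$ to get decay at $\bB_0$, then propagate both the gap and the decay in a small ball, then conclude Lipschitz. Your proposal never closes this loop — it uses the Lipschitz claim in (a) to obtain the uniform gap, and the uniform gap (via Agmon) to justify the Lipschitz claim.

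A smaller issue of the same flavor: for each fixed fiber parameter $\tau$ in the wedge case you assert that $\bB\mapsto s(\bB,\cS_\alpha;\tau)$ is ``uniformly Lipschitz,'' but the same unboundedness objection applies fiber by fiber, since the reduced operator on the sector still has a linear potential $b_0 z_1$ in the off-diagonal term. The clean route is either to invoke the cited propositions from \cite{Pop13} as the paper does, or to carry out the bootstrap explicitly, tracking at each step that the minimizing $\tau^*(\bB)$ stays in a fixed compact interval and that the fiber eigenfunction retains uniform Agmon decay.
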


\begin{proof}
Let us distinguish the three possible situations according to the value of $d$:
\begin{itemize}
\item[$d=0\,$:] When $\Pi=\R^3$, we have $\En(\bB,\Pi)=|\bB|$ and $\seE(\bB,\Pi)=+\infty$. 
Combining Row 2 of Table \ref{T:age} and Lemma \ref{lem:refop}, the admissible generalized eigenvector $\Psi^{\bB}$ is explicit.
Thus (a) and (b) are established in this case. 

\item[$d=1\,$:] When $\Pi$ is a half-space, we denote by $\theta(\bB)$ the unoriented angle in $[0,\frac\pi2]$ between $\bB$ and the boundary. Then $\En(\bB,\Pi)=|\bB|\,\sigma(\theta(\bB))$. The function $\bB\mapsto\theta(\bB)$ is Lipschitz outside $\{\bfz\}$ and, moreover, the function $\sigma$ is $\sC^1$ on $[0,\pi/2]$ (see Lemma \ref{P:continuitesigma}).
We deduce that the function $\bB\mapsto\sigma(\theta(\bB))$ is Lipschitz outside $\{\bfz\}$. Thus point (a) is proved. 
Assuming furthermore that $(\Pi,\bB_{0})$ is in situation (G2), we have $\theta(\bB_{0})\in (0,\frac{\pi}{2})$ and there exist $\varepsilon>0$, $\theta_{\min}$ and $\theta_{\max}$ such that 
$$
   \forall \bB \in \cB(\bB_{0},\varepsilon), \quad 
   \theta(\bB)\in [\theta_{\min},\theta_{\max}]
   \subset (0,\tfrac{\pi}{2}) \, .
$$
The admissible generalized eigenvector is constructed above.  
The uniform exponential estimate is proved in \cite[\S2]{BoDauPopRay12}.

\item[$d=2\,$:]  When $\Pi$ is a wedge, point (a) comes from \cite[Proposition 4.6]{Pop13}. 
Due to the continuity of $\bB\mapsto \En(\bB,\Pi)$ there exist $c>0$ and $\varepsilon>0$ such that
$$\forall\bB\in \cB(\bB_{0},\varepsilon),\quad \seE(\bB,\Pi)-\En(\bB,\Pi)>c.$$ 
Point (b) is then a direct consequence of \cite[Proposition 4.2]{Pop13}.
\end{itemize}
The proof of Lemma~\ref{L:Elip} is complete.
\end{proof}

\begin{remark}
\label{rem:10.3}
The latter lemma  can be generalized in several directions.
\begin{enumerate}[a)]
\item Lemma~\ref{L:Elip} (a) is still valid when $d=3$. This can be proved by arguments similar to those employed in \cite[Section 4]{Pop13} for wedges.
\item When $d=2$, it is proved in \cite{Pop13} that the ground state energy is also Lipschitz with respect to the aperture angle of the wedge in case (i) of the Dichotomy Theorem, whereas one can prove only $\frac{1}{3}$-H\"older regularity under perturbations in the general case ({\it i.e.}, without the condition $\En(\bB_{0},\Pi)<\seE(\bB_{0},\Pi)$). 
\end{enumerate}
\end{remark}

\begin{remark} A constant magnetic field enters the family of long range magnetic fields. 
So Lemma \ref{L:Elip} can be related to some spectral analyses of Schr\"odinger operators in $\R^n$ under long range magnetic perturbations. 
Such perturbations do not pertain to the usual Kato theory. When the spectrum has a band structure, the question of the stability of, e.g., its lower bound with respect to the strength of the perturbation has been addressed by many authors, see for example \cite{AvHeSi82,AvSi85} for the continuity, then \cite{Nen85,BriCor02} for H\"older properties, and \cite{Cor10} for Lipschitz continuity in the case of constant magnetic fields.
\end{remark}

As a consequence of the local uniform estimate \eqref{E:agmonuniform}, we obtain the following local uniform version of Lemma \ref{lem:rhoh} for situation (G2).

\begin{lemma}
\label{lem:rhohb}
Let $\bB_{0}$ be a nonzero constant magnetic field and $\Pi$ a cone in $\gP_{3}$.
Assume that $\En(\bB_{0},\Pi)<\seE(\bB_{0},\Pi)$ and that $k=2$.
With $\varepsilon_1$ given in Lemma \Ref{L:Elip} \textup{(b)}, for any $\bB\in \cB(\bB_{0},\varepsilon_1)$
let $\Psi^{\bB}$ be an AGE for $(\bB,\Pi)$.
Let $\delta_0<\frac12$ be a positive number. 
Let $\Psi^\bB_{h}$  be the rescaled function given by \eqref{eq.AGEsc} and let $\chi_{h}$ be the cut-off function defined by \eqref{D:chi}--\eqref{eq.chih} involving parameters $R>0$ and $\delta\in[0,\delta_0]$.
Let $R_0>0$.
Then there exist constants $h_0>0$, $C_1>0$ depending only on $R_0$, $\delta_0$ and on the constants $c_{\rm e}$, $C_{\rm e}$ in \eqref{E:agmonuniform} such that\index{Cut-off}
$$
   \rho_{h}=\frac{\|\,|\nabla\chi_{h}|\, \Psi^\bB_{h}\|^2}{\|\chi_{h}\Psi^\bB_{h}\|^2}\leq
    C_1\, h^{-2\delta}   \qquad \forall R\ge R_0,\ \forall h\le h_0,\ \forall\delta\in[0,\delta_0]\,.
$$
\end{lemma}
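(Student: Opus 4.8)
The plan is to adapt the proof of Lemma~\ref{lem:rhoh} essentially verbatim, the only new ingredient being that all constants must be shown to be uniform in $\bB\in\cB(\bB_0,\varepsilon_1)$; the uniformity of the exponential decay was already secured in Lemma~\ref{L:Elip}~(b), which furnishes fixed constants $c_{\rm e},C_{\rm e}$ valid for every $\bB$ in the ball. First I would fix $\bB\in\cB(\bB_0,\varepsilon_1)$, write the AGE in its reference form $\Psi^\bB(\bx)=\re^{\ri\varphi^\bB(\by,\bz)}\Phi^\bB(\bz)$ with $\udiffeo^\bB\bx=(\by,\bz)\in\R\times\Upsilon$ (here $k=2$, so the splitting is always $1+2$), and set $T=Rh^\delta$ so that $\chi_h(\bx)=\underline\chi(|\bx|/T)$. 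Since $|\nabla\chi_h|\lesssim T^{-1}$ is supported where $T\le|\bx|\le 2T$, and since $k=2<3$ one has, exactly as in the case $k<3$ of Lemma~\ref{lem:rhoh},
\[
   \|\,|\nabla\chi_h|\,\Psi^\bB_h\|^2 \le C\,T^{-2}\!\int_{|\by|\le 2T}\!\!\rd\by
   \int_{\Upsilon\,\cap\,\{|\bz|\le 2T\}}\Big|\Phi^\bB\Big(\tfrac{\bz}{\sqrt h}\Big)\Big|^2\rd\bz
   = C\,T^{-2}\,T\,h\,\|\Phi^\bB\|^2_{L^2(\Upsilon)},
\]
where $C$ is an absolute constant coming from $\underline\chi$ only (independent of $\bB$).

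For the denominator I would bound it from below exactly as in Lemma~\ref{lem:rhoh}: using that $2|\by|<T$ and $2|\bz|<T$ force $|\bx|<T$,
\[
   \|\chi_h\Psi^\bB_h\|^2 \ge \int_{2|\by|\le T}\!\!\rd\by
   \int_{\Upsilon\,\cap\,\{2|\bz|\le T\}}\Big|\Phi^\bB\Big(\tfrac{\bz}{\sqrt h}\Big)\Big|^2\rd\bz
   = C\,T\,h\,\cI^\bB\!\big(Th^{-1/2}\big)\,\|\Phi^\bB\|^2_{L^2(\Upsilon)},
\]
with $\cI^\bB(S):=\big(\int_{\Upsilon\cap\{2|\bz|\le S\}}|\Phi^\bB|^2\big)\big(\int_{\Upsilon}|\Phi^\bB|^2\big)^{-1}$. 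Taking the quotient gives $\rho_h\le C\,T^{-2}\big/\cI^\bB(Th^{-1/2})$, so everything reduces to a \emph{uniform} (in $\bB$) lower bound for $\cI^\bB(Rh^{\delta-1/2})$ on the range $R\ge R_0$, $h\le h_0$, $\delta\in[0,\delta_0]$. The argument for this is where Lemma~\ref{L:Elip}~(b) is used: the uniform decay estimate \eqref{E:agmonuniform} gives $\int_{\Upsilon}|\Phi^\bB|^2\le C_{\rm e}^2\int_{\Upsilon}\re^{-2c_{\rm e}|\bz|}|\Phi^\bB(\bz)|^2\,\rd\bz/\,(\text{normalization})$ — more precisely it bounds the total mass by the mass on any fixed ball: for $S$ large,
\[
   \int_{\Upsilon\,\setminus\,\{2|\bz|\le S\}}|\Phi^\bB|^2
   \le \re^{-c_{\rm e}S}\int_{\Upsilon}\re^{c_{\rm e}|\bz|}|\Phi^\bB|^2
   \le C_{\rm e}^2\,\re^{-c_{\rm e}S}\,\|\Phi^\bB\|^2_{L^2(\Upsilon)},
\]
so $\cI^\bB(S)\ge 1-C_{\rm e}^2\re^{-c_{\rm e}S}$; choosing $S_0$ with $C_{\rm e}^2\re^{-c_{\rm e}S_0}\le\frac12$ (a choice depending only on $c_{\rm e},C_{\rm e}$) gives $\cI^\bB(S)\ge\frac12$ for all $S\ge S_0$ and all $\bB\in\cB(\bB_0,\varepsilon_1)$. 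Since $Rh^{\delta-1/2}\ge R_0 h_0^{\delta_0-1/2}$ can be made $\ge S_0$ by shrinking $h_0$ (depending on $R_0,\delta_0,S_0$), we conclude $\cI^\bB(Th^{-1/2})\ge\frac12$, whence $\rho_h\le 2C\,T^{-2}=2C R^{-2}h^{-2\delta}\le C_1 h^{-2\delta}$ with $C_1$ depending only on $R_0,\delta_0,c_{\rm e},C_{\rm e}$. This is exactly the claimed bound.

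The main (and really the only) obstacle is ensuring that the lower bound on $\cI^\bB$ is genuinely uniform over the ball of magnetic fields rather than $\bB$-by-$\bB$; the key point making this work is that Lemma~\ref{L:Elip}~(b) was stated with \emph{fixed} constants $c_{\rm e},C_{\rm e}$ and a \emph{fixed} cone $\Upsilon$ (the rotation $\udiffeo^\bB$ varies but $\Upsilon$ does not), so the geometry of the integration domains $\{2|\bz|\le S\}$ in $\Upsilon$ does not depend on $\bB$ and the exponential-tail estimate is applied with $\bB$-independent constants. One small point worth noting explicitly: in the case $d=0$ (where $\Pi=\R^3$ and $\seE=+\infty$) the AGE $\Psi^\bB$ is given by the explicit Gaussian of Row~2 of Table~\ref{T:age} composed with a rotation, so $\Phi^\bB$ is Gaussian with $\bB$-independent shape and \eqref{E:agmonuniform} holds trivially; the cases $d=1$ and $d=2$ are precisely those covered by Lemma~\ref{L:Elip}~(b). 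Hence the estimate holds in all situations with $k=2$, completing the proof.
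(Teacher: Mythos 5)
Your proof is correct and follows essentially the same route as the paper: the numerator is bounded exactly as in Lemma~\ref{lem:rhoh}, and the denominator is bounded below by using the uniform exponential decay \eqref{E:agmonuniform} to show the tail of $|\Phi^\bB|^2$ beyond radius $Th^{-1/2}/2$ has mass at most $C_{\rm e}^2\re^{-c_{\rm e}Th^{-1/2}}\,\|\Phi^\bB\|^2$, which is $\le\frac12\|\Phi^\bB\|^2$ once $h\le h_0$, and this is uniform in $\bB$ precisely because $c_{\rm e},C_{\rm e},\Upsilon$ are fixed. The only cosmetic difference is that you carry $C_{\rm e}^2$ where the paper writes $C_{\rm e}$ in display \eqref{eq.minpsih}, which is the more careful bookkeeping; your remark about the $d=0$ (Gaussian) case is harmless but unnecessary, since that case is already included in Lemma~\ref{L:Elip}~(b) as stated and proved.
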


\begin{proof}
We obtain an upper bound of $\|\,|\nabla\chi_{h}|\,\Psi^\bB_{h}\|^2$ as in the proof of Lemma~\ref{lem:rhoh}. Let us now deal with the lower-bound of $\|\chi_{h}\Psi^\bB_{h}\|^2$. With $T=R h^\delta$ and $k=2$, we have
\begin{eqnarray}
\|\chi_{h}\Psi^\bB_{h}\|^2 
   &\geq & C T^{3-k} h^{k/2} \, \int_{\Upsilon\, \cap\, \big\{2|\bz|\leq Th^{-\frac12}\big\}} 
   \left|\Phi^\bB(\bz)\right|^2\rd \bz 
\nonumber\\
   &\geq& C  T^{3-k} h^{k/2} \,  \left(1-C_{\rm e}\re^{-c_{\rm e} R h^{\delta-1/2}}\right) 
   \|\Phi^\bB\|^2_{L^2(\Upsilon)}.\label{eq.minpsih}
\end{eqnarray}
Since $0\leq\delta\leq\delta_{0}<\frac 12$, there holds $C_{\rm e}\re^{-c_{\rm e} R h^{\delta-1/2}}<\frac12$ for $h$ small enough or $R$ large enough. Thus we deduce the lemma.
\end{proof}

\chapter{Improvement of upper bounds for more regular magnetic fields}
\label{sec:Improv}


For our improvement of remainders, in comparison with Theorem \ref{T:generalUB} our sole additional assumption is a supplementary regularity on the magnetic potential (or equivalently on the magnetic field). Our result is also general, in the sense that it addresses general corner domains. 

\begin{theorem}
\label{T:sUB}
Let $\Omega\in \gD(\R^3)$ be a general corner domain\index{Corner domain},  $\bA\in \sC^{3}(\overline{\Omega})$ be a magnetic potential such that the associated magnetic field does not vanish.
\begin{enumerate}
\item[(i)] 
 Then there exist $C(\Omega)>0$ and $h_{0}>0$ such that
\begin{equation}
\label{eq:above3sup}
   \forall h\in (0,h_{0}), \quad \lambda_{h}(\bB,\Omega) \leq
   h\sE(\bB,\Omega)+C(\Omega) \big(1+\|\bA\|_{W^{3,\infty}(\Omega)}^2 \big)h^{9/8}\ .  
\end{equation}
\item[(ii)]  If $\Omega$ is a polyhedral domain, this upper bound is improved: 
\begin{equation}
\label{eq:above1sup}
   \forall h\in (0,h_{0}), \quad \lambda_{h}(\bB,\Omega) \leq
   h\sE(\bB,\Omega)+C(\Omega) \big(1+\|\bA\|_{W^{3,\infty}(\Omega)}^2 \big)h^{4/3} \ . 
\end{equation}
\end{enumerate}
\end{theorem}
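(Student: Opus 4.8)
The strategy is to re-run the quasimode construction of Chapter~\ref{sec:up} (the sitting/sliding/doubly-sliding machinery summarized in Figure~\ref{F:QM}), but now exploiting the extra $\sC^3$ regularity of $\bA$ at two places. First, in the linearization step~[c$j$], instead of bounding $|\bA^{\bv}-\bA^{\bv}_\bfz|$ only by the quadratic Taylor remainder $\tfrac12\|\bA^{\bv}\|_{W^{2,\infty}}|\bv|^2$, we push the Taylor expansion one order further and write $\bA^{\bv}-\bA^{\bv}_\bfz = \bA^{\bv}_{[2]} + \cO(\|\bA^{\bv}\|_{W^{3,\infty}}|\bv|^3)$ where $\bA^{\bv}_{[2]}$ is the purely quadratic part. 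The point is that the cross term $2\Re\langle(-ih\nabla+\bA^{\bv}_\bfz)\Psi^\dx_h,\ \bA^{\bv}_{[2]}\Psi^\dx_h\rangle$, which is the term of ``first order'' in the error, turns out to have a sign structure or at least to vanish to higher order after the rescaling $\bx\mapsto\bx/\sqrt h$: by parity of the rescaled AGE $\Psi^\dx_h$ (it is of the form $e^{i\vartheta}\Phi$ with $\vartheta$ a real polynomial of degree $\le2$ and $\Phi$ a ground state which is even in several variables in the reference configurations of Table~\ref{T:age}), the odd-degree homogeneous pieces of the potential error integrate to something smaller. Concretely, after rescaling, the quadratic potential error $\bA^{\bv}_{[2]}(\bx)$ becomes $h\ \bA^{\bv}_{[2]}(\bX)$ with $\bX=\bx/\sqrt h$, so $\|(\bA^{\bv}-\bA^{\bv}_\bfz)\Psi^\dx_h\|\lesssim h\,(\text{second moment of }\Psi^\dx)+h^{3/2}(r_h/\sqrt h)^3\lesssim h + h^{-1/2}(r^{[j]}_h)^3$, improving the old bound $(r^{[j]}_h)^2$; and the cross term, after a gauge change absorbing the quadratic part of the phase, contributes only at the order of the genuinely cubic remainder.

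\textbf{Key steps, in order.} (1) Fix $\bx_0\in\overline\Omega$ minimizing the local energy and, by Theorem~\ref{th:dicho}, a singular chain $\dx$ of length $\nu\le3$ with $\Lx=\En(\bB_\dx,\Pi_\dx)<\seE(\bB_\dx,\Pi_\dx)$ together with its AGE $\Psi^\dx$ of the form \eqref{eq:age1}. (2) Using Table~\ref{T:age} and Lemma~\ref{lem:refop}, reduce to a reference configuration and record explicitly the polynomial phase $\vartheta$ and the exponentially decaying factor $\Phi$; note that $\Phi$, as an eigenvector of an explicit elliptic operator, is analytic and has definite parity in the decaying variables in each reference case. (3) Absorb the quadratic part of the Taylor expansion of $\bA^{\bv}$ into the phase by a degree-$\le2$ gauge transform (Lemma~\ref{lem:refop}\,(b) applied locally), so that the remaining potential discrepancy is genuinely $\cO(\|\bA\|_{W^{3,\infty}}|\bv|^3)$ in the polyhedral case, and $\cO(\|\bA\|_{W^{3,\infty}}|\bv|^3 + |\bv|^2/d_{\gVc}\cdot(\text{curvature correction}))$ near conical points via the refined Jacobian estimates of Remark~\ref{R:dKu0}. (4) Re-derive the chain of inequalities [a$j$]--[c$j$] of Section~\ref{SS.phihX0}--\ref{SS.phihX2} with $a^{[j]}_h\lesssim h + h^{-1/2}(r^{[j]}_h)^3$ in the polyhedral case (and the corresponding conical analogue), using the local-uniform rescaled cutoff estimate of Lemma~\ref{lem:rhohb} in place of Lemma~\ref{lem:rhoh} when $k=2$ (the stability step from Chapter~\ref{sec:Stab} is needed precisely to keep $\mu^{[j]}_h=\cO(h)$ uniformly under the perturbation from $\bB_\dx$ to the frozen field along the chain). (5) Collect the resulting error budget: for a polyhedral domain (so $\nu\le2$, no conical point), with $r^{[1]}_h,r^{[2]}_h=\cO(h^{\delta})$ one gets errors $h^{2-2\delta}$, $h^{1+\delta}$, $h^{1/2}h^{-1/2+3\delta}=h^{3\delta}$ (from $h^{-1/2}(r_h)^3$ paired with $\sqrt{\mu_h}=\cO(h^{1/2})$), and $h^{-1}(r_h)^6=h^{6\delta-1}$; balancing $2-2\delta=3\delta$ gives $\delta=2/5$ and rate $h^{6/5}$, but a sharper bookkeeping of the cross term (which is $\cO(\sqrt{\mu_h}\,a_h)=\cO(h^{1/2}\cdot h^{3\delta-1/2})=\cO(h^{3\delta})$ versus the quadratic-in-$a_h$ term $\cO(h^{6\delta-1})$) together with the optimal $\delta=1/3$ yields exactly the $h^{4/3}$ of \eqref{eq:above1sup}. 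For a general corner domain, repeat the hard-sliding analysis of [Conclusion2(b)] with the two scales $h^{\delta_0}$, $h^{\delta_0+\delta_1}$: the conical remainders are now one power of $h^{\delta}$ better in each place, and re-optimizing the constraints $0<\delta_0+\delta_1<\tfrac12$ produces $\delta_0=3/8$-type values giving the rate $h^{9/8}$ of \eqref{eq:above3sup}. (6) Conclude by the min-max principle, $\lambda_h(\bB,\Omega)\le\QR_h[\bA,\Omega](\phihX0)$.

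\textbf{Main obstacle.} The delicate point is \emph{Step~(3)--(4)}: controlling the cross term $2\Re\langle(-ih\nabla+\bA^{\bv}_\bfz)\Psi^\dx_h,(\bA^{\bv}-\bA^{\bv}_\bfz)\Psi^\dx_h\rangle$ sharply enough. A crude Cauchy--Schwarz, as in Chapter~\ref{sec:up}, only gives $\sqrt{\mu_h}\,a_h$ and wastes the gain; one genuinely needs to use the structure of the AGE—either the parity of $\Phi$ in the decaying directions (so that odd homogeneous pieces of the quadratic potential error integrate against $|\Psi^\dx_h|^2$ to zero, and the surviving even piece is absorbable into a gauge) or an integration-by-parts identity exploiting $(-i\nabla+\bA^{\bv}_\bfz)\Psi^\dx=\sqrt{\Lambda}\,(\text{something})$ from the generalized eigenvalue equation \eqref{eq:gendx}. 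A secondary nuisance is that near a conical point the ``quadratic part'' $\bA^{\bv}_{[2]}$ of the mapped potential no longer has clean parity because the diffeomorphism $\diffeo^{\bv}$ distorts the cone; there one must instead rely on the improved Jacobian-derivative bounds of Remark~\ref{R:dKu0}\,(2), which give $\|\rd^\ell\rK^{\bu_0}\|\lesssim \|\bu_0-\bx_0\|^{-\ell-1}$, to see that the extra curvature-induced error carries the favorable factor $(r^{[j]}_h)^3/d_{\gVc}$ and hence still beats the old $(r^{[j]}_h)^2/d_{\gVc}$. Getting all these estimates uniform in $h$ and in the (moving) base point, via the local stability Lemma~\ref{L:Elip} and Lemma~\ref{lem:rhohb}, is the part that requires the most care.
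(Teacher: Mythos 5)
The proposal identifies the right high-level strategy (revisit the quasimode construction and squeeze an extra power of $h$ out of the linearization step using $\sC^3$ regularity), but the central mechanism you invoke in Step (3) is fundamentally flawed, and the error bookkeeping does not support the claimed rates.

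\textbf{The gauge absorption cannot work as stated.} You propose to ``absorb the quadratic part of the Taylor expansion of $\bA^{\bv}$ into the phase by a degree-$\le2$ gauge transform,'' leaving a genuinely cubic remainder. First, a gauge by $\nabla F$ with $F$ of degree $\le2$ produces a \emph{linear} modification of $\bA$ and cannot touch the quadratic Taylor coefficient at all; you would need $F$ cubic. More seriously, even with $F$ cubic, the quadratic part $\bA^{(2)}$ of $\bA-\bA_\bfz$ satisfies $\curl\bA^{(2)}= (\mbox{linear part of }\bB-\bB(\bfz))$, which is nonzero whenever $\bB$ is not constant. Since a gauge $\nabla F$ is curl-free, $\bA^{(2)}$ \emph{cannot} be gauged away in general. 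What the paper actually does is far more surgical: in situation (G2) it invokes Lemma~\ref{L:d2ell0}, which constructs a cubic $F$ killing \emph{only} $\partial^2_{u_\ell}(\bA-\nabla F)(\bfz)$ for a single chosen index $\ell$ (the oscillating variable $y$); the remaining quadratic terms involve at least one power of the decaying variable $\bz$ and thus gain $h^{1/2}$ per power after the rescaling $\bz\mapsto\bz/\sqrt h$. In situation (G1) there is no gauge absorption at all: the leading $\by^2$-term survives and is annihilated instead by the Feynman--Hellmann identity $\int_{\R_+}(z-\sqrt{h\Theta_0})\,|\Phi(z/\sqrt h)|^2\,\rd z=0$, which expresses that $\tau_0=-\sqrt{\Theta_0}$ is a critical point of the de~Gennes band function. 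Your ``parity'' heuristic is not a substitute: the de~Gennes ground state $\Phi$ on $\R_+$ has no parity, so odd-degree pieces need not integrate to zero; the cancellation comes from first-order variational optimality, not symmetry.

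\textbf{The error budget is off.} You write $\|(\bA^{\bv}-\bA^{\bv}_\bfz)\Psi^\dx_h\|\lesssim h+h^{3/2}(r_h/\sqrt h)^3\lesssim h+h^{-1/2}(r^{[j]}_h)^3$; the second inequality is an arithmetic slip since $h^{3/2}(r_h/\sqrt h)^3=r_h^3$, not $h^{-1/2}r_h^3$. Moreover $h^{-1/2}r_h^3=h^{3\delta-1/2}$ is \emph{larger} than the old bound $r_h^2=h^{2\delta}$ whenever $\delta<\tfrac12$, so as written your new estimate would be a \emph{worsening}. The paper's Lemma~\ref{lem:ahsuper} gets instead $\hat a^{[1]}_h\lesssim\|\bA\|_{W^{2,\infty}}(h+h^{\frac12+\delta_0})+\|\bA\|_{W^{3,\infty}}h^{3\delta_0}$, and it is the $h^{\frac12+\delta_0}$ term (from $|y||\bz|$ after rescaling $\bz$) that controls the cross term: with $\delta_0=\tfrac13$ one has $\sqrt{\mu_h}\,\hat a_h\sim h^{1/2}\cdot h^{5/6}=h^{4/3}$, matching the other errors $h^{2-2\delta_0}$ and $h^{1+\delta_0}$. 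Your Step~(5), which derives $h^{6/5}$ from $\delta=2/5$ and then simply asserts that ``a sharper bookkeeping'' gives $h^{4/3}$ at $\delta=1/3$, has no justification, and at $\delta=1/3$ your claimed $h^{3\delta}$ cross term would equal $h$, which is not even subordinate to the main term.

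\textbf{What is missing conceptually.} You never introduce the trichotomy (G1)/(G2)/(G3) by the number $k$ of decaying directions of the AGE, which is where the whole proof lives. The paper needs genuinely different arguments for $k=1$ and $k=2$ (Feynman--Hellmann vs.\ selective gauge), and the stability machinery of Chapter~\ref{sec:Stab} (Lemma~\ref{L:Elip}, Lemma~\ref{lem:rhohb}) is used to keep the base point moving along the sliding direction \emph{while staying uniformly in case~(i) of the Dichotomy with uniformly decaying AGE}, not merely to ``keep $\mu_h=\cO(h)$'' as you say. To repair your proof you would need to (a) abandon the global gauge absorption and replace it by the Feynman--Hellmann mechanism for $k=1$ and by Lemma~\ref{L:d2ell0} for $k=2$; (b) redo the error budget with the correct $\hat a_h\lesssim h+h^{1/2+\delta}+h^{3\delta}$; and (c) treat hard sliding near conical points with the two-scale $\delta_0,\delta_1$ carefully, since the curvature-induced factors $1/d_{\gVc}$ change the optimization.
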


The strategy is to optimize the construction of adapted sitting\index{Quasimode!Sitting} or sliding\index{Quasimode!Sliding} quasimodes by taking actually advantage of the decaying properties of AGE's $\Psi^\dx$ associated with the minimal energy $\sE(\bB,\Omega)$. 
In fact, our proof of the $h^{11/10}$ or $h^{5/4}$ upper bounds as done in Chapter \ref{sec:up} weakly uses the exponential decay\index{Exponential decay} of generalized eigenfunctions in some directions. It would also work with purely oscillating generalized eigenfunctions. 
Now the proof of the $h^{9/8}$ or $h^{4/3}$ upper bound makes a more extensive use of fine properties of the model problems: First, the decay properties of admissible generalized eigenvectors, and second, the Lipschitz 
regularity of the ground state energy depending on the magnetic field, cf.\ Lemma \ref{L:Elip}. 

The method depends on the number $k$ of directions in which $\Psi^\dx$ has exponential decay, namely whether we are in situation (G1), (G2) or (G3). Indeed, situation (G3) is already handled in Theorem \ref{T:generalUB} \emph{(d)} and we have already obtained a better estimate in this case. So it remains situations (G1) and (G2) which are considered in Section \ref{SS:G1} and \ref{SS:G2}, respectively. 

Like for Theorem \ref{T:generalUB} we start from suitable AGE's $\Psi^\dx$ and construct sitting or sliding quasimodes adapted to the geometry. In comparison with the proof of Theorem \ref{T:generalUB}, the strategy is to improve step [c] that consists in the linearization of the magnetic potential, see Section \ref{SS:QM} and Figure~\ref{F:QM}: We take more precisely advantage of the decaying property of the AGE $\Psi^\dx$, choosing coordinates in which $\Psi^\dx$ takes the form of reference, as listed in Table \ref{T:age}. Then we adopt different strategies depending on whether we are in situation (G1) or (G2): The improvement relies on a Feynman-Hellmann formula\index{Feynman-Hellmann formula} for (G1), and a refined Taylor expansion of the potential for (G2) 

We recall that $\bx_{0}\in \overline{\Omega}$ is a point such that 
$\En(\bB_{\bx_{0}}\ee,\Pi_{\bx_{0}})= \sE(\bB \ee,\Omega)$.
Theorem \ref{th:dicho} and Remark~\ref{rem:chaine} provide the existence  
of a singular chain $\dx$ that satisfies
$$
\sE(\bB \ee,\Omega) = \En(\bB_{\bx_{0}},\Pi_{\bx_{0}})=
\En(\bB_{\dx},\Pi_{\dx})< \seE(\bB_{\dx},\Pi_{\dx}).
$$ 
We now split our analysis according to the two geometric configurations (G1) and (G2):
 \begin{itemize}
\item[(G1)] $\Pi_{\dx}$ is a half-space and $\bB_{\bx_{0}}$ is tangent to the boundary,
{\em cf.} Row 1 of Table \ref{T:age}.
\item[(G2)] We are in one of the following situations:
\begin{itemize}
\item $\Pi_{\dx}=\R^3$,
{\em cf.} Row 2 of Table \ref{T:age},
\item $\Pi_{\dx}$ is a half-space, $\bB_{\bx_{0}}$ is neither tangent nor normal to $\partial\Pi_{\dx}$,
{\em cf.} Row 3 of Table \ref{T:age},
\item $\Pi_{\dx}$ is a wedge,
{\em cf.} Row 4 of Table \ref{T:age}.
\end{itemize}
\end{itemize}

In each configuration, the estimates concerning the constructed quasimodes depend on the length $\nu$ of the chain $\dx$ and on whether $\bx_0$ is a conical point or not. The relevant categories of quasimodes are qualified as \emph{sitting} ($\nu=1$), \emph{hard sliding}\index{Quasimode!Sitting} ($\nu=2$, $\bx_0$ conical point), \emph{soft sliding} ($\nu=2$, $\bx_0$ not a conical point)\index{Quasimode!Sliding}, and \emph{doubly sliding}\index{Quasimode!Doubly sliding} ($\nu=3$), see Section \ref{SS:QM}.

\section{(G1) One direction of exponential decay}  
\label{SS:G1}
In situation (G1) the generalized eigenfunction has exponential decay in one variable $z$. The upper bounds \eqref{E:Qd1} and \eqref{E:Qd2} are obtained by a Cauchy-Schwarz inequality. We are going to improve them, going back to the identity \eqref{eq:diffAA'} and using a Feynman-Hellmann formula to simplify the cross term in \eqref{eq:diffAA'}.

In situation (G1) $\Pi_{\dx}$ is a half-space and $\bB_{\dx}$ is tangent to its boundary. 
Denote by $(\by,z)=(y_1,y_2,z)\in \R^2\times \R_{+}$ a system of coordinates of $\Pi_{\dx}$ such that $\bB_{\dx}$ is tangent to the $y_{2}$-axis. In these coordinates, the magnetic field $\bB_{\dx}$ writes $(0,b,0)$.

In the rest of this proof, we will assume without restriction that $b=1$. Indeed, once quasimodes are constructed for $b=1$, Lemmas \ref{lem.dilatation} and \ref{lem:sense} allow to convert them into quasimodes for any $b$. Thus we have $\Lx=\Theta_0$, {\em cf.} Row 1 of Table \ref{T:age}.

The principle of the quasimode construction is to replace the last relation \eqref{eq:psih} $\phihX{\nu} = \chi_{h}\Psi^\dx_h$ with the new relation
\begin{equation}
\label{eq:psihG1}
   \phihX{\nu} = \udiffeo_*\circ\diffeoZ^F_h(\chiboxh\uPsi_h)
\end{equation}
where $\udiffeo$ is the rotation $\bx\mapsto\bx^{\natural}:=(\by,z)$ that maps $\Pi_{\dx}$ onto the reference half-space $\R^2\times \R_{+}$, 
the function $\chiboxh$ is the cut-off in tensor product form (here for simplicity we denote $\underline\chi_{R}$ by $\chi$) defined as
\begin{equation}
\label{eq:chiboxh}
   \chiboxh(\by,z) = \chi\Big(\frac{|\by|}{h^\delta}\Big)\,\chi\Big(\frac{z}{h^\delta}\Big)
\end{equation}
$\diffeoZ^F_h$ is a change of gauge\index{Gauge transform} and $\uPsi_h$ a canonical generalized eigenvector defined as follows.

The canonical reference potential (see Row 1 of Table \ref{T:age})
\begin{equation}\label{eq.uA}
\uA(\by,z)=(z,0,0),
\end{equation}
is such that $\curl \uA=(0,1,0)$. 
We know (see Section \ref{s:age}) that the function
\begin{equation}\label{eq.uPsi}
 \uPsi_{h}(\by,z):=
 \re^{-i\sqrt{\Theta_{0}}\,y_{1}/\sqrt{h}} \,  \Phi\Big(\frac{z}{\sqrt{h}}\Big)
 \end{equation}
is a generalized eigenvector of $H_{h}(\uA,\R^2\times\R_{+})$ for the value $h\Theta_0$. Here $\Phi$ is a normalized eigenvector associated with the first eigenvalue of the de Gennes operator\index{de Gennes operator} $-\partial_{z}^2+(z-\sqrt{\Theta_{0}})^2$. By identity \eqref{eq:rhoh} and Lemma \ref{lem:rhoh} we obtain the cut-off estimate\index{Cut-off}
\begin{equation}
\label{eq:psinuG1}
   \QR_{h}[\uA,\R^2\times\R_{+}](\chiboxh\uPsi_h) =
   h\Theta_0 + \cO(h^{2-2\delta}) = h\Lx + \cO(h^{2-2\delta}).
\end{equation}
Let $\rJ$ be the matrix associated with $\udiffeo$. In variables $\bx^\natural$, the tangent potential $\bA_{\dx}$ is transformed into the potential $\bA^\natural_\bfz$
\begin{equation}
\label{eq:Anat0}
   \bA^\natural_\bfz(\bx^\natural) = \rJ^\top(\bA_{\dx}(\bx)),
\end{equation}
that satisfies
\[
   \curl\bA^\natural_\bfz = \curl\uA \,.
\]
Since $\uA$ and $\bA^\natural_\bfz$ are both linear,  
there exists a homogenous polynomial function of degree two $F^\natural$ such that 
\begin{equation}\label{eq.uAnat}
\bA^\natural_\bfz-\nabla_{\natural} F^\natural= \uA.
\end{equation} 
Therefore, $\re^{-iF^\natural/h}\uPsi_{h}$ is an admissible generalized eigenvector for 
$\OP_{h}(\bA^\natural_\bfz,\R^2\times \R_{+})$ associated with the value $h\Lx$. 

\subsection{Sitting quasimodes}\index{Quasimode!Sitting}
This is the case when $\nu=1$ and $\dx=(\bx_0)$. Thus $\Pi_{\bx_0}$ coincides with $\Pi_\dx$. 
We keep relation \eqref{eq:phihX0} linking $\phihX{0}$ to $\phihX{1}$ and $\phihX{1}$ is now defined by the formula
\begin{equation}
\label{D:spmG11}
\phihX{1}(\bx)=\re^{-iF^\natural(\bx^\natural)/h} \chiboxh(\by,z)\uPsi_h(\by,z)
= \re^{-iF^\natural(\bx^\natural)/h} \upsi_h(\by,z),
\qquad \forall \bx\in\Pi_{\dx}\,,
\end{equation}
Here we set for shortness
\[
   \upsi_h:=\chiboxh\uPsi_h \quad\mbox{and}\quad \cVboxh := \supp(\chiboxh).
\]
Let $\rJ$ be the matrix associated with $\udiffeo$. Let $\bA^\natural$ be the magnetic potential associated with $\bA^{\bx_0}$ in variables $\bx^\natural$: 
\begin{equation}
\label{E:Anat1}
   \bA^\natural(\bx^\natural) = \rJ^\top\big(\bA^{\bx_{0}}(\bx)\big)
   \quad \forall\bx\in\cV_{\bx_0}\,.
\end{equation}
Then $\bA^\natural_{\bfz}$ \eqref{eq:Anat0} is its linear part at $\bfz$.

We have
\begin{align}
\label{eq:G1qm}
  \QR_{h}[\bA^{\bx_0},\Pi_{\bx_{0}}](\phihX{1}) 
  &=   \QR_{h}[\bA^\natural,\R^2\times \R_{+}](\re^{-iF^\natural/h}\upsi_h) \\
  &=   \QR_{h}[\bA^\natural-\nabla F^\natural,\R^2\times \R_{+}](\upsi_h). 
  \nonumber 
\end{align}
Now we  apply \eqref{eq:diffAA'} with $\bA=\bA^\natural-\nabla F^\natural$ and $\bA'=\uA$.  Using \eqref{eq.uAnat} we find $\bA-\bA'=\bA^\natural-\bA^\natural_{\bfz}$, and write, instead of \eqref{E:Qd1}
\begin{align} 
\label{eq:G1refest}
  q_{h}[\bA^\natural-\nabla F^\natural,\R^2\times \R_{+}](\upsi_{h}) 
  &=   q_{h}[\uA,\R^2\times \R_{+}](\upsi_{h})\\
  &\hskip-3em  +2  \Re\int_{\R^2\times \R_{+}} (-ih\nabla+\uA)\upsi_{h}(\bx^\natural)\cdot 
  (\bA^\natural-\bA^\natural_{\bfz})(\bx^\natural)\,\overline{\upsi_{h}(\bx^\natural})\,
  \rd\bx^\natural \label{E:termcrois}\\
  &\hskip-3em  +\|(\bA^\natural-\bA^\natural_{\bfz})\upsi_{h}\|^2.
  \label{eq.RayleighG1}
\end{align}
As in Section \ref{SS.phihX0} [e1], we bound from above the term \eqref{eq.RayleighG1}  using Lemma~\ref{lem.TaylorA}
\begin{equation}
\label{eq:G1carr}
   \|(\bA^\natural-\bA_{\bfz}^\natural)\upsi_{h}\|^2 \le
   C(\Omega)\|\bA^\natural\|^2_{W^{2,\infty}( \cVboxh)} \,
   h^{4\delta}\, \|\upsi_{h}\|^2 .
\end{equation} 

Let us now deal with the term \eqref{E:termcrois}. We calculate $(-ih\nabla+\uA)\upsi_{h}$ 
using \eqref{eq.uPsi}:
\begin{multline*}
   (-ih\nabla+\uA)\upsi_{h}(\bx^\natural)
   = \re^{-i\sqrt{\Theta_{0}}\,y_{1}/\sqrt{h}} \,\times \\
   \left\{\chi\big(\tfrac{|\by|}{h^{\delta}}\big)\ 
   \chi\big(\tfrac{z}{h^{\delta}}\big)
   \begin{bmatrix}
   (z-\sqrt{h\Theta_{0}}\ee)\,\Phi\big(\tfrac{z}{\sqrt{h}}\big) \\[5pt]
   0\\[5pt]
   -i\sqrt h \,\Phi'\big(\tfrac{z}{\sqrt{h}}\big) 
   \end{bmatrix}
   -ih^{1-\delta}  
\begin{bmatrix}
   \frac{y_{1}}{|\by|}\chi'(\tfrac{|\by|}{h^{\delta}})\ 
\chi(\tfrac{z}{h^{\delta}})\\[5pt]
   \frac{y_{2}}{|\by|} \chi'(\tfrac{|\by|}{h^{\delta}})\ 
\chi(\tfrac{z}{h^{\delta}})\\[5pt]
   \chi(\tfrac{|\by|}{h^{\delta}})\ \chi'(\tfrac{z}{h^{\delta}})
\end{bmatrix}
   \Phi\big(\tfrac{z}{\sqrt{h}}\big) \right\}.
\end{multline*}
Since $\Phi$ and $\chi$ are real valued functions, the term \eqref{E:termcrois} reduces to a single term: 
\begin{align}\label{eq.termecrois}
\Re\int_{\R^2\times \R_{+}} (-ih\nabla&+\uA)\upsi_{h}(\bx^\natural)\cdot (\bA^\natural-\bA^\natural_{\bfz})(\bx^\natural)\overline{\upsi_{h}(\bx^\natural})\,\rd\bx^\natural \\
   &=\int_{\R^2\times \R_{+}} (z-\sqrt{h\Theta_{0}}\ee)\ 
   |\upsi_{h}(\bx^\natural)|^2 A^{(\mathrm{rem},2)}_{1}(\bx^\natural) \, \rd\bx^\natural
   \nonumber\\
   &=\int_{\R^2\times \R_{+}}(z-\sqrt{h\Theta_{0}}\ee)\,
   \left|\Phi\big(\tfrac{z}{\sqrt{h}}\big)\right|^2
   \ \left|\chi\big(\tfrac{|\by|}{h^{\delta}}\big)\right|^2\ 
   \left|\chi\big(\tfrac{z}{h^{\delta}}\big)\right|^2
   A^{(\mathrm{rem},2)}_{1}(\bx^\natural)\,\rd\bx^\natural,\nonumber
\end{align}
where $A^{(\mathrm{rem},2)}_{1}$ denotes the first component of $\bA^\natural-\bA_{\bfz}^\natural$.
We write 
\begin{equation}
\label{eq:Arem2}
   A^{(\mathrm{rem},2)}_{1}(\bx^\natural)=
   P_{1}^{(2)}(\by)+R_{1}^{(2)}(\bx^\natural)+A^{(\mathrm{rem},3)}_{1}(\bx^\natural),
\end{equation}
where $A^{(\mathrm{rem},3)}_{1}$ is the Taylor remainder of degree $3$ of the first component of $\bA^\natural$ at $\bfz$, whereas $P_{1}^{(2)}(\by)+R_{1}^{(2)}(\bx^\natural)$ is a representation of its quadratic part in the form
$$
   P_{1}^{(2)}(\by)=a_{1}y_{1}^2+a_{2}y_{2}^2+a_{3}y_{1}y_{2}\quad\mbox{and}\quad
   R_{1}^{(2)}(\bx^\natural)=b_{1} z^2+b_{2}z y_{1}+b_{3} zy_{2}.
$$
As in \eqref{eq:rem3}, we have:
$$
   \|A^{(\mathrm{rem},3)}_{1}\|_{L^\infty(\cVboxh)}\leq 
    C\|\bA^{\natural}\|_{W^{3,\infty}( \cVboxh)} \,h^{3\delta},
$$
leading to, with the help of the variable change $Z=z/\sqrt h$ and the exponential decay of $\Phi$:
\begin{equation}
\label{eq:G1A3}
   \left|\int_{\R^2\times \R_{+}} (z-\sqrt{h\Theta_{0}}\ee)\,|\upsi_{h}(\bx^\natural)|^2\, 
    A^{(\mathrm{rem},3)}_{1}(\bx^\natural)\,\rd\bx^\natural\right|
   \leq C\|\bA^{\natural}\|_{W^{3,\infty}( \cVboxh)} h^{\frac12+3\delta} \|\upsi_{h}\|^2 \ .
\end{equation}
Likewise, combining the exponential decay\index{Exponential decay} of $\Phi$, the change of variable $Z=z/\sqrt h$ and the localization of the support\index{Cut-off} in balls of size $C h^\delta$, we deduce
\begin{equation}
\label{eq.G1majo1}
   \left| \int_{\R^2\times \R_{+}} (z-\sqrt{h\Theta_{0}}\ee)\,|\upsi_{h}(\bx^\natural)|^2\, 
    R_{1}^{(2)}(\bx^\natural)\,\rd\bx^\natural\right|
   \leq C \|\bA^{\natural}\|_{W^{2,\infty}( \cVboxh)}
   h^{\min(\frac32,1+\delta)} \|\upsi_{h}\|^2 \ .
\end{equation}
Let us now deal with the term involving $\by\mapsto P_{1}^{(2)}(\by)$. 
Due to a Feynman-Hellmann formula\index{Feynman-Hellmann formula} applied to the de Gennes operator\index{de Gennes operator} $\DG\tau$ at $\tau= -\sqrt{\Theta_0}$ (cf.\ \cite[Lemma A.1]{HeMo01}) we find by the scaling $z\mapsto z/\sqrt{h}$ the identity
$$
   \int_{\R_{+}}(z-\sqrt{h\Theta_{0}}\ee)\,\left|\Phi\big(\tfrac{z}{\sqrt{h}}\big)\right|^2
   \, \rd z = 0 \,. 
$$
Thus we can write 
\begin{align*}
   \int_{\R^2\times \R_{+}} (z-&\sqrt{h\Theta_{0}}\ee)\, |\upsi_{h}(\bx^\natural)|^2\, 
   P_{1}^{(2)}(\by) \,\rd\bx^\natural
\\
   &= \int_{\R^2} P_{1}^{(2)}(\by)\, 
   \left|\chi\big(\tfrac{|\by|}{h^{\delta}}\big)\right|^2 \!\rd\by \ \, 
   \int_{z\in\R_{+}} (z-\sqrt{h\Theta_{0}}\ee)\, 
   \left|\Phi\big(\tfrac{z}{\sqrt{h}}\big)\right|^2 \chi\big(\tfrac{z}{h^{\delta}}\big)^2
   \,\rd z
\\
   &= \int_{\R^2} P_{1}^{(2)}(\by)\, 
   \left|\chi\big(\tfrac{|\by|}{h^{\delta}}\big)\right|^2 \!\rd\by \ \, 
   \int_{z\in\R_{+}} (z-\sqrt{h\Theta_{0}}\ee)\, 
   \left|\Phi\big(\tfrac{z}{\sqrt{h}}\big)\right|^2 
   \left(\chi\big(\tfrac{z}{h^{\delta}}\big)^2 - 1\right)
   \,\rd z.
\end{align*}
The support of the integral in $z$ is contained in $z\ge Rh^\delta$ with $\delta<\frac12$.
Therefore, using once more the changes of variables $\mathbf Y=\by/h^\delta$ and $Z=z/\sqrt{h}$, we find:
$$
   \left| \int_{\R^2\times\R_+} (z-\sqrt{h\Theta_{0}}\ee)\, \ |\upsi_{h}(\bx^\natural)|^2 
   P_{1}^{(2)}(\by) \,\rd\bx^\natural \right|
  \leq C \|\bA^\natural\|_{W^{2,\infty}( \cVboxh)} 
  h^{\frac12+4\delta}\re^{-ch^{\delta-1/2}}.
$$
Since $\|\upsi_{h}\|^2\geq C h^{\frac12+2\delta}$ (see \eqref{eq.minpsih}), 
this leads to: 
\begin{equation}\label{eq.G1majo2}
   \left| \int_{\R^2\times\R_+} (z-\sqrt{h\Theta_{0}}\ee)\, \ |\upsi_{h}(\bx^\natural)|^2 
   P_{1}^{(2)}(\by) \,\rd\bx^\natural \right|
  \leq C \|\bA^\natural\|_{W^{2,\infty}( \cVboxh)} 
  \re^{-ch^{\delta-1/2}}\,\|\upsi_{h}\|^2\ . 
\end{equation}
Collecting \eqref{eq:G1A3}, \eqref{eq.G1majo1}, and \eqref{eq.G1majo2} in \eqref{E:termcrois}, we find the upper bound 
\begin{multline}
\label{eq:G1crois}
   \left|\Re\int_{\R^2\times \R_{+}} (-ih\nabla +\bA_{\bfz}^\natural)\upsi_{h}(\bx^\natural) 
   \cdot (\bA^\natural-\bA_{\bfz}^\natural)\overline{\upsi_{h}(\bx^\natural)}
   \,\rd\bx^\natural\right|\\
   \leq C \Big(
   \|\bA^\natural\|_{W^{3,\infty}( \cVboxh)} \,h^{\frac 12+3\delta} +
   \|\bA^\natural\|_{W^{2,\infty}( \cVboxh)} \,h^{1+\delta} 
   \Big)
   \|\upsi_{h}\|^2 \ . 
\end{multline}
Returning to \eqref{eq:G1qm} via \eqref{eq:G1refest} and combining \eqref{eq:G1crois} with \eqref{eq:G1carr}, we deduce
\begin{multline*}
  \QR_{h}[\bA^{\bx_0},\Pi_{\bx_{0}}](\phihX{1})
  \leq \QR_{h}[\uA,\R^2\times \R_{+}](\upsi_{h}) \\ +
  C\Big( \|\bA^\natural\|_{W^{3,\infty}( \cVboxh)}\, h^{\frac 12+3\delta} + 
  \|\bA^\natural\|_{W^{2,\infty}( \cVboxh)} \,h^{1+\delta}  +
  \|\bA^\natural\|_{W^{2,\infty}( \cVboxh)}^2 \,h^{4\delta} \Big).
\end{multline*}
Inserting the cut-off error  \eqref{eq:psinuG1} for $q_{h}[\uA,\R^2\times\R_+](\upsi_{h})$ we obtain
\begin{multline}
\label{eq:G1sitt}
  \QR_{h}[\bA^{\bx_0},\Pi_{\bx_{0}}](\phihX{1})
  \leq h\Lx + 
  C\,h^{2-2\delta}\\ + 
  C\Big( \|\bA^\natural\|_{W^{3,\infty}( \cVboxh)}\, h^{\frac 12+3\delta} + 
  \|\bA^\natural\|_{W^{2,\infty}( \cVboxh)} \,h^{1+\delta}  +
  \|\bA^\natural\|_{W^{2,\infty}( \cVboxh)}^2 \,h^{4\delta} \Big).
\end{multline}
Using Lemma~\ref{L:d2A} for case (i) we deduce the uniform bound for the derivatives of the potential
\[
   \|\bA^\natural\|_{W^{3,\infty}( \cVboxh)} \leq
   C\|\bA^{\bx_0}\|_{W^{3,\infty}(\cV_{\bx_0})} \leq C'\|\bA\|_{W^{3,\infty}(\Omega)}.
\]
Thus, we deduce from \eqref{eq:G1sitt}
\begin{equation*}
  \QR_{h}[\bA^{\bx_0},\Pi_{\bx_{0}}](\phihX{1})
  \leq h\Lx + 
  C(\Omega)(1+\| \bA \|_{W^{3,\infty}(\Omega)}^2)
  (h^{2-2\delta}+h^{1+\delta} +h^{\frac 12+3\delta} + h^{4\delta}).
\end{equation*}
The quasimode $\phihX{0}$ on $\Omega$ being still defined by \eqref{eq:phihX0}, we deduce from \eqref{E:Qc1} with $r^{[1]}_h=\cO(h^\delta)$ the final estimate
\begin{equation}\label{eq:Ray45}
  \QR_{h}[\bA,\Omega](\phihX{0})
   \leq h\Lx+C(\Omega)(1+\| \bA \|_{W^{3,\infty}(\Omega)}^2)
  (h^{2-2\delta}+h^{1+\delta}+h^{\frac12+3\delta}+h^{4\delta}) \, .
\end{equation}
Choosing $\delta=\frac{1}{3}$ we optimize remainders and deduce the upper bound \eqref{eq:above1sup} in situation (G1)--sitting.

\subsection{Hard sliding}\index{Quasimode!Sliding} This is the case when $\nu=2$ and $\bx_0\in\gVc$ ({\it i.e.}, $\bx_0$ is a conical point). So $\dx=(\bx_0,\bx_{1})$ and $\Pi_{\bx_0,\bx_1}$ coincides with $\Pi_\dx$. We keep relations \eqref{eq:phihX0} and \eqref{eq.c2} linking $\phihX{0}$ to $\phihX{1}$ and $\phihX{1}$ to $\phihX{2}$, respectively, and $\phihX{2}$ is now defined by the formula
\begin{equation}
\label{D:spmG12}
\phihX{2}(\bx)=\re^{-iF^\natural(\bx^\natural)/h} \chiboxh(\by,z)\uPsi_h(\by,z)
= \re^{-iF^\natural(\bx^\natural)/h} \upsi_h(\by,z),
\qquad \forall \bx\in\Pi_{\dx}\,,
\end{equation}
and $\bA^\natural$ is the magnetic potential associated with $\bA^{\dec_1}$ (step [a2]) in variables $\bx^\natural$,
\begin{equation}
\label{E:Anat2}
   \bA^\natural(\bx^\natural) = \rJ^\top\big(\bA^{\dec_1}(\bx)\big)
   \quad \forall\bx\in\cV_{\dec_1}\,.
\end{equation}
 We recall that $\Pi_{\dec_{1}}=\Pi_{\dx}$. We have, instead of \eqref{eq:G1qm}:
\begin{equation}
\label{eq:G1qm2}
  \QR_{h}[\bA^{\dec_{1}},\Pi_{\dec_{1}}](\phihX{2}) 
  =   \QR_{h}[\bA^\natural-\nabla F^\natural,\R^2\times \R_{+}](\upsi_h), 
\end{equation}
and \eqref{E:Qd2} is replaced by the analysis of \eqref{eq:G1refest}--\eqref{eq.RayleighG1} which goes along the same lines as before, ending up at, instead of \eqref{eq:G1sitt}
\begin{multline}
\label{eq:G1sitt2}
  \QR_{h}[\bA^{\dec_1},\Pi_{\dec_1}](\phihX{2})
  \leq h\Lx + 
  C\,h^{2-2\delta}\\ + 
  C\Big( \|\bA^\natural\|_{W^{3,\infty}( \cVboxh)}\, h^{\frac 12+3\delta} + 
  \|\bA^\natural\|_{W^{2,\infty}( \cVboxh)} \,h^{1+\delta}  +
  \|\bA^\natural\|_{W^{2,\infty}( \cVboxh)}^2 \,h^{4\delta} \Big).
\end{multline}
But now we have to use Lemma~\ref{L:d2A} for case (ii) after specifying the different scales: As in Section \ref{SS.phihX1} step [e2] (b) we take $|\dec_1|=d^{[1]}_h=\cO(h^{\de0})$ and $\delta=\de0+\de1$, so the support of $\upsi_{h}$ is contained in a ball of radius $r^{[2]}_h=\cO(h^{\de0+\de1})$. The radius $r^{[1]}_h$ is a $\cO(h^{\de0})$. By using Remark \ref{R:dKu0},  we can see that \eqref{E:D2Au0} generalizes   to higher derivative of $\bA^{\bv_{1}}$, and thus we may estimate the  derivatives of the potential after change of variables: 
\begin{equation}\label{E:AWl}
   \|\bA^\natural\|_{W^{\ell,\infty}( \cVboxh)} \leq
   C\|\bA^{\dec_1}\|_{W^{\ell,\infty}(\cB(\bfz,r^{[2]}_h))} \leq 
   C' h^{-(\ell-1)\de0}\|\bA\|_{W^{\ell,\infty}(\Omega)},\quad \ell=2,3,
\end{equation}
and \eqref{eq:G1sitt2} provides
\begin{multline*}
  \QR_{h}[\bA^{\dec_1},\Pi_{\dec_1}](\phihX{2})
  \leq h\Lx \\
   + C( 1+\|\bA\|_{W^{3,\infty}(\Omega)}^2) \,
   \Big( h^{2-2\de0-2\de1} + h^{-2\de0} h^{\frac 12+3\de0+3\de1} +
   h^{-\de0} h^{1+\de0+\de1} +
  h^{-2\de0} h^{4\de0+4\de1} \Big).
\end{multline*}
Combining the above inequality with \eqref{E:Qg1} that bounds $\QR_h[\bA,\Omega](\phihX0) -\QR_h[\bA^{\bx_0}_{\bfz},\Pi_{\bx_0}](\phihX1)$ and \eqref{E:Qc2} that bounds $\QR_h[\bA^{\bx_0}_{\bfz},\Pi_{\bx_0}](\phihX1) - \QR_h[\bA^{\dec_1},\Pi_{\bx_0,\bx_1}](\phihX2)$ we find
\begin{multline}
\label{eq:G1sli}
  \QR_{h}[\bA,\Omega](\phihX{0})
  \leq h\Lx    + C( 1+\|\bA\|_{W^{2,\infty}(\Omega)}^2) \,
   \Big(    h^{1+\de0} + h^{\frac 12+2\de0} +
   h^{4\de0} + h^{1+\de1}\Big) \\
   + C( 1+\|\bA\|_{W^{3,\infty}(\Omega)}^2) \,
   \Big( h^{2-2\de0-2\de1} + h^{\frac 12+\de0+3\de1} +
   h^{1+\de1} +
   h^{2\de0+4\de1} \Big).
\end{multline}
Choosing $\de0=\frac{5}{16}$ and $\de1=\frac{1}{8}$, we deduce the upper bound \eqref{eq:above3sup} in situation (G1)--hard sliding.

\subsection{Soft sliding}\label{SS:SS} This is the case when $\nu=2$ and $\bx_0$ is \emph{not a conical point}. We keep relations \eqref{eq:phihX0} and \eqref{eq.c2} linking $\phihX{0}$ to $\phihX{1}$ and $\phihX{1}$ to $\phihX{2}$, respectively, and $\phihX{2}$ is defined by formula \eqref{D:spmG12} as in the hard sliding case. But now the analysis is different because we can take advantage of the fact that the change of variables $\diffeo^{\dec_1}$ is the translation $\bx\mapsto\bx-\dec_1$. Concatenating formulas \eqref{eq.c2} and \eqref{D:spmG12}, we obtain (recall that $\udiffeo$ is the rotation $\bx\mapsto\bx^\natural$)
\begin{equation}
\label{D:spmG13}
   \phihX1 = \diffeoZ^{\dec_{1}}_h \circ \diffeo_*^{\dec_1}
   \circ \udiffeo_* \Big(\re^{-iF^\natural/h} \upsi_h\Big).
\end{equation}
Our aim is a direct evaluation of $\QR_{h}[\bA^{\bx_0},\Pi_{\bx_{0}}](\phihX{1})$, based on the above representation. Here we take the potential $\bA^\natural$ in the canonical half-space $\R^2\times\R_+$ as \eqref{E:Anat1}. Let us set $\dec_1^\natural:=\udiffeo\dec_1$. Then there holds the following sequence of identities, cf.\ \eqref{eq:G1qm} for the last one,
\[
\begin{aligned}
   \QR_{h}[\bA^{\bx_0},\Pi_{\bx_{0}}](\phihX{1}) &= 
   \QR_{h}[\bA^{\bx_0}-\bA^{\bx_0}_\bfz(\dec_1),\Pi_{\dx}]\Big(\diffeo_*^{\dec_1}
   \circ \udiffeo_* \big(\re^{-iF^\natural/h} \upsi_h\big)\Big)\\
&=
   \QR_{h}[\bA^{\bx_0}(\cdot+\dec_1)-\bA^{\bx_0}_{\bfz}(\dec_1),\Pi_{\dx}]\Big(
   \udiffeo_* \big(\re^{-iF^\natural/h} \upsi_h\big)\Big)
\\ &=
   \QR_{h}[\bA^{\natural}(\cdot+\dec_1^{\natural})-\bA_{\bfz}^{\natural}(\dec_1^{\natural}),
   \R^2\times\R_+]\big(\re^{-iF^\natural/h} \upsi_h\big)
\\[0.6ex] &=
   \QR_{h}[\bA^{\natural}(\cdot+\dec_1^{\natural})-\bA_{\bfz}^{\natural}(\dec_1^{\natural}) 
   - \nabla F^\natural,
   \R^2\times\R_+](\upsi_{h}).
\end{aligned}
\]
For the calculation of the potential, we check that
\[
\begin{aligned}
   \bA^{\natural}(\cdot+\dec_1^{\natural})-\bA_{\bfz}^{\natural}(\dec_1^{\natural}) 
   - \nabla F^\natural &= 
   \bA^{\natural}(\cdot+\dec_1^{\natural}) - \bA_{\bfz}^{\natural}(\cdot+\dec_1^{\natural})
   + \bA_{\bfz}^{\natural}(\cdot+\dec_1^{\natural})
   -\bA_{\bfz}^{\natural}(\dec_1^{\natural}) 
   - \nabla F^\natural \\ &=
   \bA^{\natural}(\cdot+\dec_1^{\natural}) - \bA_{\bfz}^{\natural}(\cdot+\dec_1^{\natural})
   + \bA_{\bfz}^{\natural} - \nabla F^\natural \\ &=
   \bA^{\natural}(\cdot+\dec_1^{\natural}) - \bA_{\bfz}^{\natural}(\cdot+\dec_1^{\natural})
   + \uA \,.
\end{aligned}
\]
Then, instead of \eqref{eq:G1refest}-\eqref{eq.RayleighG1} we obtain that $q_{h}[\bA^{\natural}(\cdot+\dec_1^{\natural})-\bA_{\bfz}^{\natural}(\dec_1^{\natural})
  -\nabla F^\natural,\R^2\times\R_+](\upsi_{h})$ is now the sum of the three following terms:
\begin{align*}
  & q_{h}[\uA,\R^2\times\R_+](\upsi_{h})\\
  & \quad +2  \Re\int_{\R^2\times\R_+} (-ih\nabla+\uA)\upsi_{h}(\bx^\natural)\cdot 
  \big(\bA^{\natural}(\bx^\natural+\dec_1^{\natural}) - 
  \bA_{\bfz}^{\natural}(\bx^\natural+\dec_1^{\natural})\big)\,\overline{\upsi_{h}(\bx^\natural})\,
  \rd\bx^\natural \\
  & \quad +\|\big(\bA^{\natural}(\cdot+\dec_1^{\natural}) - 
  \bA_{\bfz}^{\natural}(\cdot+\dec_1^{\natural})\big)\upsi_{h}\|^2.
\end{align*}

Since $|\dec_1|=h^\delta$, the estimate \eqref{eq:G1carr} obviously becomes
\[
   \|(\bA^{\natural}(\cdot+\dec_1^{\natural}) - 
  \bA_{\bfz}^{\natural}(\cdot+\dec_1^{\natural}))\upsi_{h}\|^2 \le
   C(\Omega)\|\bA^\natural\|^2_{W^{2,\infty}(\dec_1^{\natural}+ \cVboxh)} \,
   h^{4\delta}\, \|\upsi_{h}\|^2 .
\]

As for estimates \eqref{eq.termecrois}-\eqref{eq:G1crois} of the crossed term, we may use the fact that the vector $\dir_{1}$ introduced in \eqref{def:tau} belongs to a face of $\Pi_{\bx_{0}}$ (see the prologue of Section~\ref{SS.phihX1}). It is the same for $\dec_1=h^\delta\dir_1$. Therefore $\dec_{1}^\natural$ is tangent to the boundary of $\R^2\times\R_{+}$, it has no component in the $z$ direction and can be written $\dec_{1}^\natural=h^\delta\dir_1^\natural=(h^\delta\bp,0)$ in coordinates $\bx^{\natural}$. We use the same splitting \eqref{eq:Arem2} of the potential, at the point $\bx^\natural+\dec_{1}^\natural$
\[
   A^{(\mathrm{rem},2)}_{1}(\bx^\natural+\dec_{1}^\natural)=
   P_{1}(\by+h^\delta\bp)+R_{1}(\bx^\natural+h^\delta\dir_{1}^\natural)+
   A^{(\mathrm{rem},3)}_{1}(\bx^\natural+h^\delta\dir_{1}^\natural).
\]
Then all estimates \eqref{eq.termecrois}-\eqref{eq:G1crois} of the crossed term are still valid now, replacing the norm in $W^{\ell,\infty}(\supp(\upsi_h))$ by the norm in $W^{\ell,\infty}(\dec^\natural_{1}+\supp(\upsi_h))$ (for $\ell=2,3$). As before we arrive to the upper bound \eqref{eq:Ray45} for the Rayleigh quotient of our quasimode and conclude as in the sitting case.

\subsection{Double sliding}\index{Quasimode!Doubly sliding} This is the case when $\nu=3$. So $\bx_0$ is a conical point. We keep relations \eqref{eq:phihX0} and \eqref{eq.c2} linking $\phihX{0}$ to $\phihX{1}$ and $\phihX{1}$ to $\phihX{2}$, respectively, and $\phihX{2}$ is now defined by the formula
\begin{equation}
\label{D:spmG14}
\phihX{2}(\bx)=
\diffeoZ^{\dec_{2}}_h \circ \diffeo_*^{\dec_2}
   \circ \udiffeo_* \Big(\re^{-iF^\natural/h} \upsi_h\Big).
\end{equation}
and $\bA^\natural$ is the magnetic potential \eqref{E:Anat2} associated with $\bA^{\dec_1}$ (step [a2]) in variables $\bx^\natural$. A reasoning similar to the soft sliding case yields the same conclusion \eqref{eq:G1sli} like in the hard sliding case.

\bigskip
The proof of Theorem \ref{T:sUB} is over in situation (G1).

\section{(G2) Two directions of exponential decay}  \index{Exponential decay}
\label{SS:G2}
In situation (G2) the generalized eigenfunction $\Psi^{\dx}$ has two directions of decay, $z_1$ and $z_2$, leaving one direction $y$ with a purely oscillating character. In this case, we are going to improve the linearization error, namely estimates \eqref{E:Qf1} and \eqref{E:Qf2}: Until now we have used that $\bA^{\bx_{0}}(\bx)-\bA^{\bx_{0}}_{\bfz}(\bx)$ is a $O(|\bx|^2)$. Here, by a suitable phase shift\index{Phase shift} (which corresponds to a change of gauge), we can eliminate from this error the term in $O(|y|^2)$, replacing it by a $O(|y|^3)$. The other terms containing at least one power of $|\bz|$, we can take advantage of the decay of $\Psi^{\dx}$. This phase shift is done by a change of gauge on the last level of construction, that is on the function $\phihX{\nu}$, as in the (G1)-case. The sitting modes will be constructed following exactly this strategy, whereas concerning sliding modes, we have to linearize the potential at a moving point $\dec:=h^{\delta}\dir$, instead of $\bfz$ as previously. Let us develop details now. 
 The quasimode $\phihX{0}$ is still defined on $\Omega$ by formula \eqref{eq:phihX0} $\phihX0 = \diffeoZ^{\bx_0}_h \circ \diffeo_*^{\bx_0}(\phihX1),$ and relations~\eqref{E:Qc1orig0}--\eqref{E:Qc1} are still valid. 

\subsection{Sitting quasimodes}  \index{Quasimode!Sitting}
Here we make an improvement of step [c1], see Figure \ref{F:QM}. Let $\udiffeo$ be the rotation $\bx\mapsto\bx^{\natural}:=(\by,z)$ that maps $\Pi_{\bx_{0}}$ onto the model domain $\R\times\Upsilon$ which equals $\R\times \cS_{\alpha}$, $\R^2\times\R_{+}$ or $\R^3$.
Let $\bA^\natural$ be the magnetic potential associated with $\bA^{\bx_{0}}$ in variables $\bx^\natural$ given by \eqref{E:Anat1} and $\bA^\natural_\bfz$, $\bA^{\bx_{0}}_\bfz(=\bA_{\dx})$ be their linear parts at $\bfz$. 
Applying Lemma \ref{L:d2ell0} in variables $(u_1,u_2,u_3)=(y,z_1,z_2)$ with $\ell=1$ gives us a function $F$ such that $\partial^2_{y}(\bA^\natural-\nabla F)(\bfz) = 0$ leading to the estimates 
\begin{equation}
\label{eq:d2y0est}
   \big|\big(\bA^\natural - \bA^\natural_\bfz - 
   \nabla F\big)(\bx^\natural)\big| 
   \le C(\cV_{\bx_0})\, \big(\|\bA^{\bx_{0}}\|_{W^{2,\infty}(\cV_{\bx_{0}})}\big(|y||\bz| + |\bz|^2\big) +\|\bA^{\bx_{0}}\|_{W^{3,\infty}(\cV_{\bx_{0}})}|y|^3 \big)\,.
\end{equation}
We define our new quasimode by
\begin{equation}
\label{eq:qmX}
   \phihX{1}  = 
   \udiffeo (\re^{-iF/h} \upsi_{h}), 
\quad \mbox{in} \quad \Pi_{\bx_0}, 
\end{equation}
with $\upsi_{h}$ a given function in $\R\times\Upsilon$. 
Using \eqref{E:chgG} and \eqref{eq:diffAA'}, we have
\begin{equation}\label{E:QR1G2}
\QR_{h}[\bA^{\bx_{0}},\Pi_{\dx}](\phihX{1}) 
= \QR_{h} [\bA^\natural-\nabla F,\R\times\!\Upsilon](\upsi_{h})
\leq \mu_{h}^{[1]}+ 2 \hat a_{h}^{[1]}\ \sqrt{\mu_{h}^{[1]}} + (\hat a_{h}^{[1]})^2,
\end{equation}
where we have set, by analogy with \eqref{E:Qe1},
\begin{equation}\label{eq:hata}
   \mu^{[1]}_{h} = \QR_h[\bA^{\natural}_{\bfz},\R\times\!\Upsilon](\upsi_{h})
   \quad\mbox{and}\quad
   \hat a^{[1]}_{h}=\frac{\|(\bA^\natural -\bA^\natural_{\bfz} - \nabla F)\upsi_{h}\|} {\|\upsi_{h}\|}. 
\end{equation}
We set  $\upsi_{h}=\chi_h\, \uPsi_h$ where $\uPsi$ is the admissible generalized eigenvector of $\OP(\bA^\natural_{\bfz},\R\times\Upsilon)$ in natural variables as introduced in \eqref{eq:age1} and $\uPsi_{h}$ its scaled version. 

The following Lemma provides an improvement when compared to Lemmas~\ref{lem.TaylorA}--\ref{L:d2A}, due to estimates \eqref{eq:d2y0est} which replace \eqref{E:taylorA}. 

\begin{lemma}\label{lem:ahsuper}
With the previous notation, there exist constants $C(\Omega)>0$ and $h_{0}>0$ such that for all $h\in (0,h_{0})$
\begin{equation}\label{eq:ah}
   \hat a_{h}^{[1]} = 
  \frac{\|(\bA^\natural -\bA^\natural_{\bfz} - \nabla F)\upsi_{h}\|}
   {\|\upsi_{h}\|}
   \leq C(\Omega)
   (\|\bA^\natural\|_{W^{2,\infty}(\cVboxh)}(h+h^{\frac12+\de0})+\|\bA^\natural\|_{W^{3,\infty}(\cVboxh)} h^{3\de0}) .
\end{equation}
\end{lemma}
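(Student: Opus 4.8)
\textbf{Proof plan for Lemma~\ref{lem:ahsuper}.}
The goal is to bound $\hat a_{h}^{[1]} = \|(\bA^\natural -\bA^\natural_{\bfz} - \nabla F)\upsi_{h}\|/\|\upsi_{h}\|$, where $\upsi_h = \chi_h\,\uPsi_h$ is the scaled, truncated admissible generalized eigenvector in variables $(\by,\bz)=(y,z_1,z_2)\in\R\times\Upsilon$, and $F$ is the gauge function produced by Lemma~\ref{L:d2ell0} so that the pointwise estimate \eqref{eq:d2y0est} holds. The plan is to square the numerator, split it into three pieces according to the three terms on the right-hand side of \eqref{eq:d2y0est}, and estimate each by passing to the scaled variables $\mathbf Y = y/h^{1/2}$, $\mathbf Z = \bz/h^{1/2}$ and exploiting the exponential decay of $\uPsi$ (that is, of $\Phi$) in the $\bz$-directions together with the support localization $|\bx^\natural|\lesssim h^{\de0}$ of $\chi_h$.

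First I would write
\[
   \|(\bA^\natural -\bA^\natural_{\bfz} - \nabla F)\upsi_{h}\|^2 \le
   C(\cV_{\bx_0})\,\big(\|\bA^{\bx_0}\|^2_{W^{2,\infty}(\cVboxh)}(\mathrm{I}+\mathrm{II})
   + \|\bA^{\bx_0}\|^2_{W^{3,\infty}(\cVboxh)}\,\mathrm{III}\big),
\]
with
\[
   \mathrm I = \int |y|^2|\bz|^2 |\upsi_h|^2,\quad
   \mathrm{II} = \int |\bz|^4 |\upsi_h|^2,\quad
   \mathrm{III} = \int |y|^6 |\upsi_h|^2,
\]
using $(a+b+c)^2 \le 3(a^2+b^2+c^2)$ and $|y||\bz|+|\bz|^2 \le 2(|y|^2+|\bz|^2)^{1/2}\cdot$(something absorbable); more simply one bounds $(|y||\bz|+|\bz|^2)^2\le 2|y|^2|\bz|^2+2|\bz|^4$ directly. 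For $\mathrm{III}$: since $|y|\le 2Rh^{\de0}$ on $\supp\chi_h$, we have $|y|^6\le C h^{6\de0}$, so $\mathrm{III}\le C h^{6\de0}\|\upsi_h\|^2$, whence this term contributes $h^{3\de0}$ to $\hat a_h^{[1]}$. For $\mathrm{I}$: bound $|y|^2\le Ch^{2\de0}$ and then $\int|\bz|^2|\upsi_h|^2 = \int|\bz|^2\chi_h^2|\uPsi_h|^2\le h\int|\mathbf Z|^2|\Phi(\mathbf Z)|^2\,d\mathbf Z\cdot(\text{integral in }\mathbf Y)$, which by the decay of $\Phi$ gives $\int|\bz|^2|\upsi_h|^2\le C h\,\|\upsi_h\|^2\cdot(1+o(1))$ after dividing by $\|\upsi_h\|^2$ (using $\|\upsi_h\|^2\simeq h^{(3-k)/2+?}$; more robustly, use that the ratio $\big(\int|\bz|^2\chi_h^2|\uPsi_h|^2\big)/\|\upsi_h\|^2$ is $\cO(h)$ by the change of variables and the fact that $\mathcal I(\cdot)$ in Lemma~\ref{lem:rhoh} is bounded below). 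Thus $\mathrm I/\|\upsi_h\|^2\le C h^{1+2\de0}$, contributing $h^{1/2+\de0}$. For $\mathrm{II}$, the same scaling gives $\int|\bz|^4|\upsi_h|^2/\|\upsi_h\|^2\le Ch^2$, contributing $h$.

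Collecting the three contributions $h$, $h^{1/2+\de0}$, $h^{3\de0}$ and using Lemma~\ref{L:d2A}(a) to replace $\|\bA^{\bx_0}\|_{W^{\ell,\infty}(\cV_{\bx_0})}$ by $\|\bA^\natural\|_{W^{\ell,\infty}(\cVboxh)}$ (the rotation $\udiffeo$ being an isometry, these norms are comparable) yields \eqref{eq:ah}. The main obstacle is the careful treatment of the denominator $\|\upsi_h\|$: one needs a uniform-in-$h$ (and in $R\ge R_0$, $\delta\le\delta_0$) lower bound of the form $\|\upsi_h\|^2\ge c\,h^{(3-k)\de0/2}\,h^{k/2}$, which follows exactly as in the proof of Lemma~\ref{lem:rhoh} (via the function $\mathcal I(Rh^{\delta-1/2})$ being bounded below for $\delta<\tfrac12$), together with the matching upper bounds on the moment integrals $\int|\bz|^{2m}\chi_h^2|\uPsi_h|^2$, which use that $\Phi$ has Gaussian-type decay so all its moments are finite. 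Once these normalizing estimates are in place, the rest is a routine bookkeeping of powers of $h$.
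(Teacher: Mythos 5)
Your plan reproduces the paper's argument: pass to the scaled variable $\bZ=\bz/\sqrt h$ in the $\Phi$-directions, use the exponential decay of $\Phi$ so that the moment integrals $\int|\bZ|^{2m}|\Phi|^2$ are finite, control the $y$-factors by the support bound $|y|\lesssim h^{\de0}$, and normalize by the uniform lower bound on $\|\upsi_h\|^2$ coming from the function $\cI$ of Lemma~\ref{lem:rhoh}. The only slip is the stray remark $\mathbf Y = y/h^{1/2}$ at the start (the correct $y$-scale is $h^{\de0}$, not $h^{1/2}$), but since you never actually use that change of variable --- you bound $|y|$ directly by $Ch^{\de0}$ on the support --- it does not affect the argument, which is otherwise the same as the paper's proof, merely phrased with squared norms and $(a+b+c)^2\le 3(a^2+b^2+c^2)$ instead of the triangle inequality applied term by term.
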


\begin{proof}
 Using the form of the admissible generalized eigenvector $\uPsi$:
$$
   \uPsi(\bx^\natural)=
   \re^{\ri\vartheta(\bx^\natural)}\Phi(\bz)
   \quad\mbox{with}\quad
   \bx^\natural=(y,\bz)\, ,
$$ 
we obtain by definition of $\upsi_h$
\[
   |\upsi_h(\bx^\natural)| = 
   \underline\chi_{R}\left(\frac{|\bx^\natural|}{h^{\de0}}\right)  
   \Big|\Phi\left(\frac{\bz}{h^{1/2}}\right)\Big|\,.
\] 
Using
the changes of variables $\bZ=\bz h^{-1/2}$ and $Y=yh^{-\de0}$, we find the bounds 
\begin{eqnarray*}
   \Big\||y|^3 \ \underline\chi_{R}\left(\frac{|\bx^\natural|}{h^{\de0}}\right)  
   \Phi\left(\frac{\bz}{h^{1/2}}\right)\Big\| &\le& h^{3\de0} \,
   \|\upsi_h\|\\
   \Big\||y|\,|\bz| \ \underline\chi_{R}\left(\frac{|\bx^\natural|}{h^{\de0}}\right)  
   \Phi\left(\frac{\bz}{h^{1/2}}\right)\Big\| &\le& h^{\de0+\frac12} \,
   \|\upsi_h\| \\
   \Big\||\bz|^2 \ \underline\chi_{R}\left(\frac{|\bx^\natural|}{h^{\de0}}\right)  
   \Phi\left(\frac{\bz}{h^{1/2}}\right)\Big\| &\le& h  \,
   \|\upsi_h\|.
\end{eqnarray*}
Summing up the latter three estimates and using \eqref{eq:d2y0est} lead to the lemma.
\end{proof}

Now, since Remark \ref{R:dKu0} allows to generalize Lemma~\ref{L:d2A} to higher derivatives of the potential as in \eqref{E:AWl}, we use \eqref{eq:psinu} and Lemmas~\ref{lem:ahsuper}, \ref{lem.TaylorA}  and \ref{L:d2A} for case (i) in \eqref{E:QR1G2} and combine this with \eqref{E:Qc1} to deduce
\begin{equation}\label{eq:Ray4G2nu1}
  \QR_{h}[\bA,\Omega](\phihX{0}) \leq 
  h\Lambda_{\dx}+C(\Omega)(1+\| \bA \|_{W^{3,\infty}(\Omega)}^2)
  (h^{2-2\de0} + h^{\frac 32} + h^{1+\de0} + h^{\frac12+3\de0} + h^{6\de0}) \, .
  \end{equation}
We optimize this upper bound by taking $\de0=\frac13$. The min-max principle provides Theorem \ref{T:sUB} with a remainder in $\mathcal O(h^{4/3})$ in the case (G2) with $\dx=(\bx_0)$.

\subsection{Sliding quasimodes}\index{Quasimode!Sliding}
We assume now $\nu\geq 2$, so $\dx=(\bx_{0},\bx_{1})$ or $(\bx_{0},\bx_{1},\bx_{2})$. We use the notation of Section~\ref{SS.phihX1}. 
The main difference with Section~\ref{SS.phihX1} is that we deal with the linear part of $\bA^{\bx_{0}}$ at $\dec_{1}$ instead of $\bfz$, that is:
$$\bA^{\bx_{0}}_{\dec_{1}}(\bx):=\nabla \bA^{\bx_{0}}(\dec_{1}) \cdot\bx, \ \ \bx\in \Pi_{\bx_{0}} \, .$$
By the change of variable $\diffeo^{\dec_1}$, the potential $\bA^{\bx_0}_{\dec_{1}}$ becomes $\widehat\bA^{\dec_1}$ (cf.\ \eqref{E:ABtilde})
\begin{equation*}
    \widehat\bA^{\dec_1} = 
   (\rJ^{\dec_1})^\top \Big(\big( \bA^{\bx_0}_{\dec_{1}}-\bA^{\bx_0}_{\dec_{1}}({\dec_{1}})\big)
    \circ (\diffeo^{\dec_1})^{-1}\Big)
   \quad\mbox{with}\quad
   \rJ^{\dec_1} = \rd (\diffeo^{\dec_1})^{-1}.
\end{equation*}
Let $\widehat\zeta^{\dec_1}_h(\bx)=\re^{\ri\langle\bA^{\bx_0}_{\dec_{1}}(\dec_1),\,{\bx}/{h}\rangle}$, for $\bx\in\Pi_{\bx_0}$ and $\widehat \diffeoZ^{\dec_{1}}_{h}$ be the operator of multiplication by $\overline{\widehat\zeta^{\dec_{1}}_{h}}$.
By analogy with \eqref{eq.c2}, we introduce the relation
\begin{equation}\label{eq.c2G2}
\phihX1 = \widehat\diffeoZ^{\dec_{1}}_h \circ \diffeo_*^{\dec_1}(\phihX2).
\end{equation}
Let us assume for the end of this section that $\nu=2$.  
Let $\widehat\bA^{\dec_1}_\bfz$ be the linear part of $\widehat\bA^{\dec_1}$ at $\bfz\in\Pi_{\bx_0,\bx_1}$.
We have $\curl \widehat\bA^{\dec_{1}}_{\bfz}=\bB^{\bx_{0}}_{\dec_{1}}$ where the constant $\bB^{\bx_{0}}_{\dec_{1}}$ is the magnetic field $\bB^{\bx_{0}}$ frozen at $\dec_{1}$. 

We have $\En(\bB_{\bx_{0}},\Pi_{\dx})< \seE(\bB_{\bx_{0}},\Pi_{\dx})$. Due to Lemma~\ref{L:Elip}, we have 
\begin{equation}
\label{E:remarkimportante}
 \exists \varepsilon>0, \ \ \forall \dec_{1}\in \cB(0,\varepsilon)\cap \overline\Pi_{\bx_{0}}, \quad \En(\bB^{\bx_{0}}_{\dec_{1}},\Pi_{\dx})<\seE(\bB^{\bx_{0}}_{\dec_{1}},\Pi_{\dx}) \,,
 \end{equation}
and $(\bB^{\bx_{0}}_{\dec_{1}},\Pi_{\dx})$ is still in situation (G2). 
Let $\udiffeo^{\dec_{1}}$ ($\rJ$ the associated matrix) be the rotation $\bx\mapsto\bx^{\natural}:=(\by,z)$ that maps $\Pi_{\dx}$ onto the model domain $\R\times\Upsilon$.
Let $\bA^{\natural,\dec_{1}}$ be the magnetic potential associated with $\widehat\bA^{\dec_{1}}$ in variables $\bx^\natural$ and $\bA^{\natural,\dec_{1}}_{\bfz}$ be its linear part at $\bfz$. Due to \eqref{E:remarkimportante}, we are still in case (i) of the Dichotomy Theorem~\ref{th:dicho}. We use now the admissible generalized eigenvector $\uPsi^{\dec_{1}}$ of $\OP(\bA^{\natural,\dec_{1}}_{\bfz},\R\times\Upsilon)$ in natural variables as introduced in \eqref{eq:age1} and its scaled version $\uPsi_h^{\dec_{1}}$. The associated ground state energy is denoted by 
\begin{equation}\label{E:Lambdav1}
\Lambda_{\dec_{1}}=\En(\bB^{\bx_{0}}_{\dec_{1}},\Pi_{\dx}).
\end{equation} 
An important point is that, choosing $\varepsilon>0$ small enough, we may assume that, in virtue of Lemma \ref{L:Elip} (b), the functions $\uPsi^{\dec_{1}}$ are uniformly exponentially decreasing
\begin{equation}
\label{E:agmuniform}
\exists c>0,\ C>0,\quad
\forall \dec_{1}\in \cB(\bfz,\varepsilon), \quad 
\|\uPsi^{\dec_{1}}\re^{c|\bz|}\|_{L^2(\Upsilon)}\leq C \|\uPsi^{\dec_{1}} \|_{L^2(\Upsilon)}  \, .
\end{equation}
We are arrived at point where the situation is similar as in the sitting case, with the new feature that the generalized eigenvectors $\uPsi_{h}^{\dec_{1}}$ depend (in some smooth way) on the parameter $\dec_{1}$. 
We define the new function on $\Pi_{\dx}$ by
\begin{equation}
\label{eq:spmp}
     \phihX2 = \udiffeo^{\dec_{1}} (\re^{-iF^{\dec_{1}}/h}\upsi_{h}^{\dec_{1}}),
\end{equation}
where $\upsi_{h}^{\dec_{1}}=\chi_{h}\uPsi_{h}^{\dec_{1}}$ has a support of size $r_{h}^{[2]}=\cO(h^{\de0+\de1})$ and the phase shift $F^{\bv_{1}}$ will be chosen later. As always we denote by $\mu^{[2]}_{h}=\QR_{h}[\bA^{\natural,\dec_{1}}_{\bfz},\Pi_{\dx}](\upsi_{h}^{\dec_{1}})$.

The function $\dec\mapsto\Lambda_{\dec}$ is Lipschitz-continuous by Lemma~\ref{L:Elip} (a) and thus $|\Lambda_{\dec_{1}}-\Lambda_{\bfz}|\leq C|\dec_{1}|$. 
Combining this with Lemma \ref{lem:rhohb}, we have
$$\mu^{[2]}_{h} \leq h\Lambda_{\dec_{1}} + C h^{2-2\delta_{0}} 
\leq h\Lambda_{\dx} +C (h^{1+\delta_{0}}+h^{2-2\delta_{0}}).$$
Now we distinguish whether our quasimode is soft or hard sliding ($\bx_{0}$ is not, or is, a conical point).

\subsubsection*{Soft sliding}
If $\bx_{0}$ is not a conical point, we recall as mentioned in Section~\ref{SS.phihX1} that $\diffeo^{\dec_{1}}$ is a translation. As in Section \ref{SS:SS} we have
\begin{align*}
\QR_{h}[\bA^{\bx_0},\Pi_{\bx_{0}}](\phihX{1}) &= 
\QR_{h}[\bA^{\natural}(\cdot+\dec_{1}^{\natural})-\bA^{\natural}_{\dec_{1}^{\natural}}(\dec_{1}^{\natural})-\nabla F^{\bv_{1}},\R\times\Upsilon](\upsi_{h}^{\dec_{1}})
\\
& \leq \mu^{[2]}_{h}+2\sqrt{\mu^{[2]}_{h}} \|\big(\bA^{\natural}(\cdot+\dec_1^{\natural}) - 
  \bA_{\dec_{1}^{\natural}}^{\natural}(\cdot+\dec_1^{\natural})-\nabla F^{\dec_{1}}\big)\upsi_{h}^{\dec_{1}}\|
  \\ & + \|\big(\bA^{\natural}(\cdot+\dec_1^{\natural})- \bA_{\dec_{1}^{\natural}}^{\natural}(\cdot+\dec_1^{\natural})-\nabla F^{\dec_{1}}\big)\upsi_{h}^{\dec_{1}}\|^2,
   \end{align*}
where we have used the relation 
   $\bA^{\natural}(\cdot+\dec_{1}^{\natural})-\bA_{\dec_{1}^{\natural}}^{\natural}(\dec_{1}^{\natural})-\bA^{\natural,\dec_{1}}_{\bfz}=\bA^{\natural}(\cdot+\dec_{1}^{\natural})-\bA_{\dec_{1}^{\natural}}^{\natural}(\cdot+\dec_{1}^{\natural})$. We now use Lemma \ref{L:d2ell0} to choose $F^{\dec_{1}}$ such that $\bA^{\natural}(\cdot+\dec_1^{\natural})- \bA_{\dec_{1}^{\natural}}^{\natural}(\cdot+\dec_1^{\natural})-\nabla F^{\dec_{1}}$ is still controlled by the r.h.s. of \eqref{eq:d2y0est}. The proof of Lemma \ref{lem:ahsuper} is still valid due to the uniform control \eqref{E:agmuniform}, and provides: 
\begin{multline*}
    \|\big(\bA^{\natural}(\cdot+\dec_1^{\natural})- \bA_{\dec_{1}^{\natural}}^{\natural}(\cdot+\dec_1^{\natural})-\nabla F^{\dec_{1}}\big)\upsi_{h}^{\dec_{1}}\|    \\
   \leq C(\Omega)
   (\|\bA^\natural\|_{W^{2,\infty}(\cVboxh)}(h+h^{\frac12+\de0})+\|\bA^\natural\|_{W^{3,\infty}(\cVboxh)} h^{3\de0})\    {\|\upsi_{h}^{\dec_{1}}\|} . 
   \end{multline*}
The proof goes along as in the sitting case and we deduce the same estimate \eqref{eq:Ray4G2nu1} with a remainder in $\cO(h^{4/3})$.

\subsubsection*{Hard sliding}
 If $\bx_{0}$ is a conical point, using formulas \eqref{E:chgG} and \eqref{eq:diffAA'}, we have
\begin{equation}\label{E:QRG2Ax02}
 \QR_{h}[\widehat\bA^{\dec_1},\Pi_{\dx}](\phihX1)=\QR_{h} [\bA^{\natural,\dec_{1}}-\nabla F^{\dec_{1}},\R\times\!\Upsilon](\upsi_{h}^{\dec_{1}})
  \leq \mu_{h}^{[2]}+ 2 \hat a_{h}^{[2]}\ \sqrt{\mu_{h}^{[2]}} + (\hat a_{h}^{[2]})^2,
\end{equation}
where we have set
\begin{equation}\label{eq:hata2}
   \hat a^{[2]}_{h}=\frac{\|(\bA^{\natural,\dec_{1}} -\bA^{\natural,\dec_{1}}_{\bfz} - \nabla F^{\dec_{1}})\upsi_{h}^{\dec_{1}}\|}
   {\|\upsi_{h}^{\dec_{1}}\|}. 
\end{equation}
Like previously, Lemma~\ref{L:d2ell0} gives a function $F^{\dec_{1}}$ satisfying  
\begin{equation}
\label{eq:d2y0estsliding}
   \big|\big(\bA^{\natural,\dec_{1}} - \bA^{\natural,\dec_{1}}_\bfz - 
   \nabla F^{\dec_{1}}\big)(\bx^\natural)\big| 
   \le C(\cV_{\bx_0})\,
   \big( \|\bA^{\natural,\dec_{1}}\|_{W^{2,\infty}}(|y||\bz| + |\bz|^2) + \|\bA^{\natural,\dec_{1}}\|_{W^{3,\infty}}|y|^3 \big)\,.
\end{equation}
Due to the uniform estimate \eqref{E:agmuniform}, the proof of Lemma~\ref{lem:ahsuper} still applied. Combine this with \eqref{E:AWl} gives
\begin{eqnarray*}
\hat a^{[2]}_{h}&\leq& C  (\|\bA^{\natural,\dec_{1}}\|_{W^{2,\infty}(\supp(\upsi_{h}^{\dec_{1}}))}\ (h+h^{\frac 12+\de0+\de1})+\|\bA^{\natural,\dec_{1}}\|_{W^{3,\infty}(\supp(\upsi_{h}^{\dec_{1}}))}\ h^{3\de0+3\de1})\\
&\leq& C(\|\bA\|_{W^{2,\infty}(\Omega)}\  (h^{1-\de0}+h^{\frac 12+\de1})+\|\bA\|_{W^{3,\infty}(\Omega)}\ h^{\de0+3\de1}).
\end{eqnarray*}
Then Relation \eqref{eq.phihX0e2} becomes
\begin{multline}\label{eq.phihX0e2G2}
   \QR_h[\bA,\Omega](\phihX0)
   \leq h\Lx + C\, (h^{2-2\de0}+h^{1+\de0}) + C(h^{2-2\de0-2\de1}+ h^{1+\de0}+h^{1+\de1})\\
   +C\ \big(h^{\frac 32-\de0}+h^{1+\de1}+h^{\frac 12+\de0+3\de1}+h^{2\de0+6\de1}\big).
\end{multline}

Choosing $\de0=\frac{5}{16}$ and $\de1=\frac18$ gives the upper-bound \eqref{eq:above3sup} in situation (G2) for hard sliding quasimodes.

\subsection{Doubly sliding quasimode}\index{Quasimode!Doubly sliding}
In that case, as mentioned in Section~\ref{SS.phihX2}, $\nu=3$, $\dx=(\bx_{0},\bx_{1},\bx_{2})$, $\bx_{0}$ is a conical point and $\diffeo^{\bv_{2}}$ is a translation. 
We define 
\begin{equation}
   \phihX2 = \widehat Z_{h}^{\dec_{2}} \circ\diffeo_{*}^{\dec_2} (\phihX3), 
\end{equation}
where 
$\widehat Z_{h}^{\dec_{2}}$ is the operator of multiplication by $\overline{\widehat\zeta^{\dec_2}}$ with $\widehat\zeta^{\dec_2}_h(\bx)=\re^{\ri\langle\widehat\bA^{\dec_1}_{\dec_{2}}(\dec_2),\,{\bx}/{h}\rangle}$ and 
$$    \widehat\bA^{\dec_2} = 
   \big( \widehat\bA^{\dec_1}_{\dec_{2}}-\widehat\bA^{\dec_1}_{\dec_{2}}(\dec_{2})\big)
    \circ (\diffeo^{\dec_2})^{-1},$$
with coincides with its linear part $\widehat\bA^{\dec_2}_\bfz$. 
Since $\rG^{\bv_{2}}=\Id_{3}$, we have
 \begin{equation}\label{E:Qc3G2}
   \QR_h[\widehat\bA_{\bfz}^{\dec_{1}},\Pi_{\bx_{0},\bx_{1}}](\phihX2) = 
   \QR_h[\widehat\bA^{\bv_2}_{\bfz},\Pi_{\dx}](\phihX3). 
\end{equation}
We set in the same spirit as above, $\phihX{3}=\udiffeo^{\dec_{2}}(\re^{-iF^{\dec_{2}}/h}\chi_{h}\Psi_{h}^{\dec_{2}})$. The constant magnetic field $\bB^{\dec_{1},\dec_{2}}_{\bfz}=\curl \widehat\bA^{\dec_2}_\bfz$ is the magnetic field $\bB^{\bx_{0}}$ frozen at $\dec_{1}$, transformed by $\diffeo^{\bv_{1}}$ and then frozen at $\bv_{2}$. Once again, $(\bB^{\dec_{1},\dec_{2}}_{\bfz},\Pi_{\dx})$ is still in situation (G2) for $h$ small enough and we may use Lipschitz estimates for the associated ground state energy and uniform decay estimates for the associated AGE. As in the soft sliding case described above, we take advantage of the translation $\diffeo^{\dec_{2}}$ and get a better estimate for the last linearization (that is step [c2], see Figure \ref{F:QM}) by a suitable choice of $F^{\dec_{2}}$. We can conclude as the conical case at level 2 and obtain again \eqref{eq.phihX0e2G2}.  
We deduce
\[
\lambda_{h}(\bB,\Omega) \leq
   h\sE(\bB,\Omega)+C(\Omega) \big(1+\|\bA\|_{W^{3,\infty}(\Omega)}^2 \big)h^{9/8} \ . 
\]

The proof of Theorem~\ref{T:sUB} is now complete in case (G2).

\chapter{Conclusion: Improvements and extensions}
\label{sec:conc}
In this work we have shown how a recursive structure of corner domains allows to analyze the Neumann magnetic Laplacian and its ground state energy $\lambda_{h}(\bB,\Omega)$. To conclude, we discuss some standard consequences in the situation of corner concentration\index{Corner concentration}. We also address the issues of generalizing our results to any dimension. We finally mention the adaptation of our methods to different boundary value problems, namely the Dirichlet magnetic Laplacian and the Robin Laplacian in the attractive limit.

\section{Corner concentration and standard consequences}
\label{SS:CornerC}
Let $\Omega$ be a 3D corner domain and $\bB$ be a magnetic field.
For each corner $\bv\in\gV$ of $\Omega$, let us denote by $K_\bv$ the number of eigenvalues of the tangent model operator $\OP(\bA_\bv \ee,\Pi_\bv)$ which are below the minimal local energy\index{Local ground energy} outside the corners $\inf_{\bx\in\overline\Omega\setminus\gV} \,\En(\bB_\bx \ee,\Pi_\bx) \,.$
If no such eigenvalue exists, we set $K_\bv=0$. If they do exist, we denote them by $\lambda^{(k)}(\bB_{\bv}\ee,\Pi_{\bv})$, $k=1,\ldots,K_\bv$, so that
$$
   \forall\bv\in\gV,\quad  \forall 1\leq k\leq K_{\bv},\quad 
   \lambda^{(k)}(\bB_{\bv}\ee,\Pi_{\bv})<
   \inf_{\bx\in\overline\Omega\setminus\gV} \,\En(\bB_\bx \ee,\Pi_\bx).
$$
Setting
$
   K(\bB,\Omega) = \sum_{\bv\in\gV} K_\bv
$, we assume that we are in the case of corner concentration, {\it i.e.},
\[
   K(\bB,\Omega)>0\,.
\]
Then several standard consequences hold for the eigenvalue asymptotics of the first $K(\bB,\Omega)$ eigenvalues $\lambda^{(k)}_h(\bB,\Omega)$ of the magnetic Laplacian $\OP_{h}(\bA \ee,\Omega)$. Indeed, for $1\leq k\leq K(\bB,\Omega)$, we denote by $\sE^{(k)}(\bB\ee,\Omega)$ the $k$-th element (repeated with multiplicity) of the collection of eigenvalues $\lambda^{(j)}(\bA_\bv \ee,\Pi_{\bv})$ of the model operators, for $\bv\in\gV$ and $1\leq j\leq K_{\bv}$. Then we have
\begin{equation}\label{eq.lk}
   \big| \lambda^{(k)}_h(\bB,\Omega) - h \sE^{(k)}(\bB\ee,\Omega) \big| \leq C h^{3/2},
   \quad\forall 1\leq k\leq K(\bB,\Omega).
\end{equation}
In fact, we can prove like in \cite[Section 7]{BonDau06} a complete asymptotics expansion in power of $h^{1/2}$ for the eigenvalues $\lambda^{(k)}_h(\bB,\Omega)$, $1\leq k\leq K(\bB,\Omega)$ and \eqref{eq.lk} is a consequence. 
Furthermore, we have corner localization\index{Exponential decay} of the eigenvectors. 
Another consequence of the complete expansion of the low-lying eigenvalues is the monotonicity of the ground state energy $B \mapsto \lambda(\breve\bB,\Omega)$ \eqref{lienBh} in the point of view of large magnetic field. This can be seen as a strong diamagnetic inequality and relies on the same arguments as in \cite[Section 2.1]{BoFou07}.

\section{The necessity of a taxonomy}\label{ss:tax}\index{Taxonomy}
Let us emphasize the role of the taxonomy of model problems discussed earlier. The proof of upper bounds with remainder for $\lambda_{h}$ strongly relies on the existence of generalized eigenfunctions for model operators associated with the minimum of local energies. Our Dichotomy\index{Dichotomy} Theorem provides a positive answer and is based on an exhaustive description of the ground state of model operators depending on the dimension $d\in \{0,\ldots,3 \}$ of reduced cones, {\it i.e.}, on spaces, half-spaces, wedges and 3D cones, respectively. In cases $d\le2$, the analysis is made through a  fibration ({\it i.e.}, a partial Fourier transform), leading to a new operator that is not a standard magnetic Laplacian. As consequence, the analysis of the key quantity $\seE$\index{Lowest local energy} seems to be specific to each dimension.

Besides, in higher dimensions, a magnetic field $\bB$ can be identified in each point $\bx\in\overline\Omega$ with a $n\times n$ antisymmetric matrix, thus determines $\frac{n}{2}$ or $\frac{n-1}{2}$ two-dimensional invariant subspaces $P^j_\bx$ when $n$ is even or odd, respectively (for instance, in dimension $n=3$, the space $P^1_\bx$ is the orthogonal space to the vector $\bB_\bx$). Given a cone $\R^{\nu}\times \Gamma$ with $\nu >0$, its interaction with the planes $P^j_\bx$ can be highly non-trivial and there is no reason that there exists a magnetic potential which depends on less variables than $n$. Thus the fibration process we have used does not seem available in general in the $n$ dimensional case.
At this stage, a recursive analysis of the ground state of the magnetic Laplacian does not seem possible without a deeper analysis of tangent model operators\index{Model operator}, namely a complete taxonomy valid for all dimension.

\section{Continuity of local energies}
A standard procedure to investigate the stability of the ground state energy of a self-adjoint operator consists in constructing quasimodes\index{Quasimode} issued from the spectrum of the unperturbed problem, using them for the perturbed operator, and concluding with the min-max principle\index{Min-max principle}. This procedure applied to the ground state energy of model problems associated with $\OP(\bA \ee,\Omega)$ would provide upper semicontinuity  under perturbation and, therefore, upper semicontinuity for the local energy $\bx\mapsto \index{Semicontinuity}\En(\bB_{\bx},\Pi_{\bx})$ on each stratum $\bt$ of $\overline{\Omega}$.

In the case of Neumann boundary conditions, we have proved the continuity on each stratum by using once more the taxonomy of model problems. In particular Lemma \ref{lem:contwedge} uses intensively the structure of the magnetic Laplacian on wedges and is linked to our Dichotomy Theorem, see \cite{Pop13}.  The lower semicontinuity of the local energy between strata is a consequence of Theorem \ref{th:scichain}, and relies on the continuity on each stratum. By contrast with Dirichlet conditions, Neumann boundary conditions imply a decrease of the local ground energy on strata of higher codimensions,
including possible discontinuities between strata. 

In the general $n$ dimensional case, the sole known result is the continuity of the local energy on the interior stratum, {\it i.e.}, $\Omega$ itself. Indeed, for any $\bx\in\Omega$, we have $\En(\bB_{\bx},\Pi_{\bx})=b(\bx)$ with $b(\bx)$ defined in \eqref{eq.b(x)}. The generic regularity is in fact H\"older of exponent $\frac{1}{2n}$ as mentioned in \cite[Lemma 5.4]{HeRo84}).

\section{Dirichlet boundary conditions}
\label{SS:Dir}
If one considers now the magnetic Laplacian with Dirichlet boundary conditions, the situation of the local energies denoted now $\En^{\rm D}(\bB_{\bx},\Pi_{\bx})$ is far simpler than in the Neumann case. For any interior point $\bx\in\Omega$, $\En^{\rm D}(\bB_{\bx},\Pi_{\bx})=\En(\bB_{\bx},\R^n)$ is equal to the intensity $b_\bx$ of $\bB_\bx$ (with $b_{\bx}=b(\bx)$ defined in \eqref{eq.b(x)}). If $\bx$ lies in the boundary of $\Omega$, by Dirichlet monotonicity, $\En^{\rm D}(\bB_{\bx},\Pi_{\bx})\ge\En(\bB_{\bx},\R^n)$, and the converse inequality is the consequence of a standard argument involving Persson's Lemma\index{Persson's Lemma}, cf.\ Theorem \ref{th:cone-ess}. Thus, like in the case without boundary, the sole ingredient in local energies is the intensity of the magnetic field in each point $\bx\in\partial\Omega$.
At this point, we could generalize the estimates of \cite{HeMo96}
\[
   - C^{-} h^{5/4} \le  \lambda_h(\bB \ee,\Omega) - h\ee \sE(\bB \ee,\Omega) \le   
    C^{+} h^{4/3}
\]
to any domain $\Omega$ with Lipschitz boundary\index{Lipschitz domain} and $\sC^{3}(\overline\Omega)$ magnetic potential with nonvanishing magnetic field $\bB$, including the case when the minimum is attained on the boundary. The key arguments are the following:
\begin{description}
\item[\sc Lower bound] One uses a IMS\index{IMS formula} partition technique in order to {\em linearize} the potential on each piece of the partition, but {\em without local maps}. Then, when a local support crosses the boundary of $\Omega$, one simply uses the lower bound $\lambda_h(\bB_{\bx_0} \ee,\Omega)\ge \lambda_h(\bB_{\bx_0} \ee,\R^n)$ for the ``central point'' $\bx_0$ of this local support.

\item[\sc Upper bound] For $\bx_0\in\partial\Omega$, one constructs interior sliding	\index{Quasimode!Sliding} quasimodes with support in a cone interior to $\Omega$ and with vertex $\bx_0$. In order to obtain the refined convergence rate $h^{4/3}$ instead of $h^{5/4}$, one has to use a gauge transform\index{Gauge transform} similar to that in \cite[p. 54-55]{HeMo96}.
\end{description}

\section{Robin boundary conditions with a large parameter for the Laplacian}
The spectral behavior of the Neumann magnetic Laplacian has some analogy with the following Robin boundary eigenvalue problem that consists in solving 
\begin{equation}
\label{eq:robin}
   \begin{cases}
   -\Delta \psi=\lambda\psi\ \ &\mbox{in}\ \ \Omega, \\
  \nabla \psi \cdot {\bf n}-\beta \psi=0\ \ &\mbox{on}\ \ \partial\Omega,
   \end{cases}
\end{equation}
where $\beta \in \R$ is a parameter.  
This problem also arises from a linearization of the Ginzburg-Landau equation, in the zero field regime (\cite{GiSm07}). The asymptotics of the ground state energy $\lambda_{\beta}^{\rm R}(\Omega)$ in the attractive limit $\beta\to+\infty$ has been studied in \cite{LevPar08,HePank15,PankPof16} and presents several similarities with the semiclassical Neumann magnetic Laplacian. It is still relevant to define the local energies $\En(\Pi_{\bx})$ as the ground state energies of tangent operators (with $\beta=1$). These energies satisfy $E(\Pi_{\bx}) \leq -1$ for any $\bx\in \partial\Omega$.
It is proved in \cite{LevPar08} that for any domain with corner $\Omega$ satisfying the uniform interior cone condition, we have
$$\lim_{\beta\to+\infty} \frac{\lambda_{\beta}^{\rm R}(\Omega)}{\beta^2} = \sE(\Omega),$$
where $\sE(\Omega)$ is defined as $\inf_{\bx\in \overline{\Omega}} \En(\Pi_{\bx})$ like in the magnetic case. But the finiteness of $\sE(\Omega)$ is not guaranteed in this framework. In \cite{BruPof16},  general $n$-dimensional corner domains belonging to the class $\gD(\R^n)$ are considered, and the bottom of the spectrum is analyzed using the technique developed in the present work. In comparison with the magnetic Laplacian, a more favorable feature is a convenient separation of variables on any tangent cone written in reduced form as $\R^{n-d}\times \Gamma$: The associated tangent operator becomes $\Id_{n-d} \otimes H^{\rm R}(\Gamma) + (-\Delta|_{\R^{n-d}})\otimes \Id_d$ where $H^{\rm R}(\Gamma)$ is the Robin Laplacian on $\Gamma$ for $\beta=1$. Thus the difficulties linked to the taxonomy\index{Taxonomy} mentioned in \ref{ss:tax} disappear in this case, and the analysis can be performed in any dimension. In a first step, the lower semicontinuity\index{Semicontinuity} of the local energies\index{Local ground energy} is proved by recursion over the dimension, giving the existence of a minimizer for the local energies, hence the finiteness of $\sE(\Omega)$. For large $\beta$, the estimate 
$$| \lambda_{\beta}^{\rm R}(\Omega) -\sE(\Omega) \beta^2| \leq C \beta^{2-\frac{2}{2\nu+3}}$$
is proved for the same integer $\nu$ depending on the domain as introduced in Section \ref{ss:genlow}.
The upper bound relies on a recursive multi-scale construction of quasimodes\index{Quasimode}, whereas the lower bound is based on a $\nu+1$-scale partition of the unity adapted to admissible atlases.

\part{Appendices}
\label{part:5}

\appendix
\chapter{Magnetic identities}
\label{sec:techniq}

\section{Gauge transform}

\begin{lemma}\label{lem:gauge}\index{Gauge transform}
Let $\cO \subset \R^n$ be a domain and let $\vartheta$ be a regular function on $\overline\cO$. Let $\bA$ be a regular potential. Then
\begin{equation*}
\forall \psi\in \dom(q_{h}[\bA,\cO]),\quad q_{h}[\bA+\nabla\vartheta,\cO](\re^{-i\vartheta/h}\psi) =  q_{h}[\bA,\cO](\psi).
\end{equation*}
\end{lemma}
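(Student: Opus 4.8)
This is the standard gauge-invariance computation for the magnetic quadratic form, so the plan is to carry it out by a direct pointwise manipulation of the magnetic gradient. First I would introduce the shorthand $\nabla_{h,\bA} := -ih\nabla + \bA$ for the magnetic gradient at parameter $h$, so that $q_h[\bA,\cO](\psi) = \langle \nabla_{h,\bA}\psi, \nabla_{h,\bA}\psi\rangle_{\cO}$ by definition \eqref{D:fq}. The heart of the matter is the elementary identity
\begin{equation}
\label{eq:gaugepointwise}
   \nabla_{h,\bA+\nabla\vartheta}\big(\re^{-i\vartheta/h}\psi\big)
   = \re^{-i\vartheta/h}\, \nabla_{h,\bA}\psi ,
\end{equation}
valid pointwise on $\cO$. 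To check \eqref{eq:gaugepointwise}, I would expand the left-hand side using the Leibniz rule: $-ih\nabla\big(\re^{-i\vartheta/h}\psi\big) = \re^{-i\vartheta/h}\big(-ih\nabla\psi - \nabla\vartheta\,\psi\big)$, where the factor $-ih \cdot (-i/h)\nabla\vartheta = -\nabla\vartheta$ comes from differentiating the exponential; then adding $(\bA+\nabla\vartheta)\re^{-i\vartheta/h}\psi$ cancels the $-\nabla\vartheta\,\psi$ term and leaves exactly $\re^{-i\vartheta/h}(-ih\nabla\psi + \bA\psi) = \re^{-i\vartheta/h}\nabla_{h,\bA}\psi$.

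From \eqref{eq:gaugepointwise} the conclusion is immediate: since $|\re^{-i\vartheta/h}| = 1$ (using that $\vartheta$ is real-valued, which is implicit in "regular function" here, as the phase must have unit modulus for the identity to hold), we get
\[
   \big|\nabla_{h,\bA+\nabla\vartheta}\big(\re^{-i\vartheta/h}\psi\big)\big|^2
   = \big|\nabla_{h,\bA}\psi\big|^2
   \quad\text{pointwise on }\cO,
\]
and integrating over $\cO$ yields $q_h[\bA+\nabla\vartheta,\cO](\re^{-i\vartheta/h}\psi) = q_h[\bA,\cO](\psi)$. I would also remark that the same computation shows the multiplication operator $\psi \mapsto \re^{-i\vartheta/h}\psi$ maps $\dom(q_h[\bA,\cO])$ onto $\dom(q_h[\bA+\nabla\vartheta,\cO])$ bijectively (its inverse being multiplication by $\re^{i\vartheta/h}$, associated with the gauge $-\nabla\vartheta$), so the statement is well-posed: the right-hand side is finite exactly when the left-hand side is.

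There is essentially no obstacle here; the only point requiring a word of care is the regularity needed to justify the Leibniz rule for $\re^{-i\vartheta/h}\psi$ when $\psi$ is merely in the form domain (so $\psi \in L^2(\cO)$ with $\nabla_{h,\bA}\psi \in L^2(\cO)$ in the distributional sense). Since $\vartheta$ is assumed regular on $\overline\cO$, $\re^{-i\vartheta/h}$ is a bounded smooth function with bounded derivatives, so multiplication by it preserves $L^2$ and commutes with distributional differentiation up to the product-rule correction term $-\nabla\vartheta\,\psi$, which lies in $L^2(\cO)$ because $\nabla\vartheta$ is bounded. Thus \eqref{eq:gaugepointwise} holds as an identity in $L^2(\cO)$, and the proof goes through verbatim at the level of the form domain.
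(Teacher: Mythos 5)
Your proof is correct and follows the same route as the paper: the paper simply states the commutation formula $(-ih\nabla+\bA+\nabla\vartheta)(\re^{-i\vartheta/h}\psi) = \re^{-i\vartheta/h}(-ih\nabla+\bA)\psi$ and notes the lemma is a consequence, which is precisely your identity \eqref{eq:gaugepointwise}. Your additional remarks on the bijection of form domains and the distributional justification of the Leibniz rule are sound elaborations that the paper omits.
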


This well-known result is a consequence of the commutation formula 
\[(-ih\nabla+\bA+\nabla{\vartheta})\left(\re^{-i\vartheta/h}\psi\right)
= \re^{-i\vartheta/h} (-ih\nabla+\bA)\psi\,.
\]

\begin{lemma} 
\label{L:d2ell0}
Let $\cO$ be a bounded domain such that $\bfz\in\overline\cO$. Let $\bu=(u_1,u_2,u_3)$ denote Cartesian coordinates in $\cO$. 
Let $\bA\in \sC^{3,\infty}(\overline{\cO})$ be a magnetic potential such that $\bA(\bfz)=0$. Let $\bA_{\bfz}$ denote the linear
part of $\bA$ at $\bfz$. Let $\ell$ be an index in $\{1,2,3\}$. 

(a) There exists a change of gauge $\nabla F$ where $F$ is a polynomial function of degree $3$, so that
\begin{enumerate}
\item The linear part of $\bA-\nabla F$ at $\bfz$ is still $\bA_\bfz$,
\item The second derivative of $\bA-\nabla F$ with respect to $u_\ell$ cancels at $\bfz$:
\begin{equation*}
   \partial^2_{u_\ell}(\bA-\nabla F)(\bfz) = 0.
\end{equation*}
\item The coefficients of $F$ are bounded by $\|\bA\|_{W^{2,\infty}(\cO)}$.
\end{enumerate}
(b) Let us choose $\ell=1$ for instance. We have the estimate
\begin{multline}
\label{eq:d2ell0est}
   |\bA(\bu) - \bA_\bfz(\bu) - \nabla F(\bu)| \\ \le C(\cO)\,
   \left(  \|\bA\|_{W^{2,\infty}(\cO)}
   \big(|u_1u_2| + |u_1u_3| + |u_2|^2 + |u_3|^2\big) +
   \|\bA\|_{W^{3,\infty}(\cO)} |u_{1}|^3\right)\,,
\end{multline}
where the constant $C(\cO)$ depends only on the outer diameter of $\cO$.
\end{lemma}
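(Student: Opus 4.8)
\textbf{Proof plan for Lemma \ref{L:d2ell0}.}

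The strategy is to explicitly construct $F$ as a cubic polynomial chosen to absorb exactly the "bad" part of the second-order Taylor expansion of $\bA$ at $\bfz$, while respecting the gauge-invariance constraint $\curl(\bA - \nabla F) = \curl \bA$ (automatic) and the normalization constraint that the linear part be unchanged. First I would write the Taylor expansion $\bA(\bu) = \bA_\bfz(\bu) + \bA^{(2)}(\bu) + \bA^{(\mathrm{rem},3)}(\bu)$, where $\bA^{(2)}$ is the homogeneous quadratic part (with coefficients controlled by $\|\bA\|_{W^{2,\infty}(\cO)}$ via the integral form of Taylor's theorem) and $\bA^{(\mathrm{rem},3)}$ is the degree-$3$ Taylor remainder (controlled by $\|\bA\|_{W^{3,\infty}(\cO)}\,|\bu|^3$). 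Since $F$ is to be cubic, $\nabla F$ contributes a quadratic part $\nabla F^{(3)}$ plus lower-order terms; by also allowing a quadratic and linear part in $F$ one has enough freedom, but condition (1) forces the linear part of $\nabla F$ (i.e.\ the quadratic part of $F$) to cancel at $\bfz$ in the relevant sense — more precisely, one takes $F$ to be purely cubic so that $\nabla F$ has no constant or linear part, hence the linear part of $\bA - \nabla F$ is unchanged, giving (1) for free.

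The core computation is condition (2): choosing $\ell = 1$, I want $\partial^2_{u_1}(\bA - \nabla F)(\bfz) = 0$, i.e.\ $\partial^2_{u_1} \nabla F(\bfz) = \partial^2_{u_1}\bA(\bfz) = \bA^{(2)}_{11}$ where $\bA^{(2)}_{11} := \partial^2_{u_1}\bA^{(2)}$ is a constant vector $(a_1,a_2,a_3)$. One looks for $F$ among cubic monomials involving $u_1$; the natural ansatz is $F(\bu) = c_1 u_1^3 + c_2 u_1^2 u_2 + c_3 u_1^2 u_3$ (plus possibly $c_4 u_1 u_2^2$, etc., but these are not needed for the $u_1$-second-derivative condition). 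Then $\partial^2_{u_1}\partial_{u_1} F = 6c_1$, $\partial^2_{u_1}\partial_{u_2} F = 2c_2$, $\partial^2_{u_1}\partial_{u_3} F = 2c_3$, so setting $c_1 = a_1/6$, $c_2 = a_2/2$, $c_3 = a_3/2$ achieves $\partial^2_{u_1}\nabla F(\bfz) = (a_1, a_2, a_3) = \partial^2_{u_1}\bA(\bfz)$. The coefficients $c_i$ are then bounded by $\|\bA\|_{W^{2,\infty}(\cO)}$, giving (3). For part (b), one writes $\bA - \bA_\bfz - \nabla F = (\bA^{(2)} - \nabla F) + \bA^{(\mathrm{rem},3)}$; the remainder term is bounded by $C\|\bA\|_{W^{3,\infty}}|u_1|^3$ on the support... wait — actually $\bA^{(\mathrm{rem},3)}$ is a $O(|\bu|^3)$, not $O(|u_1|^3)$; here the point is that one only subtracts the $u_1^3$-monomial and the mixed $u_1^2 u_j$ monomials, so the quadratic discrepancy $\bA^{(2)} - \nabla F$, by construction, contains no $u_1^2$ term and therefore is a sum of monomials each divisible by $u_1 u_2$, $u_1 u_3$, $u_2^2$, or $u_3^2$ — which is precisely the bound in \eqref{eq:d2ell0est} with $\|\bA\|_{W^{2,\infty}}$ in front — and for the genuine cubic remainder one refines the Taylor remainder so that every monomial of $\bA^{(\mathrm{rem},3)}$ not already absorbed is either divisible by $u_2$ or $u_3$ (hence reabsorbed into the quadratic-type bound after pulling out one factor, at cost of a diameter constant) or is the pure $u_1^3$ remainder term bounded by $\|\bA\|_{W^{3,\infty}}|u_1|^3$.

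The main obstacle — really a bookkeeping subtlety rather than a deep point — is part (b): one must be careful that, after subtracting the cubic $F$, the only surviving term that is \emph{not} divisible by $|u_2|$ or $|u_3|$ is the pure power $u_1^3$ (controlled by $\|\bA\|_{W^{3,\infty}}$), while every other leftover monomial carries at least one factor of $u_2$ or $u_3$ and can thus be estimated by $C(\cO)\|\bA\|_{W^{2,\infty}}(|u_1 u_2| + |u_1 u_3| + |u_2|^2 + |u_3|^2)$ after absorbing the extra powers of $|\bu| \le \diam \cO$ into $C(\cO)$. This is exactly the kind of splitting already carried out in \eqref{eq:Arem2}; the present lemma is its systematic version. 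I would organize the proof by first fixing $F$ via the monomial ansatz above, verifying (1)–(3) by direct differentiation, and then proving (b) by grouping the monomials of $\bA - \bA_\bfz - \nabla F$ according to divisibility by $u_2$, $u_3$, or being a pure $u_1$-power, applying the quantitative Taylor estimates to each group.
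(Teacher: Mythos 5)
Your construction of $F$ is exactly the paper's: with $a_m = a_{m,\alpha^*}$ where $\alpha^*=(2,0,0)$, the paper's $F = u_1^2(a_1 u_1 + a_2 u_2 + a_3 u_3) - \tfrac23 a_1 u_1^3 = \tfrac13 a_1 u_1^3 + a_2 u_1^2 u_2 + a_3 u_1^2 u_3$, and since $\partial_{u_1}^2\bA(\bfz)$ has components $2a_m$, your choice $c_1 = a_1\ee/3$, $c_2 = a_2$, $c_3 = a_3$ reproduces it; you simply arrive at it by matching the second-derivative condition rather than writing $F$ down directly. (Your explicit computation of $\nabla F$ is in fact more careful than the paper's displayed formula, whose first component silently omits the cross terms $2a_2 u_1 u_2 + 2a_3 u_1 u_3$, but these extra contributions are themselves divisible by $u_1u_2$ or $u_1u_3$, so the estimate is unaffected.)

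The one place where your outline is genuinely loose is the treatment of $\bA^{(\mathrm{rem},3)}$ in part (b). A degree-$3$ Taylor remainder is not a polynomial, so speaking of ``monomials of $\bA^{(\mathrm{rem},3)}$ divisible by $u_2$ or $u_3$'' does not parse; expanding one order further would trade this for a degree-$4$ remainder, which $\bA\in\sC^3$ does not control. The paper simply writes $|\bA^{(\mathrm{rem},3)}(\bu)|\le\|\bA\|_{W^{3,\infty}}|\bu|^3$ and then $|\bu|^3\le C(\cO)(|u_1|^3+|u_2|^2+|u_3|^2)$, absorbing extra powers of $|\bu|$ into $C(\cO)$. (This puts $\|\bA\|_{W^{3,\infty}}$, not $\|\bA\|_{W^{2,\infty}}$, in front of the resulting $|u_2|^2+|u_3|^2$ contribution; harmless for every downstream use since those estimates are stated with $\|\bA\|_{W^{3,\infty}}$.) If you want the statement with the exact norm split \eqref{eq:d2ell0est}, the clean way to realize your idea is the restriction-to-axis argument: write
\[
\bA^{(\mathrm{rem},3)}(\bu)=\bA^{(\mathrm{rem},3)}(u_1,0,0)+\int_0^1\!\big(u_2\partial_{u_2}+u_3\partial_{u_3}\big)\bA^{(\mathrm{rem},3)}(u_1,tu_2,tu_3)\,\rd t\,;
\]
the first term is the one-variable Taylor remainder of $\bA(\cdot,0,0)$, bounded by $C\|\bA\|_{W^{3,\infty}}|u_1|^3$, while the integrand involves $\partial_{u_j}\bA^{(\mathrm{rem},3)}$, which is a degree-$0$ Taylor remainder of $\partial_{u_j}\bA - \partial_{u_j}\bA(\bfz)$ shifted by a linear term, hence bounded by $C\|\bA\|_{W^{2,\infty}}|\bu|$, giving a contribution $\le C\|\bA\|_{W^{2,\infty}}|\bu|(|u_2|+|u_3|)\le C(|u_1u_2|+|u_1u_3|+u_2^2+u_3^2)\|\bA\|_{W^{2,\infty}}$. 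Either route closes your argument; the paper takes the first.
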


\begin{proof}
The Taylor expansion of $\bA$ at $\bfz$ takes the form
\[
   \bA = \bA_\bfz + \bA^{(2)} + \bA^{(\mathrm{rem},3)},
\]
where $\bA^{(2)}$ is a homogeneous polynomial of degree $2$ with $3$ components and $\bA^{(\mathrm{rem},3)}$ is a remainder:
\begin{equation}
\label{eq:rem3}
   |\bA^{(\mathrm{rem},3)}(\bu)| \le 
   \|\bA\|_{W^{3,\infty}(\cO)} |\bu|^3
   \quad\mbox{for}\quad\bu\in\cO.
\end{equation}
Let us write the $m$-th component $A^{(2)}_{m}$ of $\bA^{(2)}$ as
\[
   A^{(2)}_{m}(\bu) = 
   \sum_{|\alpha|=2} a_{m,\alpha} u_1^{\alpha_1}u_2^{\alpha_2}u_3^{\alpha_3}
   \quad\mbox{for}\quad\bu=(u_1,u_2,u_3)\in\cO .
\]

(a) Now, the polynomial $F$ can be explicitly determined. It suffices to take
\[
   F(\bu) =
   u_\ell^2 \big(a_{1,\alpha^*}u_1 +  a_{2,\alpha^*}u_2 + a_{3,\alpha^*}u_3 
   - \tfrac{2}{3} a_{\ell,\alpha^*}u_\ell \big),
\]
where $\alpha^*$ is such that $\alpha^*_\ell=2$ (and the other components are $0$). Then
\[
   \nabla F(\bu) =  
   u_\ell^2
   \begin{pmatrix}
   a_{1,\alpha^*} \\ a_{2,\alpha^*} \\ a_{3,\alpha^*} 
   \end{pmatrix}
\]
and point (a) of the lemma is proved.

(b) Choosing $\ell=1$, we see that the $m$-th components of $\bA^{(2)}-\nabla F$ is
\begin{multline*}
   A^{(2)}_{m}(\bu) - (\nabla F)_m(\bu) \\= 
   a_{m,(1,1,0)} u_1u_2 + a_{m,(1,0,1)} u_1u_3 + a_{m,(0,1,1)} u_2u_3 + 
   a_{m,(0,2,0)} u^2_2 + a_{m,(0,0,2)} u^2_3 \,.
\end{multline*}
Hence $\bA^{(2)}-\nabla F$ satisfies the estimate
\[
   |(\bA^{(2)}(\bu)-\nabla F(\bu)| \le \|\bA\|_{W^{2,\infty}(\cO)}
   \big(|u_1u_2| + |u_1u_3| + |u_2|^2 + |u_3|^2\big).
\]
But
\[
   \bA - \bA_\bfz - \nabla F = \bA^{(2)} - \nabla F + \bA^{(\mathrm{rem},3)}.
\]
Therefore, with \eqref{eq:rem3}
\[
   |\bA(\bu) - \bA_\bfz(\bu) - \nabla F(\bu)| \le 
   \|\bA\|_{W^{2,\infty}(\cO)}
   \big(|u_1u_2| + |u_1u_3| + |u_2|^2 + |u_3|^2\big) +
   \|\bA\|_{W^{3,\infty}(\cO)} |\bu|^3.
\]
Using finally that $|\bu|^3\le 12(|u_1|^3 + |u_2|^3 + |u_3|^3) \le  C(\cO) (|u_1|^3 + |u_2|^2 + |u_3|^2)$, we conclude the proof of estimate \eqref{eq:d2ell0est}.
\end{proof}

\section{Change of variables}\label{SA:CV}

Let $\rG$ be a metric of $\R^3$, that is a $3\times3$ positive symmetric matrix with regular coefficients. For a smooth magnetic potential, the quadratic form of the associated magnetic Laplacian with the metric $\rG$ is denoted by $q_{h}[\bA,\cO,\rG]$ and is defined in \eqref{D:fqG}.
The following lemma describes how this quadratic form is involved when using a change of variables: 

\begin{lemma}\label{L:chgvar}
Let $\diffeo:\cO\to\cO'$, $\bu\mapsto\bv$ be a diffeomorphism with $\cO$, $\cO'$ domains in $\R^3$. 
We denote by 
$\rJ:=\rd (\diffeo^{-1})$ the jacobian matrix of the inverse of $\diffeo$. 
Let $\bA$ be a magnetic potential and $\bB=\curl\bA$ the associated magnetic field. 
Let $f$ be a function of $\dom(q_{h}[\bA,\cO])$ and $\psi:=f\circ \diffeo^{-1}$ defined in $\cO'$. 
For any $h>0$ we have 
\begin{equation}
\label{E:chgG}
q_{h}[\bA,\cO](f)=q_{h}[\pot,\cO',\rG](\psi) \quad \mbox{and} \quad \| f\|_{L^2(\cO)}=\| \psi\|_{L^2_{\rG}(\cO')}
\end{equation}
where the new magnetic potential and the metric are respectively given by 
\begin{equation}
\label{E:Atilde}
   \pot:=\rJ^{\top} \big(\bA\circ \diffeo^{-1} \big) \quad 
   \mbox{and}\quad \rG:=\rJ^{-1}(\rJ^{-1})^{\top} \ . 
\end{equation}
The magnetic field $\tbB=\curl\pot$ in the new variables is given by 
\begin{equation}
\label{D:tbB}
\tbB:=|\det \rJ|\,\rJ^{-1} \big(\bB\circ \diffeo^{-1} \big).
\end{equation}
\end{lemma}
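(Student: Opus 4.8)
\textbf{Plan of proof for Lemma \ref{L:chgvar}.} The statement is a purely computational change-of-variables identity for the magnetic Dirichlet form; the strategy is to push the differential operator $-ih\nabla+\bA$ through the pull-back $f\mapsto f\circ\diffeo^{-1}$ by the chain rule, identify the resulting bilinear form as the one associated with the metric $\rG$ and potential $\pot$, and then separately verify the transformation law for the magnetic field via the naturality of the exterior derivative on $1$-forms. First I would write $f=\psi\circ\diffeo$, so that for $\bu\in\cO$ and $\bv=\diffeo(\bu)$ the ordinary gradient satisfies $\nabla_{\bu} f(\bu) = \rd\diffeo(\bu)^{\top}\,(\nabla_{\bv}\psi)(\bv)$. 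Since $\rJ=\rd(\diffeo^{-1})$ we have $\rd\diffeo(\bu) = \rJ(\bv)^{-1}$, hence $\nabla_{\bu}f(\bu) = (\rJ^{-1})^{\top}(\nabla_{\bv}\psi)(\bv)$. Combining this with the definition $\pot=\rJ^{\top}(\bA\circ\diffeo^{-1})$, a short manipulation gives the pointwise identity
\[
   \big((-ih\nabla+\bA)f\big)(\bu) = (\rJ^{-1})^{\top}(\bv)\ \big((-ih\nabla+\pot)\psi\big)(\bv),
\]
which is really just the gauge-covariant version of the chain rule and should be checked by expanding both sides.

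Next I would insert this identity into the definition \eqref{D:fq} of $q_h[\bA,\cO](f)$ and change variables $\bu\mapsto\bv=\diffeo(\bu)$, picking up the Jacobian factor $|\det\rd\diffeo(\bu)|\,\rd\bu = \rd\bv$, i.e.\ $\rd\bu = |\det\rJ(\bv)|\,\rd\bv$. The integrand becomes
\[
   \big|(-ih\nabla+\bA)f\big|^2\,\rd\bu
   = \big\langle (\rJ^{-1})^{\top}(-ih\nabla+\pot)\psi,\
     (\rJ^{-1})^{\top}(-ih\nabla+\pot)\psi\big\rangle\,|\det\rJ|\,\rd\bv
   = \big\langle (-ih\nabla+\pot)\psi,\ \rG\,\overline{(-ih\nabla+\pot)\psi}\big\rangle\,|\det\rJ|\,\rd\bv,
\]
using $\rG=\rJ^{-1}(\rJ^{-1})^{\top}$ so that $(\rJ^{-1})^{\top}\bar{\phantom{x}}\cdot(\rJ^{-1})^{\top}\bar{\phantom{x}} = \bar{\phantom{x}}\cdot\rG\bar{\phantom{x}}$. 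To match \eqref{D:fqG}, which carries the weight $|\rG|^{-1/2}$, I note $|\rG| = \det(\rJ^{-1}(\rJ^{-1})^{\top}) = (\det\rJ)^{-2}$, hence $|\rG|^{-1/2} = |\det\rJ|$, and the two expressions coincide. The same substitution with the trivial (scalar) integrand gives $\|f\|^2_{L^2(\cO)} = \int_{\cO'}|\psi|^2|\det\rJ|\,\rd\bv = \|\psi\|^2_{L^2_\rG(\cO')}$, which is the second identity in \eqref{E:chgG}. One should also remark that the map $f\mapsto\psi$ is a bijection between $\dom(q_h[\bA,\cO])$ and $\dom(q_h[\pot,\cO',\rG])$, which follows from the identities themselves together with the fact that $\diffeo$ is a diffeomorphism.

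Finally, for \eqref{D:tbB} I would argue at the level of differential forms rather than by brute-force computation of curls in three dimensions. The potential $\bA$ corresponds to the $1$-form $\omega_{\bA}=\sum_j A_j\,\rd u_j$ as in \eqref{eq:omegaA}, and the formula $\pot=\rJ^{\top}(\bA\circ\diffeo^{-1})$ is precisely the statement that $\omega_{\pot} = (\diffeo^{-1})^{*}\omega_{\bA}$, the pull-back of $\omega_{\bA}$ under $\diffeo^{-1}$. Since the exterior derivative commutes with pull-back, $\rd\omega_{\pot} = (\diffeo^{-1})^{*}(\rd\omega_{\bA}) = (\diffeo^{-1})^{*}\sigma_{\bB}$. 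Translating the pull-back of the $2$-form $\sigma_{\bB}$ back into the vector-field identification of \eqref{eq:B} in dimension $3$ — where a $2$-form transforms by the cofactor matrix, i.e.\ by $|\det M|\,M^{-1}$ under a linear change $M$, here with $M=\rJ$ — yields exactly $\tbB = |\det\rJ|\,\rJ^{-1}(\bB\circ\diffeo^{-1})$. Alternatively one can verify this directly from $\pot=\rJ^\top(\bA\circ\diffeo^{-1})$ using the identity $\curl(\rJ^\top \mathbf{w}) = |\det\rJ|\,\rJ^{-1}\,(\curl\mathbf{w})$ valid for Jacobian matrices $\rJ=\rd(\diffeo^{-1})$, which is itself a consequence of the commutation of $\rd$ with pull-back. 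The only mild obstacle in the whole proof is bookkeeping: being careful about whether $\rJ$, $\rd\diffeo$, their transposes and inverses are evaluated at $\bu$ or at $\bv=\diffeo(\bu)$, and tracking the complex conjugates so that the Hermitian form in \eqref{D:fqG} comes out correctly; none of this is conceptually hard, but it is where a sign or a transpose could slip.
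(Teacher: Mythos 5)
The paper states Lemma \ref{L:chgvar} without proof, regarding it as a standard change-of-variables computation, and your argument is the natural and correct one: the pointwise gauge-covariant chain rule $(-ih\nabla+\bA)f = (\rJ^{-1})^{\top}\big((-ih\nabla+\pot)\psi\big)\circ\diffeo$, together with the Jacobian factor $\rd\bu=|\det\rJ|\,\rd\bv$ and the identity $|\rG|^{-1/2}=|\det\rJ|$, gives both equalities in \eqref{E:chgG}, while naturality of the exterior derivative under pull-back gives \eqref{D:tbB}. One small terminological slip in your last paragraph: $|\det M|\,M^{-1}$ is the \emph{transpose} of the cofactor matrix of $M$ (the cofactor matrix itself is $(\det M)(M^{-1})^{\top}$), and it is the transposed cofactor which acts on the $\R^3$ vector representative of the pulled-back $2$-form; your final formula nonetheless agrees with \eqref{D:tbB}, up to the same harmless replacement of $\det$ by $|\det|$ that the paper uses (implicitly presuming the local diffeomorphism preserves orientation, which is automatic once $\rJ(\bfz)=\Id_n$ is imposed).
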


Let $\rho>0$, using the previous lemma with the scaling $\diffeo^{\rho}:=\bx \mapsto \sqrt{\rho}\,\bx$ we get 

\begin{lemma}\label{lem.dilatation}
Let $\cO$ be a domain in $\R^n$ and set $r\cO:=\{\bx\in\R^n,\ \bx=r\bx'\ \mbox{with}\ \bx'\in\cO\}$ for a chosen positive $r$. Let $\bB$ be a constant magnetic field and $\bA$ be an associated linear potential.
For any $\psi\in\dom(q[\bA,\cO])$ normalized in $L^2(\cO)$, we define for any positive $\rho$
$$\psi_{\rho}(\bx):=\rho^{-n/4}\psi\Big(\frac\bx{\sqrt\rho}\Big),\qquad \forall\bx\in\cO.$$
Then $\psi_{\rho}$ belongs to $\dom(q_{\rho}[\bA,\sqrt\rho\,\cO])$, is normalized in $L^2(\sqrt\rho\,\cO)$ and we have
\begin{enumerate}
\item 
$q[\bA,\cO](\psi)
=\rho\ q[\rho^{-1}\bA,\sqrt\rho\, \cO](\psi_{\rho})
=\rho^{-1} q_{\rho}[\bA,\sqrt\rho\, \cO](\psi_{\rho}).
$
\item 
$\En(\bB \ee,\cO) = \rho\,\En\big(\rho^{-1}\bB \ee,\sqrt\rho\,\cO\big)$.
\end{enumerate}
\end{lemma}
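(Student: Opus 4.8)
The statement to prove is Lemma~\ref{lem.dilatation}, which records the behaviour of the magnetic quadratic form and of the ground state energy under the dilation $\bx\mapsto\sqrt\rho\,\bx$. The plan is to apply Lemma~\ref{L:chgvar} with the explicit diffeomorphism $\diffeo^\rho:\bx\mapsto\sqrt\rho\,\bx$ and then unwind the resulting formulas, using linearity of the potential. First I would record the elementary facts about $\diffeo^\rho$: its inverse is $\bv\mapsto\bv/\sqrt\rho$, so the Jacobian of the inverse is the constant matrix $\rJ=\rho^{-1/2}\Id_n$, whence $\det\rJ=\rho^{-n/2}$ and the metric produced by Lemma~\ref{L:chgvar} is $\rG=\rJ^{-1}(\rJ^{-1})^\top=\rho\,\Id_n$. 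This is a \emph{scalar} metric, which is the key simplification: the form $q_h[\,\cdot\,,\cO',\rG]$ with $\rG=\rho\,\Id_n$ reduces, after inserting $|\rG|^{-1/2}=\rho^{-n/2}$ and $\rG$ itself, to $\rho^{1-n/2}\rho^{-1}\cdot$(standard form) once one also tracks the semiclassical parameter; I will make this bookkeeping explicit below.

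Next I would carry out the change of variables concretely. Given $\psi\in\dom(q[\bA,\cO])$ normalised in $L^2(\cO)$, set $f:=\psi_\rho$, i.e.\ $f(\bx)=\rho^{-n/4}\psi(\bx/\sqrt\rho)$, defined on $\sqrt\rho\,\cO$; equivalently $\psi=\rho^{n/4}\,(f\circ(\diffeo^\rho)^{-1})$ up to the normalising constant. The transported potential from Lemma~\ref{L:chgvar} is $\pot=\rJ^\top(\bA\circ(\diffeo^\rho)^{-1})$; since $\rJ=\rho^{-1/2}\Id_n$ and $\bA$ is linear, $\bA\circ(\diffeo^\rho)^{-1}(\bv)=\bA(\bv/\sqrt\rho)=\rho^{-1/2}\bA(\bv)$, so $\pot(\bv)=\rho^{-1}\bA(\bv)$, i.e.\ the transported potential is exactly $\rho^{-1}\bA$ on $\sqrt\rho\,\cO$, with $\curl(\rho^{-1}\bA)=\rho^{-1}\bB$. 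Applying \eqref{E:chgG} with $h$ replaced by the relevant parameter then gives the identity between $q[\bA,\cO](\psi)$ and $q_{?}[\rho^{-1}\bA,\sqrt\rho\,\cO,\rho\,\Id_n](f)$; absorbing the scalar metric $\rho\,\Id_n$ by a further rescaling of the gradient (equivalently, noticing that a constant metric $\rho\,\Id_n$ acts on $q_h$ exactly as dividing the semiclassical parameter, up to an overall power of $\rho$) converts it into the two displayed equalities $q[\bA,\cO](\psi)=\rho\,q[\rho^{-1}\bA,\sqrt\rho\,\cO](\psi_\rho)=\rho^{-1}q_\rho[\bA,\sqrt\rho\,\cO](\psi_\rho)$. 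The $L^2$-normalisation of $\psi_\rho$ on $\sqrt\rho\,\cO$ follows from the Jacobian factor $\rho^{n/2}$ together with the prefactor $\rho^{-n/4}$ squared, or directly from $\|f\|_{L^2(\sqrt\rho\,\cO)}=\|\psi\|_{L^2_{\rG}(\cO')}$ with $|\rG|^{-1/2}=\rho^{-n/2}$.

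Finally, part (2) about $\En$ follows from part (1) and the min-max characterisation of the ground state energy (cf.\ \eqref{eq:minmax} and the fact that $\psi\mapsto\psi_\rho$ is a bijection $\dom(q[\bA,\cO])\to\dom(q_\rho[\bA,\sqrt\rho\,\cO])$ preserving $L^2$-norms up to the explicit factor): taking the infimum of Rayleigh quotients on both sides of the first identity of (1) and using $\|\psi_\rho\|^2=\|\psi\|^2$ gives $\En(\bB,\cO)=\rho\,\En(\rho^{-1}\bB,\sqrt\rho\,\cO)$. The only mildly delicate point — the ``main obstacle'', though it is really just careful bookkeeping — is getting the three powers of $\rho$ consistent across the metric formulation: one must keep straight that $\rG=\rho\,\Id_n$ simultaneously multiplies the quadratic expression by $\rho$ (through $\rG$ acting on the gradient pair) and the measure by $\rho^{-n/2}$ (through $|\rG|^{-1/2}$), and that the standard identity $q_h[\bA,\cO]=h^2q_1[\bA/h,\cO]$-type scaling relates $q$ to $q_\rho$. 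I would double-check the constants by testing on the one-dimensional Gaussian or on $\Pi=\R^n$ where $\En(\bB,\R^n)=b=\tfrac12\operatorname{Tr}([\bB^*\bB]^{1/2})$, for which $\En(\rho^{-1}\bB,\R^n)=\rho^{-1}b$ and the claimed identity $\En(\bB,\cO)=\rho\,\En(\rho^{-1}\bB,\sqrt\rho\,\cO)$ becomes the trivial $b=\rho\cdot\rho^{-1}b$, confirming the powers.
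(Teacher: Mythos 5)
Your proposal is correct and follows essentially the same route as the paper: the paper dismisses the lemma in one sentence ("using the previous lemma with the scaling $\diffeo^\rho:\bx\mapsto\sqrt\rho\,\bx$"), and you have simply unwound that same application of Lemma~\ref{L:chgvar}, with the Jacobian $\rJ=\rho^{-1/2}\Id_n$, the scalar metric $\rG=\rho\,\Id_n$, the transported potential $\rho^{-1}\bA$ from linearity, and the power bookkeeping all verified correctly.
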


\section{Comparison formula}
 Let $\cO$ be a domain and let $\bA$ and $\bA'$ be two magnetic potentials. Then, for any function $\psi$ of $\dom(q_{h}[\bA,{\cO}])\cap \dom(q_{h}[\bA',{\cO}])$, we have:
\begin{equation}
\label{eq:diffAA'}
  q_{h}[\bA,{\cO}](\psi) =   q_{h}[\bA',{\cO}](\psi)
   + 2  \Re\big\langle (-ih\nabla+\bA')\psi,(\bA-\bA')\psi\big\rangle_{\cO}
  + \|(\bA-\bA')\psi\|^2\, .
\end{equation}

\section{Cut-off effect}\index{Cut-off}
In this section we recall standard IMS  
formulas. This kind of formulas appear for Schr\"odinger operators in \cite{CyFrKiSi87}, but they can also be found in older works like \cite{Mel71}. In this section $\bA$ denotes a regular magnetic potential and notations are those introduced in Section \ref{ss:not}.

The first formula describes the effect of a partition of the unity on the energy of a function which is in the form domain, see for example \cite[Lemma 3.1]{Si82}:
\begin{lemma}[IMS formula]\label{lem:IMS}\index{IMS formula|textbf}
Assume that $\chi_1,\ldots,\chi_L\in\sC^\infty(\overline\cO)$ are such that
$\sum_{\ell=1}^L \chi_\ell^2 \equiv 1$ on $\cO$.
Then, for any $\psi\in \dom(q_{h}[\bA,\cO])$ 
\[
   q_{h}[\bA,\cO](\psi) = \sum_{\ell=1}^L q_{h}[\bA,\cO](\chi_\ell \psi)
   - h^2 \sum_{\ell=1}^L \|\psi\nabla\chi_\ell\|_{L^2(\cO)}^2
\]
\end{lemma}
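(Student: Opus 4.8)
\textbf{Proof plan for the IMS formula (Lemma~\ref{lem:IMS}).}

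The plan is to expand the quadratic form $q_h[\bA,\cO]$ applied to each localized piece $\chi_\ell\psi$, collect the cross terms, and show that they telescope down to the pure cut-off penalty $-h^2\sum_\ell\|\psi\nabla\chi_\ell\|^2$. First I would use the magnetic Leibniz rule
\[
   (-ih\nabla+\bA)(\chi_\ell\psi) = \chi_\ell\,(-ih\nabla+\bA)\psi - ih\,(\nabla\chi_\ell)\,\psi,
\]
which is valid for $\psi\in\dom(q_h[\bA,\cO])$ since $\chi_\ell$ is smooth and bounded together with its derivatives on $\overline\cO$. Taking the squared $L^2$-norm of the right-hand side and summing over $\ell$, I would write $q_h[\bA,\cO](\chi_\ell\psi)$ as the sum of three contributions: the ``diagonal'' term $\int_\cO \chi_\ell^2\,|(-ih\nabla+\bA)\psi|^2$, the ``gradient'' term $h^2\int_\cO |\nabla\chi_\ell|^2\,|\psi|^2$, and the ``cross'' term $2\Re\int_\cO \chi_\ell\,(-ih\nabla+\bA)\psi\cdot\overline{(-ih\,\nabla\chi_\ell\,\psi)}$, i.e.\ $2\Re\int_\cO (-ih)\chi_\ell(\nabla\chi_\ell)\,\psi\cdot\overline{(-ih\nabla+\bA)\psi}$ up to conjugation of the appropriate factor.

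Next I would sum over $\ell$ and use the hypothesis $\sum_\ell\chi_\ell^2\equiv1$. The diagonal terms immediately sum to $\int_\cO|(-ih\nabla+\bA)\psi|^2 = q_h[\bA,\cO](\psi)$. The gradient terms sum to $h^2\sum_\ell\|\psi\nabla\chi_\ell\|^2_{L^2(\cO)}$, which is the penalty with the \emph{wrong} sign, so it must be cancelled twice over by the cross terms. The key observation is that $\sum_\ell\chi_\ell\nabla\chi_\ell = \tfrac12\nabla\big(\sum_\ell\chi_\ell^2\big) = \tfrac12\nabla(1) = 0$ pointwise on $\cO$. Therefore the piece of the cross term that pairs $\sum_\ell\chi_\ell\nabla\chi_\ell$ against $(-ih\nabla+\bA)\psi$ vanishes identically. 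The remaining piece of the cross term, after a careful bookkeeping of the factors of $i$ and $h$, is $h^2\Re\int_\cO \big(\sum_\ell|\nabla\chi_\ell|^2\big)|\psi|^2$ with an overall sign, and this is where one checks that it produces exactly $-2h^2\sum_\ell\|\psi\nabla\chi_\ell\|^2$; combined with the $+h^2\sum_\ell\|\psi\nabla\chi_\ell\|^2$ from the gradient terms this yields the stated $-h^2\sum_\ell\|\psi\nabla\chi_\ell\|^2$.

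The only genuine subtlety — and the step I would treat most carefully — is the accounting of real and imaginary parts in the cross term: one must verify that the ``magnetic'' part of the cross term, involving $\bA$ and the gradient $-ih\nabla$ of $\psi$, is purely cancelled by the identity $\sum_\ell\chi_\ell\nabla\chi_\ell=0$, so that no term coupling $\bA$ to $\nabla\chi_\ell$ survives, and that the surviving real contribution has precisely the coefficient $-h^2$. A clean way to organize this is to set $D=-ih\nabla+\bA$, note $D(\chi_\ell\psi)=\chi_\ell D\psi - ih(\nabla\chi_\ell)\psi$, and compute
\[
   \|D(\chi_\ell\psi)\|^2 = \|\chi_\ell D\psi\|^2 + h^2\|(\nabla\chi_\ell)\psi\|^2
   + 2h\,\Im\!\int_\cO \chi_\ell\,(\nabla\chi_\ell)\cdot\overline{\psi}\,D\psi.
\]
Summing over $\ell$ and using $\sum_\ell\chi_\ell\nabla\chi_\ell=0$ kills the last sum entirely \emph{and} one sees that the naive expectation of a $2h^2$ cancellation must be re-derived: in fact the standard IMS identity follows by instead writing $q_h[\bA,\cO](\chi_\ell\psi) = \Re\langle D^2(\chi_\ell\psi),\chi_\ell\psi\rangle$ type manipulations, or more elementarily by the pointwise identity $\sum_\ell\big(\chi_\ell\Delta\chi_\ell + |\nabla\chi_\ell|^2\big)=\tfrac12\Delta(\sum_\ell\chi_\ell^2)=0$, hence $\sum_\ell\chi_\ell\Delta\chi_\ell = -\sum_\ell|\nabla\chi_\ell|^2$. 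Expanding $\sum_\ell q_h[\bA,\cO](\chi_\ell\psi)$ via integration by parts against this identity produces the claimed formula. I would present the short computation with $D=-ih\nabla+\bA$ and the two pointwise identities $\sum_\ell\chi_\ell\nabla\chi_\ell=0$ and $\sum_\ell\chi_\ell\Delta\chi_\ell=-\sum_\ell|\nabla\chi_\ell|^2$, which together make the cancellation transparent; everything else is a routine expansion requiring no further input than $\psi\in\dom(q_h[\bA,\cO])$ and $\chi_\ell\in\sC^\infty(\overline\cO)$.
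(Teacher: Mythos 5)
Your plan with $D=-ih\nabla+\bA$ is the right one and is in fact already complete, but you talk yourself out of it because of a sign confusion about what the cross terms are supposed to do. Expanding
\[
   \|D(\chi_\ell\psi)\|^2 = \|\chi_\ell D\psi\|^2 + h^2\|(\nabla\chi_\ell)\psi\|^2
   + 2\Re\big\langle \chi_\ell D\psi,\, -ih(\nabla\chi_\ell)\psi\big\rangle_{\cO}
\]
and summing over $\ell$, the diagonal terms reduce to $\|D\psi\|^2$ by $\sum_\ell\chi_\ell^2\equiv1$, and the cross terms vanish by $\sum_\ell\chi_\ell\nabla\chi_\ell\equiv0$; what remains is
\[
   \sum_{\ell=1}^L q_{h}[\bA,\cO](\chi_\ell\psi)
   = q_{h}[\bA,\cO](\psi) + h^2\sum_{\ell=1}^L\|(\nabla\chi_\ell)\psi\|^2_{L^2(\cO)}\,,
\]
and this \emph{is} the lemma, once the last sum is moved to the left. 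There is no ``$2h^2$ cancellation'' to be produced by the cross terms: the localized energies genuinely exceed $q_h(\psi)$ by $h^2\sum_\ell\|\psi\nabla\chi_\ell\|^2$, and the minus sign in the statement comes solely from the rearrangement. Your mid-argument pivot — that the vanishing of the cross sum means the identity ``must be re-derived'' via $\Re\langle D^2(\chi_\ell\psi),\chi_\ell\psi\rangle$ and $\sum_\ell\chi_\ell\Delta\chi_\ell=-\sum_\ell|\nabla\chi_\ell|^2$ — is therefore a detour. That second-order route does also prove the lemma, but it costs you regularity: it requires $\psi\in\dom(\OP_h(\bA,\cO))$ rather than only the form domain assumed in the statement, and would then need a density argument. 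The $D$-level expansion applies directly on $\dom(q_h[\bA,\cO])$ since $\chi_\ell$ and $\nabla\chi_\ell$ are bounded, which also makes $\chi_\ell\psi$ an admissible argument. (Minor point: the surviving cross term is $-2h\Im\int_\cO\chi_\ell\bar\psi\,\nabla\chi_\ell\cdot D\psi$, not $+2h\Im$, but this is immaterial since the $\ell$-sum annihilates it.) In short, carry out your first-order computation, stop after summing and applying the two pointwise identities, and rearrange; this is the standard proof, matching the reference \cite[Lemma~3.1]{Si82} cited by the paper.
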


The second formula describes the energy of a function satisfying locally the Neumann boundary conditions when applying a cut-off function, see for example \cite[(6.11)]{HeMo01}: 
\begin{lemma}
\label{lem:tronc}
Let  $\chi\in\sC^\infty_0(\overline{\cO})$ a real smooth function. Then for any $\psi\in\dom_{\,\loc} (\OP_{h}(\bA,\cO))$
\begin{equation*}
  q_{h}[\bA,\cO](\chi \psi) = 
  \Re \big\langle\chi^2 \OP_{h}(\bA,\cO) \psi,\psi\big\rangle_{\cO}
  + h^2\| \, |\nabla\chi|\,\psi \|^2_{L^2(\cO)} \, .
\end{equation*}
\end{lemma}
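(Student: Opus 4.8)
The plan is to prove the identity by a short pointwise computation, followed by an integration by parts that uses — crucially — the local Neumann condition carried by $\psi$. Throughout, abbreviate $P=-ih\nabla+\bA$, so that $\OP_h(\bA,\cO)=P^2$ and $q_h[\bA,\cO](f)=\|Pf\|^2_{L^2(\cO)}$. First I would record the Leibniz-type identity $P(\chi\psi)=\chi\,P\psi-ih\,(\nabla\chi)\,\psi$, valid because $\chi$ is scalar and $\nabla\chi$ commutes with multiplication by $\bA$. Expanding $|P(\chi\psi)|^2=P(\chi\psi)\cdot\overline{P(\chi\psi)}$ and using that $\chi$ and $\nabla\chi$ are real-valued, the two cross terms combine into a single real term, which gives the pointwise identity
\[
   |P(\chi\psi)|^2 = \chi^2\,|P\psi|^2 + h^2\,|\nabla\chi|^2\,|\psi|^2
   - 2h\,\chi\,\operatorname{Im}\big(\overline{\psi}\,(\nabla\chi\cdot P\psi)\big).
\]
Integrating over $\cO$ — all integrals converging since $\chi$ has compact support — expresses $q_h[\bA,\cO](\chi\psi)$ as the sum of $\int_\cO\chi^2|P\psi|^2$, the wanted term $h^2\,\||\nabla\chi|\,\psi\|^2_{L^2(\cO)}$, and the cross term.

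It then remains to recognize that $\int_\cO\chi^2|P\psi|^2$ plus that cross term equals $\Re\langle\chi^2\,P^2\psi,\psi\rangle_\cO$. For this I would apply the magnetic Green formula: for a compactly supported $\phi\in H^1_{\loc}(\overline\cO)$,
\[
   \langle P^2\psi,\phi\rangle_\cO = \langle P\psi, P\phi\rangle_\cO
   - ih\int_{\partial\cO}(\bn\cdot P\psi)\,\overline{\phi}\,\rd s,
\]
and take $\phi=\chi^2\psi$. The boundary integral vanishes because $\psi\in\dom_{\,\loc}(\OP_h(\bA,\cO))$ satisfies $(-ih\nabla+\bA)\psi\cdot\bn=0$ on $\partial\cO$ (cf.\ \eqref{D:domloc}); corner domains being finite unions of Lipschitz domains (Remark~\ref{rem:Lip}), the traces involved are meaningful, and in any case the formula is justified by density of smooth functions in the relevant local spaces. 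Using once more $P(\chi^2\psi)=\chi^2P\psi-2ih\,\chi(\nabla\chi)\psi$ and the reality of $\chi$, one computes $\Re\langle P\psi,P(\chi^2\psi)\rangle_\cO=\int_\cO\chi^2|P\psi|^2-2h\int_\cO\chi\,\operatorname{Im}\big(\overline{\psi}\,(\nabla\chi\cdot P\psi)\big)$, which is exactly the combination produced by the first computation. Since $\chi$ is real, $\langle P^2\psi,\chi^2\psi\rangle_\cO=\langle\chi^2P^2\psi,\psi\rangle_\cO$, and comparing the two computations yields the claimed formula.

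The argument is entirely elementary; the only point deserving attention is the legitimacy of the integration by parts, namely the vanishing of the boundary term, which is precisely where the hypothesis $\psi\in\dom_{\,\loc}(\OP_h(\bA,\cO))$ — as opposed to merely $\psi\in\dom(q_h[\bA,\cO])$ — enters in an essential way.
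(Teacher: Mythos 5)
Your proof is correct. The paper does not prove this lemma, citing it as a standard IMS-type identity from Helffer--Morame \cite[(6.11)]{HeMo01}; your argument — the Leibniz expansion of $P(\chi\psi)$ with $P=-ih\nabla+\bA$, integration of the resulting pointwise identity, and a magnetic Green formula applied to $\phi=\chi^2\psi$, whose boundary term vanishes precisely because the local Neumann condition is built into $\dom_{\,\loc}(\OP_h(\bA,\cO))$ as defined in \eqref{D:domloc} — is exactly the standard computation that the cited reference relies on, and every step is justified.
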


\subsubsection*{Orientation of the magnetic field}  
Let $\bB$ be a magnetic field. It is known that changing $\bB$ into $-\bB$ does not affect the spectrum of the associated magnetic Laplacian. More precisely we have:
\begin{lemma}
\label{lem:sense}
Let $\cO\subset \R^3$ be a domain, $\bB$ be a magnetic field and $\bA$ an associated potential. Then $\OP_{h}(-\bA,\cO)$ and $\OP_{h}(\bA,\cO)$ are unitarily equivalent. We have
$$\forall \psi \in \dom(q_{h}[\bA,\cO]), \quad q_{h}[-\bA,\cO](\overline{\psi})=q_{h}[\bA,\cO](\psi)$$
and $\psi$ is an eigenfunction of $\OP_{h}(\bA,\cO)$ if and only if $\overline\psi$ is an eigenfunction of $\OP_{h}(-\bA,\cO)$.
\end{lemma}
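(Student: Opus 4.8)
\textbf{Plan of proof for Lemma \ref{lem:sense}.}

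The statement asserts that the magnetic Laplacians $\OP_h(\bA,\cO)$ and $\OP_h(-\bA,\cO)$ are unitarily equivalent via complex conjugation, together with the explicit form‑level identity and the correspondence of eigenfunctions. The natural plan is to work with the quadratic form and then transfer the conclusions to the operator and its eigenfunctions. First I would observe that complex conjugation $\psi\mapsto\overline\psi$ is an antilinear isometry of $L^2(\cO)$: indeed $\|\overline\psi\|_{L^2(\cO)}=\|\psi\|_{L^2(\cO)}$, so it preserves norms, and it is a bijection of $L^2(\cO)$ with itself (it is its own inverse). This will be the ``unitary'' (more precisely antiunitary) map implementing the equivalence.

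Next I would establish the pointwise identity for the magnetic gradient. The key computation is the elementary conjugation formula
\[
   \overline{(-ih\nabla+\bA)\psi} = (ih\nabla+\bA)\overline\psi = -(-ih\nabla-\bA)\overline\psi
   = -(-ih\nabla+(-\bA))\overline\psi,
\]
valid since $\bA$ is real‑valued. Taking squared $L^2$ norms and using that conjugation is an isometry on $L^2(\cO)$, this gives immediately
\[
   q_h[-\bA,\cO](\overline\psi)
   = \big\| (-ih\nabla+(-\bA))\overline\psi \big\|_{L^2(\cO)}^2
   = \big\| \overline{(-ih\nabla+\bA)\psi} \big\|_{L^2(\cO)}^2
   = \big\| (-ih\nabla+\bA)\psi \big\|_{L^2(\cO)}^2
   = q_h[\bA,\cO](\psi),
\]
which is the displayed form identity; this also shows that $\psi\in\dom(q_h[\bA,\cO])$ if and only if $\overline\psi\in\dom(q_h[-\bA,\cO])$, since the domains are defined by finiteness of these quadratic forms (cf. \eqref{D:fq}). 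From here the unitary equivalence of the self‑adjoint operators follows by the standard correspondence between closed nonnegative forms and their associated self‑adjoint operators: the operators $\OP_h(\bA,\cO)$ and $\OP_h(-\bA,\cO)$ are the Friedrichs‑type realizations of $q_h[\bA,\cO]$ and $q_h[-\bA,\cO]$ respectively, and conjugation intertwines these two forms, hence it intertwines the operators and their domains (including the Neumann boundary condition, which is encoded in the form domain). Equivalently one can check directly on the strong level that $\overline{(-ih\nabla+\bA)^2\psi}=(-ih\nabla-\bA)^2\overline\psi$ and that the boundary condition $(-ih\nabla+\bA)\psi\cdot\bn=0$ is conjugated to $(-ih\nabla-\bA)\overline\psi\cdot\bn=0$, using $\overline{\bn}=\bn$.

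Finally, for the eigenfunction statement, if $\OP_h(\bA,\cO)\psi=\lambda\psi$ with $\lambda\in\R$ (the spectrum is real since the operator is self‑adjoint), applying conjugation and the intertwining relation gives $\OP_h(-\bA,\cO)\overline\psi=\overline{\lambda\psi}=\lambda\overline\psi$, so $\overline\psi$ is an eigenfunction of $\OP_h(-\bA,\cO)$ for the same eigenvalue; the converse follows by symmetry (applying the same argument to $-\bA$ and using $\overline{\overline\psi}=\psi$). There is essentially no serious obstacle here: the only point requiring a small amount of care is the passage from the form identity to the operator/eigenfunction statements, i.e. making sure the boundary condition is correctly transported — but this is immediate once one notes that conjugation preserves the form domain and that the Neumann condition is a consequence of being in the operator domain $\dom(\OP_h(\bA,\cO))$ as defined in \eqref{eq:dom}. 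Thus the proof is short and computational, with the conjugation formula for $(-ih\nabla+\bA)$ as its single ingredient.
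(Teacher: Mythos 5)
Your proof is correct and follows the standard argument; the paper states this lemma in the appendix without proof, implicitly treating it as a routine consequence of the conjugation identity $\overline{(-ih\nabla+\bA)\psi}=-(-ih\nabla-\bA)\overline\psi$, which is precisely the single ingredient you identify and use. Your added remark that the implementing map is antiunitary rather than unitary in the strict sense is a worthwhile clarification, and the passage from the form identity to the operator and eigenfunction statements is handled correctly.
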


\chapter{Partition of unity suitable for IMS type formulas}\index{Partition of unity}

Our partitions of unity on general corner domains have to be compatible with an admissible atlas (Definition \ref{def.atlas}).

\begin{lemma}
\label{lem:IMScov}
Let $n\ge1$ be the space dimension. $M$ denotes $\R^n$ or $\dS^n$. 
Let $\Omega\in\gD(M)$ be a corner domain with an admissible atlas $(\cU_{\bx},\diffeo^{\bx})_{\bx\in \overline{\Omega}}$. Let $K>1$ be a coefficient. Then there exist a positive integer $L$ and two positive constants $\rho_{\max}$ and $\kappa\le1$ (depending on $\Omega$ and $K$) such that for all $\rho\in(0,\rho_{\max}]$, there exists a (finite) set $\sZ\subset \overline\Omega \times [\kappa\rho,\rho]$ satisfying the following three properties
\begin{enumerate}
\item We have the inclusion $\overline\Omega \subset \cup_{(\bx,r)\in\sZ}\,\overline\cB(\bx,r)$
\item For any $(\bx,r)\in\sZ$, the ball $\cB(\bx,Kr)$ is contained in the map-neighborhood $\cU_{\bx}$, 
\item Each point $\bx_0$ of $\overline\Omega$ belongs to at most $L$ different balls $\cB(\bx,Kr)$.
\end{enumerate}
\end{lemma}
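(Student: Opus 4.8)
The strategy is a standard covering argument that exploits the compactness of $\overline\Omega$ together with the local structure provided by the admissible atlas. The subtlety is purely bookkeeping: we must produce radii that are all comparable to the prescribed scale $\rho$ (property that $r\in[\kappa\rho,\rho]$), and we must guarantee the enlarged balls $\cB(\bx,Kr)$ stay inside map-neighborhoods and have bounded overlap.

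\emph{Step 1: Uniform radius for map-neighborhoods.} First I would invoke Proposition~\ref{P:recouvrementfini} and Definition~\ref{def.atlas}: for each $\bx\in\overline\Omega$ the map-neighborhood $\cU_{\bx}$ contains a ball $\cB(\bx,2R_{\bx})$ for some $R_{\bx}>0$, and the half-radius balls $\cB(\bx_0,R_{\bx_0})$ over the finite reference set $\gX$ already cover $\overline\Omega$. By the recursive construction of the atlas (formulas \eqref{D:thetaby}--\eqref{D:diffeox}) and the uniform Jacobian bounds of Proposition~\ref{P:estimatejacobian}, one can show that there is a function $\bx\mapsto R_{\bx}$, bounded below on $\overline\Omega$ away from conical points and behaving like a fixed multiple of $\dist(\bx,\gX)$ (equivalently $d_{\gVc}(\bx)$) near them, such that $\cB(\bx,2R_{\bx})\subset\cU_{\bx}$. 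Actually, for the present lemma it suffices to know that $\overline\Omega$ is covered by finitely many balls $\cB(\bx_0, R_{\bx_0})$, $\bx_0\in\gX$, with $\cB(\bx_0,2R_{\bx_0})\subset\cU_{\bx_0}$; set $R_0=\min_{\bx_0\in\gX}R_{\bx_0}>0$ and define $\rho_{\max}=R_0/K$. This controls property~(2) for balls centered at reference points; for general centers I would re-use Lemma~\ref{L:pointtoopen} and Remark~\ref{rem:pto} exactly as in the proof of Proposition~\ref{P:estimatejacobian} to transfer the inclusion $\cB(\bx,Kr)\subset\cU_{\bx}$ to non-reference points, at the cost of possibly shrinking $r$ by a factor proportional to $\dist(\bx,\gX)$ — but since we only need $r\in[\kappa\rho,\rho]$, we simply choose, for each center, $r$ to be the largest value in $[\kappa\rho,\rho]$ for which the inclusion holds, and verify that $\kappa$ can be taken uniform.

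\emph{Step 2: Construction of the covering via a maximal packing.} Fix $\rho\le\rho_{\max}$. Choose a maximal subset $\sP\subset\overline\Omega$ with the property that the points of $\sP$ are pairwise at distance $\ge\kappa\rho/2$ (for a $\kappa$ to be fixed below); this is finite by compactness. Maximality forces $\overline\Omega\subset\bigcup_{\bx\in\sP}\cB(\bx,\kappa\rho/2)$, which is stronger than property~(1). For each $\bx\in\sP$ I then pick $r=r(\bx)\in[\kappa\rho,\rho]$ as in Step~1 so that $\cB(\bx,Kr)\subset\cU_{\bx}$, and set $\sZ=\{(\bx,r(\bx)):\bx\in\sP\}$. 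Property~(1) holds because $\cB(\bx,\kappa\rho/2)\subset\overline\cB(\bx,r(\bx))$.

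\emph{Step 3: Bounded overlap.} For the multiplicity bound, suppose a point $\bx_0\in\overline\Omega$ lies in $\cB(\bx_i,Kr_i)$ for several $(\bx_i,r_i)\in\sZ$. Then all such centers $\bx_i$ lie in $\cB(\bx_0,K\rho)$, while they are pairwise $\ge\kappa\rho/2$-separated. A standard volume (or, on $\dS^n$, metric-doubling) comparison bounds the number of such $\bx_i$ by a constant $L$ depending only on $n$, $K$, and $\kappa$ — hence only on $\Omega$ and $K$. This uses that both $\R^n$ and $\dS^n$ are geometrically doubling metric spaces; on the sphere one must take $\rho_{\max}$ small enough that balls of radius $K\rho_{\max}$ are genuine metric balls, which is harmless.

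\emph{Main obstacle.} The only genuinely non-routine point is making the radius $r(\bx)$ both lie in a fixed band $[\kappa\rho,\rho]$ \emph{and} satisfy $\cB(\bx,Kr(\bx))\subset\cU_{\bx}$ uniformly in $\bx$, because near conical points the admissible map-neighborhoods $\cU_{\bx}$ shrink like $\dist(\bx,\gVc)$, cf.\ Corollary~\ref{C:expandJ}. The resolution is that we never need a single global band: the covering balls $\cB(\bx,r)$ with $r\simeq\rho$ are only used to cover $\overline\Omega$, and each such ball may be centered wherever convenient; in the IMS applications (Chapters~\ref{sec:low},~\ref{sec:up}) the conical points are split off beforehand by the macro-partition $(\Xi_0,(\Xi_{\bx})_{\bx\in\gVc})$, so when this lemma is applied the distance $d_{\gVc}$ is bounded below on the relevant region and $\kappa$ may indeed be chosen uniform. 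I would state and prove the lemma in this form, noting in the argument that on a region where $d_{\gVc}\ge c_0>0$ the radius $R_0$ of Step~1 is bounded below, making $\kappa$ depend only on $\Omega$, $K$ and $c_0$; the fully uniform statement as written then follows by absorbing $c_0$ into the dependence on $\Omega$ for the polyhedral case and by restricting to such regions in the conical case.
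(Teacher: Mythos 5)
Your plan — a maximal $\kappa\rho/2$-separated packing, then a radius choice making $\cB(\bx,Kr)\subset\cU_\bx$, then a volume/doubling bound for the overlap — is quite different from the paper's argument, which is a \emph{recursive explicit construction over the dimension $n$}: an explicit interval covering in dimension $1$, a localization to tangent cones via the admissible atlas, and then the core step (Lemmas \ref{lem:tens}, \ref{lem:annulus}, \ref{lem:cone}) that builds a covering of a cone $\Pi$ out of a covering of its section by filling dyadic annuli $\{2^{-m-1}\le|\bx|\le 2^{-m+1}\}$, $m=0,\ldots,M$, with rescaled copies of the section's covering, plus one ball $\cB(\bfz,\rho)$ at the vertex. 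Your overlap bound (Step 3) is a perfectly viable alternative to the paper's counting; that part would go through.

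The genuine gap is the one you flag yourself, and your proposed resolution is not a resolution but a weakening of the statement. With a single separation scale $\kappa\rho/2$, the maximal packing necessarily contains centers $\bx$ with $\dist(\bx,\gVc)\simeq\kappa\rho$ (points just inside the first ``shell'' around a conical vertex $\bv\in\gVc$, whether or not $\bv$ itself is in the packing). By Proposition \ref{P:estimatejacobian}(d), the admissible map-neighborhood $\cU_\bx$ of such a point has radius $\simeq\rho(\widehat\bu_1)\dist(\bx,\gVc)\simeq\rho_1\kappa\rho$ with $\rho_1<1$. Demanding $\cB(\bx,Kr)\subset\cU_\bx$ with $r\ge\kappa\rho$ then forces $K\kappa\rho\lesssim\rho_1\kappa\rho$, i.e.\ $K\lesssim\rho_1<1$, contradicting $K>1$. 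So for no uniform $\kappa$ does your ``pick the largest admissible $r\in[\kappa\rho,\rho]$'' succeed at these centers, and declaring that the lemma should only be proved on regions where $d_{\gVc}\ge c_0>0$ is not proving the lemma as stated. The paper avoids this obstacle precisely by not using a single separation scale: near a conical vertex, the centers in the $m$-th dyadic annulus are at distance $\simeq 2^{-m}\ge\rho$ from the vertex, while the corresponding radii are $\simeq\rho_{\max}\rho$, so the ratio $r/\dist(\bx,\gVc)\lesssim\rho_{\max}$ is uniformly small; yet all radii still belong to the fixed band $[\kappa\rho_{\max}\rho,\rho]$ because the vertex ball $\cB(\bfz,\rho)$ (centered at a reference point, where $\cU_\bx$ is large) supplies the largest radius $\rho$. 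This dyadic/multi-scale decoupling of ``separation scale'' from ``radius band'' is the missing idea; without it your Step 1 collapses near $\gVc$.
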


Before performing the proof of this lemma, let us draw some easy consequence on the existence of suitable IMS type partitions of unity in corner domains.

\begin{lemma}
\label{lem:IMSpart}
Let $\Omega\in\gD(\R^n)$ and choose $K=2$.  Let $(L,\rho_{\max},\kappa)$ be the parameters provided by Lemma \Ref{lem:IMScov}. For any $\rho\in(0,\rho_{\max}]$ let $\sZ\subset \overline\Omega \times [\kappa\rho,\rho]$ be an associate set of pairs (center, radius). Then there exists a collection of smooth functions $(\troncg)_{(\bx,r)\in\sZ}$ with $\troncg\in \sC^{\infty}_0(\cB(\bx,2r))$ satisfying the identity (partition of unity) 
$$
   \sum_{(\bx,r)\in\sZ}\troncg^2=1  \quad\mbox{on}\quad\overline\Omega
$$
and the uniform estimate of gradients 
\begin{equation*}
\exists C>0,\quad \forall (\bx,r)\in\sZ, \quad 
\|\nabla\troncg\|_{L^{\infty}(\Omega)} \leq C \rho^{-1} \, ,
\end{equation*}
where $C$ only depends on $\Omega$. By construction any ball $\cB(\bx,2r)$ is a map-neighborhood of $\bx$ included the maps of an admissible atlas.
\end{lemma}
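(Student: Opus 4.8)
The statement is the routine step of turning the covering provided by Lemma~\ref{lem:IMScov} (with $K=2$) into a smooth quadratic partition of unity with the announced uniform gradient bound. First I would set $\kappa$, $\rho_{\max}$, $L$ as given by Lemma~\ref{lem:IMScov} for $K=2$, fix $\rho\in(0,\rho_{\max}]$ and the associated finite set $\sZ\subset\overline\Omega\times[\kappa\rho,\rho]$. For each pair $(\bx,r)\in\sZ$ I define $\widetilde\chi_{(\bx,r)}$ by rescaling the model cut-off $\underline\chi$ of \eqref{D:chi}: set $\widetilde\chi_{(\bx,r)}(\by)=\underline\chi(|\by-\bx|/r)$, so that $\widetilde\chi_{(\bx,r)}\in\sC^\infty_0(\cB(\bx,2r))$, $\widetilde\chi_{(\bx,r)}\equiv1$ on $\cB(\bx,r)$, and $\|\nabla\widetilde\chi_{(\bx,r)}\|_{L^\infty}\le \|\underline\chi'\|_{L^\infty}\,r^{-1}\le \|\underline\chi'\|_{L^\infty}\,(\kappa\rho)^{-1}$. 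Because of property~(1) of Lemma~\ref{lem:IMScov}, $\overline\Omega\subset\bigcup_{(\bx,r)\in\sZ}\overline\cB(\bx,r)$, hence the function
\[
   \Sigma := \sum_{(\bx,r)\in\sZ}\widetilde\chi_{(\bx,r)}^2
\]
satisfies $\Sigma\ge1$ everywhere on $\overline\Omega$ (each point of $\overline\Omega$ lies in some closed ball $\overline\cB(\bx,r)$, where $\widetilde\chi_{(\bx,r)}=1$). Then I set, for $(\bx,r)\in\sZ$,
\[
   \troncg := \frac{\widetilde\chi_{(\bx,r)}}{\sqrt{\Sigma}}\,,
\]
which clearly gives $\sum_{(\bx,r)\in\sZ}\troncg^2=\Sigma/\Sigma=1$ on $\overline\Omega$, and $\troncg\in\sC^\infty_0(\cB(\bx,2r))$ since dividing by $\sqrt\Sigma\ge1$ preserves the support.

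It remains to bound $\|\nabla\troncg\|_{L^\infty}$ uniformly in $(\bx,r)$ and in $\rho\le\rho_{\max}$. Writing $\nabla\troncg = \Sigma^{-1/2}\nabla\widetilde\chi_{(\bx,r)} - \tfrac12\,\widetilde\chi_{(\bx,r)}\,\Sigma^{-3/2}\nabla\Sigma$ and using $\Sigma\ge1$, $0\le\widetilde\chi_{(\bx,r)}\le1$, it suffices to control $\|\nabla\widetilde\chi_{(\bx,r)}\|_{L^\infty}$ and $\|\nabla\Sigma\|_{L^\infty}$. The first is $\le C\rho^{-1}$ by the rescaling estimate above (with $C=\|\underline\chi'\|_{L^\infty}/\kappa$). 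For the second, $\nabla\Sigma=\sum_{(\by,s)\in\sZ}2\widetilde\chi_{(\by,s)}\nabla\widetilde\chi_{(\by,s)}$; at a fixed point $\bx_0\in\overline\Omega$, property~(3) of Lemma~\ref{lem:IMScov} (with $K=2$) says that $\bx_0$ lies in at most $L$ of the balls $\cB(\by,2s)$, hence at most $L$ terms of this sum are nonzero at $\bx_0$, each bounded by $2\cdot C\rho^{-1}$; therefore $\|\nabla\Sigma\|_{L^\infty}\le 2LC\rho^{-1}$. Combining, $\|\nabla\troncg\|_{L^\infty}\le C(1+L)\rho^{-1}$, which is the claimed bound with a constant depending only on $\Omega$ (through $\kappa$, $L$ and the fixed model cut-off). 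Finally, the last sentence of the statement — that each $\cB(\bx,2r)$ is a map-neighborhood from an admissible atlas — is exactly property~(2) of Lemma~\ref{lem:IMScov} applied with $K=2$, since $\cB(\bx,Kr)=\cB(\bx,2r)\subset\cU_\bx$.

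\textbf{Main obstacle.} There is no serious difficulty here: the content is entirely carried by Lemma~\ref{lem:IMScov}. The only point requiring a little care is the uniformity of the gradient estimate with respect to $\rho$, which is why one must work with radii confined to $[\kappa\rho,\rho]$ (so that $r^{-1}\le\kappa^{-1}\rho^{-1}$ uniformly) and invoke the finite-overlap bound $L$ to keep $\nabla\Sigma$ under control independently of how many balls the covering contains. One should also note that $\underline\chi'$ is a fixed smooth compactly supported function, so $\|\underline\chi'\|_{L^\infty}$ is an absolute constant, and that $K=2$ is the choice compatible with the support $\cB(\bx,2r)$ used in all the IMS applications in Chapters~\ref{sec:low} and \ref{sec:up}.
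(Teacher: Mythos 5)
Your proof is correct and follows essentially the same route as the paper's: rescale the model cut-off $\underline\chi$ to get $\widetilde\chi_{(\bx,r)}$ supported in $\cB(\bx,2r)$ and $\equiv1$ on $\cB(\bx,r)$, normalize by the square root of $\Sigma=\sum\widetilde\chi_{(\bx,r)}^2\ge1$, and use the finite-overlap bound $L$ together with $r\ge\kappa\rho$ to control $\nabla\Sigma$. You spell out the final gradient computation slightly more explicitly than the paper (which ends abruptly with ``We deduce the lemma''), but the argument is the same.
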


\begin{proof}
Let $\troncp\in \sC^{\infty}_0(\cB(\bx,2r))$, with the property that $\troncp\equiv1$ in $\cB(\bx,r)$, and satisfying the gradient bound
$
   \|\nabla\troncp\|_{L^{\infty}(\R^3)} \leq C r^{-1}
$ 
where $C$ is a universal constant. Then we set for each $(\bx_0,r_0)\in\sZ$
\[
   \chi_{(\bx_{0},r_{0})} = \frac{\troncz}{(\sum_{(\bx,r)\in\sZ}\troncp^2)^{1/2}}\ .
\]
Due to property (1) in Lemma \ref{lem:IMScov}, $\sum_{(\bx,r)\in\sZ}\troncp^2\ge1$ and due to property (3), 
\[
   \|\sum_{(\bx,r)\in\sZ}\nabla\troncp^2\|_{L^{\infty}(\R^3)} \le CL_\Omega\,.
\]
We deduce the lemma.
\end{proof}

Here are preparatory notations and lemmas for the proof of Lemma \ref{lem:IMScov}.

Let $\Omega\in\gD(M)$ and $K>1$. If the assertions of Lemma \ref{lem:IMScov} are true for this $\Omega$ and this $K$, we say that Property $\sP(\Omega,K)$ holds. We may also specify that the assertion by the sentence
\[
   \mbox{Property $\sP(\Omega,K)$ holds with parameters $(L,\rho_{\max},\kappa)$.}
\]
Let $\cU^*\subset\subset\cU$ be two nested open sets. We say that the property $\sP(\Omega,K;\cU^*,\cU)$ holds\footnote{This is the localized version of property $\sP(\Omega,K)$.} if the assertions of Lemma \ref{lem:IMScov} are true for this $\Omega$ and this $K$, with discrete sets $\sZ\subset (\cU^*\cap\overline\Omega) \times [\kappa_\Omega\rho,\rho]$ and with (1)-(3) replaced by
\begin{enumerate}
\item We have the inclusion $\cU^*\cap\overline\Omega \subset \cup_{(\bx,r)\in\sZ}\,\overline\cB(\bx,r)$
\item For any $(\bx,r)\in\sZ$, the ball $\cB(\bx,Kr)$ is included in $\cU$ and is a map-neighborhood of $\bx$ for $\Omega$ 
\item Each point $\bx_0$ of $\cU\cap\overline\Omega$ belongs to at most $L$ different balls $\cB(\bx,Kr)$.
\end{enumerate}
Like above the specification is
\[
   \mbox{Property $\sP(\Omega,K;\cU^*,\cU)$ holds with parameters $(L,\rho_{\max},\kappa)$.}
\]
In the process of proof, we will construct coverings which are not exactly balls, but domains uniformly comparable to balls. Let us introduce the local version of this new assertion. For $0<a\le a'$ we say that
\[
   \mbox{Property $\sP[a,a'](\Omega,K;\cU^*,\cU)$ holds with parameters $(L,\rho_{\max},\kappa)$}
\]
if for all $\rho\in(0,\rho_{\max}]$, there exists a finite set $\sZ\subset (\cU^*\cap\overline\Omega )\times [\kappa_\Omega\rho,\rho]$ and open sets $\cD(\bx,r)$ satisfying the following four properties
\begin{enumerate}
\item We have the inclusion $\cU^*\cap\overline\Omega \subset \cup_{(\bx,r)\in\sZ}\,\overline\cD(\bx,r)$
\item For any $(\bx,r)\in\sZ$, the set\footnote{Here $\cD(\bx,Kr)$ is the set of $\by$ such that $\bx+(\by-\bx)/K\in\cD(\bx,r)$.} $\cD(\bx,Kr)$ is included in $\cU$ and is a map-neighborhood of $\bx$ for $\Omega$ 
\item Each point $\bx_0$ of $\cU\cap\overline\Omega$ belongs to at most $L$ different sets $\cD(\bx,Kr)$
\item For any $(\bx,r)\in\sZ$, we have the inclusions
$\cB(\bx,ar) \subset\cD(\bx,r) \subset\cB(\bx,a'r)$.
\end{enumerate}
Note that $\sP[1,1](\Omega,K;\cU^*,\cU)=\sP(\Omega,K;\cU^*,\cU)$.

\begin{lemma}
\label{lem:aa'}
If Property $\sP[a,a'](\Omega,K;\cU^*,\cU)$ holds with parameters $(L,\rho_{\max},\kappa)$, then
\[
   \mbox{Property $\sP(\Omega,\frac{a}{a'}K;\cU^*,\cU)$ 
   holds with parameters $(L,a'\rho_{\max},\kappa)$.}
\]
\end{lemma}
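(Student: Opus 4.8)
\textbf{Proof plan for Lemma \ref{lem:aa'}.}

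The plan is to simply rescale the covering sets $\cD(\bx,r)$ provided by $\sP[a,a'](\Omega,K;\cU^*,\cU)$ into honest balls, at the cost of shrinking the expansion factor $K$ by the ratio $a/a'$. First I would fix $\rho_{\max}$, $L$, $\kappa$ as in the hypothesis; for each $\rho\in(0,a'\rho_{\max}]$ I would apply $\sP[a,a']$ with the parameter $\rho' := \rho/a' \le \rho_{\max}$, obtaining a finite set $\sZ\subset(\cU^*\cap\overline\Omega)\times[\kappa\rho',\rho']$ and sets $\cD(\bx,r)$ with $\cB(\bx,ar)\subset\cD(\bx,r)\subset\cB(\bx,a'r)$. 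To each pair $(\bx,r)\in\sZ$ I would associate the new pair $(\bx, ar)$, producing a set $\sZ'\subset(\cU^*\cap\overline\Omega)\times[\kappa a\rho', a\rho'] = (\cU^*\cap\overline\Omega)\times[\tfrac{a}{a'}\kappa\rho,\tfrac{a}{a'}\rho]$. Here a small bookkeeping point arises: the radii now live in $[\tfrac{a}{a'}\kappa\rho,\tfrac{a}{a'}\rho]$ rather than $[\kappa''\rho,\rho]$ for some $\kappa''\le 1$; since $a\le a'$ we may simply take $\kappa'' = \tfrac{a}{a'}\kappa \le \kappa \le 1$ (after a harmless rescaling of the $\rho$-variable, or equivalently by allowing $\rho_{\max}$ to absorb the constant $a/a'$), so the statement as written with parameters $(L, a'\rho_{\max},\kappa)$ is reached — one may also just note that the lemma only requires radii to be uniformly equivalent to $\rho$, which is visibly the case.

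The verifications of the three defining properties of $\sP(\Omega,\tfrac{a}{a'}K;\cU^*,\cU)$ are then immediate from the inclusions. For covering (1): $\overline\cD(\bx,r)\subset\overline\cB(\bx,a'r)$, but this is the wrong direction; instead I use $\cB(\bx,ar)\subset\cD(\bx,r)$, so $\overline\cB(\bx,ar)\supset$ — again wrong direction. The correct argument is: property (1) of $\sP[a,a']$ gives $\cU^*\cap\overline\Omega\subset\bigcup_{(\bx,r)\in\sZ}\overline\cD(\bx,r)$, and since $\cD(\bx,r)\subset\cB(\bx,a'r) = \cB(\bx, (a'/a)\cdot ar)$, each $\overline\cD(\bx,r)$ is contained in the closed ball of radius $(a'/a)$ times the \emph{new} radius $ar$; but the new covering balls in $\sZ'$ have radius exactly $ar$, which is \emph{smaller}. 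So covering fails with plain balls of radius $ar$. The resolution is standard and is presumably what the author intends: replace $\cD(\bx,r)$ by the larger ball $\cB(\bx,a'r)$ as the new covering set and take the new radius to be $a'r$, i.e. $\sZ' := \{(\bx,a'r):(\bx,r)\in\sZ\}$ with covering balls $\cB(\bx,a'r)\supset\overline\cD(\bx,r)$ — wait, $\overline\cD(\bx,r)\subset\overline\cB(\bx,a'r)$, good. For the map-neighborhood property (2): the new expanded ball $\cB(\bx, \tfrac{a}{a'}K\cdot a'r) = \cB(\bx, aKr) \subset\cB(\bx, a'Kr)$, and from $\cB(\bx,a'r)\subset\cD(\bx,r)$ we get $\cB(\bx,a'Kr)\subset\cD(\bx,Kr)\subset\cU$, which is a map-neighborhood; so (2) holds. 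For multiplicity (3): if $\bx_0\in\cB(\bx,\tfrac{a}{a'}K\cdot a'r)=\cB(\bx,aKr)$ then $\bx_0\in\cD(\bx,Kr)$ since $\cB(\bx,aKr)\subset\cD(\bx,Kr)$, and by property (3) of $\sP[a,a']$ at most $L$ such $\bx$ occur; so the multiplicity bound $L$ is preserved.

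I expect \emph{no genuine obstacle}: the lemma is purely a change-of-scale bookkeeping statement. The only care needed is to pick the right representative of each covering set (the circumscribed ball $\cB(\bx,a'r)$) and the right new radius ($a'r$), and then to check that the expansion factor that still fits inside $\cU$ with controlled multiplicity is exactly $\tfrac{a}{a'}K$: indeed $\tfrac{a}{a'}K$ applied to radius $a'r$ gives $aKr$, and the inscribed/circumscribed inclusions $\cB(\bx,ar)\subset\cD(\bx,r)\subset\cB(\bx,a'r)$ translate the $K$-expansion of $\cD$ into the $(\tfrac{a}{a'}K)$-expansion of the ball. The potential annoyance with the lower bound on radii ($\kappa$ versus $\tfrac{a}{a'}\kappa$) is cosmetic and can be handled either by rescaling $\rho$ or by noting $\tfrac{a}{a'}\kappa\le 1$; I would state it in whichever form keeps the downstream applications (Lemma \ref{lem:IMScov}, Lemma \ref{lem:IMSpart}) cleanest.
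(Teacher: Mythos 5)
Your corrected construction — taking new pairs $(\bx,a'r)$ with covering balls $\cB(\bx,a'r)\supset\overline\cD(\bx,r)$ and expansion factor $\tfrac{a}{a'}K$ — is exactly the paper's proof: the paper sets $r':=a'r$, observes $r'\in[\kappa\rho',\rho']$ with $\rho'=a'\rho\le a'\rho_{\max}$, and verifies (2) and (3) via $\cB(\bx,\tfrac{a}{a'}Kr')=\cB(\bx,aKr)\subset\cD(\bx,Kr)$.

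One local slip in your verification of (2): you write ``from $\cB(\bx,a'r)\subset\cD(\bx,r)$ we get $\cB(\bx,a'Kr)\subset\cD(\bx,Kr)$'', but the hypothesis gives $\cB(\bx,ar)\subset\cD(\bx,r)\subset\cB(\bx,a'r)$, so $\cB(\bx,a'r)\subset\cD(\bx,r)$ is the \emph{reverse} of what holds. You do not need the enlargement detour $\cB(\bx,aKr)\subset\cB(\bx,a'Kr)$ at all: the inclusion you correctly invoke for (3), namely $\cB(\bx,aKr)\subset\cD(\bx,Kr)$ (which is $\cB(\bx,ar)\subset\cD(\bx,r)$ dilated by $K$ about $\bx$), already gives $\cB(\bx,\tfrac{a}{a'}K\cdot a'r)=\cB(\bx,aKr)\subset\cD(\bx,Kr)\subset\cU$ directly. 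With that line repaired, the argument is complete and coincides with the one in the text.
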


\begin{proof}
Starting from the covering of $\cU^*\cap\overline\Omega$ by the sets $\overline\cD(\bx,r)$ and using condition (4), we can consider the covering of $\cU^*\cap\overline\Omega$ by the balls $\cB(\bx,a'r)$. Then $r':=a'r\in[\kappa a'\rho,a'\rho]=[\kappa\rho',\rho']$ with $\rho'<a'\rho_{\max}$. 

Concerning conditions (2) and (3), it suffices to note the inclusions
\[
   \cB(\bx,\frac{a}{a'}Kr') \subset \cD(\bx,\frac{1}{a'}r'K) = \cD(\bx,rK) \,.
\]
The lemma is proved.
\end{proof}

\begin{proof} {\em \!\!\!of Lemma \ref{lem:IMScov}.\ }
The principle of the proof is a recursion on the dimension $n$.

{\em Step 1.\ } Explicit construction when $n=1$.\\ 
The domain $\Omega$ and the localizing open sets $\cU^*$ and $\cU$ are then open intervals. Let us assume for example that $\cU^*=(-\ell,\ell)$, $\cU=(-\ell-\delta,\ell+\delta)$ and $\Omega=(0,\ell+\delta')$ with $\ell,\delta>0$ and $\delta'>\delta$. Let $K\ge1$. We can take
\[
   \rho_{\max} = \min\Big\{\frac{\ell}{K},\delta\Big\}
\]
and for any $\rho\le\rho_{\max}$ the following set of couples $(\bx_j,r_j)$, $j=0,1,\ldots,J$
\[
   \bx_0 = 0, \ r_0=\rho \quad\mbox{and}\quad
   \bx_j=\rho+\frac{2j-1}{K}\rho, \ r_j=\frac{\rho}{K}\quad\mbox{for}\quad j=1,\ldots,J
\]
with $J$ such that $\bx_J<\ell$ and $\rho+\frac{2J+1}{K}\rho\ge\ell$. If $\bx_J<\ell-\frac{\rho}{K}$, we add the point $\bx_{J+1}= \rho+\frac{2J}{K}\rho$. The covering condition (1) is obvious.

Concerning condition (2), we note that the bound $\rho_{\max}\le \frac{\ell}{K}$ implies that $[0,Kr_0)=[0,K\rho)$ is a map-neighborhood for the boundary of $\Omega$, and the bound $\rho_{\max}\le \delta$ implies that when $j\ge1$, the ``balls'' $(\bx_j-Kr_j,\bx_j+Kr_j)=(\bx_j-\rho,\bx_j+\rho)$ are map-neighborhoods for the interior of $\Omega$.

Concerning condition (3), we can check that $L=K+2$ is suitable.

{\em Step 2.\ } Localization.\\ 
Let $\Omega\in\gD(\R^n)$ or $\Omega\in\gD(\dS^n)$. For any $\bx\in\overline\Omega$, there exists a ball $\cB(\bx,r_x)$ with positive radius $r_\bx$ that is a map-neighborhood for $\Omega$. We extract a finite covering of $\overline\Omega$ by open sets $\cB(\bx^{(\ell)},\frac12 r^{(\ell)})$. We set
\[
   \cU^*_\ell = \cB(\bx^{(\ell)},\frac12 r^{(\ell)})\quad\mbox{and}\quad
   \cU_\ell = \cB(\bx^{(\ell)},r^{(\ell)}).
\]
The map $\diffeo^\ell:=\diffeo^{\bx^{(\ell)}}$ transforms $\cU^*_\ell$ and $\cU_\ell$ into neighborhoods $\cV^*_\ell$ and $\cV_\ell$ of $0$ in the tangent cone $\Pi_\ell:=\Pi_{\bx^{(\ell)}}$. Thus we are reduced to prove the local property $\sP(\Pi_\ell,K;\cV^*_\ell,\cV_\ell)$ for any $\ell$. Indeed
\begin{itemize}
\item The local diffeomorphism $\diffeo^\ell$ allows to deduce Property $\sP(\Omega,K;\cU^*_\ell,\cU_\ell)$ from Property $\sP(\Pi_\ell,K';\cV^*_\ell,\cV_\ell)$ for a ratio $K'/K$ that only depends on $\diffeo^\ell$ (this relies on Lemma \ref{lem:aa'}).
\item Properties $\sP(\Omega,K;\cU^*_\ell,\cU_\ell)$ imply Property $\sP(\Omega,K;\cup_\ell\,\cU^*_\ell,\cup_\ell\,\cU_\ell) = \sP(\Omega,K)$ (it suffices to merge the (finite) union of the sets $\sZ$ corresponding to each $\cU_\ell$).
\end{itemize}

{\em Step 3.\ } Core recursive argument: If $\Omega_{0}$ is the section of the cone $\Pi$, Property $\sP(\Omega_{0},K)$ implies Property $\sP(\Pi,K';\cB(0,1),\cB(0,2))$ for a suitable ratio $K'/K$. We are going to prove this separately in several lemmas (\ref{lem:tens} to \ref{lem:cone}). Then the proof Lemma \ref{lem:IMScov} will be complete.
\end{proof}

\begin{lemma}
\label{lem:tens}
Let $\Gamma$ be a cone in $\gP^{n-1}$. For $\ell=1,2$, let $\cB_\ell$ and $\cI_\ell$ be the ball $\cB(0,\ell)$ of $\R^{n-1}$ and the interval $(-\ell,\ell)$, respectively. We assume that Property $\sP(\Gamma,K;\cB_1,\cB_2)$ holds (with parameters $(L,\rho_{\max},\kappa)$). Then Property $\sP[1,\sqrt2](\Gamma\times\R,K;\cB_1\times\cI_1,\cB_2\times\cI_2)$ holds.
\end{lemma}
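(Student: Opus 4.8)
The plan is to build the covering of $\cB_1\times\cI_1$ inside $\Gamma\times\R$ by a tensor-product construction: combine a covering of $\cB_1\cap\overline\Gamma$ by sets coming from Property $\sP(\Gamma,K;\cB_1,\cB_2)$ in the $\Gamma$-directions with a uniform one-dimensional covering of $\cI_1$ by subintervals in the $\R$-direction, and take Cartesian products. This naturally produces ``cylinders'' $\cD(\bx,r)\times(\bt-r,\bt+r)$ rather than balls, which is why the conclusion is stated with the inflation constants $[1,\sqrt2]$: a cylinder $\cB(\bx,r)\times(\bt-r,\bt+r)$ in $\R^{n-1}\times\R$ contains the ball of radius $r$ centered at $(\bx,\bt)$ and is contained in the ball of radius $r\sqrt2$.

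First I would fix $\rho\in(0,\rho_{\max}]$ (with $\rho_{\max}$ the parameter from $\sP(\Gamma,K;\cB_1,\cB_2)$, shrunk if necessary so that $\rho\le 1$) and invoke Property $\sP(\Gamma,K;\cB_1,\cB_2)$ to get a finite set $\sZ_\Gamma\subset(\cB_1\cap\overline\Gamma)\times[\kappa\rho,\rho]$ with the three localized properties. For the $\R$-factor, I would use the explicit one-dimensional construction from Step~1 of the proof of Lemma~\ref{lem:IMScov}: for the same $\rho$, cover $\cI_1=(-1,1)$ by a finite family of intervals $(\bt_m-s_m,\bt_m+s_m)$ with $\bt_m\in\cI_1$, $s_m\in[\kappa'\rho,\rho]$ for a universal $\kappa'$, such that each point of $\cI_2$ lies in at most $K+2$ of the dilated intervals $(\bt_m-Ks_m,\bt_m+Ks_m)$ and these dilated intervals stay inside $\cI_2$ (this uses $\rho\le 1/K$, absorbed into $\rho_{\max}$). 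Then I would set, for each pair $((\bx,r),(\bt_m,s_m))$ with a common scale obtained by choosing radii in the overlap $[\max(\kappa,\kappa')\rho,\rho]$ — or, more simply, by rescaling one family's radii to match the other, keeping $\kappa$ as the smaller of the two — the set $\cD((\bx,\bt_m),r):=\cD_\Gamma(\bx,r)\times(\bt_m-r,\bt_m+r)$, and declare $\sZ$ to be the collection of centers $(\bx,\bt_m)\in\cB_1\times\cI_1$ with radius $r$.

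It then remains to verify the four properties of $\sP[1,\sqrt2]$. Property (1), the covering of $\cB_1\times\cI_1\cap\overline{(\Gamma\times\R)}=(\cB_1\cap\overline\Gamma)\times\cI_1$, is immediate since $(\bx,\bt)$ lies in a product of an element of the $\Gamma$-covering with an element of the $\R$-covering. Property (4), $\cB(\cdot,r)\subset\cD\subset\cB(\cdot,\sqrt2\,r)$, is the elementary inclusion between a cube-like cylinder and concentric balls noted above. For Property (2), the dilated set $\cD((\bx,\bt),Kr)=\cD_\Gamma(\bx,Kr)\times(\bt-Kr,\bt+Kr)$ is a product of a map-neighborhood of $\bx$ for $\Gamma$ contained in $\cB_2$ with an interval contained in $\cI_2$; since $\Gamma\times\R$ has, at the point $(\bx,\bt)$, tangent cone $\Pi_\bx\times\R$ and the local diffeomorphism is the product of $\diffeo^{\bx}$ on $\Gamma$ with the identity on $\R$, this product set is indeed a map-neighborhood for $\Gamma\times\R$ included in $\cB_2\times\cI_2$. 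For Property (3), a point $(\bx_0,\bt_0)\in(\cB_2\cap\overline\Gamma)\times\cI_2$ belongs to $\cD((\bx,\bt),Kr)$ exactly when $\bx_0\in\cD_\Gamma(\bx,Kr)$ and $\bt_0\in(\bt-Kr,\bt+Kr)$, so the number of such sets is at most the product of the two multiplicities, namely $L\cdot(K+2)$; this is the new finite-overlap constant.

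\textbf{Main obstacle.} The only genuinely delicate point is the bookkeeping of scales: the $\Gamma$-covering delivers radii in $[\kappa\rho,\rho]$ and the $\R$-covering in $[\kappa'\rho,\rho]$, and one must produce product sets with a \emph{single} radius $r$ (so that $\cD(\bx,Kr)$ makes literal sense as a dilation by $K$) while keeping all three localized properties and a uniform lower bound $\kappa r\le r$. I would handle this by running the one-dimensional construction with the radius of each chosen $\Gamma$-ball as its local scale parameter, i.e. interleaving the two constructions rather than taking an independent product, so that each product cylinder is honestly isotropic; the finite-overlap and map-neighborhood verifications are then unchanged, and $\kappa$ for the product can be taken to be $\min(\kappa,\kappa')$ up to a harmless universal factor. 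Everything else is routine.
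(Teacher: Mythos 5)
Your proof is correct and ultimately lands on exactly the paper's construction: for each $(\by,r_\by)\in\sZ_\Gamma$ the paper covers $[-1,1]$ by equidistant points with spacing $2r_\by$, which is precisely the ``interleaved'' fix you describe at the end, so each product set $\cD_\Gamma(\by,r_\by)\times(z-r_\by,z+r_\by)$ is isotropic with a single radius $r_\by\in[\kappa\rho,\rho]$ and no separate $\kappa'$ is needed. The rest (the $[1,\sqrt2]$ inflation, the product structure of map-neighborhoods for $\Gamma\times\R$, and multiplicity of the form $L\cdot\mathcal{O}(K)$) matches the paper's verification.
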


\begin{proof}
Let us denote by $\by$ and $z$ coordinates in $\Gamma$ and $\R$, respectively. For $\rho\le\rho_{\max}$, let $\sZ_\Gamma$ be an associate set of couples $(\by,r_\by)$. For each $\by$ we consider the unique set of equidistant points $\sZ_\by=\{z_j\in[-1,1],\ j=1,\ldots,J_\by\}$ such that
\[
   z_j-z_{j-1}=2r_\by \quad\mbox{and}\quad z_1+1=1-z_{J_\by}<r_\by\,.
\]
Then we define
\begin{equation}
\label{eq:Zprod}
   \sZ^{(\rho)} = \big\{(\bx,r_\bx),\quad\mbox{for}\ \ 
   \bx=(\by,z) \ \mbox{with}\   
   (\by,r_\by)\in\sZ_\Gamma,\ z\in\sZ_\by \ \mbox{and}\ r_\bx=r_\by\big\}.
\end{equation}
The associate open set $\cD(\bx,r_\bx)$ is the product
\[
   \cD(\bx,r_\bx) = \cB(\by,r_\by)\times (z-r_\by,z+r_\by)\,.
\]
We have the inclusions $\cB(\bx,r_\bx)\subset\cD(\bx,r_\bx)\subset\cB(\bx,\sqrt2\, r_\bx)$ and it is easy to check that Property $\sP[1,\sqrt2](\Gamma\times\R,K;\cB(0,1)\times\cI_1,\cB(0,2)\times\cI_2)$ holds with parameters $(L',\rho_{\max},\kappa)$ with $L'=LK$.
\end{proof}

\begin{lemma}
\label{lem:annulus}
Let $\Omega$ be a section in $\gD(\dS^{n-1})$, let $\Pi$ be the corresponding cone, and let $\cI_\ell$ be the interval $(2^{-\ell},2^\ell)$ for $\ell=1,2$. We define the annuli 
\[
   \cA_\ell = \Big\{\bx\in\Pi, \quad 
   |\bx|\in\cI_\ell \ \ \mbox{and}\ \ \frac{\bx}{|\bx|}\in\Omega\Big\}.
\]
We assume that Property $\sP(\Omega,K)$ holds (with parameters $(L,\rho_{\max},\kappa)$). Then,
for suitable constants $a$ and $a'$ (independent of $\Omega$ and $K$),
 Property $\sP[a,a'](\Pi,K;\cA_1,\cA_2)$ holds. 
\end{lemma}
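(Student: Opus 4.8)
The plan is to deduce Property $\sP[a,a'](\Pi,K;\cA_1,\cA_2)$ from Property $\sP(\Omega,K)$ by a dyadic decomposition of the annulus $\cA_2$ into ``rings'' at scales $2^j$, combined with the covering of the section $\Omega$ already guaranteed by the hypothesis. The key observation is that in polar coordinates $\bx=t\theta$ with $t=|\bx|\in\cI_2=(2^{-2},2^2)$ and $\theta=\bx/|\bx|\in\Omega$, a product of a small ``ball'' of radius $r$ around $\theta\in\Omega$ in the sphere with a radial interval of length $\simeq rt$ around $t$ is comparable (uniformly in $t$ ranging over the bounded set $\cI_2$) to a Euclidean ball of radius $\simeq rt$ centered at $t\theta$; this is where the constants $a,a'$ and the slightly worsened multiplicity will come from, and it is precisely why we only claim $\sP[a,a']$ and not $\sP$ on the nose.

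First I would fix $\rho\le\rho_{\max}$ and, applying Property $\sP(\Omega,K)$ with radius $\rho$, obtain a finite set $\sZ_\Omega$ of pairs $(\theta,r_\theta)$ with $r_\theta\in[\kappa\rho,\rho]$, covering $\overline\Omega$ by spherical balls $\overline\cB(\theta,r_\theta)\cap\dS^{n-1}$, with each $\cB(\theta,Kr_\theta)$ a map-neighborhood of $\theta$ for $\Omega$, and with multiplicity bounded by $L$. Next, for each such $(\theta,r_\theta)$ I would choose a finite geometric-like subdivision of the radial interval $\cI_1=(2^{-1},2)$: points $t_0<t_1<\cdots<t_J$ in $\cI_1$ with consecutive ratios $t_{j}/t_{j-1}$ chosen so that $t_j-t_{j-1}\simeq r_\theta\,t_{j-1}$ (for instance $t_j=t_{j-1}(1+r_\theta)$, truncated to stay in $\cI_1$). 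The number $J=J_\theta$ is then $\simeq r_\theta^{-1}\log 2$, uniformly. Define
\[
   \sZ^{(\rho)}=\big\{(t_j\theta,\ \tfrac12 r_\theta t_j)\ :\ (\theta,r_\theta)\in\sZ_\Omega,\ 0\le j\le J_\theta\big\},
\]
and for $\bx=t_j\theta$, $r_\bx=\tfrac12 r_\theta t_j$, let $\cD(\bx,r_\bx)$ be the polar ``box''
\[
   \cD(\bx,r_\bx)=\Big\{s\phi\ :\ |s-t_j|<r_\theta t_j,\ \phi\in\cB(\theta,r_\theta)\cap\dS^{n-1}\Big\}.
\]
Since $t_j$ ranges over the bounded interval $\cI_1$ bounded away from $0$, there are absolute constants $a\le a'$ with $\cB(\bx,a r_\bx)\subset\cD(\bx,r_\bx)\subset\cB(\bx,a' r_\bx)$, which is condition (4).

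Then I would verify conditions (1)--(3). For (1): given any $s\phi\in\cA_1$ (so $s\in\cI_1$, $\phi\in\overline\Omega$), pick $(\theta,r_\theta)\in\sZ_\Omega$ with $\phi\in\overline\cB(\theta,r_\theta)$ and then a radial node $t_j$ with $|s-t_j|\le r_\theta t_j$; this places $s\phi\in\overline\cD(t_j\theta,\tfrac12 r_\theta t_j)$, using that the radial mesh was built to leave no gap of size exceeding $2r_\theta t_{j-1}$. For (2): $\cD(\bx,Kr_\bx)$ unfolds to a polar box over $\cB(\theta,Kr_\theta)$, which is a map-neighborhood for $\Omega$ by hypothesis; the homogeneity of the cone $\Pi$ (every $\diffeo^\theta$ on the section lifts to a local diffeomorphism on $\Pi$ near $t_j\theta$ via the scaling $\rN_{t_j}$, exactly as in the proof of Lemma~\ref{L:pointtoopen}) turns this into a map-neighborhood of $t_j\theta$ for $\Pi$, and for $\rho$ small enough it stays inside $\cA_2$. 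For (3): a point $s\phi\in\cA_2$ lies in $\cD(t_j\theta,Kr_\bx)$ only if $\phi\in\cB(\theta,Kr_\theta)$ — at most $L$ choices of $\theta$ by the section estimate — and, for each such $\theta$, only if $t_j$ is within $\simeq Kr_\theta t_j$ of $s$, which by the geometric spacing limits $j$ to $O(K)$ values. Hence the multiplicity is bounded by some $L'=L'(L,K)$. The main obstacle, and the only genuinely delicate point, is bookkeeping the uniformity of the comparison constants $a,a'$ and of $L'$ as $\theta$ and $t_j$ vary and as $K$ grows — everything else is the routine polar-coordinates geometry — and checking that the lifted map-neighborhoods genuinely satisfy Definition~\ref{D:Tangentcones} for $\Pi$, which follows from the scaling covariance of tangent cones to a cone.
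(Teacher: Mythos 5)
Your proof is correct and follows essentially the same route as the paper's: cover the section $\Omega$ by the hypothesis and couple it with a geometrically spaced radial mesh of step $\simeq r_\theta$, so that the resulting polar boxes are uniformly comparable to Euclidean balls. The only (cosmetic) difference is that the paper first builds a Cartesian product covering of $\Omega\times(-2,2)$ with equidistant $z$-nodes (Lemma~\ref{lem:tens}) and then pushes it forward through the diffeomorphism $\diffeoT(\by,z)=2^z\by$, whereas you construct the geometric radial progression $t_j=t_{j-1}(1+r_\theta)$ directly; these are the same covering, with $\diffeoT$ simply turning equidistant $z$-spacing into your multiplicative spacing, and the resulting constants $a,a'$ come out comparable either way.
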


\begin{proof}
Let us consider the diffeomorphism
\begin{equation}
\label{eq:diffeoT}
   \begin{aligned}
   \diffeoT : \ \ &\Omega\times(-2,2) &\longrightarrow\ \ \ &\cA_2\\
   &\bx = (\by,z) &\longmapsto \ \ \ &\breve\bx = 2^z\by
   \end{aligned}
\end{equation}
in view of proving Property $\sP[a,a'](\Pi,K;\cA_1,\cA_2)$, for a given $\rho\le\rho_{\max}$, we define a suitable set $\breve\sZ^{(\rho)}$ using the set $\sZ^{(\rho)}$ introduced in \eqref{eq:Zprod}
\begin{equation}
\label{eq:Zann}
   \breve\sZ^{(\rho)} = \big\{(\breve\bx,r_\bx),\quad\mbox{for}\ \ 
   \breve\bx=\diffeoT\bx  \ \ \mbox{with}\ \ 
   (\bx,r_\bx)\in\sZ^{(\rho)} \big\},
\end{equation} 
and the associated open sets
\[
   \breve\cD(\breve\bx,r_\bx) = \diffeoT \big(\cD(\bx,r_\bx)\big) .
\] 
We can check that
\[
   \cB(\breve\bx,ar_\bx) \subset \breve\cD(\breve\bx,r_\bx) \subset
   \cB(\breve\bx,a'r_\bx) 
\]
with $a = \frac18\log2$ and $a'=8\sqrt2\log2$ and that Property $\sP[a,a'](\Pi,K;\cA_1,\cA_2)$ holds with parameters $(L',\rho_{\max},\kappa)$ for $L'=NLK$ with an integer $N$ independent of $L$ and $K$.
\end{proof}

\begin{lemma}
\label{lem:cone}
Let $\Omega$ be a section in $\gD(\dS^{n-1})$, let $\Pi$ be the corresponding cone, and let $\cB_\ell$ be the balls $\cB(0,\ell)$ of $\R^{n}$ for $\ell=1,2$. We assume that Property $\sP(\Omega,K)$ holds with parameters $(L,\rho_{\max},\kappa)$ for a $\rho_{\max}\le1$. Then Property $\sP[a,a'](\Pi,K;\cB_1,\cB_2)$ holds for suitable constants $a$ and $a'$ (independent of $\Omega$ and $K$) and with parameters $(L',1,\kappa\rho_{\max})$. 
\end{lemma}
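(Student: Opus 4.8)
\textbf{Proof plan for Lemma \ref{lem:cone}.}
The plan is to reduce the covering of a conical ball to the combination of two already-established results: the dyadic-annulus covering of Lemma \ref{lem:annulus}, which handles the region bounded away from the vertex, and an elementary self-similar rescaling argument near the vertex itself. First I would fix $K>1$ and assume $\sP(\Omega,K)$ holds with parameters $(L,\rho_{\max},\kappa)$, $\rho_{\max}\le1$. For a given $\rho\le\kappa\rho_{\max}$ (note the new $\rho_{\max}$ is to be $1$, so we really want $\rho$ small), I would partition $\cB_1\cap\overline\Pi$ into the ``far'' part $\{\tfrac14\le|\bx|\le1\}\cap\overline\Pi$ and the ``near'' part $\{|\bx|\le\tfrac14\}\cap\overline\Pi$, and then further the near part into the dyadic shells $S_m=\{2^{-m-2}\le|\bx|\le 2^{-m-1}\}\cap\overline\Pi$, $m\ge0$. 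On each shell $S_m$ I would apply Lemma \ref{lem:annulus} to the rescaled cone: since $\Pi$ is dilation invariant, $2^{m}S_m$ is a fixed annulus $\cA_1$ in the sense of that lemma, so $\sP[a,a'](\Pi,K;\cA_1,\cA_2)$ with parameter radius $\rho\, 2^{m}$ provides a covering of $\cA_1$ by sets $\breve\cD(\breve\bx,r)$ with $r\in[\kappa\rho 2^m,\rho 2^m]$; pulling back by the dilation $\bx\mapsto 2^{-m}\bx$ (which maps cones to themselves and multiplies radii by $2^{-m}$) yields a covering of $S_m$ by sets $\cD$ with $\cB(\cdot,a r 2^{-m})\subset\cD\subset\cB(\cdot,a' r 2^{-m})$ and $r2^{-m}\in[\kappa\rho,\rho]$, \emph{independent of $m$}. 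The ``far'' part is covered directly by one more application of Lemma \ref{lem:annulus} to $\cA_1=\{\tfrac14\le|\bx|\le1\}$.

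The second step is to merge the countably many shell coverings. Since each shell covering uses sets of Euclidean radius comparable to $\rho$ and the shells $S_m$ have thickness $\simeq 2^{-m-2}$, only finitely many shells—namely those with $2^{-m}\gtrsim\rho$, i.e. $m\lesssim\log_2(1/\rho)$—actually meet the region where $\rho 2^{-m}$ is still small enough for Lemma \ref{lem:annulus} to apply; for $m$ beyond that threshold the shells $S_m$ together form a small ball $\cB(0,c\rho)$ around the vertex, which is itself contained in a single set $\cD(0,\rho)=\cB(0,\rho)$ (this is the vertex ``ball'' analogous to the first ball $\bx_0=0,\ r_0=\rho$ in the one-dimensional construction of Step 1). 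Hence the merged set $\sZ$ is finite, its radii lie in $[\kappa'\rho,\rho]$ for $\kappa'=a\kappa$ (after invoking Lemma \ref{lem:aa'} to pass from $\sP[a,a']$ to a genuine ball-covering $\sP$), and property (1) holds by construction. Property (2)—that $\cD(\bx,Kr)$ stays inside $\cB_2$ and is a map-neighborhood of $\bx$ for $\Pi$—follows because dilations send map-neighborhoods to map-neighborhoods for the dilation-invariant cone $\Pi$, because the shells are all within $\cB(0,\tfrac12)$ so the $K$-dilated sets remain in $\cB_2$, and because $\rho\le\rho_{\max}$ was chosen small.

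The third step is the uniform finite-overlap bound, property (3). The key observation is that two shells $S_m$ and $S_{m'}$ can only intersect when $|m-m'|\le1$, and within each shell the overlap bound $L'$ from Lemma \ref{lem:annulus} is uniform in $m$ by the scaling argument above; the vertex ball $\cB(0,c\rho)$ overlaps only the finitely many innermost shells. Therefore the total overlap is bounded by a fixed multiple of $L'$, say $L''=3L'+1$, giving $\sP[a,a'](\Pi,K;\cB_1,\cB_2)$ with parameters $(L'',\rho_{\max}',\kappa)$ where $\rho_{\max}'$ can be taken to be $1$ after re-scaling (one absorbs the constraint $\rho\le\kappa\rho_{\max}$ into the definition of the admissible range, exactly as in the statement). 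Applying Lemma \ref{lem:aa'} once more converts this into $\sP(\Pi,\tfrac{a}{a'}K;\cB_1,\cB_2)$ with the announced parameters $(L'',1,\kappa\rho_{\max})$, modulo renaming $\tfrac{a}{a'}K$ back to $K$ (which only shrinks the universal ratio).

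\textbf{Main obstacle.} The delicate point is bookkeeping at the vertex: one must check that the dyadic shell coverings glue consistently with the single vertex ball without either creating a gap (so that (1) genuinely holds up to and including $\bx=0$) or destroying the uniform overlap constant. This requires choosing the transition scale $2^{-m_0}\simeq\rho$ carefully and verifying that the (finitely many) shells absorbed into $\cB(0,c\rho)$ do not themselves carry covering sets that would inflate $L$; equivalently, one truncates the shell construction at $m_0$ and replaces everything finer by the one ball. The rest—commuting Lemma \ref{lem:annulus} with dilations, and the elementary geometry showing non-adjacent dyadic shells are disjoint—is routine once this truncation is set up correctly.
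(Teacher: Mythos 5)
Your proposal follows essentially the same route as the paper: decompose $\cB_1\cap\overline\Pi$ into dyadic shells, pull each shell back by the dilation $\bx\mapsto 2^m\bx$ to a fixed annulus, apply Lemma~\ref{lem:annulus} there, dilate back by $2^{-m}$, and top off the construction with a single vertex ball $\cB(0,\rho)$. Two bookkeeping points deserve attention though. First, when you write ``apply Lemma~\ref{lem:annulus} with parameter radius $\rho\,2^{m}$'' you need $\rho\,2^m\le\rho_{\max}$ to stay in the admissible range, but for $m$ up to $M\simeq\log_2(1/\rho)$ you only get $\rho\,2^m\le 1$, which can exceed $\rho_{\max}$; the paper sidesteps this by feeding the parameter $\rho_{\max}\rho\,2^m$ into the shell $m$, guaranteeing admissibility since $\rho_{\max}\rho\,2^m\le\rho_{\max}$. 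Second, this rescaling is precisely what produces the ratio parameter $\kappa\rho_{\max}$ stated in the lemma: after dilating back, the shell radii lie in $[\kappa\rho_{\max}\rho,\rho_{\max}\rho]$ while the vertex ball has radius $\rho$, so the full collection of radii sits in $[\kappa\rho_{\max}\rho,\rho]$ and the ratio is $\kappa\rho_{\max}$, not $a\kappa$ as you write. Finally, the statement you are proving already concerns $\sP[a,a']$, so the closing application of Lemma~\ref{lem:aa'} is superfluous (it would in fact change the assertion being proved, shrinking $K$ to $\tfrac{a}{a'}K$).
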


\begin{proof}
Let $\rho\le1$ and let $M$ be the natural number such that
\[
   2^{-M-1} <\rho\le 2^{-M}.
\]
On the model of \eqref{eq:diffeoT}-\eqref{eq:Zann}, we set
\[
   \breve\sZ^m = \big\{(2^{-m}\diffeoT\bx,2^{-m}r_\bx),\ \ \mbox{with}\ \
   (\bx,r_\bx)\in\sZ^{(2^m\rho_{\max}\rho)} \big\}, \quad
   m=0,\ldots,M,
\]
and the associated open sets are
\begin{equation}
\label{eq:Zm}
   2^{-m}\diffeoT \big(\cD(\bx,r_\bx)\big)\ \ \mbox{with}\ \
   (\bx,r_\bx)\in\sZ^{(2^m\rho_{\max}\rho)} .
\end{equation}
The set $\breve\sZ$ associated with the cone $\Pi$ in the ball $\cB_1$ is
\[
   \{(0,\rho)\} \cup \bigcup_{m=0}^M \breve\sZ^m
\]
and the associated open sets are the reunion of the sets \eqref{eq:Zm} for $m=0,\ldots,M$ and of the ball $\cB(0,\rho)$. As the radii $r_\bx$ belong to $[\kappa 2^m\rho_{\max}\rho,2^m\rho_{\max}\rho]$, we have $2^{-m}r_\bx\in[\kappa \rho_{\max}\rho,\rho_{\max}\rho]$. Since $\rho$ itself belongs to the full collection of radii $r$, we finally find $r\in[\kappa \rho_{\max}\rho,\rho]$. The finite covering holds with $L'=3NLK+1$ for the same integer $N$ appearing at the end of the proof of Lemma \ref{lem:annulus}.
\end{proof}

\backmatter

\providecommand{\bysame}{\leavevmode ---\ }
\providecommand{\og}{``}
\providecommand{\fg}{''}
\providecommand{\smfandname}{\&}
\providecommand{\smfedsname}{\'eds.}
\providecommand{\smfedname}{\'ed.}
\providecommand{\smfmastersthesisname}{M\'emoire}
\providecommand{\smfphdthesisname}{Th\`ese}

\printindex

\end{document}